\newtheorem{theorem}{Theorem}[section]
\newtheorem{lemma}{Lemma}[section]
\newtheorem{proposition}{Proposition}[section]
\newtheorem{corollary}{Corollary}[section]
\theoremstyle{definition}
\newtheorem{definition}{Definition}[section]
\newtheorem{example}{Example}[section]
\theoremstyle{remark}
\newtheorem{remark}{Remark}[section]
\numberwithin{equation}{section}
\let\myenumi\theenumi
\renewcommand{\labelenumi}{\upshape{(\roman{enumi})}}
 \renewcommand{\theenumi}{\upshape{(\roman{enumi})}}
\newlength{\myfboxsep}
\newlength{\mywidth}
\newcommand{\boxv}{\setlength{\myfboxsep}{\fboxsep}
\setlength{\fboxsep}{0pt}
\,\framebox[\mywidth]{$\vee$}\,
\setlength{\fboxsep}{\myfboxsep}}
\DeclareMathOperator{\E}{E}
\DeclareMathOperator{\prob}{P}
\DeclareMathOperator{\e}{e}
\DeclareMathOperator{\lito}{o}
\DeclareMathOperator{\bigo}{O}
\newcommand{\ol}{\overline}
\newcommand{\Egamma}{\mathsf{E}^{(\gamma)}}
\newcommand{\Ebar}{\overline{\mathsf{E}}^{(\gamma)}}
\newcommand{\vaguec}{\stackrel{\mathrm{v}}{\rightarrow}}
\newcommand{\gmu}{G_{\mu}}
\newcommand{\comment}[1]{}
\newcommand{\xinf}{X_{(\infty)}}
\newcommand{\xupk}{\sum_{t=m+1}^\infty \Theta_t X_t}
\newcommand{\xdnk}{\sum_{t=1}^m \Theta_t X_t}
\newcommand{\xtheta}{\Theta_tX_t}
\begin{document}
\pagestyle{empty}
\vspace*{1cm}
\begin{center}
\begin{huge}
\textbf{Essays on regular variations\\ in classical and free setup:\\ randomly weighted sums,\\ products in CEVM and\\ free subexponentiality\\ }
\end{huge}
\end{center}

\vspace{3cm}

\begin{center}
\textbf{\Large Rajat Subhra Hazra\\}
\end{center}

\vspace{1cm}
\begin{figure}[h]
\begin{center}
\includegraphics[width=1in]{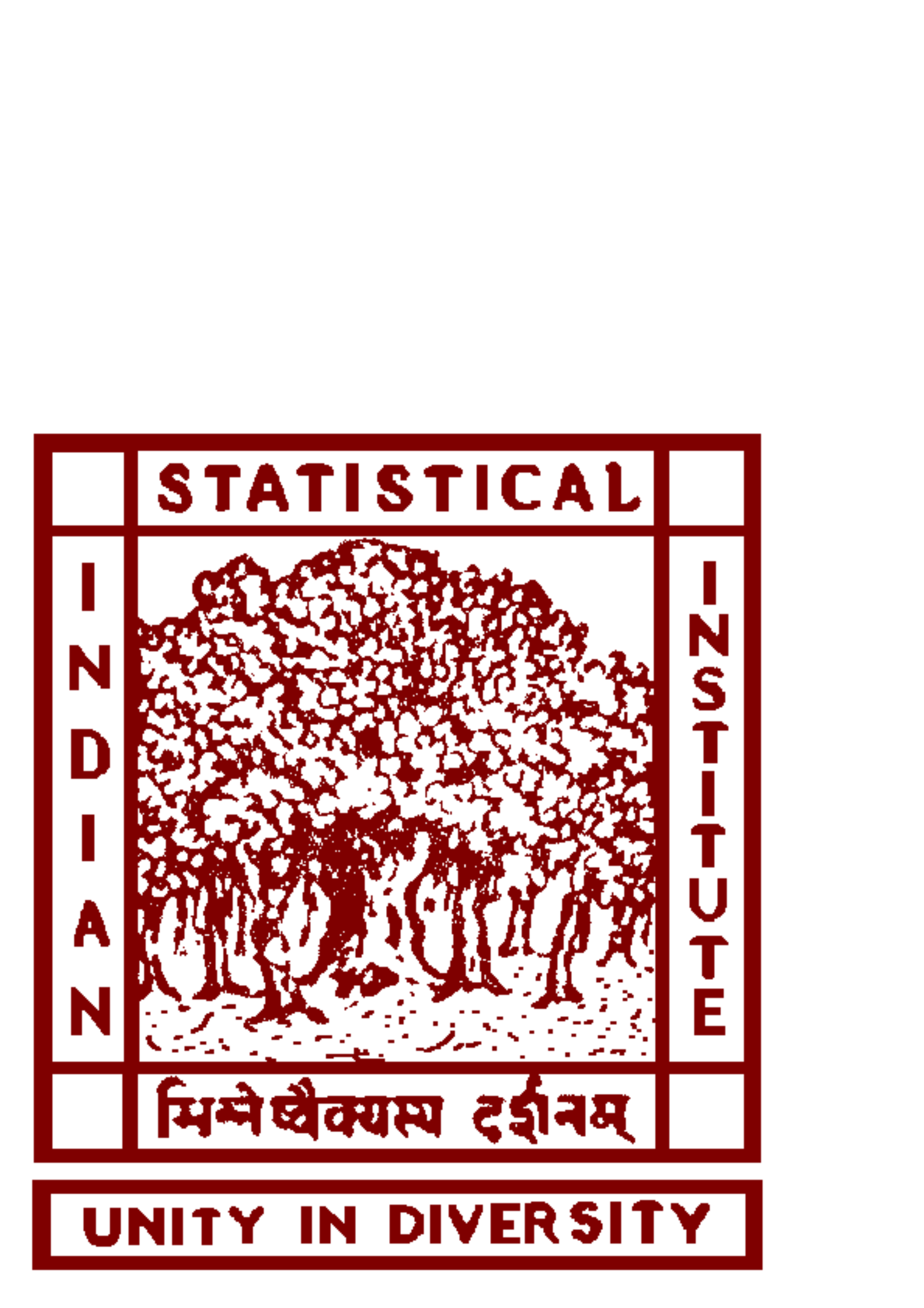}
\end{center}
\end{figure}
\vspace{1cm}

\begin{center}
\begin{large}
\textbf{Indian Statistical Institute, Kolkata\\[1ex]
April, 2011}
\end{large}
\end{center}

\cleardoublepage

\pagestyle{empty}
\begin{center}
\begin{huge}
\textbf{Essays on regular variations\\ in classical and free setup:\\ randomly weighted sums,\\ products in CEVM and\\ free subexponentiality\\}
\end{huge}
\end{center}

\vspace{3cm}

\begin{center}
\textbf{\Large Rajat Subhra Hazra\\}
\end{center}

\vspace{3cm}
\begin{center}
\small{\textbf{Thesis submitted to the Indian Statistical Institute\\
in partial fulfillment of the requirements\\
for the award of
the degree of\\
Doctor of Philosophy\\[1ex]
April, 2011}}
\end{center}

\begin{figure}[h]
\begin{center}
\includegraphics[width=1cm]{isi.pdf}
\end{center}
\end{figure}

\begin{center}
\begin{large}
\textbf{Indian Statistical Institute\\
203, B.T. Road, Kolkata, India.}
\end{large}
\end{center}

\cleardoublepage
\pagenumbering{roman}
\pagestyle{plain}
\pagestyle{empty}
\vspace*{8cm}
\begin{center}
{\it{\Large To \\  Baba, Maa and Monalisa }\\}
\end{center}
\cleardoublepage
\begin{center}
\textbf{\Large Acknowledgements}\\
\end{center}
 
\vspace{2cm}
The time spent in writing a Ph.D.\ thesis provides a unique opportunity to look back and get a new perspective on some years of work. This in turn, makes me feel the need to acknowledge many people who have helped me in many different ways to the realization of this work. I like to begin by thanking my favorite poet Robert Frost, for  the wonderful lines:
\begin{center}
\textit{Woods are lovely, dark and deep.\\
 But I have promises to keep,\\
 And miles to go before I sleep,\\
 And miles to go before I sleep.}
\end{center}
These four lines from `Stopping by Woods on a snowy evening' have been very special in my life and provided me inspiration since my high school days.

I would like to express my sincerest gratitude to my thesis advisor Krishanu Maulik for being supportive and for taking immense pain to correct my incorrectness in every step. His constructive suggestions and criticisms have always put me in the right track. Moreover, I am also grateful to him for being more like an elder brother to me.

I am also grateful to Arup Bose for many things which I cannot pen down completely. In Autumn of 2006, he infused in me the desire to learn probability. It has been a very enjoyable experience since then. I would also thank him for introducing me to the interesting area of Random matrices and for allowing me to be a part of his adventure with Koushik. I would like to acknowledge B.V. Rao, whose mere presence in ISI was a big motivation for me. His humbleness and propensity to talk at length about many mathematical and non-mathematical things have helped me to develop as a human being. Also, I am grateful that he has spent a huge amount of time to explain me various details in many subjects. His classes will remain one of the best guided tours I have ever been to. His moving to CMI was a great loss not only to me but to whole of ISI. 

I would also like to thank Debashish Goswami and Alok Goswami for helping me with important suggestions regarding my work. I am also indebted to the teachers who have taught me. Special thanks to Amites Dasgupta, S.M. Srivastava, S.C. Bagchi, Gautam Mukherjee, Amiya Mukherjee, Antar Bandyopadhyay, Gopal Basak, Rudra Sarkar, Mahuya Dutta, Anil Ghosh and Probal Chaudhuri for showing me the broader spectrum. I also thank Sudip Acharya, Manab Mukerjee, Mihir Chakraborty and Biswajit Mitra for helping me during my M.Sc. I am also thankful to  Partha Pratim Ghosh, Satyajit Sir, Ananda Dasgupta for conducting classes till late evening during our college days and making us feel what research is all about. I would also like to thank my class teachers in school for loving me and teaching me discipline. 

During my research I have also got good suggestions from Siva Athreya, Antar Bandyopadhyay, Parthanil Roy, K. R. Parthasarathi, Arijit Chakraborty, Roland Speicher, Manjunath and many others. Thanks to all of them and a special thanks to the LPS team for organizing a very fruitful meet every year.

How can a journey be possible without friends? I have always had a wonderful set of friends. I take this opportunity to apologize to all those friend with whom  I have not been able to meet during the last few years. I would like to thank a Yahoo group called ``Hostelmasti'' which has been a part of my life since my college days. I sincerely thank all college friends specially Arya, Ravi, Pralay, Deba, Mithun, Arnab, Projjal, Sukesh, Soumyadeep and Gopal for staying with me during my good and bad days. I also thank Bodhisattva, Subhadeep, Arunava, Suhel, Mainak, Rishi and Apratim for being by my side since the school days.

The entry to ISI life won't have been easy without my friend Jyotishman. He has been a very close companion to me since my college days. I hope this camaraderie will remain the rest of my life. I cant completely show my gratitude to him by mere words. After he left ISI for a post-doc, `the fun' has gone down in ISI. I also thank Koushik for allowing me to be a part of his research and for bearing up with my eccentricity for five years.  Biswarup's songs and `kurkur'  are unforgettable  and thanks to him for being a great friend. I am also thankful to Pustida, Ashishda, Abhijitda, Palda, Debashisda, Prosenjitda, Radhe, Santanu, Subhro, Subhajit, Palash, Buddhananda, Subhabrata, Soumenda, Qaiser, Swagatadi, Suparna, Dibyenduda, Sedigeh, Zinna, Soumalya, Arup, Suman and many others for making my research life stress free. 

I wish this to be an occasion to thank my family. I would like to mention their everlasting support and sincere commitments which have helped me to arrive at this stage. My father's encouraging words are always great source of inspiration. I thank Bappa for silently supporting me. I am also indebted to my mother and father in law for their continuous support and encouraging words. Finally I want to thank my wife Monalisa, for being a part of my tension, sorrow, joy and many more things. Thanks to her for being by my side and having faith in me. Her enduring love has always been a privilege for me. I regard every achievement as a success for two of us.

Last but not the least, I thank God for all the blessings He has showered on me and without whose help the journey would not have been possible.

\cleardoublepage
\cleardoublepage
\tableofcontents
\cleardoublepage
\addtolength{\headheight}{15pt}
\pagenumbering{arabic}
\setcounter{page}{1}
\pagestyle{myheadings}
\pagestyle{fancy}
\renewcommand{\chaptermark}[1]{\markboth{\chaptername
\ \thechapter:\,\ #1}{}}
\renewcommand{\sectionmark}[1]{\markright{\thesection\,\ #1}}
\fancyhf{}
\fancyhead[RE]{\sl\leftmark}
\fancyhead[RO,LE]{\rm\thepage}
\fancyhead[LO]{\sl\rightmark}
\fancyfoot[C,L,E]{}
\chapter{An introduction to heavy tailed distributions}\label{chap:introduction}

\section{Introduction}
%
In this thesis, we shall be focusing  on  some problems in probability theory involving regularly varying functions. The theory of regular variations has played an important role in probability theory, harmonic analysis, number theory, complex analysis and many more areas of mathematics. For an encyclopedic treatment of the subject, we refer to \cite{bingham:goldie:teugels:1987}. In probability theory, the limiting behavior of the sums of independent and identically distributed (i.i.d.)\ random variables is closely related to regular variation. The books by \cite{feller:1971} and \cite{gnedenko:kolmogorov} give  characterizations of random variables in the domains of attraction of stable distributions in terms of regularly varying functions. The study of extreme value theory was first initiated by \cite{gnedenko:1943,fisher:tippett:1928}.  The use of regular variation in extreme value theory is now very well known due to the works of \cite{dehaan:1970,dehaan:1971}.   The Tauberian theorems involving regularly varying functions play a very crucial role in different disciplines. \cite{bingham:goldie:teugels:1987} gives a very good account of the Tauberian theorems for Laplace and Mellin transforms.  The use of regularly varying functions is also very popular in insurance and risk theory. For a comprehensive study of different applications, we refer to the book by~\cite{embrechts:kluppelberg:mikosch:book}. The study of record values and record times also uses various properties of extreme value theory and regularly varying functions \citep[Chapter 4]{Resnick:1987}. The theory of multivariate regular variation is useful in modelling various telecommunication systems and Internet traffic  \citep{mikosch:resnick:rootzen:stegeman:2002, heath:resnick:samorodnitsky:1998, Maulik:Resnick:Rootzen:2002}. The study of certain environmental issues is also facilitated when one considers the theory of multivariate regular variations. See~\cite{dehaan:ronde:1998,heffernan:tawn:2004} for some recent applications in this area.

The theory of regular variations has also found an important place in free probability. Free probability was introduced by \cite{voiculescu1986addition}. While free probability has an important application in the study of random matrices, its connection with various topics like the study of the free groups and the factor theory have made it  a subject of its own interest. \cite{bercovici2000functions,bercovici1999stable} studied the domain of attraction of free stable laws and showed that the regularly varying functions play a very crucial role like they do in the classical setup. In the final chapter of this thesis, we show another application of regular variation in free probability theory. 

In Section~\ref{chap1:intro:defs} we give the definitions of regularly varying functions, distribution functions and measures with regularly varying tails and some of their properties. In Subsections~\ref{Some well known facts about regular variations} and~\ref{chap1:intro:products and sum} we state some well known results on the regularly varying functions which we shall use in the later chapters. In Section~\ref{chap1:sec:heavy tailed} we recall the definitions of subexponential and long tailed distributions and some of their properties. In Section~\ref{chap1:sec:extreme values} we give a brief introduction to one dimensional extreme value theory and also point out its connections with regularly varying functions. In Section~\ref{summary} we give a brief overview of the results in the later chapters.

\subsection{Notations}

The space of natural numbers, integers, real numbers and complex numbers will be denoted by $\mathbb N$, $\mathbb Z$, $\mathbb R$ and $\mathbb C$ respectively. By $\mathbb R^+$ we shall mean the space of non-negative reals, that is, $[0,\infty)$. For a complex number $z$, $\Re z$ and $\Im z$ will denote its real and imaginary parts respectively.  We shall denote the upper and the lower halves of the complex plane respectively by $\mathbb C^+$ and $\mathbb C^-$ , namely, $\mathbb C^+ = \{z\in\mathbb C: \Im z>0\}$ and $\mathbb C^- = - \mathbb C^+$.

For a real valued functions $f$ and $g$, we shall write $f(x)\approx g(x)$, $f(x)=\lito(g(x))$ and $f(x)=\bigo(g(x))$ as $x\to\infty$ to mean that $f(x)/g(x)$ converges to a non-zero limit, ${f(x)}/{g(x)}\rightarrow 0$ and $f(x)/g(x)$ stays bounded as $x\to\infty$ respectively. If the non-zero limit is $1$ in the first case, we write $f(x) \sim g(x)$ as $x\to\infty$. For complex valued functions the notations are explained in Section~\ref{subsec: non-tang} of Chapter~\ref{free sub chapter}.

By $x^+$  and  $x^-$ we shall mean $\max\{0,x\}$ and $\max\{0,-x\}$ respectively. For a distribution function $F$ the tail of the distribution function will be denote by $\overline{F}(x)=1-F(x)$.

If $\mathsf{S}$ is a topological space with $\mathcal{S}$ being its Borel $\sigma$-field, then for non-negative Radon measures $\mu_t$, for $t>0$, and $\mu$ on $(\mathsf S, \mathcal S)$, we say $\mu_t$ converges vaguely to $\mu$ and denote it by $\mu_t \vaguec \mu$, if for all relatively compact sets $C$, which are also $\mu$-continuity sets of $\mu$, that is, $\mu( \partial C ) = 0$, we have $\mu_t (C) \to \mu(C)$, as $t \to \infty$. By $M_+(\mathsf S)$ we shall mean the space of all Radon measures on $\mathsf{S}$ endowed with the topology of vague convergence.

\section{Regularly varying functions and random variables}\label{chap1:intro:defs}

Regularly varying functions are an integral part in the study of heavy tailed distributions. A regularly varying function asymptotically looks like a power function. The use of regularly varying functions is extensive in literature. They are heavily used in characterizing the domains of attraction, stable laws, modeling  long range dependence and extreme value theory. In this section we collect some  basic properties of the regularly varying functions, which we shall use throughout this thesis. Most of the proofs are very common and are well available in literature. For more detailed theory of regular variations and related classes, we refer the readers to \cite{bingham:goldie:teugels:1987}. We now recall the definition of a regularly varying function.

\begin{definition}
 A measurable function $f: \mathbb{R}_+\rightarrow \mathbb{R}_+$ is called \textit{regularly varying (at infinity) with index $\alpha$}, if for $t>0$,
\begin{equation}
\label{chap1:eq:rv}
 \lim_{x\rightarrow\infty}\frac{f(tx)}{f(x)}=t^{\alpha}.
\end{equation}
If $\alpha=0$, $f$ is said to be \textit{slowly varying.}

The space of regularly varying functions (at infinity) with index $\alpha$ will be denoted by $RV_{\alpha}$.
\end{definition}

In analogy to regular variation at infinity one can talk about regular variation at zero with an obvious change in the above definition. A function $f(x)$ is said to be regularly varying at zero if and only if $f(x^{-1})$ is regularly varying at infinity. In the subsequent chapters,  regular variation will always mean the behavior at infinity unless otherwise specified.

From the above definition it is clear that one can always represent a regularly varying function with index $\alpha$ as $x^{\alpha}L(x)$ for some slowly varying function $L$.

\begin{example}
$x^{\alpha}$, $x^{\alpha}\ln(1+x)$, $x^\alpha \ln(\ln(e+x))$ are some examples of regularly varying function at infinity with index $\alpha$.
\end{example}

\subsection{Some well known results about regular variations}\label{Some well known facts about regular variations}
This subsection provides some basic properties of  regularly varying functions, which shall be useful in later chapters of this thesis. The proofs of these results are available in Chapter 1 of \cite{bingham:goldie:teugels:1987}.

\begin{enumerate}
 \item \label{chap1:karamata rep} A positive function $L$ on $[x_0,\infty)$ is slowly varying if and only if it can be written in the form 
\begin{equation}
 \label{chap1:eq:karamata rep} 
L(x)=c(x)\exp\left(\int_{x_0}^x\frac{\epsilon(y)}{y}dy\right),
\end{equation}
where $c(\cdot)$ is a measurable nonnegative function such that $\lim_{x\rightarrow\infty}c(x)=c_0 \in (0,\infty)$ and $\epsilon(x)\rightarrow 0$ as $x\rightarrow\infty$.

This is known as Karamata's Representation for slowly varying functions and we refer to Corollary of Theorem 0.6 of \cite{Resnick:1987}.

\item\label{chap1:infinite:zero} For a regularly varying $f$ with index $\alpha\neq 0$, as $x\rightarrow\infty$,
$$f(x)\rightarrow\begin{cases}
                  \infty & \text{if}\quad \alpha >0\\
0 &\text{if}\quad \alpha<0.
                 \end{cases}$$

Moreover, if $L$ is slowly varying then for every $\epsilon>0$, $x^{-\epsilon}L(x)\rightarrow0$ and $x^{\epsilon}L(x)\rightarrow\infty$, as $x\rightarrow\infty$.

\item\label{chap1:uniformconv} If $f$ is regularly varying with index $\alpha$ (if $\alpha>0$, we also assume $f$ is bounded on each interval $(0,x]$, $x>0$) then for $0<a\leq b<\infty$, the equation~\eqref{chap1:eq:rv} holds uniformly in $t$
\begin{enumerate}
 \item on each $[a,b]$ if $\alpha=0$,
\item on each $(0,b]$ if $\alpha>0$,
\item on each $[a,\infty)$ if $\alpha<0.$
\end{enumerate}
\end{enumerate}

\begin{theorem}[Karamata's Theorem, \citealp{Resnick:1987}, Theorem 0.6]
 \label{chap1:theorem:karamata}
Let $L$ be a slowly varying function and locally bounded in $[x_0,\infty)$ for some $x_0\geq 0$.
\begin{enumerate}
 \item If $\alpha>-1$, then
\begin{equation*}
 \int_{x_0}^x t^{\alpha}L(t)dt \sim \frac{x^{\alpha+1}L(x)}{\alpha+1}, \quad \text{as } x\rightarrow\infty.
\end{equation*}
\item If $\alpha<-1$, then
\begin{equation*}
 \int_{x}^{\infty} t^{\alpha}L(t)dt \sim -\frac{x^{\alpha+1}L(x)}{\alpha+1}, \quad \text{as } x\rightarrow\infty.
\end{equation*}

\item If $\alpha=-1$, then 
 $$\frac1{L(x)}\int_{x_0}^{x} \frac{L(t)}{t}dt\rightarrow\infty \quad \text{as } x\rightarrow\infty$$ and $\int_{x_0}^x \frac{L(t)}{t}dt$ is slowly varying.

\item If $\alpha=-1$ and $\int_{x_0}^\infty \frac{L(t)}{t}dt<\infty$, then
$$\frac1{L(x)}\int_{x}^{\infty} \frac{L(t)}{t}dt\rightarrow\infty \quad \text{as } x\rightarrow\infty$$ and $\int_{x}^\infty \frac{L(t)}{t}dt$ is slowly varying.

\end{enumerate}

\end{theorem}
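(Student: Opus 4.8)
\noindent\textit{Proof proposal.} In all four parts the plan is to apply the scaling substitution $t=xs$ to pull the slowly varying function out from under the integral, using the uniform convergence theorem (fact \ref{chap1:uniformconv}) on the part of the range where $s$ lies in a fixed compact subset of $(0,\infty)$, and the Karamata representation \eqref{chap1:eq:karamata rep} to control the contribution of the singular endpoint.

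For part (i) I would first use the local boundedness of $L$ to replace the lower limit $x_0$ by some larger $x_1$ — changing $\int_{x_0}^x t^\alpha L(t)\,dt$ only by a finite constant — and then substitute $t=xs$ to get
\begin{equation*}
\frac{(\alpha+1)\int_{x_1}^x t^\alpha L(t)\,dt}{x^{\alpha+1}L(x)} \;=\; (\alpha+1)\int_{x_1/x}^1 s^\alpha\,\frac{L(xs)}{L(x)}\,ds .
\end{equation*}
On each $[a,1]$ with $a\in(0,1)$ fixed, $L(xs)/L(x)\to1$ uniformly (fact \ref{chap1:uniformconv}), so that portion converges to $(\alpha+1)\int_a^1 s^\alpha\,ds=1-a^{\alpha+1}$, and it remains to bound $\int_{x_1/x}^a s^\alpha L(xs)/L(x)\,ds$ uniformly in large $x$. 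For this I would derive from \eqref{chap1:eq:karamata rep} a Potter-type bound $L(xs)/L(x)\le C\,s^{-\delta}$ valid for $0<s\le1$ and $x$ large (using $c(\cdot)\to c_0\in(0,\infty)$ and $\epsilon(\cdot)\to0$), and pick $\delta<\alpha+1$ so that $\int_0^a s^{\alpha-\delta}\,ds$ can be made arbitrarily small; letting $x\to\infty$ and then $a\to0$ gives the limit $1$. Finally, since $x^{\alpha+1}L(x)\to\infty$ (fact \ref{chap1:infinite:zero}), the constant discarded when passing from $x_0$ to $x_1$ is asymptotically negligible. Part (ii) I would do the same way: the companion Potter bound $L(xs)/L(x)\le C\,s^{\delta}$ for $s\ge1$ with $\delta<|\alpha|-1$ first shows $\int_x^\infty t^\alpha L(t)\,dt<\infty$, and then $t=xs$ reduces the ratio to $\int_1^\infty s^\alpha L(xs)/L(x)\,ds$, handled by uniform convergence on $[1,A]$ and by the integrable majorant $Cs^{\alpha+\delta}$ on $[A,\infty)$, with $A\to\infty$ taken last.

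For (iii) and (iv) write $\tilde L(x)=\int_{x_0}^x L(t)/t\,dt$ and $\hat L(x)=\int_x^\infty L(t)/t\,dt$. Here the basic estimate I would establish, once more by $t=xs$ and fact \ref{chap1:uniformconv}, is that for each fixed $\lambda>1$
\begin{equation*}
\int_x^{\lambda x}\frac{L(t)}{t}\,dt \;=\; \int_1^\lambda \frac{L(xs)}{s}\,ds \;\sim\; L(x)\ln\lambda\qquad\text{as }x\to\infty,
\end{equation*}
so that $\int_x^{\lambda x}L(t)/t\,dt\ge\tfrac12 L(x)\ln\lambda$ for all large $x$. Since $\tilde L(\lambda x)\ge\int_x^{\lambda x}L(t)/t\,dt$ and $\hat L(x)\ge\int_x^{\lambda x}L(t)/t\,dt$, this gives (directly for $\hat L$, and after the change of variable $y=\lambda x$ together with $L(y/\lambda)\sim L(y)$ for $\tilde L$) that $\liminf_y \tilde L(y)/L(y)\ge\tfrac12\ln\lambda$ and $\liminf_y \hat L(y)/L(y)\ge\tfrac12\ln\lambda$; letting $\lambda\to\infty$ shows both ratios tend to $\infty$, which is the first claim in each of (iii) and (iv). For the slow variation of $\tilde L$ I would then use $\bigl|\tilde L(\lambda x)-\tilde L(x)\bigr|=\int_x^{\lambda x}L(t)/t\,dt\le 2L(x)\ln\lambda$ for large $x$ (and monotonicity of $\tilde L$ for the case $0<\lambda<1$), so that $\tilde L(\lambda x)/\tilde L(x)=1+O\bigl(L(x)/\tilde L(x)\bigr)\to1$; the reasoning for $\hat L$ is identical.

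I expect the substitutions and the bookkeeping with the discarded constants to be routine. The one genuinely delicate step is the Potter-type domination near the singular endpoint in (i) and (ii) — extracting from \eqref{chap1:eq:karamata rep} a bound of the form $L(xs)/L(x)\le C\max(s^\delta,s^{-\delta})$ uniform in all large $x$ — since this is exactly what justifies the interchange of limit and integral there; parts (iii) and (iv), by contrast, are comparatively easy once the compact-range estimate above is in hand.
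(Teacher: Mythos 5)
The paper does not prove this theorem---it states it with a citation to Resnick---so there is no in-paper argument to compare against. Your proof is correct and is the standard direct Karamata argument (substitute $t=xs$, use uniform convergence on compacta, control the singular endpoint with a Potter-type bound, and in the $\alpha=-1$ cases bound $\int_x^{\lambda x}L(t)/t\,dt\sim L(x)\ln\lambda$). One small imprecision worth tightening: the bound $L(xs)/L(x)\le C\,s^{-\delta}$ that you extract from the Karamata representation (choosing $x_1$ so that $c(\cdot)$ is bounded above and below and $|\epsilon(\cdot)|\le\delta$ on $[x_1,\infty)$) is valid only for $xs\ge x_1$, i.e.\ for $s\in[x_1/x,1]$, not for all $s\in(0,1]$; this happens to be exactly the range of integration after your reduction from $x_0$ to $x_1$, so nothing breaks, but the domain of validity should be stated as such.
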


\begin{theorem}[Potter's bound, \citealp{Resnick:1987}, Proposition 0.8]\label{chap1:theorem:potter:1}

 Suppose $f\in RV_{\rho}$, $\rho\in \mathbb{R}$. Take $\epsilon>0$. Then there exists $t_0$ such that for $x\geq 1$, and $t\geq t_0$, we have 
\begin{equation}\label{potter1}
(1-\epsilon)x^{\rho-\epsilon}<\frac{f(tx)}{f(t)}<(1+\epsilon)x^{\rho+\epsilon}.
\end{equation}
\end{theorem}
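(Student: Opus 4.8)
The plan is to derive the bound directly from Karamata's Representation together with the uniform convergence theorem, writing $f \in RV_\rho$ as $f(x) = x^\rho L(x)$ with $L$ slowly varying. Since $f(tx)/f(t) = x^\rho \, L(tx)/L(t)$, the entire problem reduces to controlling the ratio $L(tx)/L(t)$ for $x \ge 1$ and $t$ large, showing it is squeezed between $x^{-\delta}$ and $x^{\delta}$ (up to the $(1\mp\epsilon)$ factors) once $\delta$ is chosen appropriately in terms of $\epsilon$. So the first step is to reduce to the slowly varying case and fix the relationship between the exponent perturbation and the multiplicative error.

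Next I would invoke the Karamata Representation from item~\eqref{chap1:karamata rep}: write $L(x) = c(x)\exp\!\left(\int_{x_0}^x \epsilon(y)\,y^{-1}\,dy\right)$ with $c(x) \to c_0 \in (0,\infty)$ and $\epsilon(y) \to 0$. Then
\begin{equation*}
\frac{L(tx)}{L(t)} = \frac{c(tx)}{c(t)}\,\exp\!\left(\int_{t}^{tx} \frac{\epsilon(y)}{y}\,dy\right)
= \frac{c(tx)}{c(t)}\,\exp\!\left(\int_{1}^{x} \frac{\epsilon(ts)}{s}\,ds\right),
\end{equation*}
after the substitution $y = ts$. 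Given $\epsilon > 0$, first pick $\delta \in (0,\epsilon)$ small; then choose $y_1$ so that $|\epsilon(y)| < \delta$ for $y \ge y_1$, which forces $\left|\int_1^x \epsilon(ts)\,s^{-1}\,ds\right| \le \delta \int_1^x s^{-1}\,ds = \delta \ln x$ whenever $t \ge y_1$ (here $x \ge 1$ is essential so the integration range is $[1,x]$ and $\ln x \ge 0$). Hence the exponential factor lies in $[x^{-\delta}, x^{\delta}]$. Separately, since $c(tx)/c(t) \to 1$ as $t \to \infty$ uniformly over $x \ge 1$ (both arguments exceed $t$, and $c \to c_0 \in (0,\infty)$), choose $t_1$ so that $c(tx)/c(t) \in (1-\epsilon, 1+\epsilon)$ for all $t \ge t_1$ and all $x \ge 1$.

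Combining, for $t_0 := \max\{x_0, y_1, t_1\}$, $t \ge t_0$ and $x \ge 1$ we get
\begin{equation*}
(1-\epsilon)\,x^{\rho-\delta} \;<\; \frac{f(tx)}{f(t)} \;=\; x^\rho\,\frac{L(tx)}{L(t)} \;<\; (1+\epsilon)\,x^{\rho+\delta},
\end{equation*}
and since $\delta < \epsilon$ and $x \ge 1$ we have $x^{\rho-\delta} \ge x^{\rho-\epsilon}$ and $x^{\rho+\delta} \le x^{\rho+\epsilon}$, which yields~\eqref{potter1}. The main obstacle I anticipate is the handling of the factor $c(tx)/c(t)$: one must argue that its convergence to $1$ is genuinely uniform in $x$ over the unbounded range $x \ge 1$, which works here only because the representation guarantees $c$ has a finite positive limit, so both $c(tx)$ and $c(t)$ are simultaneously close to $c_0$ once $t$ is large; without the one-sided restriction $x \ge 1$ (so that $tx \ge t$) one would need a two-sided version and a correspondingly more careful bookkeeping of the integral term.
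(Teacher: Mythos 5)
Your proof is correct. The paper itself gives no proof of Theorem~\ref{chap1:theorem:potter:1}, deferring entirely to \cite{Resnick:1987}; the argument you give --- reducing to the slowly varying factor, invoking the Karamata representation~\eqref{chap1:eq:karamata rep}, bounding the integral of $\epsilon(\cdot)$ by $\delta \ln x$ for $t$ large and $x\ge 1$, and handling the prefactor via $c(t)\to c_0\in(0,\infty)$ --- is exactly the standard derivation of Potter's bound that the cited reference uses. One minor stylistic note: when you conclude, you chain a strict inequality with $x^{\rho-\delta}\ge x^{\rho-\epsilon}$, which preserves strictness, so the final bounds in~\eqref{potter1} do come out strict as stated; it is worth saying so explicitly since the theorem asserts strict inequalities.
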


We now state  Karamata's Tauberian theorem for Laplace-Stieltjes  and Stieltjes transforms. 
\begin{definition}For a non-decreasing right continuous function $U$ on $[0,\infty)$ define its \textit{Laplace-Stieltjes} transform 
$$\widehat U(s):=\int_0^\infty e^{-sx}dU(x).$$
\end{definition}
\begin{theorem}[Karamata's Tauberian theorem, \cite{bingham:goldie:teugels:1987}, Theorem 1.7.1]
\label{chap1:theo:tauberian:bingham}
Let $U$ be a nondecreasing, right continuous function defined on $[0,\infty)$. If $L$ is slowly varying, $c\geq 0$, $\alpha\geq 0$, then the following are equivalent:
\begin{enumerate}
\item $U(x)\sim \frac{cx^{\alpha}L(x)}{\Gamma(1+\alpha)}$, as $x\rightarrow\infty$
\item $\widehat{U}(s)\sim cs^{-\alpha}L(1/s)$,  as $s\downarrow0$. 
\end{enumerate}

If $c>0$ then any one of the above implies $U(x)\sim \widehat{U}(1/x)/\Gamma(1+\alpha).$

\end{theorem}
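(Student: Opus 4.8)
The plan is to prove the two implications separately and then read off the last assertion. I would dispose of the \emph{Abelian} direction (i)$\Rightarrow$(ii) first. Assuming $U(x)\sim cx^{\alpha}L(x)/\Gamma(1+\alpha)$ and taking $U(0-)=0$, a standard integration by parts followed by the substitution $x=t/s$ gives $\widehat U(s)=s\int_0^\infty e^{-sx}U(x)\,dx=\int_0^\infty e^{-t}U(t/s)\,dt$, the boundary term vanishing because $U$ grows subexponentially. For each fixed $t>0$ the rescaled integrand satisfies $\Gamma(1+\alpha)\,U(t/s)/\big((t/s)^{\alpha}L(t/s)\big)\to c$ by hypothesis and $(t/s)^{\alpha}L(t/s)\cdot s^{\alpha}/L(1/s)=t^{\alpha}L(t/s)/L(1/s)\to t^{\alpha}$ by slow variation, so $e^{-t}U(t/s)\cdot s^{\alpha}/L(1/s)\to c\,e^{-t}t^{\alpha}/\Gamma(1+\alpha)$. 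When $c>0$, $U\in RV_{\alpha}$ and Potter's bound (Theorem~\ref{chap1:theorem:potter:1}) dominates this by $C\,e^{-t}(t^{\alpha+\epsilon}+t^{\alpha-\epsilon})$, which is integrable for small $\epsilon$, so dominated convergence yields $s^{\alpha}\widehat U(s)/L(1/s)\to (c/\Gamma(1+\alpha))\int_0^\infty e^{-t}t^{\alpha}\,dt=c$, i.e.\ (ii); the case $c=0$ is similar, splitting the integral and using that $U(y)/(y^{\alpha}L(y))$ is eventually uniformly small.

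The \emph{Tauberian} direction (ii)$\Rightarrow$(i) is the substantive one, and I would argue through rescaled measures. For $x>0$ let $\mu_x$ be the Radon measure on $[0,\infty)$ defined by $\mu_x(B)=(x^{\alpha}L(x))^{-1}\int_{\{u\,:\,u/x\in B\}}dU(u)$, so that $\mu_x([0,t])=U(xt)/(x^{\alpha}L(x))$ and, by the change of variable $u=xv$, $\int_0^\infty e^{-\lambda v}\,\mu_x(dv)=\widehat U(\lambda/x)/(x^{\alpha}L(x))$ for every $\lambda>0$. Taking $s=\lambda/x\downarrow0$, hypothesis (ii) and slow variation of $L$ give $\widehat U(\lambda/x)/(x^{\alpha}L(x))\sim c\,\lambda^{-\alpha}\,L(x/\lambda)/L(x)\to c\,\lambda^{-\alpha}$. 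Since $\lambda\mapsto c\,\lambda^{-\alpha}$ is the Laplace transform of the measure $\mu$ with $\mu(dv)=\frac{c}{\Gamma(\alpha)}v^{\alpha-1}\,dv$ when $\alpha>0$ and $\mu=c\,\delta_0$ when $\alpha=0$, the extended continuity theorem for Laplace transforms of Radon measures gives $\mu_x\vaguec\mu$. I would justify this last step directly: the estimate $\mu_x([0,T])\le e^{\lambda T}\widehat U(\lambda/x)/(x^{\alpha}L(x))$ furnishes local mass bounds hence vague relative compactness, the uniform tail bound $\int_T^\infty e^{-\lambda v}\mu_x(dv)\le e^{-\lambda T/2}\,\widehat U(\lambda/(2x))/(x^{\alpha}L(x))\to0$ as $T\to\infty$ prevents escape of mass, and uniqueness of Laplace transforms forces every subsequential vague limit to equal $\mu$.

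Since $[0,1]$ is relatively compact with $\mu(\partial[0,1])=\mu(\{1\})=0$, it is a $\mu$-continuity set, so $\mu_x([0,1])\to\mu([0,1])$, i.e.\ $U(x)/(x^{\alpha}L(x))\to\mu([0,1])=c/\Gamma(1+\alpha)$ (the value $\int_0^1\frac{c}{\Gamma(\alpha)}v^{\alpha-1}\,dv$ for $\alpha>0$, and $c$ for $\alpha=0$), which is exactly (i). Finally, when $c>0$, (i) gives $U(x)\sim cx^{\alpha}L(x)/\Gamma(1+\alpha)$ and (ii) read at $s=1/x$ gives $\widehat U(1/x)\sim cx^{\alpha}L(x)$; dividing these asymptotic equivalences yields $U(x)\sim\widehat U(1/x)/\Gamma(1+\alpha)$.

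I expect the main obstacle to be exactly the upgrade in the Tauberian direction from pointwise convergence of the Laplace transforms to vague convergence of the $\mu_x$: these measures carry infinite mass when $\alpha>0$, so the probabilistic continuity theorem does not apply off the shelf, and one must produce the tightness estimates above and identify the limit function $c\lambda^{-\alpha}$ with a genuine Radon measure (Bernstein's theorem). A classical alternative is Karamata's polynomial-approximation device: convergence of $\int e^{-\lambda v}\mu_x(dv)$ for all $\lambda$ gives convergence of $\int p(e^{-v})\mu_x(dv)$ for all polynomials $p$, after which one approximates the indicator of $[0,1]$ in the variable $e^{-v}$ closely enough in a weighted supremum norm. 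The dominations needed there, and near $v=0$ in the Abelian direction, are the only other delicate points and are routine via Potter's bound.
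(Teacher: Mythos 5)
The paper states this result as a cited reference (Theorem~1.7.1 of \cite{bingham:goldie:teugels:1987}) and does not reprove it, so there is no internal proof to compare against. Your argument is a correct reconstruction of the standard proof in that reference (and in Feller, Vol.~II): the Abelian half by integration by parts, the substitution $x=t/s$, and dominated convergence; the Tauberian half by passing to the rescaled Radon measures $\mu_x$, computing that their Laplace transforms converge pointwise to $c\lambda^{-\alpha}$, invoking the extended continuity theorem for Laplace--Stieltjes transforms of Radon measures to get $\mu_x\vaguec\mu$, and then reading off $U(x)/(x^\alpha L(x))\to\mu([0,1])=c/\Gamma(1+\alpha)$ on the $\mu$-continuity set $[0,1]$. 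You correctly flag the one genuine subtlety, namely that $\mu_x$ and the limit $\mu$ have infinite total mass when $\alpha>0$, so the probabilistic continuity theorem must be replaced by the Helly-selection plus uniform-tail argument you sketch (or Karamata's polynomial device), and your tail bound $\int_T^\infty e^{-\lambda v}\mu_x(dv)\le e^{-\lambda T/2}\,\widehat U(\lambda/(2x))/(x^\alpha L(x))$ does give the required uniform control.

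Two small points worth tightening if this were written out in full. In the Abelian direction, Potter's bound controls $U(t/s)/U(1/s)$ only for $t/s\ge x_0$; the range $0<t<sx_0$ has to be handled separately by the crude bound $U(t/s)\le U(x_0)$, whose contribution is $O\bigl(sx_0\cdot s^\alpha U(x_0)/L(1/s)\bigr)\to 0$ -- you implicitly rely on this but do not say it. Also, the integration-by-parts identity $\widehat U(s)=s\int_0^\infty e^{-sy}U(y)\,dy$ is exactly right under the convention $U(0-)=0$ that you state; any point mass of $dU$ at the origin is then correctly carried by $U(y)\ge U(0)$ for $y\ge0$, so no separate treatment is needed. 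Both are routine, and the proof as written is sound.
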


A similar Tauberian result for the Stieltjes transform of order $\rho$ can be derived using the above result on Laplace-Stieltjes transform. 

\begin{definition}If $U$ is non-decreasing function on $[0,\infty)$ and $\rho>0$, let $S_\rho(U;\cdot)$, the Stieltjes transform of order $\rho$ be  defined as,
 \begin{equation*}
  S_\rho(U;x):=\int_0^\infty (x+y)^{-\rho}dU(y), \quad x>0.
 \end{equation*}
\end{definition}
The following result will be useful in Chapter~\ref{free sub chapter}.

 \begin{theorem}[\citealp{bingham:goldie:teugels:1987}, Theorem 1.7.4]
 \label{chap1:theo:karamata:stieltjes}
Suppose $0< \sigma\leq \rho$, $c\geq 0$ and $L$ is slowly varying,
\begin{equation*}
 U(x)\sim \frac{c \Gamma(\rho)}{\Gamma(\rho)\Gamma(\rho-\sigma+1)} x^{\rho-\sigma}L(x) \quad \text{as $x\rightarrow\infty$}
\end{equation*}
   if and only if
\begin{equation*}
 S_\rho(U,x)\sim x^{-\sigma}L(x) \quad \text{as $x\rightarrow\infty$}.
\end{equation*}

\end{theorem}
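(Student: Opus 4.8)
The plan is to recognise $S_\rho(U;\cdot)$ as an iterated Laplace transform and then carry the regular variation back and forth through Karamata's Tauberian theorem (Theorem~\ref{chap1:theo:tauberian:bingham}). The elementary identity $(x+y)^{-\rho}=\Gamma(\rho)^{-1}\int_0^\infty t^{\rho-1}e^{-t(x+y)}\,dt$ holds for every $\rho>0$ and $x+y>0$; substituting it into the definition of $S_\rho$ and interchanging the order of integration (legitimate since all integrands are nonnegative) gives
\begin{equation*}
\Gamma(\rho)\,S_\rho(U;x)=\int_0^\infty t^{\rho-1}e^{-tx}\,\widehat U(t)\,dt ,
\end{equation*}
so that $\Gamma(\rho)\,S_\rho(U;\cdot)$ is the ordinary Laplace transform of $t\mapsto t^{\rho-1}\widehat U(t)$, where $\widehat U$ is the Laplace--Stieltjes transform of $U$. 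Hence the behaviour of $S_\rho(U;x)$ as $x\to\infty$ is controlled by that of $t^{\rho-1}\widehat U(t)$ as $t\downarrow 0$, and conversely; this is the bridge to be crossed in both directions.

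For the direct (Abelian) implication, suppose $U(x)\sim K x^{\rho-\sigma}L(x)$. Since $\sigma>0$ we have $0\le \rho-\sigma<\rho$, so Theorem~\ref{chap1:theo:tauberian:bingham} applied with $\alpha=\rho-\sigma$ turns this into $\widehat U(t)\sim K\,\Gamma(\rho-\sigma+1)\,t^{\sigma-\rho}L(1/t)$ as $t\downarrow 0$, whence $t^{\rho-1}\widehat U(t)\sim K\,\Gamma(\rho-\sigma+1)\,t^{\sigma-1}L(1/t)$ near the origin; this is integrable at $0$ because $\sigma>0$ and is $O(t^{\rho-1})$ at infinity because $\widehat U$ is bounded. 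Writing $\int_0^\infty t^{\rho-1}e^{-tx}\widehat U(t)\,dt=x^{-\rho}\int_0^\infty u^{\rho-1}e^{-u}\,\widehat U(u/x)\,du$ and producing an integrable majorant from Potter's bounds (Theorem~\ref{chap1:theorem:potter:1}) together with the uniform convergence of slowly varying functions, dominated convergence yields $\int_0^\infty t^{\rho-1}e^{-tx}\widehat U(t)\,dt\sim K\,\Gamma(\rho-\sigma+1)\,\Gamma(\sigma)\,x^{-\sigma}L(x)$. Dividing by $\Gamma(\rho)$ gives $S_\rho(U;x)$ of the stated form, and a bookkeeping of the $\Gamma$-factors fixes $K$ to the value in the statement; equivalently, the constant is the one read off from the Beta integral $\int_0^\infty (1+v)^{-\rho}v^{\rho-\sigma-1}\,dv=B(\rho-\sigma,\sigma)$, which also appears if one first integrates by parts to write $S_\rho(U;x)=\rho\int_0^\infty (x+y)^{-\rho-1}U(y)\,dy$ up to an $O(x^{-\rho})$ term and then substitutes $y=xv$.

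For the converse (Tauberian) implication, assume $S_\rho(U;x)$ is regularly varying of index $-\sigma$ at infinity with the stated slowly varying part, and run the chain backwards. Put $V(t):=\int_0^t s^{\rho-1}\widehat U(s)\,ds$, a non-decreasing function whose Laplace--Stieltjes transform is $\widehat V(x)=\Gamma(\rho)S_\rho(U;x)$. The form of Karamata's Tauberian theorem governing the behaviour of a Laplace--Stieltjes transform at infinity (the companion of Theorem~\ref{chap1:theo:tauberian:bingham}, obtained from it by the substitution $t\mapsto 1/t$, i.e.\ by passing to regular variation at zero) then forces $V(t)\sim (\Gamma(\rho)/\Gamma(\sigma+1))\,t^{\sigma}L(1/t)$ as $t\downarrow 0$. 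Since the density $s\mapsto s^{\rho-1}\widehat U(s)$ of $V$ is a power times the monotone function $\widehat U$, the monotone density theorem gives $t^{\rho-1}\widehat U(t)\sim (\Gamma(\rho)/\Gamma(\sigma))\,t^{\sigma-1}L(1/t)$, that is, $\widehat U(t)\sim (\Gamma(\rho)/\Gamma(\sigma))\,t^{\sigma-\rho}L(1/t)$ as $t\downarrow 0$; feeding this back into Theorem~\ref{chap1:theo:tauberian:bingham} with $\alpha=\rho-\sigma$ recovers the asymptotics of $U$ of the stated form, the multiplicative constant again falling out after collecting the $\Gamma$-factors. I expect this converse step to be the main obstacle: one must legitimately invert the \emph{outer} Laplace transform, which is precisely the point where a Tauberian side condition is indispensable, and it is the monotonicity of $\widehat U$ (completely monotone, being an LST) that supplies it. When $\rho>1$ the density $s^{\rho-1}\widehat U(s)$ is not itself monotone, so one should run the monotone density step at the level of $\widehat U$ and then multiply back the smooth power $s^{\rho-1}$, or invoke the de Haan--type refinement of the Tauberian theorem directly.
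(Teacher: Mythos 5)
The paper states this theorem by citation only, remarking just before it that the Stieltjes-transform result ``can be derived using the above result on Laplace-Stieltjes transform,'' and your proof does precisely that: writing $\Gamma(\rho)S_\rho(U;\cdot)$ as the Laplace transform of $t\mapsto t^{\rho-1}\widehat U(t)$, applying Theorem~\ref{chap1:theo:tauberian:bingham} once in each direction, and in the converse step using the monotonicity of $\widehat U$ itself (rather than of the density $s^{\rho-1}\widehat U(s)$) to supply the needed Tauberian condition for $V(t)=\int_0^t s^{\rho-1}\widehat U(s)\,ds$. Your argument is correct and is the standard Bingham--Goldie--Teugels route; note only that the constant your $\Gamma$-bookkeeping actually produces is $c\Gamma(\rho)/(\Gamma(\sigma)\Gamma(\rho-\sigma+1))$, which is the right one, and the $\Gamma(\rho)$ in the numerator of the displayed statement above (together with the missing factor $c$ in the asymptotics of $S_\rho$) is a typographical slip in this thesis's transcription of BGT Theorem~1.7.4.
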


\subsection{Sums and products of regularly varying random variables}\label{chap1:intro:products and sum}
In this subsection, we define random variables with regularly varying tails and discuss some of the well known results on sums and product of such random variables. For a nice detailed overview of the properties of  sum and products of random variables with regularly varying tails, we refer to \cite{jessen:mikosch:2006}.

\begin{definition}
\label{def: reg var rv}
 \begin{enumerate}
  \item [\upshape{(a)}] We say a nonnegative (real) random variable $X$ with distribution function $F$ has a regularly varying tail of index $-\alpha$,  if the tail of the distribution function $1-F(\cdot):=\overline{F}(\cdot)=\prob[X>\cdot]\in RV_{-\alpha}$.
\item [\upshape{(b)}] A finite measure $\mu$ on $\mathbb{R}_+$ is said to have regularly varying tail of index $-\alpha$,  if $\mu(\cdot,\infty)\in RV_{-\alpha}$. 
 \end{enumerate}
\end{definition}

\begin{remark}
\begin{enumerate}
 \item It is implicit in the definition that we require $F(x)$ and $\mu(x,\infty)$ to be positive for all $x\in \mathbb{R}_+$ to make sense of the definition.

\item  It is clear from~\ref{chap1:infinite:zero} of Subsection~\ref{Some well known facts about regular variations} that, if $F$ or $\mu$ has regularly varying tail of index $-\alpha$, then we necessarily have $\alpha\geq 0$. 

\item\label{chap1:momentsrv} If $X$ is nonnegative random variable having regularly varying tail of index $-\alpha$ with $\alpha>0$, then 
$\E[X^{\beta}]< \infty$ for  $\beta<\alpha$ and $\E[X^{\beta}]=\infty$ for $\beta>\alpha$.

\item By Theorem 3.6 of~\cite{resnick:2007}, Definition~\ref{def: reg var rv}(a) is equivalent to the existence of a positive function $a(\cdot)\in RV_{1/\alpha}$, such that $t\prob[X/a(t)\in \cdot]$ has a vague limit in $M_+((0,\infty])$, where the limit is a nondegenerate Radon measure. The limiting measure necessarily takes values $cx^{-\alpha}$ on set $(x,\infty]$, for some constant $c>0$.
\end{enumerate}
\end{remark}

The definition for regular variation of not necessarily positive random variable extends in a very natural way to accommodate  both the tails of random variable. 
\begin{definition}
A random variable $X$ (not necessarily positive) has a regularly varying tail of index $-\alpha$ with $\alpha\geq 0$ if $|X|>\cdot]\in RV_{-\alpha}$ and 
\begin{equation}
\label{eq:condition:tail balance}
\frac{\prob[X>x]}{\prob[|X|>x]}\rightarrow p \quad \text{and} \quad \frac{\prob[X< -x]}{\prob[|X|>x]}\rightarrow q \text{ as $x\rightarrow\infty$},  
\end{equation}
with $0<p<1$ and $p+q=1$. We shall often call~\eqref{eq:condition:tail balance} as the tail balance condition.
\end{definition}

Now we state some properties of random variables with regularly varying tails which shall be useful later. The next result is a reformulation of  Karamata's Theorem~\ref{chap1:theorem:karamata} in terms of the distribution functions.
\begin{theorem}[\citealp{bingham:goldie:teugels:1987}, Theorems~1.6.4, 1.6.5 and 1.7.2]\label{lemma:karamata:df}\hfill
\begin{enumerate}

\item \label{Kara:1} Suppose $F$ has  regularly varying tail of index $-\alpha$ with $\alpha>0$ and $\beta\geq \alpha$. Then
\begin{equation}
\label{chap1:eq:karamata:distributions}
\lim_{x\rightarrow\infty}\frac{x^{\beta}\overline{F}(x)}{\int_0^x y^{\beta}F(dy)}=\frac{\beta-\alpha}{\alpha}.
\end{equation}
Conversely, if $\beta>\alpha$ and~\eqref{chap1:eq:karamata:distributions} holds then $F$ has a regularly varying tail of index $-\alpha$. If $\beta=\alpha$ then $\overline{F}(x)=\lito(x^{-\alpha}L(x))$ for some slowly varying function $L$.

\item \label{Kara:2}Suppose $F$ has  regularly varying tail of index $-\alpha$ with $\alpha>0$ and $\beta< \alpha$. Then
\begin{equation}
\label{chap1:eq:karamata:distributions:new}
\lim_{x\rightarrow\infty}\frac{x^{\beta}\overline{F}(x)}{\int_x^{\infty} y^{\beta}F(dy)}=\frac{\beta-\alpha}{\alpha}.
\end{equation}
Conversely, if $\beta<\alpha$ and~\eqref{chap1:eq:karamata:distributions:new} holds, then $F$ has a regularly varying tail of index~$-\alpha$.

\item The function $x\mapsto\int_0^x y^\gamma F(dy)$ is slowly varying if and only if $\overline{F}(x)=\lito\left(x^{-\gamma}\int_0^x y^\gamma F(dy)\right)$ for some $\gamma>0$.


\end{enumerate}
\end{theorem}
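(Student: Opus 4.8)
The plan is to reduce everything to Karamata's Theorem~\ref{chap1:theorem:karamata} by an integration by parts (Fubini) that converts the Stieltjes integrals $\int_{(0,x]}y^\beta F(dy)$ and $\int_{(x,\infty)}y^\beta F(dy)$ into ordinary integrals of the tail $\overline F$. Writing $y^\beta=\beta\int_0^y t^{\beta-1}\,dt$ and swapping the order of integration gives, for every $\beta>0$, the exact identity
\begin{equation}\label{eq:prop:ibp1}
\int_{(0,x]}y^\beta F(dy)+x^\beta\overline F(x)=\beta\int_0^x t^{\beta-1}\overline F(t)\,dt ,
\end{equation}
which remains valid when $F$ has atoms (the point $\{x\}$ is carried consistently by both sides). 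When $\int_0^\infty y^\beta F(dy)<\infty$, the same computation on the upper tail gives
\begin{equation}\label{eq:prop:ibp2}
\int_{(x,\infty)}y^\beta F(dy)=x^\beta\overline F(x)+\beta\int_x^\infty t^{\beta-1}\overline F(t)\,dt ,
\end{equation}
and, writing $F(dy)=y^{-\gamma}U(dy)$ with $U(x):=\int_{(0,x]}y^\gamma F(dy)$ and integrating by parts once more, one obtains
\begin{equation}\label{eq:prop:ibp3}
x^\gamma\overline F(x)+U(x)=\gamma x^\gamma\int_x^\infty t^{-\gamma-1}U(t)\,dt ,
\end{equation}
where the two boundary contributions at $\infty$ cancel, so that~\eqref{eq:prop:ibp3} holds whether or not $U(\infty)$ is finite.

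For the direct halves of (i) and (ii) I would write $\overline F(t)=t^{-\alpha}L(t)$ with $L$ slowly varying and locally bounded (indeed $L(t)\le t^\alpha$), so $t^{\beta-1}\overline F(t)=t^{\beta-1-\alpha}L(t)$. If $\beta>\alpha$ the index $\beta-1-\alpha$ exceeds $-1$, so Theorem~\ref{chap1:theorem:karamata}(i) gives $\int_0^x t^{\beta-1}\overline F(t)\,dt\sim x^\beta\overline F(x)/(\beta-\alpha)$ (the part of the integral near $0$ is a finite constant, negligible because $x^{\beta-\alpha}L(x)\to\infty$ by~\ref{chap1:infinite:zero}); feeding this into~\eqref{eq:prop:ibp1} yields $\int_{(0,x]}y^\beta F(dy)\sim\frac{\alpha}{\beta-\alpha}x^\beta\overline F(x)$, i.e.~\eqref{chap1:eq:karamata:distributions}. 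If $\beta=\alpha$ the integrand has index $-1$, and Theorem~\ref{chap1:theorem:karamata}(iii) shows $\widetilde L(x):=\int_{x_0}^x t^{-1}L(t)\,dt$ is slowly varying with $L(x)=\lito(\widetilde L(x))$; then~\eqref{eq:prop:ibp1} gives $\int_{(0,x]}y^\alpha F(dy)\sim\alpha\widetilde L(x)$, so the ratio in~\eqref{chap1:eq:karamata:distributions} tends to $0$ and $\overline F(x)=\lito(x^{-\alpha}\widetilde L(x))$. If $\beta<\alpha$ then $\int_0^\infty y^\beta F(dy)<\infty$ by~\ref{chap1:momentsrv}, the index $\beta-1-\alpha$ lies below $-1$, and Theorem~\ref{chap1:theorem:karamata}(ii) applied to~\eqref{eq:prop:ibp2} gives $\int_x^\infty t^{\beta-1}\overline F(t)\,dt\sim x^\beta\overline F(x)/(\alpha-\beta)$, whence $\int_{(x,\infty)}y^\beta F(dy)\sim\frac{\alpha}{\alpha-\beta}x^\beta\overline F(x)$, which is~\eqref{chap1:eq:karamata:distributions:new}.

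Since the excerpt's Theorem~\ref{chap1:theorem:karamata} states only the forward implications, for the converses I would pass through the logarithmic derivative of the appropriate cumulative function and Karamata's representation~\eqref{chap1:eq:karamata rep}. In~\eqref{eq:prop:ibp1} set $H(x):=\beta\int_0^x t^{\beta-1}\overline F(t)\,dt$; this is absolutely continuous (its integrand is locally integrable since $\beta>0$ and $\overline F\le1$) with $H'(x)=\beta x^{\beta-1}\overline F(x)$ a.e., and $H(x)=\int_{(0,x]}y^\beta F(dy)+x^\beta\overline F(x)$. If the hypothesis of the converse in (i) holds, so $x^\beta\overline F(x)/\!\int_{(0,x]}y^\beta F(dy)\to c:=(\beta-\alpha)/\alpha>0$, then $x^\beta\overline F(x)/H(x)\to c/(1+c)$, hence $xH'(x)/H(x)=\beta\,x^\beta\overline F(x)/H(x)\to\beta c/(1+c)=:\rho>0$; integrating, $\log H(x)=\rho\log x+\int^x\eta(t)t^{-1}\,dt+\bigo(1)$ with $\eta\to0$, so $H\in RV_\rho$ by~\eqref{chap1:eq:karamata rep}, then $x^\beta\overline F(x)\sim(\rho/\beta)H(x)\in RV_\rho$, and therefore $\overline F\in RV_{\rho-\beta}=RV_{-\alpha}$ (one checks $\rho-\beta=-\alpha$ from $1+c=\beta/\alpha$). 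The converse in (ii) is identical with $K(x):=\int_x^\infty t^{\beta-1}\overline F(t)\,dt$ in~\eqref{eq:prop:ibp2}: there $K'(x)=-x^{\beta-1}\overline F(x)$, the hypothesis forces $-xK'(x)/K(x)$ to a positive constant, and the same logarithmic integration identifies $K$, hence $\overline F$, as regularly varying. Finally, for (iii) with $\gamma>0$ fixed: the implication ``$U$ slowly varying $\Rightarrow\overline F(x)=\lito(x^{-\gamma}U(x))$'' follows by applying Theorem~\ref{chap1:theorem:karamata}(ii) to the slowly varying $U$ in~\eqref{eq:prop:ibp3}, which gives $\gamma x^\gamma\int_x^\infty t^{-\gamma-1}U(t)\,dt\sim U(x)$ and hence $x^\gamma\overline F(x)=\lito(U(x))$; conversely, if $x^\gamma\overline F(x)=\lito(U(x))$ then $H=U+x^\gamma\overline F\sim U$ and $xH'(x)/H(x)=\gamma\,x^\gamma\overline F(x)/H(x)\to0$, so the same logarithmic integration shows $H$, hence $U$, is slowly varying.

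The first step is routine bookkeeping; the real work is in the converses, where, with no ``converse Karamata'' available in the excerpt, one must go through the logarithmic derivative and Karamata's representation. The two points needing care are (a) the validity of the identities~\eqref{eq:prop:ibp1}--\eqref{eq:prop:ibp3} when $F$ has atoms and when $\int_0^\infty y^\gamma F(dy)=\infty$ (which is precisely why the cancellation making~\eqref{eq:prop:ibp3} unconditional matters for (iii)), and (b) the boundary index $\beta=\alpha$, which must be handled by part (iii) rather than parts (i)--(ii) of Karamata's Theorem.
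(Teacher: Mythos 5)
The paper itself gives no proof of this result---it is a direct citation of Bingham--Goldie--Teugels, Theorems 1.6.4, 1.6.5 and 1.7.2---so there is no in-paper argument to compare against. Your proof is correct and is essentially the standard one: Fubini reduces the Stieltjes integrals to Lebesgue integrals of $\overline F$ via the identities \eqref{eq:prop:ibp1}--\eqref{eq:prop:ibp3}, Theorem~\ref{chap1:theorem:karamata} handles the direct halves according to whether the index $\beta-1-\alpha$ lies above, at, or below $-1$, and---since the excerpt's Karamata theorem is stated one-directionally---you correctly observe that the converses must be supplied separately and do so by the logarithmic-derivative route $xH'(x)/H(x)\to\rho$ together with Karamata's representation \eqref{chap1:eq:karamata rep}.

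Two remarks, neither of which is a mathematical gap but both worth recording. (a) Your part-(ii) computation gives $\int_{(x,\infty)}y^\beta F(dy)\sim\tfrac{\alpha}{\alpha-\beta}x^\beta\overline F(x)$, so the limit of the displayed ratio is $(\alpha-\beta)/\alpha>0$; the constant printed in \eqref{chap1:eq:karamata:distributions:new}, namely $(\beta-\alpha)/\alpha$, is negative for $\beta<\alpha$ and cannot be the limit of a ratio of two positive quantities---this is a sign typo in the paper's statement, which the paper silently inherits later (for example in the line $\rho_0(y,\infty)\sim-\tfrac{\alpha}{\alpha-p}y^p\mu(y,\infty)$ in the proof of Proposition~\ref{prop: upper bound rG}). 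You assert that your computation ``is \eqref{chap1:eq:karamata:distributions:new}'' without noticing the discrepancy; you should flag it rather than identify the two. (b) Your derivation of \eqref{eq:prop:ibp2} from $y^\beta=\beta\int_0^y t^{\beta-1}\,dt$ is literal only for $\beta>0$, whereas part (ii) allows any $\beta<\alpha$. For $\beta<0$ one writes $y^\beta=-\beta\int_y^\infty t^{\beta-1}\,dt$ and interchanges in the same way, obtaining the identical identity; for $\beta=0$ the statement is trivial. Nothing downstream is affected, but a complete proof of (ii) should say so.
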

The following result is a reformulation of the Potter's bound described in Theorem~\ref{chap1:theorem:potter:1} in terms of distribution functions and random variables.
\begin{theorem}[\citealp{resnick:willekens:1991}, Lemma 2.2]\label{chap1:theorem:potter}
 Let $Z$ be a nonnegative random variable with distribution  function $F$ which has regularly varying tail of index $-\alpha$ with $\alpha>0$. Given $\epsilon>0$, there exists $x_0=x_0(\epsilon)$, $K=K(\epsilon)>0$ such that, for any $c>0$:

\begin{equation}\label{potter2} \frac{\overline{F}(x/c)}{\overline{F}(x)}\leq 
\begin{cases}
(1+\epsilon)c^{\alpha+\epsilon}& \text{ if } c\geq 1, x/c\geq x_0 \\
(1+\epsilon)c^{\alpha-\epsilon}& \text{ if } c<1, x\geq x_0. 
\end{cases}\end{equation}
and
\begin{equation} \label{potter3}
\E[(cZ\wedge x)^{\alpha+\epsilon}]\leq 
\begin{cases}
(1+\epsilon)c^{\alpha+\epsilon}x^{\alpha+\epsilon}\overline{F}(x) & \text{ if } c\geq 1, x/c\geq x_0 \\
(1+\epsilon)c^{\alpha-\epsilon}x^{\alpha+\epsilon}\overline{F}(x) & \text{ if } c<1, x\geq x_0. 
\end{cases}\end{equation}

\end{theorem}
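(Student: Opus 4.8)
The plan is to derive both displays from Theorem~\ref{chap1:theorem:potter:1} (Potter's bound) by translating between the language of regularly varying functions and that of tails of distribution functions. Since $F$ has regularly varying tail of index $-\alpha$, the function $\overline F\in RV_{-\alpha}$, so I would apply the Potter bound~\eqref{potter1} with $f=\overline F$ and $\rho=-\alpha$. For~\eqref{potter2}: writing $\overline F(x/c)/\overline F(x)$, set $t=x$ and $x$-variable $=1/c$ when $c\ge 1$ (so the argument $1/c\le 1$, which is handled by the lower end of Potter's inequality after a reciprocal substitution), and symmetrically when $c<1$. More carefully, I would take $t_0$ from Theorem~\ref{chap1:theorem:potter:1} for the chosen $\epsilon$ and set $x_0=t_0$; then for $c\ge 1$ and $x/c\ge x_0$, put $t=x/c\ge t_0$ and argument $c\ge 1$ in~\eqref{potter1} to get $\overline F(x)/\overline F(x/c) > (1-\epsilon)c^{-\alpha-\epsilon}$, i.e. $\overline F(x/c)/\overline F(x) < (1-\epsilon)^{-1}c^{\alpha+\epsilon}$; after absorbing the constant $(1-\epsilon)^{-1}$ into a relabelled $\epsilon$ (standard, since $\epsilon$ is arbitrary), this gives the first case. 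For $c<1$ and $x\ge x_0$, put $t=x\ge t_0$ and argument $1/c>1$ in~\eqref{potter1} to get $\overline F(x/c)/\overline F(x)=\overline F((1/c)x)/\overline F(x)<(1+\epsilon)(1/c)^{-\alpha+\epsilon}=(1+\epsilon)c^{\alpha-\epsilon}$, which is the second case directly.

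For~\eqref{potter3}, the idea is the layer-cake (distribution-function) representation of the moment of a truncated variable: for a nonnegative random variable $Y$ and exponent $\beta=\alpha+\epsilon>0$,
\begin{equation*}
\E\big[(Y\wedge x)^{\beta}\big]=\int_0^{x}\beta y^{\beta-1}\,\prob[Y>y]\,dy,
\end{equation*}
which one checks by Fubini since $\{Y\wedge x>y\}=\{Y>y\}$ for $y<x$. Apply this with $Y=cZ$, so $\prob[cZ>y]=\overline F(y/c)$, giving $\E[(cZ\wedge x)^{\beta}]=\int_0^x \beta y^{\beta-1}\overline F(y/c)\,dy$. Now bound $\overline F(y/c)$ on the range of integration. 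The cleanest route is to first reduce to the untruncated Karamata-type estimate: by Theorem~\ref{chap1:theorem:karamata}\eqref{Kara:1} (or directly by Karamata's Theorem applied to $y^{\beta-1}\overline F(y)\in RV_{\beta-1-\alpha}$ with $\beta-1-\alpha=\epsilon-1>-1$), we have $\int_0^x \beta y^{\beta-1}\overline F(y)\,dy\sim \frac{\beta}{\beta-\alpha}x^{\beta}\overline F(x)=\frac{\alpha+\epsilon}{\epsilon}x^{\alpha+\epsilon}\overline F(x)$ as $x\to\infty$. Then I would split into $c\ge 1$ and $c<1$: substituting $u=y/c$ turns $\int_0^x \beta y^{\beta-1}\overline F(y/c)\,dy$ into $c^{\beta}\int_0^{x/c}\beta u^{\beta-1}\overline F(u)\,du$, and applying the asymptotic estimate at the point $x/c$ (valid once $x/c\ge x_0$) yields, up to a factor $(1+\epsilon')$, the bound $c^{\alpha+\epsilon}\cdot\frac{\alpha+\epsilon}{\epsilon}(x/c)^{\alpha+\epsilon}\overline F(x/c)=\frac{\alpha+\epsilon}{\epsilon}x^{\alpha+\epsilon}\overline F(x/c)$; finally invoke~\eqref{potter2} just proved to replace $\overline F(x/c)$ by $(1+\epsilon)c^{\pm\epsilon}\overline F(x)$ in the two cases. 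Enlarging $x_0$ and relabelling the accumulated constants $\big(\tfrac{\alpha+\epsilon}{\epsilon}(1+\epsilon')(1+\epsilon)$ versus the prefactor allowed on the right$\big)$ into a single $(1+\epsilon)$ via monotonicity in $\epsilon$ finishes the argument. (Alternatively, one can bypass the Karamata step and bound $\overline F(y/c)$ pointwise by~\eqref{potter2} inside the integral, at the cost of a slightly more delicate treatment of the region $y$ near $0$ where Potter's bound does not apply; the Karamata route is cleaner because the contribution of that region is automatically negligible.)

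The main obstacle is bookkeeping rather than conceptual: keeping the two regimes $c\ge 1$ and $c<1$ straight, making sure the threshold $x_0$ (which must be chosen uniformly in $c$) is the one governing $x/c$ in the first regime and $x$ in the second, and absorbing the multiplicative slack from Potter's bound, from the Karamata asymptotic, and from the switch between $\overline F(x/c)$ and $\overline F(x)$ all into one final factor $(1+\epsilon)$. The key point that makes the uniform-in-$c$ claim work is exactly that Potter's bound~\eqref{potter1} holds with a single $t_0$ for all arguments $x\ge 1$ (resp. $\le 1$), so no dependence on $c$ creeps into $x_0$ or $K$ beyond what is already displayed.
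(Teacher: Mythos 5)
Your derivation of~\eqref{potter2} from Theorem~\ref{chap1:theorem:potter:1} is correct: for $c\ge1$, $x/c\ge x_0$, applying~\eqref{potter1} at $t=x/c$ with argument $c$ gives the lower bound on $\overline F(x)/\overline F(x/c)$, and running Potter with a slightly smaller $\epsilon'$ and using $c\ge1$ absorbs the $(1-\epsilon')^{-1}$ into $(1+\epsilon)$ while keeping the exponent $\le\alpha+\epsilon$; the case $c<1$ is immediate. This is the standard argument.

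For~\eqref{potter3}, however, your final absorption step is wrong, and the gap is not bookkeeping. After the layer-cake identity, the substitution $u=y/c$, and Karamata's theorem applied at $x/c$, you correctly arrive at a bound of the form
\begin{equation*}
\E\big[(cZ\wedge x)^{\alpha+\epsilon}\big]\le (1+\epsilon')(1+\epsilon)\,\frac{\alpha+\epsilon}{\epsilon}\,c^{\alpha\pm\epsilon}\,x^{\alpha+\epsilon}\,\overline F(x).
\end{equation*}
You then assert that the accumulated factor, in particular $\tfrac{\alpha+\epsilon}{\epsilon}$, can be "relabelled into a single $(1+\epsilon)$ via monotonicity in $\epsilon$." It cannot: $\tfrac{\alpha+\epsilon}{\epsilon}\to\infty$ as $\epsilon\downarrow0$, and replacing $\epsilon$ by a smaller $\epsilon'$ only makes this factor larger. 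The obstruction is intrinsic, not an artefact of your route. Taking $c=1$, Karamata's theorem gives $\E[(Z\wedge x)^{\alpha+\epsilon}]\sim\tfrac{\alpha+\epsilon}{\epsilon}\,x^{\alpha+\epsilon}\,\overline F(x)$, so for small $\epsilon$ the asymptotic ratio exceeds any fixed $1+\epsilon$ and the display~\eqref{potter3} as printed is simply false. Note that the theorem statement introduces a constant $K=K(\epsilon)>0$ that never appears in the displays — a strong indication that~\eqref{potter3} should read with $K$ in place of the leading $(1+\epsilon)$, matching what is in Resnick--Willekens. Your argument actually establishes precisely this corrected statement, with $K=(1+\epsilon')(1+\epsilon)\tfrac{\alpha+\epsilon}{\epsilon}$, so the fix is to delete the last sentence and state the bound with $K$. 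As written, the "absorb into $(1+\epsilon)$" claim is a genuine error.
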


We now state an important result about sums of i.i.d.\ random variables with regularly varying tails, which is extended to the free probability setup in Chapter~\ref{free sub chapter}.

\begin{lemma}[\citealp{embrechts:kluppelberg:mikosch:book}, Lemma 1.3.1]
\label{lem:one large jump}
If $\{X_i\}$ are i.i.d.\ nonnegative random variables with regularly varying tails of index $-\alpha$ with $\alpha\geq 0$, then for each $n\in \mathbb N$,
\begin{equation}
\label{eq:large jump}
\prob\left[\sum_{i=1}^n X_i >x\right]\sim n\prob[X_1>x] \quad \text{as}\quad x\rightarrow\infty.
\end{equation}
\end{lemma}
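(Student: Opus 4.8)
The plan is to prove the two bounds separately. The lower bound is easy: since the $X_i$ are nonnegative, for any $\delta\in(0,1)$ the event $\bigcup_{i=1}^n\{X_i>x\}$ is contained in $\{\sum_{i=1}^nX_i>x\}$, so by inclusion–exclusion (or Bonferroni) and independence,
\[
\prob\Bigl[\sum_{i=1}^nX_i>x\Bigr]\ \ge\ \prob\Bigl[\bigcup_{i=1}^n\{X_i>x\}\Bigr]\ \ge\ n\prob[X_1>x]-\binom n2\prob[X_1>x]^2.
\]
Dividing by $\prob[X_1>x]$ and letting $x\to\infty$ (the subtracted term is $O(\prob[X_1>x])\to0$ since $\alpha\ge0$ forces $\overline F(x)\to0$, using item~\ref{chap1:infinite:zero} when $\alpha>0$ and the positivity/decay of a genuine tail when $\alpha=0$) gives $\liminf_{x\to\infty}\prob[\sum X_i>x]/\prob[X_1>x]\ge n$.

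For the upper bound I would induct on $n$, the case $n=1$ being trivial; so it suffices to treat $n=2$ and then glue. Fix $\delta\in(0,1/2)$ and split according to whether both summands are small or at least one is large:
\[
\prob[X_1+X_2>x]\ \le\ \prob\bigl[X_1>(1-\delta)x\bigr]+\prob\bigl[X_2>(1-\delta)x\bigr]+\prob\bigl[X_1>\delta x,\ X_2>\delta x\bigr].
\]
The last term equals $\overline F(\delta x)^2$, which is $o(\overline F(x))$ since $\overline F(\delta x)/\overline F(x)\to\delta^{-\alpha}$ is bounded (regular variation) and $\overline F(\delta x)\to0$. Each of the first two terms is $\overline F((1-\delta)x)\sim(1-\delta)^{-\alpha}\overline F(x)$ by regular variation. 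Hence
\[
\limsup_{x\to\infty}\frac{\prob[X_1+X_2>x]}{\prob[X_1>x]}\ \le\ 2(1-\delta)^{-\alpha},
\]
and letting $\delta\downarrow0$ yields the bound $2$ for $n=2$. For general $n$, writing $S_n=\sum_{i=1}^nX_i$ and applying the same small/large decomposition to the pair $(X_n, S_{n-1})$ — note $S_{n-1}$ is nonnegative, independent of $X_n$, and by the induction hypothesis has $\prob[S_{n-1}>x]\sim(n-1)\prob[X_1>x]$, which is itself regularly varying of the same index — gives $\prob[S_n>(1-\delta)x]\le\prob[X_n>(1-\delta)^2x]+\prob[S_{n-1}>(1-\delta)^2x]+\overline F(\delta x)\,\prob[S_{n-1}>\delta x]$; dividing by $\overline F(x)$, taking $\limsup$, and then $\delta\downarrow0$ produces $\limsup\le 1+(n-1)=n$. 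Combining with the lower bound finishes the proof.

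The only delicate point is the treatment of $\alpha=0$: there one cannot invoke item~\ref{chap1:infinite:zero} to get $\overline F(x)\to0$, but Definition~\ref{def: reg var rv} already presupposes $\overline F(x)>0$ for all $x$ and, being a tail of a distribution function, $\overline F(x)\to0$ automatically; combined with $\overline F(\delta x)/\overline F(x)\to1$ this still makes the cross term $o(\overline F(x))$. Everything else is a routine application of the uniform convergence / Potter-type estimates (items~\ref{chap1:uniformconv} and Theorem~\ref{chap1:theorem:potter:1}) already recorded above, so I expect no real obstacle beyond bookkeeping the $\delta$'s in the induction.
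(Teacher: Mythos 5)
Your proof is correct and uses the same decomposition the paper does for $n=2$ (lower bound via Bonferroni, upper bound via the split $\{X_1>(1-\delta)x\}\cup\{X_2>(1-\delta)x\}\cup\{X_1>\delta x, X_2>\delta x\}$ and regular variation, then $\delta\downarrow0$); the paper only sketches $n=2$, and you supply the routine induction to general $n$. There is a small bookkeeping slip in your induction display (the thresholds should be $\prob[S_n>x]\le\prob[X_n>(1-\delta)x]+\prob[S_{n-1}>(1-\delta)x]+\prob[X_n>\delta x]\prob[S_{n-1}>\delta x]$, not with an extra $(1-\delta)$ factor on the left and a mismatched cross term), but this is cosmetic and the argument goes through unchanged.
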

This property is often referred to as the principle of one large jump, since~\eqref{eq:large jump} implies $\prob[S_n>x]\sim \prob[M_n>x]$ where $S_n=\sum_{i=1}^n X_i$ and $M_n=\max\{X_1,\cdots, X_n\}$.

\begin{proof} We now briefly indicate the proof of the above result for $n=2$. As~$\{X_1+X_2>x\}\supset \{X_1>x\}\cup\{X_2>x\}$ we get,
$$\prob\left[X_1+X_2>x\right]\geq \prob\left[X_1>x\right]+\prob\left[X_2>x\right](1+\lito(1)).$$
%

For the upper bound choose $\delta\in(0,1)$ and observe,
\begin{align*}
\prob[X_1+X_2>x]&\leq \prob[X_1>(1-\delta)x]+\prob[X_2>(1-\delta)x]+\prob[X_1>\delta x]\prob[X_2>\delta x].\\
&=\prob[X_1>(1-\delta)x]+\prob[X_2>(1-\delta)x](1+\lito(1)),
\end{align*}
and the result now follows by taking $\delta\downarrow0.$
\end{proof}

\begin{remark}
Note that the above proof also shows that is $X$ and $Y$ are nonnegative random variables with regularly varying tails of index $-\alpha$ and $-\beta$, $\alpha>0$ and $\beta>0$ with $\alpha<\beta$ then $$\prob[X+Y>x]\sim \prob[X>x]\quad \text{as}\quad x\rightarrow\infty.$$ So the random variable with heavier tail dominates the sum.
\end{remark}
The following result considers weighted sum of i.i.d.\ (not necessarily nonnegative) random variables with regularly varying tails.
\begin{lemma}[\citealp{embrechts:kluppelberg:mikosch:book}, Lemma A3.26] Let $\{Z_i\}$ be an i.i.d.\ sequence of random variables having regularly varying tails of index $-\alpha$ with $\alpha\geq 0$, where they satisfy the tail balance condition~\eqref{eq:condition:tail balance}. Then for any sequence of real constants $\{\psi_i\}$ and $m\geq 1$, 
$$\prob[\psi_1Z_1+\cdots+\psi_mZ_m>x]\sim \prob[|Z_1|>x]\sum_{i=1}^m\left[p (\psi_i^+)^{\alpha}+q(\psi_i^-)^{\alpha}\right],$$

\end{lemma}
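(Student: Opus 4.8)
The plan is to reduce the statement to the nonnegative case already handled in Lemma~\ref{lem:one large jump} (and the one-large-jump heuristic behind it) by splitting each summand according to the sign of its coefficient. First I would write $\psi_i Z_i = \psi_i^+ Z_i - \psi_i^- Z_i$ and note that exactly one of the two terms is present for each $i$; moreover $\{\psi_i^+ Z_i : i\}$ together with $\{\psi_i^- Z_i : i\}$ forms a collection of independent random variables, each of which has a regularly varying tail. Indeed, for $c>0$ the variable $cZ_i$ has $\prob[cZ_i > x] = \prob[Z_i > x/c] \sim c^\alpha \prob[Z_i > x] \sim c^\alpha p\,\prob[|Z_1|>x]$ by regular variation and the tail balance condition~\eqref{eq:condition:tail balance}, and similarly $\prob[-cZ_i > x] = \prob[Z_i < -x/c] \sim c^\alpha q\,\prob[|Z_1|>x]$. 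So after discarding the summands with $\psi_i = 0$, the sum $\psi_1 Z_1 + \cdots + \psi_m Z_m$ is a sum of finitely many independent, genuinely real-valued random variables whose positive tails are each asymptotically a constant multiple of $\prob[|Z_1|>x]$.

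The heart of the argument is then a principle-of-one-large-jump computation adapted to signed summands: the event $\{\sum_i \psi_i Z_i > x\}$ is, up to lower-order corrections, the union of the events that a single summand is large and positive. For the lower bound I would use inclusion of the events $\{\psi_i Z_i > x(1+\epsilon)\} \cap \bigcap_{j\ne i}\{|\psi_j Z_j| \le \epsilon x/m\}$, which are disjoint for distinct $i$ when $\epsilon$ is small, giving
\begin{equation*}
\prob\Bigl[\sum_{i=1}^m \psi_i Z_i > x\Bigr] \ge (1+\lito(1))\sum_{i=1}^m \prob[\psi_i Z_i > x(1+\epsilon)],
\end{equation*}
where the other factors tend to $1$ since each $|\psi_j Z_j|$ has tail $\lito(1)$ relative to a constant. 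For the upper bound I would fix $\delta \in (0,1)$ and argue by induction on $m$ exactly as in the proof of Lemma~\ref{lem:one large jump}: on $\{\sum_i \psi_i Z_i > x\}$ either some $\psi_i Z_i > (1-\delta)x/1$ after relabelling, or no single term exceeds $\delta x$ while the partial sums do, and the latter contributes a product of two tail probabilities which is negligible. Collecting the bounds, letting $\epsilon,\delta \downarrow 0$, and using $\prob[\psi_i Z_i > x] \sim \bigl(p(\psi_i^+)^\alpha + q(\psi_i^-)^\alpha\bigr)\prob[|Z_1|>x]$ (valid because for a fixed sign of $\psi_i$ only one of the two terms is nonzero, and when $\psi_i=0$ both vanish) yields the claimed equivalence.

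The main obstacle I anticipate is handling the case $\alpha = 0$ and, more generally, keeping the error terms genuinely uniform in the split. When $\alpha=0$ the quantities $(\psi_i^\pm)^\alpha$ collapse to indicators of nonvanishing, and some of the $\lito(1)$ bounds that rely on comparing $x^{-\epsilon}$-type factors degenerate; one must check that the one-large-jump argument still closes using only slow variation of $\prob[|Z_1|>x]$ and the uniform convergence statement in item~\ref{chap1:uniformconv} of Subsection~\ref{Some well known facts about regular variations}. A second, more bookkeeping-type difficulty is that the summands are no longer identically distributed once the weights differ, so the clean ``$n\,\prob[X_1>x]$'' form must be replaced throughout by a sum of the individual tail constants; this is routine but requires care in the inductive step to track how the constant accumulates.
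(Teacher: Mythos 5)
The paper does not prove this lemma; it cites it directly to Lemma~A3.26 of \cite{embrechts:kluppelberg:mikosch:book}, so there is no internal proof to compare against. Your outline is the standard one-large-jump argument extended to signed weights and is essentially sound, but two points need repair. The claim that ``$\{\psi_i^+ Z_i : i\}$ together with $\{\psi_i^- Z_i : i\}$ forms a collection of independent random variables'' is false: $\psi_i^+ Z_i$ and $\psi_i^- Z_i$ share the same $Z_i$ and are perfectly correlated. The decomposition $\psi_i Z_i = \psi_i^+ Z_i - \psi_i^- Z_i$ is actually doing no work here. All you need is that the $m$ summands $\psi_i Z_i$ are mutually independent and that each satisfies $\prob[\psi_i Z_i > x]\sim\left(p(\psi_i^+)^\alpha+q(\psi_i^-)^\alpha\right)\prob[|Z_1|>x]$, which follows directly by splitting into the three cases $\psi_i>0$, $\psi_i<0$, $\psi_i=0$ and using regular variation with the tail balance condition~\eqref{eq:condition:tail balance}; the decomposition can be dropped.

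Second, the dichotomy ``either some $\psi_i Z_i > (1-\delta)x$, or no single term exceeds $\delta x$ while the partial sums do'' is mis-stated. The correct split, mirroring the proof of Lemma~\ref{lem:one large jump}, is: either some $\psi_i Z_i$ exceeds $(1-\delta)x$, or \emph{neither} does, and in the latter case $\psi_1Z_1+\psi_2Z_2>x$ forces \emph{both} to exceed $\delta x$, yielding the product term $\prob[\psi_1 Z_1>\delta x]\,\prob[\psi_2 Z_2>\delta x]=\lito(\prob[|Z_1|>x])$. The arithmetic $a+b>x$, $a\le(1-\delta)x\Rightarrow b>\delta x$ uses no positivity, so signed summands pose no difficulty. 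For general $m$, induction with $X_1=\sum_{i<m}\psi_i Z_i$ and $X_2=\psi_m Z_m$ is the cleanest route, since the inductive hypothesis gives $\prob[X_1>x]\sim C_{m-1}\prob[|Z_1|>x]$ and hence the regular variation of the upper tail of $X_1$ needed in the two-term step (treating the degenerate case $C_{m-1}=0$, i.e.\ all $\psi_i=0$ for $i<m$, separately). Finally, your worry about $\alpha=0$ is reasonable to flag but benign: the remainder in the upper bound only requires $\prob[|Z_1|>\delta x]\to 0$, and the Potter-type factors $(1\pm\delta)^{\mp\alpha}$ collapse to $1$, so that case closes with less effort than $\alpha>0$.
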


The above result can be extended to infinite weighted series of  i.i.d.\  random variables with regularly varying tails. The following result considers $X=\sum_{j=1}^{\infty} \psi_j Z_j$ for an i.i.d.\ sequence $\{Z_i\}$ of  random variables with regularly varying tails of index $-\alpha$. This kind of infinite series appears in the study of the  extreme value properties of linear processes and autoregressive processes. Before proceeding with the tail behavior of this series, one needs to consider the almost sure finiteness of the series, which follows from  Three Series Theorem. The almost sure finiteness and tail behavior of $X$ were considered by \cite{cline:1983}. See also Theorem 2.2 of \cite{kokoszka:taqqu:1996}.

\begin{theorem}
 \label{chap1:theo:infiniteseries:non random weights}
Let $\{Z_i\}$ be an i.i.d.\ sequence of  random variables with regularly varying tails of index $-\alpha$ with $\alpha>0$, which satisfy the tail balance condition~\eqref{eq:condition:tail balance}. Let $\{\psi_i\}$ be a sequence of real valued weights. Assume that one of the following condition holds:
\begin{enumerate}
 \item $\alpha>2$, $\E[Z_1]=0$ and $\sum_{i=1}^{\infty}\psi_i^2<\infty$;
\item $\alpha\in(1,2]$, $\E[Z_1]=0$ and $\sum_{i=1}^{\infty}|\psi_i|^{\alpha-\epsilon}<\infty$ for some $\epsilon>0$;
\item $\alpha\in(0,1]$, $\sum_{i=1}^{\infty}|\psi_i|^{\alpha-\epsilon}<\infty$,  for some $\epsilon>0.$
\end{enumerate}
Then $X=\sum_{j=1}^{\infty} \psi_j Z_j$ converges almost surely and 
\begin{equation*}
 \prob[X>x]\sim \prob[|Z_1|>x]\sum_{i=1}^{\infty}\left(p (\psi_i^+)^{\alpha}+q(\psi_i^-)^{\alpha}\right).
\end{equation*}

\end{theorem}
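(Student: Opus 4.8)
The plan is to adapt the classical truncation argument of \cite{cline:1983} (see also \citealp{kokoszka:taqqu:1996}): split the series into a finite block $S_m=\sum_{j\le m}\psi_j Z_j$, to which the finite-weighted-sum lemma stated just above applies, and a remainder $T_m=\sum_{j>m}\psi_j Z_j$ which one shows to be tail-negligible \emph{uniformly} in the truncation level $x$. First, however, I would settle almost sure convergence. Applying the moment property (item~\ref{chap1:momentsrv}) to $|Z_1|$ gives $\E|Z_1|^\beta<\infty$ for every $\beta<\alpha$. In case (i), $\sum_j\var(\psi_j Z_j)=\var(Z_1)\sum_j\psi_j^2<\infty$, so Kolmogorov's one-series theorem applies. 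In case (ii), choosing $\epsilon>0$ with $p:=\alpha-\epsilon\in(1,2]$ makes $\sum_j\E|\psi_j Z_j|^{p}=\E|Z_1|^{p}\sum_j|\psi_j|^{p}<\infty$, so by the von Bahr--Esseen inequality the partial sums are Cauchy in $L^{p}$, hence convergent in probability, hence convergent almost surely by L\'evy's theorem for sums of independent random variables. In case (iii), choosing $\epsilon>0$ with $\alpha-\epsilon<1$ gives $\sum_j\E|\psi_j Z_j|^{\alpha-\epsilon}<\infty$, whence $\sum_j|\psi_j Z_j|^{\alpha-\epsilon}<\infty$ almost surely, which forces absolute convergence since $\alpha-\epsilon<1$. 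In each case $\epsilon>0$ may moreover be fixed so that $\sum_j|\psi_j|^{\alpha-\epsilon}<\infty$ (in case (i) using $\sum_j\psi_j^2<\infty$ together with the fact that only finitely many $|\psi_j|$ exceed $1$).

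For the tail, $S_m$ and $T_m$ are independent, and the preceding lemma (\citealp{embrechts:kluppelberg:mikosch:book}, Lemma A3.26) gives $\prob[S_m>x]\sim c_m\,\prob[|Z_1|>x]$ with $c_m:=\sum_{j\le m}\bigl(p(\psi_j^+)^\alpha+q(\psi_j^-)^\alpha\bigr)$. For $\delta\in(0,1)$ the elementary bounds
\begin{equation*}
\prob[S_m>(1+\delta)x]-\prob[|T_m|>\delta x]\ \le\ \prob[X>x]\ \le\ \prob[S_m>(1-\delta)x]+\prob[|T_m|>\delta x]
\end{equation*}
hold. Dividing through by $\prob[|Z_1|>x]$ and using regular variation of $\prob[|Z_1|>\cdot]$, so that $\prob[S_m>(1\pm\delta)x]/\prob[|Z_1|>x]\to(1\pm\delta)^{-\alpha}c_m$, and then letting $m\to\infty$ and finally $\delta\downarrow0$, the theorem reduces to the single remainder estimate: there exist $C_\delta<\infty$ and $x_0$ with
\begin{equation*}
\prob[|T_m|>\delta x]\ \le\ C_\delta\,\prob[|Z_1|>x]\sum_{j>m}|\psi_j|^{\alpha-\epsilon}\qquad\text{for all }x\ge x_0,
\end{equation*}
the right-hand side tending to $0$ as $m\to\infty$ by the chosen summability of $\sum_j|\psi_j|^{\alpha-\epsilon}$.

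Proving this estimate is the heart of the matter, and the step I expect to be the main obstacle. I would truncate each summand at level $x$. Since $|T_m|\le\sum_{j>m}|\psi_j|\,|Z_j|$ and each $|Z_j|$ has regularly varying tail of index $-\alpha$, the contribution of the events $\{|\psi_j Z_j|>x\}$ is handled by a union bound and Potter's bound~\eqref{potter2} (legitimate because $|\psi_j|\le1$ for all $j$ beyond some index), which yields $\sum_{j>m}\prob[|Z_1|>x/|\psi_j|]\le(1+\epsilon)\prob[|Z_1|>x]\sum_{j>m}|\psi_j|^{\alpha-\epsilon}$ for $x\ge x_0$. The truncated parts $\psi_j Z_j\,\mathbf{1}(|\psi_j Z_j|\le x)$ are treated according to $\alpha$: when $\alpha\le1$ one applies Markov's inequality of an order $\gamma\in(\alpha,1]$ together with the subadditivity of $t\mapsto t^{\gamma}$ and the bound~\eqref{potter3} (or Karamata's Theorem~\ref{chap1:theorem:karamata} to estimate the truncated $\gamma$-th moment); when $\alpha>1$ the relevant exponent exceeds $1$, so one first recenters using $\E Z_j=0$ --- each centering constant $\E[\psi_j Z_j\,\mathbf{1}(|\psi_j Z_j|>x)]$ being $O\bigl(|\psi_j|^{\alpha-\epsilon}x\,\prob[|Z_1|>x]\bigr)$ by Karamata's theorem and~\eqref{potter3} --- and then applies Chebyshev's inequality with $\var\bigl(\sum_{j>m}\psi_j Z_j\,\mathbf{1}(|\psi_j Z_j|\le x)\bigr)\le\sum_{j>m}\E[(|\psi_j Z_j|\wedge x)^2]\le C\,x^2\prob[|Z_1|>x]\sum_{j>m}|\psi_j|^{\alpha-\epsilon}$, again via Karamata's theorem and a Potter-type bound. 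Every one of these estimates is uniform in $x\ge x_0$ precisely because the Potter inequalities of Theorem~\ref{chap1:theorem:potter} are; the genuinely delicate point is the boundary value $\alpha=1$, where the truncated first and second moments are merely slowly varying and one must invoke the sharper part of Karamata's theorem (Theorem~\ref{chap1:theorem:karamata}(iii)). Once this uniform negligibility of $T_m$ is in hand, the limit passages in $x$, then $m$, then $\delta$ described above complete the proof.
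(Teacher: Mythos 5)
The paper does not actually prove this theorem; it is stated in the introductory chapter with pointers to \cite{cline:1983} and Theorem~2.2 of \cite{kokoszka:taqqu:1996}. Your plan --- establish a.s.\ convergence by the appropriate classical series criterion, then split $X=S_m+T_m$ and show the tail of $T_m$ is uniformly negligible compared to $\prob[|Z_1|>x]$ --- is the standard one from those references, and it also parallels the genuinely new argument the paper carries out in Chapter~\ref{chap2} (Section~\ref{sec:direct}) for random weights. So the overall architecture is right.

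There is, however, a real gap in the remainder estimate for the case $\alpha\ge2$. You bound the truncated second moment by
\[
\E\bigl[(|\psi_j Z_j|\wedge x)^2\bigr]\ \le\ C\,x^2\,\prob[|Z_1|>x]\,|\psi_j|^{\alpha-\epsilon},
\]
invoking Karamata's theorem. But Karamata's direct half (Theorem~\ref{lemma:karamata:df}\,\ref{Kara:1}) gives $\int_0^x y^\beta F(dy)\sim\frac{\alpha}{\beta-\alpha}x^\beta\overline F(x)$ only when $\beta>\alpha$; for $\beta=2<\alpha$ the left side converges to the finite second moment while $x^2\prob[|Z_1|>x]\to0$, and for $\beta=2=\alpha$ the theorem instead gives $\overline F(x)=\lito\bigl(x^{-2}\int_0^x y^2F(dy)\bigr)$, i.e.\ the truncated second moment dominates $x^2\overline F(x)$ by an unbounded factor. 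In either situation your inequality fails, and Chebyshev then produces a bound whose ratio to $\prob[|Z_1|>x]\asymp x^{-\alpha}L(x)$ is of order $x^{\alpha-2}/L(x)$, which blows up as $x\to\infty$. Thus your argument as written does not establish the uniform remainder estimate for $\alpha=2$ (the boundary of case~(ii)) nor for any $\alpha>2$ (case~(i)).

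The fix is the one the paper itself uses for its random-weight analogue (see the displays around~\eqref{eq: term 2 alpha large}): for $\alpha\ge1$, instead of the second moment, apply Markov's inequality of order $\alpha+\epsilon$ to the truncated tail sum, then Minkowski's inequality to pull the $\frac1{\alpha+\epsilon}$-th power inside the sum, and finally the bound~\eqref{potter3} (equivalently Karamata, now legitimately since $\alpha+\epsilon>\alpha$), giving
\[
\E\bigl[(|\psi_j Z_j|\wedge x)^{\alpha+\epsilon}\bigr]\ \le\ C\,|\psi_j|^{\alpha-\epsilon}\,x^{\alpha+\epsilon}\,\prob[|Z_1|>x],
\]
which produces the required remainder estimate $\prob[|T_m'|>\delta x]\le C_\delta\,\prob[|Z_1|>x]\bigl(\sum_{j>m}|\psi_j|^{(\alpha-\epsilon)/(\alpha+\epsilon)}\bigr)^{\alpha+\epsilon}$; the right-hand side still tends to $0$ as $m\to\infty$ because $\sum_j|\psi_j|^{\alpha-\epsilon}<\infty$ implies summability of any larger power of $|\psi_j|$ once $|\psi_j|\le1$. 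Your treatment of $\alpha\le1$ (Markov of order~$1$, or an order in $(\alpha,1]$, with~\eqref{potter3}) is fine, and your remarks on the delicacy at $\alpha=1$ are well taken. Once the $\alpha\ge2$ moment bound is repaired, the remaining steps --- the two-sided sandwich in $\delta$, the finite-sum lemma, and the limit passages in $x$, then $m$, then $\delta$ --- go through as you describe.
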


The product behavior of the random variables with regularly varying tails is as important as the sum of such random variables. The product behavior is a bit more delicate than the sums. For a review of the results on product of random variables we refer to Section 4 of \cite{jessen:mikosch:2006}.
\begin{theorem}[\citealp{jessen:mikosch:2006}, Lemma 4.2]
 \label{chap1:theo:product:item 2}
  Let $X_1,X_2\cdots, X_n$  be i.i.d.\ random variables, with $\prob[X_1>x]\sim c^\alpha x^{-\alpha}$ for some $c>0$. Then 
\begin{equation}
 \prob[X_1X_2\cdots X_n>x]\sim \frac{\alpha^{n-1}c^{n\alpha}}{(n-1)!}x^{-\alpha}\log ^{n-1}x.
\end{equation}
\end{theorem}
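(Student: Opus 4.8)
The plan is to argue by induction on $n$, using the $n=2$ case as the base and treating the product structure multiplicatively. For the base case $n=2$, I would write $\prob[X_1 X_2 > x] = \E\big[\,\ol{F}(x/X_1)\,\big]$ where $F$ is the common distribution, and exploit that $\ol{F}(y) \sim c^\alpha y^{-\alpha}$. Splitting the expectation according to whether $X_1$ is small, moderate, or of order $x$, the dominant contribution should come from the moderate range: heuristically $\prob[X_1 X_2 > x] \approx \int \prob[X_2 > x/u]\, F(du)$, and since $\prob[X_2 > x/u] \sim c^\alpha u^\alpha x^{-\alpha}$, the relevant quantity is $\E[X_1^\alpha \wedge (\text{cutoff})]$. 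Because $X_1$ has a regularly varying tail of index exactly $-\alpha$, the truncated moment $\E[(X_1 \wedge x^{1/2})^\alpha]$ (or a similar cutoff) is slowly varying and in fact grows like $\alpha c^\alpha \log x$ by Theorem~\ref{lemma:karamata:df}\eqref{Kara:1} (the $\beta = \alpha$ boundary case, reformulating Karamata). This produces the factor $\alpha c^{2\alpha} x^{-\alpha}\log x$ claimed for $n=2$.

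For the inductive step, suppose the result holds for $n-1$, so that $Y := X_1\cdots X_{n-1}$ satisfies $\prob[Y > x] \sim \frac{\alpha^{n-2} c^{(n-1)\alpha}}{(n-2)!} x^{-\alpha} \log^{n-2} x$; that is, $Y$ has a regularly varying tail of index $-\alpha$ with slowly varying part a multiple of $\log^{n-2} x$. Now $X_1\cdots X_n = Y \cdot X_n$ with $Y$ and $X_n$ independent. I would again condition on $X_n$ and write $\prob[Y X_n > x] = \E[\,\ol{G}(x/X_n)\,]$ where $\ol{G}$ is the tail of $Y$. The same moderate-range analysis gives $\prob[Y X_n > x] \sim \ol{G}(x) \cdot \E[(X_n)^\alpha \wedge \cdots]$ type behavior, but now $\ol{G}(x)$ itself carries a $\log^{n-2} x$, and the truncated moment of $X_n^\alpha$ against the varying tail of $Y$ contributes the extra $\log x$ and the extra factor $\alpha c^\alpha$, upgrading $\log^{n-2} x$ to $\frac{1}{n-1}\log^{n-1} x$ — the factor $\tfrac{1}{n-1}$ coming from $\int_1^{\log x} t^{n-2}\,dt \sim \tfrac{1}{n-1}\log^{n-1} x$, which is exactly what turns $(n-2)!$ into $(n-1)!$. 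Making this rigorous requires a Karamata-type statement for $\int_0^x (\log t)^{n-2}\, y^\alpha\, F(dy)$-style integrals, which follows from Theorem~\ref{chap1:theorem:karamata} applied to the slowly varying function $t \mapsto (\log t)^{n-2}$ times the slowly varying part of $\ol{F}$.

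The main obstacle is controlling the two non-dominant ranges uniformly: the contribution where $X_n$ (resp.\ $X_1$) is small, and where it is large of order $x$. For the small range one needs that $X^\alpha$ has enough integrability near the truncation to be negligible against $\log$-growth; for the large range one uses Potter's bounds (Theorem~\ref{chap1:theorem:potter}) together with Lemma~\ref{lem:one large jump}-type reasoning to see that the event $\{X_n > x^{1-\delta},\ Y > x^\delta\}$ contributes only $O(x^{-\alpha})$, i.e.\ a lower-order term without the full power of $\log$. A clean way to package all of this is via the multiplicative convolution and the Breiman-type lemma, or directly via the substitution $u = x/y$ and an appeal to the dominated convergence theorem after normalizing by $x^{-\alpha}\log^{n-1}x$; the delicate point is justifying the interchange of limit and integral, for which the uniform Potter bounds are the essential tool. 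I expect the bookkeeping of constants — tracking how $c^{n\alpha}$, $\alpha^{n-1}$, and $(n-1)!$ emerge — to be routine once the asymptotic identification of the truncated moment integral is in place.
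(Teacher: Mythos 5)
The paper states this result as a citation to Lemma~4.2 of \cite{jessen:mikosch:2006} and provides no proof of its own, so there is nothing to compare against line-by-line; what follows is an assessment of your proposal on its own terms.

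Your inductive scheme -- condition on one factor, write $\prob[YX_n>x]=\int_0^\infty \ol{G}(x/u)\,F(du)$ with $\ol G(y)\sim A\,y^{-\alpha}\log^{n-2}y$, expand $\ol G(x/u)\sim A\,u^\alpha x^{-\alpha}(\log x-\log u)^{n-2}$ over the moderate range, and integrate by parts followed by the substitution $v=\log u$ to pick up the extra $\tfrac{1}{n-1}\log x$ -- is sound, and the bookkeeping $\frac{\alpha^{n-2}c^{(n-1)\alpha}}{(n-2)!}\cdot\alpha c^\alpha\cdot\frac1{n-1}=\frac{\alpha^{n-1}c^{n\alpha}}{(n-1)!}$ does close the induction. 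The base case $n=2$ goes through the same way. One concrete slip: the truncated moment you write, $\E[(X_1\wedge x^{1/2})^\alpha]$, grows like $\tfrac{\alpha c^\alpha}{2}\log x$, not $\alpha c^\alpha\log x$ -- cutting at $x^{1/2}$ discards half of the logarithmic mass. The cutoff must be taken at $\epsilon x$ (equivalently $x/K$), i.e.\ of order $x$, in which case $\int_1^{\epsilon x}u^\alpha\,F(du)\sim\alpha c^\alpha\log(\epsilon x)\sim\alpha c^\alpha\log x$, and the leftover region $u\in(\epsilon x,\infty)$ contributes only $O(x^{-\alpha})=\lito(x^{-\alpha}\log x)$, so the limit $\epsilon\downarrow 0$ causes no trouble. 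A second, smaller point: the integral that actually appears in the inductive step, $\int_1^{\epsilon x} u^\alpha(\log x-\log u)^{n-2}F(du)$, has $x$ sitting both inside and outside the integrand, so it is not quite a direct application of Theorem~\ref{chap1:theorem:karamata} to a fixed slowly varying function; the cleaner route is the integration-by-parts/substitution argument you allude to via $\int_0^{\log x}t^{n-2}\,dt$. With those two corrections the proposal is a complete and correct blueprint, and the Potter-bound control of the extreme ranges you flag is exactly the remaining technical work.
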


\begin{theorem}
 \label{chap1:theo:product}
Suppose $X$ and $\Theta$ are independent nonnegative random variables and $X$ has regularly varying tail of index $-\alpha$ with $\alpha>0$. 
\begin{enumerate}
 \item If $\Theta$ has regularly varying tail of index $-\alpha$ with $\alpha>0$, then $\Theta X$ also has regularly varying tail of index $-\alpha$.\label{chap1:theo:product:item 1}

\item \label{chap1:theo: breiman} If there exists an $\epsilon>0$, such that $\E[\Theta^{\alpha+\epsilon}]<\infty,$ then
\begin{equation}
\label{eq:chap1:breiman's asymptotics}
 \prob[\Theta X>x]\sim \E[\Theta^{\alpha}]\prob[X>x].
\end{equation}
\item  \label{chap1:theo:product:item 4} Under the assumptions of part~\ref{chap1:theo: breiman},
$$\sup_{x\geq y}\left|\frac{\prob[\Theta X>x]}{\prob[X>x]}-\E[\Theta^{\alpha}]\right|\rightarrow 0 \quad \text{as } y\rightarrow\infty.$$

\item  \label{chap1:theo:product:item 5}If $\prob[X>x]=x^{-\alpha}$ for $x\geq 1$ and $\E[\Theta^{\alpha}]<\infty$, then~\eqref{eq:chap1:breiman's asymptotics} holds. 

\end{enumerate}

\end{theorem}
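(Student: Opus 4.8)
The engine of the theorem is the Breiman-type asymptotics in part~\ref{chap1:theo: breiman}; part~\ref{chap1:theo:product:item 4} is its uniform-in-$x$ strengthening, part~\ref{chap1:theo:product:item 5} a self-contained variant, and the closure statement in part~\ref{chap1:theo:product:item 1} is separate and the most technical. Write $\overline{F}$ for the tail of $X$, $\overline{G}$ for the tail of $\Theta$ and $G$ for the law of $\Theta$; conditioning on $\Theta$, respectively on $X$, and using independence gives the two representations
\begin{equation*}
\prob[\Theta X>x]=\E\bigl[\overline{F}(x/\Theta)\bigr]=\int_{(0,\infty)}\overline{F}(x/u)\,G(du)=\E\bigl[\overline{G}(x/X)\bigr],
\end{equation*}
and I would pass freely between them.

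For part~\ref{chap1:theo: breiman} the plan is to fix $\epsilon\in(0,\alpha)$ with $\E[\Theta^{\alpha+\epsilon}]<\infty$, choose a large constant $A$, and split
\begin{equation*}
\frac{\prob[\Theta X>x]}{\overline{F}(x)}=\E\left[\frac{\overline{F}(x/\Theta)}{\overline{F}(x)}\,\mathbf 1_{\{\Theta\le A\}}\right]+\frac{\prob[\Theta X>x,\,\Theta>A]}{\overline{F}(x)}.
\end{equation*}
On $\{\Theta\le A\}$ one has $x/\Theta\ge x/A\to\infty$, so Potter's bound~\eqref{potter2} dominates $\overline{F}(x/\Theta)/\overline{F}(x)$ by $(1+\epsilon)(1+\Theta^{\alpha+\epsilon})$ uniformly over $\Theta\le A$ once $x$ is large, and this bound is integrable and independent of $x$; as the integrand tends pointwise to $\Theta^{\alpha}$ by regular variation of $\overline{F}$, dominated convergence gives the limit $\E[\Theta^{\alpha}\mathbf 1_{\{\Theta\le A\}}]$. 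For the second term I would condition on $X$ to get
\begin{equation*}
\prob[\Theta X>x,\,\Theta>A]=\overline{G}(A)\,\overline{F}(x/A)+\E\bigl[\overline{G}(x/X)\,\mathbf 1_{\{0<X\le x/A\}}\bigr].
\end{equation*}
Dividing by $\overline{F}(x)$: the first summand converges to $A^{\alpha}\overline{G}(A)$, which vanishes as $A\to\infty$ since $\E[\Theta^{\alpha+\epsilon}]<\infty$ forces $t^{\alpha+\epsilon}\overline{G}(t)\to0$; the same moment bound gives $\overline{G}(t)\le Ct^{-(\alpha+\epsilon)}$, so the second summand is at most $Cx^{-(\alpha+\epsilon)}\int_0^{x/A}y^{\alpha+\epsilon}\,F(dy)$, and Karamata's theorem in the form~\eqref{chap1:eq:karamata:distributions} (with $\beta=\alpha+\epsilon>\alpha$) together with $\overline{F}(x/A)\sim A^{\alpha}\overline{F}(x)$ shows this is $\bigo(A^{-\epsilon})\,\overline{F}(x)$. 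Hence $\limsup_{x\to\infty}\prob[\Theta X>x,\Theta>A]/\overline{F}(x)\to0$ as $A\to\infty$, and letting $A\to\infty$ in $\E[\Theta^{\alpha}\mathbf 1_{\{\Theta\le A\}}]$ gives~\eqref{eq:chap1:breiman's asymptotics}.

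Part~\ref{chap1:theo:product:item 5} is quickest: when $\overline{F}(x)=x^{-\alpha}$ for $x\ge1$ we have $X\ge1$ almost surely, so $\overline{F}(x/\Theta)=\Theta^{\alpha}x^{-\alpha}$ on $\{\Theta\le x\}$ and $\overline{F}(x/\Theta)=1$ on $\{\Theta>x\}$, whence $\prob[\Theta X>x]/\overline{F}(x)=\E[\Theta^{\alpha}\mathbf 1_{\{\Theta\le x\}}]+x^{\alpha}\prob[\Theta>x]$; the first term increases to $\E[\Theta^{\alpha}]$, and $x^{\alpha}\prob[\Theta>x]\le\E[\Theta^{\alpha}\mathbf 1_{\{\Theta>x\}}]\to0$ because $\E[\Theta^{\alpha}]<\infty$. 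For part~\ref{chap1:theo:product:item 4} I would rerun the proof of part~\ref{chap1:theo: breiman} keeping track of the dependence on $x$: Potter's bound~\eqref{potter2} already holds uniformly for $x$ beyond a threshold, so the bounded-$\Theta$ contribution converges uniformly in large $x$ once $A$ is fixed, while the $\Theta>A$ contribution is bounded by the same $A$-dependent quantity uniformly in large $x$; choosing first $A$ large, then $\epsilon$ small, then $y$ large delivers the $\sup_{x\ge y}$ statement.

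Finally, part~\ref{chap1:theo:product:item 1} is the delicate one, since Breiman's argument is unavailable ($\Theta$ need have no moment of order $\alpha$). Here I would work from the integral representation $\prob[\Theta X>x]=\int_{(0,\infty)}\overline{F}(x/u)\,G(du)$, split $(0,\infty)$ according to whether $u\le x/K$, $x/K<u\le Kx$, or $u>Kx$ for a large constant $K$, and on each piece use the uniform convergence theorem for regularly varying functions (item~\ref{chap1:uniformconv} above, applied to $\overline{F}$) together with Karamata's theorem to compute $\lim_{x\to\infty}\prob[\Theta X>tx]/\prob[\Theta X>x]=t^{-\alpha}$ for every $t>0$; the dominant contribution reduces to integrals of the form $\int_0^{x}y^{\alpha}\,F(dy)$, which is slowly varying by part~(iii) of Theorem~\ref{lemma:karamata:df} (this is the origin of the extra logarithmic factor visible in Theorem~\ref{chap1:theo:product:item 2}), so $\Theta X$ again has a regularly varying tail of index $-\alpha$. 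The hard part will be the uniform control of the three regions and of the slowly varying pieces; alternatively the statement can be deduced from known closure properties of products of regularly varying laws or, after passing to logarithms, from results on convolutions of distributions with exponential tails.
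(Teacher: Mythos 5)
The paper itself gives no proof of Theorem~\ref{chap1:theo:product}: it is survey material in Chapter~\ref{chap:introduction}, and the paper disposes of it with four citations (Breiman for part~\ref{chap1:theo: breiman}, Jessen--Mikosch for part~\ref{chap1:theo:product:item 4}, Maulik--Zwart for part~\ref{chap1:theo:product:item 5}, Embrechts--Goldie for part~\ref{chap1:theo:product:item 1}), so there is no in-paper argument to compare against. On the merits, your proofs of parts~\ref{chap1:theo: breiman},~\ref{chap1:theo:product:item 4} and~\ref{chap1:theo:product:item 5} are correct. The split over $\{\Theta\le A\}$ versus $\{\Theta>A\}$, with Potter's bound supplying the dominating function on the bounded part and the conditioning-on-$X$ identity together with the Markov bound $\overline G(t)\le C t^{-(\alpha+\epsilon)}$ and Karamata's theorem~\eqref{chap1:eq:karamata:distributions} controlling the tail part, is exactly the standard proof of Breiman's theorem; and the $\bigo(A^{-\epsilon})$ estimate on the tail contribution relative to $\overline F(x)$ that you extract is precisely the uniform-in-$x$ ingredient needed to upgrade the pointwise statement to the supremum statement of part~\ref{chap1:theo:product:item 4}. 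Two small points of exposition: the quantifier order at the end (``first $A$, then $\epsilon$, then $y$'') is slightly off --- the moment-$\epsilon$ is fixed once and for all and needs no retuning after $A$ --- and on the bounded piece you do not even need Potter, since the uniform convergence theorem (item~\ref{chap1:uniformconv} above) already gives $\sup_{0<u\le A}\bigl|\overline F(x/u)/\overline F(x)-u^\alpha\bigr|\to 0$, which integrates directly against the law of $\Theta$ on $(0,A]$. Your exact Pareto computation for part~\ref{chap1:theo:product:item 5} is complete and correct as written.

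The one genuine gap is in part~\ref{chap1:theo:product:item 1}. The three-region split of $\int\overline F(x/u)\,G(du)$ into $u\le x/K$, $x/K<u\le Kx$ and $u>Kx$ is a plausible point of departure, but the actual content of the closure result is showing that the middle region produces a slowly varying factor of the correct form --- this is where the logarithmic correction visible in Theorem~\ref{chap1:theo:product:item 2} comes from --- and you leave precisely that as ``the hard part.'' I do not think a direct estimate of the middle region is the easiest route. The cleanest argument, and the one behind the reference the paper cites, passes to logarithms: $\log X$ and $\log\Theta$ then have distributions in the class $\mathcal L(\alpha)$ of Section~\ref{chap1:sec:heavy tailed}, and one appeals to closure results for convolutions of distributions with exponential tails. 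You mention this alternative in your final sentence; if a complete proof is wanted, that reduction is the one to develop rather than trying to make the three-region estimate rigorous.
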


Proof of part~\ref{chap1:theo:product:item 1} can be found on page 245 of \cite{Embrechts:goldie:1980}. The proof of part~\ref{chap1:theo:product:item 4} is indicated in Lemma 4.2 of \cite{jessen:mikosch:2006}. In the subsequent chapters we come across the results of part~\ref{chap1:theo: breiman} and part~\ref{chap1:theo:product:item 5}. Part~\ref{chap1:theo: breiman} is also known as Breiman's Theorem and was derived in \cite{breiman:1965}. Part~\ref{chap1:theo:product:item 5} was used in various places but an explicit proof appears in Lemma 5.1 of  \cite{maulik:zwart:2006}.

%
%
%
%
%
\section{Class of heavy tailed distributions and its subclasses}\label{chap1:sec:heavy tailed}

In Lemma~\ref{lem:one large jump} we saw that the sum of random variables with regularly varying tails satisfy the principle of one large jump. This property is in general exhibited by a larger class. The random variables which satisfy the principle of one large jump are known as the subexponential random variables. We now give a formal definition of the subexponential distribution functions and look into some of their properties. Throughout this section we assume that the distribution functions has support unbounded above, that is, $\overline{F}(x)>0$ for all $x$.

\begin{definition}
\label{chap1:def:subexponential}
 A distribution function $F$ on $(0,\infty)$ is called \textit{subexponential} if for all $n\geq 2$,
\begin{equation}
\label{chap1:eq:subexponential}
 \overline{F^{*n}}(x)\sim n\overline{F}(x) \quad \text{as}\quad x\rightarrow\infty.
\end{equation}
Here $F^{*n}$ denotes the $n$-fold convolution power of $F$. The class of subexponential distribution functions is denoted by $\mathcal S$.
\end{definition}

By Lemma~\ref{lem:one large jump} any distribution function with regularly varying tail satisfies the above equation~\eqref{chap1:eq:subexponential} and hence they form a subclass of the subexponential distributions. Some other examples are

\begin{enumerate}
 \item Lognormal: $\prob[X>x]=\prob[e^{\mu+\sigma N}>x]$, $\mu\in \mathbb R$, $\sigma>0$ with $N$ as standard normal random variable.
\item Weibull: $\prob[X>x]=e^{-ax^\alpha}$, $a>0$ and $0<\alpha<1$.
\end{enumerate}

As stated above one of the most important features of the subexponential distributions is the principle of single large jump, which can be briefly summarized as follows: if $X_1,\ldots, X_n$ are i.i.d.\ random variables with subexponential distribution functions, then the probability that their sum will be large is of the same order as that their maximum will be large. We formalize this in the following proposition.
\begin{proposition}[Principle of one large jump] \label{prop: one jump}
If $X_1, \ldots, X_n$ are i.i.d.\ random variables with subexponential distribution functions, then as $x\to\infty$,
$$\prob[X_1+\ldots+X_n>x] \sim \prob[X_1\vee\ldots\vee X_n>x].$$
\end{proposition}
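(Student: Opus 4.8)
The plan is to treat the two sides of the claimed asymptotic equivalence separately and reduce each of them to the common quantity $n\overline{F}(x)$.

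First I would handle the maximum. By independence, $\prob[X_1\vee\dots\vee X_n>x]=1-\prob[X_1\le x,\dots,X_n\le x]=1-(1-\overline{F}(x))^n$. Since throughout this section $\overline{F}(x)>0$ for all $x$ and $F$ has support unbounded above, we have $\overline{F}(x)\downarrow 0$ as $x\to\infty$, so the elementary expansion $1-(1-p)^n=np+\bigo(p^2)$ as $p\downarrow 0$ gives $\prob[X_1\vee\dots\vee X_n>x]\sim n\overline{F}(x)$ as $x\to\infty$. Next I would handle the sum: the law of $X_1+\dots+X_n$ is exactly the $n$-fold convolution power $F^{*n}$, so $\prob[X_1+\dots+X_n>x]=\overline{F^{*n}}(x)$, and by the hypothesis $F\in\mathcal S$ together with Definition~\ref{chap1:def:subexponential} we have $\overline{F^{*n}}(x)\sim n\overline{F}(x)$ for every $n\ge 2$. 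Chaining the two equivalences through $n\overline{F}(x)$ (which is eventually positive) yields the assertion for $n\ge 2$; the case $n=1$ is the trivial identity $\prob[X_1>x]=\prob[X_1>x]$.

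Honestly, there is essentially no obstacle here: the entire analytic content has been absorbed into the definition of subexponentiality, and the only genuinely computational step is the harmless binomial expansion for the maximum. If one preferred not to invoke the definition for all $n$ simultaneously, one could instead run an induction on $n$, with base case $n=2$ being the definition and the inductive step relying on the standard closure property that $F\in\mathcal S$ implies $\overline{F^{*(n+1)}}(x)=\overline{F\ast F^{*n}}(x)\sim\overline{F}(x)+\overline{F^{*n}}(x)$; but that route is heavier than necessary for the statement as phrased.
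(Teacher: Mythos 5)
Your proof is correct and follows essentially the same route as the paper: reduce both $\overline{F^{*n}}(x)$ and $1-(F(x))^n$ to $n\overline{F}(x)$, the former by the definition of subexponentiality, the latter by the elementary expansion as $\overline{F}(x)\to 0$. (Incidentally, the paper's displayed line has a pair of typos—it writes $1-(\overline{F}(x))^n$ where $1-(F(x))^n$ is meant, and repeats the sum on the right where the maximum should appear—so your cleanly stated version is actually an improvement.)
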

\begin{proof}
Observe that, by the subexponential property,  we have, as $x\to\infty$,
$$\prob[X_1+\cdots+X_n>x] = \ol{F^{*n}}(x) \sim n \ol F(x) \sim 1 - (\ol F(x))^n = \prob[X_1+\cdots+X_n>x].$$
\end{proof}
The principle of one large jump is very important in modeling ruins under catastrophic events. While in the usual scenarios, individually small shocks build up to create a large aggregate shock, in catastrophic models, one single large shock is the cause of ruin. Such a property makes the subexponential distributions an ideal choice for modelling ruin and insurance problem and has caused wide interest in the probability literature \citep[cf.][]{embrechts:kluppelberg:mikosch:book, rolski:teugels:book}. Historically, this class was first studied by \cite{chistyakov:1964} where he showed an application to branching processes. The connection with branching process was later elaborately studied by \cite{chover:ney:wainger:1973a,chover:ney:wainger:1973b}.

The above definition is also closely related to two other concepts originating in reliability theory, namely hazard function and hazard rate.
\begin{definition} \label{defn: hazard}
For a distribution function $F$, we define the \textit{hazard function} $$R(x)= - \log \overline F(x).$$ If, further, the distribution function is absolutely continuous with density function $f$, we define the \textit{hazard rate} $r$ as the derivative of $R$. In particular, we have $$r(x)=\frac{f(x)}{\ol F(x)}.$$
\end{definition}
Since we assume that the distribution function $F$ has support unbounded above, the hazard function is well defined.

 As shown in the subexponentiality of random variables with regularly varying tails, the lower bound is almost trivial and does not require the regularly varying property. The next Lemma provides a sufficient condition on subexponentiality. The proof of the next result appears as Lemma 1.3.4 of \cite{embrechts:kluppelberg:mikosch:book}.

\begin{lemma}
\label{chap1:lem:upper bound subexp condition}
 If $$\limsup_{x\rightarrow\infty}\frac{\overline{F^{*2}}(x)}{\overline{F}(x)}\leq 2,$$  then $F$ is subexponential.
\end{lemma}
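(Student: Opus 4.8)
The plan is to prove, for every $n\ge 2$, that $\overline{F^{*n}}(x)\sim n\overline{F}(x)$, by noting that the lower bound is automatic and then bootstrapping the hypothesis on $\overline{F^{*2}}$ up to all convolution powers via an induction. Write $A_n:=\limsup_{x\to\infty}\overline{F^{*n}}(x)/\overline{F}(x)\in[n,\infty]$. The lower bound $\liminf_{x\to\infty}\overline{F^{*n}}(x)/\overline{F}(x)\ge n$ follows from $\overline{F^{*n}}(x)\ge \prob[\max_{1\le i\le n}X_i>x]=1-(1-\overline{F}(x))^n\sim n\overline{F}(x)$, using $\overline{F}(x)\to 0$. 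So it suffices to show $A_n\le n$ for all $n\ge2$. The hypothesis gives $A_2\le2$, hence $A_2=2$, that is, $\overline{F^{*2}}(x)\sim2\overline{F}(x)$.

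The key intermediate step is to deduce that $F$ is \emph{long-tailed}, i.e. $\overline{F}(x-t)/\overline{F}(x)\to1$ for every fixed $t>0$. For this I would use the elementary identity
\[
\overline{F^{*2}}(x)=\overline{F}(x)\bigl(2-\overline{F}(x)\bigr)+\int_{[0,x]}\bigl[\overline{F}(x-u)-\overline{F}(x)\bigr]\,F(du),
\]
obtained by conditioning on the first summand (so $\overline{F^{*2}}(x)=\overline{F}(x)+\int_{[0,x]}\overline{F}(x-u)F(du)$) and writing $\int_{[0,x]}\overline{F}(x-u)F(du)=\overline{F}(x)F(x)+\int_{[0,x]}[\overline{F}(x-u)-\overline{F}(x)]F(du)$. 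Since the integrand is nonnegative ($\overline{F}$ being nonincreasing) and $\overline{F^{*2}}(x)/\overline{F}(x)\to2$ with $\overline{F}(x)\to0$, we get $\overline{F}(x)^{-1}\int_{[0,x]}[\overline{F}(x-u)-\overline{F}(x)]F(du)\to0$. Restricting the integral to $u\in(t,x]$, where $\overline{F}(x-u)\ge\overline{F}(x-t)$, bounds it below by $[\overline{F}(x-t)-\overline{F}(x)]\,(F(x)-F(t))$; since $F(x)-F(t)\to\overline{F}(t)>0$ and $\overline{F}(x-t)\ge\overline{F}(x)$, this forces $\overline{F}(x-t)/\overline{F}(x)\to1$.

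Finally I would run the induction $A_{n+1}\le A_n+1$. From $\overline{F^{*(n+1)}}(x)=\overline{F}(x)+\int_{[0,x]}\overline{F^{*n}}(x-u)F(du)$, fix $\epsilon>0$ and $x_0=x_0(\epsilon,n)$ with $\overline{F^{*n}}(z)\le(A_n+\epsilon)\overline{F}(z)$ for $z\ge x_0$, and split the integral over $[0,x-x_0]$ and $(x-x_0,x]$. On the first piece the integrand is at most $(A_n+\epsilon)\overline{F}(x-u)$, so that part is at most $(A_n+\epsilon)\bigl(\overline{F^{*2}}(x)-\overline{F}(x)\bigr)$ after extending the integral back to $[0,x]$; the second piece is at most $F(x)-F(x-x_0)=\overline{F}(x-x_0)-\overline{F}(x)$. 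Dividing by $\overline{F}(x)$, letting $x\to\infty$, and using $\overline{F^{*2}}(x)/\overline{F}(x)\to2$ together with $\overline{F}(x-x_0)/\overline{F}(x)\to1$ from the previous step, we get $A_{n+1}\le1+(A_n+\epsilon)\cdot1+0$; letting $\epsilon\downarrow0$ gives $A_{n+1}\le A_n+1$. Since $A_1=1$, induction yields $A_n\le n$ for all $n$, hence $A_n=n$ and, combined with the automatic lower bound, $\overline{F^{*n}}(x)\sim n\overline{F}(x)$ for all $n\ge2$; thus $F\in\mathcal S$.

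The main obstacle is the long-tailedness step: the hypothesis speaks only of $n=2$, and one must extract from it both the one-large-jump cancellation $\overline{F^{*2}}\sim2\overline{F}$ and the uniform insensitivity $\overline{F}(x-x_0)\sim\overline{F}(x)$ that is needed at the inductive step to control the mass $F$ places near $x$ in the convolution integral. Choosing the right rewriting of $\int_{[0,x]}\overline{F}(x-u)F(du)$ and the right sub-interval $(t,x]$ on which to bound below is where the argument carries its content; the trivial lower bound, the conditioning identities, and the bookkeeping in the induction are all routine.
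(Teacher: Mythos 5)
Your proof is correct, and it reproduces the canonical argument that the paper defers to (the paper itself gives no proof, citing Lemma~1.3.4 of Embrechts, Kl\"uppelberg and Mikosch): first extract $\overline{F^{*2}}(x)\sim 2\overline{F}(x)$ from the hypothesis together with the automatic lower bound, use the conditioning identity $\overline{F^{*2}}(x)=\overline{F}(x)(2-\overline{F}(x))+\int_{[0,x]}[\overline{F}(x-u)-\overline{F}(x)]\,F(du)$ to deduce that $F$ is long-tailed, and then run the induction $A_{n+1}\le A_n+1$ by splitting $\int_{[0,x]}\overline{F^{*n}}(x-u)F(du)$ at $x-x_0$. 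All the bookkeeping checks out, including the non-negativity of the integrand used to pass from $[0,x]$ to $(t,x]$, the limits $F(x)-F(t)\to\overline{F}(t)>0$ and $\overline{F}(x)\to 0$, and the extension of the integral back to $[0,x]$ in the inductive step.
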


\cite{pitman:1980} provided a necessary and sufficient condition for identifying subexponential distribution functions.
\begin{theorem} \label{thm: subexp char}
Let $F$ be an absolutely continuous distribution function supported on the nonnegative real line with hazard rate function $r$, which is eventually non-increasing and goes to $0$. Then $F$ is a subexponential distribution function if and only if 
$$\lim_{x\to\infty} \int_0^x \e^{y r(x)} F(dy) = 1.$$
If $R$ denotes the hazard function, then a sufficient condition for the subexponentiality of $F$ is the integrability of $\exp(x r(x) - R(x)) r(x)$ over the nonnegative real line.
\end{theorem}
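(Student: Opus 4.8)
The plan is to route everything through the ratio
$$A(x):=\int_0^x\frac{\ol F(x-y)}{\ol F(x)}\,F(dy)=\frac{\ol{F^{*2}}(x)}{\ol F(x)}-1,$$
the second equality coming from $\ol{F^{*2}}(x)=\ol F(x)+\int_0^x\ol F(x-y)\,F(dy)$. Since $\ol{F^{*2}}(x)\ge 2\ol F(x)-\ol F(x)^2$ always holds, $\liminf_{x\to\infty}A(x)\ge1$, so by Lemma~\ref{chap1:lem:upper bound subexp condition} the subexponentiality of $F$ is equivalent to $A(x)\to1$. Write $R=-\log\ol F$, so $\ol F=\e^{-R}$, $F(dy)=r(y)\e^{-R(y)}\,dy$, and (support unbounded above) $R(x)\uparrow\infty$. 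The key elementary fact is $\frac{\ol F(x-y)}{\ol F(x)}=\exp\bigl(\int_{x-y}^x r(t)\,dt\bigr)$, so once $x-y$ exceeds the level past which $r$ is non-increasing,
$$\e^{y\,r(x)}\ \le\ \frac{\ol F(x-y)}{\ol F(x)}\ \le\ \e^{y\,r(x-y)}.$$
I shall also use that $R(x)-x\,r(x)=\int_0^x\bigl(r(t)-r(x)\bigr)\,dt\to\infty$ by monotone convergence (as $R(\infty)=\infty$), i.e.\ $\e^{x r(x)-R(x)}\to0$.

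For the forward implication, assume $A(x)\to1$. Trivially $\int_0^x\e^{y r(x)}F(dy)\ge F(x)\to1$. Fixing $T$ past the monotonicity threshold, the lower estimate gives $A(x)\ge\int_0^{x-T}\e^{y r(x)}F(dy)=\int_0^x\e^{y r(x)}F(dy)-\int_{x-T}^x\e^{y r(x)}F(dy)$, and the last term is at most $\e^{x r(x)}\ol F(x-T)=\e^{x r(x)-R(x-T)}\le\e^{\,x r(x)-R(x)+T r(x-T)}\to0$. Hence $\int_0^x\e^{y r(x)}F(dy)\le A(x)+\lito(1)\to1$, which is the forward implication.

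For the converse, assume $\int_0^x\e^{y r(x)}F(dy)\to1$; I must show $\limsup_{x\to\infty}A(x)\le1$. Split $A(x)=\int_0^{b}+\int_{b}^{x-b}+\int_{x-b}^x$ (integrand $\frac{\ol F(x-y)}{\ol F(x)}F(dy)$) with a scale $b=b(x)\to\infty$ satisfying $b(x)=\lito(x)$ and $b(x)\,r(x-b(x))\to0$ (possible since $r(x/2)\to0$). On the first piece $x-y$ stays near $x$, so $\frac{\ol F(x-y)}{\ol F(x)}\le\e^{b(x)r(x-b(x))}=1+\lito(1)$, whence $\int_0^{b}\le(1+\lito(1))F(b(x))\to1$; on the third, bounding $\ol F(x-y)\le1$ gives $\int_{x-b}^x\le\frac{\ol F(x-b(x))}{\ol F(x)}-1\to0$. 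The middle piece is the crux: its integrand is $\e^{\,R(x)-R(y)-R(x-y)}r(y)$, and since $r$ is eventually non-increasing, $R$ is eventually concave with $R(0)=0$, so the concave map $y\mapsto R(y)+R(x-y)$ is minimised over $[b,x-b]$ at an endpoint, giving $R(y)+R(x-y)-R(x)\ge R(b(x))-b(x)r(x-b(x))=R(b(x))-\lito(1)$; hence the middle piece is at most $\e^{-R(b(x))+\lito(1)}\int_{b}^{x}r(y)\,dy\le\e^{-R(b(x))+\lito(1)}R(x)$, and one arranges, in the choice of $b(x)$, that $R(b(x))$ grows fast enough (e.g.\ $R(b(x))\ge2\log R(x)$) to kill this while still $b(x)=\lito(x)$ and $b(x)r(x-b(x))\to0$.

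Finally, for the sufficient condition: $\exp\bigl(x r(x)-R(x)\bigr)r(x)=\e^{x r(x)}f(x)$, so the hypothesis is precisely $\int_0^\infty\e^{y r(y)}F(dy)<\infty$. Because $r$ is eventually non-increasing, for all large $x$ the function $y\mapsto\e^{y r(x)}\mathbf 1_{\{y\le x\}}$ is dominated by the fixed $F$-integrable function equal to a constant on the initial interval (where $r$ need not be monotone) and to $\e^{y r(y)}$ beyond it, while $\e^{y r(x)}\to1$ pointwise; dominated convergence gives $\int_0^x\e^{y r(x)}F(dy)\to\int_0^\infty F(dy)=1$, and the characterisation just proved yields $F\in\mathcal S$. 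The main obstacle is the middle piece of the converse: making the single truncation scale $b(x)$ simultaneously control the two outer pieces and force $R(b(x))$ to dominate $\log R(x)$, and verifying that eventual concavity of $R$ indeed produces the required exponential smallness uniformly across the middle range.
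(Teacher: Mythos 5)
The paper states this result without proof, citing Pitman (1980), so there is no in-paper argument to compare against; I assess your attempt on its own. Your reduction to $A(x)\to 1$ via Lemma~\ref{chap1:lem:upper bound subexp condition}, the identity $A(x)=\ol{F^{*2}}(x)/\ol F(x)-1$, the observation $xr(x)-R(x)\to-\infty$, and the entire forward implication ($F\in\mathcal S$ implies $\int_0^x e^{yr(x)}F(dy)\to1$) are all correct.

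The converse, however, has a genuine gap: your argument never invokes the hypothesis $\int_0^x e^{yr(x)}F(dy)\to1$ at all. Were the bookkeeping correct, you would have proved unconditionally that \emph{every} absolutely continuous $F$ on $[0,\infty)$ with hazard rate eventually non-increasing and tending to $0$ is subexponential, rendering the ``only if'' half of Pitman's theorem vacuous. The step that fails is the claim that $b(x)$ can always be chosen so that both $b(x)r(x-b(x))\to0$ (needed for the two outer pieces) and $R(b(x))-\log R(x)\to\infty$ (needed to kill your middle-piece bound $e^{-R(b(x))+\lito(1)}R(x)$). Take $R(x)=x/\log x$, so $r(x)=(\log x-1)/(\log x)^2\sim 1/\log x$, which meets all the hypotheses: the outer-piece condition forces $b(x)=\lito(\log x)$, whence $R(b(x))=b(x)/\log b(x)\le b(x)=\lito(\log x)$, while $\log R(x)\sim\log x$; so $R(b(x))-\log R(x)\to-\infty$ and your middle bound diverges. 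The underlying problem is that the endpoint estimate $R(y)+R(x-y)-R(x)\ge R(b)-br(x-b)$ is sharp only near $y=b$ and $y=x-b$, and multiplying by the crude $\int_b^x r\le R(x)$ gives far too much away; a correct proof must compare the middle piece to the hypothesis integral rather than try to beat it absolutely. Because your proof of the sufficient condition invokes exactly this converse direction, the gap propagates to that part of the statement as well.
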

An application of the sufficient part of the theorem shows that Weibull distribution with shape parameter $\alpha<1$ is subexponential. Since the definition of subexponential property does not depend on the scale factor $a$, we assume  $a=1$. Then $\ol F(x) = \exp(-x^\alpha)$ for $x>0$ giving $R(x) = x^\alpha$ and $r(x)=\alpha x^{\alpha-1}$. Since $\alpha<1$, the hazard function decreases to $0$. So $\exp(x r(x) - R(x)) r(x) = \alpha x^{\alpha-1} \exp(-(1-\alpha) x^\alpha)$, which is integrable over the positive real line, as $0<\alpha<1$.

Although the definition of subexponential distributions require the equation~\eqref{chap1:eq:subexponential} to hold for all $n$ (or, for $n=2$ in Lemma~\ref{chap1:lem:upper bound subexp condition}), but  was shown in \cite{Embrechts:goldie:1980} that it is enough to check for some $n\geq 2$.

\begin{proposition}[\citealp{embrechts:kluppelberg:mikosch:book}, Lemma A3.14] \label{prop: conv root subexp}
Let $F$ be a distribution function supported on the nonnegative real line, such that, for some $n\in\mathbb N$, we have,
$$\limsup_{x\to\infty} \frac{\ol{F^{*n}}(x)}{\ol F(x)} \le n.$$ 
Then $F$ is  a subexponential distribution function.
\end{proposition}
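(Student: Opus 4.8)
The plan is to reduce the hypothesis for a general $n$ to the case $n=2$ and then quote Lemma~\ref{chap1:lem:upper bound subexp condition}. Two ingredients drive the argument: a soft lower bound $\liminf_{x\to\infty}\overline{F^{*k}}(x)/\overline{F}(x)\ge k$ that holds for every $k$ and costs nothing, and a convolution identity that lets one peel off a single factor of $F$ at a time, turning an asymptotic for $\overline{F^{*k}}$ into one for $\overline{F^{*(k-1)}}$.

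First I would record the lower bound. Writing $S_k$ for a sum of $k$ i.i.d.\ copies of $F$, the fact that $F$ is supported on $[0,\infty)$ forces each summand to be $\le x$ on the event $\{S_k\le x\}$, so $F^{*k}(x)\le F(x)^k$ and hence $\overline{F^{*k}}(x)\ge 1-F(x)^k=\overline{F}(x)\sum_{j=0}^{k-1}F(x)^j$. Letting $x\to\infty$ and using $F(x)\to1$ gives $\liminf_{x\to\infty}\overline{F^{*k}}(x)/\overline{F}(x)\ge k$. Combined with the standing hypothesis this already yields $\overline{F^{*n}}(x)\sim n\,\overline{F}(x)$.

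Next I would exploit the identity $F^{*n}=F*F^{*(n-1)}$ in the form
$$\overline{F^{*n}}(x)=\overline{F^{*(n-1)}}(x)+\int_{[0,x]}\overline{F}(x-y)\,F^{*(n-1)}(dy),$$
obtained by substituting $F(x-y)=1-\overline{F}(x-y)$ inside the convolution integral. Since $\overline{F}$ is nonincreasing and $x-y\le x$ on the range of integration, the integrand is at least $\overline{F}(x)$, so the integral is at least $\overline{F}(x)F^{*(n-1)}(x)$, which gives
$$\frac{\overline{F^{*(n-1)}}(x)}{\overline{F}(x)}\le\frac{\overline{F^{*n}}(x)}{\overline{F}(x)}-F^{*(n-1)}(x).$$
Taking $\limsup$ as $x\to\infty$, using $\overline{F^{*n}}(x)\sim n\,\overline{F}(x)$ and $F^{*(n-1)}(x)\to1$, yields $\limsup_{x\to\infty}\overline{F^{*(n-1)}}(x)/\overline{F}(x)\le n-1$; together with the lower bound from the previous paragraph this upgrades to $\overline{F^{*(n-1)}}(x)\sim(n-1)\overline{F}(x)$. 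Iterating the descent downward (it is vacuous when the original $n$ is already $2$) I arrive at $\overline{F^{*2}}(x)\sim 2\,\overline{F}(x)$, in particular $\limsup_{x\to\infty}\overline{F^{*2}}(x)/\overline{F}(x)\le 2$, and Lemma~\ref{chap1:lem:upper bound subexp condition} then gives $F\in\mathcal S$.

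The only place that calls for a little care is the convolution identity and the legitimacy of the one-step descent (handling of possible atoms of $F$ and of the endpoint of the interval $[0,x]$); everything else — the lower bound and the passage to the limit — is routine. I do not expect any genuine obstacle beyond bookkeeping.
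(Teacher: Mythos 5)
Your proof is correct and self-contained. The paper merely cites this as Lemma~A3.14 of \cite{embrechts:kluppelberg:mikosch:book} without reproducing an argument, so there is no internal proof to compare against. Your two ingredients are exactly the right ones: the elementary lower bound $\liminf_{x\to\infty}\overline{F^{*k}}(x)/\overline{F}(x)\ge k$, which follows from $F^{*k}(x)\le F(x)^k$ on $[0,\infty)$, and the one-step peeling identity
\[
\overline{F^{*n}}(x)=\overline{F^{*(n-1)}}(x)+\int_{[0,x]}\overline{F}(x-y)\,F^{*(n-1)}(dy),
\]
whose integral is bounded below by $\overline F(x)F^{*(n-1)}(x)$ because $\overline F$ is nonincreasing. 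Taking $\limsup$ in the resulting inequality and pairing it with the lower bound descends $\overline{F^{*n}}(x)\sim n\overline F(x)$ to $\overline{F^{*(n-1)}}(x)\sim(n-1)\overline F(x)$, and iterating lands at $n=2$ so that Lemma~\ref{chap1:lem:upper bound subexp condition} finishes. The point about atoms is handled automatically by your choice to integrate against the measure $F^{*(n-1)}(dy)$ over the closed interval $[0,x]$, since $\int_{[0,x]}F^{*(n-1)}(dy)=F^{*(n-1)}(x)$ regardless of whether there is an atom at $x$; no further care is needed. This is essentially the standard argument for this folklore reduction.
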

\cite{chistyakov:1964} showed that the subexponential distributions form a  part of larger class of distributions which are called heavy tailed distributions. We now define heavy tailed distributions and analogously light tailed distributions.
\begin{definition}\label{chap1:heavy:def}
 We say a random variable $X$ is \textit{heavy tailed} if, for all $\lambda>0$, $\E[e^{\lambda X}]=\infty$, or equivalently, if for all $\lambda>0$, $e^{\lambda x}\prob[X>x]\rightarrow\infty$ as $x\rightarrow\infty$.
\end{definition}

 Also we give a definition for light tailed distribution in view of the above definition. 
 \begin{definition} 
A random variable $X$ is called \textit{light tailed} if $\E[e^{\lambda X}]<\infty$ for some $\lambda>0$.  
 \end{definition}
 For light tailed distributions on $[0,\infty)$ we have all moments finite.

Clearly, Definition~\ref{chap1:heavy:def} requires that $\ol F(x)>0$ for all $x\in\mathbb R$, which is the standing assumption. The definition requires that the tail of the distribution function decays slower than any exponential function. This is the reason that sometimes such distribution functions are also called subexponential distribution functions. However, as per the convention introduced by the seminal work of \cite{teugels:1975}, we have reserved that terminology for the class defined in Definition~\ref{chap1:def:subexponential}.  The heavy-tailed property, the hazard function and the decay of the tail are related in the following theorem:
\begin{theorem}[\citealp{foss:korshunov:zachary:2009}, Theorem 2.6]\label{thm: heavy tail}
For a distribution function $F$ with support unbounded above, the following statements are equivalent:
\begin{enumerate}
  \item $F$ is a heavy-tailed distribution. \label{item: ht1}
  \item $\liminf_{x\to\infty} R(x)/x = 0$. \label{item: ht2}
  \item For all $\lambda>0$, $\limsup_{x\to\infty} \e^{\lambda x} \overline F(x) = \infty$. \label{item: ht3}
\end{enumerate}
\end{theorem}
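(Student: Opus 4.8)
The plan is to route all three conditions through the tail function $\overline F(x)=\e^{-R(x)}$ and, for the heavy-tail condition, through the integral $\int_0^\infty \e^{\lambda x}\overline F(x)\,dx$. It suffices to establish \ref{item: ht2}$\Leftrightarrow$\ref{item: ht3} and \ref{item: ht1}$\Leftrightarrow$\ref{item: ht3}; throughout, heavy-tailedness is used in its primary form, $\E[\e^{\lambda X}]=\infty$ for every $\lambda>0$.

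\textbf{\ref{item: ht2}$\Leftrightarrow$\ref{item: ht3}.} This is essentially a rewriting, since $\e^{\lambda x}\overline F(x)=\exp(x(\lambda-R(x)/x))$. If \ref{item: ht2} holds, choose $x_n\to\infty$ with $R(x_n)/x_n\to 0$; then for any fixed $\lambda>0$ one has $\lambda-R(x_n)/x_n\ge\lambda/2$ eventually, so $\e^{\lambda x_n}\overline F(x_n)\ge\e^{\lambda x_n/2}\to\infty$, giving \ref{item: ht3}. Conversely, if \ref{item: ht2} fails then $R(x)/x\ge c$ for some $c>0$ and all large $x$, i.e.\ $\overline F(x)\le\e^{-cx}$ eventually; with $\lambda=c/2$ this forces $\e^{\lambda x}\overline F(x)\to 0$, so \ref{item: ht3} fails.

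\textbf{\ref{item: ht1}$\Leftrightarrow$\ref{item: ht3}.} First I would record the identity $\E[\e^{\lambda X^+}]=1+\lambda\int_0^\infty \e^{\lambda x}\overline F(x)\,dx$, obtained by applying Tonelli to $\e^{\lambda X^+}=1+\int_0^\infty\lambda\e^{\lambda t}\mathbf{1}_{\{t<X\}}\,dt$; since $\e^{\lambda X}\le\e^{\lambda X^+}\le\e^{\lambda X}+1$, this shows that $F$ is heavy-tailed iff $\int_0^\infty\e^{\lambda x}\overline F(x)\,dx=\infty$ for every $\lambda>0$. If \ref{item: ht3} holds, fix $\lambda>0$, apply \ref{item: ht3} with $\lambda/2$ to get a $1$-separated sequence $x_n\uparrow\infty$ with $\e^{(\lambda/2)x_n}\overline F(x_n)\ge n$, and use that $\overline F$ is non-increasing on each $[x_n-1,x_n]$:
\[
\int_0^\infty\e^{\lambda x}\overline F(x)\,dx\ \ge\ \sum_n\overline F(x_n)\int_{x_n-1}^{x_n}\e^{\lambda x}\,dx\ \ge\ \frac{1-\e^{-\lambda}}{\lambda}\sum_n n\,\e^{(\lambda/2)x_n}\ =\ \infty.
\]
Conversely, if \ref{item: ht3} fails there are $\mu>0$ and $C<\infty$ with $\overline F(x)\le C\e^{-\mu x}$ for all large $x$, whence $\int_0^\infty\e^{(\mu/2)x}\overline F(x)\,dx<\infty$ and $F$ is not heavy-tailed.

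The arithmetic above is routine; the one step that is not pure bookkeeping is converting the $\limsup$ in \ref{item: ht3} into a genuinely divergent integral. The device that does the work is integrating $\e^{\lambda x}\overline F(x)$ over the short intervals $[x_n-1,x_n]$ lying immediately to the left of the ``good'' points $x_n$ and invoking monotonicity of $\overline F$ there — otherwise one only controls $\overline F$ at the isolated points $x_n$. It is worth noting that this $\limsup$ is genuinely necessary: $\E[\e^{\lambda X}]=\infty$ for all $\lambda$ does not force $\e^{\lambda x}\overline F(x)\to\infty$, only $\limsup_{x\to\infty}\e^{\lambda x}\overline F(x)=\infty$, so condition \ref{item: ht3} is the correct sharpening of the pointwise-limit formulation.
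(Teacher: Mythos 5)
The paper states this result as a citation to Foss, Korshunov, and Zachary (2009, Theorem 2.6) and provides no proof of its own, so there is nothing in the text for your argument to diverge from. Your proof is correct and self-contained. The reduction of \ref{item: ht1}$\Leftrightarrow$\ref{item: ht3} to the tail-integral formula $\E[\e^{\lambda X^+}]=1+\lambda\int_0^\infty\e^{\lambda x}\overline F(x)\,dx$ is the standard device, and the step that deserves (and gets) care — converting divergence of a $\limsup$ at isolated points $x_n$ into divergence of the integral — is handled correctly: passing to a $1$-separated subsequence and using monotonicity of $\overline F$ on $[x_n-1,x_n]$ is exactly what is needed, since a priori you only control $\overline F$ at the sequence. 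The other two implications are immediate rewrites once $\overline F(x)=\e^{-R(x)}$ is inserted, and your case split for \ref{item: ht2}$\Leftrightarrow$\ref{item: ht3} (including the failure direction $\liminf R(x)/x>0\Rightarrow\overline F(x)\le\e^{-cx}$) is clean.

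Your closing remark is also worth flagging in connection with the paper: Definition~\ref{chap1:heavy:def} in the text claims that $\E[\e^{\lambda X}]=\infty$ for all $\lambda>0$ is ``equivalently'' expressed by $\e^{\lambda x}\overline F(x)\to\infty$, but as you observe this is too strong — heavy-tailedness only forces the $\limsup$ to be infinite, which is why Theorem~\ref{thm: heavy tail}\ref{item: ht3} is phrased with a $\limsup$. A simple example is a distribution whose tail drops in steps, being exactly $\e^{-n^2}$ on $[n^2,(n+1)^2)$: one checks $\E[\e^{\lambda X}]=\infty$ for every $\lambda>0$ while $\e^{\lambda x}\overline F(x)\to0$ along $x\uparrow(n+1)^2$. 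So your proof is not only correct, it implicitly corrects a small imprecision in the surrounding prose.
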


Definition~\ref{chap1:def:subexponential} defines subexponential distribution functions, when they are supported on the nonnegative real line only. If we allow the distribution function to be supported on the entire real line,~\eqref{chap1:eq:subexponential} may hold even for light-tailed distribution. See Example~3.3 of \cite{foss:korshunov:zachary:2009} for one such distribution function. One easy way to extend the notion to all distribution functions is to restrict the distribution function to the nonnegative real line by considering
$$F_+(x)=
\begin{cases}
  F(x), &\text{if $x\ge 0$,}\\
  0, &\text{otherwise}
\end{cases}$$
and requiring~\eqref{chap1:eq:subexponential} to hold for $F_+$.

The class of subexponential distributions satisfies the important property of being long tailed and long tailed distributions in turn have the property of being heavy tailed. 
\begin{definition}
\label{chap1:def:long tailed}
A distribution function $F$ on $\mathbb R$ is called \textit{long tailed}, if $\overline{F}(x)>0$ for all $x$ and, for any real number $y$,
\begin{equation}
\label{eq: longtail def}
 \overline{F}(x-y)\sim \overline{F}(x) \quad \text{as} \quad x\rightarrow\infty.
\end{equation}
The class of long-tailed distributions is denoted by $\mathcal L$.
\end{definition}

Since the function $\ol F$ is monotone, it can be easily checked that the convergence is uniform in $y$ on every compact interval.

\begin{remark}
\label{chap1:rem:long:slowly varying}
 It is important to note that the long tailed distributions are related to the slowly varying functions in the following manner
\begin{equation}
 F\in \mathcal L \quad \text{if and only if} \quad \ol F(\ln (\cdot))\in RV_0.
\end{equation}

\end{remark}

%
%
%

 \cite{chistyakov:1964} introduced the classes of subexponential, heavy tailed and long tailed distributions and showed the following containment.

\begin{proposition}[\citealp{chistyakov:1964}, Lemma~2 and Theorem~2] \label{prop: contain}
Any long-tailed distribution function is heavy-tailed. Any subexponential distribution function (supported on the nonnegative real line) is long-tailed.
\end{proposition}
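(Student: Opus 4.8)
The plan is to prove the two containments separately, starting with the easier one: every long-tailed distribution function is heavy-tailed. Suppose $F \in \mathcal{L}$, so that $\overline{F}(x) > 0$ for all $x$ and $\overline{F}(x-y) \sim \overline{F}(x)$ as $x \to \infty$ for every fixed $y$. I would show that the hazard function $R(x) = -\log \overline{F}(x)$ satisfies $\liminf_{x\to\infty} R(x)/x = 0$, which by Theorem~\ref{thm: heavy tail} (the equivalence \ref{item: ht1} $\Leftrightarrow$ \ref{item: ht2}) is exactly heavy-tailedness. Fixing $y = 1$, the long-tail property gives $R(x) - R(x-1) = -\log\bigl(\overline{F}(x)/\overline{F}(x-1)\bigr) \to 0$. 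So the increments of $R$ over unit steps tend to $0$; writing $R(n) = R(x_0) + \sum_{k}\bigl(R(x_0+k) - R(x_0+k-1)\bigr)$ and using Cesàro convergence of the increments, one gets $R(n)/n \to 0$ along the integers, hence $\liminf R(x)/x = 0$. Alternatively, and more cleanly, one can invoke Remark~\ref{chap1:rem:long:slowly varying}: $F \in \mathcal{L}$ iff $\overline{F}(\ln(\cdot)) \in RV_0$, and a slowly varying function $L$ satisfies $x^{-\epsilon} L(x) \to 0$ by fact~\ref{chap1:infinite:zero}; translating back through the logarithm converts this into $e^{\epsilon x}\overline{F}(x) \to \infty$ for every $\epsilon > 0$, which is even stronger than the $\limsup$ criterion \ref{item: ht3} of Theorem~\ref{thm: heavy tail}.

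For the second containment, let $F$ be subexponential and supported on $[0,\infty)$; I want to show $F \in \mathcal{L}$. The starting point is the representation, for fixed $y > 0$ and large $x$,
\[
\overline{F^{*2}}(x) = \overline{F}(x) + \int_0^x \overline{F}(x-t)\,F(dt) \ge \overline{F}(x) + \overline{F}(x-y)\,F(y) + \overline{F}(x)\bigl(F(x) - F(y)\bigr),
\]
where I split the integral at $y$, bound $\overline{F}(x-t) \ge \overline{F}(x-y)$ on $[0,y]$ and $\overline{F}(x-t) \ge \overline{F}(x)$ on $(y,x]$. Dividing by $\overline{F}(x)$ and letting $x \to \infty$, the left side tends to $2$ by subexponentiality, while the right side has liminf at least $1 + F(y)\liminf_{x\to\infty}\overline{F}(x-y)/\overline{F}(x) + (1 - F(y))$. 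Rearranging yields $\limsup_{x\to\infty}\overline{F}(x-y)/\overline{F}(x) \le 1$. Since $\overline{F}$ is non-increasing, $\overline{F}(x-y) \ge \overline{F}(x)$ for $y > 0$, so the liminf of the ratio is already $\ge 1$; combining the two bounds gives $\overline{F}(x-y)/\overline{F}(x) \to 1$, i.e.\ \eqref{eq: longtail def} for $y > 0$. For $y < 0$ replace $x$ by $x + |y|$ and use the case just proved (or monotonicity directly). This establishes $F \in \mathcal{L}$.

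The main obstacle is the second part, and specifically the bookkeeping in the convolution estimate: one must be careful that the three-way bound on $\int_0^x \overline{F}(x-t)F(dt)$ is genuinely valid (the point masses at $0$ and $y$, whether the split is at $y$ or just below, and the sign of $F(x) - F(y)$ for large $x$), and that the inequality survives taking $\limsup$/$\liminf$ in the right direction — the subexponential equality gives $\overline{F^{*2}}(x)/\overline{F}(x) \to 2$ exactly, which is what makes the squeeze work. The first part is essentially immediate once Theorem~\ref{thm: heavy tail} and Remark~\ref{chap1:rem:long:slowly varying} are in hand. I would present the heavy-tail implication via the slowly-varying reformulation since it is the shortest, and the long-tail implication via the explicit convolution lower bound.
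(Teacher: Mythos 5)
The paper does not actually prove this proposition; it is stated with a citation to Chistyakov (1964) and no internal argument, so there is nothing of the paper's to compare against. Your strategy for both implications is the standard one, but as written the second half contains two genuine errors that invalidate the argument.

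The first error is in the split of $\int_0^x \overline{F}(x-t)\,F(dt)$. For $t\in[0,y]$ we have $x-t\ge x-y$, and since $\overline{F}$ is non-increasing this gives $\overline{F}(x-t)\le\overline{F}(x-y)$, \emph{not} $\ge$; the correct lower bound on $[0,y]$ is $\overline{F}(x-t)\ge\overline{F}(x)$, while on $(y,x]$ one has $x-t\le x-y$, hence $\overline{F}(x-t)\ge\overline{F}(x-y)$. You have the two pieces swapped. The valid inequality is
\begin{equation*}
\overline{F^{*2}}(x)\ \ge\ \overline{F}(x)+\overline{F}(x)F(y)+\overline{F}(x-y)\bigl(F(x)-F(y)\bigr),
\end{equation*}
which attaches the unknown ratio $\overline{F}(x-y)/\overline{F}(x)$ to the coefficient $F(x)-F(y)\to 1-F(y)$, not to $F(y)$. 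The second error is the $\liminf$/$\limsup$ bookkeeping: you take the $\liminf$ of the right side and solve, which only controls $\liminf_{x\to\infty}\overline{F}(x-y)/\overline{F}(x)$; it does not yield a bound on the $\limsup$. To close the argument, take the $\limsup$ of both sides of the pointwise inequality: the left side converges to $2$, the deterministic terms on the right converge, so the $\limsup$ splits and, from the corrected bound, $2\ge 1+F(y)+(1-F(y))\limsup_{x\to\infty}\overline{F}(x-y)/\overline{F}(x)$, giving $\limsup\le 1$; combined with the trivial $\ge 1$ from monotonicity one gets the limit $1$. Both slips are easy to fix, and you do flag the first as a place requiring care, but the argument as stated does not go through.

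On the first implication your Ces\`aro argument is correct. The slowly-varying reformulation is also fine in substance, but you cite the wrong half of fact~\ref{chap1:infinite:zero}: with $L(s)=\overline{F}(\ln s)$ you want $s^{\epsilon}L(s)\to\infty$, which under $s=e^{x}$ becomes $e^{\epsilon x}\overline{F}(x)\to\infty$ and matches Definition~\ref{chap1:heavy:def} directly; the statement $s^{-\epsilon}L(s)\to 0$ only gives the trivial $e^{-\epsilon x}\overline{F}(x)\to 0$.
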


The class of subexponential distributions has found wide application in different branches of probability theory other than branching process, for which it was originally introduced by \cite{chistyakov:1964}. The following result shows that if two distribution functions have tails of comparable order and one of them is subexponential, then the other is also subexponential.
\begin{definition} \label{def: tail equiv}
We say that two distribution functions $F$ and $G$ are \textit{tail equivalent}, if there exists a number $c\in(0,\infty)$, such that $\ol G(x) \sim c \ol F(x)$.
\end{definition}
The proof of the following result can be found as Lemma~A3.15 of \cite{embrechts:kluppelberg:mikosch:book}.
\begin{proposition} \label{prop: tail eq subexp}
If $F$ is a subexponential distribution function and $G$ is tail equivalent to $F$, then $G$ is also subexponential.
\end{proposition}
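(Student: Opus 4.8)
The plan is to verify the sufficient condition of Lemma~\ref{chap1:lem:upper bound subexp condition}, i.e.\ to show $\limsup_{x\to\infty}\ol{G^{*2}}(x)/\ol G(x)\le 2$; in fact the argument will give $\ol{G^{*2}}(x)\sim 2\ol G(x)$. First I would dispose of routine reductions. By the usual convention (restricting to $[0,\infty)$ via $G_+$) we may assume $G$ is supported on $[0,\infty)$; since $\ol{G_+}(x)=\ol G(x)$ for $x\ge0$ it stays tail equivalent to $F$, and its subexponentiality is exactly what must be proved. Since $F\in\mathcal S\subseteq\mathcal L$ by Proposition~\ref{prop: contain}, tail equivalence transfers long-tailedness to $G$:
\[
\frac{\ol G(x-y)}{\ol G(x)}=\frac{\ol G(x-y)}{\ol F(x-y)}\cdot\frac{\ol F(x-y)}{\ol F(x)}\cdot\frac{\ol F(x)}{\ol G(x)}\longrightarrow\frac1c\cdot1\cdot c=1
\]
for each fixed $y$, so $G\in\mathcal L$. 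Using that the convergence in~\eqref{eq: longtail def} is uniform on compact $y$-sets, a diagonal argument produces one function $h$ with $h(x)\to\infty$, $h(x)\le x/3$, and $\ol F(x-h(x))\sim\ol F(x)$, $\ol G(x-h(x))\sim\ol G(x)$ simultaneously.

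Next, write $\ol{G^{*2}}(x)=\ol G(x)+\int_{[0,x]}\ol G(x-y)\,G(dy)$ and split the integral over $[0,h(x)]$, $(h(x),x-h(x)]$ and $(x-h(x),x]$. The first piece lies between $\ol G(x)\bigl(1-\ol G(h(x))\bigr)$ and $\ol G(x-h(x))$, hence equals $(1+\lito(1))\ol G(x)$; the third is at most $\ol G(x-h(x))-\ol G(x)=\lito(\ol G(x))$. So everything reduces to showing that the middle piece $\int_{(h(x),x-h(x)]}\ol G(x-y)\,G(dy)$ is $\lito(\ol G(x))$. The same three-way split applied to $F$ gives $\ol{F^{*2}}(x)=2\ol F(x)+\lito(\ol F(x))+\int_{(h(x),x-h(x)]}\ol F(x-y)\,F(dy)$, so since $F\in\mathcal S$ the $F$-middle term $\int_{(h(x),x-h(x)]}\ol F(x-y)\,F(dy)$ is $\lito(\ol F(x))$; this is the input I will feed in.

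The crux, and the step I expect to be the main obstacle, is to pass from the $F$-middle term to the $G$-middle term: the ``wrong'' measure $G(dy)$ occurs, and tail equivalence says nothing about $G$ against $F$ as measures. On $(h(x),x-h(x)]$ both $y$ and $x-y$ exceed $h(x)\to\infty$, so for fixed $\delta\in(0,c)$ and $x$ large one has $\ol G(u)\le(c+\delta)\ol F(u)$ for every argument $u$ that occurs. Hence the $G$-middle term is at most $(c+\delta)\int_{(h(x),x-h(x)]}\ol F(x-y)\,G(dy)$. Integrating by parts (via $dG=-d\ol G$) replaces $G(dy)$ by the tail $\ol G$: the boundary terms are $\ol F(h(x))\ol G(x-h(x))$ and $\ol F(x-h(x))\ol G(h(x))$, each $\lito(\ol G(x))$ since in each product one factor vanishes and the other is comparable to $\ol G(x)$, and there remains $\int_{(h(x),x-h(x)]}\ol G(y)\,d_y\ol F(x-y)$. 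Bounding $\ol G(y)\le(c+\delta)\ol F(y)$ again and integrating by parts once more — the boundary term now cancels — this last integral becomes exactly $\int_{(h(x),x-h(x)]}\ol F(x-y)\,F(dy)$, which is $\lito(\ol F(x))$. Since tail equivalence also gives $\ol F(x)\approx\ol G(x)$, collecting the estimates shows the $G$-middle term is $\lito(\ol G(x))$, whence $\ol{G^{*2}}(x)=2\ol G(x)+\lito(\ol G(x))$ and Lemma~\ref{chap1:lem:upper bound subexp condition} finishes the proof. The remaining care is purely in the Lebesgue--Stieltjes bookkeeping of the two integrations by parts and in the existence of a common insensitivity scale $h$, both of which are routine.
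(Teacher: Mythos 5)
The paper does not supply its own proof of this proposition, citing Lemma~A3.15 of \cite{embrechts:kluppelberg:mikosch:book} instead. Your argument is correct and is essentially the standard proof from that source and from \cite{Embrechts:goldie:1980}: split the convolution tail at a common insensitivity scale $h$, observe that the outer pieces contribute $2\ol G(x)(1+\lito(1))$, and convert the $G$-middle piece into $(c+\delta)^2$ times the $F$-middle piece (negligible because $F\in\mathcal S$) via tail equivalence and two Lebesgue--Stieltjes integrations by parts.
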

\begin{remark}
We can take $F$ and $G$ to be supported even on the entire real line. The subexponentiality will depend on the corresponding distribution functions $F_+$ and $G_+$, which will also be tail equivalent.
\end{remark}

Proposition~\ref{prop: tail eq subexp} and Theorem~\ref{thm: subexp char} together can be used to show that lognormal distribution is subexponential. While the tail of the lognormal distribution is not easy to understand, Mill's ratio can be used to easily approximate it by a nice nonincreasing continuous function. So we first consider a distribution function which has this tail and hence is tail equivalent to the lognormal distribution. The corresponding hazard rate and hazard function do not satisfy the sufficient integrability condition of Theorem~\ref{thm: subexp char} around $0$. However, we can still alter the distribution function in a neighborhood of $0$ to make things integrable, yet maintain tail equivalence, which is a behavior around infinity. Thus the new distribution function will be subexponential and hence, by Proposition~\ref{prop: tail eq subexp}, the lognormal distribution will also be subexponential.

Proposition~\ref{prop: tail eq subexp} shows that the class of subexponential distribution functions is closed under tail equivalence. Closure of the class of long-tailed and heavy-tailed distribution functions under tail equivalence is trivial. Similar questions are of general interest in this area of probability theory. One of such operations for the question of closure is convolution. Clearly if $F$ and $G$ are heavy-tailed, then so is $F*G$. Similar result is true for long-tailed distributions as well; see Section~2.7 of \cite{foss:korshunov:zachary:2009}. In fact, it is also proved there that if $F$ is long-tailed and $\ol G(x) = \lito(\ol F(x))$, then $F*G$ is also long-tailed. However, in general, the class of subexponential distributions is not closed under convolution; see \cite{leslie:1989} for a counterexample. Other interesting operations include finite mixture and product. In particular, if $F$ and $G$ are heavy-tailed (long-tailed respectively) distribution functions, then for any $p\in(0,1)$, the mixture distribution function  $pF+(1-p)G$ and $FG$ (the distribution function of the maximum of two independent random variables with distributions $F$ and $G$) are also heavy-tailed (long-tailed respectively). However, such closures fail for the class of subexponential distributions. The failures are not coincidental, but they are closely related. To state the corresponding result, we need to introduce the following notion from \cite{Embrechts:goldie:1980}.
\begin{definition} \label{def: max-sum}
Two distribution functions $F$ and $G$ are called \textit{max-sum equivalent} and is denoted by $F\sim_M G$, if $\ol{F*G}(x)\sim \ol F(x) + \ol G(x)$.
\end{definition}
Note that, if $X$ and $Y$ are two independent random variables with distribution functions $F$ and $G$ respectively, where $F\sim_MG$, then $\prob[X+Y>x]=\ol{F*G}(x)\sim\ol F(x) + \ol G(x) \sim\ol F(x) + \ol G(x) - \ol F(x) \ol G(x) = \prob[X\vee Y>x]$, which suggests the terminology. Also, if $F\sim_M F$, then $F$ is subexponential.

The following theorem relates the closure of the class of subexponential distribution functions under convolution, finite mixture and product with max-sum equivalence.
\begin{theorem}[\citealp{Embrechts:goldie:1980}, Theorem 2] \label{eq: subexp conv}
Let $F$ and $G$ be two subexponential distribution functions. Then the following statements are equivalent:
\begin{enumerate}
\item $F*G$ is subexponential.
\item $pF+(1-p)G$ is subexponential for all $p\in[0,1]$.
\item $pF+(1-p)G$ is subexponential for some $p\in(0,1)$.
\item $F\sim_MG$.
\item $FG$ is subexponential.
\end{enumerate}
\end{theorem}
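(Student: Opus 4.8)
The plan is to prove all five equivalences by showing that each of (i), (ii), (iii) and (v) is equivalent to (iv), i.e.\ to max-sum equivalence $F\sim_M G$, which serves as the hub. Two elementary reductions will be used throughout. First, since every distribution function here is supported on $[0,\infty)$ with support unbounded above, the inclusion $\{X_1+X_2>x\}\supseteq\{X_1>x\}\cup\{X_2>x\}$ gives $\ol{H^{*2}}(x)\ge 2\ol H(x)-\ol H(x)^2$ for any such $H$, so $\liminf_{x\to\infty}\ol{H^{*2}}(x)/\ol H(x)\ge 2$; combined with Proposition~\ref{prop: conv root subexp} (applied with $n=2$) this yields the criterion $H\in\mathcal S\iff\ol{H^{*2}}(x)\sim 2\ol H(x)$. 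Second, applying the same inclusion to $X+Y$ with $X\sim F$, $Y\sim G$ independent gives $\ol{F*G}(x)\ge\ol F(x)+\ol G(x)-\ol F(x)\ol G(x)$, and since $\ol F(x)\ol G(x)=\lito(\ol F(x)+\ol G(x))$ we get $\liminf_{x\to\infty}\ol{F*G}(x)/(\ol F(x)+\ol G(x))\ge 1$; hence $F\sim_M G\iff\ol{F*G}(x)-\ol F(x)-\ol G(x)=\lito(\ol F(x)+\ol G(x))$.

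For the block (ii)$\Leftrightarrow$(iii)$\Leftrightarrow$(iv), fix $p\in(0,1)$ and put $H_p=pF+(1-p)G$. Conditioning a sum of two i.i.d.\ $H_p$-random variables on which mixture component each summand is drawn from shows $H_p^{*2}=p^2F^{*2}+2p(1-p)\,(F*G)+(1-p)^2G^{*2}$ as a mixture, hence
$$\ol{H_p^{*2}}(x)-2\ol{H_p}(x)=p^2\bigl(\ol{F^{*2}}(x)-2\ol F(x)\bigr)+(1-p)^2\bigl(\ol{G^{*2}}(x)-2\ol G(x)\bigr)+2p(1-p)\bigl(\ol{F*G}(x)-\ol F(x)-\ol G(x)\bigr).$$
Since $F,G\in\mathcal S$ the first two brackets are $\lito(\ol F(x))$ and $\lito(\ol G(x))$, hence $\lito(\ol F(x)+\ol G(x))$; and as $\ol{H_p}(x)=p\ol F(x)+(1-p)\ol G(x)$ is bounded above and below by fixed multiples of $\ol F(x)+\ol G(x)$, dividing through and using the first criterion shows that $H_p\in\mathcal S$ if and only if $\ol{F*G}(x)-\ol F(x)-\ol G(x)=\lito(\ol F(x)+\ol G(x))$, i.e.\ (by the second criterion) if and only if $F\sim_M G$. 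Since $H_0=G$ and $H_1=F$ are subexponential by hypothesis, this gives (ii)$\Leftrightarrow$(iii)$\Leftrightarrow$(iv).

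For (v)$\Leftrightarrow$(iv) and (iv)$\Rightarrow$(i) I would invoke closure of $\mathcal S$ under tail-equivalence (Proposition~\ref{prop: tail eq subexp}). As $FG$ is the distribution function of the maximum of two independent random variables with laws $F$ and $G$, one has $\ol{FG}(x)=\ol F(x)+\ol G(x)-\ol F(x)\ol G(x)\sim\ol F(x)+\ol G(x)=2\ol{H_{1/2}}(x)$, so $FG$ is tail-equivalent to $H_{1/2}=\tfrac12(F+G)$; hence $FG\in\mathcal S\iff H_{1/2}\in\mathcal S\iff F\sim_M G$ by the previous paragraph, which is (v)$\Leftrightarrow$(iv). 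Likewise, if $F\sim_M G$ then $\ol{F*G}(x)\sim\ol F(x)+\ol G(x)=2\ol{H_{1/2}}(x)$, so $F*G$ is tail-equivalent to $H_{1/2}$, which is itself subexponential because $F\sim_M G$; therefore $F*G\in\mathcal S$, giving (iv)$\Rightarrow$(i).

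It remains to prove (i)$\Rightarrow$(iv), which is the \emph{technical core} of the theorem. Assume $F*G\in\mathcal S$; by the second criterion it suffices to show $\limsup_{x\to\infty}\ol{F*G}(x)/(\ol F(x)+\ol G(x))\le 1$. Fix a continuity point $a$ of $F$ and $G$, take $X\sim F$, $Y\sim G$ independent, and decompose, for $x>2a$,
$$\ol{F*G}(x)=\prob[X+Y>x,\,X\le a]+\prob[X+Y>x,\,Y\le a]+D_a(x),\qquad D_a(x):=\prob[X>a,\,Y>a,\,X+Y>x],$$
the overlap $\{X\le a,\,Y\le a\}$ contributing nothing. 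Because $F,G\in\mathcal L$ with the convergence in~\eqref{eq: longtail def} uniform over the compact set $[0,a]$, the first two terms are $\sim\ol G(x)F[0,a]$ and $\sim\ol F(x)G[0,a]$, hence at most $(1+\lito(1))\ol G(x)$ and $(1+\lito(1))\ol F(x)$ respectively; so $\limsup_{x\to\infty}\ol{F*G}(x)/(\ol F(x)+\ol G(x))\le 1+\limsup_{x\to\infty}D_a(x)/(\ol F(x)+\ol G(x))$ for every $a$, and the claim reduces to $\lim_{a\to\infty}\limsup_{x\to\infty}D_a(x)/(\ol F(x)+\ol G(x))=0$. This is exactly where the subexponentiality of $F*G$, and not merely its long-tailedness, is indispensable (for long-tailed but non-subexponential $F*G$ the displayed limit is strictly positive); concretely, using the one-large-jump property of $F*G$ in the form $\ol{(F*G)^{*2}}(x)\sim 2\ol{F*G}(x)$ together with the analogous properties of $F$ and $G$, one must bound the ``two large summands'' contribution $D_a(x)$ by a quantity that vanishes relative to $\ol F(x)+\ol G(x)$ as $a\to\infty$. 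I expect this estimate on $D_a$ to be the main obstacle; once it is in hand, (i)$\Rightarrow$(iv) follows and the cycle of equivalences is closed.
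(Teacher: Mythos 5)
The paper cites this result from \cite{Embrechts:goldie:1980} without supplying a proof, so there is no in-paper argument to compare yours against; I assess your attempt on its own merits.

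The ``hub'' strategy is well chosen and most of it is correct. Your two preliminary criteria (the characterization $H\in\mathcal S\iff\ol{H^{*2}}(x)\sim 2\ol H(x)$ via Proposition~\ref{prop: conv root subexp}, and the rewriting of $F\sim_M G$ as $\ol{F*G}(x)-\ol F(x)-\ol G(x)=\lito(\ol F(x)+\ol G(x))$) are sound, and the mixture identity
$$\ol{H_p^{*2}}(x)-2\ol{H_p}(x)=p^2\bigl(\ol{F^{*2}}(x)-2\ol F(x)\bigr)+(1-p)^2\bigl(\ol{G^{*2}}(x)-2\ol G(x)\bigr)+2p(1-p)\bigl(\ol{F*G}(x)-\ol F(x)-\ol G(x)\bigr)$$
cleanly delivers (ii)$\Leftrightarrow$(iii)$\Leftrightarrow$(iv), since $\ol{H_p}(x)\asymp\ol F(x)+\ol G(x)$ for fixed $p\in(0,1)$. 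The tail-equivalence arguments for (v)$\Leftrightarrow$(iv) and (iv)$\Rightarrow$(i) via Proposition~\ref{prop: tail eq subexp} and the observations $\ol{FG}(x)\sim 2\ol{H_{1/2}}(x)$ and $\ol{F*G}(x)\sim 2\ol{H_{1/2}}(x)$ (the latter under (iv)) are also correct.

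The genuine gap is in (i)$\Rightarrow$(iv). You reduce it, correctly, to showing $\lim_{a\to\infty}\limsup_{x\to\infty}D_a(x)/(\ol F(x)+\ol G(x))=0$ where $D_a(x)=\prob[X>a,\,Y>a,\,X+Y>x]$, and you are right that the uniformity of~\eqref{eq: longtail def} on compacts handles the first two pieces of the decomposition and that the pointwise overlap on $\{X\le a,\,Y\le a\}$ vanishes for $x>2a$. But you then explicitly stop, writing that you ``expect this estimate on $D_a$ to be the main obstacle'' rather than proving it. That estimate \emph{is} the implication (i)$\Rightarrow$(iv); without it, your argument only shows that (ii), (iii), (iv), (v) are mutually equivalent and that each of them implies (i), leaving the cycle open. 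You correctly anticipate that subexponentiality (and not just long-tailedness) of $F*G$ must be used, and you gesture at the one-large-jump property of $(F*G)^{*2}=F^{*2}*G^{*2}$, but turning that into a bound for the ``both summands large'' term $D_a(x)$ is a substantive piece of analysis (this is essentially Theorem~1 of \cite{Embrechts:goldie:1980}); identifying where the difficulty lies is not the same as resolving it.
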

Thus, the counterexample from \cite{leslie:1989} for closure under convolution will also work for finite mixture and product.

The product of independent random variables with regularly varying tails were studied by \cite{breiman:1965}. Further details are given in Theorem~\ref{chap1:theo:product}. The product of independent random variables with subexponential distribution functions is not as well behaved as the class of random variables with regularly varying tails. It was extensively studied in \cite{cline:somorodnitsky:1994}. We now state two results from \cite{cline:somorodnitsky:1994} without proofs. 

\begin{proposition}[\citealp{cline:somorodnitsky:1994}, Theorem 2.1]
 Assume $X$ and $Y$ to be independent positive random variables with distribution functions $F$ and $G$ respectively and let the distribution of $XY$ be denoted by $H$. Suppose $F$ is subexponential. 
\begin{enumerate}
\item Suppose there exists a function $a:(0,\infty)\to (0,\infty)$ satisfying:
\begin{enumerate}
 \item $a(t)\uparrow\infty$ as $t\rightarrow\infty$;
\item $\frac{t}{a(t)}\rightarrow\infty$ as $t\rightarrow\infty$;
\item $\ol F(t-a(t))\sim \ol F(t)$ as $t\rightarrow\infty$;
\item $\ol G(a(t))=\lito (\ol H(t))$ as $t\rightarrow\infty$.
\end{enumerate}
Then $H$ is subexponential.
\item If $Y$ is bounded random variable, then $H$ is subexponential.
\end{enumerate}
\end{proposition}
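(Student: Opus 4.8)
The plan is to verify the sufficient condition for subexponentiality from Proposition~\ref{prop: conv root subexp} with $n=2$, that is, $\limsup_{t\to\infty}\ol{H^{*2}}(t)/\ol H(t)\le 2$, using condition~(d) to discard large values of $Y$ and the subexponentiality of $F$ to treat what remains; part~(ii) will then be deduced from part~(i) by exhibiting a suitable $a$.

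For part~(i) I would realise $H^{*2}$ as the law of $X_1Y_1+X_2Y_2$, with $(X_1,Y_1),(X_2,Y_2)$ independent copies of $(X,Y)$; note that $\ol H>0$ everywhere, since $Y>0$ a.s.\ and $\ol F>0$ everywhere. Since $a(t)\uparrow\infty$, condition~(d) gives $\prob[X_1Y_1+X_2Y_2>t,\ Y_i>a(t)]\le\ol G(a(t))=\lito(\ol H(t))$, hence
\[
\ol{H^{*2}}(t)\le\prob[X_1Y_1+X_2Y_2>t,\ Y_1\le a(t),\ Y_2\le a(t)]+2\,\ol G(a(t)),
\]
and, applying the same splitting to one factor, $\ol H(t)\sim\prob[XY>t,\ Y\le a(t)]=\int_{(0,a(t)]}\ol F(t/y)\,G(dy)=:\ol H_a(t)$. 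It therefore suffices to bound the first term above by $(2+\lito(1))\ol H_a(t)$. I would condition on $(Y_1,Y_2)=(y_1,y_2)$ with $0<y_1,y_2\le a(t)$ and use, for $c\in(0,t)$, the elementary inclusion
\[
\{y_1X_1+y_2X_2>t\}\subseteq\{y_1X_1>t-c\}\cup\{y_2X_2>t-c\}\cup\bigl(\{y_1X_1>c\}\cap\{y_2X_2>c\}\cap\{y_1X_1+y_2X_2>t\}\bigr)
\]
with $c=a(t)$. Integrating the first two pieces over $(0,a(t)]^2$ against $G\otimes G$ gives at most $2\int_{(0,a(t)]}\ol F\bigl((t-a(t))/y\bigr)\,G(dy)\le 2\,\ol H(t-a(t))$, which is $(2+\lito(1))\ol H_a(t)$ provided one has the \emph{insensitivity of $H$}: $\ol H(t-a(t))\sim\ol H(t)$.

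That insensitivity I would derive from~(a), (c), (d): after discarding the part with $Y>a(t)$ (of size $\le\ol G(a(t))=\lito(\ol H(t))$), one compares $\int_{(0,a(t)]}\ol F\bigl((t-a(t))/y\bigr)G(dy)$ with $\ol H_a(t)$ using that dividing by $y\le a(t)$ turns $t\mapsto t-a(t)$ into a multiplicative perturbation of relative size $a(t)/t\to0$, to which $\ol F$ is insensitive by~(c); the comparison has to be carried out on the integral, not pointwise in $y$, because pointwise insensitivity can fail for very small $y$, and this is the one place where $F$ being long-tailed rather than merely heavy-tailed is used. The main obstacle is the third piece,
\[
\prob\bigl[X_1Y_1>a(t),\ X_2Y_2>a(t),\ X_1Y_1+X_2Y_2>t\bigr],
\]
which must be shown to be $\lito(\ol H(t))$. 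Crude bounds such as $\ol H(a(t))^2$, or $\ol H(t/2)\ol H(a(t))$, or $(2+\epsilon)\ol F(t/a(t))$ are all too weak: for rapidly decaying (Weibull-type) tails none of them is $\lito(\ol H(t))$ when $a(t)=\lito(t)$. One is forced to localise near the endpoint. The contribution in which one of $X_1Y_1,X_2Y_2$ exceeds $t-a(t)$ is $\le\ol H(t-a(t))\,\ol H(a(t))=\lito(\ol H(t))$ by the insensitivity of $H$ and $\ol H(a(t))\to0$. The residual interior contribution, where both $X_iY_i\in(a(t),t-a(t)]$, is where the full strength of $F\in\mathcal S$ must be invoked: re-conditioning on $(Y_1,Y_2)$, one reduces it to a weighted two-fold convolution of $F$ at scale $s\asymp t/(y_1\vee y_2)$, where~(b) forces $s\to\infty$ and the subexponential relation $\ol{F^{*2}}(s)\sim 2\ol F(s)$ (which already encodes that this very interior region is negligible) can be exploited after a further splitting, keeping the errors at level $\lito(\ol H(t))$ rather than merely $\lito(\ol F(t/a(t)))$ — this again uses~(b) and~(c). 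I expect this interior estimate to be the hard step; it is precisely the one that fails for heavy-tailed but non-subexponential $F$.

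For part~(ii), suppose $Y\le M$ almost surely. As $F\in\mathcal S$ it is long-tailed by Proposition~\ref{prop: contain}, so — by a standard property of the class $\mathcal L$, obtainable from the Karamata representation together with Remark~\ref{chap1:rem:long:slowly varying} — there is $a(t)\uparrow\infty$ with $a(t)=\lito(t)$ and $\ol F(t-a(t))\sim\ol F(t)$; replacing $a(t)$ by $\min\{a(t),\sqrt t\}$ if necessary, conditions~(a), (b), (c) hold. For~(d): once $a(t)>M$ one has $\ol G(a(t))=0$, whereas $\ol H(t)=\prob[XY>t]\ge\ol F(t/\delta)\,\prob[Y>\delta]>0$ for every $t$ (pick $\delta>0$ with $\prob[Y>\delta]>0$, possible since $Y>0$ a.s., and use $\ol F>0$ everywhere); hence $\ol G(a(t))=\lito(\ol H(t))$ trivially, and part~(i) applies.
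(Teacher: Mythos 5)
The paper does not prove this proposition---it is quoted from \cite{cline:somorodnitsky:1994} explicitly ``without proofs''---so there is no in-paper argument to compare against. Assessed on its own, your outline is structurally sensible: the reduction via Proposition~\ref{prop: conv root subexp}, the truncation of $\{Y>a(t)\}$ using condition~(d), the three-way inclusion with $c=a(t)$, and the derivation of part~(ii) from part~(i) through the standard existence of an insensitivity function for long-tailed $F$ are all sound. But two of the steps you lean on are genuine gaps, not merely omitted routine computations, and they carry essentially the whole weight of the theorem.

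First, the insensitivity $\ol H(t-a(t))\sim\ol H(t)$ is asserted from (a), (c), (d) but not proved. After discarding $\{Y>a(t)\}$, the comparison at level $t/y$ involves a shift of size $a(t)/y$, and this needs to lie inside $F$'s own insensitivity window $a(t/y)$; this can be arranged for $y\ge 1$ by first replacing $a$ with a concave minorant (a modification you neither perform nor justify), while for $y<1$ the shift $a(t)/y$ exceeds $a(t)$ and a separate control of the small-$Y$ contribution is required which (a)--(d) alone do not supply. You flag that ``the comparison has to be carried out on the integral, not pointwise,'' but you never carry it out. Second, the residual interior term is only a plan, and the plan as written does not close: conditionally on $(Y_1,Y_2)=(y_1,y_2)$ the summands $y_1X_1$ and $y_2X_2$ are scalings of $F$ by \emph{different} factors, so $\prob[y_1X_1+y_2X_2>t]$ is not a convolution power of a single law and the relation $\ol{F^{*2}}(s)\sim2\,\ol F(s)$ cannot be invoked directly. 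What is needed is a two-distribution version of the interior estimate---for instance via max-sum equivalence (Definition~\ref{def: max-sum}) or a Kesten-type bound---that is uniform over the admissible $(y_1,y_2)$; nothing in your proposal provides that uniformity. These two missing pieces are precisely the technical content of the Cline--Samorodnitsky result.
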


\subsection{Some other useful classes of distributions}

The light tailed counterparts of subexponential and long tailed classes are the classes $\mathcal S(\gamma)$ and $\mathcal L(\gamma)$ respectively, which we describe now. They will be useful in dealing with the weighted  sums of random variables with regularly varying tail in the Chapter~\ref{chap2}. In Remark~\ref{chap1:rem:long:slowly varying} we saw the relationship between the random variables with slowly varying  tails and random variables with long tailed distribution functions. In general the class of distribution having regularly varying tails of index $-\gamma$ with $\gamma\geq 0$ can be related to a class $\mathcal{L}(\gamma)$.

\begin{definition}
\label{chap1:def:lgamma}
 A distribution function $F$ on $(0,\infty)$ belongs to the class $\mathcal L(\gamma)$ with $\gamma\geq 0$, if for all real $u$,
\begin{equation}
 \label{chap1:eq:def: lgamma}
\lim_{x\rightarrow\infty}\frac{\ol F(x-u)}{\ol F(x)}=e^{\gamma u}.
\end{equation}
Alternatively, one can say $\ol F(\ln(\cdot))\in RV_{-\gamma}$ if and only if $F\in \mathcal L(\gamma)$.
\end{definition}

The analogous counterpart of the subexponential class  is the class $\mathcal S(\gamma)$.
\begin{definition}
\label{chap1:def:sgamma}
 A distribution function $F$ on $(0,\infty)$ belongs to the class $\mathcal S(\gamma)$ with $\gamma\geq 0$, if $F\in\mathcal L(\gamma)$ and
\begin{equation}
 \label{chap1:eq:def: sgamma}
\lim_{x\rightarrow\infty}\frac{\ol{ F*F}(x)}{\ol F(x)}=2 \int_0^\infty e^{\gamma y}F(dy).
\end{equation}
\end{definition}

When $\gamma=0$ we get back the long tailed and subexponential classes. Note that when $\gamma=0$ in Definition~\ref{chap1:def:sgamma},  the condition that $F\in \mathcal L(\gamma)$ is automatically satisfied due to Proposition~\ref{prop: contain}. These classes were initially introduced by \cite{chover:ney:wainger:1973a,chover:ney:wainger:1973b} in the theory of branching process and they were further studied  by \cite{kluppelberg:1988, kluppelberg:1989} . \cite{kluppelberg:1988, kluppelberg:1989} also studied the densities of the class $\mathcal S(\gamma)$. To describe them we now introduce two more classes $\mathcal S_d(\gamma)$ and $\mathcal S^*$.

\begin{definition}
 A function $f:\mathbb R\rightarrow \mathbb R_+$  such that $f(x)>0$ on $[A,\infty)$ for some $A\in \mathbb R_+$ belongs to the class $\mathcal S_d(\gamma)$ with $\gamma\geq 0$ if
\begin{eqnarray}
&(i)&\qquad     \lim_{x\rightarrow\infty}\int_0^x\frac{f(x-y)}{f(x)}f(y)dy= 2\int_0^\infty f(u)du<\infty;\nonumber\\ 
&(ii)&\qquad       \lim_{x\rightarrow\infty}\frac{f(x-y)}{f(x)}=e^{\gamma y} \quad \text{for all }y\in\mathbb R.\nonumber 
      \end{eqnarray}
\end{definition}

\begin{definition}
 A distribution function $F$ belongs to the class $\mathcal S^*$ if it has finite expectation $\mu$ and 
\begin{equation}
 \lim_{x\rightarrow\infty}\int_0^x\frac{\ol F(x-y)}{\ol F(x)}\ol F(y)dy= 2\mu.
\end{equation}

\end{definition}

The classes $\mathcal S_d(\gamma)$ and $S(\gamma)$ can be linked in the following way. For $f\in\mathcal S_d(\gamma)$ define a distribution concentrated on $(0,\infty)$ by 
$$F(x)=\frac{\int_0^xf(y)dy}{\int_0^\infty f(y)dy}.$$ Then $F\in \mathcal S(\gamma)$ \citep[see][Theorem 1.1]{kluppelberg:1989}. It was also shown in Theorem 3.2 of \cite{kluppelberg:1988} that $\mathcal S^*$ is a proper subclass of the subexponential distributions.

\begin{example}[Generalized inverse Gaussian distribution (GIGD), \citealp{kluppelberg:1989}]
For  $a>0$, $b\geq 0$ and $c<0$, the density of GIGD is given by 
$$f(x)=\left(\frac ba\right)^{c/2}\left(2K_c(\sqrt{ab})\right)^{-1}x^{c-1}\exp\left(-\frac12 (ax^{-1}+bx)\right), \quad x\in\mathbb R_+$$ 
 where $K_c$ is the modified Bessel function of third kind with index $c$. Then $f\in \mathcal S_d(b/2)$ and hence the distribution function $F\in \mathcal S(b/2)$.
\end{example}

In the next theorem we summarize the relationship between the classes described before. To describe the containment we need to define another class of distribution functions first.

\begin{definition}
 A distribution function on $[0,\infty)$ belongs to the class $\mathcal D$ of dominated variations if
\begin{equation}
 \limsup_{x\rightarrow\infty}\frac{\ol F(x)}{\ol F(2x)}<\infty.
\end{equation}

\end{definition}

\begin{theorem}[\citealp{goldie:1978, embrechts:omey:1984, kluppelberg:1988}]\hfill

\noindent
$(1)$ The class of distribution functions $RV_{-\alpha}$, $\mathcal D$, $\mathcal L$, $\mathcal S$ satisfy $$RV_{-\alpha}\subset \mathcal D \cap \mathcal L \subset \mathcal S \subset \mathcal L$$
and all the classes are contained in the class of heavy tailed distribution functions.\\
\noindent
$(2)$ If $F$ has finite expectation and 
$F\in D \cap \mathcal L$, then $F\in\mathcal S^*\subset \mathcal S$.

\end{theorem}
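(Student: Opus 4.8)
The plan is to establish the chain of inclusions one link at a time, the only substantial link being $\mathcal D\cap\mathcal L\subseteq\mathcal S$. The elementary links come first. That $RV_{-\alpha}\subseteq\mathcal L$ follows from the uniform convergence theorem for regularly varying functions (item~\ref{chap1:uniformconv} of Subsection~\ref{Some well known facts about regular variations}): since $\overline F(x-y)/\overline F(x)=\overline F\bigl(x(1-y/x)\bigr)/\overline F(x)$ and $1-y/x\to 1$, the ratio tends to $1$. That $RV_{-\alpha}\subseteq\mathcal D$ is immediate, as $\overline F(x)/\overline F(2x)\to2^\alpha<\infty$. The inclusions $\mathcal S\subseteq\mathcal L$ and $\mathcal L\subseteq\{\text{heavy tailed}\}$ are exactly Proposition~\ref{prop: contain}. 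Finally, $\mathcal D\subseteq\{\text{heavy tailed}\}$: if $\overline F(x)/\overline F(2x)\le C$ for $x\ge x_0$, iterating gives $\overline F(2^nx_0)\ge C^{-n}\overline F(x_0)$, and monotonicity of $\overline F$ promotes this to a power lower bound $\overline F(x)\ge c\,x^{-p}$ with $p=\log_2 C$ for all large $x$, whence $\e^{\lambda x}\overline F(x)\to\infty$ for every $\lambda>0$.

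\emph{The main link $\mathcal D\cap\mathcal L\subseteq\mathcal S$.} By Lemma~\ref{chap1:lem:upper bound subexp condition} it suffices to prove $\limsup_{x\to\infty}\overline{F^{*2}}(x)/\overline F(x)\le 2$. Write $\overline{F^{*2}}(x)=\overline F(x)+\int_{[0,x]}\overline F(x-y)\,F(dy)$. Using $F\in\mathcal L$, fix an insensitivity function $h=h(x)$ with $h(x)\to\infty$, $h(x)=\lito(x)$ and $\sup_{0\le y\le h(x)}\bigl|\overline F(x-y)/\overline F(x)-1\bigr|\to 0$ (such $h$ is obtained by letting it increase slowly along a sufficiently sparse sequence of thresholds supplied by long-tailedness). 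Split $\int_{[0,x]}=\int_{[0,h]}+\int_{(h,\,x-h]}+\int_{(x-h,\,x]}$. On $[0,h]$, monotonicity of $\overline F$ gives $\overline F(x)F(h)\le\int_{[0,h]}\overline F(x-y)\,F(dy)\le\overline F(x-h)$, so this piece divided by $\overline F(x)$ tends to $1$. On $(x-h,x]$ the integrand is at most $\overline F(0)\le 1$ and the $F$-mass of the interval is $\overline F(x-h)-\overline F(x)=\lito(\overline F(x))$, so this piece is negligible. For the middle piece, split once more at $x/2$: on $(h,x/2]$ bound $\overline F(x-y)\le\overline F(x/2)$ and the $F$-mass by $\overline F(h)$, and on $(x/2,x-h]$ bound $\overline F(x-y)\le\overline F(h)$ and the $F$-mass by $\overline F(x/2)$, which together give $\int_{(h,\,x-h]}\overline F(x-y)\,F(dy)\le 2\,\overline F(h)\,\overline F(x/2)$. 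Dividing by $\overline F(x)$ and invoking $\overline F(x/2)/\overline F(x)\le C$ for large $x$ (the defining property of $\mathcal D$, after the substitution $x\mapsto x/2$) bounds this by $2C\,\overline F(h)\to 0$ since $h(x)\to\infty$. Summing the three contributions yields $\limsup\overline{F^{*2}}(x)/\overline F(x)\le 1+1+0=2$, as required.

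\emph{Part (2).} Let $\mu=\int_0^\infty\overline F(y)\,dy<\infty$, and use the same $h(x)$ to split $\int_0^x\frac{\overline F(x-y)}{\overline F(x)}\,\overline F(y)\,dy$ over $[0,h]$, $(h,x-h]$ and $(x-h,x]$. On $[0,h]$, the uniform estimate $\sup_{0\le y\le h}\bigl|\overline F(x-y)/\overline F(x)-1\bigr|\to 0$ together with $\int_0^{h(x)}\overline F(y)\,dy\to\mu$ forces this piece to tend to $\mu$; the substitution $y\mapsto x-y$ shows the piece over $(x-h,x]$ also tends to $\mu$. The middle piece, split at $x/2$ exactly as before, is at most $2\,\dfrac{\overline F(x/2)}{\overline F(x)}\int_h^\infty\overline F(y)\,dy\le 2C\int_{h(x)}^\infty\overline F(y)\,dy\to 0$ by finiteness of $\mu$. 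Hence the integral tends to $2\mu$, i.e.\ $F\in\mathcal S^*$; and $\mathcal S^*\subseteq\mathcal S$ is the inclusion recorded above (Theorem~3.2 of \cite{kluppelberg:1988}).

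\emph{Main obstacle.} The crux in both parts is the estimate of the middle range, where the two hypotheses are used in tandem: long-tailedness supplies the scale $h(x)$ that annihilates the near-endpoint contributions, while dominated variation is precisely what forces the ``both coordinates of order $x$'' contribution to vanish — without it, a long-tailed $F$ need not be subexponential. The remaining ingredients (the elementary inclusions, the power lower bound behind heavy-tailedness of $\mathcal D$, and the splitting bookkeeping) are routine.
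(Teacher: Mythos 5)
The paper does not prove this theorem; it is stated with citations to Goldie (1978), Embrechts--Omey (1984) and Kl\"uppelberg (1988), which are the original sources. Your proof is correct, and the key step---splitting the convolution integral at $h(x)$, $x/2$ and $x-h(x)$, using long-tailedness to handle the near-endpoint ranges and dominated variation to kill the middle range---is essentially Goldie's original argument for $\mathcal D\cap\mathcal L\subseteq\mathcal S$, and your Part (2) is the corresponding standard argument from Kl\"uppelberg for $\mathcal D\cap\mathcal L$ with finite mean lying in $\mathcal S^*$. The elementary links ($RV_{-\alpha}\subseteq\mathcal D\cap\mathcal L$, the power-law lower bound giving heavy-tailedness of $\mathcal D$, and invoking Proposition~\ref{prop: contain} for the rest) are all sound. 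So this is a correct self-contained proof of a result the paper only cites.
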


\section{Extreme value theory and regular variations}\label{chap1:sec:extreme values}

The regularly varying functions also play an important role in the theory of extreme values. In this section we briefly point out the role of regularly varying functions in the extreme  value theory.

Let $X_1, X_2,\cdots, X_n$ be i.i.d.\ random variables with a nondegenerate distribution function $F$ on $\mathbb R$. The study of $M_n=\max(X_1, X_2,\cdots, X_n)$ is often known as the extreme value theory. It is easy to see that if we denote $x_F$ as the right end point of $F$, that is, $x_F:= \sup \{x: F(x)<1\}$, then $M_n\rightarrow x_F$ in probability and since $M_n$ is increasing, we also have $M_n\rightarrow x_F$ almost surely. Since one does not arrive at anything deeply interesting in terms of almost sure convergence or convergence in probability so it is of major importance to study the distributional convergence of $M_n$.

\begin{definition}
\label{chap1:def:doa}
A distribution function $F$, belongs to the maximum domain of attraction of $G$,  if $G$ is nondegenerate and there exists $a_n>0$ and $b_n\in\mathbb{R}$ such that
\begin{equation}
\label{chap1:def:eq:doa}
\prob\left[\frac{M_n-b_n}{a_n}\leq x\right]=F^n(a_n x+b_n)\rightarrow G(x).
\end{equation}
If this happens then we write $F\in D(G)$.
\end{definition}

Now a major challenge is to classify the possible distributions $G$ that can arise in the above limit. The distribution function $G$ can be classified into three types, namely Fr\'echet, Weibull and Gumbel. The initial study was carried out by  \cite{fisher:tippett:1928} and \cite{gnedenko:1943} and later elaborated by \cite{dehaan:1970, dehaan:1971,balkema:dehaan:1972}. A nice treatment of this can be found in the monographs by \cite{Resnick:1987, Haan:Ferreira:2006, embrechts:kluppelberg:mikosch:book, galambos:extreme:book, teugels:book}. The major tool  used in the proof of the above result is the convergence of types. Since we shall use the result, we state it  for future use.

\begin{theorem}[Convergence of types]\label{theo:convergence of types}
Suppose $U(x)$ and $V(x)$ are two nondegenerate distribution functions. Suppose for $n\geq 1$, there exists distribution function $F_n$, scaling constants $\alpha_n, a_n>0$ and centering constants $\beta_n, b_n\in \mathbb R$ such that
$$F_n(a_n x+b_n)\rightarrow U(x) \qquad \text{and} \qquad F_n(\alpha_nx+\beta_n)\rightarrow V(x).$$

Then as $n\rightarrow\infty$,

$$\frac{\alpha_n}{a_n}\rightarrow A>0,\quad \frac{\beta_n-b_n}{a_n}\rightarrow B\in\mathbb R \quad \text{and} \quad V(x)=U(Ax+B).$$

\end{theorem}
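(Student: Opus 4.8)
The plan is to first normalize the problem so that only one pair of scaling/centering sequences remains. Set $G_n(x) = F_n(a_n x + b_n)$, so that $G_n(x) \to U(x)$ at every continuity point of $U$. A direct substitution gives $F_n(\alpha_n x + \beta_n) = G_n(A_n x + B_n)$ with $A_n = \alpha_n/a_n > 0$ and $B_n = (\beta_n - b_n)/a_n$, so the second hypothesis becomes $G_n(A_n x + B_n) \to V(x)$ at continuity points of $V$. The goal thereby reduces to showing $A_n \to A \in (0,\infty)$, $B_n \to B \in \mathbb{R}$, and $V(\cdot) = U(A\cdot + B)$.

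The first real step is to show that $\{A_n\}$ and $\{B_n\}$ are bounded and that $\{A_n\}$ is bounded away from $0$. Since $V$ is nondegenerate, fix continuity points $x_1 < x_2$ of $V$ with $0 < V(x_1) < V(x_2) < 1$, and put $z_n = A_n x_1 + B_n$, $w_n = A_n x_2 + B_n$, so $G_n(z_n) \to V(x_1)$ and $G_n(w_n) \to V(x_2)$. Because $U$ is nondegenerate, there are continuity points $s < t$ of $U$ with $U(s) < V(x_1)$ and $U(t) > V(x_2)$; monotonicity of $G_n$ together with $G_n(s) \to U(s)$ and $G_n(t) \to U(t)$ then forces $s < z_n < w_n < t$ for all large $n$, so $\{z_n\}$ and $\{w_n\}$ are bounded. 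Hence $A_n = (w_n - z_n)/(x_2 - x_1)$ is bounded, and then $B_n = z_n - A_n x_1$ is bounded. Applying the same argument with the roles of $U, V$ and of $(a_n, b_n), (\alpha_n, \beta_n)$ interchanged shows $1/A_n$ is bounded, i.e. $\liminf A_n > 0$.

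Next I would argue along subsequences. Given any subsequence, extract a further one with $A_{n_k} \to A' \in (0,\infty)$ and $B_{n_k} \to B'$. For $x$ a continuity point of $V$ such that $A'x + B'$ is a continuity point of $U$, choose small $\epsilon>0$ with $A'x+B'\pm\epsilon$ continuity points of $U$; for large $k$ one has $A_{n_k} x + B_{n_k} \in (A'x+B'-\epsilon, A'x+B'+\epsilon)$, so $G_{n_k}(A'x+B'-\epsilon) \le G_{n_k}(A_{n_k}x+B_{n_k}) \le G_{n_k}(A'x+B'+\epsilon)$, and letting $k\to\infty$ then $\epsilon\downarrow 0$ gives $V(x) = U(A'x + B')$. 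This identity holds off a countable set of $x$, hence for all $x$ by right-continuity. For uniqueness: if also $V(\cdot) = U(A''\cdot + B'')$, then $U(y) = U(cy + d)$ for all $y$, where $c = A''/A'$ and $d = B'' - A''B'/A'$; iterating this affine relation (and its inverse) shows that unless $c = 1$ and $d = 0$, $U$ must be constant on a half-line on each side of a point, contradicting nondegeneracy. Hence $A'' = A'$ and $B'' = B'$, so every subsequence of $(A_n, B_n)$ has a sub-subsequence converging to the same limit $(A, B)$; therefore $A_n \to A$, $B_n \to B$, and $V(\cdot) = U(A\cdot + B)$.

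The step I expect to be the main obstacle is the boundedness/non-degeneration claim: one must simultaneously rule out $A_n \to 0$, $A_n \to \infty$, and $B_n$ escaping to $\pm\infty$, and the clean way to do this is to exploit nondegeneracy of \emph{both} $U$ and $V$ — the symmetry trick of swapping the two normalizations is exactly what makes $\liminf A_n > 0$ painless, avoiding a separate atom-chasing argument. The self-contained sublemma that $U(y) = U(cy+d)$ for all $y$ with $U$ nondegenerate forces $c = 1$, $d = 0$ is the other point requiring a little care, though it is routine once the affine map is iterated. (Alternatively, the whole theorem can be run through the generalized inverses $U^{\leftarrow}, V^{\leftarrow}$, using that weak convergence of distribution functions yields convergence of quantile functions at continuity points; this trades the monotonicity bookkeeping above for bookkeeping about inverses, and identifies $A = (U^{\leftarrow}(y_2) - U^{\leftarrow}(y_1))/(V^{\leftarrow}(y_2) - V^{\leftarrow}(y_1))$ directly.)
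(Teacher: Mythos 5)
The paper states this as a classical result (Khinchin's convergence-of-types theorem) and gives no proof of its own; it is simply cited for later use. Your argument is correct and is the standard textbook proof: normalize to a single pair $(A_n,B_n)$, use nondegeneracy of $V$ to get boundedness of $z_n,w_n$ and hence of $A_n$ and $B_n$, use the role-swapping symmetry (nondegeneracy of $U$) to get $\liminf A_n>0$, pass to convergent subsequences and identify the limit by sandwiching $G_{n_k}$ between $U(A'x+B'\mp\epsilon)$, and conclude via the uniqueness sublemma that $U(y)=U(cy+d)$ for nondegenerate $U$ forces $c=1$, $d=0$. The only place worth making fully explicit is that last sublemma, since it is the second point where nondegeneracy is actually invoked: if $c\neq 1$, iterate the map (or its inverse when $c>1$) toward the fixed point $p=d/(1-c)$ to show $U$ is constant on $(-\infty,p)$ and on $(p,\infty)$, hence degenerate at $p$; if $c=1$ and $d\neq 0$, iterate to get $U(y)=U(y+nd)$ for all integers $n$, forcing $U$ constant. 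With that filled in, the proof is complete and correct.
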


\begin{definition}
 A nondegenerate distribution function $F$ is called \textit{max-stable} if for all $n\in \mathbb N$, there exists $a_n>0$, $b_n\in\mathbb R$ such that $F(x)^n=F(a_nx+b_n)$.
\end{definition}

The next theorem characterizes the class of extreme value distributions. This result was proved by \cite{fisher:tippett:1928} and~\cite{gnedenko:1943}.

\begin{theorem}
 Suppose there exists $a_n>0$, $b_n\in\mathbb R$, $n\geq 1$ such that
$$\prob\left[\frac{M_n-b_n}{a_n}\leq x\right]=F^n(a_n x+b_n)\rightarrow G(x)$$
where $G$ is nondegenerate, then $G$ can be suitable scaled and centered to be one of the following forms:
\begin{enumerate}
 \item Fr\'echet : \quad $\Phi_\alpha(x)=\begin{cases}
                                          0, &\text{if } x<0,\\
                                          \exp(-x^{-\alpha}), &\text{if } x>0;\\
                                         \end{cases}$
\item Weibull : \quad $\Psi_\alpha(x)=\begin{cases}
                                       \exp(-(-x)^{\alpha}, &\text{if } x<0,\\
                                        1, &\text{if } x>0;\\
                                      \end{cases}$
\item Gumbel : \quad $\Lambda(x)=\exp(-e^{-x}),$  $x\in\mathbb R$.

\end{enumerate}

\end{theorem}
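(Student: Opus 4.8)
The plan is to run the classical Fisher--Tippett--Gnedenko argument in three stages: first show that any non-degenerate limit $G$ in~\eqref{chap1:def:eq:doa} must be max-stable; then reduce the max-stability relation to a pair of Cauchy-type functional equations for the scaling and centering functions; and finally solve these equations to recover exactly the three displayed forms, up to location and scale.

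\emph{Step 1: $G$ is max-stable.} Fix $k\in\mathbb N$ and set $F_n:=F^n$. By hypothesis $F_n(a_nx+b_n)\to G(x)$ at continuity points of $G$. Passing to the subsequence $\{nk\}_n$ gives $F_{nk}(a_{nk}x+b_{nk})\to G(x)$, while raising the original convergence to the $k$-th power gives $F_{nk}(a_nx+b_n)=\bigl(F_n(a_nx+b_n)\bigr)^k\to G^k(x)$, and $G^k$ is again non-degenerate. Applying the convergence of types theorem (Theorem~\ref{theo:convergence of types}) to the single sequence $F_{nk}$ with the two normalizations $(a_{nk},b_{nk})$ and $(a_n,b_n)$ produces constants $\alpha_k>0$, $\beta_k\in\mathbb R$ with $G^k(x)=G(\alpha_kx+\beta_k)$ for all $x$. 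Hence $G$ is max-stable.

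\emph{Step 2: functional equations for the normalizing constants.} Write $\psi(x):=-\log G(x)$ on $\{0<G(x)<1\}$. Extending the relation $G^t(x)=G(a(t)x+b(t))$ from integers to rationals, and then to all real $t>0$ using the monotonicity of $G$, one obtains $a(t)>0$, $b(t)\in\mathbb R$ with $t\,\psi(x)=\psi\bigl(a(t)x+b(t)\bigr)$. Feeding the semigroup identity $G^{st}=(G^s)^t$ into this, and invoking uniqueness of the normalizing constants (again via Theorem~\ref{theo:convergence of types}, since $G$ is non-degenerate), yields $a(st)=a(s)a(t)$ and $b(st)=a(t)b(s)+b(t)$ for all $s,t>0$. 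As $t\mapsto a(t)$ is positive and monotone, the multiplicative Cauchy equation forces $a(t)=t^{\theta}$ for some $\theta\in\mathbb R$.

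\emph{Step 3: solving the three cases, and exhaustiveness.} If $\theta=0$, then $a\equiv1$ and $b$ is additive and monotone, hence $b(t)=c\log t$ for a constant $c\ne 0$; substituting into $t\psi(x)=\psi(x+c\log t)$ and normalizing so that $\psi$ takes the value $1$ at some point, one solves to get $\psi(x)=e^{-\kappa x}$ up to an affine change of variable, i.e. $G$ is of the Gumbel type $\Lambda$. If $\theta\ne0$, the equation for $b$ gives $b(t)=\gamma(1-t^{\theta})$ for a constant $\gamma$; recentring at $\gamma$ reduces matters to $t\psi(x)=\psi(t^{\theta}x)$, whose monotone solutions are $\psi(x)=(\mathrm{const})\,x^{-1/\theta}$ on a half-line, with the complementary half-line forced to $\psi\equiv0$ or $\psi\equiv\infty$ according to the sign of $\theta$; this is exactly the Fr\'echet type $\Phi_\alpha$ with $\alpha=1/\theta$ when $\theta>0$, and the Weibull type $\Psi_\alpha$ with $\alpha=-1/\theta$ (and finite right endpoint) when $\theta<0$. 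Finally, a direct computation shows each of $\Phi_\alpha$, $\Psi_\alpha$, $\Lambda$ is itself max-stable, so the list is both exhaustive and sharp.

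\emph{Main obstacle.} The delicate points are the passage in Step~2 from integer exponents to all real $t$ and the extraction of power/logarithm laws in Step~3: one must justify that the normalizing constants can be chosen to depend on $t$ multiplicatively/additively (this rests on repeated use of the convergence of types theorem) and that the monotonicity of $G$ promotes the measurable Cauchy equations to their classical solutions. Pinning down the boundary behaviour of $\psi$ in the $\theta\ne0$ cases --- that $\psi$ vanishes or blows up on the complementary half-line, which fixes the endpoints of the support --- also requires some care.
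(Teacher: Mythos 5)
The paper states this result without proof, attributing it to Fisher--Tippett and Gnedenko and pointing the reader to the standard monographs, so there is no ``paper's own proof'' to compare against; I will assess your argument on its own merits. Your proposal reproduces, in the right order, the three standard ingredients of the classical Fisher--Tippett--Gnedenko argument: (1) max-stability of $G$ via two normalizations of $F_{nk}$ and the convergence of types theorem (Theorem~\ref{theo:convergence of types}); (2) the semigroup identity $G^{st}=(G^s)^t$ plus uniqueness of the norming constants to produce the multiplicative Cauchy equation for $a$ and the cocycle equation for $b$; and (3) the case split $\theta=0$ (Gumbel) versus $\theta\neq 0$ (Fr\'echet/Weibull). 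All of the equations you write down are correct; the derivation $b(t)=\gamma(1-t^\theta)$ in the $\theta\neq0$ case follows, as you indicate, from symmetrizing $b(st)=a(t)b(s)+b(t)$ in $s,t$. This is essentially the proof one finds in \cite{Resnick:1987} or \cite{embrechts:kluppelberg:mikosch:book}.

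Two places deserve more care than your sketch gives them, and you flag only the second. First, you assert that $t\mapsto a(t)$ is ``positive and monotone'' and therefore solves the multiplicative Cauchy equation by $t^\theta$. Monotonicity of $a$ is not obvious a priori: from $G^t(x)=G(a(t)x+b(t))$ one gets, for two growth points $x_1<x_2$, that $a(t)(x_2-x_1)=G^\leftarrow\!\bigl(G(x_2)^t\bigr)-G^\leftarrow\!\bigl(G(x_1)^t\bigr)$, which is a difference of two decreasing functions and need not be monotone. What this expression does give readily is Borel measurability (or local boundedness) of $a$, which is what one actually needs to upgrade the multiplicative Cauchy equation to a power law; the analogous point applies to $b$ in the $\theta=0$ case. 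Second, the passage from integer $n$ to all real $t>0$ in $G^t(x)=G(a(t)x+b(t))$ requires a properness/Helly argument along rational sequences $t_n\to t$ together with nondegeneracy of $G^t$, and one should also note (or cite) that a max-stable $G$ is continuous and strictly increasing on the interior of its support, so that the inverse $G^\leftarrow$ used above is well behaved. These are standard repairs and do not change the structure of your argument.
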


These three types of distributions are known as the extreme value distributions. It is well known that the class of extreme value distributions is same as the class of max-stable distributions.  A different choice of centering gives another parametrization of the above limits called Generalized Extreme Value distributions. 

\begin{definition}[Generalized Extreme Value Distributions]
\label{def:gevd}
For any $\gamma\in\mathbb R$, define the generalized extreme value distribution function $G_\gamma$ as follows:
$$G_\gamma(x) =
\begin{cases}
\exp\left(-(1+\gamma x)^{-\frac1\gamma}\right), &\text{for $\gamma \neq 0$ and $1+\gamma x >0$},\\
\exp\left(-\e^{-x}\right), &\text{for $\gamma = 0$}.
\end{cases}$$

\end{definition}

Note that for $\gamma=0$, we have $G_0(x)=\Lambda(x)$. For $\gamma>0$, $1+\gamma x>0$ implies $x>-1/\gamma$ and after a shift it is of the same type as $\Phi_\alpha$ with $\alpha=1/\gamma$. When $\gamma<0$, $1-|\gamma|x>0$ implies $x<1/|\gamma|$ and again after a shift, this of the same type as $\Psi_\alpha$ with $\alpha=1/|\gamma|$. We often use this parametrization of the limits and write $F\in D(G_\gamma)$.

Note that as $1-\Lambda(x)\sim e^{-x}$, it has all moments finite and hence it is not heavy tailed. Now as $1-\Phi_\alpha(x)\sim x^{-\alpha}$, this is heavy tailed. For $\Psi_\alpha$ we have the right end point $x_F$ to be finite. At this stage we want to point out the relation between extreme value theory and regular variation. The next result not only gives the relation, but also gives the properties of the centering and scaling needed in~\eqref{chap1:def:eq:doa}. For stating the result, we need to introduce the notation for left continuous inverse of a nondecreasing function.

\begin{definition}
 \label{chap1:def:inverse}
Suppose $f$ is a nondecreasing function on $\mathbb R$. The (left continuous) inverse of $f$ is defined as $$f^{\leftarrow}(y)=\inf \{s: \ f(s)\ge y\ \}.$$
\end{definition}
 
Suppose $f(\infty)=\infty$ and $f\in RV_{\alpha}$ with $0<\alpha<\infty$, then it is known \citep[Proposition 0.8]{Resnick:1987} that $f^{\leftarrow}\in RV_{1/\alpha}$.

\begin{theorem}
 \label{chap1:theo:evt:rv}
A distribution function $F\in D(G_\gamma)$ if and only if
\begin{enumerate}
 \item ($\gamma>0$): $x_F=\infty$ and $\ol F\in RV_{-\frac1\gamma}$. In this case 
\begin{equation*}
 F^n(a_n x+b_n)\rightarrow \Phi_{\frac1\gamma}(x) \quad \text{with } a_n=\left(\frac1{\ol F}\right)^{\leftarrow}(n), \ b_n=0.
\end{equation*}

\item ($\gamma<0$): $x_F<\infty$ and $\ol F\left(x_F-\frac1x\right)\in RV_\frac1\gamma.$ In this case 

\begin{equation*}
 F^n(a_n x+b_n)\rightarrow \Psi_{-\frac1\gamma}(x) \quad \text{with } a_n=x_F-\left(\frac1{\ol F}\right)^{\leftarrow}(n), \ b_n=x_F.
\end{equation*}

\item ($\gamma=0$): $x_F\leq \infty$ and
\begin{equation}
 \label{chap1:theo:evt:rv:eq:gumbel}
\lim_{t\uparrow x_F}\frac{\ol F(t+xf(t))}{\ol F(t)}=e^{-x}, \quad x\in \mathbb R
\end{equation}
for some positive function $f$. If~\eqref{chap1:theo:evt:rv:eq:gumbel} holds for some  positive function $f$, then $\int_t^{x_F} \ol F(s)ds <\infty$ for $t<x_F$ and $$f(t)=\frac{\int_t^{x_F}\ol F(s)ds}{\ol F(t)}, \quad t<x_F. $$ In this case,
$$F^n(a_n x+b_n)\rightarrow \Lambda(x) \quad \text{with } b_n=\left(\frac{1}{\ol F}\right)^{\leftarrow}(n) \text{ and } a_n=f(b_n).$$

\end{enumerate}

\end{theorem}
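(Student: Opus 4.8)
The plan is to reduce the distributional convergence in \eqref{chap1:def:eq:doa} to an equivalent statement about the tail $\ol F$, and then to treat the three regimes separately, modelling the $\gamma<0$ case on the $\gamma>0$ case and regarding $\gamma=0$ as the substantive one. First I would record the elementary equivalence: for $c_n>0$, $d_n\in\mathbb R$ and $\tau\in[0,\infty]$ one has $F^n(c_nx+d_n)\to\e^{-\tau}$ if and only if $n\ol F(c_nx+d_n)\to\tau$, which follows from $n\log(1-u_n)\sim-nu_n$ as $u_n\to0$. Since the Fisher--Tippett theorem stated above, together with Theorem~\ref{theo:convergence of types}, already forces the only possible limits to be of the three types $\Phi_{1/\gamma}$, $\Psi_{-1/\gamma}$, $\Lambda$, it then suffices in each case to show that the asserted regular variation condition is equivalent to $n\ol F(a_nx+b_n)\to-\log G_\gamma(x)$ for the prescribed centering and scaling.

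For $\gamma>0$, set $\alpha:=1/\gamma$. If $x_F=\infty$ and $\ol F\in RV_{-\alpha}$, then $1/\ol F\in RV_\alpha$ is eventually increasing to $\infty$, so by the fact recorded above that the inverse of such a function lies in $RV_{1/\alpha}$, the function $a(t):=(1/\ol F)^\leftarrow(t)$ is in $RV_{1/\alpha}$ and satisfies $\ol F(a(t))\sim1/t$; writing $t\,\ol F(a(t)x)=\bigl(t\,\ol F(a(t))\bigr)\,\ol F(a(t)x)/\ol F(a(t))$ and using the regular variation of $\ol F$ gives $t\,\ol F(a(t)x)\to x^{-\alpha}$, and specialising to $t=n$ finishes the ``if'' direction with $a_n=(1/\ol F)^\leftarrow(n)$, $b_n=0$. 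Conversely, if $F\in D(G_\gamma)$ the limit is of Fréchet type, so $n\ol F(a_nx+b_n)\to x^{-\alpha}$ for $x>0$; I would first show $b_n/a_n\to0$ so the centering can be absorbed, then divide by $n\ol F(a_n)\to1$ to get $\ol F(a_nx)/\ol F(a_n)\to x^{-\alpha}$, and since $a_n\uparrow\infty$, a standard sequential criterion for regular variation (applied to the monotone function $1/\ol F$) upgrades this to $\ol F\in RV_{-\alpha}$, while $\ol F>0$ everywhere forces $x_F=\infty$. For $\gamma<0$, set $\alpha:=-1/\gamma$; the limit is of Weibull type, hence $x_F<\infty$, and putting $U(x):=\ol F(x_F-1/x)$ and performing the change of variable $y=x_F-1/x$ reduces everything to the Fréchet case applied to $U$, yielding the equivalence with $U\in RV_{-\alpha}$ together with the scaling $a_n=x_F-(1/\ol F)^\leftarrow(n)$, which is positive for large $n$ since $(1/\ol F)^\leftarrow(n)\uparrow x_F$, and $b_n=x_F$.

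The case $\gamma=0$ is the one I expect to be the main obstacle. Assuming \eqref{chap1:theo:evt:rv:eq:gumbel} holds for some positive $f$, monotonicity of $\ol F$ makes the convergence $\ol F(t+xf(t))/\ol F(t)\to\e^{-x}$ locally uniform in $x$ and, via a domination argument, integrable over $x>0$; integrating then shows $\int_t^{x_F}\ol F(s)\,ds<\infty$ and $f(t)\sim\int_t^{x_F}\ol F(s)\,ds/\ol F(t)=:f_0(t)$, and a further Karamata-type estimate shows $f_0$ is itself an admissible auxiliary function, being self-neglecting in the sense that $f_0(t+xf_0(t))/f_0(t)\to1$. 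With $b_n=(1/\ol F)^\leftarrow(n)$, so $\ol F(b_n)\sim1/n$ and $b_n\uparrow x_F$, and $a_n=f_0(b_n)$, one then gets $n\ol F(a_nx+b_n)=\bigl(n\ol F(b_n)\bigr)\,\ol F(b_n+xf_0(b_n))/\ol F(b_n)\to\e^{-x}=-\log\Lambda(x)$. For the converse, $F\in D(\Lambda)$ and the reduction above give $n\ol F(a_nx+b_n)\to\e^{-x}$; normalising so that $n\ol F(b_n)\to1$ turns this into $\ol F(b_n+a_nx)/\ol F(b_n)\to\e^{-x}$, which says $1/\ol F$ belongs to de Haan's class $\Pi$ with auxiliary sequence $a_n$, and the remaining work is to transfer this from the sequence $b_n$ to a continuous variable $t\uparrow x_F$ and to identify the auxiliary function as $\int_t^{x_F}\ol F(s)\,ds/\ol F(t)$. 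The delicate points throughout the $\gamma=0$ analysis are precisely the uniformity of this $\Gamma$-type convergence, the finiteness of $\int_t^{x_F}\ol F$, and the self-neglecting property of $f_0$; the $\gamma\neq0$ cases, by contrast, are essentially corollaries of the inversion and uniform-convergence facts for regularly varying functions recorded in Subsection~\ref{Some well known facts about regular variations}.
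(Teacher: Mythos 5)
The paper does not prove this theorem: it is stated as a background result recalled from the classical extreme value literature (Resnick, de~Haan--Ferreira), so there is no in-paper argument to compare against. Your outline is nonetheless the standard textbook route, and the overall structure is sound. The logarithmic reduction $F^n(c_nx+d_n)\to\e^{-\tau}\iff n\ol F(c_nx+d_n)\to\tau$, the Karamata-inversion argument for $\gamma>0$, the change of variables $y=x_F-1/x$ reducing $\gamma<0$ to the Fr\'echet case, and the treatment of $\gamma=0$ via $\Gamma$-variation of $\ol F$ and $\Pi$-variation of $1/\ol F$ are exactly how the theorem is proved in the sources this thesis cites.

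Two places in your sketch deserve flagging. First, in the $\gamma>0$ converse you write ``first show $b_n/a_n\to0$ so the centering can be absorbed'' as though it were routine; this step is not automatic and is where the Fr\'echet case genuinely uses the boundary behaviour $\Phi_{1/\gamma}(0)=0$ together with the convergence-of-types theorem to force $b_n=\lito(a_n)$ before the normalisation $\ol F(a_n)\sim1/n$ can be extracted. The standard sequential-to-continuous upgrade of $\ol F(a_n x)/\ol F(a_n)\to x^{-\alpha}$ to $\ol F\in RV_{-\alpha}$ also requires the observation that $a_{n+1}/a_n\to1$ (because $\ol F(a_{n+1})/\ol F(a_n)\to1$), which you implicitly invoke but should state. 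Second, in the $\gamma=0$ case, the claim that ``integrating then shows $\int_t^{x_F}\ol F(s)\,ds<\infty$'' needs the dominated-convergence bound made explicit: one uses \eqref{chap1:theo:evt:rv:eq:gumbel} to bound $\ol F(t+xf(t))/\ol F(t)$ by a multiple of $\e^{-x/2}$ for $x$ large, uniformly in $t$ near $x_F$, and then the change of variable $s=t+xf(t)$ converts the convergent integral in $x$ into the finiteness and the asymptotic identity $f(t)\sim\int_t^{x_F}\ol F(s)\,ds/\ol F(t)$. You have correctly identified the self-neglecting property of $f_0$ and the transfer from the sequence $b_n$ to the continuous variable $t$ as the two remaining technical obstacles; these are indeed the nontrivial lemmas (Propositions 1.1 and 1.9 and the surrounding material in Resnick's book) that one cannot shortcut.
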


\section{Summary of the thesis}\label{summary}
This thesis considers three problems where regularly varying functions play an important role. The problems are from classical as well as free probability theory. The first two problems described in Subsections~\ref{chap1:subsec1} and~\ref{chap1:subsec2} involve the tail behavior of randomly weighted sums and the tail behavior of products of random variables from conditional extreme value model. The third problem described in Subsection~\ref{chap1:subsec3} provides the extension of the notion of subexponentiality to the free probability theory. It shows that the probability distribution functions with regularly varying tails play as important a role in free probability theory as in classical probability.
\begin{enumerate}
\item Suppose $\{X_t\}_{t\geq 1}$ is a sequence i.i.d.\ random variables having regularly varying tails of index $-\alpha$ with $\alpha>0$ and $\{\Theta_t\}_{t\geq 1}$ is a sequence of positive random variables independent of $\{X_t\}_{t\geq 1}$. We obtain sufficient conditions, which ensure that $\xinf = \sum_{t=1}^\infty \Theta_t X_t$ has regularly varying tails,  under  reduced conditions involving the summability of only $\alpha$-th moments of $\{\Theta_t\}_{t\ge 1}$. Motivated by \cite{denisov:zwart:2007}, we reduce the moment conditions required in determining the series behavior.
 For a converse result to the problem above, we suppose that $\xinf=\sum_{t=1}^{\infty}\Theta_t X_t$ converges with probability $1$ and has regularly varying tail of index $-\alpha$, where $\alpha>0$. Also assume $\{\Theta_t\}_{t\geq 1}$ are positive random variables independent of the positive i.i.d.\ sequence $\{X_t\}_{t\geq1}$. We obtain sufficient moment conditions on $\{\Theta_t\}$, so that the regularly varying tail of $\xinf$ guarantees the same for $X_1$.
\item In much of the probability literature on the product of  random variables with regularly varying tails, the random variables are taken to be independent. We extend the results to a suitable dependence structure. The conditional extreme value models were introduced by \cite{heffernan:tawn:2004} and later elaborated upon by \cite{heffernan:resnick:2007} to model the multivariate regular variation and accommodate the notions of asymptotic independence and dependence. We explore the behavior of $XY$, when $(X,Y)$ follow the conditional extreme value model.
\item Free probability theory is a fast emerging area of interest and it is worthwhile to look into the role of regularly varying functions in the free probability theory. It is already known that they play a very crucial role in determining the behavior of domains of attraction of stable laws \citep{bercovici1999stable, bercovici2000free}. In classical probability theory, the subexponential random variables play an important role in the class of heavy-tailed random variables. We extend the notion of subexponentiality to the free probability setup. We show the class is nonempty. In fact, we show that it contains all random variables with regularly varying tails. In the process, we prove results giving the relation between the error terms in Laurent series expansions of Voiculescu and Cauchy transforms, which can be of independent interest.
\end{enumerate}

\begin{subsection}{Tail behavior of randomly weighted sums}\label{chap1:subsec1}
We consider a problem about the tail behavior of the randomly weighted sum and its converse. The details of this problem is available in Chapter~\ref{chap2} and is based on \cite{hazra:maulik:2010b}.

If $\{c_t\}_{t\geq 1}$ is a sequence of positive real numbers satisfying suitable summability conditions and $\{X_t\}$ are i.i.d.\ random variables with regularly varying tails of index $-\alpha$ with $\alpha>0$, then $\sum_{t=1}^{\infty}c_tX_t$ has regularly varying tails with same index \citep[see][Section~4.5, for details]{Resnick:1987}. One may replace the constant coefficients by a sequence of positive random variables $\{\Theta_t\}_{t\geq 1}$ independent of $\{X_t\}_{t\geq 1}$ to consider the tail behavior of $\xinf=\sum_{t=1}^{\infty}\Theta_tX_t$. Observe that the tail behavior of the product in one particular term can be controlled by the moment conditions on $\Theta_t$ according to Breiman's theorem. Recall from Theorem~\ref{chap1:theo:product}~\ref{chap1:theo: breiman} that Breiman's result gives: if $\E[\Theta^{\alpha+\epsilon}]<\infty$ for some $\epsilon>0$ and $X$ has regularly varying tail with index $-\alpha$ where $\alpha>0$ then $\Theta X$ also has regularly varying tail of index $-\alpha$. In fact in this case, $\prob[\Theta X>x] \sim \E[\Theta^\alpha] \prob[X>x]$ holds. When one considers the series $\xinf$, then both the small and the large values of $\Theta$ need to be managed. The large values are controlled by Breiman's result. \cite{resnick:willekens:1991} assumed the following moment conditions on $\{\Theta_t\}_{t\geq 1}$: 
\begin{enumerate}
\item When $0<\alpha<1$, then for some $\epsilon>0$,
\begin{equation}\label{chap1:condition1}
\sum_{t=1}^{\infty}\left(E[\Theta_t^{\alpha-\epsilon}+\Theta_t^{\alpha+\epsilon}]\right)<\infty.
\end{equation}
\item When $\alpha\geq1$, then for some $\epsilon>0$,
\begin{equation}\label{chap1:condition2}
\sum_{t=1}^{\infty}\left(E[\Theta_t^{\alpha-\epsilon}]^{\frac{1}{\alpha+\epsilon}}
+\E[\Theta_t^{\alpha+\epsilon}]^{\frac{1}{\alpha+\epsilon}}\right)<\infty.
\end{equation}
\end{enumerate}
Then they showed that $\xinf$ has regularly varying tail of index $-\alpha.$ Note that the conditions~\eqref{chap1:condition1} and~\eqref{chap1:condition2} also hold for $\alpha\ge 1$ and $0<\alpha<1$  respectively.
 \cite{zhang:weng:shen:2008} extended the result to random variables with extended regularly varying tails. Further review of the existing literature in the topic is available in Chapter~\ref{chap2}. It can be easily verified that the series converges almost surely under the above moment conditions \citep[see][]{jessen:mikosch:2006, mikosch:somorodnitsky:2000}.

In many cases, the existence of moments of $\Theta$ of order strictly greater that $\alpha$ becomes too strong an assumption in Breiman's result. In fact, the condition $\sum_{t=1}^{\infty}E[\Theta_t^{\alpha+\epsilon}]<\infty$ can become too restrictive for regularly varying tail property of $\xinf$.  For example, if we take a sequence of random variables such that $\sum_{t=1}^{\infty}\E[\Theta_t^{\alpha+\epsilon}]=\infty$, but $\sum_{t=1}^{\infty}\E[{\Theta_t}^{\alpha}]<\infty$ and $\{X_t\}$ to be standard Pareto with parameter $-\alpha$ where $0<\alpha<1$, then it can be shown that $\xinf$ still has regularly varying tail of index $-\alpha$. \cite{denisov:zwart:2007} relaxed the moment condition in Breiman's result by assuming the finiteness of $\alpha$-th moment of $\Theta$ alone. They also assumed the natural condition that $\Theta$ has lighter tail than $X$, that is, $\prob[\Theta>x]=\text{o} (\prob[X>x])$ as $x\rightarrow\infty$. They replaced the requirement of the existence of higher moments by some further sufficient conditions on the slowly varying part in $\prob[X>x]$. We exhibit that these conditions have a natural extension when we consider the randomly weighted sums and series.

We consider a sequence of identically distributed random variables $\{X_t\}_{t\geq 1}$ with regularly varying tails of index $-\alpha$, where $\alpha>0$. We reduce the independence assumption to that of pairwise asymptotically independence, that is, 
\begin{equation}\label{eq:pairwiseasympind}
\frac{\prob[X_t>x,X_s>x]}{\prob[X_s>x]}\rightarrow 0 \text{ as } x\rightarrow\infty \text{ for } s\neq t.
\end{equation}
 The nonnegativity of the summands can also be replaced by the weaker condition of the negligibility of the left tail compared to the right tail, that is, $\prob[X<-x]=\lito(\prob[X>x])$ as $x\rightarrow\infty$. We also consider another sequence of non-negative random variables $\{\Theta_t\}_{t\geq1}$ independent of $\{X_t\}$ such that  $\prob[\Theta_t>x]=\lito(\prob[X_1>x])$ for all $t\geq 1$ and satisfying :
\begin{enumerate}
    \item When $ 0 <\alpha<1$, \begin{equation} \label{chap1:alpha less than 1} \sum_{t=1}^{\infty} \E[{\Theta_t}^{\alpha}]<\infty.  \end{equation}
    \item When $1\leq\alpha<\infty$,  \begin{equation}\label{chap1:alpha greater than 1}\sum_{t=1}^{\infty} (\E[{\Theta_t}^{\alpha}])^{\frac{1}{\alpha+\epsilon}}<\infty  \mbox{~~~~~~~for some  $\epsilon > 0$.}\end{equation}
\end{enumerate}

Note again that the conditions~\eqref{chap1:alpha less than 1}  and~\eqref{chap1:alpha greater than 1} also hold for $\alpha\ge 1$ and $0<\alpha<1$  respectively.
In Chapter~\ref{chap2}, we show that under the natural uniform extensions of the conditions on $\{X_t, \Theta_t\}_{t\geq 1}$ given in \cite{denisov:zwart:2007}, we have
\begin{equation} \label{chap1:infinite asymptotics}
\prob\left[\max_{1\leq k <\infty}\sum_{t=1}^{k}\Theta_t X_t > x\right]\sim \prob[\xinf > x]\sim \prob[X_1>x]\sum_{t=1}^{\infty}\E[\Theta_t^{\alpha}].
\end{equation}
It should be noted above that in conditions (\ref{chap1:alpha less than 1}) and (\ref{chap1:alpha greater than 1}) we reduce the assumption of existence of moments of $\Theta$ of order strictly greater than $\alpha$ as well as the extra summability conditions of the moments.

Till now, we have discussed how the regular variation of the tail of $X_1$ forces the same for $\xinf$ under various weights and dependence structure.
\cite{JMRS:2009} discussed a converse problem. They considered a sequence of i.i.d.\ positive random variables $\{X_t\}_{t\geq 1}$ and a non-random sequence $\{c_t\}$ satisfying some summability assumptions, so that $\sum_{t=1}^{\infty}c_tX_t<\infty$ almost surely and has regularly varying tail of index $-\alpha$ with $\alpha>0$. Then they showed under suitable assumptions that $X_1$ has regularly varying tail with same index. Motivated by this converse problem, we obtain sufficient conditions, which  guarantee a regularly varying tail of $X_1$, whenever $\xinf$ has one. We show in Theorem~\ref{main result}, if  $\{X_t, t\geq1\}$ is a sequence of positive, identically distributed and pairwise asymptotically independent random variables,  $\{\Theta_t, t\geq1\}$ is a sequence of positive random variables independent of $\{X_t, t\geq 1\}$ satisfying the moment condition~\eqref{chap1:condition1} or \eqref{chap1:condition2} such that $\xinf=\sum_{t=1}^{\infty}\Theta_tX_t$ is finite almost surely and has regularly varying tail of index $-\alpha$ with $\alpha>0$, and
\begin{equation}\label{chap1:condition3} \sum_{t=1}^{\infty}\E[\Theta_t^{\alpha+i\phi}]\neq 0 \text{ for all }  \phi\in \mathbb{R},\end{equation}
 then $X_1$ has regularly varying tail of index $-\alpha$.

The condition~\eqref{chap1:condition3} is necessary for the above theorem. In fact, in Theorem~\ref{thm: converse}, we show that whenever there exists a sequence of positive random variables $\{\Theta_t, t\geq1\}$  which for some $\alpha>0$, satisfies the appropriate moment condition \eqref{chap1:condition1} or \eqref{chap1:condition2},  but the condition \eqref{chap1:condition3} fails, that is,  $\sum_{t=1}^{\infty}\E[\Theta_t^{\alpha+i\phi_0}]=0$ for some $\phi_0$, then there exists a sequence of i.i.d.\ random variables $\{X_t\}$, whose common marginal distribution function does not have regularly varying tail, yet the series $\xinf=\sum_{t=1}^{\infty}\xtheta$ converges almost surely and $\xinf$ has regularly varying tail of index $-\alpha$.

\end{subsection}

\begin{subsection}{Products in the Conditional Extreme Value model}\label{chap1:subsec2}
The joint distribution of two random variables is said to be \textit{asymptotically independent}, if the suitably centered and scaled coordinatewise maximum of $n$ i.i.d.\ observations from the distribution has a non-degenerate limit, which is a product measure. It follows form Proposition 5.27 of \cite{Resnick:1987} that this  is equivalent to the pairwise asymptotic independence between the two marginals as described in \eqref{eq:pairwiseasympind}. However, this concept is too weak to conclude anything useful about the product of random variables, which is the main issue in this problem. So this concept was replaced by a stronger condition in \cite{Maulik:Resnick:Rootzen:2002}. They assumed that
\begin{equation}\label{chap1:asympind-strong}
t\prob\left[\left(\frac{X}{b(t)}, Y\right)\in\cdot\right]\vaguec(\nu\times G)(\cdot) \text { on } (0,\infty]\times[0,\infty],
\end{equation}
where $X$ and $Y$ are strictly positive random variables and $b(t)=\inf\{x:\prob[X>x]\leq 1/t\}$, $\nu$ is a Radon measure on $(0, \infty]$ and $G$ is a probability measure with $G(0,\infty)=1$.  Note that this implies that $X$ has regularly varying tail of index $-\alpha$, for some $\alpha>0$, $\nu(x,\infty)=cx^{-\alpha}$ for some $c\in(0,\infty)$ and $(X,Y)$ are asymptotically independent in the sense defined above. It was shown in \cite{Maulik:Resnick:Rootzen:2002} that if $(X,Y)$ has the above dependence structure~\eqref{chap1:asympind-strong}, then, under some moment conditions, the product has regularly varying tail, whose behavior is similar to that of the heavier of the two factors.

Suppose that $(X, Y)$ are multivariate regularly varying in the sense that there exists regularly varying functions $b(\cdot)$ and $a(\cdot)$ and a Radon measure $\mu(\cdot)$ such that $$t\prob\left[\left(\frac{X}{b(t)}, \frac{Y}{a(t)}\right)\in\cdot\right]\vaguec\mu(\cdot) \text{ on } [0,\infty]^{2}\setminus\{(0,0)\}.$$ If the limit measure $\mu$ satisfies $\mu((0, \infty]^2) > 0$, $(X, Y)$ are said to be \textit{asymptotically dependent}. \cite{Maulik:Resnick:Rootzen:2002} showed that, for asymptotically dependent $(X, Y)$, the random variables $X$, $Y$ and $XY$ have regularly varying tails of indices $-\alpha$, $-\beta$ and $-\alpha\beta/(\alpha+\beta)$, for some $\alpha, \beta > 0$. Thus, the product behavior for jointly multivariate regularly varying random variables is in contrast to the case when $(X, Y)$ follow (\ref{chap1:asympind-strong}). This causes an interest about the behavior of the product of multivariate regularly varying random variables, which have other types of joint distribution.

The conditional extreme value model (CEVM) provides a rich source of dependence structure for multivariate regularly varying random variables. The model extends~\eqref{chap1:asympind-strong} to include the limit measures which are not in product form. This model was introduced as the usual ones in multivariate extreme value theory suffer a lot either from the presence of asymptotic independence or the absence of one or more components in domain of attraction of an univariate extreme value. The model was first proposed by \cite{heffernan:tawn:2004} and then further formalized and analyzed by \cite{heffernan:resnick:2007}. We now briefly describe the model.

Let $(X, Y)$ be a two-dimensional real-valued random vector and let $F$ denote the marginal distribution function of $Y$. $(X, Y)$ is said to be from a conditional extreme value model if the following conditions hold:
\begin{enumerate}
\item The marginal distribution function $F$ is in the domain of attraction of an extreme value distribution $G_{\gamma}$, for some $\gamma\in \mathbb{R}$ as in Definition~\ref{chap1:def:doa}.
\item There exists a positive valued function $\alpha$ and a real valued function $\beta$ and a non-null Radon measure $\mu$ on Borel subsets of $[-\infty,\infty] \times \Ebar$, where $\Ebar = \{ y\in \mathbb R: 1+\gamma y \ge 0\}$, such that
    \begin{enumerate}
      \item \label{chap1:basic1} $$t \prob \left[ \left(\frac{X-\beta(t)}{\alpha(t)}, \frac{Y-b(t)}{a(t)}\right) \in \cdot \right] \vaguec \mu(\cdot)$$ on $[-\infty, \infty] \times \Ebar$, and
      \item for each $y \in \Egamma$, $\mu ((-\infty, x] \times (y, \infty))$ is a non-degenerate distribution function in $x$.
    \end{enumerate}
\item The function $H(x)=\mu((-\infty,x]\times(0,\infty))$ is a probability distribution.
\end{enumerate}

The above conditions imply that if $(x,0)$ is a continuity point of $\mu(\cdot)$, then 
$$\prob\left[\frac{X-\beta(t)}{\alpha(t)}\leq x \mid Y>b(t)\right]\rightarrow H(x), \quad \text{as}\quad t\rightarrow \infty,$$ which suggests the name of this class of distribution functions.
\cite{heffernan:resnick:2007} showed that for a conditional extreme value model, there exists functions $\psi_1(\cdot)$, $\psi_2(\cdot)$ such that,
$$\lim_{t\rightarrow\infty}\frac{\alpha(tx)}{\alpha(t)}=\psi_1(x) \qquad \text{and} \qquad \lim_{t\rightarrow\infty}\frac{\beta(tx)-\beta(t)}{\alpha(t)}=\psi_2(x)$$
hold uniformly on compact subsets of $(0,\infty)$. We must also necessarily have, for some $\rho \in \mathbb R$, $\psi_1(x)=x^{\rho}, x>0$ and either $\psi_2$ is $0$ or, for some $k\in\mathbb R$, on $x>0$,
$$\psi_2(x) =
\begin{cases}
  \frac k\rho (x^\rho - 1), &\text{when $\rho \neq 0$},\\
  k \log x, &\text{when $\rho = 0$}.
\end{cases}$$

In Chapter~\ref{chap3}, we study the tail behavior of the product $XY$, when $(X, Y)$ belongs to the conditional extreme value model. The discussion in Chapter~\ref{chap3} is based on \cite{hazra:maulik:2011}. Throughout this thesis we assume $(\psi_1(x),\psi_2(x))\neq(1,0)$.  More precisely, we considered the following cases:
\begin{enumerate}
\item When $\gamma>0$ and $\rho>0$, then under some tail conditions $XY$ has regularly varying tail of index $-1/{(\gamma+\rho)}$. This situation was similar to the case when $(X,Y)$ satisfy multivariate regular variation with asymptotic dependence.
\item When $\gamma<0$ and $\rho<0$, the situation is more complicated. Here $b(t)\rightarrow b(\infty)<\infty$, and $\beta(t)\rightarrow\beta(\infty)<\infty$ as $t\rightarrow\infty$, and the regularly varying property of $Y$ occurs at $\beta(\infty)$. We further divided this into following subcases:
  \begin{enumerate}
  \item Suppose $\gamma \le \rho$, $\beta(\infty)>0$ and $b(\infty)>0$ and also assume $X$ and $Y$ are strictly positive. Then under some moment conditions on $X$, we have $(b(\infty)\beta(\infty)-XY)^{-1}$ has regularly varying tail of index $-{1}/{|\rho|}$.
  \item If $\beta(\infty)=0=b(\infty)$ and $X$ and $Y$, then $(XY)^{-1}$ has regularly varying tail of index ${-{1}/{(|\gamma|+|\rho|)}}$.
  \item Suppose the $\beta(\infty)=0$ and $b(\infty)=1$ with $Y>0$. Then $-(XY)^{-1}$ has regularly varying tail of index $-{1}/{|\rho|}$.
  \item If both $\beta(\infty)$ and $b(\infty)$ are strictly less than zero, then $(XY-\beta(\infty)b(\infty))^{-1}$ has regularly varying tail of index ${-1/|\rho|}$.
  \end{enumerate}
  \item Let $\rho>0$ and $\gamma<0$ and assume that $b(\infty)>0$ and $Y>0$. The product $XY$ has regularly varying tail of index $-{1}/{|\gamma|}$, provided some moment conditions on $X$ gets satisfied.
\item If $\rho<0$ and $\gamma>0$ then  $XY$ has a regularly varying tail of index $-1/\gamma$.
\end{enumerate}

\end{subsection}

\begin{subsection}{Sums of free random variables with regularly varying tails}\label{chap1:subsec3}
Free probability theory is the non-commutative analogue of the classical probability theory. In free probability theory, the notion of independent random variables is replaced by freely independent operators on some suitable Hilbert space. A non-commutative probability space is a pair $(\mathcal{A}, \tau)$, where $\mathcal{A}$ is a unital complex algebra and $\tau$ is a linear functional on $\mathcal{A}$ satisfying $\tau(1)=1$. A non-commutative analogue of independence, based on free products, was introduced by \cite{voiculescu1986addition}. A family of unital subalgebras $\{\mathcal{A}_i\}_{i\in I}\subset\mathcal{A}$ is called a family of {\it free algebras}, if $\tau(a_1\cdots a_n)=0$, whenever $\tau(a_j)=0,\  a_j\in \mathcal{A}_{i_j}$ and $i_j\neq i_{j+1}$ for all $j$. The above setup is suitable for dealing with bounded random variables. In order to deal with unbounded random variables we need to consider a tracial $W^*$-probability space $(\mathcal{A}, \tau)$, where $\mathcal{A}$ is a von Neumann algebra and $\tau$ is a normal faithful tracial state. A self-adjoint operator $X$ is said to be affiliated to $\mathcal{A}$, if $u(X)\in\mathcal{A}$ for any bounded Borel function $u$ on the real line $\mathbb{R}$. A self-adjoint operator affiliated to $\mathcal{A}$ will also be called a random variable. The notion of freeness was extended to this context by~\cite{bercovici1993free}. The self-adjoint operators $\{X_i\}_{1\leq i\leq k}$ affiliated with a von Neumann algebra $\mathcal{A}$ are called freely independent, or simply free, if and only if the algebras generated by the operators, $\{f(X_i): \text{ $f$ bounded measurable} \}_{1\leq i\leq p}$ are free. Given a self-adjoint operator $X$ affiliated with $\mathcal{A},$ the law of $X$ is the unique probability measure $\mu_X$ on $\mathbb{R}$ satisfying $$\tau(u(X))=\int_{-\infty}^{\infty}u(t)d\mu_X(t)$$ for every bounded Borel function $u$  on $\mathbb{R}$. If $e_A$ denote the projection valued spectral measure associated with $X$ evaluated at the set $A$, then it is easy to see that $\mu_{X}(-\infty,x]=\tau(e_{(-\infty,x]}(X)).$ The distribution function of $X$, denoted by ${F}_X$, is given by ${F}_{X}(x)=\mu_X(-\infty,x]$.

The measure $\mu*\nu$ is the classical convolution of the measures $\mu$ and $\nu$, which also corresponds to the probability law of random variable $X+Y$ where $X$ and $Y$ are independent and have laws $\mu$ and $\nu$ respectively. Now it is well known that given two measures $\mu$ and $\nu$, there exists a tracial $W^*$-probability space $(\mathcal{A}, \tau)$ and free random variables $X$ and $Y$ affiliated to $\mathcal{A}$ such that $\mu$ and $\nu$ are the laws of $X$ and $Y$ respectively. Now $\mu\boxplus\nu$ denotes the law of $X+Y$. This is well defined as the law of $X+Y$ does not depend on the particular choices of $X$ and $Y$, except for the fact that $X$ and $Y$ have laws $\mu$ and $\nu$ respectively and they are free. The free convolution was first introduced in \cite{voiculescu1986addition} for compactly supported measures, extended by \cite{maassen1992addition} to measures with finite variance and by \cite{bercovici1993free} to measures with unbounded support.

The relationship between $*$ and $\boxplus$ convolution is very striking. There are many similarities, for example, in characterizations of infinitely divisible and stable laws \citep{bercovici1999stable, bercovici2000free}, weak law of large numbers \citep{bercovici1996law} and central limit theorem \citep{pata1996central,Voiculescu:1985,maassen1992addition}. Analogues of many other classical theories have also been derived. In recent times, links with extreme value theory \citep{arous2006free, arous2009free} and de Finetti type theorems \citep{banica907finetti} are of major interest. However, there are differences as well. Cramer's theorem \citep{bercovici1995superconvergence} and Raikov's theorem \citep{benaych2004failure} fail to extend to the non-commutative setup. Further details about free probability and free convolution is provided in Chapter~\ref{free sub chapter}.

In Chapter~\ref{free sub chapter}, we study some heavy tailed properties of the distributions under non-commutative setup. \cite{hazra:maulik:2010a} extended the definition of subexponentiality given in Definition~\ref{chap1:def:subexponential} to the free setup.
\begin{definition}\label{def: free sub}
A probability measure $\mu$ on $(0,\infty)$ is said to be \textit{free subexponential} if for all $n$,
\begin{equation}\label{chap1:freesub}
\mu^{\boxplus n}(x,\infty)=\underbrace{(\mu\boxplus\cdots\boxplus\mu)}_{n-\text{times}}(x,\infty)\sim n\mu(x,\infty), \ \ \text{as}\ \ x\rightarrow\infty.
\end{equation}
\end{definition}

In Theorem~\ref{main theorem-1}, we show that if ${\mu}$ has regularly varying tail of index ${-\alpha}$ with $\alpha>0$, then ${\mu}$ is free subexponential. Thus probability measures with regularly varying tails not only form a subclass of the classical subexponential probability measures, but they also form a subclass of the free subexponential. Theorem~\ref{main theorem-1} and the related results in Chapter~\ref{free sub chapter} are based on~\cite{hazra:maulik:2010a}.

 The above definition can be rewritten in terms of distribution functions as well. A distribution function $F$ is called free subexponetial if for all $n \in \mathbb N$, $\overline {F^{\boxplus n}}(x) \sim n \overline F(x)$ as $x\to\infty$. A random element $X$ affiliated to a tracial $W^*$-probability space is called free subexponential, if its distribution is so. 
%

Due to the lack of coordinate systems and expressions for joint distributions in terms of measures, the proofs of the above results deviates from the classical ones. When dealing with convolutions in the free setup, the best way is to go via Cauchy and Voiculescu transforms. The proof of the above theorem involves obtaining a relationship between Cauchy and Voiculescu transforms. Since, probability distribution function with regularly varying tails do not have all moments finite, their Cauchy and Voiculescu transforms will have Laurent series expansions of only finite order. We obtain a relationship between the remainder terms in two expansions, which is new in literature and is an extension of \eqref{eq: phi mu} proved in \cite{bercovici1999stable}, where only one term expansion was considered. In Theorems~\ref{thm: error equiv}--\ref{thm: error equiv new} we obtain sharper results by considering higher order expansions of Cauchy and Voiculescu transforms of distribution functions with regularly varying tails. This relation can be of independent interest in free probability theory.

To study the relationship between the remainder terms in the expansion of Cauchy and Voiculescu transforms, we derive two interesting results from complex analysis. We consider Taylor series expansion of finite order of an analytic function, as well as the remainder term, which is assumed to have suitable regularly varying properties besides other regularity conditions. We show in Theorems~\ref{fraction-taylor} and~\ref{inverse-taylor} that such properties are inherited by the remainder terms of of the reciprocals and inverses of the analytic functions respectively. We also use Karamata's Tauberian theorems and other results to relate the remainder term in the expansion of Cauchy transform and the regularly varying tail of the distribution function.

 \end{subsection}

\cleardoublepage
\chapter{Tail behavior of randomly weighted sums}\label{chap2}

\begin{section}{Introduction}\label{section:intro}
 Let $\{X_t, t \geq 1\}$ be a sequence of identically distributed, pairwise asymptotically independent, cf.~\eqref{asymptotic independence}, random variables and $\{ \Theta_t, t\geq 1 \}$ be a sequence of positive random variables independent of the sequence $\{X_t, t \geq 1\}$. We shall discuss the tail probabilities and almost sure convergence of $\xinf=\sum_{t=1}^{\infty}\Theta_t X_t^{+}$ (recall that $X^+=\max\{0,X\}$)  and $\max_{1\leq k<\infty}\sum_{t=1}^{k}\Theta_t X_t$, in particular, when $X_t$'s belong to the class of random variables with regularly varying tail and $\{\Theta_t, t\geq 1\}$ satisfies some moment conditions.
 In recent times, there have been quite a few articles devoted to the asymptotic tail behavior of randomly weighted sums and their maxima. See, for example,\cite{resnick:willekens:1991, tang:chen:ng:2005, tang:wang:2006, zhang:weng:shen:2008, hult:somorodnitsky:2008}.

The question about the tail behavior of the infinite series $\xinf$ with non-random $\Theta_t$ and i.i.d.\ $X_t$ having regularly varying tails has been studied well in the literature, as it arises in the context of the linear processes, including ARMA and FARIMA processes. We refer to \cite{jessen:mikosch:2006} for a review of the results. The case, when $\Theta_t$'s are random, arises in various areas, especially in actuarial and economic situations and stochastic recurrence equation. For various applications, see \cite{hult:somorodnitsky:2008, zhang:weng:shen:2008}.

\cite{resnick:willekens:1991} showed that if $\{X_t\}$ is a sequence of i.i.d.\ nonnegative random variables with regularly varying tail of index $-\alpha$, where $\alpha>0$ and $\{\Theta_t\}$ is another sequence of positive random variables independent of $\{X_t\}$, the series $\xinf$ has regularly varying tail under the following conditions, which we shall call the RW conditions:
\renewcommand{\labelenumi}{(RW\arabic{enumi})}
\renewcommand{\theenumi}{RW\arabic{enumi}}
\begin{enumerate}
\item \label{RW1} If $0 <\alpha<1$, then for some $\epsilon\in(0,\alpha)$, $\sum_{t=1}^{\infty} \E[\Theta_t^{\alpha+\epsilon} + \Theta_t^{\alpha-\epsilon}]<\infty$.
\item \label{RW2} If $1\leq\alpha<\infty$, then for some $\epsilon\in(0,\alpha)$, $\sum_{t=1}^{\infty} (\E[\Theta_t^{\alpha+\epsilon} + \Theta_t^{\alpha-\epsilon}])^{\frac{1}{\alpha+\epsilon}}<\infty$.
\end{enumerate}
In this case, we have $\prob[\xinf>x] \sim \sum_{t=1}^\infty \E[\Theta_t^\alpha] \prob[X_1>x]$ as $x\to\infty$.
\begin{remark} \label{rem: RW}
Each of the RW conditions implies the other for the respective ranges of $\alpha$. In particular, if $0<\alpha<1$, choose $\epsilon^\prime<\epsilon$ such that $\alpha+\epsilon^\prime<1$. Note that
\begin{multline*}
 \sum_{t=1}^{\infty} \E[\Theta_t^{\alpha+\epsilon^\prime} + \Theta_t^{\alpha-\epsilon^\prime}] \le 2 \sum_{t=1}^{\infty} \E[\Theta_t^{\alpha+\epsilon^\prime} \mathbbm 1_{[\Theta_t\ge 1]} + \Theta_t^{\alpha-\epsilon^\prime} \mathbbm 1_{[\Theta_t< 1]}]\\
 \le 2 \sum_{t=1}^{\infty} \E[\Theta_t^{\alpha+\epsilon} \mathbbm 1_{[\Theta_t\ge 1]} + \Theta_t^{\alpha-\epsilon} \mathbbm 1_{[\Theta_t< 1]}] \le 2\sum_{t=1}^{\infty} \E[\Theta_t^{\alpha+\epsilon} + \Theta_t^{\alpha-\epsilon}]<\infty.
 \end{multline*}
Further, since $\alpha+\epsilon^\prime<1$, we also have $\sum_{t=1}^{\infty} (\E[\Theta_t^{\alpha+\epsilon^\prime} + \Theta_t^{\alpha-\epsilon^\prime}])^{\frac{1}{\alpha+\epsilon^\prime}}<\infty$. On the other hand, if $\alpha\ge 1$ and $\epsilon>0$, then $\alpha+\epsilon>1$ and the condition~\eqref{RW2} implies $\sum_{t=1}^{\infty} \E[\Theta_t^{\alpha+\epsilon} + \Theta_t^{\alpha-\epsilon}]<\infty$.
\end{remark}

\cite{zhang:weng:shen:2008} considered the tails of $\sum_{t=1}^n \Theta_tX_t$ and the tails of their maxima, when $\{X_t\}$ are pairwise asymptotically independent and have extended regularly varying and negligible left tail and $\{\Theta_t\}$ are positive random variables independent of $\{X_t\}$. The sufficient conditions for the tails to be regularly varying are almost similar.

While the tail behavior of $\xinf$ requires only the $\alpha$-th moments of $\Theta_t$'s, we require existence and summability of some extra moments in the RW conditions. Note that $\Theta_t^{\alpha+\epsilon}$ acts as a dominator for $[\Theta_t\ge1]$ and $\Theta_t^{\alpha-\epsilon}$ acts as a dominator for $[\Theta_t\le1]$. In some cases, the assumption of existence and summability of extra moments can become restrictive. We now consider such an example.

\begin{example} Consider $\{\Theta_t\}$ such that $\sum_{t=1}^{\infty}\E[\Theta_t^{\alpha+\epsilon}]=\infty$ for all $\epsilon>0$ but $\sum_{t=1}^{\infty} \E[\Theta_t^{\alpha}]<\infty$. (A particular choice of such $\{\Theta_t\}$, for $\alpha<1$ is as follows: $\Theta_t$ takes values ${2^t}/{t^{2/\alpha}}$ and $0$ with probability ${2^{-t\alpha}}$ and $1-2^{-t\alpha}$ respectively.) Also let  $\{X_t\}$ be i.i.d.\ Pareto with parameter $\alpha<1$, independent of $\{\Theta_t\}$. Then it turns out, after some easy calculations, that $\sum_{t=1}^{\infty}\Theta_t X_t$ is finite almost surely and has regularly varying tail of index $-\alpha$. 
 \end{example}
This leads to the question whether we can reduce the moment conditions on $\Theta_t$ to obtain the regular variation of the tail for $\xinf$.

The situation becomes clearer when we consider a general term of the series. It involves the product $\Theta_t X_t$. Using Breiman's theorem, see Theorem~\ref{chap1:theo:product}~\ref{chap1:theo: breiman},  the tail behavior of the product depends on the moments of $\Theta_t$. Recall that Breiman's theorem states, if $X$ is a random variable with regularly varying tail of index $-\alpha$ for some $\alpha> 0$ and is independent of a positive random variable $\Theta$ satisfying $\E[\Theta^{\alpha+\epsilon}]<\infty$ for some $\epsilon>0$, then,
\begin{equation}\label{breiman}
\lim_{x\to\infty} \prob[\Theta X > x]\sim \E[\Theta^{\alpha}] \prob[X>x].
\end{equation}
Note that, in this case, we work with a probability measure $\prob [\Theta_t \in \cdot]$, unlike in the problem of the weighted sum, where a $\sigma$-finite measure $\sum_{t=1}^\infty \prob [\Theta\in\cdot]$ is considered. In this case, we can consider the dominator as $1$ on $[\Theta\le1]$ instead of $\Theta^{\alpha-\epsilon}$, since $1$ is integrable with respect to a probability measure.

\cite{denisov:zwart:2007} relaxed the existence of $(\alpha+\epsilon)$ moments in Breiman's theorem to $\E[\Theta^{\alpha}] < \infty$. They also made the further natural assumption that $\prob[\Theta>x]=\lito(\prob[X>x])$. However, to obtain~\eqref{breiman}, the weaker moment assumption needed to be compensated. They obtained some sufficient conditions for~\eqref{breiman} to hold. In Sections~\ref{section:notation} and~\ref{sec:direct}, we find conditions similar to those obtained by \cite{denisov:zwart:2007}, which will guarantee the regular variation of $\xinf$.

In the above discussion, we considered the effect of the tail of $X_1$ in determining the tail of $\xinf$. However, the converse question is also equally interesting. More specifically, let $\{X_t\}$ be a sequence of identically distributed, asymptotically independent, positive random variables, independent of another sequence of positive random variables $\{\Theta_t\}$. As before, we define $\xinf=\sum_{t=1}^{\infty}\Theta_t X_t$. Assume that $\xinf$ converges with probability one and has regularly varying tail of index $-\alpha$ with $\alpha>0$. In Section~\ref{sec: converse} we obtain sufficient conditions which will ensure that $X_1$ has a regularly varying tail of index $-\alpha$ as well.

Similar converse questions regarding Breiman's theorem~\eqref{breiman} have recently been considered in the literature. Suppose $X$ and $Y$ are positive random variables with $\E[Y^{\alpha+\epsilon}]<\infty$ and the product $XY$ has regularly varying tail of index $-\alpha$, $\alpha>0$. Then it was shown in \cite{JMRS:2009} that $X$ has regularly varying tail of same index and hence~\eqref{breiman} holds. They have also obtained similar results for the weighted series, when the weights $\{\Theta_t\}$ are nonrandom. We shall extend this result for product to the case of randomly weighted series under appropriate conditions.

In Section~\ref{section:notation} we describe the conditions imposed by \cite{denisov:zwart:2007} and  study the tail behavior when finite weighted sums are considered. In Section~\ref{sec:direct} we describe the tail behavior of the series of randomly weighted sums. In Section~\ref{sec: converse} we consider the converse problem described above. We prove the converse result is true under the RW conditions and the extra assumption of nonvanishing Mellin transform. We also show the necessity of this extra assumption.
\end{section}

\begin{section}{Some preliminary results} \label{section:notation}

We call two random variables $X_1$ and $X_2$ to be \textit{asymptotically independent} if
\begin{equation} \label{asymptotic independence}
\lim_{x\rightarrow\infty}\frac{\prob[X_1 > x,X_2 > x]}{\prob[X_t > x]} = 0, \text{ for $t=1, 2$.}
\end{equation}
\begin{remark}
It is should be noted that the above notion of asymptotic independence is useful when $X_1$ and $X_2$ have similar tail behavior, that is, $\lim_{x\to\infty}\prob[X_1>x]/\prob[X_2>x]$ exists and is positive. In fact we use it for such cases only.
\end{remark}
See \cite{ledford:tawn:1996, ledford:tawn:1997} or Chapter~6.5 of \cite{resnick:2007} for discussions on asymptotic independence. Note that, we require $\overline F_t(x)>0$ for all $x>0$ and $t=1,2$. Observe that if $X_1$ and $X_2$ are independent, then they are also asymptotically independent. Thus the results under pairwise asymptotic independence condition continue to hold in the independent setup.

A random variable $X$ is said to have \textit{negligible left tail with respect to the right one}, if
\begin{equation} \label{right tail}
\lim_{x\rightarrow\infty}\frac{\prob[X<-x]}{\prob[X > x]}=0.
\end{equation}
Note that we require $\prob[X>x]>0$ for all $x>0$.

Now, we gather a few results important for the present chapter. Our next Lemma states that by Karamata's representation, a slowly varying function $L$ can be one of the following form.

\begin{lemma}[\citealp{denisov:zwart:2007}, Lemma 2.1]
   Let $L$ be slowly varying. Then $L$ admits precisely one of the following four representations:
  \begin{enumerate}
  \item \label{L1}$L(x) = c(x)$,
 \item \label{L2}$L(x) = c(x)/\prob[ V > \log x]$,	
  \item \label{L3}$L(x) = c(x)\prob[ U > \log x]$,
  \item \label{L4}$L(x) = c(x)\prob[ U > \log x]/\prob[ V > \log x]$.
 \end{enumerate}
  In the above representations, $c(x)$ is a function converging to $c\in(0,\infty)$, and $U$ and $V$ are two independent long-tailed random variables with hazard rates converging to $0$.
\end{lemma}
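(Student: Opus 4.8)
The plan is to read this lemma as a careful repackaging of Karamata's representation from \ref{chap1:karamata rep}. Write $L(x) = c(x)\exp\left(\int_{x_0}^x \epsilon(y)/y\,dy\right)$ as in \eqref{chap1:eq:karamata rep}, with $c(x)\to c_0\in(0,\infty)$ and $\epsilon(y)\to 0$, and substitute $y = e^s$ to pass to the logarithmic scale: setting $t_0 = \log x_0$ and $\eta(s) := \epsilon(e^s)\to 0$, this becomes $L(e^t) = c(e^t)\exp\left(\int_{t_0}^t \eta(s)\,ds\right)$. Now split $\eta = \eta^+ - \eta^-$ into positive and negative parts and put $A(t) = \int_{t_0}^t \eta^+(s)\,ds$ and $B(t) = \int_{t_0}^t \eta^-(s)\,ds$. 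Both are continuous, nondecreasing, and vanish at $t_0$, so each either converges to a finite limit or increases to $+\infty$; this yields a four-way split.

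First I would dispose of the convergent factors: whenever $A(\infty)<\infty$ (resp. $B(\infty)<\infty$), the factor $e^{A(\log x)}$ (resp. $e^{-B(\log x)}$) converges to a positive constant and can be folded into $c(x)$, keeping it convergent in $(0,\infty)$. If both $A(\infty)<\infty$ and $B(\infty)<\infty$, everything is absorbed and we are in case \ref{L1}. If exactly one diverges, say $A(\infty)=\infty$, I would define $\overline{F_V}(t) = e^{-A(t)}$ for $t\ge t_0$ and $\overline{F_V}(t)=1$ for $t<t_0$; since $A$ is continuous and nondecreasing with $A(t_0)=0$ and $A(\infty)=\infty$, the function $F_V := 1-\overline{F_V}$ is a genuine distribution function, its hazard function in the sense of Definition~\ref{defn: hazard} is $A$, its hazard rate is $A'=\eta^+\to 0$, and a vanishing hazard rate forces long-tailedness because $\log\!\left(\overline{F_V}(t-u)/\overline{F_V}(t)\right) = \int_{t-u}^t \eta^+(s)\,ds \to 0$ as $t\to\infty$ for fixed $u$. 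This produces case \ref{L2}; the symmetric choice $\overline{F_U}(t)=e^{-B(t)}$ gives case \ref{L3}; and when both $A(\infty)=B(\infty)=\infty$ we use both $U$ and $V$ (which may be realised on a product space, so the required independence is automatic and enters no formula) to obtain case \ref{L4}.

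Exhaustiveness is then immediate from the four-fold case split. Mutual exclusivity can be read off from the resulting orders of magnitude — $L$ tends to a positive finite constant in case \ref{L1}, to $\infty$ in case \ref{L2}, and to $0$ in case \ref{L3}, while case \ref{L4} is the genuinely mixed one — so each slowly varying $L$ is placed in exactly one of the four forms. The content here is not deep: it is Karamata's representation together with the elementary observation that, among the tails one builds this way, the long-tailed ones are exactly those with hazard rate tending to $0$. Accordingly, the main obstacle is bookkeeping rather than ideas: checking that $e^{-A}$ and $e^{-B}$ are honest distribution tails (nonincreasing, right-continuous, running from $1$ down to $0$, strictly positive everywhere), that the leftover multiplicative constants genuinely fold back into a $c(x)$ that still converges to a point of $(0,\infty)$, and that the four cases are organised so the classification is unambiguous.
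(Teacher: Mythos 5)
The paper cites this as Lemma~2.1 of Denisov--Zwart and does not reprove it, so your proposal must stand on its own. The constructive (existence) half of your argument is correct and is the standard one: pass to the logarithmic scale, split $\eta=\eta^+-\eta^-$, integrate to obtain continuous nondecreasing $A$ and $B$, fold any convergent factor into $c(x)$, and turn a divergent one into a distribution tail $e^{-A}$ or $e^{-B}$ whose hazard function is $A$ (resp.\ $B$), hence whose hazard rate is $\eta^\pm\to 0$; long-tailedness of these tails then follows exactly as you say. Realising $U$ and $V$ on a product space to get independence is also fine, since only the marginals enter the formula.

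However, the ``precisely one'' claim is not established, and the argument you give for mutual exclusivity is wrong. You assert that the four cases are separated by the limiting behaviour of $L$ (constant, $\infty$, $0$, ``mixed''), but case~\ref{L4} places no constraint on $\lim L$: when both $A$ and $B$ diverge, $A-B$ can converge, oscillate, or diverge in either direction. Concretely, take $L\equiv 2$: this is manifestly of type~\ref{L1}, but it is also of type~\ref{L4} --- choose $\epsilon(y)=\sin(\log y)/\log y$ in Karamata's representation, so that $\eta(s)=\sin(s)/s$ has $\int\eta^+\,ds=\int\eta^-\,ds=\infty$ even though $\int\eta\,ds$ converges; the resulting $U$ and $V$ are both long-tailed with hazard rate $\to 0$, and $L(x)=c(x)\,\prob[U>\log x]/\prob[V>\log x]$ with $c(x)\to c_0\in(0,\infty)$. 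The deeper issue is that the case you land in depends on the choice of Karamata representation $(c,\epsilon)$, which is not unique, so the four-fold split on $(A(\infty),B(\infty))$ is not intrinsic to $L$. To recover ``precisely one'' one must either fix a canonical Karamata pair and declare the type relative to that choice, or impose extra normalisations on $U$ and $V$ (beyond long-tailedness and vanishing hazard rate) that rule out representing the same $L$ in two of the four forms; your proposal does neither, so this part of the lemma remains unproved.
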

 We shall refer to a slowly varying function $L$ as of type 1, type 2, type 3 or type 4, according to the above representations.

%

\cite{denisov:zwart:2007} introduced the following sufficient conditions on the slowly varying part $L$ of the regularly varying tail of index $-\alpha$ of a random variable $X$ with distribution function $\overline F(x)=x^{-\alpha} L(x)$ for Breiman's theorem~\eqref{breiman} to hold:
\let\myenumi\theenumi
\renewcommand{\labelenumi}{(DZ\arabic{enumi})}
\renewcommand{\theenumi}{DZ\arabic{enumi}}
\begin{enumerate}
   \item \label{DZ1} Assume $\lim_{x\rightarrow\infty}\sup_{y\in[1,x]} {L(y)}/{L(x)}:=D_1 < \infty$.
   \item \label{DZ2} Assume $L$ is of type 3 or type 4 and  $L(e^{x}) \in \mathcal{S}_{d}$.
   \item \label{DZ3} Assume $L$ is of type 3 or type 4, $U\in \mathcal{S^{*}}$ and $\prob[\Theta > x] = \lito(x^{-\alpha}\prob[U>\log x])$.
   \item \label{DZ4} When $\E[U]=\infty$ or equivalently $\E[X^\alpha]=\infty$, define $m(x)=\int_{0}^{x} v^{\alpha} F(dv) \to \infty$. Assume $\limsup_{x\rightarrow\infty} \sup_{\sqrt{x}\leq y \leq x} {L(y)}/{L(x)} :=D_2 < \infty$ and $\prob[\Theta > x] = \lito({\prob[X>x]}/m(x))$.
\end{enumerate}
\renewcommand{\theenumi}{\myenumi}
We shall refer to these conditions as the DZ conditions. A short discussion on several classes of distribution functions involved in the statements of the DZ conditions are available in Section~\ref{chap1:sec:heavy tailed}. For further discussions on the DZ conditions, we refer to \cite{denisov:zwart:2007}. Denisov and Zwart proved the following lemma:
\begin{lemma}[\citealp{denisov:zwart:2007}, Section~2] \label{breiman new}
Let $X$ be a nonnegative random variable with regularly varying tail of index $-\alpha$, $\alpha \geq 0$ and $\Theta$ be a positive random variable independent of $X$ such that $\E[\Theta^{\alpha}]<\infty$ and $\prob[\Theta > x]= \lito(\prob[X>x])$. If $X$ and ${\Theta}$ satisfy any one of the DZ conditions, then~\eqref{breiman} holds.
\end{lemma}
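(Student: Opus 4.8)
The plan is to rewrite the left-hand side of~\eqref{breiman} as an average of the tail $\ol F$ against the law $\mu_\Theta$ of $\Theta$ and to show that, after normalizing by $\ol F(x)$, this average tends to $\E[\Theta^\alpha]$. By independence,
\[
\frac{\prob[\Theta X>x]}{\prob[X>x]}=\int_{(0,\infty)}\frac{\ol F(x/u)}{\ol F(x)}\,\mu_\Theta(du)=:\int_{(0,\infty)}h_x(u)\,\mu_\Theta(du),
\]
and, writing $\ol F(y)=y^{-\alpha}L(y)$ with $L$ slowly varying, $h_x(u)=u^\alpha L(x/u)/L(x)$. For each fixed $u>0$, regular variation of $\ol F$ gives $h_x(u)\to u^\alpha$ as $x\to\infty$. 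Since $h_x\ge0$, Fatou's lemma gives, \emph{for free}, the lower bound $\liminf_{x\to\infty}\prob[\Theta X>x]/\prob[X>x]\ge\int u^\alpha\,\mu_\Theta(du)=\E[\Theta^\alpha]$, using none of the DZ conditions. Thus the whole content of the lemma is the matching upper bound $\limsup_{x\to\infty}\int h_x\,d\mu_\Theta\le\E[\Theta^\alpha]$, which I would obtain by exhibiting, for all large $x$, a $\mu_\Theta$-integrable dominating function on a suitable truncation and invoking (reverse) dominated convergence.

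For the upper bound I would split the integral over $u\in(0,1]$, $u\in(1,x]$, and $u\in(x,\infty)$. On $(0,1]$ one has $x/u\ge x$, so monotonicity of $\ol F$ gives $h_x(u)\le1$; dominated convergence (with the finite measure $\mu_\Theta|_{(0,1]}$) yields contribution $\to\int_{(0,1]}u^\alpha\,\mu_\Theta(du)$. On $(x,\infty)$ the crude bound $h_x(u)\le1/\ol F(x)$ gives a contribution at most $\prob[\Theta>x]/\ol F(x)=\lito(1)$ by the standing hypothesis $\prob[\Theta>x]=\lito(\prob[X>x])$. The entire difficulty is the middle range $u\in(1,x]$, where $h_x(u)=u^\alpha L(x/u)/L(x)$ with $x/u$ running over $[1,x)$, and this is exactly what the DZ conditions are designed to control. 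Under~\eqref{DZ1} one has $L(x/u)/L(x)\le\sup_{1\le y\le x}L(y)/L(x)\le D_1$ for all large $x$, hence $h_x(u)\le D_1u^\alpha$ on $(1,x]$; since $\E[\Theta^\alpha]<\infty$ the function $D_1u^\alpha$ is $\mu_\Theta$-integrable, and dominated convergence (with the moving endpoint $x$) gives $\int_{(1,x]}h_x\,d\mu_\Theta\to\int_{(1,\infty)}u^\alpha\,\mu_\Theta(du)$, so the three ranges add up to $\E[\Theta^\alpha]$.

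Conditions~\eqref{DZ2}--\eqref{DZ4} cover precisely the cases where $L$ is of type 3 or type 4, so $L(x)=c(x)\prob[U>\log x]$, possibly divided by $\prob[V>\log x]$, with $U$ long-tailed; then $L$ decays to $0$ and $\sup_{1\le y\le x}L(y)/L(x)=\infty$, so~\eqref{DZ1} fails and a finer argument is required in the middle range. Here I would first remove from $(1,x]$ the part where $x/u$ stays bounded (equivalently $u$ of order $x$), which is again negligible by $\prob[\Theta>x]=\lito(\prob[X>x])$ together with $\prob[X>\cdot]\in RV_{-\alpha}$; on what remains $x/u\to\infty$. After the substitution $w=\log u$, $t=\log x$, the surviving middle integral $\int u^\alpha L(x/u)\,\mu_\Theta(du)$ turns into an integral of the weight $e^{\alpha w}$ against $\prob[U>t-w]$ (up to the $c(\cdot)$ factors and the possible $\prob[V>\cdot]$ denominator), so that the desired equivalence to $L(x)\int_{(1,\infty)}u^\alpha\,\mu_\Theta(du)$ reduces to a convolution-type statement $\int\prob[U>t-w]\,e^{\alpha w}\,\nu(dw)\sim\prob[U>t]\int e^{\alpha w}\,\nu(dw)$ for the law $\nu$ of $\log\Theta$. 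The subexponential-type hypotheses $L(e^{\,\cdot})\in\mathcal S_d$ (under~\eqref{DZ2}) or $U\in\mathcal S^*$ (under~\eqref{DZ3}), together with the $\lito$-assumption on $\prob[\Theta>x]$ rewritten through $\prob[U>\log x]$, are exactly the conditions under which such convolutions inherit the tail of $U$ (the ``$h$-insensitivity'' of $\mathcal S_d$ and $\mathcal S^*$). Finally~\eqref{DZ4} is the sub-case $\E[X^\alpha]=\infty$: one must carry $m(x)=\int_0^xv^\alpha\,F(dv)\to\infty$ along, truncate $X$ at level $x$, estimate the truncated part with the $D_2$-bound $\sup_{\sqrt x\le y\le x}L(y)/L(x)<\infty$, and absorb the overshoot using $\prob[\Theta>x]=\lito(\prob[X>x]/m(x))$.

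The main obstacle is this middle range under~\eqref{DZ2}--\eqref{DZ4}: converting the estimate of $L(x/u)/L(x)$ for $u$ as large as order $x$ into a form one can feed into the subexponential closure/insensitivity results, and carrying out the $m(x)$ bookkeeping when the $\alpha$-th moment of $X$ is infinite. Everything else — the lower bound, the $(0,1]$ and $(x,\infty)$ ranges, and the entire case~\eqref{DZ1} — is routine once the ratio is written as $\int h_x\,d\mu_\Theta$.
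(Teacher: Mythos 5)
Note first that the paper does not actually prove Lemma~\ref{breiman new}: it is quoted verbatim from \cite{denisov:zwart:2007} and used as a black box, so there is no proof in the text to compare against. Your plan is nonetheless the right one, and it is in fact the method the paper deploys later in Section~\ref{sec:direct} when a uniform version of this estimate is needed for infinite weighted sums --- the three-interval split of $\int h_x\,d\mu_\Theta$ over $(0,1]$, $(1,x]$, $(x,\infty)$ appears there verbatim (around \eqref{ratio}), and the middle-range estimates~\eqref{eq: second bd DZ1}--\eqref{eq: second bd DZ4} are the uniform analogues of exactly what you need.

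On the substance: the Fatou lower bound, the $(0,1]$ and $(x,\infty)$ ranges, and the entire case~\eqref{DZ1} are correct and complete. The one genuine gap is that under~\eqref{DZ2}--\eqref{DZ4} you identify the convolution-insensitivity statement to be proved but do not prove it; this is where essentially all the work lives. The eventual argument (integration by parts in $u$, extraction of the relevant constant from the $\lito$-assumption on $\prob[\Theta>\cdot]$, then invocation of the closure/insensitivity property of $\mathcal S(\alpha)$ or $\mathcal S^*$) is precisely what the paper carries out to obtain~\eqref{eq: second bd DZ2}--\eqref{eq: second bd DZ4}, and you would need to reproduce that, not merely reduce to it. One small correction to the set-up: with a fixed cut-off $K$ you can kill $u\in(x/K,x]$ using $\prob[\Theta>x]=\lito(\ol F(x))$ and regular variation, but on the surviving range $(1,x/K]$ the quantity $x/u$ is only bounded below by $K$, not tending to infinity, so your parenthetical ``on what remains $x/u\to\infty$'' is not available as stated; either the cut-off must grow with $x$, or the estimates must handle $x/u$ ranging over all of $[K,x)$, which is what the integration-by-parts bounds actually do.
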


The next result shows that asymptotic independence is preserved under multiplication, when the DZ conditions are assumed.
\begin{lemma} \label{joint asymptotic independence}
Let $X_1, X_2$ be two positive, asymptotically independent, identically distributed random variables with common regularly varying tail of index $-\alpha$, where $\alpha>0$. Let $\Theta_1$ and $\Theta_2$ be two other positive random variables independent of the pair $(X_1, X_2)$ satisfying $\E[\Theta_t^{\alpha}] < \infty$,  $t=1,2$. Also suppose that $\prob[\Theta_t > x]= \lito(\prob[X_1>x])$ for $t=1,2$ and the pairs $(\Theta_1, X_1)$ and $(\Theta_2, X_2)$ satisfy any one of the DZ conditions. Then $\Theta_1 X_1$ and $\Theta_2 X_2$ are asymptotically independent.
\end{lemma}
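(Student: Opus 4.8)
The plan is to first reduce the claimed asymptotic independence of $\Theta_1X_1$ and $\Theta_2X_2$ to an estimate on their joint tail, and then to control that joint tail by separating the large values of the weights. First I would apply Lemma~\ref{breiman new} to each pair $(\Theta_t,X_t)$, $t=1,2$ --- which is legitimate since by hypothesis $\E[\Theta_t^\alpha]<\infty$, $\prob[\Theta_t>x]=\lito(\prob[X_1>x])$, and $(\Theta_t,X_t)$ satisfies one of the DZ conditions --- to obtain $\prob[\Theta_tX_t>x]\sim\E[\Theta_t^\alpha]\,\ol F(x)$ as $x\to\infty$, where $\ol F(x)=\prob[X_1>x]=\prob[X_2>x]$. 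Since $\Theta_t$ is strictly positive, $\E[\Theta_t^\alpha]\in(0,\infty)$, so $\prob[\Theta_tX_t>x]$ is of the exact order of $\ol F(x)$; hence it suffices to prove $\prob[\Theta_1X_1>x,\ \Theta_2X_2>x]=\lito(\ol F(x))$ as $x\to\infty$.

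Next I would fix $M>0$ and split the joint event according to whether both weights are at most $M$. On $\{\Theta_1\le M,\ \Theta_2\le M\}$ the event $\{\Theta_1X_1>x,\ \Theta_2X_2>x\}$ forces $X_1>x/M$ and $X_2>x/M$, while the complementary set is contained in $\{\Theta_1>M\}\cup\{\Theta_2>M\}$, so that
\[\prob[\Theta_1X_1>x,\Theta_2X_2>x]\le\prob[X_1>x/M,X_2>x/M]+\prob[\Theta_1X_1>x,\Theta_1>M]+\prob[\Theta_2X_2>x,\Theta_2>M].\]
For the first term I would use the asymptotic independence of $X_1,X_2$, cf.~\eqref{asymptotic independence}, to get $\prob[X_1>x/M,X_2>x/M]=\lito(\ol F(x/M))$, and then $\ol F\in RV_{-\alpha}$ with $M$ fixed gives $\ol F(x/M)\sim M^\alpha\ol F(x)$, so this term is $\lito(\ol F(x))$. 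For the second term, using that $\Theta_1$ is independent of $X_1$, I would write $\prob[\Theta_1X_1>x,\Theta_1>M]=\E[\ol F(x/\Theta_1)\mathbbm 1_{[\Theta_1>M]}]=\E[\ol F(x/\Theta_1)]-\E[\ol F(x/\Theta_1)\mathbbm 1_{[\Theta_1\le M]}]$; Lemma~\ref{breiman new} gives $\E[\ol F(x/\Theta_1)]\sim\E[\Theta_1^\alpha]\ol F(x)$, while the uniform convergence theorem for regularly varying functions (which applies since $\ol F$ has negative index, yielding $\ol F(x/\theta)/\ol F(x)\to\theta^\alpha$ uniformly for $\theta\in(0,M]$) together with bounded convergence gives $\E[\ol F(x/\Theta_1)\mathbbm 1_{[\Theta_1\le M]}]\sim\E[\Theta_1^\alpha\mathbbm 1_{[\Theta_1\le M]}]\ol F(x)$; hence $\limsup_{x\to\infty}\prob[\Theta_1X_1>x,\Theta_1>M]/\ol F(x)=\E[\Theta_1^\alpha\mathbbm 1_{[\Theta_1>M]}]$, and the third term is handled identically with $t=2$.

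Putting the three pieces together, for every $M>0$,
\[\limsup_{x\to\infty}\frac{\prob[\Theta_1X_1>x,\Theta_2X_2>x]}{\ol F(x)}\le\E[\Theta_1^\alpha\mathbbm 1_{[\Theta_1>M]}]+\E[\Theta_2^\alpha\mathbbm 1_{[\Theta_2>M]}],\]
and letting $M\to\infty$ the right-hand side tends to $0$ by dominated convergence, since $\E[\Theta_t^\alpha]<\infty$. This yields $\prob[\Theta_1X_1>x,\Theta_2X_2>x]=\lito(\ol F(x))$ and, combined with the first step, the lemma. The main obstacle is the large-weight contribution $\E[\ol F(x/\Theta_1)\mathbbm 1_{[\Theta_1>M]}]$: it cannot be bounded directly by a Potter-type estimate because only the $\alpha$-th moment of $\Theta_1$ is assumed (a Potter bound would cost an extra $\epsilon$ in the exponent, requiring $\E[\Theta_1^{\alpha+\epsilon}]<\infty$). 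The device above circumvents this by extracting that contribution as the difference between the full Breiman asymptotics of Lemma~\ref{breiman new} --- which is precisely where the DZ conditions are consumed --- and the elementary truncated asymptotics; a secondary point worth verifying carefully is the uniformity of $\ol F(x/\theta)/\ol F(x)$ over $\theta\in(0,M]$, including $\theta$ near $0$.
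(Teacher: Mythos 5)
Your proof is correct, but it follows a genuinely different route from the paper's. The paper rewrites $\prob[\Theta_1X_1>x,\Theta_2X_2>x]/\prob[X_1>x]$ as an integral against the joint law of $(\Theta_1,\Theta_2)$, bounds it (after splitting the region by $u\le v$ versus $u>v$) by a single integral against $G_1+G_2$ with integrand
$$\frac{\prob[X_1>x/v,\,X_2>x/v]}{\prob[X_1>x/v]}\cdot\frac{\prob[X_1>x/v]}{\prob[X_1>x]},$$
notes that this tends to $0$ pointwise in $v$ (the first factor $\to 0$ by asymptotic independence of $X_1,X_2$) while being dominated by the second factor, and then invokes Pratt's lemma: the dominating sequence converges pointwise to $v^\alpha$ and its integral against $G_1+G_2$ converges to $\int_0^\infty v^\alpha\,(G_1+G_2)(dv)$ precisely because of Lemma~\ref{breiman new}. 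Your argument replaces Pratt's lemma by an explicit $M$-truncation: on $\{\Theta_1\le M,\Theta_2\le M\}$ you invoke asymptotic independence of $(X_1,X_2)$ directly, and on $\{\Theta_t>M\}$ you extract the tail contribution $\E[\Theta_t^\alpha\mathbbm 1_{[\Theta_t>M]}]$ as the difference between Breiman's full limit and the elementary truncated limit (justified by uniform convergence of $\ol F(x/\theta)/\ol F(x)\to\theta^\alpha$ on $(0,M]$, which holds since $\ol F$ has negative index), then send $M\to\infty$. Both proofs consume Lemma~\ref{breiman new} at exactly the same pressure point --- controlling the contribution of large weights, which a Potter bound cannot handle under only an $\alpha$-th moment hypothesis --- so the essential idea is the same. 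The trade-off is that your truncation device avoids citing Pratt's lemma and is arguably more self-contained, at the cost of a somewhat longer argument and the side observation about uniform convergence, which you correctly flag as worth verifying.
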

\begin{proof}
Here and later $G$ will denote the joint distribution function of $(\Theta_1,\Theta_2)$ and $G_t$ will denote the marginal distribution functions of $\Theta_t$.
\begin{multline*}
\frac{\prob[\Theta_1 X_1 > x,\Theta_2X_2 > x]}{\prob[X_1>x]} = \iint_{u\leq v}+\iint_{u>v} \frac{\prob[X_1>x/u,X_2>x/v]}{\prob[X_1>x]} G(du,dv)\\
\leq \int_0^\infty \frac{\prob [X_1>x/v, X_2>x/v]}{\prob[X_1>x/v]} \frac{\prob[X_1>x/v]}{\prob[X_1>x]} (G_1+G_2)(dv).
\end{multline*}
The integrand converges to $0$. Also, the first factor of the integrand is bounded by $1$ and hence the integrand is bounded by the second factor, which converges to $v^\alpha$. Further, using Lemma~\ref{breiman new}, we have
\begin{multline*}
\int_0^\infty \frac{\prob[X_1>x/v]}{\prob[X_1>x]} (G_1+G_2)(dv) = \frac{\prob[\Theta_1X_1>x] + \prob[\Theta_2X_1>x]}{\prob[X_1>x]}\\
\to \E\left[ \Theta_1^\alpha \right] + \E\left[ \Theta_2^\alpha \right] = \int_0^\infty v^\alpha (G_1+G_2)(dv).
\end{multline*}
Then the result follows using Pratt's lemma, cf. \cite{pratt:1960}.
\end{proof}

The next lemma shows that if the left tail of $X$ is negligible when compared to the right tail then the product has also such a behavior.
\begin{lemma} \label{left tail negligibility}
Let $X$ have regularly varying tail of index $-\alpha$, for some $\alpha>0$ satisfying~\eqref{right tail} and $\Theta$ be independent of $X$ satisfying $\E[\Theta^{\alpha}]<\infty$ and $\prob[\Theta>x]= \lito(\prob[X > x])$. Also suppose that $(\Theta,X)$ satisfy one of the DZ conditions. Then, for any $u>0$,
$$\lim_{x\rightarrow\infty}\frac{\prob[\Theta X < -ux]}{\prob[\Theta X > x]}=0.$$
\end{lemma}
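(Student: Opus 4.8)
The plan is to strip off the positive and negative parts of $X$. Write $X^+ = \max\{0,X\}$ and $X^- = \max\{0,-X\}$. Since $\Theta$ is strictly positive, for $x>0$ we have $\{\Theta X < -ux\} = \{\Theta X^- > ux\}$ and $\{\Theta X > x\} = \{\Theta X^+ > x\}$. The denominator is the easy part: $X^+$ is a nonnegative random variable whose tail equals $\overline F$, so it inherits the DZ condition satisfied by $(\Theta, X)$, and Lemma~\ref{breiman new} gives $\prob[\Theta X > x] = \prob[\Theta X^+ > x] \sim \E[\Theta^\alpha]\,\prob[X>x]$, where $\E[\Theta^\alpha]\in(0,\infty)$ since $\Theta>0$. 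Hence it suffices to prove that $\prob[\Theta X^- > ux] = \lito(\prob[X>x])$ as $x\to\infty$.

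For this I would condition on $\Theta$: writing $G$ for its law, $\prob[\Theta X^- > ux] = \int_0^\infty \prob[X^- > ux/v]\,G(dv)$. Fix $\eta>0$ and use the negligible-left-tail hypothesis~\eqref{right tail} to pick $y_0$ with $\prob[X^- > y] \le \eta\,\prob[X>y]$ for all $y\ge y_0$, then split the integral at $v = ux/y_0$. On $\{v\le ux/y_0\}$ one has $ux/v\ge y_0$, so $\prob[X^- > ux/v]\le \eta\,\prob[X>ux/v]$, whence that part of the integral is at most $\eta\int_0^\infty \prob[X>ux/v]\,G(dv) = \eta\,\prob[\Theta X^+ > ux]$. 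On $\{v > ux/y_0\}$ I would bound $\prob[X^- > ux/v]$ crudely by $1$, so that part is at most $\prob[\Theta > ux/y_0]$.

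It then remains to estimate the two pieces via regular variation. By Lemma~\ref{breiman new} applied at the point $ux$ and by $\overline F\in RV_{-\alpha}$, $\prob[\Theta X^+ > ux]\sim \E[\Theta^\alpha]\,u^{-\alpha}\,\prob[X>x]$; and by the hypothesis $\prob[\Theta>x]=\lito(\prob[X>x])$ together with $\overline F\in RV_{-\alpha}$, $\prob[\Theta > ux/y_0] = \lito(\prob[X>x])$. Therefore $\limsup_{x\to\infty}\prob[\Theta X^- > ux]/\prob[X>x]\le \eta\,\E[\Theta^\alpha]\,u^{-\alpha}$, and letting $\eta\downarrow 0$ yields $\prob[\Theta X^- > ux]=\lito(\prob[X>x])$. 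Dividing by $\prob[\Theta X > x]\sim\E[\Theta^\alpha]\,\prob[X>x]$ finishes the argument.

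The only real subtlety is that $X^-$ need not itself have a regularly varying tail, so one cannot invoke Breiman's theorem~\eqref{breiman} for the product $\Theta X^-$ directly; the splitting at $y_0$ — which plays the two smallness hypotheses $\prob[X^->y]=\lito(\prob[X>y])$ and $\prob[\Theta>y]=\lito(\prob[X>y])$ off against each other — is precisely what lets us bypass this, and everything else is bookkeeping with the regular variation of $\overline F$.
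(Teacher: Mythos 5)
Your proof is correct, and it takes a mildly different technical route from the paper's. The paper disposes of this lemma by pointing to the proof of Lemma~\ref{joint asymptotic independence}: one conditions on $\Theta$, writes
\[
\frac{\prob[\Theta X < -ux]}{\prob[X>x]} = \int_0^\infty \frac{\prob[X<-ux/v]}{\prob[X>ux/v]}\cdot\frac{\prob[X>ux/v]}{\prob[X>x]}\,G(dv),
\]
observes that the first factor tends to $0$ pointwise and is uniformly bounded (this is where~\eqref{right tail} enters), so the integrand is dominated by a constant times the second factor, whose integral converges to $u^{-\alpha}\E[\Theta^\alpha]$ by Lemma~\ref{breiman new}; Pratt's lemma then forces the whole integral to $0$. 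You instead pass to $X^-$, fix $\eta>0$ and a threshold $y_0$ from~\eqref{right tail}, and split the integral over $\Theta$ at $v=ux/y_0$: the small-$v$ part is bounded by $\eta\,\prob[\Theta X^+>ux]$ (again via Lemma~\ref{breiman new}), the large-$v$ part by $\prob[\Theta>ux/y_0]$, which is $\lito(\prob[X>x])$ by the hypothesis $\prob[\Theta>x]=\lito(\prob[X>x])$ and regular variation. Both routes lean on the same two ingredients — conditioning on $\Theta$ and Lemma~\ref{breiman new} — so they are the same at a high level, but your version is slightly more elementary (no appeal to Pratt's lemma, only Markov-type estimates and monotone splitting) and it makes more visible where the two negligibility hypotheses, $\prob[X<-y]=\lito(\prob[X>y])$ and $\prob[\Theta>y]=\lito(\prob[X>y])$, are each used. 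Your remark that $X^-$ may fail to have a regularly varying tail, so Breiman cannot be applied directly to $\Theta X^-$, is exactly the right thing to flag; both the paper's Pratt argument and your splitting are ways of circumventing precisely that.
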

The proof is exactly similar to that of Lemma~\ref{joint asymptotic independence}, except for the fact that the first factor in the integrand is bounded, as, using~\eqref{right tail}, $\prob[X<-x]/\prob[X>x]$ is bounded. We skip the proof.

The following result from \cite{davis:resnick:1996} considers a simple case of the tail of sum of finitely many random variables.
\begin{lemma}[\citealp{davis:resnick:1996}, Lemma~2.1] \label{finite sum asymptotics1}
Suppose $Y_1, Y_2,\ldots,Y_k$ are nonnegative, pairwise asymptotically independent $($but not necessarily identically distributed$)$ random variables with regularly varying tails of common index $-\alpha$, where $\alpha>0$. If, for $t=1, 2, \ldots, k$,
${\prob[Y_t>x]}/\prob[Y_1>x]\rightarrow c_t$, then
$$\frac{\prob[\sum_{t=1}^k Y_t>x]}{\prob[Y_1>x]}\rightarrow c_1+c_2+\cdots+c_k.$$
\end{lemma}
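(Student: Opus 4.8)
The plan is to establish matching asymptotic lower and upper bounds for $\prob[\sum_{t=1}^k Y_t>x]/\prob[Y_1>x]$, each equal to $c_1+\cdots+c_k$. This is the \emph{principle of one large jump} in the present non-identically-distributed setting, so the only genuine work is to show that the event in which two or more of the $Y_t$ are simultaneously large contributes nothing to the limit; for this one combines pairwise asymptotic independence~\eqref{asymptotic independence} with regular variation at shifted levels. We may assume $k\ge 2$, the case $k=1$ being trivial.

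For the lower bound I would use $\sum_{t=1}^k Y_t\ge\max_{1\le t\le k}Y_t$, so that $\{\sum_{t=1}^k Y_t>x\}\supseteq\bigcup_{t=1}^k\{Y_t>x\}$, and then apply the Bonferroni inequality
\[
\prob\Bigl[\bigcup_{t=1}^k\{Y_t>x\}\Bigr]\ge\sum_{t=1}^k\prob[Y_t>x]-\sum_{1\le s<t\le k}\prob[Y_s>x,\,Y_t>x].
\]
Dividing by $\prob[Y_1>x]$, the first sum tends to $c_1+\cdots+c_k$ by hypothesis, while each term of the second equals $\bigl(\prob[Y_s>x,Y_t>x]/\prob[Y_s>x]\bigr)\cdot\bigl(\prob[Y_s>x]/\prob[Y_1>x]\bigr)$, whose first factor tends to $0$ by~\eqref{asymptotic independence} and whose second factor tends to $c_s<\infty$. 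Hence $\liminf_{x\to\infty}\prob[\sum_{t=1}^k Y_t>x]/\prob[Y_1>x]\ge\sum_{t=1}^k c_t$.

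For the upper bound I would fix $\delta\in(0,1/k)$ and observe that if $\sum_{t=1}^k Y_t>x$, then either at least two of the $Y_t$ exceed $\delta x$, or at most one does; in the latter case the unique index $t_0$ with $Y_{t_0}>\delta x$ (which must exist, since otherwise $\sum_{t=1}^k Y_t\le k\delta x<x$) satisfies $Y_{t_0}\ge\sum_{t=1}^k Y_t-(k-1)\delta x>(1-(k-1)\delta)x$. Therefore
\[
\prob\Bigl[\sum_{t=1}^k Y_t>x\Bigr]\le\sum_{t=1}^k\prob\bigl[Y_t>(1-(k-1)\delta)x\bigr]+\sum_{1\le s<t\le k}\prob[Y_s>\delta x,\,Y_t>\delta x].
\]
After dividing by $\prob[Y_1>x]$, the generic term of the first sum, written as $\bigl(\prob[Y_t>(1-(k-1)\delta)x]/\prob[Y_t>x]\bigr)\cdot\bigl(\prob[Y_t>x]/\prob[Y_1>x]\bigr)$, converges to $(1-(k-1)\delta)^{-\alpha}c_t$ by regular variation of $\prob[Y_t>\cdot]$ (here $1-(k-1)\delta$ is a fixed constant, so plain regular variation suffices). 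The generic term of the second sum, written as $\bigl(\prob[Y_s>\delta x,Y_t>\delta x]/\prob[Y_s>\delta x]\bigr)\cdot\bigl(\prob[Y_s>\delta x]/\prob[Y_1>x]\bigr)$, has first factor tending to $0$ by asymptotic independence applied at the level $\delta x$ and second factor tending to $\delta^{-\alpha}c_s$, hence tends to $0$. Thus $\limsup_{x\to\infty}\prob[\sum_{t=1}^k Y_t>x]/\prob[Y_1>x]\le(1-(k-1)\delta)^{-\alpha}\sum_{t=1}^k c_t$, and letting $\delta\downarrow0$ gives the matching upper bound, completing the proof.

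The main obstacle is organisational rather than technical: the decomposition ``one large term versus at least two moderately large terms'' must be chosen so that the one-large-term piece does not pick up a spurious multiplicative constant, which is why the threshold is $(1-(k-1)\delta)x$ (with $\delta$ small) rather than something like $x/k$; and one must notice that pairwise asymptotic independence, although formulated at a common level $x$ in~\eqref{asymptotic independence}, automatically yields $\prob[Y_s>\delta x,Y_t>\delta x]/\prob[Y_s>\delta x]\to0$ as well, since $\delta x\to\infty$. Beyond these points, everything reduces to elementary manipulations with regularly varying functions.
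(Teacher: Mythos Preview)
Your proof is correct. The paper does not give its own proof of this lemma; it simply cites it as \cite[Lemma~2.1]{davis:resnick:1996} and uses it as a black box, so there is nothing to compare against directly. Your argument is the standard one-large-jump decomposition, and all the details---the Bonferroni lower bound, the ``at most one vs.\ at least two exceedances of $\delta x$'' split for the upper bound, and the handling of the cross terms via pairwise asymptotic independence at level $\delta x$---are carried out cleanly.
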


We have the following corollary by applying Lemma~\ref{finite sum asymptotics1} with $Y_t=\Theta_tX_t^+$ and the modified Breiman's theorem in Lemma~\ref{breiman new} under the DZ conditions.
\begin{corollary} \label{fin sum cor}
Let $\{X_t\}$ be a sequence of pairwise asymptotically independent, identically distributed random variables with common regularly varying tail of index $-\alpha$, where $\alpha>0$, which is independent of another sequence of positive random variables $\{\Theta_t\}$ satisfying $\E[\Theta_t^\alpha]<\infty$, for all $t$. Also assume that, for all $t$, $\prob [\Theta_t > x] = \lito(\prob[X_1>x])$ and the pairs $(\Theta_t, X_t)$ satisfy one of the DZ conditions. Then we have
$$\prob\left[\sum_{t=1}^k\Theta_tX_t^+>x\right]\sim \prob[X_1>x]\sum_{t=1}^k\E[\Theta_t^{\alpha}].$$
\end{corollary}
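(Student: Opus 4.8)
The plan is to reduce the statement to the two ingredients already assembled, namely the modified Breiman theorem (Lemma~\ref{breiman new}) and the additivity of tails under pairwise asymptotic independence (Lemma~\ref{finite sum asymptotics1}), applied to the summands $Y_t := \Theta_t X_t^+$.

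First I would record the elementary observations that make the hypotheses transfer from $(\Theta_t,X_t)$ to $(\Theta_t,X_t^+)$. Since $\prob[X_t^+>x]=\prob[X_t>x]=\prob[X_1>x]$ for $x>0$, each $X_t^+$ is nonnegative with regularly varying tail of index $-\alpha$ and the same slowly varying part as $X_1$; hence the DZ conditions imposed on $(\Theta_t,X_t)$ are, verbatim, DZ conditions on $(\Theta_t,X_t^+)$, and the moment bound $\E[\Theta_t^\alpha]<\infty$ together with $\prob[\Theta_t>x]=\lito(\prob[X_1>x])$ are unchanged. Likewise, pairwise asymptotic independence of the $X_t$ passes to the $X_t^+$ since $\prob[X_s^+>x,X_t^+>x]=\prob[X_s>x,X_t>x]$ for $x>0$. (The only cosmetic point is that $X_t^+$ may put positive mass at $0$, which is irrelevant for all of these tail statements.)

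Next I would invoke Lemma~\ref{breiman new} for each $t$ to get
$$\prob[Y_t>x]=\prob[\Theta_t X_t^+>x]\sim \E[\Theta_t^\alpha]\,\prob[X_1>x]\quad\text{as }x\to\infty.$$
Because $\Theta_t$ is positive we have $\E[\Theta_t^\alpha]\in(0,\infty)$, so each $Y_t$ indeed has a regularly varying tail of index $-\alpha$, and dividing the displayed asymptotics for $Y_t$ and for $Y_1$ shows $\prob[Y_t>x]/\prob[Y_1>x]\to c_t:=\E[\Theta_t^\alpha]/\E[\Theta_1^\alpha]$. Simultaneously, Lemma~\ref{joint asymptotic independence}, applied to the pairs $(\Theta_s,X_s^+)$ and $(\Theta_t,X_t^+)$, gives that $Y_s$ and $Y_t$ are pairwise asymptotically independent for $s\ne t$.

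Finally I would feed these facts into Lemma~\ref{finite sum asymptotics1} with the nonnegative, pairwise asymptotically independent, common-index family $Y_1,\dots,Y_k$, obtaining $\prob[\sum_{t=1}^k Y_t>x]/\prob[Y_1>x]\to\sum_{t=1}^k c_t=(\sum_{t=1}^k\E[\Theta_t^\alpha])/\E[\Theta_1^\alpha]$, and then combine with $\prob[Y_1>x]\sim\E[\Theta_1^\alpha]\prob[X_1>x]$ to cancel $\E[\Theta_1^\alpha]$ and land on $\prob[\sum_{t=1}^k\Theta_t X_t^+>x]\sim\prob[X_1>x]\sum_{t=1}^k\E[\Theta_t^\alpha]$. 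The only place with any real content is checking that the product summands remain pairwise asymptotically independent, and that is exactly what Lemma~\ref{joint asymptotic independence} supplies; everything else is bookkeeping with regularly varying functions.
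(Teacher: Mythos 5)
Your proof is correct and matches the paper's own approach exactly: the paper states that the corollary follows by applying Lemma~\ref{finite sum asymptotics1} with $Y_t=\Theta_tX_t^+$ together with the modified Breiman result (Lemma~\ref{breiman new}), and your write-up simply fills in the routine verification (transfer of DZ hypotheses to $X_t^+$, application of Lemma~\ref{joint asymptotic independence} for the pairwise asymptotic independence of the products, and the cancellation of $\E[\Theta_1^\alpha]$) that the paper leaves implicit.
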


Using Lemmas~\ref{breiman new}--\ref{finite sum asymptotics1} and Corollary~\ref{fin sum cor}, we obtain the following result, which is an extension of Theorem 3.1(a) of \cite{zhang:weng:shen:2008}. (Note that the proof of Theorem~3.1(a) of \cite{zhang:weng:shen:2008} require only the results obtained in Lemmas~\ref{breiman new}--\ref{finite sum asymptotics1} and Corollary~\ref{fin sum cor}.) 
\begin{proposition} \label{finite sum}
Let $\{X_t\}$ be a sequence of pairwise asymptotically independent, identically distributed random variables with common regularly varying tail of index $-\alpha$, for some $\alpha>0$ satisfying~\eqref{right tail}, which is independent of another sequence of positive random variables $\{\Theta_t\}$. Further assume that, for all $t$, $\prob[\Theta_t>x]= \lito(\prob[X_1>x])$ and $\E[\Theta_t^{\alpha}]<\infty$. Also assume that the pairs $(\Theta_t, X_t)$ satisfy one of the DZ conditions. Then,
$$\prob\left[\max_{1\leq k \leq n}\sum_{t=1}^{k}\Theta_t X_t > x\right] \sim \prob\left[\sum_{t=1}^{n}\Theta_t X_t^{+}> x\right] \sim \prob[X_1>x] \sum_{t=1}^{n}\E[\Theta_t^{\alpha}].$$
\end{proposition}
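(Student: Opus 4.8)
The plan is to establish the two asymptotic equivalences separately, working from the inside out: first obtain the asymptotics of $\prob[\sum_{t=1}^n \Theta_t X_t^+ > x]$, which is essentially Corollary~\ref{fin sum cor}, and then sandwich $\prob[\max_{1\le k\le n}\sum_{t=1}^k \Theta_t X_t > x]$ between it and a matching lower bound. For the middle term, I would first note that by Corollary~\ref{fin sum cor} we already have $\prob[\sum_{t=1}^n \Theta_t X_t^+ > x] \sim \prob[X_1>x]\sum_{t=1}^n \E[\Theta_t^\alpha]$, since the hypotheses here (pairwise asymptotic independence, identical distribution, $\prob[\Theta_t>x]=\lito(\prob[X_1>x])$, $\E[\Theta_t^\alpha]<\infty$, and a DZ condition for each pair) are exactly those of the corollary. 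So the content is to show the max-partial-sum probability has the same asymptotics.

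For the upper bound, observe the pathwise inequality $\max_{1\le k\le n}\sum_{t=1}^k \Theta_t X_t \le \sum_{t=1}^n \Theta_t X_t^+$, because replacing each $X_t$ by $X_t^+$ only increases every partial sum and makes the sequence of partial sums nondecreasing in $k$. Hence $\prob[\max_{1\le k\le n}\sum_{t=1}^k \Theta_t X_t > x] \le \prob[\sum_{t=1}^n \Theta_t X_t^+ > x]$, giving the upper half of both $\sim$ relations at once. For the lower bound I would use that the maximum dominates the single large jump: for each fixed $t_0$, on the event that $\Theta_{t_0}X_{t_0}$ is huge while all other $\Theta_s X_s^+$ are moderate, the partial sum up to $t_0$ (hence the max) exceeds $x$. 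Concretely, fix $\delta\in(0,1)$ and write
\begin{equation*}
\prob\left[\max_{1\le k\le n}\sum_{t=1}^k \Theta_t X_t > x\right] \ge \prob\left[\bigcup_{t_0=1}^n \left\{\Theta_{t_0}X_{t_0} > (1+(n-1)\delta)x,\ \Theta_s X_s^+ \le \delta x\ \forall s\ne t_0\right\}\right].
\end{equation*}
The events in the union are disjoint, and using independence of $\{X_s\}$ from $\{\Theta_s\}$, the negligibility of the left tail of $X$ (so that $\prob[\Theta_s X_s^+ \le \delta x]\to 1$, in fact $\prob[\Theta_s X_s^+ > \delta x]$ is comparable to $\prob[X_1>x]\to 0$ by the modified Breiman lemma), and Lemma~\ref{breiman new} applied to each $\Theta_{t_0}X_{t_0}$, each term is asymptotically $\E[\Theta_{t_0}^\alpha]\prob[X_1 > (1+(n-1)\delta)x](1+\lito(1))$. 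Summing over $t_0$, dividing by $\prob[X_1>x]$, using regular variation to send $(1+(n-1)\delta)^{-\alpha}\to 1$ as $\delta\downarrow 0$, and combining with the upper bound yields $\prob[\max_{1\le k\le n}\sum_{t=1}^k \Theta_t X_t > x]\sim \prob[X_1>x]\sum_{t=1}^n\E[\Theta_t^\alpha]$.

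The main obstacle, I expect, is making the lower-bound estimate fully rigorous when $X_t$ can be negative: one must ensure that the "other" terms $\Theta_s X_s$ for $s<t_0$ do not drag the partial sum $\sum_{t=1}^{t_0}\Theta_t X_t$ below $x$ even though $\Theta_{t_0}X_{t_0}$ is large. Replacing $X_s$ by $X_s^+$ in the conditioning event handles the positive excursions, but one still needs a bound like $\Theta_s X_s \ge -\Theta_s X_s^-$ and control of $\sum_s \Theta_s X_s^-$; this is where~\eqref{right tail} (negligible left tail) together with $\E[\Theta_t^\alpha]<\infty$ and the DZ conditions enter, via Lemma~\ref{left tail negligibility}, to show $\prob[\sum_{s} \Theta_s X_s^- > \delta x] = \lito(\prob[X_1>x])$. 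Modulo this bookkeeping — and the fact, as parenthetically remarked in the excerpt, that the proof of Theorem~3.1(a) of Zhang--Weng--Shen goes through verbatim once Lemmas~\ref{breiman new}--\ref{finite sum asymptotics1} and Corollary~\ref{fin sum cor} are available — the argument is routine.
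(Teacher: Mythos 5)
Your overall strategy is the same as the paper's: the upper bound follows by sandwiching the maximum between $\sum_{t=1}^n\Theta_tX_t$ and $\sum_{t=1}^n\Theta_tX_t^+$ and invoking Corollary~\ref{fin sum cor}, and the lower bound comes from a single-large-jump argument using Lemmas~\ref{breiman new},~\ref{joint asymptotic independence} and~\ref{left tail negligibility}. However, there is a genuine error in your stated lower-bound inequality. The event $\{\Theta_{t_0}X_{t_0}>(1+(n-1)\delta)x,\ \Theta_sX_s^+\le\delta x\ \forall s\ne t_0\}$ is \emph{not} contained in $\{\max_{1\le k\le n}\sum_{t=1}^k\Theta_tX_t>x\}$: bounding $\Theta_sX_s^+$ from above says nothing about how negative $\Theta_sX_s$ can be for $s<t_0$, so the partial sum at $t_0$ may be dragged arbitrarily far below $x$. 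Constraining the positive parts is simply beside the point; what you must constrain is $\Theta_sX_s$ from below. You flag the difficulty in your final paragraph but mischaracterize the fix — ``control of $\sum_s\Theta_sX_s^-$'' is not a negligibility estimate to be added on top; it has to go into the definition of the event, otherwise the displayed inequality remains false.

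The paper handles this cleanly. It first observes $\max_{1\le k\le n}\sum_{t=1}^k\Theta_tX_t\ge\sum_{t=1}^n\Theta_tX_t$, so only $\prob[\sum_{t=1}^n\Theta_tX_t>x]$ needs a lower bound. Then, with $v>1$ and $y=(v-1)/(n-1)$, it intersects $\{\Theta_sX_s>vx\}$ with $\{\Theta_jX_j>-yx,\ j\ne s\}$; on this event the full sum exceeds $vx-(n-1)yx=x$ deterministically. A Bonferroni expansion over $s$ reduces the estimate to the single terms $\prob[\Theta_sX_s>vx]$ (handled by Lemma~\ref{breiman new}), the cross terms $\prob[\Theta_kX_k>vx,\Theta_lX_l>vx]$ (negligible by Lemma~\ref{joint asymptotic independence}), and the left-tail terms $\prob[\Theta_jX_j\le -yx]$ (negligible by Lemma~\ref{left tail negligibility}); letting $v\downarrow1$ finishes. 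If you replace your $\Theta_sX_s^+\le\delta x$ by $\Theta_sX_s\ge -\delta x$ (or add the latter) your disjoint-union version becomes correct and essentially equivalent, but as written the crucial inclusion fails.
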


\begin{proof}[Proof of Propositon~\ref{finite sum}] The proof is similar to that of Theorem 3.1(a) of \cite{zhang:weng:shen:2008}. We provide a sketch for the completeness.
 Since $\{\Theta_t\}_{t\geq 1}$ are positive, we have
\begin{equation*}
 \sum_{t=1}^n\xtheta \leq \max_{1\leq k\leq n}\sum_{t=1}^n\xtheta \leq \sum_{t=1}^n\Theta_tX_t^+, \quad n\geq 1.
\end{equation*}

Thus it suffices to show for $n\geq 1$,
\begin{equation}
 \label{chap2:proof:finite sum:claim1}
\prob\left[\sum_{t=1}^n\Theta_tX_t^+>x\right]\sim\prob[X_1>x]\sum_{t=1}^n\E[\Theta_t^\alpha],\quad \text{as } x\rightarrow\infty
\end{equation}
\begin{equation}
 \label{chap2:proof:finite sum:claim2}
\limsup_{x\rightarrow\infty}\frac{\prob\left[\sum_{t=1}^n\xtheta>x\right]}{\prob[X_1>x]}\geq \sum_{t=1}^n\E[\Theta_t^\alpha].
\end{equation}

Note that~\eqref{chap2:proof:finite sum:claim1} immediately follows from Corollary~\ref{fin sum cor}.  Also note that~\eqref{chap2:proof:finite sum:claim2} holds for $n=1$ by the modified Breiman's theorem given in Lemma~\ref{breiman new}. Suppose $n\geq 2$. Let $v>1$ be a constant and set $y=(v-1)/(n-1)$. Clearly $y>0$. 

\begin{align}
 \prob\left[\sum_{t=1}^n\xtheta>x\right]&\geq \prob\left[\sum_{t=1}^n\xtheta>x,\max_{1\leq s\leq n}\Theta_sX_s >vx\right]\nonumber \\
&\geq \sum_{s=1}^n\prob\left[\sum_{t=1}^n\xtheta>x,\Theta_sX_s >vx\right]\nonumber \\
&\qquad-\sum_{1\leq k\neq l\leq n}\prob\left[\sum_{t=1}^n\xtheta>x,\Theta_kX_k >vx,\Theta_lX_l>vx\right]\nonumber \\
&\qquad:=\Delta_1-\Delta_2. \label{chap2:proof:finite sum: difference}
\end{align}

For $\Delta_1$ in~\eqref{chap2:proof:finite sum: difference}, we have

\begin{align*}
&\prob\left[\sum_{t=1}^n\xtheta>x, \Theta_sX_s >vx \right]\\ 
&\qquad \geq \prob\left[\sum_{t=1}^n\xtheta>x,\Theta_sX_s >vx, \Theta_jX_j>-yx,1\leq j\leq n,j\neq s\right]\\
&\qquad \geq \prob\left[\Theta_sX_s >vx, \Theta_jX_j>-yx,1\leq j\leq n,j\neq s\right]\\
&\qquad \geq 1-\left(\prob\left[\Theta_sX_s\leq vx\right]+\sum_{\substack{j=1\\ j\neq s}}^n\prob\left[\Theta_jX_j\leq -yx\right]\right)
\end{align*}

It follows from Lemma~\ref{left tail negligibility},
$$\lim_{x\rightarrow\infty}\frac{\prob\left[\Theta_jX_j\leq -yx\right]}{\prob\left[\Theta_jX_j>x\right]}=0 \quad \text{for } 1\leq j\leq n.$$
Therefore we have,
\begin{equation}
 \label{chap2:proof:finite sum: final eq}
\limsup_{x\rightarrow\infty}\frac{\Delta_1}{\prob[X_1>x]}\geq \limsup_{x\rightarrow\infty}\sum_{s=1}^n\frac{\prob\left[\Theta_sX_s>vx\right]}{\prob[X_1>x]}=v^{-\alpha}\sum_{s=1}^n\E[\Theta_t^\alpha].
\end{equation}
For $\Delta_2$ in~\eqref{chap2:proof:finite sum: difference} by Lemma~\ref{joint asymptotic independence} we have for $1\leq k\neq l\leq n$,
\begin{multline*}
 \lim_{x\rightarrow\infty}\frac{\prob\left[\sum_{t=1}^n\xtheta>x,\Theta_kX_k >vx,\Theta_lX_l>vx\right]}{\prob[\Theta_lX_l>x]}\\ \leq v^{-\alpha}\lim_{x\rightarrow\infty}\frac{\prob\left[\Theta_kX_k >vx,\Theta_lX_l>vx\right]}{\prob[\Theta_lX_l>vx]}=0.
\end{multline*}

So this implies, by Lemma~\ref{breiman new}, $\Delta_2=\lito(\prob[X_1>x])$. Now letting $v\rightarrow1$ in~\eqref{chap2:proof:finite sum: final eq}, we get the result.

\end{proof}

\end{section}

\begin{section}{The tail of the weighted sum under the DZ conditions} \label{sec:direct}
In Proposition~\ref{finite sum}, we saw that the conditions on the slowly varying function helps us to reduce the moment conditions on $\{\Theta_t\}$ for the finite sum. However we need some additional hypotheses to handle the infinite series. To study the almost sure convergence of $\xinf=\sum_{t=1}^{\infty} \Theta_t X_t^{+}$, observe that the partial sums $S_n=\sum_{t=1}^{n}\Theta_t X_t^{+}$ increase to $\xinf$. We shall show in the following results that $\prob[\xinf>x]\sim\prob[X_1>x]\sum_{t=1}^{\infty}\E[\Theta_t^{\alpha}]$ under suitable conditions. Thus if $\sum_{t=1}^{\infty}\E[\Theta_t^{\alpha}] < \infty$, then $\lim_{x\rightarrow\infty}\prob[\xinf>x]=0$ and $\xinf$ is finite almost surely.

To obtain the required tail behavior, we shall assume the following conditions, which weaken the moment requirements of $\{\Theta_t\}$ assumed in the conditions~\eqref{RW1} and~\eqref{RW2} given in \cite{resnick:willekens:1991}:
\renewcommand{\labelenumi}{(RW\arabic{enumi}$^\prime$)}
\renewcommand{\theenumi}{RW\arabic{enumi}$^\prime$}
\begin{enumerate}
    \item \label{ERW1} For $ 0 <\alpha<1$, $\sum_{t=1}^{\infty} \E[\Theta_t^{\alpha}]<\infty$.
    \item \label{ERW2} For $1\leq\alpha<\infty$, for some $\epsilon>0$, $\sum_{t=1}^{\infty} (\E[\Theta_t^{\alpha}])^{\frac{1}{\alpha+\epsilon}} <\infty$.
\end{enumerate}
\renewcommand{\theenumi}{\myenumi}
We shall call these conditions modified RW moment conditions.

\begin{remark} \label{almost sure remark}
As observed in Remark~\ref{rem: RW}, for $\alpha\geq 1$ and $\epsilon>0$, the finiteness of the sum in~\eqref{ERW2} implies $\sum_{t=1}^{\infty}(\E[\Theta_t^{\alpha}])<\infty$. Thus to check the almost sure finiteness of $\xinf$, it is enough to check the tail asymptotics condition:
$$\prob[\xinf > x]\sim \prob[X_1>x]\sum_{t=1}^{\infty}\E[\Theta_t^{\alpha}].$$

We shall prove it under the above model together with the assumption that $\prob[\Theta_t>x] = \lito(\prob[X_1>x])$ and one of the DZ conditions. We need to assume an extra summability condition for uniform convergence, when the conditions~\eqref{DZ2},~\eqref{DZ3} or~\eqref{DZ4} hold.

Further note that $\Theta_1 X_1 \le \max_{1\le n<\infty} \sum_{t=1}^n \Theta_t X_t \le X_{(\infty)}$ and hence the almost sure finiteness of $X_{(\infty)}$ guarantees that $\max_{1\le n<\infty} \sum_{t=1}^n \Theta_t X_t$ is a valid random variable.
\end{remark}

\begin{theorem}
Suppose that $\{X_t\}$ is a sequence of pairwise asymptotically independent, identically distributed random variables with common regularly varying tail of index $-\alpha$, where $\alpha>0$, satisfying~\eqref{right tail}, which is independent of another sequence of positive random variables $\{\Theta_t\}$. Also assume that $\prob[\Theta_t>x]=\lito(\prob[X_1>x])$, the pairs $(\Theta_t, X_t)$ satisfy one of the four DZ conditions~\eqref{DZ1}--\eqref{DZ4} and, depending on the value of $\alpha$, the modified RW moment conditions~\eqref{ERW1} or~\eqref{ERW2} holds. If the pairs $(\Theta_t, X_t)$ satisfy DZ condition~\eqref{DZ2},~\eqref{DZ3} or~\eqref{DZ4}, define
\begin{equation} \label{Ct}
C_t =
\begin{cases}
\sup_x \frac{\prob[\Theta_t > x]}{\prob[X_1>x]}, &\text{when~\eqref{DZ2} holds,}\\[1ex]
\sup_x \frac{\prob[\Theta_t > x]}{x^{-\alpha}\prob[U>\log x]}, &\text{when~\eqref{DZ3} holds,}\\[1ex]
\sup_x \frac{\prob[\Theta_t > x]}{\prob[X_1>x]} m(x), &\text{when~\eqref{DZ4} holds,}
\end{cases}
\end{equation}
and further assume that
\begin{align}
&\sum_{t=1}^\infty C_t < \infty, &\text{when $\alpha<1$,} \label{sum less}\\
&\sum_{t=1}^\infty C_t^{\frac1{\alpha+\epsilon}} < \infty, &\text{when $\alpha\ge 1$.} \label{sum more}
\end{align}

Then
$$\prob\left[\max_{1\leq n <\infty}\sum_{t=1}^{n}\Theta_t X_t > x\right]\sim \prob[\xinf > x]\sim \prob[X_1>x]\sum_{t=1}^{\infty}\E[\Theta_t^{\alpha}]$$
and $\xinf$ is almost surely finite.
\end{theorem}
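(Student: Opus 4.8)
\emph{Proof strategy.} The idea is to push the finite--sum asymptotics of Proposition~\ref{finite sum} and Corollary~\ref{fin sum cor} through to the infinite series by truncating the index set at a level $m$ and sending $m\to\infty$. Note first that the almost sure finiteness of $\xinf$ (and hence the fact that $\max_{1\le n<\infty}\sum_{t=1}^n\Theta_tX_t$ is a bona fide random variable, being dominated by $\xinf$) will follow once the tail asymptotics is established: by Remark~\ref{almost sure remark} the hypotheses~\eqref{ERW1}/\eqref{ERW2} already force $\sum_{t=1}^\infty\E[\Theta_t^\alpha]<\infty$, so $\prob[\xinf>x]\to 0$. Using the sandwich
$$\Theta_1X_1\ \le\ \max_{1\le n<\infty}\sum_{t=1}^n\Theta_tX_t\ \le\ \sum_{t=1}^\infty\Theta_tX_t^+=\xinf ,$$
it suffices to prove a matching lower bound for the maximum and a matching upper bound for $\xinf$, both relative to $\prob[X_1>x]$. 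The lower bound is immediate: for fixed $m$ one has $\max_{1\le n<\infty}\sum_{t=1}^n\Theta_tX_t\ge\max_{1\le n\le m}\sum_{t=1}^n\Theta_tX_t$, and Proposition~\ref{finite sum} gives $\liminf_{x\to\infty}\prob[\max_{1\le n<\infty}\sum_{t=1}^n\Theta_tX_t>x]/\prob[X_1>x]\ge\sum_{t=1}^m\E[\Theta_t^\alpha]$; let $m\to\infty$.

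For the upper bound fix $m\ge 1$ and $\delta\in(0,1)$, and split $\xinf=S_m+R_m$ with $S_m=\sum_{t=1}^m\Theta_tX_t^+$, $R_m=\sum_{t=m+1}^\infty\Theta_tX_t^+$, so that $\prob[\xinf>x]\le\prob[S_m>(1-\delta)x]+\prob[R_m>\delta x]$. By Corollary~\ref{fin sum cor} and the regular variation of the common tail, $\prob[S_m>(1-\delta)x]\sim(1-\delta)^{-\alpha}\,\prob[X_1>x]\sum_{t=1}^m\E[\Theta_t^\alpha]$. Hence the whole problem reduces to showing that
$$\rho_m(\delta):=\limsup_{x\to\infty}\frac{\prob[R_m>\delta x]}{\prob[X_1>x]}\ \longrightarrow\ 0\qquad\text{as }m\to\infty ,$$
for each fixed $\delta\in(0,1)$. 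Granting this, $\limsup_{x\to\infty}\prob[\xinf>x]/\prob[X_1>x]\le(1-\delta)^{-\alpha}\sum_{t=1}^m\E[\Theta_t^\alpha]+\rho_m(\delta)$, and letting first $m\to\infty$ and then $\delta\downarrow 0$ gives $\limsup_{x\to\infty}\prob[\xinf>x]/\prob[X_1>x]\le\sum_{t=1}^\infty\E[\Theta_t^\alpha]$, as required.

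To bound $\prob[R_m>\delta x]$ I would decompose $\{R_m>\delta x\}$ into the ``single large jump'' event $\bigcup_{t>m}\{\Theta_tX_t^+>x\}$ and a complementary event on which every summand is truncated at $x$. For the first event one needs a form of the modified Breiman estimate of Lemma~\ref{breiman new} that is \emph{uniform in $t$}: starting from $\prob[\Theta_tX_t^+>x]=\int_0^\infty\prob[X_1>x/\theta]\,G_t(d\theta)$, under~\eqref{DZ1} the Potter bounds of Theorem~\ref{chap1:theorem:potter} together with the very definition of~\eqref{DZ1} dominate $\prob[X_1>x/\theta]/\prob[X_1>x]$ by a constant multiple of $\theta^\alpha$ for $\theta\ge 1$, the range of huge $\theta$ being absorbed by $\prob[\Theta_t>x]=\lito(\prob[X_1>x])$, which yields $\prob[\Theta_tX_t^+>x]\le K\,\E[\Theta_t^\alpha]\,\prob[X_1>x]$ for all $t$ and all large $x$; under~\eqref{DZ2}--\eqref{DZ4} the convergence in Lemma~\ref{breiman new} is no longer uniform in $t$ for free, and the constants $C_t$ of~\eqref{Ct} are exactly what measure the residual $t$--dependence, so that~\eqref{sum less}/\eqref{sum more} make $\sum_{t>m}\prob[\Theta_tX_t^+>x]\le o_m(1)\,\prob[X_1>x]$. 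For the truncated event, when $\alpha\ge 1$ I would estimate $\big\|\sum_{t>m}(\Theta_tX_t^+\wedge x)\big\|_{\alpha+\epsilon}\le\sum_{t>m}\|\Theta_tX_t^+\wedge x\|_{\alpha+\epsilon}$ by Minkowski's inequality, bound each term through Karamata's Theorem~\ref{chap1:theorem:karamata} (in the truncated--moment form provided by Theorem~\ref{chap1:theorem:potter}) by a constant multiple of $x\,\prob[X_1>x]^{1/(\alpha+\epsilon)}(\E[\Theta_t^\alpha])^{1/(\alpha+\epsilon)}$ (plus a $C_t$--term under~\eqref{DZ2}--\eqref{DZ4}), and then apply Markov's inequality; this produces a bound of the order $\delta^{-(\alpha+\epsilon)}\big(\sum_{t>m}(\E[\Theta_t^\alpha])^{1/(\alpha+\epsilon)}\big)^{\alpha+\epsilon}$ relative to $\prob[X_1>x]$, which is precisely why~\eqref{ERW2} is phrased with the exponent $\tfrac1{\alpha+\epsilon}$ and why it tends to $0$ as $m\to\infty$. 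When $0<\alpha<1$ one works instead with first moments of the truncated summands (Markov at level $\delta x$), the resulting bound being controlled by $\sum_{t>m}\E[\Theta_t^\alpha]$, which vanishes by~\eqref{ERW1}.

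The main obstacle is this last step: producing the above estimates for $\Theta_tX_t^+\wedge x$ and $\prob[\Theta_tX_t^+>x]$ uniformly in $t$ while assuming only $\E[\Theta_t^\alpha]<\infty$. Two points are delicate. First, the small values of $\Theta_t$: a naive Potter bound would replace $\theta^\alpha$ by $\theta^{\alpha-\epsilon}$ there and so demand the uncontrolled quantity $\sum_t\E[\Theta_t^{\alpha-\epsilon}]<\infty$, and one must instead exploit the precise structure of the slowly varying part $L$ dictated by the DZ conditions (the type~3/4 representations with a long--tailed factor in the numerator in the case of~\eqref{DZ2} and~\eqref{DZ3}, the $\sqrt x$--to--$x$ control in~\eqref{DZ4}, etc.) to keep a genuine $\theta^\alpha$, up to a bounded correction, for $\theta<1$ as well. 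Second, the interchange of $\limsup_{x\to\infty}$ with the infinite sum $\sum_{t>m}$, which is justified by $\sum_{t=1}^\infty\E[\Theta_t^\alpha]<\infty$ (and, under~\eqref{DZ2}--\eqref{DZ4}, by~\eqref{sum less}/\eqref{sum more}) together with the elementary $\prob[\Theta_t>\eta]\le\eta^{-\alpha}\E[\Theta_t^\alpha]$, which makes $\sum_{t>m}\prob[\Theta_t\in\cdot]$ a finite measure away from the origin. Carrying out these estimates in each of the four cases~\eqref{DZ1}--\eqref{DZ4} is where the technical weight of the argument sits.
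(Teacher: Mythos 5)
Your strategy coincides with the paper's almost exactly: lower bound via Proposition~\ref{finite sum} applied to finite prefixes, upper bound via the $(1-\delta)x$/$\delta x$ split, then the one--large--jump decomposition of the remainder $R_m$ into $\bigcup_{t>m}\{\Theta_tX_t^+>x\}$ and a truncated sum, with Markov for $\alpha<1$ and Minkowski--plus--Markov at power $\alpha+\epsilon$ for $\alpha\ge 1$, and finally a bound on $\prob[\Theta_tX_t>x]/\prob[X_1>x]$ that is summable in $t$. That is exactly the route taken in the paper, so the architecture is sound.

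One point in your discussion is off, and it is worth flagging because it would lead you astray when filling in the estimates. You write that the DZ conditions are what rescue the $\theta<1$ regime from the Potter bound's $\theta^{\alpha-\epsilon}$. That is not the case. The paper handles $\theta\in(0,1]$ with no reference to the DZ conditions at all: since $\overline F$ is regularly varying of strictly negative index $-\alpha$, the uniform convergence theorem gives $\prob[X_1>x/v]/\prob[X_1>x]\to v^\alpha$ uniformly on $v\in(0,1]$, so for large $x$ the integral over $(0,1]$ is directly bounded by $2\E[\Theta_t^\alpha]$. The DZ conditions are deployed instead on the intermediate range $v\in(1,x]$, where $x/v<x$ and the naive Potter estimate would force $v^{\alpha+\epsilon}$ and hence require $\E[\Theta_t^{\alpha+\epsilon}]$; the structural information about the slowly varying factor (the $D_1$ bound under~\eqref{DZ1}, the subexponential class $\mathcal S_d$/$\mathcal S^*$ control under~\eqref{DZ2}--\eqref{DZ3}, the $\sqrt x$--to--$x$ bound and $m(x)\to\infty$ under~\eqref{DZ4}) is precisely what lets you replace the exponent $\alpha+\epsilon$ by $\alpha$ on that interval, up to the $C_t$ correction. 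The third interval $(x,\infty)$ is where $\prob[\Theta_t>x]=\lito(\prob[X_1>x])$ and the $C_t$ constants enter. So the geography of the argument is: uniform convergence on $(0,1]$, DZ conditions on $(1,x]$, the lightness assumption and $C_t$ on $(x,\infty)$; you had the middle two pieces transposed.

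With that correction, everything else you outline tracks the paper's proof, including the crucial point that for $\alpha\ge 1$ the $(\alpha+\epsilon)$--norm and Minkowski are what make the condition~\eqref{ERW2} appear with exponent $1/(\alpha+\epsilon)$, and that under~\eqref{DZ2}--\eqref{DZ4} the additional $C_t$ terms must be summable (or $1/(\alpha+\epsilon)$--summable) as in~\eqref{sum less}/\eqref{sum more}.
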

\begin{proof}
For any $m\geq1$, we have, by Proposition~\ref{finite sum},
$$\prob\left[\max_{1\leq n <\infty}\sum_{t=1}^{n}\Theta_t X_t >x\right] \geq \prob\left[\max_{1\leq n\leq m}\sum_{t=1}^{n}\Theta_t X_t > x\right] \sim \prob[X_1>x]\sum_{t=1}^{m}\E[\Theta_t^{\alpha}]$$
leading to
$$\liminf_{x\rightarrow\infty}\frac{\prob[\max_{1\leq n <\infty}\sum_{t=1}^{n}\Theta_t X_t >x]}{\prob[X_1>x]} \geq \sum_{t=1}^{\infty}\E[\Theta_t^{\alpha}].$$
Similarly, comparing with the partial sums and using Proposition~\ref{finite sum}, we also get
$$\liminf_{x\rightarrow\infty}\frac{\prob[\xinf >x]}{\prob[X_1>x]} \geq \sum_{t=1}^{\infty}\E[\Theta_t^{\alpha}].$$

For the other inequality, observe that for any natural number $m$, $0<\delta<1$ and $x\geq 0$,
\begin{multline*}
\prob\left[ \max_{1\leq n <\infty}\sum_{t=1}^{n}\Theta_tX_t > x\right]\\
\leq \prob\left[\max_{1\leq n \leq m}\sum_{t=1}^{n}\Theta_tX_t>(1-\delta)x\right] + \prob\left[\sum_{t=m+1}^{\infty}\Theta_tX_t^{+}>\delta x\right].
\end{multline*}
For the first term, by Proposition~\ref{finite sum} and the regular variation of the tail of $X_1$, we have,
\begin{multline*}
\lim_{x\rightarrow\infty} \frac{\prob\left[\max_{1\leq n \leq m}\sum_{t=1}^{n}\Theta_tX_t>(1-\delta)x\right]}{\prob[X_1>x]}\\ = (1-\delta)^{-\alpha} \sum_{t=1}^m \E[\Theta_t^{\alpha}] \leq (1-\delta)^{-\alpha} \sum_{t=1}^{\infty}\E[\Theta_t^{\alpha}].
\end{multline*}
Also, for $\xinf$, we have,
$$\prob\left[ \xinf > x\right] \leq\ \prob\left[\sum_{t=1}^{m}\Theta_tX_t^+>(1-\delta)x\right] + \prob\left[\sum_{t=m+1}^{\infty}\Theta_tX_t^{+}>\delta x\right]$$
and a similar result holds for the first term.

Then, as $X_1$ is a random variable with regularly varying tail, to complete the proof, it is enough to show that,
\begin{equation} \label{tail negligible}
\lim_{m\rightarrow\infty}\limsup_{x\rightarrow\infty}\frac{\prob[\sum_{t=m+1}^{\infty}\Theta_tX_t^{+} >x]}{\prob[X_1>x]}= 0.
\end{equation}
Now,
\begin{align}
&\prob\left[\sum_{t=m+1}^{\infty}\Theta_t{X_t}^{+} > x\right] \nonumber\\
\leq &\prob\left[\bigvee\limits_{t=m+1}^{\infty}\Theta_t X_t^{+}>x\right] +\prob\left[\sum_{t=m+1}^{\infty}\Theta_tX_t^{+}>x, \bigvee\limits_{t=m+1}^{\infty}\Theta_t X_t^{+}\leq x\right] \nonumber\\
\leq &\sum_{t=m+1}^\infty \prob[\Theta_t X_t > x] + \prob\left[\sum_{t=m+1}^{\infty}\Theta_tX_t^{+}\mathbbm{1}_{[\Theta_t X_t^+ \leq x]}>x\right]. \label{two terms tail negligible}
\end{align}

We bound the final term of~\eqref{two terms tail negligible} separately in the cases $\alpha<1$ and $\alpha\ge 1$. In the rest of the proof, for $\alpha\ge 1$, we shall choose $\epsilon>0$, so that the condition~\eqref{ERW2} holds. We first consider the case $\alpha<1$. By Markov inequality, the final term of~\eqref{two terms tail negligible} gets bounded above by
\begin{align}
\sum_{t=m+1}^{\infty}\frac1x\E\left[\Theta_t X_t^+\mathbbm{1}_{[\Theta_t X_t^+ \leq x]}\right]
&=\sum_{t=m+1}^{\infty}\int_0^{\infty}\frac1{x/v}{\E\left[X_t^+\mathbbm{1}_{[X_t^+\leq x/v]}\right]}G_t(dv) \label{eq: term 2 alpha small}\\
&=\sum_{t=m+1}^{\infty}\int_0^{\infty}\frac{\E[X_t^+\mathbbm{1}_{[X_t^+\leq x/v]}]}{x/v\prob[X_t^+>x/v]}\prob[X_t>x/v]G_t(dv).\nonumber
\end{align}
Now, using Karamata's theorem (see Theorem~\ref{chap1:theorem:karamata}), we have
$$\lim_{x\to\infty} \frac{\E\left[X_t^+\mathbbm{1}_{[X_t^+\leq x/v]}\right]}{x\prob[X_t^+>x]} = \frac\alpha{1-\alpha}$$
and, for $x<1$, we have $\E[X_t^+\mathbbm{1}_{[X_t^+\leq x/v]}]/(x\prob[X_t^+>x])\le 1/\prob[X_t^+>1]$. Thus, $\E[X_t^+\mathbbm{1}_{[X_t^+\leq x/v]}]/(x\prob[X_t^+>x])$ is bounded on $(0,\infty)$. So the final term of~\eqref{two terms tail negligible} becomes bounded by a multiple of $\sum_{t=m+1}^\infty \prob[\Theta_t X_t>x]$.

When $\alpha\geq 1$, using Markov inequality on the final term of~\eqref{two terms tail negligible}, we get a bound for it as
\begin{align}
&\frac1{x^{\alpha+\epsilon}} \E\left[ \left( \sum_{t=m+1}^\infty \Theta_t X_t^+ \mathbbm{1}_{[\Theta_t X_t^+\le x]} \right)^{\alpha+\epsilon} \right],\nonumber
\intertext{and then using Minkowski's inequality, this gets further bounded by}
&\left\{ \sum_{t=m+1}^\infty \left( \E \left[ \frac1{x^{\alpha+\epsilon}} \left(\Theta_t X_t^+\right)^{\alpha+\epsilon} \mathbbm{1}_{[\Theta_t X_t^+\le x]} \right] \right)^{\frac1{\alpha+\epsilon}} \right\}^{\alpha+\epsilon}\nonumber\\
= &\left\{ \sum_{t=m+1}^\infty \left[ \int_0^\infty (x/v)^{-(\alpha+\epsilon)} {\E \left[ \left(X_t^+\right)^{\alpha+\epsilon} \mathbbm{1}_{[X_t^+\le x/v]} \right]} G_t(dv) \right]^{\frac1{\alpha+\epsilon}} \right\}^{\alpha+\epsilon} \label{eq: term 2 alpha large}\\
= &\left\{ \sum_{t=m+1}^\infty \left[ \int_0^\infty \frac{\E \left[ \left(X_t^+\right)^{\alpha+\epsilon} \mathbbm{1}_{[X_t^+\le x/v]} \right]}{(x/v)^{\alpha+\epsilon} \prob[X_t^+ > x/v]} \prob[X_t>x/v] G_t(dv) \right]^{\frac1{\alpha+\epsilon}} \right\}^{\alpha+\epsilon}.\nonumber
\end{align}
Then, again using Karamata's theorem, the first factor of the integrand converges to $\alpha/\epsilon$ and, arguing as in the case $\alpha<1$, is bounded. Thus the final term of~\eqref{two terms tail negligible} is bounded by a multiple of $[ \sum_{t=m+1}^\infty ( \prob[\Theta_t X_t > x] )^{1/(\alpha+\epsilon)} ]^{\alpha+\epsilon}$.

Combining the two cases for $\alpha$, we get, for some $L_1>0$,
$$\frac{\prob[\sum_{t=m+1}^{\infty}\Theta_tX_t^{+} >x]}{\prob[X_1>x]} \le
\begin{cases}
L_1 \sum_{t=m+1}^\infty \frac{\prob[\Theta_t X_t>x]}{\prob[X_1>x]}, &\text{when $\alpha<1$,}\\[2ex]
\sum_{t=m+1}^\infty \frac{\prob[\Theta_t X_t>x]}{\prob[X_1>x]}\\
 + L_1 \left[ \sum_{t=m+1}^\infty \left( \frac{\prob[\Theta_t X_t > x]}{\prob[X_1>x]} \right)^{\frac1{\alpha+\epsilon}} \right]^{\alpha+\epsilon}, &\text{when $\alpha\ge 1$.}
\end{cases}$$
To prove~\eqref{tail negligible}, we shall show
\begin{equation} \label{eq: dir bd}
\frac{\prob[\Theta_t X_t > x]}{\prob[X_1>x]} \le B_t
\end{equation}
for all large values of $x$, where
\begin{equation} \label{eq: bd sum}
\begin{split}
\sum_{t=1}^\infty B_t <\infty, &\text{ for $\alpha<1$,}\\
\sum_{t=1}^\infty B_t^{\frac1{\alpha+\epsilon}} < \infty, &\text{ for $\alpha\ge1$.}
\end{split}
\end{equation}
As mentioned in Remark~\ref{almost sure remark}, for $\alpha\ge 1$ and $\epsilon>0$, $\sum_{t=1}^\infty B_t^{1/{\alpha+\epsilon}} < \infty$ will also imply $\sum_{t=1}^\infty B_t <\infty$. Thus, for both the cases of $\alpha<1$ and $\alpha\ge1$, the sums involved will be bounded by the tail sum of a convergent series and hence~\eqref{tail negligible} will hold.

First observe that
\begin{equation} \label{ratio}
\frac{\prob[\Theta_t X_t>x]}{\prob[X_1>x]}=\int_{0}^{\infty}\frac{\prob[X_1>x/v]}{\prob[X_1>x]}G_t(dv).
\end{equation}
We break the range of integration into three intervals $(0,1]$, $(1,x]$ and $(x,\infty)$, where we choose a suitably large $x$ greater than $1$.

Since $\overline F$ is regularly varying of index $-\alpha$ with $\alpha>0$, $\prob[X_1>x/v]/\prob[X_1>x]$ converges uniformly to $v^\alpha$ for $v\in(0,1)$ or equivalently $1/v\in(1, \infty)$. Hence the integral in~\eqref{ratio} over the first interval can be bounded, for all large enough $x$, as
\begin{equation} \label{eq: first bd}
\int_{0}^{1}\frac{\prob[X_1>x/v]}{\prob[X_1>x]}G_t(dv) \le 2\E[\Theta_t^\alpha].
\end{equation}

For the integral in~\eqref{ratio} over the third interval, we have, for all large enough $x$, by~\eqref{Ct} (for the conditions~\eqref{DZ2},~\eqref{DZ3} and~\eqref{DZ4} only),
\begin{multline} \label{eq: third bd}
\int_{x}^{\infty}\frac{\prob[X_1>x/v]}{\prob[X_1>x]}G_t(dv) \le \frac{\prob\left[\Theta_t>x\right]}{\prob[X_1>x]}\\
\le
\begin{cases}
\frac{\E\left[\Theta_t^\alpha\right]}{L(x)} \leq 2D_1 \frac{\E\left[\Theta_t^\alpha\right]}{L(1)}, &\text{by Markov's inequality, when~\eqref{DZ1} holds,}\\
C_t, &\text{when~\eqref{DZ2} holds,}\\
\frac{\prob\left[\Theta_t>x\right]}{c(x)x^{-\alpha}\prob[U>\log x]} \le \frac2c C_t, &\text{when~\eqref{DZ3} holds,}\\
C_t, &\text{as $m(x)\to\infty$, when~\eqref{DZ4} holds.}
\end{cases}
\end{multline}
Note that, when the condition~\eqref{DZ3} holds and $L$ is of type 4, we can ignore the factor $\prob[V>\log x]$, as it is bounded by $1$.

Finally, we consider the integral in~\eqref{ratio} over the second interval separately for each of the DZ conditions. We begin with the condition~\eqref{DZ1}. In this case, we have, for all large enough $x$,
\begin{multline} \label{eq: second bd DZ1}
\int_{1}^{x}\frac{\prob[X_1>x/v]}{\prob[X_1>x]}G_t(dv) \le \int_{1}^{x} v^\alpha \frac{L(x/v)}{L(x)} G_t(dv)\\ \le \sup_{y\in[1,x]} \frac{L(y)}{L(x)} \E\left[\Theta_t^\alpha\right] \le 2D_1 \E\left[\Theta_t^\alpha\right].
\end{multline}

Next we consider the condition~\eqref{DZ2}. Integrating by parts, we have
$$\int_{1}^{x}\frac{\prob\left[X_1>{x}/{v}\right]}{\prob[X_1>x]}G_t(dv) \leq \prob[\Theta_t > 1] + \int_{1}^{x} \frac{\prob[\Theta_t > v]}{\prob[X_1 > x]} d_{v}\prob\left[X_1 > {x}/{v}\right].$$ Using Markov's inequality and~\eqref{Ct} respectively in each of the terms, we have
$$\int_{1}^{x}\frac{\prob\left[X_1>{x}/{v}\right]}{\prob[X_1>x]}G_t(dv) \leq \E[\Theta_t^\alpha] + C_t \int_1^x \frac{\prob[X_1 > v]}{\prob[X_1 > x]} d_{v}\prob\left[X_1 > {x}/{v}\right].$$
Substituting $u=\log v$, the second term becomes, for all large $x$,
\begin{multline*}
C_t \int_0^{\log x} \frac{\prob[\log X_1 > u]}{\prob[\log X_1 > \log x]} d_u\prob\left[\log X_1 > \log x - u \right]\\
\le 2 C_t \E[\exp(\alpha (\log X_1)^+)] \le 2 C_t \E[X_1^\alpha],
\end{multline*}
where the inequalities follow, since $L(e^x)\in\mathcal S_d$ implies $(\log X_1)^+\in \mathcal S(\alpha)$, cf. \cite{kluppelberg:1989}. Thus,
\begin{equation} \label{eq: second bd DZ2}
\int_{1}^{x}\frac{\prob[X_1>x/v]}{\prob[X_1>x]}G_t(dv) \le \E[\Theta_t^\alpha] + 2 C_t \E[X_1^\alpha].
\end{equation}

Next we consider the condition~\eqref{DZ3}. In this case, we have
\begin{align*}
\int_1^{x}\frac{\prob[X_1>x/v]}{\prob[X_1>x]}G_t(dv)&= \int_1^{x}\frac{L(x/v)}{L(x)}v^{\alpha}G_t(dv)\\
&\leq \sup_{v\in[1,x]}\frac{c(x/v)}{c(x)} \int_1^{x}\frac{\prob[U > \log x -\log v]}{\prob[U > \log x]}v^{\alpha}G_t(dv).
\end{align*}
If $L$ is of type 4, the ratio $L(x/v)/L(x)$ has an extra factor $\prob[V>\log x]/\prob[V>\log x-\log v]$, which is bounded by $1$. Thus the above estimate works if $L$ is either of type 3 or of type 4. Since $c(x)\rightarrow c \in (0,\infty)$, we have $\sup_{v\in[N,x)} {c(x/v)}/{c(x)} := L_2 <\infty$. Integrating by parts, the integral becomes
\begin{multline*}
\int_1^{x}\frac{\prob[U>\log x-\log v]}{\prob[U>\log x]}v^{\alpha}G_t(dv)\\
\leq \prob[\Theta_t >1]+\int_1^{x}\frac{\prob[U>\log x- \log v]\prob[\Theta_t > v]}{\prob[U>\log x]}\alpha v^{\alpha -1}dv\\
+\int_1^{x}\frac{\prob[\Theta_t > v]v^{\alpha}}{\prob[U>\log x]}d_{v}\prob[U>\log x-\log v].
\end{multline*}
The first term is bounded by $\E[\Theta_1^\alpha]$ by Markov's inequality. By~\eqref{Ct}, the second term gets bounded by, for all large enough $x$,
$$\alpha C_t \int_1^x \frac{\prob[U>\log x-\log v]\prob[U>\log v]}{\prob[U>\log x]}d(\log v) \le 2 \alpha C_t \E[U],$$
as $U$ belongs to $\mathcal S^*$. Again, by~\eqref{Ct}, the third term gets bounded by, for all large enough $x$,
$$C_t \int_1^x \frac{\prob[U>\log v] d_v \prob[U>\log x - \log v]}{\prob[U>\log x]} \le 4 C_t,$$
as $U$ belongs to $\mathcal S^*$ and hence is subexponential, cf. \cite{kluppelberg:1988}. Combining the bounds for the three terms, we get
\begin{equation} \label{eq: second bd DZ3}
\int_{1}^{x}\frac{\prob[X_1>x/v]}{\prob[X_1>x]}G_t(dv) \le L_2 \{\E[\Theta_t^\alpha] + 2 (\alpha \E[U] + 2) C_t\}.
\end{equation}

Finally we consider the condition~\eqref{DZ4}. In this case, we split the interval $(1,x]$ into two subintervals $(1,\sqrt{x}]$ and $(\sqrt{x},x]$ and bound the integrals on each of the subintervals separately. We begin with the integral on the subinterval $(1,\sqrt{x}]$.
$$\int_1^{\sqrt x} \frac{L(x/v)}{L(x)}v^{\alpha}G_t(dv)\leq \sup_{v\in (1,\sqrt x]}\frac{L(x/v)}{L(x)}\int_1^{\sqrt x}v^{\alpha}G_t(dv) \leq D_2 \E[\Theta_t^{\alpha}].$$
For the integral over $(\sqrt{x}, x]$, we integrate by parts to obtain
$$\int_{\sqrt x}^{x}\frac{L(x/v)}{L(x)}v^{\alpha}G_t(dv)\leq \prob[\Theta_t>\sqrt x]x^{\alpha/2}\frac{L(\sqrt x)}{L(x)}+\int_{\sqrt x}^x\frac{\prob[\Theta_t>v]}{L(x)}d_v(v^{\alpha}L(x/v)).$$
By Markov's inequality, the first term is bounded by $D_2 \E[\Theta_t^\alpha]$. The second term becomes, using~\eqref{Ct},
\begin{align*}
\int_{\sqrt x}^x \frac{\prob[\Theta_t>v]}{L(x)} x^\alpha d_v(\prob[X_1\le x/v]) &\leq C_t\int_{\sqrt x}^x \frac{\prob[X_1>v]}{L(x)m(v)} x^\alpha d_v(\prob[X_1\le x/v])\\
&\leq \frac{C_t}{m(\sqrt x)}\int_{\sqrt x}^x\frac{L(v)}{L(x)}\left(\frac{x}{v}\right)^{\alpha}d_v(\prob[X_1\leq x/v])\\
&\leq \frac{D_2 C_t}{m(\sqrt x)} \int_{1}^{\sqrt x}y^{\alpha}d_y(\prob[X_1\leq y]) \le D_2 C_t.
\end{align*}
Combining the bounds for the integrals over each subinterval, we get
\begin{equation} \label{eq: second bd DZ4}
\int_{1}^{x}\frac{\prob[X_1>x/v]}{\prob[X_1>x]}G_t(dv) \le D_2 (2 \E[\Theta_t^\alpha] + C_t).
\end{equation}

Combining all the bounds in~\eqref{eq: first bd}--\eqref{eq: second bd DZ4}, for some constant $B$, we can choose the bound in~\eqref{eq: dir bd} as
$$B_t =
\begin{cases}
B \E[\Theta_t^\alpha], &\text{when the condition~\eqref{DZ1} holds,}\\
B (\E[\Theta_t^\alpha] + C_t), &\text{when the conditions~\eqref{DZ2}, \eqref{DZ3} or~\eqref{DZ4} hold.}
\end{cases}$$
Then, for $\alpha<1$, the summability condition~\eqref{eq: bd sum} follows from the condition~\eqref{ERW1} alone under the condition~\eqref{DZ1} and from the condition~\eqref{ERW1} together with~\eqref{sum less} under the conditions~\eqref{DZ2},~\eqref{DZ3} or~\eqref{DZ4}. For $\alpha\ge 1$, under the condition~\eqref{DZ1}, the summability condition~\eqref{eq: bd sum} follows from the condition~\eqref{ERW2}. Finally, to check the summability condition~\eqref{eq: bd sum} for $\alpha\ge1$, under the condition~\eqref{DZ2},~\eqref{DZ3} or~\eqref{DZ4}, observe that as $\alpha\ge 1$ and $\epsilon>0$, we have
$$(\E[\Theta_t^\alpha] + C_t)^{\frac1{\alpha+\epsilon}} \le (\E[\Theta_t^\alpha])^{\frac1{\alpha+\epsilon}} + C_t^{\frac1{\alpha+\epsilon}}$$ and we get the desired condition from the condition~\eqref{ERW2}, together with~\eqref{sum more}.
\end{proof}
\end{section}

\begin{section}{The tails of the summands from the tail of the sum} \label{sec: converse}
In this section, we address the converse problem of studying the tail behavior of $X_1$ based on the tail behavior of $\xinf$. For the converse problem, we restrict ourselves to the setup where the sequence $\{X_t\}$ is positive and pairwise asymptotically independent and the other sequence $\{\Theta_t\}$ is positive and independent of the sequence $\{X_t\}$, such that $\xinf$ is finite with probability one and has regularly varying tail of index $-\alpha$. Depending on the value of $\alpha$, we assume the usual RW moment conditions~\eqref{RW1} or~\eqref{RW2} for the sequence $\{\Theta_t\}$, instead of the modified ones. Then, under a further assumption of the non-vanishing Mellin transform along the vertical line of the complex plane with the real part $\alpha$, we shall show that $X_1$ also has regularly varying tail of index $-\alpha$.

We use the extension of the notion of product of two independent positive random variables to the product convolution of two measures on $(0,\infty)$, which we allow to be $\sigma$-finite. For two $\sigma$-finite measures $\nu$ and $\rho$ on $(0,\infty)$, we define the product convolution as
$$\nu \circledast \rho(B)= \int_0^{\infty}\nu(x^{-1}B)\rho(dx),$$
for any Borel subset $B$ of $(0,\infty)$. We shall need the following result from \cite{JMRS:2009}.
\begin{theorem}[\citealp{JMRS:2009}, Theorem~2.3] \label{JMRS}
Let a non-zero $\sigma$-finite measure $\rho$ on $(0,\infty)$ satisfies, for some $\alpha>0$, $\epsilon\in(0,\alpha)$ and all $\beta\in\mathbb R$,
\begin{align}
\int_{0}^{\infty} \left(y^{\alpha-\epsilon}\vee y^{\alpha+\epsilon}\right) \rho(dy)&<\infty \label{conditionjmrs0}
\intertext{and}
\int_{0}^{\infty}y^{\alpha+i\beta}\rho(dy)&\neq 0. \label{conditionjmrs1}
\end{align}

Suppose, for another $\sigma$-finite measure $\nu$ on $(0,\infty)$, the product convolution measure $\nu\circledast\rho$ has a regularly varying tail of index $-\alpha$ and
\begin{equation} \label{conditionjmrs} \lim_{b\rightarrow0}\limsup_{x\rightarrow\infty}\frac{\int_0^b\rho(x/y,\infty)\nu(dy)}{(\nu\circledast\rho)(x,\infty)}=0.
\end{equation}
Then the measure $\nu$ has a regularly varying tail of index $-\alpha$ as well and
$$\lim_{x\rightarrow\infty}\frac{\nu\circledast\rho (x,\infty)}{\nu(x,\infty)}=\int_0^{\infty}y^{\alpha}\rho(dy).$$

Conversely, if~\eqref{conditionjmrs0} holds but~\eqref{conditionjmrs1} fails for the measure $\rho$, then there exists a $\sigma$-finite measure $\nu$ without regularly varying tail, such that $\nu\circledast\rho$ has regularly varying tail of index $-\alpha$ and~\eqref{conditionjmrs} holds.
\end{theorem}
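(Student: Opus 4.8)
The plan is to transfer the problem to an additive convolution on $\mathbb{R}$ and run a Tauberian argument. Write $\overline\nu(x)=\nu(x,\infty)$ and $\overline\rho(x)=\rho(x,\infty)$. Since $(\nu\circledast\rho)(x,\infty)=\int_0^\infty\overline\nu(x/y)\,\rho(dy)$, the substitution $x=e^{t}$ turns $\circledast$ into ordinary convolution: writing $\hat\nu,\hat\rho$ for the images of $\nu,\rho$ under $\log$, we get $\overline{\nu\circledast\rho}(e^{t})=\int_{\mathbb R}\overline\nu(e^{t-s})\,\hat\rho(ds)$. An exponential tilt by $e^{\alpha t}$ is then natural: by~\eqref{conditionjmrs0} the tilted measure $\tilde\rho(ds):=e^{\alpha s}\hat\rho(ds)$ is \emph{finite}, with total mass $D:=\int_0^\infty y^{\alpha}\rho(dy)\in(0,\infty)$ and two-sided exponential moments $\int e^{\pm\epsilon s}\tilde\rho(ds)=\int_0^\infty y^{\alpha\pm\epsilon}\rho(dy)<\infty$; hence its characteristic function $\widehat{\tilde\rho}(\beta)=\int_0^\infty y^{\alpha+i\beta}\rho(dy)$ is $C^\infty$ (indeed analytic in a horizontal strip), and~\eqref{conditionjmrs1} is precisely the Wiener-type statement that $\widehat{\tilde\rho}$ has no real zeros.

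For the direct part I would first extract a crude a priori bound from monotonicity of the tail: choosing $a>0$ with $c_0:=\tilde\rho([-a,a])>0$ and using $\overline\nu(e^{t-s})\ge\overline\nu(e^{t+a})$ for $|s|\le a$, one obtains $\overline{\nu\circledast\rho}(e^{t})\ge c_0 e^{-2\alpha a}e^{\alpha t}\overline\nu(e^{t+a})$, so that $\overline\nu(x)=\bigo\big(\overline{\nu\circledast\rho}(x)\big)$. Together with Potter's bounds (Theorem~\ref{chap1:theorem:potter:1}) this makes the family $h_x(\lambda):=\overline\nu(x\lambda)/\overline{\nu\circledast\rho}(x)$ locally bounded and nonincreasing in $\lambda$, so along any $x_n\to\infty$ Helly's selection theorem produces a subsequence with $h_{x_{n_k}}\to\chi$ at continuity points, where $\chi$ is positive, nonincreasing and $\chi(\lambda)\le C\lambda^{-\alpha}$. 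The crux is to show that every such $\chi$ solves
\[
\int_0^\infty\chi(\lambda/y)\,\rho(dy)=\lambda^{-\alpha},\qquad\lambda>0 .
\]
This comes from writing $\overline{\nu\circledast\rho}(x_{n_k}\lambda)/\overline{\nu\circledast\rho}(x_{n_k})=\int_0^\infty h_{x_{n_k}}(\lambda/y)\,\rho(dy)$, using $\overline{\nu\circledast\rho}\in RV_{-\alpha}$ on the left, and passing to the limit on the right. The delicate range --- $y$ large together with small values of the $\nu$-variable --- is exactly what hypothesis~\eqref{conditionjmrs} controls (a Fubini rewriting turns it into $\int_{(0,b)}\overline\rho(x/u)\,\nu(du)=\lito(\overline{\nu\circledast\rho}(x))$); on the remaining ranges of $y$ the integrand is dominated using~\eqref{conditionjmrs0}, the a priori bound and Potter estimates, so dominated and monotone convergence yield the identity.

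It then remains to solve the functional equation. Putting $\chi(\lambda)=\lambda^{-\alpha}\kappa(\lambda)$ (so $0<\kappa\le C$) and $\mu^{*}(dy):=y^{\alpha}\rho(dy)$, it becomes $\int_0^\infty\kappa(\lambda/y)\,\mu^{*}(dy)\equiv1$, and one further logarithmic substitution turns this into $K*\hat\mu^{*}\equiv1$ with $K$ bounded and $\hat\mu^{*}$ a finite positive measure whose Fourier transform is $C^\infty$ and nowhere zero. Such a $K$ must be constant: in $\mathcal S'(\mathbb R)$ the product $\widehat K\,\widehat{\hat\mu^{*}}$ is a multiple of $\delta_0$, and since $\widehat{\hat\mu^{*}}$ is smooth and nonvanishing, $\widehat K$ vanishes off $\{0\}$, whereupon boundedness of $K$ leaves only $\widehat K=c\,\delta_0$. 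Hence $\kappa\equiv1/D$, i.e.\ $\chi(\lambda)=\lambda^{-\alpha}/D$ regardless of the subsequence, so $\overline\nu(x\lambda)/\overline{\nu\circledast\rho}(x)\to\lambda^{-\alpha}/D$ for every $\lambda$. Taking $\lambda=1$ gives $\overline{\nu\circledast\rho}(x)\sim D\,\overline\nu(x)=\big(\int_0^\infty y^{\alpha}\rho(dy)\big)\overline\nu(x)$, and dividing the two relations gives $\overline\nu(x\lambda)/\overline\nu(x)\to\lambda^{-\alpha}$, i.e.\ $\nu$ has regularly varying tail of index $-\alpha$. For the converse, if~\eqref{conditionjmrs1} fails then $\widehat{\tilde\rho}(\beta_0)=0$ for some $\beta_0\ne0$ (the case $\beta_0=0$ is impossible since $\widehat{\tilde\rho}(0)=D>0$); take $\nu$ to be the $\sigma$-finite measure on $(0,\infty)$ with $\overline\nu(x)=c\,x^{-\alpha}\big(1+\eta\cos(\beta_0\log x)\big)$, which for $\eta$ small enough is a genuine positive nonincreasing tail, and~\eqref{conditionjmrs} holds since $\int_{(0,b)}u^{\alpha+\epsilon}\,\nu(du)<\infty$. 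Then $\nu$ has no regularly varying tail because of the oscillation, yet in the tilted picture the tail of $\nu\circledast\rho$ is $\tilde g*\tilde\rho$ with $\tilde g(t)=c(1+\eta\cos\beta_0t)$, and since $\int\cos(\beta_0(t-s))\,\tilde\rho(ds)=\Re\big(e^{i\beta_0t}\,\overline{\widehat{\tilde\rho}(\beta_0)}\big)=0$ this equals the constant $cD$; hence $\overline{\nu\circledast\rho}(x)=cD\,x^{-\alpha}$, so $\nu\circledast\rho$ has regularly varying tail of index $-\alpha$.

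I expect the main obstacle to be the second paragraph: making the passage to the functional equation rigorous for $\sigma$-finite $\nu$ and $\rho$, and in particular isolating exactly the regime of $\int_0^\infty\overline\nu(x/y)\,\rho(dy)$ in which hypothesis~\eqref{conditionjmrs} is indispensable (the joint ``$\rho$-mass far out, $\nu$-mass near $0$'' range) and producing there a bound that is $\lito(\overline{\nu\circledast\rho}(x))$ uniformly along the subsequence. By contrast, the a priori bound from monotonicity and the Fourier-analytic rigidity for the functional equation should be comparatively routine.
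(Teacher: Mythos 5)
The thesis itself does not prove this statement; it is quoted from \citet{JMRS:2009} (their Theorem~2.3) and used as a black box in Section~\ref{sec: converse}. So there is no proof in the paper to compare against, but your argument can be checked on its own merits, and it is essentially the strategy of Jacobsen--Mikosch--Rosi\'nski--Samorodnitsky: pass to logarithmic coordinates, tilt by $e^{\alpha s}$ using~\eqref{conditionjmrs0} to get a finite measure with nowhere-vanishing (indeed real-analytic on a strip) characteristic function, extract subsequential limits of $\overline\nu(x\lambda)/\overline{\nu\circledast\rho}(x)$, show they all solve $\int\chi(\lambda/y)\rho(dy)=\lambda^{-\alpha}$, and then conclude uniqueness of $\chi$ by a Wiener-type cancellation argument --- exactly the content of their separate ``cancellation property'' (their Theorem~2.1, which the thesis quotes as~\eqref{eq: prod conv} in Remark~\ref{rem: jmrs}). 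Your distributional-Fourier version of that cancellation step is sound: $K\in L^\infty$ is tempered, $\widehat{\hat\mu^*}$ is $C^\infty$ and nonvanishing, so $\operatorname{supp}\widehat K\subset\{0\}$, and boundedness of $K$ rules out derivatives of $\delta_0$, leaving $K$ constant. Your converse construction $\overline\nu(x)=cx^{-\alpha}(1+\eta\cos(\beta_0\log x))$ is a reparametrisation of the $g\,d\nu_\alpha$ recipe recorded in Remark~\ref{rem: jmrs}, and your verification that~\eqref{conditionjmrs} holds via $\int_0^b y^{\alpha+\epsilon}\nu(dy)<\infty$ is correct (indeed in that case the $\limsup_{x}$ is already $0$ for each fixed $b$). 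Two small corrections/caveats. First, the displayed a priori bound contains a spurious factor $e^{\alpha t}$: from $\overline{\nu\circledast\rho}(e^t)\ge\hat\rho([-a,a])\,\overline\nu(e^{t+a})\ge c_0 e^{-\alpha a}\overline\nu(e^{t+a})$ one gets $\overline\nu(x)=\bigo(\overline{\nu\circledast\rho}(x))$, and it is this form (no growth factor) that you actually use afterwards; as written the display is wrong even though the conclusion you draw from it is right. Second, the passage to the functional equation is the genuinely delicate step (you correctly flag it): one must dominate $\int_0^\infty h_{x_{n_k}}(\lambda/y)\rho(dy)$ uniformly along the subsequence, with Potter and~\eqref{conditionjmrs0} handling moderate $y$, and a Fubini rewriting of~\eqref{conditionjmrs} --- namely $\int_{(0,b)}\rho(x/u,\infty)\nu(du)=\lito(\overline{\nu\circledast\rho}(x))$ uniformly in $x$ large as $b\to0$ --- handling the large-$y$/small-$u$ regime. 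Spelling out the dominating function and the splitting at $y=\lambda/b$ is exactly what a fully rigorous write-up must supply; your sketch correctly locates the difficulty but does not yet discharge it.
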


\begin{remark} \label{rem: jmrs}
\cite{JMRS:2009} gave an explicit construction of the $\sigma$-finite measure $\nu$ in Theorem~\ref{JMRS} above. In fact, if~\eqref{conditionjmrs1} fails for $\beta=\beta_0$, then, for any real number $a$ and $b$ satisfying $0<a^2+b^2\le 1$, we can define $g(x) = 1 + a \cos(\beta_0 \log x) + b \sin(\beta_0 \log x)$ and $d\nu = g d\nu_\alpha$ will qualify for the measure in the converse part, where $\nu_\alpha$ is the $\sigma$-finite measure given by $\nu_\alpha(x,\infty)=x^{-\alpha}$ for any $x>0$.

It is easy to check that $0\le g(x)\le2$ for all $x>0$ and hence
\begin{equation} \label{eq: bd nu}
\nu(x,\infty)\le2x^{-\alpha}.
\end{equation}
Also, it is known from Theorem~2.1 of \cite{JMRS:2009} that
\begin{equation} \label{eq: prod conv}
\nu\circledast\rho=\|\rho\|_\alpha\nu_\alpha,
\end{equation}
where $\|\rho\|_\alpha = \int_0^\infty y^\alpha \rho(dy) <\infty$, by~\eqref{conditionjmrs0}.
\end{remark}

We are now ready to state the main result of this section.
\begin{theorem}\label{main result}
Let $\{X_t,t\geq1\}$ be a sequence of identically distributed, pairwise asymptotically independent positive random variables and $\{\Theta_t,t\geq1\}$ be a sequence of positive random variables independent of $\{X_t\}$, such that $\xinf=\sum_{t=1}^{\infty}\Theta_tX_t$ is finite with probability one and has regularly varying tail of index $-\alpha$, where $\alpha>0$. Let $\{\Theta_t,t\geq1\}$ satisfy the appropriate RW condition~\eqref{RW1} or~\eqref{RW2}, depending on the value of $\alpha$. If we further have, for all $\beta\in \mathbb{R}$,
\begin{equation} \label{eq: mellin}
\sum_{t=1}^{\infty}\E[\Theta_t^{\alpha+i\beta}]\neq 0,
\end{equation}
then $X_1$ has regularly varying tail of index $-\alpha$ and, as $x\to\infty$,
$$\prob[\xinf > x]\sim \prob[X_1>x]\sum_{t=1}^{\infty}\E[\Theta_t^{\alpha}] \ \text{as}\ x\rightarrow\infty.$$
\end{theorem}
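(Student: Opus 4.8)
The plan is to repackage the randomly weighted series as a single product convolution and then apply the converse result of \cite{JMRS:2009}, i.e. Theorem~\ref{JMRS}. Set
$$\rho=\sum_{t=1}^{\infty}\prob[\Theta_t\in\cdot\,],\qquad \nu=\prob[X_1\in\cdot\,],$$
a non-zero $\sigma$-finite measure $\rho$ on $(0,\infty)$ (it is $\sigma$-finite because the relevant RW condition together with Remark~\ref{rem: RW} gives $\sum_t\E[\Theta_t^{\alpha}]<\infty$) and a probability measure $\nu$. Unwinding the definition of $\circledast$ and using that $\{\Theta_t\}$ is independent of $\{X_t\}$ with all $X_t$ identically distributed, one gets $(\nu\circledast\rho)(x,\infty)=\sum_{t=1}^{\infty}\prob[\Theta_tX_t>x]$ for every $x>0$. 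Hypotheses \eqref{conditionjmrs0} and \eqref{conditionjmrs1} of Theorem~\ref{JMRS} are then immediate: \eqref{conditionjmrs0} is $\sum_t\E[\Theta_t^{\alpha-\epsilon}\vee\Theta_t^{\alpha+\epsilon}]\le\sum_t\E[\Theta_t^{\alpha-\epsilon}+\Theta_t^{\alpha+\epsilon}]<\infty$, which follows from \eqref{RW1} or \eqref{RW2} via Remark~\ref{rem: RW}, and \eqref{conditionjmrs1} is precisely the nonvanishing Mellin hypothesis \eqref{eq: mellin}, since $\int_0^\infty y^{\alpha+i\beta}\rho(dy)=\sum_t\E[\Theta_t^{\alpha+i\beta}]$. (Note also that, since $\xinf\ge\Theta_1X_1$ and $\prob[\Theta_1>c]>0$ for some $c>0$, the regular variation of $\xinf$ forces $\E[X_1^{\alpha'}]<\infty$ for all $\alpha'<\alpha$; this is used below.)

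The substantial step is to show that $\nu\circledast\rho$ has a regularly varying tail of index $-\alpha$; since $\xinf$ is assumed to have one, it is enough to prove the tail equivalence
\[\prob[\xinf>x]\sim(\nu\circledast\rho)(x,\infty)=\sum_{t=1}^{\infty}\prob[\Theta_tX_t>x],\qquad\text{as }x\to\infty.\]
For the lower bound one uses positivity of all summands: for each $m$, $\xinf\ge\bigvee_{t=1}^{m}\Theta_tX_t$, and a Bonferroni estimate reduces matters to showing $\prob[\Theta_sX_s>x,\Theta_tX_t>x]$ is negligible for $s\neq t$. Bounding $X_s,X_t$ below by $x/(\Theta_s\vee\Theta_t)$ on that event, conditioning on $(\Theta_s,\Theta_t)$, and using the pairwise asymptotic independence \eqref{asymptotic independence} of $\{X_t\}$ together with the integrability of $(\Theta_s\vee\Theta_t)^{\alpha'}\le\Theta_s^{\alpha'}+\Theta_t^{\alpha'}$ for suitable $\alpha'$ (a variant of Lemma~\ref{joint asymptotic independence}, with $\Theta_s\vee\Theta_t$ in the role of the common scaling) gives this negligibility; letting $m\to\infty$ after a uniform-in-$x$ tail-negligibility estimate for $\sum_{t>m}\prob[\Theta_tX_t>x]$ yields $\liminf_x\prob[\xinf>x]/(\nu\circledast\rho)(x,\infty)\ge1$. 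For the upper bound one splits $\{\xinf>x\}$ according to whether $\bigvee_t\Theta_tX_t>(1-\delta)x$ or not: the first part is at most $\sum_t\prob[\Theta_tX_t>(1-\delta)x]$, and the second part---where every summand is truncated at $(1-\delta)x$---is controlled by the truncation plus Markov/Minkowski argument used in the proof of \eqref{tail negligible} in Section~\ref{sec:direct}, with the crucial difference that here it is the regular variation of $\xinf$, not that of $X_1$ (which is not yet available), that supplies the comparison denominator; letting $\delta\downarrow0$ completes the equivalence, and hence $\nu\circledast\rho$ is regularly varying of index $-\alpha$.

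With $\nu\circledast\rho$ now known to be regularly varying of index $-\alpha$, the remaining hypothesis \eqref{conditionjmrs} of Theorem~\ref{JMRS} is easy: for $0<b<x$,
$$\int_0^b\rho(x/y,\infty)\,\nu(dy)\le\sum_{t=1}^{\infty}\prob[\Theta_t>x/b]\le\Big(\tfrac bx\Big)^{\alpha+\epsilon}\sum_{t=1}^{\infty}\E[\Theta_t^{\alpha+\epsilon}],$$
where $\sum_t\E[\Theta_t^{\alpha+\epsilon}]<\infty$ by \eqref{RW1}, \eqref{RW2} and Remark~\ref{rem: RW}, whereas the denominator $(\nu\circledast\rho)(x,\infty)$, being regularly varying of index $-\alpha$, eventually dominates $x^{-\alpha-\epsilon/2}$; so the ratio tends to $0$ as $x\to\infty$ for each fixed $b$, and a fortiori $\lim_{b\downarrow0}\limsup_{x\to\infty}$ of it is $0$. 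Theorem~\ref{JMRS} then gives that $\nu=\prob[X_1\in\cdot\,]$ has a regularly varying tail of index $-\alpha$ and that $(\nu\circledast\rho)(x,\infty)/\prob[X_1>x]\to\int_0^\infty y^\alpha\rho(dy)=\sum_t\E[\Theta_t^{\alpha}]$; combining this with the tail equivalence of the previous paragraph yields $\prob[\xinf>x]\sim\prob[X_1>x]\sum_t\E[\Theta_t^{\alpha}]$, as claimed. The main obstacle is exactly the tail-equivalence step $\prob[\xinf>x]\sim(\nu\circledast\rho)(x,\infty)$: it must be carried out without the a priori regular variation of $X_1$ that was available in Sections~\ref{section:notation}--\ref{sec:direct}, so both the single-big-jump lower bound and the truncation upper bound have to be rerun using only \eqref{asymptotic independence}, the RW moment conditions, and the regular variation of $\xinf$ itself.
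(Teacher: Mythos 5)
Your proposal matches the paper's own proof essentially step for step: it repackages the series as the product convolution $\nu\circledast\rho$, reduces to Theorem~\ref{JMRS}, verifies \eqref{conditionjmrs0} and \eqref{conditionjmrs1} from the RW conditions and \eqref{eq: mellin}, and carries the weight of the argument in the tail-equivalence $\prob[\xinf>x]\sim\sum_t\prob[\Theta_tX_t>x]$, which the paper isolates as Proposition~\ref{prop: equiv} and proves via exactly the ingredients you list (domination of $\prob[X_1>\cdot]$ by a multiple of the regularly varying $\prob[\xinf>\cdot]$, the joint asymptotic-independence estimate of Lemma~\ref{lem: approx indep}, Bonferroni as in Lemma~\ref{lem: comp}, and the truncation/Markov--Minkowski tail control of Lemma~\ref{lem: tail}). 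The only cosmetic deviations are that you verify \eqref{conditionjmrs} by a direct Markov bound rather than citing Remark~2.4 of \cite{JMRS:2009}, and you describe the upper-bound half of the tail equivalence a bit loosely (the paper first pins down $\prob[\sum_{t\le m}\Theta_tX_t>x]/\prob[\xinf>x]\to1$ and only then trades it for $\sum_{t\le m}\prob[\Theta_tX_t>x]$ via Bonferroni, which cleanly sidesteps any circularity in the choice of comparison denominator).
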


We shall prove Theorem~\ref{main result} in several steps. We collect the preliminary steps, which will also be useful for a converse to Theorem~\ref{main result}, into three separate lemmas. The first lemma controls the tail of the sum $\xinf$.
\begin{lemma} \label{lem: tail}
Let $\{X_t\}$ be a sequence of identically distributed positive random variables and $\{\Theta_t\}$ be a sequence of positive random variables independent of $\{X_t\}$. Suppose that the tail of $X_1$ is dominated by a bounded regularly varying function $R$ of index $-\alpha$, where $\alpha>0$, that is, for all $x>0$,
\begin{equation} \label{eq: domination}
\prob[X_1>x] \le R(x).
\end{equation}
Also assume that $\{\Theta_t\}$ satisfies the appropriate RW condition depending on the value of $\alpha$. Then,
\begin{align*}
\lim_{m\to\infty} \limsup_{x\to\infty} \frac{\prob[\xupk>x]}{R(x)} &= 0 \intertext{and}
\lim_{m\to\infty} \limsup_{x\to\infty} \sum_{t=m+1}^\infty \frac{\prob[\Theta_t X_t>x]}{R(x)} &= 0.
\end{align*}
\end{lemma}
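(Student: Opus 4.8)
The plan is to reduce both statements to a single uniform estimate on the individual products: I will show that there are a nonnegative sequence $\{B_t\}$ and a threshold $x_*$, \emph{independent of} $t$, with
\[
\frac{\prob[\Theta_t X_t>x]}{R(x)}\le B_t \qquad\text{for all } x\ge x_*\text{ and all }t,
\]
where $\sum_t B_t<\infty$ (directly from~\eqref{RW1} when $\alpha<1$, and from~\eqref{RW2} together with the observation recorded in Remark~\ref{almost sure remark} that $\sum_t a_t^{1/(\alpha+\epsilon)}<\infty$ forces $\sum_t a_t<\infty$ once $\alpha+\epsilon\ge1$, when $\alpha\ge1$). Granting this, the second assertion is immediate: for $x\ge x_*$ we have $\sum_{t=m+1}^\infty\prob[\Theta_t X_t>x]/R(x)\le\sum_{t=m+1}^\infty B_t$, so the $\limsup$ over $x$ of the left side is bounded by the tail of a convergent series, which tends to $0$ as $m\to\infty$.

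To prove the uniform estimate, let $G_t$ be the distribution function of $\Theta_t$ and write, using independence and~\eqref{eq: domination}, $\prob[\Theta_t X_t>x]=\int_0^\infty\prob[X_1>x/v]\,G_t(dv)\le\int_0^\infty R(x/v)\,G_t(dv)$. Fix $\epsilon\in(0,\alpha)$ as furnished by the RW condition, let $L$ be the slowly varying part of $R$ (so $R(x)=x^{-\alpha}L(x)$), and let $t_0$ be a Potter threshold for $R\in RV_{-\alpha}$, so that $R(x/v)\le C_\epsilon R(x)(v^{\alpha-\epsilon}+v^{\alpha+\epsilon})$ for $0<v\le x/t_0$, $x\ge t_0$ (Theorem~\ref{chap1:theorem:potter:1}). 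Splitting the integral at $v=x/t_0$: the part over $(0,x/t_0]$ is at most $C_\epsilon R(x)\bigl(\E[\Theta_t^{\alpha-\epsilon}]+\E[\Theta_t^{\alpha+\epsilon}]\bigr)$; for the part over $(x/t_0,\infty)$ I use boundedness of $R$ (say $R\le M$) and Markov's inequality to bound it by $M\prob[\Theta_t>x/t_0]\le M(t_0/x)^{\alpha+\epsilon}\E[\Theta_t^{\alpha+\epsilon}]$, which after division by $R(x)$ equals $Mt_0^{\alpha+\epsilon}(x^{\epsilon}L(x))^{-1}\E[\Theta_t^{\alpha+\epsilon}]$ and is hence at most $\E[\Theta_t^{\alpha+\epsilon}]$ for $x$ beyond a threshold depending only on $\epsilon$, since $x^{\epsilon}L(x)\to\infty$. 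Thus $B_t=(C_\epsilon+1)\bigl(\E[\Theta_t^{\alpha-\epsilon}]+\E[\Theta_t^{\alpha+\epsilon}]\bigr)$ works, and its summability in the required sense is precisely~\eqref{RW1}/\eqref{RW2}.

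For the first assertion I use the standard decomposition
\[
\prob[\xupk>x]\le\prob\Bigl[\bigvee_{t=m+1}^\infty\Theta_t X_t>x\Bigr]+\prob\Bigl[\sum_{t=m+1}^\infty\Theta_t X_t\mathbbm{1}_{[\Theta_t X_t\le x]}>x\Bigr]\le\sum_{t=m+1}^\infty\prob[\Theta_t X_t>x]+T_m(x),
\]
where the first sum is controlled by the estimate above. For the truncated term $T_m(x)$ I apply Markov's inequality with exponent $1$ when $\alpha<1$, and with exponent $\alpha+\epsilon$ followed by Minkowski's inequality in $L^{\alpha+\epsilon}$ when $\alpha\ge1$; in either case it reduces to bounding $\E[(\Theta_t X_t)^{\beta}\mathbbm{1}_{[\Theta_t X_t\le x]}]$ for $\beta=1$, respectively $\beta=\alpha+\epsilon$. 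Here is the one point where the argument genuinely differs from the proof of the direct theorem in Section~\ref{sec:direct}: since $X_1$ is only \emph{dominated} by $R$ and need not have a regularly varying tail, Karamata's theorem cannot be applied to the truncated moments of $X_1$ itself. Instead I bound, by integration by parts and~\eqref{eq: domination}, $\E[X_1^{\beta}\mathbbm{1}_{[X_1\le y]}]\le\beta\int_0^y s^{\beta-1}\prob[X_1>s]\,ds\le\beta\int_0^y s^{\beta-1}R(s)\,ds=:R_\beta(y)$, and now $R_\beta\in RV_{\beta-\alpha}$ with $\beta-\alpha>0$, satisfies $R_\beta(y)\le My^{\beta}$ for all $y$, and $R_\beta(y)\sim\frac{\beta}{\beta-\alpha}y^{\beta}R(y)$ by Karamata's theorem (Theorem~\ref{chap1:theorem:karamata}). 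Conditioning on $\Theta_t$ gives $\E[(\Theta_t X_t)^{\beta}\mathbbm{1}_{[\Theta_t X_t\le x]}]\le\int_0^\infty v^{\beta}R_\beta(x/v)\,G_t(dv)$, and the same splitting-at-$x/t_0$ argument as above (Potter's bound for $R_\beta$ on the first piece, $R_\beta(x/v)\le M(x/v)^{\beta}$ together with Markov on the second, and $R_\beta(x)\approx x^{\beta}R(x)$ throughout) yields $\E[(\Theta_t X_t)^{\beta}\mathbbm{1}_{[\Theta_t X_t\le x]}]\le x^{\beta}R(x)\,\widetilde B_t$ for $x$ beyond a $t$-independent threshold, with $\widetilde B_t=C\bigl(\E[\Theta_t^{\alpha-\epsilon}]+\E[\Theta_t^{\alpha+\epsilon}]\bigr)$. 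Feeding this back through Markov (and Minkowski when $\alpha\ge1$) and dividing by $R(x)$ bounds $T_m(x)/R(x)$ by $\sum_{t=m+1}^\infty\widetilde B_t$ when $\alpha<1$ and by $\bigl(\sum_{t=m+1}^\infty\widetilde B_t^{1/(\alpha+\epsilon)}\bigr)^{\alpha+\epsilon}$ when $\alpha\ge1$; both vanish as $m\to\infty$ by the RW condition, and combining with the first sum gives the first assertion.

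I expect the main obstacle to be exactly this last adaptation: everything has to be run off the explicit regularly varying dominators $R$ and $R_\beta(y)=\beta\int_0^y s^{\beta-1}R(s)\,ds$ rather than off the (possibly irregular) law of $X_1$, and the thresholds produced by the various Potter bounds and by $x^{\epsilon}L(x)\to\infty$ must be chosen uniformly in $t$, so that one may legitimately send $x\to\infty$ before $m\to\infty$. The boundedness of $R$ is used in an essential way in both parts, to handle the region where $\Theta_t$ is large relative to $x$.
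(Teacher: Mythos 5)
Your proposal is correct and follows essentially the same route as the paper's proof: the same decomposition into the maximum term plus a truncated sum, the same Markov/Minkowski reduction depending on whether $\alpha<1$ or $\alpha\ge1$, and the same key observation that the truncated moment $\E[X_t^\beta \mathbbm{1}_{[X_t\le y]}]$ must be controlled via $\beta\int_0^y s^{\beta-1}R(s)\,ds$ (Karamata applied to the dominator $R$, not to the possibly irregular law of $X_1$), followed by a Potter-bound split with boundedness of $R$ handling the region where $\Theta_t$ is large. The only cosmetic divergence is that you package everything into a $t$-uniform pointwise bound $B_t$ valid for $x\ge x_*$, whereas the paper carries a residual term $x_0^{\alpha+\epsilon}\E[\Theta_t^{\alpha+\epsilon}]/(x^{\alpha+\epsilon}R(x))$ and sends it to zero inside the $\limsup$; both are valid.
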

\begin{proof}
From~\eqref{two terms tail negligible}, we have
\begin{equation} \label{eq: tail bd conv}
\prob\left[\xupk>x\right] \le \sum_{t=m+1}^\infty \prob\left[\Theta_tX_t>x\right] + \prob\left[\sum_{t=m+1}^\infty \Theta_tX_t \mathbbm 1_{[\Theta_tX_t\le x]}>x\right].
\end{equation}
Using~\eqref{eq: domination}, the summands of the first term on the right side of~\eqref{eq: tail bd conv} can be bounded as
\begin{equation} \label{eq: first term conv}
\prob[\Theta_t X_t > x] = \int_0^\infty \prob[X_t > x/u] G_t(du) \leq \int_0^\infty R(x/u) G_t(du).
\end{equation}

Before analyzing the second term on the right side of~\eqref{eq: tail bd conv}, observe that, for $\gamma>\alpha$, we have, using Fubini's theorem,~\eqref{eq: domination} and Karamata's theorem successively
$$\E\left[X_t^\gamma \mathbbm 1_{[X_t\le x]}\right] \le \gamma \int_0^x u^{\gamma-1} \prob[X_t>u] du \le \gamma \int_0^x u^{\gamma-1} R(u) du \sim \frac{\gamma}{\gamma-\alpha} x^\gamma R(x).$$
Thus, there exists constant $M \equiv M(\gamma)$, such that, for all $x>0$,
\begin{equation} \label{eq: karamata}
x^{-\gamma} \E\left[X_t^\gamma \mathbbm 1_{[X_t\le x]}\right] \le M R(x).
\end{equation}

We bound the second term on the right side of~\eqref{eq: tail bd conv}, using~\eqref{eq: karamata}, separately for the cases $\alpha<1$ and $\alpha\ge 1$. For $\alpha<1$, we use~\eqref{eq: term 2 alpha small} and~\eqref{eq: karamata} with $\gamma=1$, to get
\begin{equation} \label{eq: second term alpha small}
\prob\left[\sum_{t=m+1}^\infty \Theta_tX_t \mathbbm 1_{[\Theta_tX_t\le x]}>x\right] \le M(1) \sum_{t=m+1}^\infty \int_0^\infty R(x/u) G_t(du).
\end{equation}
For $\alpha\ge 1$, we use~\eqref{eq: term 2 alpha large} and~\eqref{eq: karamata} with $\gamma=\alpha+\epsilon$, to get
\begin{equation} \label{eq: second term alpha large}
\prob\left[\sum_{t=m+1}^\infty \Theta_tX_t \mathbbm 1_{[\Theta_tX_t\le x]}>x\right] \le M(\alpha+\epsilon) \left[ \sum_{t=m+1}^\infty \left( \int_0^\infty R(x/u) G_t(du) \right)^{\frac1{\alpha+\epsilon}} \right]^{\alpha+\epsilon}.
\end{equation}

Combining~\eqref{eq: first term conv},~\eqref{eq: second term alpha small} and~\eqref{eq: second term alpha large} with the bound in~\eqref{eq: tail bd conv}, the proof will be complete if we show
\begin{equation} \label{eq: aim}
\begin{split}
&\lim_{m\to\infty} \limsup_{x\to\infty} \sum_{t=m+1}^\infty \int_0^\infty \frac{R(x/u)}{R(x)} G_t(du) = 0, \text{ for $\alpha<1$},\\
\text{and } &\lim_{m\to\infty} \limsup_{x\to\infty} \sum_{t=m+1}^\infty \left( \int_0^\infty \frac{R(x/u)}{R(x)} G_t(du) \right)^{\frac1{\alpha+\epsilon}} =0, \text{ for $\alpha\ge1$}.
\end{split}
\end{equation}
Note that, for $\alpha\ge 1$, as in Remark~\ref{rem: RW}, the second limit above gives the first one as well.

We bound the integrand using a variant of Potter's bound (see~\eqref{potter2}) . Let $\epsilon>0$ be as in the RW conditions. Then there exists a $x_0$ and a constant $M>0$ such that, for $x>x_0$, we have
\begin{equation} \label{eq: potter}
\frac{R(x/u)}{R(x)}\leq
\begin{cases}
Mu^{\alpha-\epsilon},  &\text{if $u<1$,}\\
Mu^{\alpha +\epsilon},  &\text{if $1\leq u\leq x/x_0$.}
\end{cases}
\end{equation}

We split the range of integration in~\eqref{eq: aim} into three intervals, namely $(0,1]$, $(1,x/x_0]$ and $(x/x_0,\infty)$. For $x>x_0$, we bound the integrand over the first two integrals using~\eqref{eq: potter} and hence the integrals get bounded by a multiple of $\E[\Theta_t^{\alpha-\epsilon}]$ and $\E[\Theta_t^{\alpha+\epsilon}]$ respectively. As $R$ is bounded, by Markov's inequality, the third integral gets bounded by a multiple of $x_0^{\alpha+\epsilon}\E[\Theta_t^{\alpha+\epsilon}]/\{x^{\alpha+\epsilon}R(x)\}$. Putting all the bounds together, we have
$$\int_0^\infty \frac{R(x/u)}{R(x)} G_t(du) \le M \left(\E[\Theta_t^{\alpha-\epsilon}] + \E[\Theta_t^{\alpha+\epsilon}] + \frac{x_0^{\alpha+\epsilon}\E[\Theta_t^{\alpha+\epsilon}]}{x^{\alpha+\epsilon}R(x)}\right).$$
Then,~\eqref{eq: aim} holds for $\alpha<1$ using the condition~\eqref{RW1} and the fact that $R$ is regularly varying of index $-\alpha$. For $\alpha\ge 1$, we need to further observe that, as $\alpha+\epsilon>1$, we have
\begin{multline*}
\left(\int_0^\infty \frac{R(x/u)}{R(x)} G_t(du)\right)^{\frac1{\alpha+\epsilon}} \le M^{\frac1{\alpha+\epsilon}} \left[ \left( \E[\Theta_t^{\alpha-\epsilon}] + \E[\Theta_t^{\alpha+\epsilon}]\right) + \frac{x_0^{\alpha+\epsilon} \E[\Theta_t^{\alpha+\epsilon}]}{x^{\alpha+\epsilon}R(x)}\right]^{\frac1{\alpha+\epsilon}}\\
\le M^{\frac1{\alpha+\epsilon}} \left(\E[\Theta_t^{\alpha-\epsilon}] + \E[\Theta_t^{\alpha+\epsilon}]\right)^{\frac1{\alpha+\epsilon}} + \frac{x_0 \left(\E[\Theta_t^{\alpha+\epsilon}]\right)^{\frac1{\alpha+\epsilon}}}{xR(x)^{\frac1{\alpha+\epsilon}}}
\end{multline*}
and~\eqref{eq: aim} holds using the condition~\eqref{RW2} and the fact that $R$ is regularly varying of index $-\alpha$.
\end{proof}

The next lemma considers the joint distribution of $(\Theta_1 X_1, \Theta_2 X_2)$ and shows they are ``somewhat'' asymptotically independent, if $(X_1, X_2)$ are asymptotically independent.
\begin{lemma} \label{lem: approx indep}
Let $(X_1, X_2)$ and $(\Theta_1, \Theta_2)$ be two independent random vectors, such that each coordinate of either vector is positive. We assume that $X_1$ and $X_2$ have same distribution with their common tail dominated by a regularly varying function $R$ of index $-\alpha$ with $\alpha>0$, as in~\eqref{eq: domination}. We also assume that $R$ stays bounded away from $0$ on any bounded interval. We further assume that both $\Theta_1$ and $\Theta_2$ have $(\alpha+\epsilon)$-th moments finite. Then
$$\lim_{x\to\infty} \frac{\prob[\Theta_1 X_1 > x, \Theta_2 X_2 > x]}{R(x)} = 0.$$
\end{lemma}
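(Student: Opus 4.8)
The plan is to condition on $(\Theta_1, \Theta_2) = (u, v)$ and exploit the asymptotic independence of $(X_1, X_2)$, while using domination by $R$ together with finiteness of $(\alpha+\epsilon)$-th moments of the $\Theta$'s to control the contribution from large values of $u$ and $v$. Writing $G$ for the joint law of $(\Theta_1,\Theta_2)$, we have
$$
\frac{\prob[\Theta_1 X_1 > x, \Theta_2 X_2 > x]}{R(x)} = \int_0^\infty\!\!\int_0^\infty \frac{\prob[X_1 > x/u, X_2 > x/v]}{R(x)} \, G(du, dv).
$$
Since $X_1$ and $X_2$ have the same marginal law and $R$ is bounded, bounding the joint tail by the smaller of the two marginal tails and then applying Potter-type bounds (as in~\eqref{potter2} or~\eqref{eq: potter}) gives a dominating function of the form $M(u^{\alpha-\epsilon}\vee u^{\alpha+\epsilon})(v^{\alpha-\epsilon}\vee v^{\alpha+\epsilon})$ on a suitable range, which is $G$-integrable by the independence of $(\Theta_1,\Theta_2)$ and finiteness of their $(\alpha\pm\epsilon)$-th moments. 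So the plan is to set up a dominated-convergence argument (more precisely, a Pratt-type argument, cf.~\cite{pratt:1960}, if the dominating function is not literally integrable without a little care near $x/u \le x_0$).

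For the pointwise limit of the integrand, fix $u, v > 0$. Without loss of generality assume $u \le v$, so $x/v \le x/u$ and hence $\{X_1 > x/u, X_2 > x/v\} \subset \{X_1 > x/v, X_2 > x/v\}$. Thus
$$
\frac{\prob[X_1 > x/u, X_2 > x/v]}{R(x)} \le \frac{\prob[X_1 > x/v, X_2 > x/v]}{\prob[X_1 > x/v]} \cdot \frac{R(x/v)}{R(x)} \cdot \frac{\prob[X_1 > x/v]}{R(x/v)}.
$$
Here the first factor tends to $0$ by asymptotic independence of $(X_1, X_2)$ (note $x/v \to \infty$), the second factor tends to $v^{\alpha}$ by regular variation of $R$, and the third factor is at most $1$ by the domination~\eqref{eq: domination}; so the product tends to $0$. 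This handles the pointwise convergence for every fixed $(u,v)$, which is all that a dominated/Pratt argument requires.

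The main obstacle, and the only genuinely delicate point, is the uniform integrability of the integrand, i.e.\ justifying the interchange of limit and integral. The difficulty is twofold: first, one must handle the region where $x/u$ or $x/v$ drops below the threshold $x_0$ beyond which Potter's bound applies — here one uses that $R$ is bounded and stays away from $0$ on bounded intervals, so $\prob[X_i > x/u]/R(x)$ is still controlled by a Markov-inequality bound of order $u^{\alpha+\epsilon}/(x^{\alpha+\epsilon} R(x))$, which is handled exactly as in the proof of Lemma~\ref{lem: tail}. Second, to make a clean dominated-convergence statement one should either produce an honest $G$-integrable dominating function valid for all large $x$ (splitting the $(u,v)$-plane into the regions $u,v \le x/x_0$ and its complement, as done for a single variable in Lemma~\ref{lem: tail}), or invoke Pratt's lemma with the dominating sequence $\frac{R(x/u)}{R(x)}\cdot\frac{R(x/v)}{R(x)}$ whose integral converges to $\E[\Theta_1^\alpha]\E[\Theta_2^\alpha]$ by the modified Breiman estimates. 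Either way the bookkeeping mirrors the three-interval split already carried out in Lemma~\ref{lem: tail}, so no new idea is needed beyond organizing those estimates as a product over the two coordinates.
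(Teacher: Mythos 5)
Your plan is essentially the paper's proof: both condition on $(\Theta_1,\Theta_2)=(u,v)$, reduce the bivariate integrand to a single variable by comparing $u$ and $v$ (the paper splits the integral over $\{u>v\}$ and $\{u\le v\}$ rather than arguing WLOG, and so bounds everything by $\int \prob[X_1>x/u,X_2>x/u]/R(x)\,(G_1+G_2)(du)$), obtain the pointwise limit from asymptotic independence of $(X_1,X_2)$ together with $R(x/u)/R(x)\to u^\alpha$, and justify the interchange via a Potter/Markov split at $u=x/x_0$. The only cosmetic slip is that your claimed dominating function is a product in $(u,v)$ — after the reduction to one variable the paper simply uses $M(1+u^{\alpha+\epsilon})$, which is what the Potter bound actually delivers and is integrable against $G_1+G_2$ by the moment hypothesis; this does not affect the correctness of the argument.
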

\begin{proof}
By asymptotic independence and~\eqref{eq: domination}, we have
\begin{equation} \label{eq: jt neg}
\prob[X_1>x,X_2>x] = \lito(R(x)).
\end{equation}
Further, since $R$ is bounded away from $0$ on any bounded interval, $\prob[X_1>x,X_2>x]$ is bounded by a multiple of $R(x)$. Then,
\begin{align*}
\frac{\prob[\Theta_1X_1>x,\Theta_2X_2>x]}{R(x)} =&\int\limits_0^{\infty}\int\limits_0^{\infty}\frac{\prob[X_1>x/u,X_2>x/v]}{R(x)}G(du,dv)\\
=&\iint\limits_{u>v}+\iint\limits_{u\leq v}\frac{\prob[X_1>x/u,X_2>x/v]}{R(x)}G(du,dv)\\
\leq &\int_0^{\infty} \frac{\prob[X_1>x/u,X_2>x/u]}{R(x)} (G_1+G_2)(du)\\
= &\int_0^\infty \frac{\prob[X_1>x/u,X_2>x/u]}{R(x)} \mathbbm 1_{[0,x/x_0]}(u) (G_1+G_2)(du)\\ &\quad + \frac{x_0^{\alpha+\epsilon} \left( \E[\Theta_1^{\alpha+\epsilon}] + \E[\Theta_1^{\alpha+\epsilon}] \right)}{x^{\alpha+\epsilon} R(x)},
\end{align*}
for any $x_0>0$. The integrand in the first term goes to $0$, using~\eqref{eq: jt neg} and the regular variation of $R$. Further choose $x_0$ as in Potter's bound~\eqref{eq: potter}. Then, the integrand of the first term is bounded by a multiple of $1+u^{\alpha+\epsilon}$, which is integrable with respect to $G_1+G_2$. So, by Dominated Convergence Theorem, the first term goes to $0$. For this choice of $x_0$, the second term also goes to $0$, as $R$ is regularly varying of index $-\alpha$.
\end{proof}

The next lemma compares $\sum_{t=1}^m \prob[\Theta_t X_t > x]$ and $\prob\left[ \sum_{t=1}^m \Theta_t X_t > x \right]$.
\begin{lemma} \label{lem: comp}
Let $\{X_t\}$ and $\{\Theta_t\}$ be two sequences of positive random variables.
Then, we have, for any $\frac12<\delta<1$ and $m\ge 2$,
\begin{align}
\prob\left[ \sum_{t=1}^m \Theta_t X_t > x \right] \geq &\sum_{t=1}^m \prob[\Theta_t X_t > x] - \underset{1\le s\neq t\le m}{\sum\sum} \prob[\Theta_s X_s > x, \Theta_t X_t > x] \label{eq: comp up}
\intertext{and}
\prob\left[ \sum_{t=1}^m \Theta_t X_t > x \right] \leq &\sum_{t=1}^m \prob[\Theta_t X_t > x] \nonumber\\
&\quad +\underset{1\le s\neq t\le m}{\sum\sum} \prob\left[\Theta_s X_s > \frac{1-\delta}{m-1} x, \Theta_t X_t > \frac{1-\delta}{m-1} x\right]. \label{eq: comp dn}
\end{align}
\end{lemma}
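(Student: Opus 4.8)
The strategy is to estimate $\prob[\sum_{t=1}^m \Theta_t X_t > x]$ by inclusion–exclusion type bounds, using only the positivity of the summands $\Theta_t X_t$, and nothing about their dependence structure. Throughout write $Y_t = \Theta_t X_t \ge 0$ and $S = \sum_{t=1}^m Y_t$.

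\textbf{Lower bound~\eqref{eq: comp up}.} First I would observe that $\{S > x\} \supseteq \bigcup_{t=1}^m \{Y_t > x\}$, since the $Y_t$ are nonnegative, so that a single large summand already pushes the sum above $x$. Applying Bonferroni's inequality (the two-term lower bound for the probability of a union),
$$\prob[S > x] \ge \prob\left[\bigcup_{t=1}^m \{Y_t > x\}\right] \ge \sum_{t=1}^m \prob[Y_t > x] - \underset{1\le s < t\le m}{\sum\sum} \prob[Y_s > x, Y_t > x],$$
and since the double sum over unordered pairs $s<t$ is exactly half of the double sum over ordered pairs $s\ne t$, this gives~\eqref{eq: comp up} (in fact with an even better constant $\tfrac12$ on the subtracted term, but the stated form suffices).

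\textbf{Upper bound~\eqref{eq: comp dn}.} Here I would argue by contraposition on the event $\{S > x\} \cap \bigcap_{t=1}^m \{Y_t \le \tfrac{1-\delta}{m-1} x\}$. On this event, if additionally at most one summand were ``moderately large'', the sum could not exceed $x$; more precisely, I claim that $\{S > x\}$ is contained in the union of the $m$ events $\{Y_t > x\}$ together with all pairwise events $\{Y_s > \tfrac{1-\delta}{m-1}x,\ Y_t > \tfrac{1-\delta}{m-1}x\}$ for $s \ne t$. To see this, suppose $S > x$ but $Y_t \le \tfrac{1-\delta}{m-1}x$ for every $t$ except possibly one index $t_0$; then $\sum_{t \ne t_0} Y_t \le (m-1)\cdot\tfrac{1-\delta}{m-1}x = (1-\delta)x$, so $Y_{t_0} > \delta x$. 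Since $\tfrac12 < \delta < 1$, we have $\delta > \tfrac{1-\delta}{m-1}$ would not immediately give $Y_{t_0} > x$; instead I would refine: if no $Y_t$ exceeds $x$ and no two $Y_t$ simultaneously exceed $\tfrac{1-\delta}{m-1}x$, then at most one $Y_t$ exceeds $\tfrac{1-\delta}{m-1}x$, and for that one $Y_{t_0} \le x$, so $S \le x + (m-1)\tfrac{1-\delta}{m-1}x = (2-\delta)x$. This bound is not sharp enough; the cleaner route is: on $\bigcap_t\{Y_t \le x\}$, if $S > x$ then since each $Y_t \le x$ and the total exceeds $x$, at least one $Y_t > x/m \ge \tfrac{1-\delta}{m-1}x$ is too weak. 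I would therefore instead use the standard decomposition: $\{S>x\}\subseteq \bigcup_{t}\{Y_t > x/m\}$; but to match the exact constant $\tfrac{1-\delta}{m-1}$ in the statement one argues that if at most one $Y_t$ exceeds $\tfrac{1-\delta}{m-1}x$, then $S \le Y_{t_0} + (m-1)\tfrac{1-\delta}{m-1}x$, and combined with $Y_{t_0} \le x$ gives $S \le (2-\delta)x$, whereas combined with the hypothesis $Y_{t_0} > \delta x$ being the only way $S>x$ could fail... The correct clean claim to establish is: $\{S > x\} \subseteq \bigl(\bigcup_t \{Y_t > x\}\bigr) \cup \bigl(\bigcup_{s\ne t}\{Y_s > \tfrac{1-\delta}{m-1}x, Y_t > \tfrac{1-\delta}{m-1}x\}\bigr)$, proved by showing its complement forces $S \le x$: on the complement at most one $Y_t$ exceeds $\tfrac{1-\delta}{m-1}x$ and none exceeds $x$, so $S \le x + (m-1)\tfrac{1-\delta}{m-1}x$ — I will need to re-examine the exact arithmetic, but the mechanism is a union bound applied to this set containment.

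\textbf{Main obstacle.} The lower bound is routine Bonferroni; the genuine content — and the step I expect to require the most care — is pinning down the precise set containment behind~\eqref{eq: comp dn}, i.e.\ verifying that on the complement of the asserted union one indeed has $\sum_t Y_t \le x$ for the stated threshold $\tfrac{1-\delta}{m-1}$ with $\tfrac12 < \delta < 1$. I would handle this by a clean case analysis on the number of indices $t$ with $Y_t > \tfrac{1-\delta}{m-1}x$: if that number is $0$ the sum is at most $x$ trivially when $\tfrac{1-\delta}{m-1}\le 1/m$, which holds iff $m(1-\delta)\le m-1$, i.e.\ $\delta\ge 1/m$, true since $\delta>\tfrac12\ge 1/m$ for $m\ge 2$; if it is exactly $1$, with that one index contributing at most $x$ (we are on $\{Y_t\le x\}$ for all $t$) and the remaining $m-1$ contributing at most $(m-1)\tfrac{1-\delta}{m-1}x = (1-\delta)x$ each summand bounded — actually the remaining ones each at most $\tfrac{1-\delta}{m-1}x$ so together at most $(1-\delta)x < x$, forcing the large one to carry more than $\delta x$, which is consistent with $Y_{t_0}\le x$ only if we are careful — so I would instead bound $S \le x \cdot \mathbbm 1 + (1-\delta)x$ is $>x$, meaning this case must actually be excluded or absorbed. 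The resolution: when exactly one index is ``moderate'', the event $\{S>x\}$ still needs that index to exceed $\delta x$; choosing the pairwise threshold $\tfrac{1-\delta}{m-1}$ small enough relative to $x$ is precisely what makes two moderate indices necessary, and this is the calculation I would write out in full in the proof.
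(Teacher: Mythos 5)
Your lower bound is correct: the containment $\bigcup_t\{Y_t>x\}\subseteq\{S>x\}$ (valid because the $Y_t$ are nonnegative) together with Bonferroni's two-term lower bound gives~\eqref{eq: comp up}, which is exactly the paper's argument.

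The reason your upper-bound arithmetic refuses to close is that the set containment you are aiming for, with $\{Y_t>x\}$ as the single-index events, is false --- and so, in fact, is~\eqref{eq: comp dn} as stated. Take $m=2$, $\delta=\tfrac34$, and deterministic $Y_1=0.9x$, $Y_2=0.2x$: the left side of~\eqref{eq: comp dn} is $1$, while the right side is $0$, since neither $Y_t$ exceeds $x$ and $Y_2=0.2x<0.25x=\tfrac{1-\delta}{m-1}x$. Your own computation that the complement of the proposed union only forces $S\le(2-\delta)x$, which exceeds $x$ whenever $\delta<1$, was the diagnostic. The repair is to replace $x$ by $\delta x$ in the single-index events: with $a=\tfrac{(1-\delta)x}{m-1}$, on the complement of $\bigcup_t\{Y_t>\delta x\}\cup\bigcup_{s\ne t}\{Y_s>a,\,Y_t>a\}$ every $Y_t\le\delta x$ and at most one exceeds $a$, so $S\le\delta x+(m-1)a=x$; the union bound then gives $\prob[S>x]\le\sum_t\prob[Y_t>\delta x]+\sum_{s\ne t}\prob[Y_s>a,Y_t>a]$. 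This is precisely what the paper's own proof establishes: it splits on $\{\max_t Y_t\le\delta x\}$, collecting $\sum_t\prob[Y_t>\delta x]$ on the complement; on that event $S>x$ forces some $Y_s>x/m$, and $Y_s\le\delta x$ then forces $\sum_{t\ne s}Y_t>(1-\delta)x$, hence some $Y_t>\tfrac{(1-\delta)x}{m-1}$ with $t\ne s$; finally $\tfrac1m\ge\tfrac{1-\delta}{m-1}$ holds since $m\delta\ge1$ (as $\delta>\tfrac12$, $m\ge2$). So change the first sum in~\eqref{eq: comp dn} to $\sum_t\prob[\Theta_tX_t>\delta x]$; with that correction your containment argument becomes valid and is essentially equivalent to the paper's proof. (In the downstream applications in Propositions~\ref{prop: equiv} and~\ref{prop: equiv alt} this weakened form suffices, since one sends $x\to\infty$ and then $\delta\to1$ and uses regular variation, which is presumably how the typo went unnoticed.)
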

\begin{proof}
The first inequality~\eqref{eq: comp up} follows from the fact that
$$\left[\sum_{t=1}^m \Theta_t X_t > x\right] \subseteq \bigcup_{t=1}^m [\Theta_t X_t > x]$$ and Bonferroni's inequality.

For the second inequality~\eqref{eq: comp dn}, observe that
$$\prob\left[\xdnk>x\right] \leq \sum_{t=1}^{m}\prob[\xtheta>\delta x]+\prob\left[\sum_{t=1}^{k}\xtheta>x, \bigvee_{t=1}^m\xtheta \leq \delta x\right].$$
Next we estimate the second term as
\begin{align*}
&\prob\left[\sum_{t=1}^{m} \xtheta>x, \bigvee_{t=1}^m \xtheta \leq \delta x\right]\\
=& \prob\left[\sum_{t=1}^m \xtheta>x, \bigvee_{t=1}^m\xtheta \leq \delta x, \bigvee_{t=1}^m \xtheta>\frac xm\right]\\
\leq &\sum_{s=1}^m \prob\left[\sum_{t=1}^{m}\xtheta>x, \bigvee_{t=1}^m \xtheta \leq \delta x, \Theta_{s}X_s>\frac xm \right]\\
\leq &\sum_{s=1}^m \prob\left[\sum_{t=1}^{m}\xtheta>x, \Theta_sX_s \leq \delta x, \Theta_sX_s>\frac xm \right]\\
\leq &\sum_{s=1}^m \prob\left[\sum_{\substack{t=1\\t\neq s}}^k \xtheta>(1-\delta)x, \Theta_sX_s>\frac xm \right]\\
\leq &\underset{1\le s\neq t\le m}{\sum\sum} \prob\left[\xtheta>\frac{1-\delta}{m-1}x, \Theta_sX_s>\frac xm \right]\\
\leq &\underset{1\le s\neq t\le m}{\sum\sum} \prob\left[\xtheta>\frac{1-\delta}{m-1}x, \Theta_sX_s>\frac{1-\delta}{m-1}x \right],
\end{align*}
since $\delta>1/2$ and $m\ge2$ imply $(1-\delta)/(m-1)<1/m$.
\end{proof}

With the above three lemmas, we are now ready to show the tail equivalence of the distribution of $\xinf$ and $\sum_{t=1}^\infty \prob[\Theta_tX_t\in\cdot]$.
\begin{proposition} \label{prop: equiv}
Let $\{X_t,t\geq1\}$ be a sequence of identically distributed, pairwise asymptotically independent positive random variables and $\{\Theta_t,t\geq1\}$ be a sequence of positive random variables independent of $\{X_t\}$, such that $\xinf=\sum_{t=1}^{\infty}\Theta_tX_t$ is finite with probability one and has regularly varying tail of index $-\alpha$, where $\alpha>0$. Let $\{\Theta_t,t\geq1\}$ satisfy the appropriate RW condition~\eqref{RW1} or~\eqref{RW2}, depending on the value of $\alpha$. Then, as $x\to\infty$,
$$\sum_{t=1}^\infty \prob[\Theta_tX_t>x] \sim \prob[\xinf>x].$$
\end{proposition}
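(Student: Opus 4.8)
The plan is to transfer the problem onto a single bounded, \emph{genuinely} regularly varying reference function built out of the tail of $\xinf$, and then to feed this into the three lemmas just proved. First I would construct a dominating function $R$ for the tail of $X_1$: since $\Theta_1$ is a positive random variable, one may fix $c>0$ with $\prob[\Theta_1>c]>0$, and then, using the independence of $\Theta_1$ and $X_1$ and the nonnegativity of the summands, $\prob[\xinf>x]\ge\prob[\Theta_1 X_1>x]\ge\prob[\Theta_1>c]\,\prob[X_1>x/c]$, so that $\prob[X_1>x]\le R(x):=\prob[\xinf>cx]/\prob[\Theta_1>c]$ for all $x>0$. This $R$ is nonincreasing, strictly positive (hence bounded away from $0$ on every bounded interval), bounded above, regularly varying of index $-\alpha$, and obeys $\prob[\xinf>x]/R(x)\to\kappa:=c^\alpha\prob[\Theta_1>c]\in(0,\infty)$. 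Because the $X_t$ are identically distributed we also get $\prob[X_t>x]\le R(x)$ for every $t$, while~\eqref{RW1}/\eqref{RW2} guarantees $\E[\Theta_t^{\alpha+\epsilon}]<\infty$ for each $t$; so all the hypotheses of Lemmas~\ref{lem: tail},~\ref{lem: approx indep} and~\ref{lem: comp} are met with this choice of $R$.

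Next I would set $S(x)=\prob[\xinf>x]$, $T(x)=\sum_{t=1}^\infty\prob[\xtheta>x]$, $T_m(x)=\sum_{t=1}^m\prob[\xtheta>x]$, $E_m(x)=T(x)-T_m(x)$, $U_m(x)=\prob[\xdnk>x]$, $V_m(x)=\prob[\xupk>x]$ (each $\xupk$ being an honest a.s.\ finite random variable because $\xinf<\infty$ a.s.). Lemma~\ref{lem: tail} supplies $\lim_{m\to\infty}\limsup_{x\to\infty}E_m(x)/R(x)=0$ and $\lim_{m\to\infty}\limsup_{x\to\infty}V_m(x)/R(x)=0$, while for each fixed $m$ and fixed $\theta>0$, Lemma~\ref{lem: approx indep} applied to the pairs $(X_s,X_t)$, $(\Theta_s,\Theta_t)$, together with the identical distribution of the $X_t$ and $R(\theta x)/R(x)\to\theta^{-\alpha}$, makes each of the finitely many cross terms $\prob[\Theta_sX_s>\theta x,\ \Theta_tX_t>\theta x]$, $s\ne t\le m$, equal to $\lito(R(x))$.

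Then the proof of $S\sim T$ splits into two one-sided estimates. For the upper bound on $T$: nonnegativity gives $S(x)\ge U_m(x)$, and~\eqref{eq: comp up} gives $U_m(x)\ge T_m(x)-\sum\sum_{s\ne t}\prob[\Theta_sX_s>x,\ \Theta_tX_t>x]$, so that $T(x)\le S(x)+E_m(x)+\lito(R(x))$; dividing by $R(x)$, taking $\limsup_{x\to\infty}$ and then $m\to\infty$ gives $\limsup_{x\to\infty}T(x)/R(x)\le\kappa$. For the matching lower bound on $T$: for $\tfrac12<\delta<1$ and $m\ge2$ write $S(x)\le U_m((1-\delta)x)+V_m(\delta x)$, bound $U_m((1-\delta)x)$ above via~\eqref{eq: comp dn} by $T_m((1-\delta)x)+\lito(R(x))\le T((1-\delta)x)+\lito(R(x))$, rearrange to $T((1-\delta)x)\ge S(x)-V_m(\delta x)-\lito(R(x))$, divide by $R((1-\delta)x)$, convert every ratio of $R$-values at scales $x,\delta x,(1-\delta)x$ using $R(ax)/R(x)\to a^{-\alpha}$, and then take $\liminf_{x\to\infty}$, $m\to\infty$ and finally $\delta\downarrow0$ to obtain $\liminf_{x\to\infty}T(x)/R(x)\ge\kappa$. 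Combining, $T(x)/R(x)\to\kappa$, hence $T(x)\sim\kappa R(x)\sim S(x)$, which is the claim $\sum_{t=1}^\infty\prob[\Theta_tX_t>x]\sim\prob[\xinf>x]$.

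The hard part will be the bookkeeping around the three nested limits $x\to\infty$, $m\to\infty$, $\delta\downarrow0$, and especially the fact that we are \emph{not} allowed to assume in advance that $\prob[X_1>\cdot]$ (equivalently $T$) is regularly varying --- that is exactly the conclusion being built towards in Theorem~\ref{main result}. This is precisely why the regularly varying function $R$ must be carried as the reference scale throughout: a displaced quantity such as $T((1-\delta)x)$ can only be controlled through the companion ratio $R((1-\delta)x)/R(x)$, and the existence of $\lim_x T(x)/R(x)$ is pinned down only at the last step by a sandwich. A secondary point is making the domination estimates from Lemmas~\ref{lem: tail} and~\ref{lem: approx indep} uniform enough in $t$ to survive the summation over $t$, which is where the summability built into~\eqref{RW1}--\eqref{RW2} is used.
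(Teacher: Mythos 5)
Your plan is, up to a change of normalization, the paper's proof: dominate $\prob[X_1>\cdot]$ by a scaled copy of $\prob[\xinf>\cdot]$ so that Lemmas~\ref{lem: tail}, \ref{lem: approx indep} and~\ref{lem: comp} apply, then sandwich $\sum_t\prob[\Theta_tX_t>x]$ between the finite partial-sum tail and the tail of $\xinf$, sending $x\to\infty$, then $m\to\infty$, then the auxiliary truncation parameter to $0$. (You divide by $R$ and chase a limit $\kappa$; the paper divides directly by $\prob[\xinf>x]$ and chases $1$. The paper first pins down $\prob[\xdnk>x]/\prob[\xinf>x]\to1$ and then swaps $\xdnk$ for $\sum_{t\le m}\prob[\xtheta>x]$, whereas you go directly from $T_m$ to $S$; the underlying estimates are the same.)

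One bookkeeping slip that would need repair: you use a single $\delta\in(\tfrac12,1)$ both for the outer decomposition $S(x)\le U_m((1-\delta)x)+V_m(\delta x)$ and as the parameter of~\eqref{eq: comp dn}, and then propose to send $\delta\downarrow0$ at the end. That is incompatible with the restriction $\delta>\tfrac12$, which the proof of Lemma~\ref{lem: comp} genuinely needs (to ensure $(1-\delta)/(m-1)<1/m$). The fix is cosmetic: hold the $\delta$ inside Lemma~\ref{lem: comp} fixed, and introduce a \emph{separate} small parameter $\delta'$ for the outer split, $S(x)\le U_m((1-\delta')x)+V_m(\delta'x)$, which is the one sent to $0$ after $m\to\infty$; the cross-terms produced by~\eqref{eq: comp dn} are still evaluated at a fixed positive fraction of $x$ and remain $\lito(R(x))$ by Lemma~\ref{lem: approx indep} and regular variation of $R$. (The paper sidesteps the clash by writing the outer split as $\prob[\xinf>(1+\delta)x]\le\prob[\xdnk>x]+\prob[\xupk>\delta x]$ with its own small $\delta$, leaving the $\delta$ of Lemma~\ref{lem: comp} internal to that lemma.) Everything else in your plan goes through as written.
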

\begin{proof}
We first show that the tail of $X_1$ can be dominated by a multiple of the tail of $\xinf$, so that Lemmas~\ref{lem: tail} and~\ref{lem: approx indep} apply. Note that the tail of $\xinf$ is bounded and stays bounded away from $0$ on any bounded interval. As $\Theta_1$ is a positive random variable, choose $\eta>0$ such that $\prob[\Theta_1>\eta]>0$. Then, for all $x>0$,
$$\prob[\xinf>\eta x] \ge \prob[\Theta_1X_1>\eta x, \Theta_1>\eta] \ge \prob[X_1>x] \prob[\Theta_1>\eta].$$
Further, using the regular variation of the tail of $\xinf$, $X_1$ satisfies~\eqref{eq: domination} with $R$ as a multiple of $\prob[\xinf>\cdot]$. Thus, from Lemmas~\ref{lem: tail} and~\ref{lem: approx indep}, we have,
\begin{align}
\lim_{m\to\infty} \limsup_{x\to\infty} \frac{\prob[\xupk>x]}{\prob[\xinf>x]} &= 0, \label{eq: neg tail one}\\
\lim_{m\to\infty} \limsup_{x\to\infty} \sum_{t=m+1}^\infty \frac{\prob[\xtheta>x]}{\prob[\xinf>x]} &= 0, \label{eq: neg tail two}
\intertext{and, for any $s\neq t$,}
\lim_{x\to\infty} \frac{\prob[\Theta_s X_s > x, \xtheta > x]}{\prob[\xinf > x]} &= 0. \label{eq: approx indep}
\end{align}

Choose any $\delta>0$. Then
$$\prob\left[\xinf>(1+\delta)x\right]\leq\prob\left[\xdnk>x\right]+\prob\left[\xupk>\delta x\right],$$
and from~\eqref{eq: neg tail one} and the regular variation of the tail of $\xinf$, we have
$$\lim_{m\to\infty}\liminf_{x\to\infty}\frac{\prob[\xdnk>x]}{\prob[\xinf>x]}\geq 1.$$
Further, using the trivial bound $\prob[\xdnk>x] \le \prob[\xinf>x]$, we have
\begin{equation}\label{eq: lower tail}
1 \leq \lim_{m\to\infty} \liminf_{x\to\infty} \frac{\prob[\xdnk>x]}{\prob[\xinf>x]} \leq \lim_{m\to\infty} \limsup_{x\to\infty} \frac{\prob[\xdnk>x]}{\prob[\xinf>x]} \leq 1.
\end{equation}

We next replace $\prob[\xdnk>x]$ in the numerator by $\sum_{t=1}^m \prob[\xtheta>x]$. We obtain the upper bound first. From~\eqref{eq: comp up},~\eqref{eq: approx indep} and~\eqref{eq: lower tail}, we get
$$\limsup_{x\to\infty} \frac{\sum_{t=1}^m \prob[\xtheta>x]}{\prob[\xinf>x]} \leq 1$$ and letting $m\to\infty$, we get the upper bound. The lower bound follows using exactly similar lines, but using~\eqref{eq: comp dn} and the regular variation of the tail of $\xinf$ instead of~\eqref{eq: comp up}. Putting together, we get
\begin{equation}\label{eq: lower tail sum}
1 \leq \lim_{m\to\infty} \liminf_{x\to\infty} \frac{\sum_{t=1}^m \prob[\xtheta>x]}{\prob[\xinf>x]} \leq \lim_{m\to\infty} \limsup_{x\to\infty} \frac{\sum_{t=1}^m \prob[\xtheta>x]}{\prob[\xinf>x]} \leq 1.
\end{equation}

Then the result follows combining~\eqref{eq: neg tail two} and~\eqref{eq: lower tail sum}.
\end{proof}

We are now ready to prove Theorem~\ref{main result}.
\begin{proof}[Proof of Theorem~\ref{main result}]
Let $\nu$ be the law of $X_1$ and define the measure $\rho(\cdot)=\sum_{t=1}^{\infty}\prob[\Theta_t \in \cdot]$. As observed in Remark~\ref{rem: RW}, under the RW conditions, for all values of $\alpha$, we have $\sum_{t=1}^\infty \E[\Theta_t^{\alpha+\epsilon}] <\infty$. Thus, $\rho$ is a $\sigma$-finite measure. Also, by Proposition~\ref{prop: equiv}, we have $\nu\circledast\rho (x,\infty) = \sum_{t=1}^{\infty} \prob[\xtheta>x] \sim \prob[\xinf>x]$. Hence $\nu\circledast\rho$ has regularly varying tail of index $-\alpha$. As $\nu$ is a probability measure, by Remark~2.4 of \cite{JMRS:2009},~\eqref{conditionjmrs} holds. The RW condition implies~\eqref{conditionjmrs0}. Finally,~\eqref{conditionjmrs1} holds, since, for all $\beta\in\mathbb{R}$, we have, from~\eqref{eq: mellin}, $\int_0^{\infty} y^{\alpha+i\beta} \rho(dy) = \sum_{t=1}^{\infty} \E[\Theta_t^{\alpha+i\beta}] \neq 0$. Hence, by Theorem~\ref{JMRS}, $X_1$ has regularly varying tail of index $-\alpha$.
\end{proof}

As in Theorem~\ref{JMRS},~\eqref{eq: mellin} is necessary for Theorem~\ref{main result} and we give its converse below.
\begin{theorem} \label{thm: converse}
Let $\{\Theta_t,t\geq1\}$ be a sequence of positive random variables satisfying the condition~\eqref{RW1} or~\eqref{RW2}, for some $\alpha>0$, but $\sum_{t=1}^{\infty}\E[\Theta_t^{\alpha+i\beta_0}]=0$ for some $\beta_0\in\mathbb{R}$. Then there exists a sequence of i.i.d.\ positive random variables $\{X_t\}$, such that $X_1$ does not have a regularly varying tail, but $\xinf$ is finite almost surely and has regularly varying tail of index $-\alpha$.
\end{theorem}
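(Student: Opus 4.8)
The plan is to build the counterexample directly from the measure $\nu$ constructed in the converse part of Theorem~\ref{JMRS} (see Remark~\ref{rem: jmrs}), and then verify that the resulting series $\xinf$ behaves as claimed. First I would set $\rho(\cdot) = \sum_{t=1}^\infty \prob[\Theta_t\in\cdot]$, which is $\sigma$-finite because, by Remark~\ref{rem: RW}, the RW condition forces $\sum_{t=1}^\infty \E[\Theta_t^{\alpha+\epsilon}]<\infty$, so $\int_0^\infty (y^{\alpha-\epsilon}\vee y^{\alpha+\epsilon})\rho(dy)<\infty$, i.e.\ \eqref{conditionjmrs0} holds. Since $\sum_{t=1}^\infty \E[\Theta_t^{\alpha+i\beta_0}] = \int_0^\infty y^{\alpha+i\beta_0}\rho(dy) = 0$, condition \eqref{conditionjmrs1} fails for $\beta = \beta_0$. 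Remark~\ref{rem: jmrs} then supplies $g(x) = 1 + a\cos(\beta_0\log x) + b\sin(\beta_0\log x)$ with $0<a^2+b^2\le 1$ and $d\nu = g\,d\nu_\alpha$, where $\nu_\alpha(x,\infty)=x^{-\alpha}$; this $\nu$ is a genuine ($\sigma$-finite, in fact we will want it to be a probability measure after normalization — more on this below) measure that does not have a regularly varying tail, yet by \eqref{eq: prod conv} satisfies $\nu\circledast\rho = \|\rho\|_\alpha\,\nu_\alpha$, which has regularly varying tail of index $-\alpha$, and \eqref{conditionjmrs} holds.

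The subtlety is that $\nu$ as given is a measure on $(0,\infty)$, not necessarily a probability measure, and its tail $\nu(x,\infty)$ must be turned into a valid marginal law for the i.i.d.\ sequence $\{X_t\}$. Here I would exploit the fact that regular variation and the product-convolution identity only constrain the behaviour near infinity: I would modify $\nu$ on a bounded neighbourhood of $0$ (e.g.\ replace its restriction to $(0,x_1]$ by any convenient probability-mass arrangement, scaling so that the total mass is $1$), keeping $\nu(x,\infty)$ unchanged for $x\ge x_1$. Since $g$ is bounded between $0$ and $2$ and bounded below by a positive constant away from the zeros of $1 + a\cos(\cdot) + b\sin(\cdot)$ — and by choosing $a^2+b^2$ strictly less than $1$ we can keep $g$ bounded away from $0$ everywhere — $\nu(x,\infty)$ stays comparable to $x^{-\alpha}$, so the modification affects neither \eqref{eq: bd nu} nor the tail asymptotics. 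Let $\{X_t\}$ be i.i.d.\ with this law. The tail of $X_1$ is $\asymp x^{-\alpha} g(x)$, which oscillates and hence is not regularly varying (one checks that $\nu(tx,\infty)/\nu(x,\infty)$ does not converge for suitable $t$, because $g$ is genuinely oscillatory).

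Next I would establish that $\xinf = \sum_{t=1}^\infty \Theta_t X_t$ converges almost surely and has the right tail. Almost sure finiteness: since $\prob[X_1>x]\le R(x)$ for a bounded regularly varying $R$ of index $-\alpha$ (take $R$ a multiple of $\nu(\cdot,\infty)$, using $g\le 2$), Lemma~\ref{lem: tail} applies and gives $\lim_{m\to\infty}\limsup_{x\to\infty}\prob[\xupk>x]/R(x)=0$; combined with Breiman-type control (condition~\eqref{RW1}/\eqref{RW2} plus $\E[\Theta_t^\alpha]<\infty$) on the finitely many leading terms, this forces $\prob[\xinf>x]\to 0$, so $\xinf<\infty$ a.s. For the tail asymptotics, I would run the argument of Proposition~\ref{prop: equiv}, which only uses the RW conditions, pairwise asymptotic independence (trivially satisfied, as the $X_t$ are i.i.d.), and the domination \eqref{eq: domination}; none of it requires $X_1$ itself to be regularly varying. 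This yields $\sum_{t=1}^\infty\prob[\Theta_t X_t>x]\sim\prob[\xinf>x]$. On the other hand, $\sum_{t=1}^\infty \prob[\Theta_t X_t>x] = \nu\circledast\rho(x,\infty) = \|\rho\|_\alpha\,\nu_\alpha(x,\infty) = \|\rho\|_\alpha\, x^{-\alpha}$ by \eqref{eq: prod conv} (valid after the bounded modification of $\nu$, up to an asymptotically negligible correction). Hence $\prob[\xinf>x]\sim\|\rho\|_\alpha x^{-\alpha}$, which is regularly varying of index $-\alpha$, completing the proof.

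The main obstacle I anticipate is the bookkeeping in the second step: ensuring that the truncation/normalization of $\nu$ near $0$ simultaneously (i) produces a bona fide probability distribution, (ii) preserves non-regular-variation of the tail, and (iii) does not disturb the exact identity $\nu\circledast\rho = \|\rho\|_\alpha\nu_\alpha$ beyond an $o(x^{-\alpha})$ error — so that one still gets \emph{asymptotic} (rather than exact) regular variation of $\xinf$. A clean way around (iii) is to note that changing $\nu$ on $(0,x_1]$ changes $\nu\circledast\rho(x,\infty)$ by at most $\int_0^\infty \mathbf 1[x/u\le x_1]\,(\text{bounded})\,\rho(du) = O(\rho(x/x_1,\infty)) = o(x^{-\alpha})$, using \eqref{conditionjmrs0} and Markov's inequality; this is routine but must be written carefully. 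Everything else is an application of results already in hand (Theorem~\ref{JMRS}, Remark~\ref{rem: jmrs}, Lemma~\ref{lem: tail}, Proposition~\ref{prop: equiv}).
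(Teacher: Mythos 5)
Your proposal follows essentially the paper's route: extract $\nu$ from the converse of Theorem~\ref{JMRS} via Remark~\ref{rem: jmrs}, modify it near the origin into a probability law $\mu$ whose tail still oscillates, check that the modification perturbs $\nu\circledast\rho(x,\infty)$ by only $\lito(x^{-\alpha})$, and transfer the regular variation of $\sum_t\prob[\Theta_tX_t>x]=\mu\circledast\rho(x,\infty)$ to $\prob[\xinf>x]$. The paper implements the truncation concretely (restrict $\nu$ to $(b,\infty)$ with $\nu(b,\infty)\le1$ and place the remaining mass at the point $1$), which makes the error analysis you flag as the ``main obstacle'' entirely routine, but the idea matches yours.

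The one genuine issue is the final transfer step. You propose to ``run the argument of Proposition~\ref{prop: equiv}'' on the grounds that it ``does not require $X_1$ itself to be regularly varying.'' That is true, but the proof of Proposition~\ref{prop: equiv} \emph{does} use that $\xinf$ has a regularly varying tail: that hypothesis supplies the bounded regularly varying dominator $R$ needed to invoke Lemma~\ref{lem: tail} (the proof sets $R$ to a multiple of $\prob[\xinf>\cdot]$) and is invoked again in the lower-bound step following~\eqref{eq: neg tail one}. Since regular variation of $\prob[\xinf>x]$ is precisely the conclusion you are after, invoking Proposition~\ref{prop: equiv} as stated is circular. The paper proves Proposition~\ref{prop: equiv alt} exactly to sidestep this: it replaces the hypothesis on $\xinf$ with the hypothesis that $R(x)=\sum_t\prob[\Theta_tX_t>x]$ is regularly varying of index $-\alpha$ --- which you have, from $\mu\circledast\rho(x,\infty)\sim\|\rho\|_\alpha x^{-\alpha}$ --- and in return delivers both the tail equivalence $\prob[\xinf>x]\sim R(x)$ and the almost-sure finiteness of $\xinf$. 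Substituting Proposition~\ref{prop: equiv alt} for Proposition~\ref{prop: equiv} closes the gap and brings your outline into agreement with the paper's proof; the remaining differences (e.g.\ your observation that choosing $a^2+b^2<1$ strictly keeps $g$ bounded away from $0$) are minor refinements that are not needed once the paper's specific truncation is used.
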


The proof depends on an analogue of Proposition~\ref{prop: equiv}.
\begin{proposition} \label{prop: equiv alt}
Let $\{X_t,t\geq1\}$ be a sequence of identically distributed, pairwise asymptotically independent positive random variables and $\{\Theta_t,t\geq1\}$ be a sequence of positive random variables satisfying the condition~\eqref{RW1} or~\eqref{RW2} for some $\alpha>0$ and independent of $\{X_t\}$. If $\sum_{t=1}^\infty \prob[\Theta_tX_t>x]$ is regularly varying of index $-\alpha$, then, as $x\to\infty$,
$$\sum_{t=1}^\infty \prob[\Theta_tX_t>x] \sim \prob[\xinf>x]$$
and $\xinf$ is finite with probability one.
\end{proposition}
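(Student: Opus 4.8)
The plan is to run, almost word for word, the squeeze argument used to prove Proposition~\ref{prop: equiv}, but with the role of $\prob[\xinf>\cdot]$ now played by the function $R(x):=\sum_{t=1}^{\infty}\prob[\Theta_tX_t>x]$ itself, which is regularly varying of index $-\alpha$ by hypothesis. Being regularly varying of a negative index, $R$ is eventually positive, and being non-increasing it is then strictly positive on all of $(0,\infty)$ (possibly $+\infty$ near $0$) with $R(x)\to0$ as $x\to\infty$. The single new ingredient needed is that the tail of $X_1$ is dominated by a multiple of $R$: choosing $\eta>0$ with $\prob[\Theta_1>\eta]>0$, one has, for every $x>0$,
$$\prob[X_1>x]\le\frac{\prob[\Theta_1X_1>\eta x]}{\prob[\Theta_1>\eta]}\le\frac{R(\eta x)}{\prob[\Theta_1>\eta]},$$
so that $\widehat R(x):=\min\bigl\{1,\ R(\eta x)/\prob[\Theta_1>\eta]\bigr\}$ is a bounded, non-increasing function which is regularly varying of index $-\alpha$ (it coincides with $R(\eta\,\cdot\,)/\prob[\Theta_1>\eta]$ for large argument, as the latter tends to $0$), is bounded away from $0$ on bounded intervals, dominates $\prob[X_1>\cdot]$ everywhere, and satisfies $\widehat R(x)\sim\eta^{-\alpha}\prob[\Theta_1>\eta]^{-1}R(x)$.

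With $\widehat R$ as the dominating function, Lemma~\ref{lem: tail} (under the RW condition~\eqref{RW1} or~\eqref{RW2}) and Lemma~\ref{lem: approx indep} (applied to the pairs $(\Theta_sX_s,\Theta_tX_t)$, $s\neq t$; here the RW condition supplies the finite $(\alpha+\epsilon)$-th moments of the $\Theta_t$, and pairwise asymptotic independence of $\{X_t\}$ supplies the required asymptotic independence) give, after passing from $\widehat R$ to the asymptotically proportional $R$,
\begin{align*}
\lim_{m\to\infty}\limsup_{x\to\infty}\frac{\prob[\xupk>x]}{R(x)}&=0,\\
\lim_{m\to\infty}\limsup_{x\to\infty}\sum_{t=m+1}^{\infty}\frac{\prob[\xtheta>x]}{R(x)}&=0,
\end{align*}
and $\lim_{x\to\infty}\prob[\Theta_sX_s>x,\ \Theta_tX_t>x]/R(x)=0$ for each $s\neq t$.

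Now I would repeat the $\delta$-squeeze of Proposition~\ref{prop: equiv}, with the one essential modification that $\prob[\xinf>\cdot]$ is \emph{not} yet known to be regularly varying, so every dilation step must be applied to $R$ instead and then transferred by a change of variables. For the upper bound, fix $\delta\in(0,1)$ and $m\geq2$, use $\prob[\xinf>(1+\delta)x]\le\prob[\xdnk>x]+\prob[\xupk>\delta x]$, estimate the first term by~\eqref{eq: comp dn} of Lemma~\ref{lem: comp}, and divide by $R(x)$: the double sum there vanishes by the approximate-independence estimate together with $R\bigl((1-\delta)x/(m-1)\bigr)/R(x)\to\bigl((1-\delta)/(m-1)\bigr)^{-\alpha}$, while $\sum_{t=1}^{m}\prob[\xtheta>x]/R(x)\le1$; the second term equals $\bigl(\prob[\xupk>\delta x]/R(\delta x)\bigr)\bigl(R(\delta x)/R(x)\bigr)$, whose $\limsup$ in $x$ tends to $0$ as $m\to\infty$. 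Letting $m\to\infty$ gives $\limsup_{x}\prob[\xinf>(1+\delta)x]/R(x)\le1$, and the substitution $y=(1+\delta)x$ with $R\bigl(y/(1+\delta)\bigr)/R(y)\to(1+\delta)^{\alpha}$ yields $\limsup_{y}\prob[\xinf>y]/R(y)\le(1+\delta)^{\alpha}$; now let $\delta\downarrow0$. For the lower bound, start from $\prob[\xinf>x]\ge\prob[\xdnk>x]$, bound below using~\eqref{eq: comp up} of Lemma~\ref{lem: comp}, divide by $R(x)$, write $\sum_{t=1}^{m}\prob[\xtheta>x]/R(x)=1-\sum_{t>m}\prob[\xtheta>x]/R(x)$, and send $m\to\infty$ using the second negligibility estimate above. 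Together these give $\prob[\xinf>x]\sim R(x)=\sum_{t=1}^{\infty}\prob[\Theta_tX_t>x]$; finally $R(x)\to0$ (as $\alpha>0$) forces $\prob[\xinf=\infty]=\lim_{x\to\infty}\prob[\xinf>x]=0$, i.e.\ $\xinf<\infty$ almost surely.

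Apart from this the proof is pure bookkeeping, since Proposition~\ref{prop: equiv} and Lemmas~\ref{lem: tail}--\ref{lem: comp} already do all the analytic work. The main point requiring care — and effectively the only obstacle — is exactly the asymmetry just noted: one cannot invoke regular variation of $\prob[\xinf>\cdot]$ (that is the conclusion), so each appeal to ``regular variation of the tail of $\xinf$'' in the proof of Proposition~\ref{prop: equiv} must be rerouted through $R$ together with a change of variables, and one must first verify that the truncated majorant $\widehat R$ is a genuine bounded regularly varying dominator of $\prob[X_1>\cdot]$ so that Lemmas~\ref{lem: tail} and~\ref{lem: approx indep} genuinely apply under the RW conditions.
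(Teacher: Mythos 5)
Your proof is correct and follows essentially the same route as the paper's: establish that $\prob[X_1>\cdot]$ is dominated by a suitable regularly varying majorant, invoke Lemmas~\ref{lem: tail} and~\ref{lem: approx indep} to obtain the three negligibility/asymptotic-independence estimates relative to $R$, pass from $\sum_{t=1}^{m}\prob[\Theta_tX_t>\cdot]$ to $\prob[\xdnk>\cdot]$ via Lemma~\ref{lem: comp}, and finally to $\prob[\xinf>\cdot]$ by dilating in $R$ and using its regular variation, closing with $R(x)\to0$ to get almost sure finiteness of $\xinf$. One point in your write-up is actually a refinement of the paper's proof: the paper asserts that $R$ is bounded and then feeds $R$ directly into Lemma~\ref{lem: tail}, but since each $\Theta_tX_t$ is strictly positive we have $R(0+)=\infty$, so $R$ itself is not an admissible dominator; your truncated $\widehat R(x)=\min\{1,\ R(\eta x)/\prob[\Theta_1>\eta]\}$ is precisely the correction needed, after which the conclusions transfer from $\widehat R$ to $R$ by the asymptotic proportionality $\widehat R(x)\sim\eta^{-\alpha}\prob[\Theta_1>\eta]^{-1}R(x)$.
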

\begin{proof}
We shall denote $R(x) = \sum_{t=1}^\infty \prob[\Theta_t X_t > x]$. As $\Theta_1$ is a positive random variable, choose $\eta>0$ such that $\prob[\Theta_1>\eta]>0$. Then, for all $x>0$, we have $R(x) \ge \prob[\Theta_1X_1>\eta x, \Theta_1>\eta] \ge \prob[X_1>x]\prob[\Theta_1>\eta]$
and using the regular variation of $R$, the tail of $X_1$ is dominated by a constant multiple of $R$. Also, note that, $R$ is bounded and stays bounded away from $0$ on any bounded interval. Then, from Lemmas~\ref{lem: tail} and~\ref{lem: approx indep}, we have
\begin{align}
\lim_{m\to\infty} \limsup_{x\to\infty} \frac{\prob[\xupk>x]}{R(x)} &= 0, \label{eq: neg tail one alt}\\
\lim_{m\to\infty} \limsup_{x\to\infty} \sum_{t=m+1}^\infty \frac{\prob[\xtheta>x]}{R(x)} &= 0, \label{eq: neg tail two alt}
\intertext{and, for any $s\neq t$,}
\lim_{x\to\infty} \frac{\prob[\Theta_s X_s > x, \xtheta > x]}{R(x)} &= 0. \label{eq: approx indep alt}
\end{align}

Using~\eqref{eq: neg tail two alt}, we have
$$1 \le \lim_{m\to\infty} \liminf_{x\to\infty} \frac{\sum_{t=1}^m \prob[\xtheta>x]}{R(x)} \le \lim_{m\to\infty} \limsup_{x\to\infty} \frac{\sum_{t=1}^m \prob[\xtheta>x]}{R(x)} \le 1.$$
As in the proof of Proposition~\ref{prop: equiv}, using~\eqref{eq: comp up},~\eqref{eq: comp dn} and~\eqref{eq: approx indep alt}, the above inequalities reduce to
$$1 \le \lim_{m\to\infty} \liminf_{x\to\infty} \frac{\prob[\xdnk>x]}{R(x)} \le \lim_{m\to\infty} \limsup_{x\to\infty} \frac{\prob[\xdnk>x]}{R(x)} \le 1$$
and the tail equivalence follows using~\eqref{eq: neg tail one alt} and the regular variation of $R$. Since $R(x)\to 0$, the tail equivalence also shows the almost sure finiteness of $\xinf$.
\end{proof}

Next, we prove Theorem~\ref{thm: converse} using the converse part of Theorem~\ref{JMRS}.
\begin{proof}[Proof of Theorem~\ref{thm: converse}]
Define the measure $\rho(\cdot)=\sum_{t=1}^{\infty}\prob[\Theta_t\in\cdot]$. By the RW moment condition, the measure $\rho$ is $\sigma$-finite. Further, we have, $\int_0^\infty y^{\alpha+i\beta_0} \rho(dy)=0$. Now by converse part of Theorem~\ref{JMRS}, there exists a $\sigma$-finite measure $\nu$, whose tail is not regularly varying, but $\nu\circledast\rho$ has regularly varying tail. Next, define a probability measure $\mu$ using the $\sigma$-finite measure $\nu$ as in Theorem~3.1 of~\cite{JMRS:2009}. Choose $b>1$, such that $\nu(b,\infty)\leq 1$ and define a probability measure on $(0,\infty)$ by
$$\mu(B)=\nu(B\cap(b,\infty))+(1-\nu(b,\infty))\mathbbm{1}_{B}(1),  \text{ where $B$ is Borel subset of $(0,\infty)$.}$$

First observe that
$$\mu(y,\infty) =
\begin{cases}
\nu(y,\infty), &\text{for $y>b$,}\\
\nu(b,\infty), &\text{for $1<y\le b$,}\\
1, &\text{for $y\le 1$.}
\end{cases}$$
Thus, $\mu$ does not have a regularly varying tail and
\begin{align*}
\mu\circledast\rho(x,\infty) = &\int_0^\infty \mu(x/u,\infty) \rho(du)\\
= &\int_0^{x/b} \nu(x/u,\infty) \rho(du) + \nu(b,\infty) \rho[x/b,x) + \rho[x,\infty)\\
= &\nu\circledast\rho(x,\infty) - 2 x^{-\alpha} \int_{x/b}^\infty u^\alpha \rho(du)\\
&\quad + \nu(b,\infty) \rho[x/b,x) + \rho[x,\infty).
\end{align*}
Now, using the bound from~\eqref{eq: bd nu} and~\eqref{eq: prod conv}, the second term is bounded by, for $x>b$,
$$2 \, \frac{\nu\circledast\rho(x,\infty)}{\|\rho\|_\alpha} \int_{x/b}^\infty u^{\alpha+\epsilon} \rho(du) = \lito(\nu\circledast\rho(x,\infty))$$
as $x\to\infty$, since $\int_0^\infty u^{\alpha+\epsilon} \rho(du) < \infty$ by the RW conditions. The sum of the last two terms can be bounded by
$$\frac{1+\nu(b,\infty)b^{\alpha+\epsilon}}{x^{\alpha+\epsilon}} \int_0^\infty u^{\alpha+\epsilon} \rho(du) = \lito(\nu\circledast\rho(x,\infty)),$$
as $x\to\infty$, since $\nu\circledast\rho(x,\infty)$ is regularly varying of index $-\alpha$. Thus, $\mu\circledast\rho(x,\infty) \sim \nu\circledast\rho(x,\infty)$ as $x\to\infty$ and hence is regularly varying of index $-\alpha$.

Let $X_t$ be an i.i.d.\ sequence with common law $\mu$. Then, $X_1$ does not have regularly varying tail. Further, by Proposition~\ref{prop: equiv alt}, $\xinf$ is finite with probability one and $\prob[\xinf>x] \sim \mu\circledast\rho(x,\infty)$ is regularly varying of index $-\alpha$.
\end{proof}
\end{section}

\cleardoublepage
\chapter{Products in CEVM}\label{chap3}

\begin{section}{Introduction} \label{chap3: sec: intro}
The classical multivariate extreme value theory tries to capture the extremal dependence between the components under a multivariate domain of attraction condition and it requires each of the components to be in domain of attraction of a univariate extreme value distribution. The multivariate extreme value theory has a rich theory but has some limitations as it fails to capture the dependence structure. The concept of tail dependence is an alternative way of detecting this dependence. The concept was first proposed by \cite{ledford:tawn:1996,ledford:tawn:1997} and then elaborated upon by \cite{Resnick:2002, Maulik:Resnick:2004}. A different approach towards modeling multivariate extreme value distributions was given by \cite{heffernan:tawn:2004} by conditioning on one of the components to be  extreme. Further properties of this conditional model were subsequently studied by \cite{heffernan:resnick:2007,Das:Resnick:2008}.

An important dependence structure in multivariate extreme value theory is that of asymptotic independence. Recall from Section~\ref{chap1:subsec2} that the joint distribution of two random variable is asymptotically independent if the the nondegenerate limit of suitably centered and scaled coordinate wise partial maximums is a product measure. One of the limitations of the asymptotic independence model is that it is too large a class to conclude anything interesting, for example, about product of two random variables. In another approach, a smaller class was considered by \cite{Maulik:Resnick:Rootzen:2002}. They assumed that $(X,Y)$ satisfy the following vague convergence:
\begin{equation}\label{asymptotic-ind-strong}
t\prob\left[ \left(\frac{X}{a(t)},Y\right)\in\cdot\right]\vaguec(\nu\times H)(\cdot) \text{  on  }M_+((0,\infty]\times[0,\infty]),
\end{equation}
where $M_+((0,\infty]\times[0,\infty])$ denotes the space of nonnegative Radon measures on $(0,\infty] \times [0,\infty]$ and $\nu(x,\infty]=x^{-\alpha}$, for some $\alpha>0$ and $H$ is a probability distribution supported on $(0,\infty)$. The tail behavior of the product $XY$  under the assumption~\eqref{asymptotic-ind-strong} and some further moment conditions was obtained by \cite{Maulik:Resnick:Rootzen:2002}. The conditional model can be viewed as an extension of the above model. Under the conditional model, the limit of the vague convergence need  not be a product measure and it happens in the space $M_+\left([-\infty,\infty]\times \Ebar\right)$ with $\gamma\in \mathbb{R}$, where $\Ebar$ is the right closure of the set $\{x\in\mathbb{R}:1+\gamma x>0\}$. (See Section~\ref{sec: notation} for details.) In this thesis we mainly focus on the product behavior when the limiting measure in the conditional model is not of the product form.

Products of random variables and their domains of attraction are important theoretical issues which have a lot of applications ranging from Internet traffic to insurance models. We study the product of two random variables whose joint distribution satisfies the conditional extreme value model. In particular we try to see the  role of regular variation in determining the behavior of the product. When $\gamma>0$, then it is easy to describe the behavior of the product under certain conditions. However, when $\gamma<0$, some complications arise due to the presence of finite upper end point. We remark that  we do not deal with the other case of $\gamma =0$ in this thesis. Like in the case of Gumbel domain of attraction for maximum of i.i.d.\ observations, the case $\gamma=0$ will require more careful and detailed analysis.

In Section~\ref{sec: notation} we briefly describe the conditional extreme value model and state some properties and nomenclatures, which we use throughout this Chapter. In Section~\ref{sec: overview} we provide an overview of our results, which are presented in the later sections. In Section~\ref{Transformations} we reduce the conditional extreme value model defined in Section~\ref{sec: notation} to simpler forms for special cases. In Section~\ref{mainresults} we describe the behavior of the product of random variables following conditional model under some appropriate conditions. In Section~\ref{section:remarks on moments} we make some remarks on the assumptions used in the results. In the final Section~\ref{section:example} we present an example of a conditional model where the assumptions of the Theorems in Section~\ref{mainresults} do not hold, yet we look at the product behavior.

\end{section}

\begin{section}{Conditional extreme value model} \label{sec: notation}
In this section, we provide the notations used in this Chapter and the basic model. Some of the definitions and notations have already been  introduced in Chapter~\ref{chap:introduction}, but we recall them again together with the new ones to put all the relevant definitions at one place for quick reference.

%


Let $\Egamma$ be the interval $\{x \in \mathbb{R}: 1+\gamma x > 0\}$ and $\Ebar$ be its right closure in the extended real line $[-\infty, \infty]$. Thus, we have
$$\Egamma =
\begin{cases}
  (-1/\gamma, \infty), &\text{if $\gamma>0$},\\
  (-\infty, \infty), &\text{if $\gamma=0$},\\
  (-\infty, -1/\gamma), &\text{if $\gamma<0$},\\
\end{cases}
\qquad \text{and} \qquad
\Ebar =
\begin{cases}
  (-1/\gamma, \infty], &\text{if $\gamma>0$},\\
  (-\infty, \infty], &\text{if $\gamma=0$},\\
  (-\infty, -1/\gamma], &\text{if $\gamma<0$}.\\
\end{cases}$$

For any $\gamma\in \mathbb{R}$, recall from Definition~\ref{def:gevd}, the generalized extreme value distribution is denoted by $G_\gamma$. It is supported on $\Egamma$ and is  given by, for $x \in \Egamma$,
$$G_\gamma(x) =
\begin{cases}
\exp\left(-(1+\gamma x)^{-\frac1\gamma}\right), &\text{for $\gamma \neq 0$},\\
\exp\left(-\e^{-x}\right), &\text{for $\gamma = 0$}.
\end{cases}$$

%
%

\begin{definition} [Conditional extreme value model]\label{defn: cevm}
The real valued random vector $(X, Y)$ satisfies \textit{conditional extreme value model} (CEVM) if
\renewcommand{\labelenumi}{(\arabic{enumi})}
\renewcommand{\theenumi}{\arabic{enumi}}
\begin{enumerate}
\item \label{cevm:doa:def}The marginal distribution of $Y$ is in the domain of attraction of an extreme value distribution $G_{\gamma}$, that is, there exists a positive valued function $a$ and a real valued function $b$ such that~\eqref{chap1:def:doa} holds on $\Egamma$, with $G=G_\gamma$.

\item There exists a positive valued function $\alpha$ and a real valued function $\beta$ and a non-null Radon measure $\mu$ on Borel subsets of $(-\infty,\infty) \times \Egamma$ such that
    \begin{enumerate}[(2A)]
      \item \label{basic1} $t \prob \left[ \left(\frac{X-\beta(t)}{\alpha(t)}, \frac{Y-b(t)}{a(t)}\right) \in \cdot \right] \vaguec \mu(\cdot)$ on $[-\infty, \infty] \times \Ebar$, as $t\to\infty$. 
        \item \label{nondegenerate} for each $y \in \Egamma$, $\mu ((-\infty, x] \times (y, \infty))$ is a nondegenerate distribution function in $x$.
    \end{enumerate}

   The function $\alpha$ is regularly varying of index $\rho\in\mathbb R$ and for the function $\beta$,  $\lim_{t\rightarrow\infty}\frac{\beta(tx)-\beta(t)}{\alpha(t)}$ exists for all $x>0$ and is called $\psi(x)$, say, \citep[cf.][]{heffernan:resnick:2007}. We further have \citep[cf.][Theorem B.2.1]{Haan:Ferreira:2006} either $\psi\equiv 0$ or, for some real number $k$,
   $$\psi(x)=\begin{cases}
  \frac k\rho (x^\rho - 1), &\text{when $\rho \neq 0$},\\
  k \log x, &\text{when $\rho = 0$}.
\end{cases}$$

\item \label{cevm:probability} The function $H(x)=\mu((-\infty,x]\times(0,\infty))$ is a probability distribution.
\end{enumerate}
\end{definition}

If $(X,Y)$ satisfy Conditions~\eqref{cevm:doa:def}--\eqref{cevm:probability}, then we say $(X,Y)\in CEVM(\alpha,\beta; a, b;\mu)$ in  $[-\infty,\infty]\times\Ebar$.

Note that, by  Condition~\eqref{nondegenerate}, $H$ is a nondegenerate probability distribution function. Also, Condition~\eqref{basic1} is equivalent to the convergence
$$t \prob [X \le \alpha(t) x + \beta(t), Y > a(t)y + b(t)] \to \mu ((-\infty, x] \times (y, \infty))$$ for all $y \in \Egamma$ and continuity points $(x, y)$ of the measure $\mu$. Note that, for $(X,Y)\in CEVM(\alpha,\beta;a,b;\mu)$ in  $[-\infty,\infty]\times\Ebar$ we have, for all $x\in[-\infty,\infty]$, as $t\rightarrow\infty$,
$$ \prob \left[ \left. \frac{X - \beta(t)}{\alpha(t)} \le x \right| Y > b(t) \right] \to H(x),$$ which motivates the name of the model.

\begin{remark}
Occasionally, we shall also be interested in pair of random variables $(X,Y)$ which satisfy Conditions~\eqref{basic1} and~\eqref{nondegenerate}, without any reference to Conditions~\ref{cevm:doa:def} and~\ref{cevm:probability}. We shall then say that $(X,Y)$ satisfies Conditions~\eqref{basic1} and~\eqref{nondegenerate} with parameters $(\alpha,\beta; a, b;\mu)$ on  $E$, where $\alpha$ and $\beta$ will denote the scaling and centering of $X$, $a$ and $b$ will denote the scaling and centering of $Y$ and $\mu$ will denote the nondegenerate limiting distribution and $E$ is the space on which the convergence takes place. In Definition~\ref{defn: cevm}, we have $E=[-\infty, \infty] \times \Ebar$.
\end{remark}

\begin{definition} \label{def: mrv}
The pair of nonnegative random variables $(Z_1,Z_2)$ is said to be \textit{standard multivariate regularly varying} on  $[0,\infty]\times(0,\infty]$ if, as $t\rightarrow\infty$
$$ t\prob\left[\left(\frac{Z_1}{t},\frac{Z_2}{t}\right)\in \cdot\right]\vaguec\nu(\cdot) \text{ in } M_+([0,\infty]\times(0,\infty]).$$ 
\end{definition}  
In such cases we have $(Z_1,Z_2)\in CEVM(t,0;t,0;\nu)$ in $[0,\infty]\times(0,\infty]$.  The above convergence implies that $\nu(\cdot)$ is homogeneous of order $-1$, that is, $$\nu(c\Lambda)=c^{-1}\nu(\Lambda) \text{ for all  } c>0$$ where  $\Lambda$ is a Borel subset of $[0,\infty]\times(0,\infty]$. By homogeneity arguments it follows that for $r>0,$
\begin{align*}
&\nu\{(x,y)\in[0,\infty]\times(0,\infty]:x+y>r,\frac{x}{x+y}\in\Lambda\}\\
&\qquad =r^{-1}\nu\{(x,y)\in[0,\infty]\times(0,\infty]:x+y>1,\frac{x}{x+y}\in\Lambda\}\\
&\qquad=:r^{-1}S(\Lambda),
\end{align*}
where $S$ is a measure on $\{(x, y): x+y=1, 0\le x<1\}$. The measure $S$ is called the \textit{spectral measure} corresponding to $\nu(\cdot)$, while the measure $\nu$ is called the standardized measure. It was shown in \cite{heffernan:resnick:2007} that  whenever $(X,Y)\in CEVM(\alpha,\beta;a,b;\mu)$ in $[-\infty,\infty]\times\Ebar$ with $(\rho,\psi(x))\neq(0,0)$, we have the standardization  $(f_1(X),f_2(Y))\in CEVM(t,0;t,0;\nu)$ on the cone $[0,\infty]\times(0,\infty]$, for some monotone transformations $f_1$ and $f_2$.  \cite{Das:Resnick:2008} showed that this standardized measure $\nu$ is not a product measure. Throughout this Chapter  we assume that $(\rho,\psi(x))\neq(0,0)$ and consider the product of $X$ and $Y$. We remark that although the model can be standardized in this case, the standardization does not help one to conclude about the behavior of $XY$.
\end{section}

\begin{section}{A brief overview of the results} \label{sec: overview}
In this section we give a brief description of the results. First note that  if $(X,Y)\in CEVM(\alpha,\beta;a,b;\mu)$ on  $[-\infty,\infty]\times\Ebar$, then  $\alpha\in RV_{\rho}$ and $a\in RV_\gamma$, where $\alpha$ and $a$ were the scalings for $X$ and $Y$ respectively. While $Y\in D(G_\gamma)$ necessarily holds, it  need not a priori follow that $X\in D(G_{\rho})$.  We classify the problem according to the parameters $\gamma$ and $\rho$. We break the problem into four cases depending on whether the parameters $\gamma$ and $\rho$ are positive or negative. In Section~\ref{Transformations} we show that, depending on the properties of the scaling and centering parameters, we can first reduce the basic convergence in conditional model to an equivalent convergence with the limiting measure satisfying nondegeneracy condition in an appropriate cone. The reduction of the basic convergence helps us to compute the convergence of the product with ease in Section~\ref{mainresults}.

{\bf Case I: $\rho$ and $\gamma$ positive:} This is an easier case and the behavior is quiet similar to the classical multivariate extreme theory.  In Theorem~\ref{case1[a]}, we show that under appropriate tail condition on $X$, the product $XY$ has regularly varying tail of index $-1/(\rho+\gamma)$. It is not assumed that $X\in D(G_{\rho})$, but in Section~\ref{section:remarks on moments} we show that the tail condition is satisfied when $X\in D(G_{\rho})$. It may happen that $X$ is in some other domain of attraction but still the tail condition holds. We also present a situation where the tail condition may fail.

In all the remaining cases, at least one of the indices $\rho$ and $\gamma$ will be negative. A negative value for $\gamma$ will require that the upper endpoint of $Y$ is indeed $b(\infty)$. However, as has been noted below, the same need not be true for $\rho$, $X$ and $\beta(\infty)$. Yet, we shall assume that whenever $\rho<0$, the upper endpoint of the support of $X$ is $\beta(\infty)$. Further, we shall assume at least one of the factors $X$ and $Y$ to be nonnegative. If both the factors take negative values, then, the product of the negative numbers being positive, their left tails will contribute to the right tail of the product as well. In fact, it will become important in that case to compare the relative heaviness of the two contributions. This can be easily done by breaking each random variable into its negative and positive parts. For the product of two negative parts, the relevant model should be built on $(-X, -Y)$ and the analysis becomes same after that. While these details increase the bookkeeping, they do not provide any new insight into the problem. So we shall refrain from considering the situations where both $X$ and $Y$ take negative values, except in Subcases II(b) and II(d) below, where both $X$ and $Y$ are nonpositive and we get some interesting result about the lower tail of the product $XY$ easily.

{\bf Case II: $\rho$ and $\gamma$ negative:} In Section~\ref{Transformations}, we first reduce the basic convergence to an equivalent convergence where regular variation can play an important role. In this case both $b(t)$ and $\beta(t)$ have finite limits $b(\infty)$ and $\beta(\infty)$ respectively. Since $Y\in D(G_{\gamma})$, $b(\infty)$ is the right end point of $Y$. However, $\beta(\infty)$ need not be the right end point of $X$ in general, yet throughout we shall assume it to be so. In Section~\ref{Transformations}, we reduce the conditional model $(X,Y)\in CEVM(\alpha,\beta;a,b;\mu)$ to $(\widetilde{X},\widetilde{Y})$ which satisfies Conditions~\eqref{basic1} and~\eqref{nondegenerate} with parameters $(\widetilde{\alpha},0;\widetilde{a},0;\nu)$ on $[0,\infty]\times(0,\infty]$, where,
\begin{equation}\label{eq:x-y-transformation}
 \widetilde{X}=\frac1{\beta(\infty)-X}\quad \text{and}\quad \widetilde{Y}=\frac1{b(\infty)-Y},
\end{equation}
and  $\widetilde{\alpha}$ and $\widetilde{a}$ are some appropriate scalings and $\nu$ is a transformed measure.
Regular variation at the right end point plays a crucial role during the determination the product behavior in this case. Depending on the right end point, we break the problem into few subcases which are interesting.

{\bf Subcase II(a): $\beta(\infty)$ and $b(\infty)$ positive:} If the right end points are positive, then, without loss of generality, we assume them to be $1$. In Theorem~\ref{main theorem negative}, we show that if $X$ and $Y$ both have positive right end points, then $(1-XY)^{-1}$ has regularly varying tail of index $-1/|\rho|$, under some further sufficient moment conditions. In Section~\ref{section:example}, we give an example where the moment condition fails, yet the product shows the tail behavior predicted by Theorem~\ref{main theorem negative}.

{\bf Subcase II(b):  $\beta(\infty)$ and $b(\infty)$  zero:} In Theorem~\ref{theorem:both zero} we show that if both right end points are zero, then the product convergence is a simple consequence of the result in Case I. In this case $(XY)^{-1}$ has regularly varying tail of index ${-1/(|\rho|+|\gamma|)}$.

{\bf Subcase II(c): $\beta(\infty)$ zero and $b(\infty)$ positive:} We show in Theorem~\ref{theorem: X zero} that if $Y$ is a nonnegative random variable having positive right end point, then under some appropriate moment conditions $-(XY)^{-1}$ has regularly varying tail of index ${-1/|\rho|}$.

{\bf Subcase II(d): $\beta(\infty)$ and $b(\infty)$  negative:} When both the right end points are negative, then, without loss of generality, we assume them to be $-1$. In Theorem \ref{theo: both right end point negative}, we show that $(XY-1)^{-1}$ has regularly varying tail of index ${-1/|\rho|}$.

There are a few more cases beyond the four subcases considered above, when both $\rho$ and $\gamma$ are negative. For example, consider the case when $Y$ has right end point zero and $X$ has positive right end point $\beta(\infty)$. By our discussion above, $X$ should have the support $[0,\beta(\infty)]$. Again, the product has right end point $0$ and the behavior of $X$ around zero becomes important. Thus, to get something interesting in this case one must have a conditional model which gives adequate information about the behavior of $X$ around the left end point. So it becomes natural to model $(-X, Y)$, which has already been considered in Subcase II(b). A similar situation occurs when $\beta(\infty)<0$ and $b(\infty)>0$. Here, again, the problem reduces to that in Subcase II(d) by modeling $(X, -Y)$.  We refer to Remark~\ref{rem: not done} for a discussion on this subcase.

{\bf Case III: $\rho$ positive and $\gamma$ negative:} In this case we assume $b(\infty)>0$ and also $\alpha(t)\sim 1/a(t)$ which implies that $\rho=-\gamma$. We  show in  Theorem~\ref{theorem case 3} that $XY$ has regularly varying tail of index ${-1/|\gamma|}$.

{\bf Case IV: $\rho$ negative and  $\gamma$ positive:} In Theorem~\ref{case 4} we show that $XY$ has regularly varying tail of index ${-1/\gamma}$.

Finally we end this section by summarizing the results in a tabular form:

\begin{center}
\begin{table}[h]
\caption{Behavior  of  products}	\label{tab:BehaviorOfProducts}
	\begin{tabular}{|c|c|c| c| c| c|}
	\hline \hline
	Index of $\alpha$     & Index of $a$     &Theorem number  & Nature     & Regular variation    \\[0.5ex]
	\hline
	$\rho>0$ & $\gamma >0$ & Theorem~\ref{case1[a]} & $XY$ &$\text{RV}_{-{1}/{(\gamma+\rho)}}$ \\
	\hline
	$\stackrel{\displaystyle \rho>0}{\alpha\sim\frac{1}{a}}$ & $\gamma=-\rho<0$  &  Theorem~\ref{theorem case 3} & $XY$ & $\text{RV}_{-{1}/{|\gamma|}}$ \\
	\hline
	$\rho<0$ & $\gamma<\rho$ &  Theorem~\ref{main theorem negative}& $(\beta(\infty)b(\infty)-XY)^{-1}$ & $\text{RV}_{-{1}/{|\rho|}}$ \\
	\hline
	$\stackrel{\displaystyle\rho<0}{\alpha\sim a}$ & $\gamma=\rho$ & Theorem~\ref{main theorem negative} &  $(\beta(\infty)b(\infty)-XY)^{-1}$ &$\text{RV}_{-{1}/{|\rho|}}$ \\
	\hline	
$\rho<0$& $\gamma<0$  & Theorem~\ref{theorem:both zero} & $(XY)^{-1}$ & $\text{RV}_{-{1}/(|\gamma|+|\rho|)}$ \\
\hline
$\rho<0$& $\gamma<0$  & Theorem~\ref{theorem: X zero}& $-(XY)^{-1}$ & $\text{RV}_{-{1}/{|\rho|}}$ \\
\hline
$\rho<0$& $\gamma>0$ & Theorem~\ref{case 4} & XY& $\text{RV}_{1/\gamma}$\\
\hline
\hline
\end{tabular}	
\end{table}
\end{center}

\end{section}

\begin{section}{Some transformations of CEVM according to parameters $\gamma$ and $\rho$} \label{Transformations}
In this section we reduce the basic convergence in Condition~\eqref{basic1} to an equivalent convergence in some appropriate subspace of $\mathbb R^2$ to facilitate  our calculations of  product of two variables following conditional extreme value model. We now discuss the four cases considered in Section~\ref{sec: overview}.

{\bf Case I: $\rho$ and $\gamma$ positive:} In this case we assume $Y$ is nonnegative. Let $(X,Y)\in CEVM(\alpha,\beta;a,b;\mu)$ on $[-\infty,\infty]\times \Ebar$. Now by the domain of attraction condition~\eqref{chap1:def:doa} and  Corollary 1.2.4 of \cite{Haan:Ferreira:2006}, we have,  $b(t)\sim a(t)/\gamma$, as $t\to\infty$. Also, from Theorem 3.1.12 (a),(c) of \cite{bingham:goldie:teugels:1987} it follows that
  $$\lim_{t\rightarrow\infty}\frac{\beta(t)}{\alpha(t)}=
  \begin{cases}
  0 &\text{when $\psi_2=0$} \\
  \frac{1}{\rho} &\text{when $\psi_2\neq 0$}.
  \end{cases}$$
Now using the above conditions and translating $X$ and $Y$ coordinates we get that $(X,Y)$ satisfies Conditions~\eqref{basic1} and~\eqref{nondegenerate} with parameters $(\alpha,0;a,0;\nu)$ on $D:=[-\infty,\infty]\times(0,\infty]$, for some nondegenerate measure $\nu$ which is obtained from $\mu$ by translations on both axes. So in Theorem~\ref{case1[a]} which deals with product $XY$ in Case I, we assume that $(X,Y)$ satisfies Conditions~\eqref{basic1} and~\eqref{nondegenerate} with parameters $(\alpha,0;a,0;\nu)$ on $D=[-\infty,\infty]\times(0,\infty]$ for some nondegenerate Radon measure $\nu$.

{\bf Case II: $\rho$ and $\gamma$ negative:}  Recall that in this case $\Ebar=\left(-\infty,\frac{1}{|\gamma|}\right]$.  Since $Y\in D(G_{\gamma})$ with $\gamma<0$ , it follows from Lemma 1.2.9 of \cite{Haan:Ferreira:2006} that  $\lim_{t\rightarrow\infty}b(t)=:b(\infty)$ exists and is finite and as $t\rightarrow\infty$ we have, $$\frac{b(\infty)-b(t)}{a(t)}\rightarrow\frac{1}{|\gamma|}.$$ Moreover $b(\infty)$ turns out to be the right end point of $Y$. Hence in this case, without loss of generality we take $a(t)=|\gamma|(b(\infty)-b(t))$ and it easily follows that, for $y>0$,
$$\lim_{t\rightarrow\infty}t\prob\left[\frac{\widetilde Y}{a(t)^{-1}}>y\right]=  y^{\frac{1}{\gamma}},$$ where $\widetilde Y$ is defined in~\eqref{eq:x-y-transformation}.  Now observe that $(X,Y)\in CEVM(\alpha,\beta;a,b;\mu)$ on $[-\infty,\infty]\times \Ebar$ gives $(X,\widetilde Y)$ which satisfies Conditions~\eqref{basic1} and~\eqref{nondegenerate} with parameters $(\alpha,\beta;1/a(t),0;\mu_2)$ on $D$, where, $$\mu_2 ([-\infty,x]\times(y,\infty])=\mu( [-\infty,x]\times(\frac{1}{|\gamma|}-\frac{1}{y},\infty]).$$

Now since $\rho<0$, we get by  Theorem B.22 of \cite{Haan:Ferreira:2006} that $\lim_{t\rightarrow\infty}\beta(t)=\beta(\infty)$ exists and is finite. It may happen that $X$ has a different  right end point than $\beta(\infty)$,  but we assume $\beta(\infty)$ to be its right end point to avoid complications. In  $X$ coordinate we can do a similar transformation as the $Y$ variable, to get
\begin{align*}
K_t(\alpha(t)x+\beta(t))&:=\prob\left[  \left. \frac{X-\beta(t)}{\alpha(t)}\leq x \right|\frac{(b(\infty)-Y)^{-1}}{a(t)^{-1}}>y\right]\\
&\rightarrow y^{-\frac1\gamma}\mu_2([-\infty,x]\times(y,\infty])=:K(x).
\end{align*}
When $\psi_2\neq0$, we have, as $t\to\infty$,
\begin{equation}\label{eq:beta:negative:conv}
\frac{\beta(\infty)-\beta(t)}{\alpha(t)}\rightarrow\frac{1}{|\rho|}.\end{equation}

Now by convergence of types theorem (see Theorem~\ref{theo:convergence of types}), as $t\to\infty$, we have, $$K_t(|\rho|(\beta(\infty)-\beta(t))x+\beta(t))\rightarrow K(x).$$ Define,
\begin{equation}\label{eq:negative:alpha:a}
\widetilde{\alpha}(t)=
\begin{cases}
\frac1{|\rho|(\beta(\infty)-\beta(t))} &\text{when~} \psi_2\neq0\\
\frac1{\alpha(t)} &\text{when~} \psi_2=0\\
\end{cases}
\qquad \text{and} \qquad
\widetilde{a}(t)=\frac1{a(t)}.
\end{equation}
Using~\eqref{eq:negative:alpha:a} we get, as $t\rightarrow\infty$,
\begin{equation}
\label{basic2a}
t\prob\left[\frac{\widetilde X}{\widetilde{\alpha}(t)}\leq x,\frac{\widetilde Y}{\widetilde{a}(t)}>y\right]\to
\begin{cases}
\mu_2([-\infty,-\frac{1}{x}+\frac{1}{|\rho|}]\times(y,\infty]) &\text{for }\psi_2\neq 0\\
 \mu_2([-\infty,-\frac{1}{x}]\times(y,\infty]) &\text{for }\psi_2=0.\\
\end{cases}
\end{equation}
In Section~\ref{mainresults}, we deal with Case II by breaking it up into different subcases as pointed out in Section~\ref{sec: overview}. So, in Theorems~\ref{main theorem negative}--\ref{theo: both right end point negative}, we assume that $(\widetilde X,\widetilde Y)$  satisfy Conditions~\eqref{basic1} and~\eqref{nondegenerate} with parameters $(\widetilde{\alpha},0;\widetilde{a},0;\nu)$ on $[0,\infty]\times(0,\infty]$, for some nondegenerate Radon measure $\nu$.

{\bf Case III:  $\rho$ positive and $\gamma$ negative: }
Since $Y\in D(G_{\gamma})$, we can do a transformation similar to that in Case II. So in this case  $(X,\widetilde Y)$  satisfies Conditions~\eqref{basic1} and~\eqref{nondegenerate} with parameters $(\alpha,\beta;\widetilde{a},0;\mu_3)$ on $[-\infty,\infty]\times(0,\infty]$, for some nondegenerate measure $\mu_3$.

Now since $\rho>0$, we can do a translation in the first coordinate to get $(X,\widetilde Y)$  which satisfies Conditions~\eqref{basic1} and~\eqref{nondegenerate} with parameters $(\alpha,0;\widetilde{a},0;\nu)$ on $[-\infty,\infty]\times(0,\infty]$ for some nondegenerate measure $\nu$. In Theorem~\ref{theorem case 3} we deal with product behavior in this case.

{\bf Case IV:  $\rho$ negative and $\gamma$ positive: }
We assume that $\beta(\infty)$, the right end point of X is positive. Now, as in Case II, we use~\eqref{eq:beta:negative:conv} to get the following convergence for $x\geq 0$ and $y>0,$
\begin{align*}
   t\prob\left[\frac{(\beta(\infty)-X)}{\alpha(t)}\leq x,\frac{Y}{a(t)}>y\right]&=t\prob\left[\frac{X-\beta(t)}{\alpha(t)}\geq -x+\frac{\beta(\infty)-\beta(t)}{\alpha(t)},\frac{Y}{a(t)}>y\right]\\
&\rightarrow\mu\left([-x+\frac{1}{|\gamma|},\infty]\times (y,\infty]\right) \text{ as } t\rightarrow\infty.
\end{align*}
So in Theorem~\ref{case 4}, which derives the product behavior in Case IV, we assume  $(\beta(\infty)-X, Y)$  satisfies Conditions~\eqref{basic1} and~\eqref{nondegenerate} with parameters $(\alpha,0;a,0;\nu)$ on $[0,\infty]\times(0,\infty]$, for some nondegenerate Radon measure $\nu$.

We thus observe that if $(X, Y)\in CEVM(\alpha, \beta; a, b; \mu)$, then in Cases I, II, III and IV respectively, $(X, Y)$, $(\widetilde X, \widetilde Y)$, $(X, \widetilde Y)$ and $(\beta(\infty)-X, Y)$ satisfy Conditions~\eqref{basic1} and~\eqref{nondegenerate} with some positive scaling parameters, zero centering parameters and a nondegenerate limiting Radon measure on $D=[-\infty, \infty]\times (0, \infty]$. In future sections, whenever we refer to Conditions~\eqref{basic1} and~\eqref{nondegenerate} with respect to the transformed variables alone without any reference to the CEVM model for the original pair $(X, Y)$, we shall denote, by an abuse of notation, the limiting Radon measure for the transformed random variables as $\mu$ as well.
\end{section}

\begin{section}{Behavior of the product under conditional model}
\label{mainresults}
Now we study the product behavior when $(X,Y)\in CEVM(\alpha,\beta;a,b;\mu)$. In all the cases we assume Conditions~\eqref{basic1} and~\eqref{nondegenerate} on the suitably transformed versions of $(X, Y)$, so that centering is not required.

{\bf Case I: $\rho$ and $\gamma$ positive:} We begin with the case where both $\rho$ and $\gamma$ are positive. 
\begin{theorem}\label{case1[a]} Let $\rho>0,\gamma>0$ and $Y$ be a nonnegative random variable. Assume $(X,Y)$ satisfies Conditions~\eqref{basic1} and~\eqref{nondegenerate} with parameters $(\alpha,0;a,0;\mu)$ on $D:=[-\infty,\infty]\times(0,\infty]$. Also assume
  \begin{equation}
  \label{moment condition}
   \lim_{\epsilon\downarrow0}\limsup_{t\rightarrow\infty}t\prob\left[\frac{|X|}{\alpha(t)}>\frac{z}{\epsilon}\right]=0.
  \end{equation}
   Then, $XY$ has regularly varying tail of index $-{1/(\gamma+\rho)}$ and $t\prob\left[XY/{\left(\alpha(t)a(t)\right)}\in\cdot\right]$ converges vaguely to some nondegenerate Radon measure on $[-\infty,\infty]\setminus \{0\}$.
\end{theorem}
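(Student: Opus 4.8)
The plan is to compute the vague limit of $t\,\prob[XY/(\alpha(t)a(t)) \in \cdot]$ directly and read off regular variation from homogeneity of the limit. First I would fix the polar-type change of variables suggested by the product: on the region where $X>0$, write $(X,Y) = (\alpha(t)u, a(t)v)$ so that $XY/(\alpha(t)a(t)) = uv$, and I want to understand $t\,\prob[XY > \alpha(t)a(t)\,z]$ for $z>0$. The heuristic is that the only way $XY$ can be of order $\alpha(t)a(t)$ is for $Y$ to be of order $a(t)$ (which has probability of order $1/t$ by the domain of attraction condition in Condition~\eqref{basic1}) while $X/\alpha(t)$ is of order $1$; the contribution of very large $X$ (with $Y$ merely moderate) is killed by the moment/tail condition~\eqref{moment condition}, and the contribution of $X/\alpha(t) \to 0$ together with $Y/a(t)\to\infty$ is negligible because $Y$ cannot be that large. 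So I would split
\[
t\,\prob[XY > \alpha(t)a(t) z] = t\,\prob\Bigl[\tfrac{X}{\alpha(t)}\cdot\tfrac{Y}{a(t)} > z,\ \tfrac{|X|}{\alpha(t)} \le \tfrac{z}{\epsilon}\Bigr] + t\,\prob\Bigl[\tfrac{X}{\alpha(t)}\cdot\tfrac{Y}{a(t)} > z,\ \tfrac{|X|}{\alpha(t)} > \tfrac{z}{\epsilon}\Bigr],
\]
bound the second term by $t\,\prob[|X|/\alpha(t) > z/\epsilon]$ which vanishes as $t\to\infty$ then $\epsilon\downarrow 0$, and on the first term use the vague convergence in Condition~\eqref{basic1} on the relatively compact (in $[-\infty,\infty]\times\Ebar$, away from the boundary $y=0$) rectangle-like set $\{(x,y): xy>z,\ |x|\le z/\epsilon\}$, after checking it is a $\mu$-continuity set.

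The key steps, in order, would be: (1) reduce, via Section~\ref{Transformations}, to $(X,Y)$ satisfying Conditions~\eqref{basic1} and~\eqref{nondegenerate} with parameters $(\alpha,0;a,0;\mu)$ on $D$, so no centering is needed (this is already done in the statement's hypotheses); (2) establish that for each fixed $z>0$ and $\epsilon>0$ the set $A_{z,\epsilon} = \{(x,y)\in D : xy>z,\ |x|<z/\epsilon\}$ is relatively compact and a continuity set of $\mu$ for all but countably many $z$ (using that $\mu$ puts no mass on lines parallel to the axes, which follows from the nondegeneracy in Condition~\eqref{nondegenerate} together with $\mu$ being Radon and the standard argument that a $\sigma$-finite measure charges only countably many members of a disjoint family); (3) deduce $\lim_{t\to\infty} t\,\prob[(X/\alpha(t), Y/a(t))\in A_{z,\epsilon}] = \mu(A_{z,\epsilon})$ and then let $\epsilon\downarrow 0$, using~\eqref{moment condition} for the tail remainder and monotone convergence $A_{z,\epsilon}\uparrow A_z = \{xy>z\}$ for the main part, to get $t\,\prob[XY/(\alpha(t)a(t)) > z] \to \mu(A_z) =: c_+\,m_+(z,\infty]$; (4) do the same on the negative side, $t\,\prob[XY/(\alpha(t)a(t)) < -z] \to \mu(\{xy<-z\}) =: c_-\,m_-(\infty,-z)$, which is the set where $X<0$; (5) compute the index: since $\alpha\in RV_\rho$ and $a\in RV_\gamma$ (recalled in Section~\ref{sec: overview}), the scaling function $t\mapsto \alpha(t)a(t)$ is in $RV_{\rho+\gamma}$, so by the standard equivalence (Remark following Definition~\ref{def: reg var rv}, i.e.\ Theorem~3.6 of \cite{resnick:2007}) the existence of the vague limit of $t\,\prob[XY/(\alpha(t)a(t))\in\cdot]$ as a nondegenerate Radon measure on $[-\infty,\infty]\setminus\{0\}$ is equivalent to $XY$ having a regularly varying tail, and the index is $-1/(\rho+\gamma)$; finally (6) check nondegeneracy of the limit measure: at least one of $\mu(\{xy>z\})$, $\mu(\{xy<-z\})$ is positive because $\mu$ is non-null and, by Condition~\eqref{nondegenerate}, for each $y>0$ the section $x\mapsto \mu((-\infty,x]\times(y,\infty))$ is a nondegenerate distribution function, so $\mu$ genuinely spreads mass in the $x$-direction, forcing $\mu(\{|xy|>z\})\in(0,\infty)$ for small $z$.

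I expect the main obstacle to be step (2)–(3): justifying the interchange of the vague limit with the set $A_{z,\epsilon}$, i.e.\ verifying relative compactness in the correct one-point-compactified space and the $\mu$-continuity of the boundary $\{xy=z\}\cup\{|x|=z/\epsilon\}$. The subtlety is that $A_{z,\epsilon}$ touches $x=\pm\infty$ only through the excluded face $|x|=z/\epsilon$, and touches $y=+\infty$; one must confirm its closure stays away from the removed boundary $\{y=0\}$ (true since $xy>z>0$ and $|x|<\infty$ force $y$ bounded below) so that it is indeed relatively compact in $[-\infty,\infty]\times\Ebar$, and that $\mu$ assigns zero mass to the hyperbola $\{xy=z\}$ — this last point needs a genuine argument, most cleanly by noting the hyperbolae $\{xy=z\}$ for distinct $z$ are disjoint and a Radon (hence $\sigma$-finite) measure can charge only countably many of them, so the vague convergence holds for a co-countable set of $z$, which suffices for regular variation. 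A secondary, more routine difficulty is controlling the "small $X$, large $Y$" regime, i.e.\ showing $t\,\prob[XY > \alpha(t)a(t)z,\ |X|/\alpha(t) < \delta] \to 0$ as $\delta\downarrow 0$; this follows because on that event $Y/a(t) > z/\delta$, and $t\,\prob[Y/a(t) > z/\delta] \to (z/\delta)^{-1/\gamma} \to 0$ by the domain of attraction condition, so it is absorbed into the $\epsilon\downarrow 0$ limit together with the large-$X$ term.
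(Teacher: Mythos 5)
Your proposal is essentially the paper's own argument: both sandwich $t\prob[XY/(\alpha(t)a(t))>z]$ between the probability of a relatively compact $\mu$-continuity set (to which the vague convergence of Condition~\eqref{basic1} applies) and the same quantity plus a remainder controlled by the tail condition~\eqref{moment condition}, then remove the auxiliary cut-off; the only difference is cosmetic, as you truncate $|X|/\alpha(t)\le z/\epsilon$ where the paper truncates $Y/a(t)>\epsilon$, two interchangeable restrictions since on $\{xy>z\}$ with $y>0$ one has $\{|x|<z/\epsilon\}\subset\{y>\epsilon\}$ and both complementary strips lie inside $\{|X|/\alpha(t)>z/\epsilon\}$. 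One incidental slip worth noting: the parenthetical that nondegeneracy in Condition~\eqref{nondegenerate} forces $\mu$ to charge no lines parallel to the axes is not correct (nondegenerate distribution functions may still have atoms), and your exhaustion yields $\{xy>z,\,|x|<\infty\}$ rather than $\{xy>z\}$ itself, so strictly one should record that~\eqref{moment condition} gives $\mu(\{\pm\infty\}\times(0,\infty])=0$ — but the disjoint-family/$\sigma$-finiteness argument you also give handles the $\mu$-continuity step correctly, so the proof goes through.
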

\begin{remark} Please see Section~\ref{section:remarks on moments} for the cases where the condition~\eqref{moment condition} holds.
\end{remark}
\begin{proof}
For $\epsilon>0$ and $z>0$ observe that the set, $$A_{\epsilon,z}=\{(x,y)\in D :xy>z, y>\epsilon \}$$ is a relatively compact set in $D$ and $\mu$ is a Radon measure. Note that,
\begin{multline}
t\prob\left[\left(\frac{XY}{\alpha(t)a(t)},\frac{Y}{a(t)}\right)\in A_{\epsilon,z}\right]\le t\prob\left[\frac{XY}{\alpha(t)a(t)}>z\right] \\ \le  t\prob\left[\left(\frac{XY}{\alpha(t)a(t)},\frac{Y}{a(t)}\right)\in A_{\epsilon,z}\right] + t\prob\left[\frac{XY}{\alpha(t)a(t)}>z,\frac{Y}{a(t)}\leq \epsilon\right]
\end{multline}
First letting $t\to\infty$ and then $\epsilon\downarrow0$ through a sequence such that $A_{\epsilon,z}$ is a $\mu$-continuity set, the left side and the first term on the right side converge to $\mu\{(x,y)\in D: xy>z\}$. The second term on the right side is negligible by the assumed tail condition~\eqref{moment condition}. Combining all, we have the required result when $z>0$.
By similar arguments one can show the above convergence on the sets of the form $(-\infty,-z)$ with $z>0$, also.
\end{proof}

\noindent{\bf Spectral form for product:}
For simplicity let us assume that $X$ is nonnegative as well. Then the vague convergence in $M_+(D)$ can be thought of as vague convergence in $M_+([0,\infty]\times(0,\infty]).$  By Theorem \ref{case1[a]} we have,
$$\lim_{t\rightarrow\infty}t\prob\left[\frac{XY}{\alpha(t)a(t)}>z\right]= \mu\{(x,y)\in [0,\infty]\times(0,\infty] : xy > z\}.$$
Since $\rho>0$ and $\gamma>0$, it is known that there exists $\overline{\alpha}(t)\sim \alpha(t)$ and $\overline{a}(t)\sim a(t)$ such that they are eventually differentiable and strictly increasing. Also $\frac{\overline{\alpha}(tx)}{\alpha(t)}\rightarrow x^{\rho}$ and $\frac{\overline{a}(tx)}{a(t)}\rightarrow x^{\gamma}$ as $t\rightarrow\infty.$
Hence,
\begin{align*}
   t\prob\left[\frac{\overline{\alpha}^{\leftarrow}(X)}{t}\leq x, \frac{\overline{a}^{\leftarrow}(Y)}{t}>y\right]&=t\prob\left[\frac{X}{\alpha(t)}\leq \frac{\overline{\alpha}(tx)}{\alpha(t)},\frac{Y}{a(t)}>\frac{\overline{a}(ty)}{a(t)}\right]\\
   &\rightarrow \mu([0,x^{\rho}]\times(y^{\gamma},\infty])\\
   &=\mu T_1^{-1}([0,x]\times(y,\infty]),
 \end{align*}
where $T_1(x,y)=(x^{1/\rho},y^{1/\gamma})$.
Let $S$ be the spectral measure for the standardized pair $(\overline{\alpha}^{\leftarrow}(X),\overline{a}^{\leftarrow}(Y))$ corresponding to $\mu T_1^{-1}$. Then,
  \begin{align*}
  &\mu\{(x,y)\in[0,\infty]\times(0,\infty]: xy > z\}\\
  =&\mu T_1^{-1}\{(x,y)\in [0,\infty]\times(0,\infty] : x^{\rho}y^{\gamma} > z\}\\
  =&\int_{\omega\in[0,1)}\int_{r^{\rho+\gamma}\omega^{\rho}(1-\omega)^{\gamma}>z} r^{-2}dr S(d\omega)\\
  =&\int_{\omega\in[0,1)}\int_{r>\frac{z^{\frac{1}{\rho+\gamma}}}{(\omega^{\rho}(1-\omega)^{\gamma})^{\frac{1}{\rho+\gamma}}}}r^{-2}drS(d\omega)\\
  =&z^{-\frac{1}{\rho+\gamma}}\int_{\omega\in[0,1)}\omega ^{\frac{\rho}{\rho+\gamma}}(1-\omega)^{\frac{\gamma}{\rho+\gamma}}S(d\omega).\\
  \end{align*}
So finally we have,
\begin{align*}\lim_{t\rightarrow\infty}t\prob\left[\frac{XY}{\alpha(t)a(t)}>z\right]&= \mu\{(x,y)\in [0,\infty]\times(0,\infty] : xy > z\}\\
&=z^{-\frac{1}{\rho+\gamma}}\int_{\omega\in[0,1)}\omega ^{\frac{\rho}{\rho+\gamma}}(1-\omega)^{\frac{\gamma}{\rho+\gamma}}S(d\omega).\\
\end{align*}

{\bf Case II: $\rho$ and $\gamma$ negative:} As has been already pointed out, $\beta(\infty)$ need not be the right endpoint $X$. However, we shall assume it to be so. The tail behavior of $XY$ strongly depends on the right end points of $X$ and $Y$. There are several possibilities which may arise, but it may not always be possible to predict the tail behavior of $XY$ in all the cases. We shall deal with few interesting cases. See Section~\ref{sec: overview} for a discussion in this regard. Regarding one of the cases left out, see Remark~\ref{rem: not done}. Recall that, from~\eqref{eq:x-y-transformation} we have,
\begin{equation}\label{transformation}
\frac{1}{\beta(\infty)b(\infty)-XY}=\frac{\widetilde{X}\widetilde{Y}}{\beta(\infty)\widetilde{X}+b(\infty)\widetilde{Y}-1}
\end{equation}

{\bf Subcase II(a): $\beta(\infty)$ and $b(\infty)$ positive:} After scaling $X$ and $Y$ suitably, without loss of generality, we can assume that $\beta(\infty)=1=b(\infty).$
\begin{theorem}\label{main theorem negative}
Suppose $X$ and $Y$ are nonnegative and $(\widetilde X,\widetilde Y)$ satisfies Conditions~\eqref{basic1} and~\eqref{nondegenerate} with parameters $(\widetilde{\alpha},0;\widetilde{a},0;\mu)$ on $[0,\infty]\times(0,\infty]$. Assume $\E\left[\widetilde{X}^{{1}/{|\rho|}+\delta}\right]<\infty$ for some $\delta>0$ and  either  $\gamma < \rho $ or  $\widetilde{\alpha}(t)/\widetilde{a}(t)$ remains bounded. Then $(1-XY)^{-1}$ has regularly varying tail of index ${-1/|\rho|}$ and, as $t\to\infty$, $t\prob\left[{(1-XY)^{-1}}/{\widetilde{\alpha}(t)}\in\cdot \right]$ converges vaguely to some nondegenerate Radon measure on $(0,\infty]$.
\end{theorem}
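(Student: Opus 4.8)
## Proof proposal for Theorem~\ref{main theorem negative}

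The plan is to exploit the identity~\eqref{transformation}, which in the present normalization $\beta(\infty)=b(\infty)=1$ reads
$$\frac{1}{1-XY}=\frac{\widetilde X\widetilde Y}{\widetilde X+\widetilde Y-1}.$$
First I would observe that, since $(\widetilde X,\widetilde Y)$ satisfies Conditions~\eqref{basic1} and~\eqref{nondegenerate} with zero centering on $[0,\infty]\times(0,\infty]$ and positive scaling indices $|\rho|$ (for $\widetilde\alpha$) and $|\gamma|$ (for $\widetilde a$), the pair $(\widetilde X,\widetilde Y)$ is precisely in the situation of Case~I, with the r\^oles of $\rho$ and $\gamma$ played by $|\rho|$ and $|\gamma|$. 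So the first step is to verify the tail condition~\eqref{moment condition} for $\widetilde X$: here this is exactly the requirement $\lim_{\epsilon\downarrow0}\limsup_{t\to\infty}t\prob[\widetilde X/\widetilde\alpha(t)>z/\epsilon]=0$, which follows from the moment assumption $\E[\widetilde X^{1/|\rho|+\delta}]<\infty$ together with Markov's inequality and the fact that $\widetilde\alpha\in RV_{|\rho|}$ (use Potter-type bounds, Theorem~\ref{chap1:theorem:potter:1}, to handle $\widetilde\alpha(t)^{-(1/|\rho|+\delta)}t\to0$). Then Theorem~\ref{case1[a]}, applied to $(\widetilde X,\widetilde Y)$, gives that $\widetilde X\widetilde Y$ has regularly varying tail of index $-1/(|\rho|+|\gamma|)$ and $t\prob[\widetilde X\widetilde Y/(\widetilde\alpha(t)\widetilde a(t))\in\cdot]$ converges vaguely on $(0,\infty]$ to a nondegenerate Radon measure.

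The second step is to pass from the numerator $\widetilde X\widetilde Y$ to the ratio $\widetilde X\widetilde Y/(\widetilde X+\widetilde Y-1)$, scaled by $\widetilde\alpha(t)$ rather than $\widetilde\alpha(t)\widetilde a(t)$. The heuristic is that, on the event $\{(1-XY)^{-1}>\widetilde\alpha(t)z\}$, $\widetilde X$ is of order $\widetilde\alpha(t)\to\infty$ while $\widetilde Y$ stays bounded (its scaling $\widetilde a(t)\to\infty$, so $\widetilde Y/\widetilde a(t)$ small means $\widetilde Y=o(\widetilde a(t))$, i.e.\ the conditional limit $\mu$ lives where the second coordinate is finite only after the $\widetilde a(t)$-scaling; what matters is that $\widetilde Y-1$ becomes negligible compared to $\widetilde X$). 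Concretely I would write
$$\frac{1}{\widetilde\alpha(t)}\cdot\frac{\widetilde X\widetilde Y}{\widetilde X+\widetilde Y-1}
=\frac{1}{\widetilde\alpha(t)}\cdot\frac{\widetilde X\widetilde Y}{\widetilde X}\cdot\frac{\widetilde X}{\widetilde X+\widetilde Y-1}
=\frac{\widetilde Y}{\widetilde\alpha(t)}\cdot\Bigl(1+\frac{\widetilde Y-1}{\widetilde X}\Bigr)^{-1},$$
which is not quite right since $\widetilde Y/\widetilde\alpha(t)$ has the wrong order — so instead I would keep the product form and sandwich: for fixed $\eta>0$, on $\{\widetilde Y/\widetilde a(t)>\eta\}$ the factor $\widetilde X+\widetilde Y-1$ is comparable to $\widetilde X$ up to $\widetilde Y$, and on $\{\widetilde Y/\widetilde a(t)\le\eta\}$ one controls the contribution using the condition $\gamma<\rho$ (equivalently $|\rho|<|\gamma|$, so $\widetilde a$ dominates $\widetilde\alpha$) or the boundedness of $\widetilde\alpha(t)/\widetilde a(t)$. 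The role of this dichotomy is to guarantee that $\widetilde Y$, which lives on the $\widetilde a(t)$-scale, is negligible next to $\widetilde X$, which lives on the $\widetilde\alpha(t)$-scale, precisely in the regime $\widetilde\alpha(t)=o(\widetilde a(t))$ or $\widetilde\alpha(t)\asymp\widetilde a(t)$. After this reduction one shows $(1-XY)^{-1}/\widetilde\alpha(t)$ is asymptotically equivalent to $\widetilde X\widetilde Y/(\widetilde\alpha(t)\widetilde Y)$-type expression whose limit is read off from the conditional convergence of $\widetilde X/\widetilde\alpha(t)$ alone — giving regular variation of index $-1/|\rho|$ — and the vague limit is nondegenerate because $\mu((-\infty,x]\times(y,\infty))$ is a nondegenerate distribution in $x$.

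The main obstacle I anticipate is the second step: carefully justifying that the correction factor $\widetilde X/(\widetilde X+\widetilde Y-1)$ converges to $1$ in the relevant asymptotic regime and that the exceptional events (where $\widetilde Y$ is comparable to or larger than $\widetilde X$, or where $\widetilde X+\widetilde Y-1$ is near zero so the ratio blows up spuriously) contribute negligibly to $t\prob[\,\cdot\,>\widetilde\alpha(t)z]$. This requires a uniform integrability / dominated-convergence argument in the vague-convergence setup, of the same flavour as the proof of Theorem~\ref{case1[a]} but with the extra complication of the denominator; the moment condition $\E[\widetilde X^{1/|\rho|+\delta}]<\infty$ and Potter's bounds (Theorem~\ref{chap1:theorem:potter}) should supply the needed domination. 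I would organize this as: (i) a lower bound via restricting to a nice relatively compact continuity set $\{\widetilde x>z(1+\epsilon),\ \widetilde y>\epsilon\}$ in the $(\widetilde X/\widetilde\alpha(t),\widetilde Y/\widetilde a(t))$-plane; (ii) a matching upper bound, splitting off the tail in $\widetilde X$ (controlled by the moment condition) and the piece where $\widetilde Y/\widetilde a(t)\le\epsilon$ (controlled by the scaling dichotomy); (iii) letting $\epsilon\downarrow0$ along a sequence of $\mu$-continuity sets. Then the nondegeneracy and the vague convergence on $(0,\infty]$ follow as in Theorem~\ref{case1[a]}.
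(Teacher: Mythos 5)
Your high-level plan --- reduce to Case~I-style joint convergence, then control the exceptional set via the moment condition on $\widetilde X$ and the scaling dichotomy --- is the right shape, but the central step is missing and one of your guiding heuristics is inverted, so the argument as written would not close.

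First, the heuristic. You assert that on the event $\{(1-XY)^{-1}>\widetilde\alpha(t)z\}$ the random variable $\widetilde X$ is of order $\widetilde\alpha(t)$ while ``$\widetilde Y-1$ becomes negligible compared to $\widetilde X$.'' This is backwards. Under either hypothesis ($\gamma<\rho$, i.e.\ $|\rho|<|\gamma|$, or $\widetilde\alpha(t)/\widetilde a(t)$ bounded), $\widetilde Y$ lives on the $\widetilde a(t)$ scale which is at least as large as the $\widetilde\alpha(t)$ scale on which $\widetilde X$ lives, so in the relevant regime it is $\widetilde Y$, not $\widetilde X$, that dominates the sum $\widetilde X+\widetilde Y-1$. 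That is exactly why $(1-XY)^{-1}=\widetilde X\widetilde Y/(\widetilde X+\widetilde Y-1)\approx\widetilde X\widetilde Y/\widetilde Y=\widetilde X$ on the $\widetilde\alpha$ scale, and hence why the claimed tail index is $-1/|\rho|$; with your version of the heuristic the denominator would be $\approx\widetilde X$ and the ratio $\approx\widetilde Y$, giving the wrong index.

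Second, and more substantively, your Step~1 only yields the marginal tail of the numerator $\widetilde X\widetilde Y$ (or at best, from the internals of the proof of Theorem~\ref{case1[a]}, the joint limit of $(\widetilde X\widetilde Y/(\widetilde\alpha\widetilde a),\,\widetilde Y/\widetilde a)$). But $(1-XY)^{-1}/\widetilde\alpha(t)$ is the ratio $(W'/(\widetilde\alpha(t)\widetilde a(t)))/(W''/\widetilde a(t))$ with $W'=\widetilde X\widetilde Y$ and $W''=\widetilde X+\widetilde Y-1$, so what is actually required is the \emph{joint} vague convergence of the pair $(W',W'')$ with the two different scalings $(\widetilde\alpha\widetilde a,\widetilde a)$. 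This is the content of Lemma~\ref{lemma:joint: prod:sum} in the paper, and it is the step you would need to supply: starting from the joint limit of $(\widetilde X\widetilde Y,\widetilde Y)$ one replaces $\widetilde Y$ in the second coordinate by $\widetilde X+\widetilde Y-1$, and it is precisely here that the moment condition is used, via Markov's inequality, to show that $t\prob[\widetilde X/\widetilde a(t)>\epsilon/2]\to0$ (and $1/\widetilde a(t)\to0$ is trivial). Your Step~2 acknowledges the algebraic identity you wrote is ``not quite right'' and then proposes to ``keep the product form and sandwich'' without specifying what to sandwich with, so the gap is not filled. The paper's actual finish is then short: write the event $\{W'/(\widetilde\alpha(t)W'')>w\}$, split off the $\mu T_2^{-1}$-continuity set $\{x>yw,\ y>\epsilon\}$ --- to which the joint vague convergence applies --- and bound the remainder $\{W''/\widetilde a(t)\le\epsilon\}$ explicitly: using $\widetilde X\ge0$ one has $\widetilde X+\widetilde Y-1\ge\widetilde Y-1$, which reduces the exceptional event to a bound on $t\prob[\widetilde X>\cdot]$ controlled by $\E[\widetilde X^{1/|\rho|+\delta}]$ together with $\widetilde\alpha\in RV_{|\rho|}$ and the dichotomy on $\widetilde\alpha/\widetilde a$. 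None of this is in your outline; identifying the joint limit of $(W',W'')$ as the pivotal lemma is the idea you are missing.
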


We start with a technical lemma.
\begin{lemma}\label{important lemma}
For $0\leq t_1\leq t_2\leq\infty$ and $z>0$, we denote the set
\begin{equation}\label{eq:set:V}
V_{[t_1,t_2],z}=\{(x,y)\in [0,\infty]\times(0,\infty]:x\in[t_1,t_2], y>z\}.
\end{equation} 
Suppose that $\{(Z_{1t},Z_{2t})\}$ is a sequence of pairs of nonnegative random variables and there exists a Radon measure $\nu(\cdot)$ in $[0,\infty]\times(0,\infty]$  such that they satisfy the following two conditions.\\
Condition A: For $0<t_1<t_2<\infty$ and $z>0$, whenever $V_{[t_1,t_2],z}$ is a $\nu$-continuity set, we have, as $t\rightarrow\infty$,
$$t\prob\left[(Z_{1t},Z_{2t})\in V_{[t_1,t_2],z}\right]\rightarrow \nu(V_{[t_1,t_2],z}).$$
Condition B: For any  $z_0\in(0,\infty)$ we have as $t\rightarrow\infty,$
$$t\prob\left[(Z_{1t},Z_{2t})\in V_{[0,\infty],z_0}\right]\rightarrow f(z_0)\in(0,\infty).$$
Then, as $t\to\infty$,
$$t\prob\left[(Z_{1t},Z_{2t})\in\cdot\right]\vaguec\nu(\cdot)$$ in $M_+([0,\infty]\times(0,\infty])$
\end{lemma}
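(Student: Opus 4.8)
The goal is to establish vague convergence on the locally compact, second countable space $[0,\infty]\times(0,\infty]$, and I would do this through the two usual ingredients: (i) the family $\mu_t:=t\,\prob[(Z_{1t},Z_{2t})\in\cdot\,]$ is relatively compact in the vague topology, and (ii) every vague subsequential limit of $\mu_t$ equals $\nu$. Together these force $\mu_t\vaguec\nu$. The only non-routine work is in (ii), where Condition A will pin down a subsequential limit on the ``open strip'' $(0,\infty)\times(0,\infty]$ and Condition B will be needed to control the behaviour near the lines $\{x=0\}$ and $\{x=\infty\}$.

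\textbf{Step 1 (relative compactness).} Every compact $K\subseteq[0,\infty]\times(0,\infty]$ must stay away from $\{y=0\}$, hence $K\subseteq V_{[0,\infty],z_0}$ for some $z_0>0$. Condition B gives $\limsup_t\mu_t(K)\le\limsup_t\mu_t(V_{[0,\infty],z_0})=f(z_0)<\infty$, so $\{\mu_t\}$ is eventually bounded on compact sets; since the space is locally compact Hausdorff with a countable base, this yields vague relative compactness, and also shows any subsequential limit $\lambda$ is a Radon measure.

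\textbf{Step 2 (identification on the interior).} Take a subsequence along which $\mu_{t_k}\vaguec\lambda$. The rectangles $V_{[t_1,t_2],z}$ with $0<t_1<t_2<\infty$ and $z>0$ are relatively compact, and for all but countably many choices of $(t_1,t_2,z)$ they are continuity sets for both $\lambda$ and $\nu$; for such choices Condition A together with the vague convergence $\mu_{t_k}\vaguec\lambda$ gives $\lambda(V_{[t_1,t_2],z})=\nu(V_{[t_1,t_2],z})$. These rectangles form a $\pi$-system generating $\mathcal B((0,\infty)\times(0,\infty])$, on which both $\lambda$ and $\nu$ are $\sigma$-finite with a common exhausting sequence; hence $\lambda=\nu$ on $(0,\infty)\times(0,\infty]$.

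\textbf{Step 3 (the boundary lines, and conclusion).} It remains to rule out that $\lambda$ (or $\nu$) carries mass on $\{0\}\times(0,\infty]$ or $\{\infty\}\times(0,\infty]$. For a continuity level $z_0$ of $\lambda$ the slab $V_{[0,\infty],z_0}$ is relatively compact and a $\lambda$-continuity set, so $\lambda(V_{[0,\infty],z_0})=\lim_k\mu_{t_k}(V_{[0,\infty],z_0})=f(z_0)$ by Condition B. Comparing this with the interior value $\lambda((0,\infty)\times(z_0,\infty])=\nu((0,\infty)\times(z_0,\infty])=\lim_{n\to\infty}\nu(V_{[1/n,n],z_0})$ from Step 2, Condition B forces the whole slab mass to be accounted for by the interior rectangles $V_{[1/n,n],z_0}$ as $n\to\infty$, whence $\lambda(\{0,\infty\}\times(z_0,\infty])=0$; letting $z_0$ decrease along continuity levels shows $\lambda$ assigns zero mass to $\{0,\infty\}\times(0,\infty]$, and the same holds for $\nu$. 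Thus $\lambda$ is concentrated on $(0,\infty)\times(0,\infty]$, where it agrees with $\nu$, so $\lambda=\nu$; since the subsequence was arbitrary, $\mu_t\vaguec\nu$.

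\textbf{Main obstacle.} The crux is Step 3: Condition A only governs the interior, and a sequence could a priori shed mass toward $x=0$ or $x=\infty$ within a slab, producing a subsequential limit with spurious boundary mass. It is precisely the preservation of the total slab mass in Condition B that must be leveraged — via the identity $f(z_0)=\nu(V_{[0,\infty],z_0})$ — to show no such escape occurs. Making this ``no mass escapes to the $x$-boundary'' argument airtight, together with the routine continuity-set perturbations needed to invoke Conditions A and B, is the heart of the proof; everything else (Steps 1 and 2) is standard vague-convergence bookkeeping.
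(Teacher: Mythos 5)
Your overall architecture — (1) bounded-on-compacts relative compactness from Condition B, (2) identify subsequential limits with $\nu$ on the open strip $(0,\infty)\times(0,\infty]$ via Condition A and a generating family of rectangles, (3) kill boundary mass — is a genuinely different route from the paper's. The paper instead fixes $z_0$, normalizes both $\mu_t:=t\,\prob[(Z_{1t},Z_{2t})\in\cdot]$ and $\nu$ by the slab mass to probability measures $Q_t$ and $Q=\nu/f(z_0)$ on the slab, observes $Q_t(V_{[t_1,t_2],z})\to Q(V_{[t_1,t_2],z})$ for $\nu$-continuity rectangles, and then cites the proof of Theorem~2.1 of \cite{Maulik:Resnick:Rootzen:2002} to upgrade this to weak convergence, finally unscaling to get the vague convergence. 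Your route is self-contained and avoids the external citation, which is a plus.

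That said, there is a genuine gap in Step~3, and you have actually diagnosed it yourself in your ``Main obstacle'' paragraph without closing it. You write that Condition B ``forces the whole slab mass to be accounted for by the interior rectangles $V_{[1/n,n],z_0}$, whence $\lambda(\{0,\infty\}\times(z_0,\infty])=0$.'' But Conditions A and B, as stated, only yield $\lambda(V_{[0,\infty],z_0})=f(z_0)$ and $\lambda\bigl((0,\infty)\times(z_0,\infty]\bigr)=\nu\bigl((0,\infty)\times(z_0,\infty]\bigr)\leq f(z_0)$; the difference $f(z_0)-\nu\bigl((0,\infty)\times(z_0,\infty]\bigr)$ is a priori nonnegative and nothing in the hypotheses forces it to be zero. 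If it is positive, your subsequential limit $\lambda$ carries mass on $\{0,\infty\}\times(z_0,\infty]$ that $\nu$ need not carry, so $\lambda\neq\nu$ and the conclusion fails. Likewise, your closing remark ``and the same holds for $\nu$'' — that $\nu$ itself has no $x$-boundary mass — is not derivable: Condition A never probes $\nu$ on $\{0,\infty\}\times(0,\infty]$. What is really needed is the compatibility $f(z_0)=\nu\bigl((0,\infty)\times(z_0,\infty]\bigr)=\nu(V_{[0,\infty],z_0})$ for continuity levels $z_0$, i.e.\ that $\nu$ puts no mass on the $x$-boundary and its slab mass matches $f$. This is exactly what the paper's own proof uses without comment when it declares $Q=\nu/f(z_0)$ to be a probability measure on the slab — so the gap is an imprecision in the lemma's hypotheses rather than unique to your argument (and it does hold in the lemma's actual application in Lemma~\ref{lemma:joint: prod:sum}) — but you should not present the boundary-mass vanishing as ``forced'' by Conditions A and B alone; either add the compatibility as an explicit standing assumption or verify it from the specific $\nu$ and $f$ at hand.

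One minor point on Step~2: the sub-collection of rectangles $V_{[t_1,t_2],z}$ that are simultaneously $\lambda$- and $\nu$-continuity sets need not be a $\pi$-system in its own right (an intersection of two such rectangles may land on an atomic level). This is easily repaired — extend the identity $\lambda=\nu$ from continuity rectangles to all rectangles by a monotone approximation through continuity levels, and then invoke the $\pi$-$\lambda$ theorem for the full rectangle family — but the wording as written glosses over it.
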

\begin{proof}
Fix a $z_0\in(0,\infty)$ and define the following probability measures on $[0,\infty]\times(z_0,\infty)$:
$$Q_t(\cdot)=\frac{t\prob\left[(Z_{1t},Z_{2t})\in \cdot\right]}{t\prob\left[(Z_{1t},Z_{2t})\in V_{[0,\infty],z_0}\right]} \text{ and } Q(\cdot)=\frac{\nu(\cdot)}{f(z_0)}.$$ From condition A and B it follows that, $$Q_t(V_{[t_1,t_2],z})\rightarrow Q(V_{[t_1,t_2],z})$$  as $t\rightarrow\infty$, whenever $V_{[t_1,t_2],z}$ is $\nu$-continuity set. Now following the arguments in the proof of Theorem 2.1 of \cite{Maulik:Resnick:Rootzen:2002}, it follows that $Q_t$ converges weakly to $Q$ on $[0,\infty]\times(z_0,\infty)$. Since a Borel set with boundary having zero $Q$ measure is equivalent to having measure zero with respect to the measure $\nu$ we have that $t \prob\left[(Z_{1t},Z_{2t})\in B\right]\rightarrow\nu(B)$ for any Borel subset $B$ of $[0,\infty]\times (z_0,\infty]$, having boundary with zero $\nu$ measure.

Let $K$ be a $\nu$-continuity set as well as a relatively compact set in $[0,\infty]\times(0,\infty]$. Then there exists $z_0>0$ such that $K\subset[0,\infty]\times(z_0,\infty]$. Then $K$ is Borel in $[0,\infty]\times(z_0,\infty)$ and also a $\nu$-continuity set. Hence we have,
$$t\prob\left[(Z_{1t},Z_{2t})\in K\right]=t\prob\left[(Z_{1t},Z_{2t})\in K\right]\rightarrow \nu(K).$$ This shows that $t\prob\left[(Z_{1t},Z_{2t})\in \cdot\right]$ vaguely converges to $\nu$ on $[0,\infty]\times(0,\infty]$.
\end{proof}

From~\eqref{transformation}, the behavior of $XY$ will be determined by the pair $(\widetilde X \widetilde Y, \widetilde X + \widetilde Y - 1)$. So we next prove a result about the joint convergence of product and the sum.
\begin{lemma}\label{lemma:joint: prod:sum}
Let $\gamma<0,\rho<0$ and $(\widetilde X,\widetilde Y)$  satisfies Conditions~\eqref{basic1} and~\eqref{nondegenerate} with parameters $(\widetilde{\alpha},0;\widetilde{a},0;\mu)$. If  $\E\left[\widetilde{X}^{{1}/{|\gamma|}+\delta}\right]<\infty$ for some $\delta>0,$ then~$(\widetilde X\widetilde Y,\widetilde X+\widetilde Y-1)$ also satisfies Conditions~\eqref{basic1} and~\eqref{nondegenerate} with parameters $(\widetilde{\alpha}\widetilde{a},0;\widetilde{a},0;\mu T_2^{-1})$ on  $[0,\infty]\times(0,\infty]$ where $T_2(x,y)=(xy,y)$.
\end{lemma}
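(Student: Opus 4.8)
The plan is to apply Lemma~\ref{important lemma} to the pair
$$(Z_{1t},Z_{2t}) \;=\; \left(\frac{\widetilde X\widetilde Y}{\widetilde\alpha(t)\,\widetilde a(t)}\,,\; \frac{\widetilde X+\widetilde Y-1}{\widetilde a(t)}\right)$$
with candidate limit $\nu=\mu T_2^{-1}$; once Conditions~A and~B of that lemma are verified we obtain $t\,\prob[(Z_{1t},Z_{2t})\in\cdot]\vaguec\mu T_2^{-1}$ on $[0,\infty]\times(0,\infty]$, which is exactly Condition~\eqref{basic1} for $(\widetilde X\widetilde Y,\widetilde X+\widetilde Y-1)$ with parameters $(\widetilde\alpha\widetilde a,0;\widetilde a,0;\mu T_2^{-1})$. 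The fact driving the argument is that the moment hypothesis makes $\widetilde X$ negligible at the scale $\widetilde a(t)$: with $p:=1/|\gamma|+\delta$, since $\widetilde a\in RV_{|\gamma|}$ we have $\widetilde a(t)^{p}\in RV_{1+|\gamma|\delta}$, so by Markov's inequality $t\,\prob[\widetilde X>\eta\,\widetilde a(t)]\le\E[\widetilde X^{p}]\,\eta^{-p}\cdot t\,\widetilde a(t)^{-p}$ with $t\,\widetilde a(t)^{-p}\in RV_{-|\gamma|\delta}$, hence $t\,\prob[\widetilde X/\widetilde a(t)>\eta]\to0$ for every $\eta>0$; in particular $1/\widetilde a(t)\to0$. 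Throughout we use that Condition~\eqref{basic1} for $(\widetilde X,\widetilde Y)$ reads $t\,\prob[(\widetilde X/\widetilde\alpha(t),\widetilde Y/\widetilde a(t))\in\cdot]\vaguec\mu(\cdot)$ on $[0,\infty]\times(0,\infty]$ (the first coordinate is nonnegative in Case~II), and that the second marginal $y\mapsto\mu([0,\infty]\times(y,\infty])$ is finite, continuous and decreases to $0$ as $y\to\infty$.

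For Condition~A, fix $0<t_1<t_2<\infty$ and $z>0$ and study $t\,\prob[(Z_{1t},Z_{2t})\in V_{[t_1,t_2],z}]$, i.e. $t\,\prob[\widetilde X\widetilde Y\in[t_1\widetilde\alpha(t)\widetilde a(t),t_2\widetilde\alpha(t)\widetilde a(t)],\ \widetilde X+\widetilde Y>z\widetilde a(t)+1]$. I would first discard two ``escape'' regions: the part with $\widetilde Y/\widetilde a(t)>M$, which is contained in $\{\widetilde Y/\widetilde a(t)>M\}$ and for which $\limsup_{t}t\,\prob[\cdot]=\mu([0,\infty]\times(M,\infty])\to0$ as $M\to\infty$; and the part with $\widetilde X/\widetilde a(t)>\eta$, whose $t\,\prob$ tends to $0$ for each fixed $\eta$. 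On the remaining event $\widetilde X/\widetilde a(t)\le\eta$ and $1/\widetilde a(t)\to0$, so $Z_{2t}=\widetilde Y/\widetilde a(t)+\xi_t$ with $|\xi_t|\le\eta+1/\widetilde a(t)$; hence for large $t$ the constraint $Z_{2t}>z$ forces $\widetilde Y/\widetilde a(t)>z/2$, and then $\widetilde X\widetilde Y\le t_2\widetilde\alpha(t)\widetilde a(t)$ gives $\widetilde X/\widetilde\alpha(t)\le 2t_2/z$, so the remaining event lies inside the fixed relatively compact box $[0,2t_2/z]\times(z/2,M]$ for $(\widetilde X/\widetilde\alpha(t),\widetilde Y/\widetilde a(t))$. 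Sandwiching $\{Z_{2t}>z\}$ between $\{\widetilde Y/\widetilde a(t)>z+\eta\}$ and $\{\widetilde Y/\widetilde a(t)>z-\eta\}$ on this region, evaluating Condition~\eqref{basic1} for $(\widetilde X,\widetilde Y)$ on the sets $T_2^{-1}(V_{[t_1,t_2],z\pm\eta})\cap\big([0,2t_2/z]\times(z/2,M]\big)$ (which can be chosen to be $\mu$-continuity sets off a countable set of parameters), and letting $t\to\infty$, then $\eta\downarrow0$, then $M\to\infty$, using continuity of $\mu$ from above, one gets $t\,\prob[(Z_{1t},Z_{2t})\in V_{[t_1,t_2],z}]\to\mu(T_2^{-1}(V_{[t_1,t_2],z}))=\mu T_2^{-1}(V_{[t_1,t_2],z})$. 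Here $T_2(x,y)=(xy,y)$ is a homeomorphism of $(0,\infty)^2$, so the set manipulations are routine.

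For Condition~B, fix $z_0\in(0,\infty)$; here $V_{[0,\infty],z_0}=\{Z_{2t}>z_0\}=\{\widetilde X+\widetilde Y>z_0\widetilde a(t)+1\}$, and since $\widetilde X,\widetilde Y\ge0$, for every $\eta>0$,
$$\{\widetilde Y/\widetilde a(t)>z_0+1/\widetilde a(t)\}\subseteq\{Z_{2t}>z_0\}\subseteq\{\widetilde Y/\widetilde a(t)>z_0-\eta+1/\widetilde a(t)\}\cup\{\widetilde X/\widetilde a(t)>\eta\}.$$
Applying $t\,\prob$, using $t\,\prob[\widetilde X/\widetilde a(t)>\eta]\to0$, $1/\widetilde a(t)\to0$ and the convergence of $t\,\prob[\widetilde Y/\widetilde a(t)>y]$ to the continuous limit $\mu([0,\infty]\times(y,\infty])$, then letting $\eta\downarrow0$, yields $t\,\prob[(Z_{1t},Z_{2t})\in V_{[0,\infty],z_0}]\to f(z_0):=\mu([0,\infty]\times(z_0,\infty])$, which is finite by relative compactness and positive because nondegeneracy of $\mu$ (the distribution function $x\mapsto\mu((-\infty,x]\times(z_0,\infty))$ is nondegenerate) forces $\mu((0,\infty]\times(z_0,\infty])>0$. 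Lemma~\ref{important lemma} then gives Condition~\eqref{basic1} for $(\widetilde X\widetilde Y,\widetilde X+\widetilde Y-1)$. Finally, Condition~\eqref{nondegenerate} for $\mu T_2^{-1}$ requires that $x\mapsto\mu T_2^{-1}((-\infty,x]\times(y,\infty))=\mu(\{(u,v):uv\le x,\ v>y\})$ be a nondegenerate distribution function for each $y>0$: right-continuity and a finite limit as $x\to\infty$ are clear, and nondegeneracy again follows from $\mu((0,\infty]\times(y,\infty])>0$ (bound the set below by $(-\infty,x/v_0]\times(y,v_0]$ for a suitable $v_0>y$).

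The main obstacle is the step in the second paragraph: $T_2$ is not a proper map on $[0,\infty]\times(0,\infty]$, since mass escapes to $v=\infty$ as $u\to0$, so the vague convergence of $(\widetilde X/\widetilde\alpha(t),\widetilde Y/\widetilde a(t))$ to $\mu$ cannot simply be transported along $T_2$. The truncation $\{\widetilde Y/\widetilde a(t)\le M\}$ combined with the quantitative negligibility $t\,\prob[\widetilde X/\widetilde a(t)>\eta]\to0$ — which is precisely where the hypothesis $\E[\widetilde X^{1/|\gamma|+\delta}]<\infty$ is used, and without which the second coordinate of the limiting measure would not be the second marginal of $\mu$ — is what closes this gap.
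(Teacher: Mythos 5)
Your proof is correct and follows essentially the same route as the paper's: verify Conditions~A and~B of Lemma~\ref{important lemma} for the pair $(\widetilde X\widetilde Y,\widetilde X+\widetilde Y-1)$ by isolating the escape region $\{\widetilde X/\widetilde a(t)>\eta\}$ via a Markov estimate that combines $\E[\widetilde X^{1/|\gamma|+\delta}]<\infty$ with $\widetilde a\in RV_{|\gamma|}$, then sandwiching $(\widetilde X+\widetilde Y-1)/\widetilde a(t)$ around $\widetilde Y/\widetilde a(t)$ on the complement. The paper organizes it slightly differently — it first records the $T_2$-pushforward convergence for $(\widetilde X\widetilde Y,\widetilde Y)$ as an intermediate step and uses that $\widetilde X\ge 1$ (not merely $\ge 0$, since $\widetilde X=(1-X)^{-1}$ with $X\in[0,1)$) so the lower bound in Conditions~A and~B requires no $\eta$-wiggle at all — but the key estimates and the final conclusion are the same.
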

\begin{proof}
First observe that from the compactification arguments used in Lemma~\ref{important lemma} and the basic convergence in Condition~\eqref{basic1} satisfied by the pair $(\widetilde X,\widetilde Y)$, it follows that
\begin{equation} \label{T_2 convergence}
t\prob\left[\left(\frac{\widetilde{X}\widetilde{Y}}{\widetilde{\alpha}(t)\widetilde{a}(t)},\frac{\widetilde{Y}}{\widetilde{a}(t)}\right)\in\cdot\right]\vaguec \mu T_2^{-1}(\cdot)\quad \text{in}\quad  M_+([0,\infty]\times(0,\infty]).
\end{equation}
Let $0\leq t_1\leq t_2\leq \infty$ and $z>0$. Assume that $\mu T_2^{-1}(\partial V_{[t_1,t_2],z})=0$, where $V_{[t_1,t_2],z}$ is defined in~\eqref{eq:set:V}. Since $X$ is nonnegative, $\widetilde{X}$ is greater than or equal to $1$ and hence for a lower bound we get,
\begin{multline}
\liminf_{t\rightarrow\infty}t\prob\left[\left(\frac{\widetilde{X}\widetilde{Y}}{\widetilde{\alpha}(t)\widetilde{a}(t)}, \frac{\widetilde{X}+\widetilde{Y}-1}{\widetilde{a}(t)}\right)\in V_{[t_1,t_2],z}\right]\\ \geq\lim_{t\rightarrow\infty}t\prob\left[\frac{\widetilde{X}\widetilde{Y}}{\widetilde{\alpha}(t)\widetilde{a}(t)}\in[t_1,t_2],\frac{\widetilde{Y}}{\widetilde{a}(t)}>z\right]
=\mu T_2^{-1}(V_{[t_1,t_2],z}).\label{eq:negative:lower bound}
\end{multline}
For the upper bound, choose $0<\epsilon<z$, such that $1/|\widetilde a(t)| < \epsilon/2$ (since $\widetilde{a}(t)\in RV_{-\gamma}$) and $\mu T_2^{-1}\left(\partial V_{[t_1,t_2],z-\epsilon}\right)=0$. Then,
\begin{align*}
&t\prob\left[\left(\frac{\widetilde{X}\widetilde{Y}}{\widetilde{\alpha}(t)\widetilde{a}(t)},\frac{\widetilde{X}+\widetilde{Y}-1}{\widetilde{a}(t)}\right)\in V_{[t_1,t_2],z}\right]\\
\leq &t\prob\left[\left(\frac{\widetilde{X}\widetilde{Y}}{\widetilde{\alpha}(t)\widetilde{a}(t)},\frac{\widetilde{X}+\widetilde{Y}-1}{\widetilde{a}(t)}\right)\in V_{[t_1,t_2],z},\frac{\widetilde{X}}{\widetilde{a}(t)}\leq\frac{\epsilon}2\right]\\
& +t\prob\left[\left(\frac{\widetilde{X}\widetilde{Y}}{\widetilde{\alpha}(t)\widetilde{a}(t)},\frac{\widetilde{X}+\widetilde{Y}-1}{\widetilde{a}(t)}\right)\in V_{[t_1,t_2],z},\frac{\widetilde{X}}{\widetilde{a}(t)}>\frac{\epsilon}2\right]\\
\leq &t\prob \left[\frac{\widetilde{X}\widetilde{Y}}{\widetilde{\alpha}(t)\widetilde{a}(t)}\in[t_1,t_2],\frac{\widetilde{Y}}{\widetilde{a}(t)}>z-\epsilon\right]
+t\prob\left[\frac{\widetilde{X}}{\widetilde{a}(t)}>\frac{\epsilon}2\right]\\
\leq &t\prob\left[\left(\frac{\widetilde{X}\widetilde{Y}}{\widetilde{\alpha}(t)\widetilde{a}(t)},\frac{\widetilde{Y}}{\widetilde{a}(t)}\right)\in V_{[t_1,t_2],z-\epsilon}\right] + 2^{1/|\gamma|+\delta} t \frac{\E\left[\widetilde{X}^{{1}/{|\gamma|}+\delta}\right]}{(\widetilde{a}(t)\epsilon)^{\frac{1}{|\gamma|}+\delta}}.
\end{align*}
The first term converges to $\mu T_2^{-1}(V_{[t_1,t_2],z-\epsilon})$, while the second sum converges to zero, since $\widetilde{a}(t)\in RV_{-\gamma}$. Now letting $\epsilon\to0$ satisfying the defining conditions, we obtain the upper bound, which is same as the lower bound~\eqref{eq:negative:lower bound}. Thus we get that,
$$\lim_{t\rightarrow\infty}t\prob\left[(\frac{\widetilde{X}\widetilde{Y}}{\widetilde{\alpha}(t)\widetilde{a}(t)},\frac{\widetilde{X}+\widetilde{Y}}{\widetilde{a}(t)})\in V_{[t_1,t_2],z}\right]= \mu T_2^{-1}(V_{[t_1,t_2],z}).$$
Hence Condition A of Lemma~\ref{important lemma} is satisfied.

Now if we fix $z_0\in(0,\infty)$  and let $\epsilon>0$ satisfy the conditions as in the upper bound above, then
\begin{align*} t\prob\left[\left(\frac{\widetilde{X}\widetilde{Y}}{\widetilde{\alpha}(t)\widetilde{a}(t)},\frac{\widetilde{X}+\widetilde{Y}-1}{\widetilde{a}(t)}\right)\in V_{[0,\infty],z_0}\right]&= t\prob\left[\frac{\widetilde{X}+\widetilde{Y}-1}{\widetilde{a}(t)}>z_0\right]\\
&\leq t\prob\left[\frac{\widetilde{Y}}{\widetilde{a}(t)}>z_0-\epsilon \right]+t\prob\left[\frac{\widetilde{X}}{\widetilde{a}(t)}>\frac{\epsilon}2 \right]\\
&\rightarrow (z_0 -\epsilon)^{\frac{1}{\gamma}}.
\end{align*}
Hence the upper bound for the required limit in Condition B of Lemma~\ref{important lemma} follows by letting $\epsilon\rightarrow 0.$ The lower bound easily follows from the domain of attraction condition on $Y$ and the fact that $\widetilde X\ge 1$. So Condition B is also satisfied by the pair~$(\frac{\widetilde{X}\widetilde{Y}}{\widetilde{\alpha}(t)\widetilde{a}(t)},\frac{\widetilde{X}+\widetilde{Y}-1}{\widetilde{a}(t)})$ and hence the result follows from Lemma \ref{important lemma}.
\end{proof}

\begin{proof}[Proof of Theorem~\ref{main theorem negative}]
Denote $W'=\widetilde{X}\widetilde{Y}$, $W''=\widetilde{X}+\widetilde{Y}-1$ and note that $(1-XY)^{-1}=W'/W''$. So from previous lemma it follows  that,
$$t\prob\left[\left(\frac{W'}{\widetilde{\alpha}(t)\widetilde{a}(t)},\frac{W''}{\widetilde{a}(t)}\right)\in \cdot\right]\vaguec \mu T_2^{-1}(\cdot)\qquad \mbox{as~} t\rightarrow\infty.$$
Let $w\in(0,\infty)$ and $\epsilon>0$ and consider the set,
\begin{equation}\label{eq: neg: A set:descrip}
B_{w,\epsilon}=\{(x,y)\in [0,\infty]\times(0,\infty]: x>yw, y>\epsilon\}.\end{equation}
Then, for $\epsilon>0$, we have,
$$t\prob\left[\frac{W'}{W''}\frac{1}{\widetilde{\alpha}(t)}> w \right] = t \prob\left[\left(\frac{W'}{\widetilde{\alpha}(t)\widetilde{a}(t)}, \frac{W''}{\widetilde{a}(t)}\right)\in B_{w,\epsilon}\right] + t \prob\left[\frac{W'}{W''}\frac{1}{\widetilde{\alpha}(t)}> w, \frac{W''}{\widetilde{a}(t)}\leq \epsilon \right].$$
Since the set is bounded away from both the axes, the first sum converges to $\mu T_2^{-1}(B_{w,\epsilon})$ by the vague convergence of $(W', W'')$. Now since $\widetilde{X}\geq0$ we have $\widetilde{X}+\widetilde{Y}-1\geq \widetilde{Y}-1$, and hence for large $t$ we get,
\begin{align*}
&t\prob\left[\frac{(1-XY)^{-1}}{\widetilde{\alpha}(t)}>w, \frac{\widetilde{Y}-1}{\widetilde{a}(t)}\leq \epsilon_k\right]\\
\leq &t\prob\left[ XY>1-\frac{1}{w\widetilde{\alpha}(t)}, Y\leq 1-\frac{1}{\widetilde{a}(t)\epsilon_k +1} \right]\\
&\leq t\prob\left[\widetilde{X}>\frac{1-\frac{1}{\widetilde{a}(t)\epsilon_k+1}}{\frac{1}{w\widetilde{\alpha}(t)}-\frac{1}{(\widetilde{a}(t)\epsilon_k+1)}}\right]\\
&\leq C(t,w,k)\E\left[\widetilde{X}^{\frac1{|\rho|}+\delta}\right],
\end{align*}
where,
$$C(t,w,k)= \frac{t}{(\widetilde{\alpha}(t))^{\frac{1}{|\rho|}+\delta}}\left({1-\frac{1}{\widetilde{a}(t)\epsilon_k+1}}\right)^{-(\frac{1}{|\rho|}+\delta)} \left(\frac{1}{w}-\frac{1}{(\frac{\widetilde{a}(t)}{\widetilde{\alpha}(t)}\epsilon_k+\frac{1}{\widetilde{\alpha}(t)})}\right)^{\frac{1}{|\rho|}+\delta},$$
which goes to zero as $t\to\infty$, since $\widetilde{\alpha}(t)\in RV_{-\rho}$ and $\widetilde{\alpha}(t)/\widetilde{a}(t)$ remains bounded.
\end{proof}
\begin{remark}
Theorem~\ref{main theorem negative} requires that $(1/|\rho|+\delta)$-th moment of $\widetilde X$ is finite. However, this condition is not necessary. In the final section we give an example where this moment condition is not satisfied but we still obtain the tail behavior of the product.
\end{remark}

{\bf Subcase II(b): $\beta(\infty)=0$ and $ b(\infty)=0$:} In this case, both $X$ and $Y$ are nonpositive, but the product $XY$ is nonnegative. Thus, the right tail behavior of $XY$ will be controlled by the left tail behaviors of $X$ and $Y$, which we cannot control much using CEVM. However, CEVM gives some information about the left tail behavior of $XY$ at $0$, which we summarize below. 

Note that in this case, from~\eqref{eq:x-y-transformation} and~\eqref{eq:negative:alpha:a}, we have $\widetilde{X}=-1/{X}$, $\widetilde{Y}=-{1}/{Y}$ and $\widetilde{\alpha}(t)=-{1}/{(|\rho|\beta(t))}$, $\widetilde{a}(t)={1}/{a(t)}$. From Theorem~\ref{case1[a]}, the behavior of the product $XY$ around zero, or equivalently the behavior of the reciprocal of the product $\widetilde X\widetilde Y$ around infinity, follows immediately.
\begin{theorem}\label{theorem:both zero}
If $\rho<0$, $\gamma<0$ and $(\widetilde X,\widetilde Y)$  satisfies Conditions~\eqref{basic1} and~\eqref{nondegenerate} with parameters $(\widetilde{\alpha},0;\widetilde{a},0;\mu)$.  Also suppose, $$\lim_{\epsilon\downarrow0}\limsup_{t\rightarrow\infty}t\prob\left[\frac{\widetilde{X}}{\widetilde{\alpha}(t)}>\frac{z}{\epsilon}\right]=0$$  Then  $(XY)^{-1}$ has regularly varying tail with index $-1/(|\gamma|+|\rho|)$ and as $t\rightarrow\infty$, $t\prob\left[{(XY)^{-1}}/({\widetilde{\alpha}(t)\widetilde{a}(t)})\in\cdot\right]$ converge to some nondegenerate Radon measure on $(0,\infty]$.
\end{theorem}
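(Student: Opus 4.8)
The plan is to recognize that Theorem~\ref{theorem:both zero} is, after the substitutions $\widetilde X = -1/X$ and $\widetilde Y = -1/Y$, nothing but a direct application of Case~I (Theorem~\ref{case1[a]}) to the pair $(\widetilde X, \widetilde Y)$. Indeed, when $\beta(\infty)=0=b(\infty)$, the transformed variables of~\eqref{eq:x-y-transformation} become $\widetilde X = -1/X$, $\widetilde Y = -1/Y$, and by the reduction carried out in Case~II of Section~\ref{Transformations}, the pair $(\widetilde X, \widetilde Y)$ satisfies Conditions~\eqref{basic1} and~\eqref{nondegenerate} with parameters $(\widetilde\alpha, 0; \widetilde a, 0; \mu)$ on $[0,\infty]\times(0,\infty]$, where $\widetilde\alpha \in RV_{-\rho}$ and $\widetilde a \in RV_{-\gamma}$, i.e.\ with positive indices $|\rho|$ and $|\gamma|$. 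Moreover $(XY)^{-1} = \widetilde X \widetilde Y$, since the two minus signs cancel. So the statement to be proved is precisely the conclusion of Theorem~\ref{case1[a]} applied to $(\widetilde X, \widetilde Y)$ in place of $(X,Y)$, with $\rho$ replaced by $|\rho|>0$ and $\gamma$ replaced by $|\gamma|>0$.

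First I would verify that the hypotheses of Theorem~\ref{case1[a]} are met. The positivity requirement ``$Y$ nonnegative'' there translates to $\widetilde Y \ge 0$, which holds automatically since $\widetilde Y = -1/Y$ with $Y$ nonpositive (and in fact $\widetilde Y$ is a valid positive random variable affiliated to the relevant end point). The nondegeneracy and basic convergence hypotheses hold by the reduction in Section~\ref{Transformations}, as just noted. Finally, the crucial tail condition~\eqref{moment condition} of Theorem~\ref{case1[a]}, namely $\lim_{\epsilon\downarrow0}\limsup_{t\to\infty} t\,\prob[|\widetilde X|/\widetilde\alpha(t) > z/\epsilon] = 0$, is exactly the hypothesis imposed in the statement of Theorem~\ref{theorem:both zero} (with $\widetilde X \ge 0$, so $|\widetilde X| = \widetilde X$). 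Thus every hypothesis of Theorem~\ref{case1[a]} is in force for $(\widetilde X, \widetilde Y)$.

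Applying Theorem~\ref{case1[a]} then yields that $\widetilde X \widetilde Y$ has regularly varying tail of index $-1/(|\rho|+|\gamma|)$ and that $t\,\prob[\widetilde X \widetilde Y/(\widetilde\alpha(t)\widetilde a(t)) \in \cdot]$ converges vaguely to a nondegenerate Radon measure on $[-\infty,\infty]\setminus\{0\}$; since $\widetilde X, \widetilde Y \ge 0$ the support of the limit is contained in $(0,\infty]$, so the limit lives on $(0,\infty]$. Rewriting $\widetilde X \widetilde Y = (XY)^{-1}$ and $\widetilde\alpha(t)\widetilde a(t)$ as the appropriate scaling gives exactly the assertion: $(XY)^{-1}$ has regularly varying tail of index $-1/(|\gamma|+|\rho|)$ and $t\,\prob[(XY)^{-1}/(\widetilde\alpha(t)\widetilde a(t)) \in \cdot]$ converges to a nondegenerate Radon measure on $(0,\infty]$.

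There is essentially no serious obstacle here; the work is entirely bookkeeping — keeping straight the signs in the transformation $\widetilde X = -1/X$, confirming that $\beta(\infty)b(\infty) = 0$ makes the denominator in~\eqref{transformation} disappear so that $\beta(\infty)b(\infty) - XY = -XY$ and hence $(\beta(\infty)b(\infty)-XY)^{-1} = (XY)^{-1} = \widetilde X\widetilde Y$, and checking that the indices flip sign correctly ($\widetilde\alpha \in RV_{-\rho} = RV_{|\rho|}$, $\widetilde a \in RV_{-\gamma} = RV_{|\gamma|}$). The only point requiring a word of care is that Theorem~\ref{case1[a]} is stated with a general pair satisfying Conditions~\eqref{basic1} and~\eqref{nondegenerate}, not with a CEVM pair directly, so one must invoke the reduction of Section~\ref{Transformations} (Case~II) to legitimately feed $(\widetilde X, \widetilde Y)$ into it; this has already been set up in the hypotheses of the present theorem, so the proof is genuinely immediate.
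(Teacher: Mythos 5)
Your proof is correct and follows exactly the paper's approach: the paper gives no separate argument for Theorem~\ref{theorem:both zero} but simply remarks before the statement that, with $\widetilde X=-1/X$, $\widetilde Y=-1/Y$ and $(XY)^{-1}=\widetilde X\widetilde Y$, the result ``follows immediately'' from Theorem~\ref{case1[a]}. You have supplied the bookkeeping the paper leaves implicit — checking that $\widetilde\alpha\in RV_{|\rho|}$, $\widetilde a\in RV_{|\gamma|}$, that $\widetilde Y\ge 0$, and that the stated tail hypothesis is the condition~\eqref{moment condition} of Theorem~\ref{case1[a]} — and it all checks out.
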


{\bf Subcase II(c): $\beta(\infty)=0$ and $b(\infty)=1$:} Now note that from~\eqref{transformation} we have, $$-\frac{1}{XY}=\frac{\widetilde{X}\widetilde{Y}}{\widetilde{Y}-1}.$$
\begin{theorem}\label{theorem: X zero}
Suppose $Y$ is nonnegative and $(\widetilde X,\widetilde Y)$  satisfies Conditions~\eqref{basic1} and~\eqref{nondegenerate} with parameters $(\widetilde{\alpha},0;\widetilde{a},0;\mu)$. If $\E\left[\widetilde{X}^{{1}/{|\rho|}+\delta}\right]<\infty$ for some $\delta>0$, then  $-(XY)^{-1}$ has regularly varying tail of index $-{1/|\rho|}$ and as $t\rightarrow\infty$, $t\prob\left[{-(XY)^{-1}}/{\widetilde{\alpha}(t)}\in\cdot\right]$ converge vaguely to some nondegenerate measure on $(0,\infty]$.
\end{theorem}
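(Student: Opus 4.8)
\textbf{Proof plan for Theorem~\ref{theorem: X zero}.}
The strategy is to mimic the proof of Theorem~\ref{main theorem negative}, replacing the sum $\widetilde X + \widetilde Y - 1$ by $\widetilde Y - 1$, since here $\beta(\infty) = 0$ forces $b(\infty)\widetilde Y - 1 = \widetilde Y - 1$ in the identity~\eqref{transformation}, giving $-(XY)^{-1} = \widetilde X \widetilde Y / (\widetilde Y - 1)$. First I would establish the joint vague convergence of the pair $(\widetilde X \widetilde Y, \widetilde Y - 1)$, suitably scaled. The convergence $t\prob[(\widetilde X \widetilde Y/(\widetilde\alpha(t)\widetilde a(t)), \widetilde Y/\widetilde a(t)) \in \cdot] \vaguec \mu T_2^{-1}$ on $[0,\infty]\times(0,\infty]$ follows exactly as~\eqref{T_2 convergence} in Lemma~\ref{lemma:joint: prod:sum}, using the compactification argument of Lemma~\ref{important lemma} (note: here one only needs $\E[\widetilde X^{1/|\rho|+\delta}]<\infty$ rather than a $1/|\gamma|$ moment, because the product $\widetilde X \widetilde Y$ is now divided by $\widetilde\alpha(t)\widetilde a(t)$ and the relevant scaling is $\widetilde\alpha \in RV_{-\rho}$; I would check this carefully). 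Since $\widetilde Y - 1$ differs from $\widetilde Y$ by a bounded shift and $\widetilde a(t)\to\infty$, replacing $\widetilde Y$ by $\widetilde Y - 1$ in the second coordinate does not affect the limit — this is the analogue of verifying Conditions~A and~B of Lemma~\ref{important lemma}, and is easier here than in Lemma~\ref{lemma:joint: prod:sum} because there is no $\widetilde X/\widetilde a(t)$ error term to control in the second coordinate (only a deterministic $\pm 1$).

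Next, with $W' = \widetilde X \widetilde Y$ and $W'' = \widetilde Y - 1$, I would write, for $w > 0$ and small $\epsilon > 0$,
\begin{equation*}
t\prob\left[\frac{W'/W''}{\widetilde\alpha(t)} > w\right] = t\prob\left[\left(\frac{W'}{\widetilde\alpha(t)\widetilde a(t)}, \frac{W''}{\widetilde a(t)}\right) \in B_{w,\epsilon}\right] + t\prob\left[\frac{W'/W''}{\widetilde\alpha(t)} > w, \frac{W''}{\widetilde a(t)} \le \epsilon\right],
\end{equation*}
with $B_{w,\epsilon}$ as in~\eqref{eq: neg: A set:descrip}. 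The first term converges to $\mu T_2^{-1}(B_{w,\epsilon})$ by the vague convergence just established, since $B_{w,\epsilon}$ is bounded away from both axes; letting $\epsilon\downarrow 0$ along $\mu T_2^{-1}$-continuity values gives a limit of the form $c\, w^{-1/|\rho|}$ for some $c\in(0,\infty)$, using the homogeneity of $\mu T_2^{-1}$. The bulk of the work is showing the second term is asymptotically negligible: $\lim_{\epsilon\downarrow 0}\limsup_{t\to\infty}$ of it is $0$. On the event there, $\widetilde Y - 1 \le \widetilde a(t)\epsilon$, i.e. $\widetilde Y \le \widetilde a(t)\epsilon + 1$, while $W'/W'' > w\widetilde\alpha(t)$ forces $\widetilde X = W'/\widetilde Y > w\widetilde\alpha(t) W''/\widetilde Y = w\widetilde\alpha(t)(\widetilde Y - 1)/\widetilde Y$; but this lower bound is useless when $\widetilde Y$ is near $1$, so instead I would argue directly from $-(XY)^{-1} > w\widetilde\alpha(t)$ and $Y$ close to its right endpoint $b(\infty)=1$, exactly as in the displayed chain of inequalities in the proof of Theorem~\ref{main theorem negative}: translate back to $X, Y$, bound $\prob[\widetilde X > (\text{something})]$ by Markov's inequality applied to $\widetilde X^{1/|\rho|+\delta}$, and check the resulting constant $C(t,w,\epsilon)$ tends to $0$ because $\widetilde\alpha(t)\in RV_{-\rho}$ and (crucially) there is no ratio $\widetilde a(t)/\widetilde\alpha(t)$ to worry about — unlike Theorem~\ref{main theorem negative}, the denominator here is $\widetilde Y - 1$ alone, so no boundedness hypothesis on $\widetilde\alpha/\widetilde a$ is needed.

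Finally, I would assemble these pieces: the two-term split together with the negligibility gives $\lim_{t\to\infty} t\prob[-(XY)^{-1}/\widetilde\alpha(t) > w] = c\,w^{-1/|\rho|}$, and since $\widetilde\alpha \in RV_{-\rho} = RV_{|\rho|}$, a standard regular-variation argument (as in Remark~\ref{chap1:momentsrv} / Theorem~3.6 of~\cite{resnick:2007}) upgrades this to: $-(XY)^{-1}$ has a regularly varying tail of index $-1/|\rho|$ and $t\prob[-(XY)^{-1}/\widetilde\alpha(t)\in\cdot]$ converges vaguely on $(0,\infty]$ to the nondegenerate measure with tail $c\,w^{-1/|\rho|}$. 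I expect the main obstacle to be the negligibility estimate for the boundary term: one must handle the regime where $\widetilde Y$ is only slightly above $1$ (equivalently $Y$ only slightly below its endpoint), where naive product bounds degenerate, and it takes care to see that the event $\{-(XY)^{-1} > w\widetilde\alpha(t),\ \widetilde Y \le \widetilde a(t)\epsilon + 1\}$ nonetheless forces $\widetilde X$ to be large enough for the Markov bound on its $(1/|\rho|+\delta)$-th moment to kill the factor $t/\widetilde\alpha(t)^{1/|\rho|+\delta}$.
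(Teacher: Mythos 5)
Your proposal matches the paper's proof essentially step for step: the decomposition $-(XY)^{-1}=\widetilde X\widetilde Y/(\widetilde Y-1)$, the joint vague convergence of $\left(\widetilde X\widetilde Y/(\widetilde\alpha(t)\widetilde a(t)),(\widetilde Y-1)/\widetilde a(t)\right)$ obtained from the $T_2$-convergence by the deterministic shift (using $\widetilde a(t)\to\infty$), the split of $t\prob[-(XY)^{-1}/\widetilde\alpha(t)>w]$ into the $B_{w,\epsilon}$ term plus a boundary error term, and the Markov bound on $\widetilde X^{1/|\rho|+\delta}$. Your observation that, unlike Theorem~\ref{main theorem negative}, no boundedness of $\widetilde\alpha/\widetilde a$ is required here is correct and is a genuine simplification in this subcase.

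However, the closing paragraph needs two corrections, and the second points to a real gap shared with the paper's own displayed chain. First, since $\widetilde Y=1/(1-Y)$, the regime ``$\widetilde Y$ slightly above $1$'' corresponds to $Y$ near its \emph{left} endpoint $0$, not near $b(\infty)=1$; this matters because the CEVM hypotheses are centered at the \emph{right} endpoint of $Y$ and give no control when $Y$ is small. Second, the assertion that the event $\{-(XY)^{-1}>w\widetilde\alpha(t),\ \widetilde Y\le\widetilde a(t)\epsilon+1\}$ ``forces $\widetilde X$ to be large enough'' is exactly the inclusion the paper invokes, namely $\{XY>-\alpha/w,\ Y\le c\}\subseteq\{-X<(\alpha/w)/c\}$ with $c=1-1/(\widetilde a(t)\epsilon+1)$, and this inclusion is false. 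On the left-hand event one only gets $-X<\alpha/(wY)$, and since $Y\le c$ is an \emph{upper} bound, small $Y$ permits arbitrarily large $-X$: for instance $-X=2\alpha/(wc)$ with $Y=c/4$ lies in the left set but not the right. So the Markov estimate as written does not cover the entire boundary event; some additional control on the joint distribution in the regime $Y$ near $0$ is needed before negligibility can be concluded, and neither your outline nor the paper's chain of inequalities supplies it. This is the step you should not take on faith.
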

\begin{proof}
Under the given hypothesis, and the fact that $\widetilde{a}(t)\to \infty$, it can be shown by arguments similar to Lemma~\ref{lemma:joint: prod:sum} that,
$ t\prob\left[\left({\widetilde{X}\widetilde{Y}}/({\widetilde{\alpha}(t)\widetilde{a}(t)}), ({\widetilde{Y}-1})/{\widetilde{\alpha}(t)}\right)\in\cdot\right]$ converges vaguely to some nondegenerate Radon measure in $M_+([0,\infty]\times(0,\infty]$.

Next for $z>0$ and $\epsilon>0$, so that the set $B_{z,\epsilon}$ as in~\eqref{eq: neg: A set:descrip} is a continuity set of the limit measure. Note that
\begin{multline*}
 t\prob\left[\left(\frac{\widetilde{X}\widetilde{Y}}{\widetilde{\alpha}(t)\widetilde{a}(t)}, \frac{\widetilde{Y}-1}{\widetilde{\alpha}(t)}\right)\in B_{z,\epsilon}\right]\leq t\prob\left[\frac{-(XY)^{-1}}{\widetilde{\alpha}(t)}>z\right]\\ \leq t\prob\left[\left(\frac{\widetilde{X}\widetilde{Y}}{\widetilde{\alpha}(t)\widetilde{a}(t)}, \frac{\widetilde{Y}-1}{\widetilde{\alpha}(t)}\right)\in B_{z,\epsilon}\right]+t\prob\left[\frac{\widetilde{X}\widetilde{Y}}{\widetilde{\alpha}(t)(\widetilde{Y}-1)} >z,\frac{\widetilde{Y}-1}{\widetilde{a}(t)}\leq\epsilon \right].
\end{multline*}

The term on the left side and the first term on the right side converge as $t\to\infty$ due to the vague convergence mentioned at the beginning of the proof. Letting $\epsilon\downarrow0$ appropriately, we get the required limit. For the second term on the right side observe that,
 \begin{align*}
t\prob\left[\frac{\widetilde{X}\widetilde{Y}}{\widetilde{\alpha}(t)(\widetilde{Y}-1)} >z,\frac{\widetilde{Y}-1}{\widetilde{a}(t)}\leq\epsilon \right]&
\leq t\prob\left[\frac{-(XY)^{-1}}{\widetilde{\alpha}(t)}>z,\widetilde{Y}\leq \widetilde{a}(t)\epsilon+1\right]\\
&\leq t\prob\left[\frac{-(XY)^{-1}}{\widetilde{\alpha}(t)}>z,1-Y\geq\frac{1}{\widetilde{a}(t)\epsilon+1}\right]\\
&\leq t\prob\left[\frac{XY}{\alpha(t)}>-\frac{1}{z},Y\leq 1-\frac{1}{\widetilde{a}(t)\epsilon+1}\right]\\
&\leq t\prob\left[-X< \frac{\alpha(t)/z}{1-\frac{1}{\widetilde{a}(t)\epsilon+1}}\right]=t\prob\left[\widetilde{X}> \frac{1-\frac{1}{\widetilde{a}(t)\epsilon+1}}{\alpha(t)/z}\right]\\
&\leq t\E\left[\widetilde{X}^{{1}/{|\rho|}+\delta}\right]\left(\frac{\alpha(t)/z}{1-\frac{1}{\widetilde{a}(t)\epsilon+1}}\right)^{\frac{1}{|\rho|}+\delta}.
\end{align*}
The last expression tends to zero as $\widetilde{a}(t)\rightarrow\infty$ and $\alpha\in RV_\rho$.
\end{proof}

{\bf Subcase II(d): $\beta(\infty)$ and $b(\infty)$ negative:} As in Subcase II(a), after suitable scaling, without loss of generality, we can assume that $\beta(\infty)=b(\infty)=-1$.
Again, from~\eqref{transformation}, we have
$$\frac1{XY-1}=\frac{\widetilde{X}\widetilde{Y}}{\widetilde{X}+\widetilde{Y}+1}$$
and to get the behavior of the product around $1$ we first need to derive the joint convergence of $(\widetilde{X}\widetilde{Y}, \widetilde{X}+\widetilde{Y}+1)$. Using an argument very similar to Theorem~\ref{main theorem negative}, we immediately obtain the following result.
\begin{theorem} \label{theo: both right end point negative}
Let $(\widetilde X,\widetilde Y)$  satisfy Conditions~\eqref{basic1} and~\eqref{nondegenerate} with parameters $(\widetilde{\alpha},0;\widetilde{a},0;\mu)$ and $\E\left[\widetilde{X}^{\frac{1}{|\rho|}+\delta}\right]<\infty$ for some $\delta>0$. If either $\gamma < \rho $ or $\widetilde{\alpha}(t)/\widetilde{a}(t)$ remains bounded, then $(XY-1)^{-1}$ has regularly varying tail of index ${-1/|\rho|}$ and as $t\rightarrow\infty$,
$t\prob\left[{(XY-1)^{-1}}/{\widetilde{\alpha}(t)}\in\cdot \right]$ converges vaguely to some nondegenerate Radon measure on $(0,\infty]$.
\end{theorem}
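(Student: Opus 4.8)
The plan is to mimic, essentially line for line, the proof of Theorem~\ref{main theorem negative}. The point of departure is the identity~\eqref{transformation}, which with $\beta(\infty)=b(\infty)=-1$ reads $(XY-1)^{-1}=\widetilde X\widetilde Y/(\widetilde X+\widetilde Y+1)$; thus the tail of $(XY-1)^{-1}$ is dictated by the joint behaviour of the product $\widetilde X\widetilde Y$ and the sum $\widetilde X+\widetilde Y+1$, where $(\widetilde X,\widetilde Y)$ satisfies Conditions~\eqref{basic1} and~\eqref{nondegenerate} with parameters $(\widetilde\alpha,0;\widetilde a,0;\mu)$ on $[0,\infty]\times(0,\infty]$, $\widetilde\alpha\in RV_{-\rho}$, $\widetilde a\in RV_{-\gamma}$, $\widetilde X\ge0$.

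First I would establish the analogue of Lemma~\ref{lemma:joint: prod:sum}: as $t\to\infty$,
\[
t\prob\left[\left(\frac{\widetilde X\widetilde Y}{\widetilde\alpha(t)\widetilde a(t)},\ \frac{\widetilde X+\widetilde Y+1}{\widetilde a(t)}\right)\in\cdot\right]\vaguec\mu T_2^{-1}(\cdot)\quad\text{in }M_+([0,\infty]\times(0,\infty]),
\]
where $T_2(x,y)=(xy,y)$. The proof of Lemma~\ref{lemma:joint: prod:sum} transfers with only cosmetic changes: replacing $\widetilde X+\widetilde Y-1$ by $\widetilde X+\widetilde Y+1$ only shifts the second coordinate by the bounded amount $2$, which is washed out after dividing by $\widetilde a(t)\to\infty$; and the moment hypothesis that proof uses, namely $\E[\widetilde X^{1/|\gamma|+\delta}]<\infty$, is implied by the assumed $\E[\widetilde X^{1/|\rho|+\delta}]<\infty$, since under either alternative in the statement ($\gamma<\rho$, or $\widetilde\alpha/\widetilde a\in RV_{\gamma-\rho}$ bounded) one has $\gamma\le\rho$, hence $|\gamma|\ge|\rho|$. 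Conditions~A and~B of Lemma~\ref{important lemma} are then verified exactly as before, using $\widetilde X+\widetilde Y+1\ge\widetilde Y$ for the lower bounds and a Markov estimate on $t\prob[\widetilde X/\widetilde a(t)>\epsilon/2]$ for the upper bounds.

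Next, writing $W'=\widetilde X\widetilde Y$, $W''=\widetilde X+\widetilde Y+1$, so that $(XY-1)^{-1}=W'/W''$, I would proceed as in the proof of Theorem~\ref{main theorem negative}: for $w>0$ and $\epsilon>0$ chosen so that the set $B_{w,\epsilon}$ of~\eqref{eq: neg: A set:descrip} is a $\mu T_2^{-1}$-continuity set, split
\[
t\prob\left[\frac{(XY-1)^{-1}}{\widetilde\alpha(t)}>w\right]=t\prob\left[\left(\frac{W'}{\widetilde\alpha(t)\widetilde a(t)},\frac{W''}{\widetilde a(t)}\right)\in B_{w,\epsilon}\right]+t\prob\left[\frac{W'/W''}{\widetilde\alpha(t)}>w,\ \frac{W''}{\widetilde a(t)}\le\epsilon\right].
\]
By Step~1 the first term converges to $\mu T_2^{-1}(B_{w,\epsilon})=\mu((w,\infty]\times(0,\infty])$, since $B_{w,\epsilon}$ is bounded away from both axes. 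On the event in the second term one has $\widetilde Y\le\widetilde X+\widetilde Y+1\le\widetilde a(t)\epsilon$, i.e.\ $Y\le-1-1/(\widetilde a(t)\epsilon)$, while $(XY-1)^{-1}>w\widetilde\alpha(t)$ gives $0<XY-1<1/(w\widetilde\alpha(t))$; together with $-X\ge1$ these force $-1-X$ to be of order $1/(w\widetilde\alpha(t))$ or smaller (the event being empty unless $\widetilde\alpha(t)/\widetilde a(t)<\epsilon/w$), so that $\widetilde X\ge w\widetilde\alpha(t)$, and since $\widetilde\alpha\in RV_{-\rho}$ one has $t/\widetilde\alpha(t)^{1/|\rho|+\delta}\to0$; a Markov bound with $\E[\widetilde X^{1/|\rho|+\delta}]<\infty$ then makes the second term vanish as $t\to\infty$. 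Hence $t\prob[(XY-1)^{-1}/\widetilde\alpha(t)>w]\to\mu((w,\infty]\times(0,\infty])$ for every $w>0$, and this convergence on half-lines (together with the usual generating argument) yields the claimed vague convergence on $(0,\infty]$. Finally the scaling relation $\mu(s^{-\rho}A\times s^{-\gamma}B)=s^{-1}\mu(A\times B)$ inherited from Condition~\eqref{basic1}, applied with $B=(0,\infty]$ scale invariant, gives $\mu((\lambda w,\infty]\times(0,\infty])=\lambda^{-1/|\rho|}\mu((w,\infty]\times(0,\infty])$, so the limit is a positive multiple of $w^{-1/|\rho|}$ (positivity and finiteness coming from Condition~\eqref{nondegenerate}), which is exactly the regularly varying tail of index $-1/|\rho|$ and identifies the vague limit as a nondegenerate Radon measure.

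The hard part will be the vanishing of the boundary term $t\prob[(W'/W'')/\widetilde\alpha(t)>w,\ W''/\widetilde a(t)\le\epsilon]$: this is the only place the moment condition $\E[\widetilde X^{1/|\rho|+\delta}]<\infty$ and the hypothesis on $\widetilde\alpha/\widetilde a$ (or $\gamma<\rho$) are genuinely needed, and it is the step that cannot be shortened. The joint convergence of Step~1, the continuity-set bookkeeping, and the homogeneity computation all carry over verbatim from Lemma~\ref{lemma:joint: prod:sum} and the proof of Theorem~\ref{main theorem negative}.
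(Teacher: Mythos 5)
Your proposal is correct and follows exactly the route the paper itself takes, which is to transplant the proof of Theorem~\ref{main theorem negative} to the identity $(XY-1)^{-1}=\widetilde X\widetilde Y/(\widetilde X+\widetilde Y+1)$; the paper simply states "using an argument very similar to Theorem~\ref{main theorem negative}" and you have supplied the details of that analogy accurately, including the key observation that $\widetilde X+\widetilde Y+1\ge\widetilde Y$ replaces $\widetilde X+\widetilde Y-1\ge\widetilde Y$ in the lower bound and that the moment hypothesis transfers under $\gamma\le\rho$. The only slip, which does not affect the argument, is writing $\mu T_2^{-1}(B_{w,\epsilon})=\mu((w,\infty]\times(0,\infty])$ where it should be $\mu((w,\infty]\times(\epsilon,\infty])$ with the $\epsilon\downarrow0$ limit taken afterward.
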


\begin{remark} \label{rem: not done}
The other case when $\beta(\infty)=1$ and $b(\infty)=0$ is not easy to derive from the information about the conditional extreme value model. In this case the right endpoint of the product is zero and behavior of $X$ around zero seems to be important. But the conditional model gives us the regular variation behavior around one and not around zero.
\end{remark}

{\bf Case III: $\rho$ positive and $\gamma$ negative:} In this case we shall assume that $X$ is nonnegative and the upper endpoint of $Y$, $b(\infty)$ is positive. If $b(\infty)\leq0$, then the behavior of $X$ around its lower endpoint will play a crucial role in the behavior of the product $XY$, which becomes negative. However, the behavior of $X$ around its lower endpoint is not controlled by the conditional model and we are unable to conclude about the product behavior when $b(\infty)\leq 0$. So we only consider the case $b(\infty)>0$. We also make the assumption that $\alpha(t)\sim\widetilde a(t)$, which requires that $\rho=|\gamma|$.
\begin{theorem}\label{theorem case 3}
Let $X$ be nonnegative and $Y$ have upper endpoint $b(\infty)>0$. Assume that $(X,\widetilde Y)$  satisfies Conditions~\eqref{basic1} and~\eqref{nondegenerate} with parameters$(\alpha,0;\widetilde{a},0;\mu)$ with $\alpha(t)\sim {1}/{a(t)}=\widetilde{a}(t)$ and $\E\left[X^{{1}/{|\gamma|}+\delta}\right]<\infty$ for some $\delta>0$, then $XY$ has regularly varying tail of index ${-1/|\gamma|}$ and as $t\rightarrow\infty$, we have $t\prob\left[{XY}/\widetilde a(t) \in\cdot\right]$ converges vaguely to a nondegenerate Radon measure on $(0,\infty]$.
\end{theorem}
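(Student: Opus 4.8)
\textbf{Proof proposal for Theorem~\ref{theorem case 3}.}

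The plan is to mimic the structure used for Theorem~\ref{main theorem negative}: convert the CEVM data into a vague convergence statement for a transformed pair, and then read off the tail of the product by sandwiching the relevant probability between a ``bulk'' term (which converges by vague convergence on a set bounded away from the axes) and an ``edge'' term (which is negligible by the moment hypothesis on $X$). Write $Z := XY$. Since $b(\infty)>0$ and $\widetilde Y = 1/(b(\infty)-Y)$, note the elementary identity $XY = b(\infty) X - X/\widetilde Y = b(\infty)X - X\widetilde a(t)/(\widetilde a(t)\widetilde Y)$; more usefully, on the region where $\widetilde Y/\widetilde a(t)$ is large, $XY$ is close to $b(\infty)X$, while on the complementary region $X/\alpha(t)$ must itself be large for $XY$ to be large. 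This is the mechanism that makes the product regularly varying of index $-1/|\gamma|$: the heavy tail is carried by $X$ near its right endpoint (equivalently by $\widetilde X$, which is what enters the moment bound), scaled through the factor $b(\infty)$, and the conditioning event $\{\widetilde Y > z\widetilde a(t)\}$ contributes the mass.

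First I would invoke the reduction of Section~\ref{Transformations} for Case III: $(X,\widetilde Y)$ satisfies Conditions~\eqref{basic1} and~\eqref{nondegenerate} with parameters $(\alpha,0;\widetilde a,0;\mu)$ on $[-\infty,\infty]\times(0,\infty]$, with $\alpha\in RV_\rho$, $\widetilde a\in RV_{-\gamma}=RV_{|\gamma|}$, and the standing assumption $\alpha(t)\sim\widetilde a(t)$ (so $\rho=|\gamma|$). Using the compactification argument from the proof of Lemma~\ref{important lemma} (pulling in the nonnegative variable $X$ and noting $X\ge 0$), upgrade this to vague convergence of $t\prob[(X/\alpha(t),\widetilde Y/\widetilde a(t))\in\cdot]$ on $[0,\infty]\times(0,\infty]$. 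Then for $z>0$ and small $\epsilon>0$ consider the set $B_{z,\epsilon}=\{(x,y):b(\infty)x > z\cdot\text{(something)},\ y>\epsilon\}$; more precisely, since $1/(b(\infty)-Y) > \epsilon\widetilde a(t)$ forces $b(\infty)-Y$ small, on that event $XY \in [b(\infty)X - o(\alpha(t)), b(\infty)X]$ uniformly, so $\{XY/\widetilde a(t)>z\}\cap\{\widetilde Y/\widetilde a(t)>\epsilon\}$ is squeezed between two sets of the form $\{b(\infty)X/\widetilde a(t) > z\pm\delta_t,\ \widetilde Y/\widetilde a(t)>\epsilon\}$ with $\delta_t\to0$. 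The vague convergence then gives, upon letting $t\to\infty$ and then $\epsilon\downarrow 0$ through continuity values, a limit $\mu\{(x,y)\in[0,\infty]\times(0,\infty]: b(\infty)x>z\}$, which equals $(z/b(\infty))^{-1/|\gamma|}\,\mu\{(x,y):x>1\}$ by the homogeneity/regular-variation of the $x$-marginal of $\mu$ (which has index $-1/\rho=-1/|\gamma|$ since $X/\alpha(t)$ inherits regular variation of index $-1/\rho$ from $\alpha\in RV_\rho$). This produces the claimed RV$_{-1/|\gamma|}$ behaviour and identifies the limiting Radon measure on $(0,\infty]$.

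The remaining task is the negligibility of the edge term
\[
t\prob\left[\frac{XY}{\widetilde a(t)}>z,\ \frac{\widetilde Y}{\widetilde a(t)}\le\epsilon\right].
\]
Here $\widetilde Y/\widetilde a(t)\le\epsilon$ means $Y\le b(\infty)-1/(\widetilde a(t)\epsilon)$, so $XY > z\widetilde a(t)$ with $Y$ bounded away from $b(\infty)$ forces $X \ge z\widetilde a(t)/\big(b(\infty)-1/(\widetilde a(t)\epsilon)\big)$, i.e. $X$ of order $\widetilde a(t)\in RV_{|\gamma|}$. By Markov's inequality applied with the exponent $1/|\gamma|+\delta$,
\[
t\prob\left[\frac{XY}{\widetilde a(t)}>z,\ \frac{\widetilde Y}{\widetilde a(t)}\le\epsilon\right]
\le t\,\E\!\left[X^{1/|\gamma|+\delta}\right]\left(\frac{b(\infty)-1/(\widetilde a(t)\epsilon)}{z\,\widetilde a(t)}\right)^{1/|\gamma|+\delta},
\]
and since $\widetilde a(t)\in RV_{|\gamma|}$ the factor $t\,\widetilde a(t)^{-(1/|\gamma|+\delta)}=t\cdot t^{-1-|\gamma|\delta}L(t)\to 0$; the hypothesis $\E[X^{1/|\gamma|+\delta}]<\infty$ is exactly what is needed. (One should really work with $\widetilde X = 1/(\text{endpoint}-X)$ or with $X$ directly depending on whether $X$ itself has right endpoint $\beta(\infty)$; since $\rho>0$ here $X$ need not have a finite endpoint, and the moment assumption is stated directly on $X$, so Markov on $X^{1/|\gamma|+\delta}$ is the clean route.) Finally I would record the analogous (and easier) computation on sets of the form $(z,\infty]$ only — the product is nonnegative because $X\ge0$ and $Y$ is eventually positive near $b(\infty)>0$, so no left-tail bookkeeping is needed — and conclude RV$_{-1/|\gamma|}$ together with the stated vague convergence. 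The main obstacle I anticipate is making the ``$XY\approx b(\infty)X$ on $\{\widetilde Y/\widetilde a(t)>\epsilon\}$'' approximation rigorous as a set inclusion between $\mu$-continuity sets, with the correction terms $\delta_t$ controlled uniformly so that the $\limsup$ and $\liminf$ match after $\epsilon\downarrow0$; this is routine but needs the continuity-set bookkeeping handled carefully, exactly as in Lemmas~\ref{important lemma}–\ref{lemma:joint: prod:sum}.
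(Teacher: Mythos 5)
Your argument is correct but follows a genuinely different route from the paper. The paper first rewrites $XY = X(b(\infty)\widetilde Y - 1)/\widetilde Y$ and then establishes the joint vague convergence of $\bigl(XY/\widetilde a(t),\,\widetilde Y/\widetilde a(t)\bigr)$ by chaining the maps $T_3(x,y)=(x/y,b(\infty)y)$, $T_2(x,y)=(xy,y)$ and $\widetilde T_3(x,y)=(x,y/b(\infty))$, each time invoking the compactification argument of \cite{Maulik:Resnick:Rootzen:2002}; it then peels off the second coordinate with the same edge-term estimate you use. You bypass the chain of maps entirely: starting from the CEVM convergence of $\bigl(X/\alpha(t),\,\widetilde Y/\widetilde a(t)\bigr)$ with $\alpha\sim\widetilde a$, you squeeze the bulk event $\{XY/\widetilde a(t)>z,\ \widetilde Y/\widetilde a(t)>\epsilon\}$ between $\{b(\infty)X/\widetilde a(t)>z,\ \widetilde Y/\widetilde a(t)>\epsilon\}$ (trivially, since $Y\le b(\infty)$) and $\{X\bigl(b(\infty)-1/(\epsilon\widetilde a(t))\bigr)/\widetilde a(t)>z,\ \widetilde Y/\widetilde a(t)>\epsilon\}$, whose level $z/\bigl(b(\infty)-1/(\epsilon\widetilde a(t))\bigr)\to z/b(\infty)$. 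This is arguably cleaner, since the composed pushforward $\mu T_3^{-1}T_2^{-1}\widetilde T_3^{-1}$ in the paper simplifies to exactly this scaling $\mu\{(u,v):b(\infty)u\in\cdot_1,\ v\in\cdot_2\}$, and your route reaches it directly without tracking the intermediate ratio pairs. The paper's method establishes the stronger joint limit for $\bigl(XY/\widetilde a(t),\widetilde Y/\widetilde a(t)\bigr)$, which your approach does not need and does not obtain; for the theorem as stated only the marginal is required, so nothing is lost.

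One phrase in your write-up is misleading and should be fixed: on $\{\widetilde Y/\widetilde a(t)>\epsilon\}$ it is not true that $XY\in[b(\infty)X-\lito(\alpha(t)),\,b(\infty)X]$ \emph{pointwise}, since the correction $X(b(\infty)-Y)<X/(\epsilon\widetilde a(t))$ is random and unbounded in $X$. What is true, and what you in fact need (and do gesture at with your ``$\delta_t\to0$''), is the set inclusion: $\{XY/\widetilde a(t)>z\}\cap\{\widetilde Y/\widetilde a(t)>\epsilon\}\supset\{b(\infty)X/\widetilde a(t)>z/c_t\}\cap\{\widetilde Y/\widetilde a(t)>\epsilon\}$ with $c_t=1-1/(\epsilon b(\infty)\widetilde a(t))\to1$. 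State the squeeze as a pair of event inclusions, not as a uniform additive error on $XY$, and your proof is complete. The Markov bound on the edge term and the scaling consistency that forces the limit to be a power law of index $-1/|\gamma|$ are handled as in the paper.
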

\begin{proof} 
As $\alpha(t)\sim{\widetilde a(t)}$, convergence of types allows us to change $\alpha(t)$ to $\widetilde{a}(t)$ and hence $(X,\widetilde Y)$ satisfy Conditions~\eqref{basic1} and~\eqref{nondegenerate} with parameters $(\widetilde{a},0;\widetilde{a},0;\mu)$.  Using the fact that $\widetilde{Y}=(b(\infty)-Y)^{-1}$, we have  $$XY=X\frac{b(\infty)\widetilde{Y}-1}{\widetilde{Y}}.$$ Using arguments similar to Proposition~4 of \cite{heffernan:resnick:2007}, it can be shown that as $t\rightarrow\infty$,
$$t\prob\left[\left(\frac{X}{\widetilde{Y}},\frac{b(\infty)\widetilde{Y}}{\widetilde{a}(t)}\right)\in\cdot\right]\vaguec \mu T_3^{-1}(\cdot),$$ in $M_+([0,\infty]\times(0,\infty])$, where $T_3(x,y)=(x/y,b(\infty)y)$. Since $\widetilde{a}(t)\rightarrow\infty$, we further have, as $t\rightarrow\infty,$
$$t\prob\left[\left(\frac{X}{\widetilde{Y}},\frac{b(\infty)\widetilde{Y}-1}{\widetilde{a}(t)}\right)\in\cdot\right]\vaguec \mu T_3^{-1}(\cdot).$$
Now applying the map $T_2(x,y)=(xy,y)$ to the above vague convergence and using compactification arguments similar to that in the proof of Theorem~2.1 of \cite{Maulik:Resnick:Rootzen:2002}, we have,
$$t\prob\left[\left(\frac{X}{\widetilde{Y}} \frac{b(\infty)\widetilde{Y}-1}{\widetilde{a}(t)}, \frac{b(\infty)\widetilde{Y}-1}{\widetilde{a}(t)}\right)\in\cdot\right]\vaguec \mu T_3^{-1}T_2^{-1}(\cdot).$$
Recalling the facts that $XY = X (b(\infty) \widetilde Y - 1)/\widetilde Y$ and $\widetilde a(t)\to\infty$ and reversing the arguments in the second coordinate, we have,
\begin{equation} \label{vague 3}
t\prob\left[\left(\frac{XY}{\widetilde{a}(t)},\frac{\widetilde{Y}}{\widetilde{a}(t)}\right)\in\cdot\right]\vaguec \mu T_3^{-1}T_2^{-1}{\widetilde{T}_3}^{-1}(\cdot),
\end{equation}
where $\widetilde{T}_3(x,y)=(x,\frac{y}{b(\infty)}).$   If $\epsilon>0$ and $z>0$ we have following series of inequalities,
\begin{align*}
   t\prob\left[\frac{XY}{\widetilde{a}(t)}>z,\frac{\widetilde{Y}}{\widetilde{a}(t)}\leq\epsilon\right]&
   =t\prob\left[\frac{XY}{\widetilde{a}(t)}>z,\frac{1}{b(\infty)-Y}\leq \widetilde{a}(t)\epsilon\right]\\
   &=t\prob\left[\frac{XY}{\widetilde{a}(t)}>z,Y\leq b(\infty)-\frac{1}{\widetilde{a}(t)\epsilon}\right]\\
   &\leq t\prob\left[\frac{X}{\widetilde{a}(t)}>z\left(b(\infty)-\frac{1}{\widetilde{a}(t)\epsilon}\right)^{-1}\right]\\
   &\leq t\frac{\E\left[X^{\frac{1}{|\gamma|}+\delta}\right]}{\left(\widetilde{a}(t)z\right)^{\frac{1}{|\gamma|}+\delta}}
   \left(b(\infty)-\frac{1}{\widetilde{a}(t)\epsilon}\right)^{\frac{1}{|\gamma|}+\delta} \to 0,
\end{align*}
since $\widetilde a \in RV_{|\gamma|}$.

Now observe that
\begin{multline*}
t \prob\left[\frac{XY}{\widetilde a(t)}>z, \frac{\widetilde Y}{\widetilde a(t)}>\epsilon\right] \le t \prob\left[\frac{XY}{\widetilde a(t)}>z\right]\\ 
\le  t \prob\left[\frac{XY}{\widetilde a(t)}>z, \frac{\widetilde Y}{\widetilde a(t)}>\epsilon\right] + t \prob\left[\frac{XY}{\widetilde a(t)}>z, \frac{\widetilde Y}{\widetilde a(t)}\le\epsilon\right].\end{multline*}
The last term on the right side is negligible by the previous argument and the left side and the first term on the right side converge due to the vague convergence in~\eqref{vague 3}. The result then follows by letting $\epsilon\to 0$.
\end{proof}

{\bf Case IV: $\rho$ negative and $\gamma$ positive:}
In this case we shall assume that $Y$ is nonnegative, $\beta(\infty)>0$ and $\beta(\infty)$ is the upper endpoint of $X$. Arguing as in Case III, we neglect the possibility that $\beta(\infty)\leq0$. Thus we further assume $0\leq X\leq \beta(\infty)$. Since $X$ becomes bounded, the product of $XY$ inherits its behavior from the tail behavior of $Y$.
\begin{theorem}\label{case 4}
Assume that both $X$ and $Y$ are nonnegative random variables with $\beta(\infty)$ being the upper endpoint of $X$. Let $(\beta(\infty)-X, Y)$   satisfies Conditions~\eqref{basic1} and~\eqref{nondegenerate} with parameters $(\alpha,0;a,0;\mu)$ on $[0,\infty]\times(0,\infty]$, for some nondegenerate Radon measure $\mu$. Then $XY$ has regularly varying tail of index ${-1/\gamma}$ and for all $z>0$, we have
$$t\prob\left[\frac{XY}{a(t)}>z\right]\rightarrow z^{-\frac1\gamma}\beta(\infty)^{\frac1\gamma}, \text{ as } t\rightarrow \infty.$$
\end{theorem}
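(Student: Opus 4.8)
The plan is to establish, for each fixed $z>0$, the limit $t\prob[XY/a(t)>z]\to z^{-1/\gamma}\beta(\infty)^{1/\gamma}$ by a sandwich argument that exploits the following heuristic: since $\rho<0$, the scaling $\alpha(t)\in RV_\rho$ tends to $0$, so $X=\beta(\infty)-\alpha(t)\,(\beta(\infty)-X)/\alpha(t)$ is squeezed against its finite right endpoint $\beta(\infty)$ at a rate that is negligible against $a(t)\in RV_\gamma$ (which diverges). Hence $XY$ behaves like $\beta(\infty)Y$, and the tail of $XY$ is governed by the tail of $Y$, which is regularly varying of index $-1/\gamma$ because $Y\in D(G_\gamma)$. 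Once the displayed limit is in hand, the regular variation of the tail of $XY$ of index $-1/\gamma$ follows routinely: for $\lambda>0$, $\overline F_{XY}(\lambda a(t))/\overline F_{XY}(a(t))=[t\prob[XY>\lambda a(t)]]/[t\prob[XY>a(t)]]\to\lambda^{-1/\gamma}$, and since $a$ may be taken continuous and onto a neighbourhood of $\infty$, this upgrades to $\overline F_{XY}(\lambda x)/\overline F_{XY}(x)\to\lambda^{-1/\gamma}$ as $x\to\infty$.

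For the upper bound I would simply use $0\le X\le\beta(\infty)$, so that $\{XY>za(t)\}\subseteq\{Y>(z/\beta(\infty))a(t)\}$ and therefore $t\prob[XY>za(t)]\le t\prob[Y>(z/\beta(\infty))a(t)]\to(z/\beta(\infty))^{-1/\gamma}$, the limit being the regularly varying tail of $Y$ under its canonical normalization $a$ (cf.\ Theorem~\ref{chap1:theo:evt:rv}). For the lower bound, set $W_t=(\beta(\infty)-X)/\alpha(t)\ge0$ and fix $\delta\in(0,\beta(\infty))$ and $M>0$, chosen (from a cocountable set) so that $[0,M]\times(z/(\beta(\infty)-\delta),\infty]$ is a relatively compact $\mu$-continuity set. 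Since $\alpha(t)M\to0$, for all large $t$ the event $\{W_t\le M\}$ forces $X\ge\beta(\infty)-\alpha(t)M\ge\beta(\infty)-\delta>0$, whence (using $Y\ge0$) $\{W_t\le M\}\cap\{Y>za(t)/(\beta(\infty)-\delta)\}\subseteq\{XY>za(t)\}$. Applying Condition~\eqref{basic1} for the transformed pair $(\beta(\infty)-X,Y)$ to this continuity set gives $\liminf_t t\prob[XY>za(t)]\ge\mu([0,M]\times(z/(\beta(\infty)-\delta),\infty])$, and it remains to let $M\uparrow\infty$ and then $\delta\downarrow0$.

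The only genuinely delicate point — and what I expect to be the main obstacle — is matching the two bounds, i.e.\ showing that $\sup_{M}\mu([0,M]\times(y,\infty])=\mu([0,\infty]\times(y,\infty])$, equivalently that no mass of $\mu$ escapes to $+\infty$ in the first coordinate. I plan to deduce this from Condition~\eqref{nondegenerate}: for each $y>0$ the map $x\mapsto\mu([0,x]\times(y,\infty])$ is a (nondegenerate, proper) distribution function, so it attains its total mass as $x\to\infty$; probabilistically, conditionally on $\{Y>b(t)\}$ the normalized $X$ converges to the proper law $H$, which rules out leakage. Since in addition $\mu([0,\infty]\times(y,\infty])=\lim_t t\prob[Y>ya(t)]=y^{-1/\gamma}$ by the domain of attraction of $Y$, the lower bound becomes $(z/(\beta(\infty)-\delta))^{-1/\gamma}$, which converges to $(z/\beta(\infty))^{-1/\gamma}=\beta(\infty)^{1/\gamma}z^{-1/\gamma}$ as $\delta\downarrow0$, matching the upper bound and completing the proof.
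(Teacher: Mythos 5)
Your proof is correct, and the lower bound takes a genuinely different (and somewhat more elementary) route than the paper's. The upper bound is identical: $0\le X\le\beta(\infty)$ gives $\{XY>za(t)\}\subseteq\{Y>(z/\beta(\infty))a(t)\}$. For the lower bound, the paper rewrites $XY/a(t)=\beta(\infty)Y/a(t)-\alpha(t)\cdot\bigl((\beta(\infty)-X)Y\bigr)/(\alpha(t)a(t))$ and first establishes, via a compactification argument in the spirit of Maulik--Resnick--Rootz\'en, the joint vague convergence of $\bigl((\beta(\infty)-X)Y/(\alpha(t)a(t)),\,Y/a(t)\bigr)$ to $\mu T_2^{-1}$ with $T_2(x,y)=(xy,y)$; it then restricts to $\{(\beta(\infty)-X)Y/(\alpha(t)a(t))\le M,\ \beta(\infty)Y/a(t)>z+M\epsilon\}$ and sends $\epsilon\downarrow0$, $M\uparrow\infty$. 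You instead bypass the $T_2$ pushforward entirely: on $\{(\beta(\infty)-X)/\alpha(t)\le M\}$ you have $X\ge\beta(\infty)-\alpha(t)M\ge\beta(\infty)-\delta$ eventually, and combined with $Y>za(t)/(\beta(\infty)-\delta)$ this already forces $XY>za(t)$; applying Condition~\eqref{basic1} directly to the rectangle $[0,M]\times(z/(\beta(\infty)-\delta),\infty]$ and then sending $M\uparrow\infty$, $\delta\downarrow0$ gives the matching limit. This avoids deriving an intermediate vague convergence and is a cleaner use of the hypotheses. Your handling of the ``no mass at $\{+\infty\}$ in the first coordinate'' point is also sound: Condition~\eqref{nondegenerate} guarantees $x\mapsto\mu([0,x]\times(y,\infty])$ is a distribution function attaining its total mass as $x\to\infty$, and $\mu([0,\infty]\times(y,\infty])=\lim_t t\prob[Y/a(t)>y]=y^{-1/\gamma}$ follows from evaluating the vague limit on the full strip (the first coordinate being trivially in $[0,\infty]$ since $0\le X\le\beta(\infty)$) together with $Y\in D(G_\gamma)$; the paper invokes the same equality at the last line of its proof without comment.
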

\begin{proof}
First we prove the upper bound which, in fact, does not use the conditional model. Observe that
\begin{align*}
 t\prob\left[\frac{XY}{a(t)}>z\right]&=t\prob\left[\frac{XY}{a(t)}>z,X<\beta(\infty)\right]\\
&\leq t\prob\left[\frac{Y}{a(t)}>\frac{z}{\beta(\infty)}\right] \to\left(\frac{z}{\beta(\infty)}\right)^{-\frac{1}{\gamma}} \text{ as } t\rightarrow\infty.
\end{align*}

To prove the lower bound we use the basic convergence in Condition~\eqref{basic1} for the pair $(\beta(\infty)-X, Y)$. Before we show the lower bound, first observe that by arguments similar to the proof of Theorem 2.1 of \cite{Maulik:Resnick:Rootzen:2002} we have,
\begin{equation}\label{basic4b}
 t\prob\left[\left(\frac{(\beta(\infty)-X)Y}{\alpha(t)a(t)},\frac{Y}{a(t)}\right)\in\cdot\right]\vaguec \mu T_2^{-1}(\cdot) \text{ as } t\rightarrow\infty
\end{equation}
in $M_+([0,\infty]\times(0,\infty])$, where recall that $T_2(x,y)=(xy,y).$ Now to show the lower bound, first fix a large $M>0$ and $\epsilon>0$ such that $\alpha(t)<\epsilon$ for large $t$ (recall that $\alpha\in RV_{-\rho}$ and $\alpha(t)\rightarrow0$ in this case). Note that
\begin{align*}
 t\prob\left[\frac{XY}{a(t)}>z\right]&=t\prob\left[\frac{(X-\beta(\infty))Y}{\alpha(t)a(t)}\alpha(t)+\frac{\beta(\infty)Y}{a(t)}>z\right]\\
&\geq t\prob\left[\frac{(\beta(\infty)-X)Y}{\alpha(t)a(t)}\leq M, \frac{\beta(\infty)Y}{a(t)}>z+M\alpha(t)\right]\\
&\geq t\prob\left[\frac{(\beta(\infty)-X)Y}{\alpha(t)a(t)}\leq M, \frac{\beta(\infty)Y}{a(t)}>z+M\epsilon\right]\\
&\to \mu T_2^{-1}\left([0,M]\times \left(\frac{z+M\epsilon}{\beta(\infty)},\infty\right]\right),
\end{align*}
using~\eqref{basic4b}. First letting $\epsilon\rightarrow0$ and then letting $M\rightarrow\infty$, so that $$\mu T_2^{-1}\left(\partial \left([0, M]\times (z/\beta(\infty), \infty]\right)\right)=0,$$ we get that
$$\liminf_{t\rightarrow\infty}t\prob[\frac{XY}{a(t)}>z]\geq \mu T_2^{-1}([0,\infty)\times (z/\beta(\infty),\infty])=z^{-\frac1\gamma}\beta(\infty)^{\frac1\gamma}.$$
\end{proof}
\end{section}

\begin{section}{Some remarks on the tail condition~\eqref{moment condition}}\label{section:remarks on moments}
We have already noted that the compactification arguments in Section~\ref{mainresults} require some conditions on the tails of associated random variables. Except in Theorem~\ref{case1[a]}, they have been replaced by some moment conditions, using Markov inequality. The tail condition~\eqref{moment condition} in Theorem~\ref{case1[a]}  can also be replaced by the following moment condition:

If for some $\delta>0$, we have $\E[|X|^{{1}/{\rho}+\delta}]<\infty$, then~\eqref{moment condition} holds, as $\alpha\in RV_{\rho}$.

In general, if $(X,Y)$ follows CEVM model, it need not be true that $X\in D(G_\rho)$. However, if $X\in D(G_{\rho})$  with scaling and centering functions $\alpha(t)$ and $\beta(t)$ and $X\geq0$, then the moment condition \eqref{moment condition}. In fact,
$$\limsup_{t\rightarrow\infty}t\prob\left[\frac{X}{\alpha(t)}>\frac{z}{\epsilon}\right]$$ is a constant multiple of $({z}/{\epsilon})^{-{1}/{\rho}}$, which goes to zero as $\epsilon\rightarrow 0.$

The tail condition~\eqref{moment condition} continues to hold in certain other cases as well. Suppose that $X\geq0$ and $X\in D(G_{\lambda})$ with scaling and centering $A(t)$ and $B(t)$ and $\lambda<\rho$. In this case, $\alpha\in RV_\rho$ and $A\in RV_\lambda$. Thus, $\alpha(t)/A(t)\to\infty$. Hence, for any $\epsilon>0$, we have
$$t\prob\left[\frac{X}{\alpha(t)}>\frac{z}{\epsilon}\right]=t\prob\left[\frac{X}{A(t)}>\frac{z\alpha(t)}{\epsilon A(t)}\right]$$
is of order of $({\alpha(t)}/{A(t)})^{-{1}/{\lambda}}$, which goes to zero and~\eqref{moment condition} holds. However, it would be interesting to see the effect of $A$ and $B$ as scaling and centering in CEVM model. Since, $\alpha$ is of an order higher than $A$, the limit, as expected, becomes degenerate.

\begin{proposition}
Let the pair $(X,Y)\in CEVM(\alpha,\beta;a,b;\mu)$  with $\rho>0$ and $\gamma>0.$ Assume  $X\in D(G_{\lambda})$ with  $\rho>\lambda$ and centering and scaling as $A(t)$ and $B(t)$. If 
$$\lim_{t\rightarrow\infty}t\prob\left[\frac{X-B(t)}{A(t)}\leq x,\frac{Y-b(t)}{a(t)}>y\right]$$ exists for all continuity points $(x,y)\in \mathbb{R}\times \mathbb{E}^{(\gamma)}$, then, for any fixed $y\in \mathbb{E}^{(\gamma)}$, as $x$ varies in $\mathbb R$, the limit measure assigns same values to the sets of the form $[-\infty, x]\times (y,\infty]$.
\end{proposition}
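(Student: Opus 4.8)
The plan is to show that the limit measure, restricted to a horizontal strip $\{y' > y\}$, does not depend on the $x$-coordinate cutoff, i.e. it is supported on $\{x = +\infty\}$ (in the compactified sense). The intuition is that $\alpha(t)$ grows strictly faster than $A(t)$ since $\rho > \lambda$, so the rescaling by $A(t)$ in the CEVM-type convergence for $(X,Y)$ forces $X$ (which only needs scaling $\alpha(t)$ to have a nondegenerate behaviour, or at worst is dominated by such scaling) to be pushed to $+\infty$. Concretely, I would first invoke the CEVM hypothesis for $(X,Y)$ with parameters $(\alpha,\beta;a,b;\mu)$ to control the tail of $X$: from Condition~\eqref{basic1} we know $t\prob[(X-\beta(t))/\alpha(t) \le x, (Y-b(t))/a(t) > y] \to \mu((-\infty,x]\times(y,\infty))$, and in particular $t\prob[(Y-b(t))/a(t) > y]$ has a finite nonzero limit while $(X-\beta(t))/\alpha(t)$ converges in distribution (conditionally) to the nondegenerate $H$.

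The key step is a comparison of the two scalings. Since $\alpha \in RV_\rho$ and $A \in RV_\lambda$ with $\rho > \lambda$, by property~\ref{chap1:infinite:zero} of Subsection~\ref{Some well known facts about regular variations} applied to $\alpha/A \in RV_{\rho-\lambda}$ with $\rho - \lambda > 0$, we have $\alpha(t)/A(t) \to \infty$. A parallel statement should hold for the centerings: in the $\gamma>0$, $\rho>0$ regime one has (as recalled in Case~I of Section~\ref{Transformations}) $\beta(t)/\alpha(t) \to 0$ or $1/\rho$, so $\beta(t) = O(\alpha(t))$, and similarly $B(t) = O(A(t)) = o(\alpha(t))$. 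Hence $(B(t) - \beta(t))/\alpha(t)$ stays bounded. Now fix $x \in \mathbb{R}$ and $y \in \Egamma$, and write
\begin{align*}
t\prob\left[\frac{X - B(t)}{A(t)} \le x,\ \frac{Y-b(t)}{a(t)} > y\right]
&= t\prob\left[\frac{X - \beta(t)}{\alpha(t)} \le \frac{A(t)}{\alpha(t)} x + \frac{B(t) - \beta(t)}{\alpha(t)},\ \frac{Y-b(t)}{a(t)} > y\right].
\end{align*}
The first argument inside the probability is $\le A(t)x/\alpha(t) + (B(t)-\beta(t))/\alpha(t)$, and since $A(t)/\alpha(t) \to 0$ and $(B(t)-\beta(t))/\alpha(t)$ is bounded, for any fixed real threshold $M$ eventually $A(t)x/\alpha(t) + (B(t)-\beta(t))/\alpha(t) < M$ fails to capture much: more precisely, the upper bound tends to a finite limit $\ell$ (or stays in a bounded set). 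Thus, up to $o(1)$, the left side is squeezed between $t\prob[(X-\beta(t))/\alpha(t) \le M', (Y-b(t))/a(t) > y]$ for thresholds $M'$ that do \emph{not} depend on $x$ in the limit, and by the CEVM convergence this pins the limit down to a value independent of $x$. Slightly more carefully, for any two reals $x_1 < x_2$, the difference of the two expressions is
$t\prob\left[\frac{A(t)}{\alpha(t)}x_1 + \frac{B(t)-\beta(t)}{\alpha(t)} < \frac{X-\beta(t)}{\alpha(t)} \le \frac{A(t)}{\alpha(t)}x_2 + \frac{B(t)-\beta(t)}{\alpha(t)},\ \frac{Y-b(t)}{a(t)} > y\right]$, and since both endpoints converge to the \emph{same} limit (as $A(t)/\alpha(t)\to 0$), this difference tends to the $\mu$-measure of a ``degenerate'' vertical slab, which is $0$ provided the common limiting endpoint is a continuity point of the relevant marginal — handled by choosing $x_1, x_2$ so that the endpoints avoid the at most countably many atoms, then extending by monotonicity.

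The main obstacle I anticipate is the rigorous handling of the centering terms and the continuity-point bookkeeping: one must ensure that $(B(t)-\beta(t))/\alpha(t)$ genuinely converges (not merely stays bounded) so that the squeezing endpoints have a common limit, and that this common limit is a continuity point of $x \mapsto \mu((-\infty,x]\times(y,\infty))$ — or, failing that, approximate from continuity points on either side and use that the limit measure (being a limit of probabilities on vertical slabs) is a finite measure, hence has at most countably many atoms in the $x$-direction for fixed $y$. The boundedness/convergence of $(B(t)-\beta(t))/\alpha(t)$ should follow from the regular variation of $\alpha, A, \beta, B$ together with the structure theorems for centering functions in domains of attraction (Theorem~3.1.12 of \cite{bingham:goldie:teugels:1987}, as used in Case~I of Section~\ref{Transformations}), but one may need to treat the subcases $\psi_2 = 0$ and $\psi_2 \neq 0$ separately. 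Once that is in place, the conclusion that the limit assigns the same value to all sets $[-\infty,x]\times(y,\infty]$ is immediate, and this is exactly the claimed degeneracy.
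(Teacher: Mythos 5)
Your approach is essentially the same as the paper's: rewrite the event $\{(X-B(t))/A(t)\le x\}$ in terms of the CEVM scaling as $\{(X-\beta(t))/\alpha(t)\le A(t)x/\alpha(t)+(B(t)-\beta(t))/\alpha(t)\}$, observe that $A(t)/\alpha(t)\to 0$ by the $RV$ indices $\rho>\lambda$, and note that the centering correction converges (the paper computes it explicitly as $\frac{A(t)}{\alpha(t)}\bigl(x+\frac{B(t)}{A(t)}\bigr)-\frac{\beta(t)}{\alpha(t)}\to 0\cdot(x+\tfrac1\lambda)-\tfrac1\rho=-\tfrac1\rho$, using $B(t)/A(t)\to 1/\lambda$ from $X\in D(G_\lambda)$ and $\beta(t)/\alpha(t)\to 1/\rho$). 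You correctly identify that boundedness of the correction is not enough and that one needs genuine convergence, which is exactly what these limits supply; in fact, your $x_1<x_2$ squeezing argument handles the continuity-point bookkeeping at the common limit $-1/\rho$ slightly more carefully than the paper, which simply plugs the limiting threshold into $\mu$ and writes the answer as $\mu\bigl([-\infty,-\tfrac1\rho]\times(y,\infty]\bigr)$.
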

\begin{proof} 
Observe that
$$\frac{A(t)}{\alpha(t)}x+\frac{B(t)-\beta(t)}{\alpha(t)}=\frac{A(t)}{\alpha(t)}\left(x+\frac{B(t)}{A(t)}\right)-\frac{\beta(t)}{\alpha(t)}\rightarrow 0\times(x+\frac{1}{\lambda})-\frac{1}{\rho}.$$
Therefore we have,
\begin{multline*}
\lim_{t\rightarrow\infty} t\prob\left[\frac{X-B(t)}{A(t)}\leq x,\frac{Y-b(t)}{a(t)}>y\right]\\
=\lim_{t\rightarrow\infty} t\prob\left[\frac{X-\beta(t)}{\alpha(t)}\leq \frac{A(t)}{\alpha(t)}x+\frac{B(t)-\beta(t)}{\alpha(t)},\frac{Y-b(t)}{a(t)}>y\right]
=\mu( [-\infty,-\frac{1}{\rho}]\times(y,\infty]),
\end{multline*}
which is independent of $x$.
\end{proof}

We would also like to consider the remaining case, namely, when $X\ge0$ and $X\in D(G_\lambda)$ with scaling $A$ and $\lambda>\rho$. Clearly, we have $\alpha(t)/A(t)\to 0$. Thus, for any $\epsilon>0$, we have 
$$t\prob\left[\frac{X}{\alpha(t)}>\frac{z}{\epsilon}\right]=t\prob\left[\frac{X}{A(t)}>\frac{z\alpha(t)}{\epsilon A(t)}\right]\to\infty$$ Hence~\eqref{moment condition} cannot hold and Theorem~\ref{case1[a]} is of no use. The next result show that in this case we have multivariate extreme value model with the limiting measure being concentrated on the axes, which gives asymptotic independence.
\begin{theorem}\label{theo:hidden}
Let $(X,Y)\in CEVM(\alpha,0;a,0;\mu)$  on the cone $[0,\infty]\times(0,\infty]$ and $X\in D(G_\lambda)$ with $\lambda>\rho$ and scaling $A(t)\in RV_{-\lambda}$.  Also assume that $X\in D(G_\lambda)$ with $\lambda>\rho$ and ${A(t)}/{\alpha(t)}\rightarrow \infty$. Then
\begin{equation}\label{hidden2}
t\prob\left[\left(\frac{X}{A(t)},\frac{Y}{a(t)} \right)\in\cdot\right]
\end{equation}
converges vaguely to a nondegenerate Radon measure on $[0,\infty]^2 \setminus\{(0,0)\}$, which is concentrated on the axes.
\end{theorem}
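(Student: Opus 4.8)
\textbf{Proof proposal for Theorem~\ref{theo:hidden}.}

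The plan is to show that the vague limit of~\eqref{hidden2} exists, is nondegenerate, and puts no mass on the open first quadrant $(0,\infty]^2$, so that it must be concentrated on the two axes. First I would record the two marginal inputs. Since $X\in D(G_\lambda)$ with scaling $A(t)\in RV_{-\lambda}$ (here I read the index convention so that $t\prob[X/A(t)>x]\to x^{-1/\lambda}$ for $x>0$), we have the one-dimensional vague convergence $t\prob[X/A(t)\in\cdot]\vaguec \nu_1(\cdot)$ on $(0,\infty]$, where $\nu_1(x,\infty]=x^{-1/\lambda}$. Similarly, from the CEVM hypothesis the second marginal satisfies $t\prob[Y/a(t)\in\cdot]\vaguec\nu_2(\cdot)$ on $(0,\infty]$ with $\nu_2(y,\infty]=y^{-1/\gamma}$ (this is just Condition~\eqref{cevm:doa:def} read off from $\mu$ together with $a\in RV_\gamma$). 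Each of these, viewed on the cone $[0,\infty]^2\setminus\{(0,0)\}$, gives a measure concentrated on one axis: $\nu_1$ lives on $(0,\infty]\times\{0\}$ and $\nu_2$ on $\{0\}\times(0,\infty]$. The candidate limit measure is $\mu_\infty := \nu_1\otimes\delta_0 + \delta_0\otimes\nu_2$ (appropriately interpreted on the cone), which is nondegenerate since both $\nu_1$ and $\nu_2$ are non-null.

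The key step is to show the mass on the interior vanishes, i.e. for every $x>0$, $y>0$,
$$\lim_{t\to\infty} t\prob\left[\frac{X}{A(t)}>x,\ \frac{Y}{a(t)}>y\right] = 0.$$
Here is where I would use ${A(t)}/{\alpha(t)}\to\infty$. Write the event as $\{X/\alpha(t) > x A(t)/\alpha(t),\ Y/a(t)>y\}$. Fix any large $N>0$; since $A(t)/\alpha(t)\to\infty$, for all $t$ large enough $xA(t)/\alpha(t) > N$, hence
$$t\prob\left[\frac{X}{A(t)}>x,\ \frac{Y}{a(t)}>y\right] \le t\prob\left[\frac{X}{\alpha(t)}>N,\ \frac{Y}{a(t)}>y\right] \le t\prob\left[\left(\frac{X}{\alpha(t)},\frac{Y}{a(t)}\right)\in \bar V_{[N,\infty],y}\right],$$
and by the CEVM convergence (Condition~\eqref{basic1} for the pair $(X,Y)$ on the cone $[0,\infty]\times(0,\infty]$, applied along a sequence of $\mu$-continuity sets) the right side converges to $\mu([N,\infty]\times(y,\infty])$. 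Since $\mu$ is a Radon measure on $[0,\infty]\times(0,\infty]$ and $\mu((0,\infty]\times(y,\infty])<\infty$ (it is finite because $\mu((-\infty,x]\times(y,\infty))$ is a genuine distribution function in $x$ by nondegeneracy), letting $N\to\infty$ gives $\mu([N,\infty]\times(y,\infty])\to 0$. Combining, $\limsup_t t\prob[X/A(t)>x, Y/a(t)>y]\le \mu([N,\infty]\times(y,\infty])$ for every $N$, hence the limit is $0$.

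Finally I would assemble the pieces. For a relatively compact $\mu_\infty$-continuity rectangle of the form $(x,\infty]\times(y,\infty]$ with $x,y>0$ the previous paragraph gives limit $0$, matching $\mu_\infty$. For a rectangle touching the $x$-axis, say $(x_1,x_2]\times[0,y]$ with $0<x_1<x_2$ and $y>0$ a continuity level, inclusion–exclusion reduces it to differences of sets of the form $(x_i,\infty]\times[0,\infty]$ and $(x_i,\infty]\times(y,\infty]$; the first converges to $\nu_1(x_i,\infty]$ by the marginal convergence of $X$, the second to $0$ by the interior estimate, so the limit is $\nu_1(x_1,x_2]$, again matching $\mu_\infty$. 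Symmetrically for rectangles touching the $y$-axis one uses the marginal convergence of $Y$. Since relatively compact continuity sets in $[0,\infty]^2\setminus\{(0,0)\}$ can be approximated from inside and outside by finite unions of such rectangles (all bounded away from the origin, hence hitting at most one axis once split finely enough), and since $\mu_\infty$ assigns finite mass to each such set, a standard $\pi$–$\lambda$/approximation argument upgrades convergence on these rectangles to vague convergence, giving $t\prob[(X/A(t),Y/a(t))\in\cdot]\vaguec\mu_\infty$. The limit $\mu_\infty$ is nondegenerate and, by construction, concentrated on the two axes, which is the assertion.

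The main obstacle I anticipate is purely bookkeeping at the boundary: making the approximation of continuity sets by axis-hugging rectangles precise on the cone $[0,\infty]^2\setminus\{(0,0)\}$, and being careful that the marginal convergences (which a priori hold on the half-lines $(0,\infty]$) transfer correctly to the cone with the right identification of $\{0\}\times(0,\infty]$ and $(0,\infty]\times\{0\}$. The genuinely new analytic content — the vanishing of the interior mass — is short and rests entirely on $A(t)/\alpha(t)\to\infty$ together with finiteness of $\mu$ near the $y$-axis, so I would make sure that finiteness is justified cleanly from the nondegeneracy hypothesis in Condition~\eqref{nondegenerate}.
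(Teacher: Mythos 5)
Your proposal is correct and takes essentially the same route as the paper: the paper also reduces to showing $t\prob[X/A(t)>x,\,Y/a(t)>y]\to 0$ by rewriting the $X$-threshold as $xA(t)/\alpha(t)\to\infty$ in the $\alpha(t)$-scaling and appealing to the CEVM limit together with the marginal domain-of-attraction conditions, though it organizes the vague-convergence bookkeeping via the generating class $([0,x]\times[0,y])^c$ rather than your rectangles-plus-approximation scheme. Your intermediate truncation at level $N$ and the explicit appeal to finiteness of $\mu$ near the second axis simply spell out what the paper leaves implicit.
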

\begin{proof}
We first show the vague convergence on $[0,\infty]^2 \setminus\{(0,0)\}$. Let $x>0,y>0$. Then,
\begin{multline*}t\prob\left[ \left(\frac{X}{A(t)},\frac{Y}{a(t)}\right)\in ([0,x]\times [0,y])^c\right]\\ = t\prob\left[\frac{X}{A(t)}>x\right]+t\prob\left[\frac{Y}{a(t)}>y\right]-t\prob\left[\frac{X}{A(t)}>x,\frac{Y}{a(t)}>y\right].\end{multline*}
The first two terms converge due to the domain of attraction conditions on $X$ and $Y$. For the last term, by the CEVM conditions and the fact that $A(t)/\alpha(t)\to\infty$, we have
\begin{equation} \label{eq:hidden}
t\prob\left[\frac{X}{A(t)}>x,\frac{Y}{a(t)}>y\right] = t\prob\left[\frac{X}{\alpha(t)}>x\frac{A(t)}{\alpha(t)},\frac{Y}{a(t)}>y\right]\to 0.
\end{equation}
This establishes~\eqref{hidden2}. However, using $x=y$ in~\eqref{eq:hidden}, we find that the limit measure does not put any mass on $(0,\infty]^2$.
\end{proof}
\begin{remark}
Since, we have asymptotic independence, several different behaviors for the product are possible, as it has been illustrated in \cite{Maulik:Resnick:Rootzen:2002}.

While we have established asymptotic independence on the larger cone $[0,\infty]^2\setminus\{0\}$ in Theorem~\ref{theo:hidden}, CEVM gives another nondegenerate limit $\mu$ on the smaller cone $[0,\infty]\times(0,\infty]$. Thus, $(X,Y)$ exhibits hidden regular variation, as described in \cite{Resnick:2002, Maulik:Resnick:2004}.
\end{remark}
\end{section}

\begin{section}{Example}\label{section:example}
We now consider the moment condition in Theorem~\ref{main theorem negative}. We show that the condition is not necessary by providing an example, where the condition fails, but we explicitly calculate the tail behavior of $XY$. 

Let $X$ and $Z$ be two independent random variables, where $X$ follows Beta distribution with parameters $1$ and $a$ and $Z$ is supported on $[0,1]$ and is in $D(G_{-1/b})$, for some $a>0$ and $b>0$. Thus, we have $\prob[X>x]=(1-x)^a$ and $\prob[Z>1-\frac{1}{x}]=x^{-b}L(x)$ for some slowly varying function $L$. Let $G$ denote the distribution function of the random variable $Y=X\wedge Z$. Then $\overline{G}(x)=(1-x)^{a+b}L(\frac{1}{1-x})$ and hence $Y\in D(G_{-1/(a+b)}).$ Clearly, for $\widetilde X= 1/(1-X)$, we have $\E[\widetilde X^{a+b}]=\infty$ and the moment condition in Theorem~\ref{main theorem negative} fails for $\rho=-1/(a+b)$.

We further define  
$$\widetilde{a}(t)= \frac{1}{1-\left({1}/{\overline{G}}\right)^{\leftarrow}(t)},$$ where the left-continuous inverse is defined in Definition~\ref{chap1:def:inverse}. Since, $Y\in D(G_{-1/(a+b)})$, we have from Corollary 1.2.4 of \cite{Haan:Ferreira:2006}, \begin{equation}\label{example relation}
 \frac{{\widetilde{a}(t)}^{a+b}}{L(\widetilde{a}(t))}\sim t.
\end{equation}
Then, for $x>y>0$, we have,
\begin{align*}
t \prob \left[ \frac{\widetilde X}{\widetilde a(t)} \le x, \frac{\widetilde Y}{\widetilde a(t)} > y\right]
= &t\prob \left[\widetilde{a}(t)(X-1)\leq -\frac1x, \widetilde{a}(t)(Y-1)> -\frac1y\right]\\
= &t\prob \left[1-\frac{1}{y\widetilde{a}(t)}<X\leq 1-\frac{1}{x\widetilde{a}(t)}, Z>1-\frac{1}{y\widetilde{a}(t)}\right]\\
= &\frac{t}{\widetilde{a}(t)^{a+b}}[y^{-a}-x^{-a}]y^{-b}L(y{\widetilde{a}(t)})\\
\sim &\frac{t L(\widetilde{a}(t))}{\widetilde{a}(t)^{a+b}}[y^{-a}-x^{-a}]y^{-b}
\sim [y^{-a}-x^{-a}]y^{-b},
\end{align*}
where we use~\eqref{example relation} in the last step. Thus, $(\widetilde X, \widetilde Y)$ satisfies Conditions~\eqref{basic1} and~\eqref{nondegenerate} with $\rho=\gamma=-1/(a+b)$.

Finally, we directly calculate the asymptotic tail behavior of the product. For simplicity of the notations, for $y>0$, let us denote $c(t)=1-{1}/(\widetilde{a}(t)y)$. Since $\widetilde{a}\in RV_{1/(a+b)}$, as $t\to\infty$, we have $c(t)\to 1$. Also, $\widetilde a(t) = 1/((1-c(t))y)$. Then,
\begin{align*}
&t\prob\left[XY>1-\frac{1}{\widetilde{a}(t)y}\right]
= a\cdot t \int\limits_{\sqrt{c(t)}}^{1} \prob \left[Z>\frac{c(t)}{s} \right](1-s)^{a-1}ds\\
\sim &a\cdot {\widetilde{a}}^{\leftarrow} \left(\frac{1}{(1-c(t))y}\right) \int\limits_{\sqrt{c(t)}}^{1}\prob \left[Z>\frac{c}{s}\right] (1-s)^{a-1}ds\\
\sim &a\cdot {\widetilde{a}}^{\leftarrow} \left(\frac{1}{(1-c(t))y}\right) \int\limits_{\sqrt{c(t)}}^{1} \left(1-\frac{c}{s}\right)^{b} (1-s)^{a-1} L\left(\frac{1}{1-c(t)/s}\right)ds
\intertext{substituting $s=1-z(1-c(t))$,}
= &a (1-c(t))^{a+b} {\widetilde{a}}^{\leftarrow} \left(\frac{1}{(1-c(t))y}\right) \int\limits_{0}^{\frac{1}{1+\sqrt{c(t)}}} \frac{(1-z)^{b}z^{a-1}}{(1-(1-c(t))z)^{b}}  L\left(1+\frac{c(t)}{(1-c(t))(1-z)}\right)dz\\
\sim &a (1-c(t))^{a+b} {\widetilde{a}}^{\leftarrow} \left(\frac{1}{(1-c(t))y}\right) L\left( \frac{c(t)}{1-c(t)}\right) \int\limits_{0}^{1/2}(1-z)^b z^{a-1}dz,
\end{align*}
where, in the last step, we use Dominated Convergence Theorem and the facts that $c(t)\to 1$ and 
$$L\left(1+\frac{c(t)}{(1-c(t))(1-z)}\right) \sim L\left(\frac{c(t)}{(1-c(t))(1-z)}\right) \sim L\left( \frac{c(t)}{1-c(t)}\right)$$
uniformly on bounded intervals of $z$.
Finally, using~\eqref{example relation}, definition of $c(t)$,  to get,
$$t\prob\left[\frac{(1-XY)^{-1}}{\widetilde{a}(t)}>y\right]\rightarrow a \cdot y^{-(a+b)}\int\limits_{0}^{1/2}(1-z)^b z^{a-1}dz.$$
\end{section}

\cleardoublepage

\chapter{Sums of free random variables with regularly varying tails}\label{free sub chapter}
\section{Introduction}\label{sec: basic intro}
In this chapter we show an application of regular variations to free probability theory. In Lemma~\ref{lem:one large jump} we saw that  random variables (or distribution functions) with regularly varying tails are in the class of subexponential ones and satisfy the principle of one large jump. In terms of probability measures on $[0,\infty)$ we can reformulate it to say, measures with regularly varying index $-\alpha$, $\alpha\geq 0$ satisfy the following property for all $n\geq 1$,
$$\mu^{*n}(x,\infty)\sim n\mu(x,\infty) \text{ as $x\rightarrow\infty$.}$$

 In Chapter~\ref{chap:introduction} we saw many properties of the subexponential distributions. We shall now show that this property of measures with regularly varying tails is not restricted to classical convolution on the space of probability measures, but they extend to free probability theory as introduced in Section~\ref{chap1:subsec3}. The concept of freeness allows one to describe a convolution on the space of all probability measures which is known as the free convolution. Free convolution between two measures $\mu$ and $\nu$ is denoted by $\mu\boxplus\nu$. In Theorem~\ref{main theorem-1} we show that a probability measure $\mu$ with regularly varying tail on $[0,\infty)$ satisfies the following property for all $n\geq 1$,
$$\mu^{\boxplus n}(x,\infty)\sim n\mu(x,\infty) \text{ as $x\rightarrow\infty$}.$$

  The space of  all measures which satisfy the above property will be called \textit{free subexponential} measures (see Definition~\ref{def: free sub}). Section~\ref{sec: intro} elaborates upon Section~\ref{sec: basic intro} and provides an extended review of the useful notions and results of free probability theory. We refer to the monographs by \cite{voiculescu:dykema:nica:book, speicher:nica:book} for the basic definitions  of free probability. Towards the end of Section~\ref{sec: intro}, once we have introduced the important concepts, we introduce the main problem considered in the rest of the chapter. In Section~\ref{subsec: main results} we also provide a sketch of the chapter and an approach to the proof of Theorem~\ref{main theorem-1}.

\section{Basic setup and results} \label{sec: intro}
\subsection{Free probability}\label{subsec: free}
The notion of freeness was first introduces by \cite{voiculescu1986addition}. The definition was motivated by his study on certain types of von Neumann algebras. The relationship of freeness with random matrices established in \cite{voiculescu:1991} was a major advancement in this area. Some of the notions in this Subsection has already been in introduced in Section~\ref{chap1:subsec3}. We define them here formally and elaborate upon them.

\begin{definition}\label{def: ncp}A non-commutative probability space is a pair $(\mathcal{A},\tau)$ where $\mathcal{A}$ is a unital complex algebra and $\tau$ is a linear functional on $\mathcal{A}$ satisfying $\tau(1)=1.$ 
 \end{definition}
 Given elements $a_1,a_2,\cdots,a_k\in\mathcal A$ and indices $i(1),i(2),\cdots,i(n)\leq k$, the numbers $\tau(a_{i(1)}a_{i(2)}\cdots a_{i(n)})$ denote a moments. In analogy to the fact from the classical probability theory, where, in certain cases, the moments determine a distribution function, the collection of all moments involving $a_1,a_2,\cdots ,a_k$ is called the joint distribution of $a_1,a_2,\cdots a_k$.

Now we give two examples of non-commutative probability spaces. The first example indicates the motivation behind the nomenclature of a probability space.

\begin{example}
 \begin{enumerate}
  \item Let $(\Omega,\mathcal F, \prob)$ be a probability space in the classical sense. Let $\mathcal A=L^\infty(\Omega,\prob)$ be the set of bounded random variables and $$\tau(a)=\int a(\omega)d\prob(w).$$ In this case, the random variables actually commute.

\item Let $\mathcal H$ be a Hilbert space and $\mathcal A=\mathcal B(\mathcal H)$ be the space of all bounded operators on $\mathcal H$. The fundamental states are of the form $\tau(a)=\langle \eta,a\eta\rangle$ where $\eta\in \mathcal H$ is a vector of norm $1$.
 \end{enumerate}

\end{example}

Having defined the non-commutative probability spaces, we now give the definition of freeness.

\begin{definition}
Let $(\mathcal A,\tau)$ be a non-commutative probability space.  A family of unital subalgebras $\{\mathcal{A}_i\}_{i\in I}\subset\mathcal{A}$ is called {\it free} if $\tau(a_1\cdots a_n)=0$ whenever $\tau(a_j)=0,\  a_j\in \mathcal{A}_{i_j}$ and $i_j\neq i_{j+1}$ for all $j$. 
\end{definition}
Next we briefly describe some of the properties of freeness in this setup. 

Freeness allows easy calculation of the mixed moments from the knowledge of moments of the individual elements. For example, if $\mathcal A$ and $\mathcal B$ are free, then one has, for $a,a_1,a_2\in \mathcal A$ and $b,b_1,b_2\in \mathcal B$,
\begin{enumerate}
 \item $\tau(ab)=\tau(a)\tau(b);$ \label{tauab}
\item $\tau(a_1ba_2)=\tau(a_1a_2)\tau(b)$;
\item $\tau(a_1b_1a_2b_2)=\tau(a_1a_2)\tau(b_1)\tau(b_2)+\tau(a_1)\tau(a_2)\tau(b_1b_2)-\tau(a_1)\tau(b_1)\tau(a_2)\tau(b_2)$.
\end{enumerate}

In all the above examples one has to first center the elements and use the freeness condition. If $\hat{a}=a-\tau(a)$ and $\hat{b}=b-\tau(b)$, then $\tau(\hat{a}\hat{b})=0$ by freeness. Also, by linearity of $\tau$, we have $\tau(\hat{a}\hat{b})=\tau(ab)-\tau(a)\tau(b)$ which leads to~\ref{tauab}. Other claims can similarly be  checked.

Much of the calculations of moments using the above methods involve combinatorial arguments. In the combinatorial theory of free probability, it turns out that  calculating moments is often difficult. In some  such cases, the calculation of ``free cumulants'' are more advantageous. In order to describe the free cumulants one uses a subclass of the partitions known as non-crossing partitions. 
\begin{definition}
Let $n$ be a natural number. We call $\pi=\{V_1,\cdots ,V_r\}$ to be a \textit{partition} of $S=\{1,2,\cdots ,n\}$ if and only if $V_i$ for $1\leq i\leq r$ are pairwise disjoint and their union is $S$. $V_1,\cdots,V_r$ are called the \textit{blocks} of the partition $\pi$. If two elements $p$ and $q$ are in the same block then one writes $p\sim_\pi q$. 
\end{definition}
\begin{definition}A partition $\pi$ is called \textit{crossing} if there exists $1\leq p_1<q_1<p_2<q_2\leq n$ such that $p_1\sim_\pi p_2\nsim_\pi q_1\sim_\pi q_2$. A partition which is not crossing, is called a \textit{ non-crossing partition}. The set of non-crossing partitions of $S$ is denoted by $NC(S)$.
\end{definition}
Consider a collection $\{l_n:\mathcal A^n\to \mathbb C\}_{n=1}^\infty$ of multilinear functionals on a fixed complex algebra $\mathcal A$. 
Given a non-empty set $J=\{i_1<i_2<\cdots <i_m\}$ of positive integers, define
 $l(\{a_i\}_{i\in J})=l_m(a_{i_1},\cdots a_{i_m}).$
Further, if $\pi\in NC(J)$, define
$$l_{\pi}(\{a_i\}_{i\in J})=\prod_{V\in\pi }l(\{a_i\}_{i\in V}).$$
We are now ready to introduce the free cumulants.
\begin{definition}
 Let $(\mathcal A,\tau)$ be a non-commutative probability space. The free cumulants are defined as a collection of multilinear functions $\kappa_n:\mathcal A^n\to\mathbb C$ by the following system of equations:
\begin{equation}\label{eq:moment-cumulant}
\tau(a_1a_2\cdots a_n)=\sum_{\pi\in NC(n)}\kappa_\pi(a_1,a_2,\cdots ,a_n).
\end{equation}
\end{definition}

It can be easily shown that free cumulants are well defined.
The first few free cumulants can be calculated as follows. For $n=1$, we have from~\eqref{eq:moment-cumulant}, $\kappa_1(a_1)=\tau(a_1)$ which is the mean. For $n=2$, one gets the covariance, $\kappa_2(a_1,a_2)=\tau(a_1a_2)-\tau(a_1)\tau(a_2)$.

Freeness can also be characterized in terms of cumulants. In fact, freeness is equivalent to vanishing of mixed cumulants. More precisely, we have the following result.
\begin{theorem}[\citealp{speicher:nica:book}, Theorem11.16]\label{free:cumulants:charac}
The elements $\{a_i\}_{i\in I}$ of a complex unital algebra $\mathcal A$ are free if and only if $\kappa_n(a_{i(1)},\cdots,a_{i(n)})=0$ whenever $n\geq 2$ and there exists $k,l$ such that $i(k)\neq i(l)$. 
\end{theorem}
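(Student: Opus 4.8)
The whole argument runs through the moment--cumulant formula~\eqref{eq:moment-cumulant} together with the multiplicativity of $\kappa_\pi$ over the blocks of a non-crossing partition. I would first record two elementary facts, obtained by M\"obius inversion from~\eqref{eq:moment-cumulant}: for $m\geq 2$ a cumulant with a scalar entry vanishes, $\kappa_m(b_1,\dots,\lambda 1,\dots,b_m)=0$, and hence $\kappa_m$ with $m\geq 2$ is unchanged if one adds scalar multiples of $1$ to its arguments; and, on the combinatorial side, that \emph{every} non-crossing partition of $\{1,\dots,n\}$ with $n\geq 1$ has an interval block, i.e.\ a block of the form $\{p,p+1,\dots,q\}$. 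The one genuinely nontrivial preliminary is the formula for cumulants with products as entries (\cite{speicher:nica:book}, Theorem~11.12), which writes $\kappa_m$ evaluated at products of the $a_i$'s as a sum of $\kappa_\rho$ over those $\rho$ which, joined with the partition grouping the factors of each product, produce the full partition $1_n$; I would state and import (or prove) this at the outset. Throughout write $\hat a=a-\tau(a)1$ for the centering.

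\emph{Sufficiency (vanishing of mixed cumulants $\Rightarrow$ freeness).} The plan is: assuming $\kappa_n(a_{i(1)},\dots,a_{i(n)})=0$ whenever $i(1),\dots,i(n)$ are not all equal, take $b_1,\dots,b_n$ with $b_j$ in the unital algebra generated by $a_{i_j}$, $\tau(b_j)=0$, $i_j\neq i_{j+1}$, and show $\tau(b_1\cdots b_n)=0$. By multilinearity of $(c_1,\dots,c_n)\mapsto\tau(c_1\cdots c_n)$ I may reduce to $b_j=a_{i_j}^{\ell_j}-\tau(a_{i_j}^{\ell_j})1$. Expanding $\tau(b_1\cdots b_n)=\sum_{\pi\in NC(n)}\kappa_\pi(b_1,\dots,b_n)$, I claim each summand is $0$: choose an interval block $V=\{p,\dots,q\}$ of $\pi$; if $V$ is a singleton then $\kappa_1(b_p)=\tau(b_p)=0$; if $q>p$ then $i_p\neq i_{p+1}$, so, after discarding the scalar parts of the $b_j$'s (affecting only order-one cumulants, already handled) and applying the product formula, $\kappa_{|V|}$ on the relevant powers of the $a_i$'s becomes a sum of terms $\kappa_\rho$ each having a block that meets two different indices, hence vanishing by hypothesis. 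So $\tau(b_1\cdots b_n)=0$, which is freeness of the generated subalgebras.

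\emph{Necessity (freeness $\Rightarrow$ vanishing of mixed cumulants).} I would induct on $n$. For $n=2$, translation invariance gives $\kappa_2(a_{i(1)},a_{i(2)})=\kappa_2(\hat a_{i(1)},\hat a_{i(2)})=\tau(\hat a_{i(1)}\hat a_{i(2)})=0$ by freeness. For $n\geq 3$ with non-constant index tuple, first use translation invariance (and the induction hypothesis, to kill the lower-order mixed cumulants that appear upon centering) to assume each $a_{i(j)}$ is centered. If the tuple is alternating, $i(j)\neq i(j+1)$ for all $j$, then $\tau(a_{i(1)}\cdots a_{i(n)})=0$ by the definition of freeness; but the moment--cumulant formula writes this as $\kappa_n(a_{i(1)},\dots,a_{i(n)})+\sum_{\pi\neq 1_n}\kappa_\pi$, and for $\pi\neq 1_n$ the interval block is either a singleton (so $\kappa_\pi=0$ by centering) or sits on at least two, and at most $n-1$, positions with non-constant index (so $\kappa_\pi=0$ by induction), forcing $\kappa_n=0$. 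If the tuple is not alternating, pick $k$ with $i(k)=i(k+1)$; then $a_{i(k)}a_{i(k+1)}$ lies in the subalgebra of index $i(k)$, the merged $(n-1)$-tuple still has non-constant index so its $\kappa_{n-1}$ vanishes by induction, and the product formula equates that $\kappa_{n-1}$ with $\kappa_n(a_{i(1)},\dots,a_{i(n)})$ plus a sum over certain $\rho\neq 1_n$; analysing the possible two-block non-crossing $\rho$'s and using $i(k)=i(k+1)$ together with non-constancy of the full tuple shows each such $\kappa_\rho=0$, so again $\kappa_n=0$.

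\emph{Main obstacle.} The real work is combinatorial rather than analytic: importing or proving the product formula for cumulants with products as entries, and then, in both directions, pinning down which $\pi\in NC(n)$ can possibly contribute and showing each of them nevertheless vanishes. The crux is the interval-block observation, since the alternation hypothesis $i_j\neq i_{j+1}$ is exactly what forces an interval block to be a singleton; the transition between the generators $a_i$ and arbitrary elements of the subalgebras they generate, via multilinearity and the scalar-entry vanishing, is routine but must be carried out to reconcile the two forms of the statement.
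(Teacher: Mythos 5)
The paper does not prove this theorem: it is quoted with a citation to \cite{speicher:nica:book} (Theorem~11.16) and used purely as a black box in the expository section on cumulants, so there is no in-paper proof for your sketch to be measured against. What you have written is, in outline, the argument from the cited reference---moment--cumulant recursion, multiplicativity of $\kappa_\pi$ over blocks, the observation that every non-crossing partition contains an interval block, and the cumulant-of-products formula (\citealp{speicher:nica:book}, Theorem~11.12). The sufficiency direction as you present it is sound: reducing to centered monomials, extracting an interval block, and then (when that block has size at least two) noting that $\rho\vee\sigma=1$ forces some block of $\rho$ to meet two different indices all hold up.

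The necessity direction has a small but real gap in the way you set up the induction. Your stated hypothesis is that mixed cumulants of the \emph{generators} $a_i$ of order below $n$ vanish; but in Case~2 you invoke it for $\kappa_{n-1}(a_{i(1)},\dots,a_{i(k)}a_{i(k+1)},\dots,a_{i(n)})$, whose $k$-th argument is a product lying in $\mathrm{Alg}(1,a_{i(k)})$ rather than one of the $a_i$ themselves, which is outside the scope of the hypothesis as announced. The fix is to strengthen the inductive statement to ``$\kappa_m(b_1,\dots,b_m)=0$ whenever $b_j\in\mathrm{Alg}(1,a_{i(j)})$ for each $j$, $(i(1),\dots,i(m))$ is non-constant, and $m<n$''; the base case $m=2$ still reduces by centering to $\tau(\hat b_1\hat b_2)=0$ via freeness, and all of your subsequent Case~1/Case~2 reasoning goes through unchanged. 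This strengthened form is exactly what Nica--Speicher prove (their Theorem~11.16 is stated for free subalgebras, of which the element-wise version in the paper is the specialization). With that one adjustment your argument is complete and coincides with the standard one.
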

Denote by $\kappa_n(a):=\kappa_n(a,\cdots,a)$ the $n$-th cumulant of $a$. When $a$ and $b$ are free then from Theorem~\ref{free:cumulants:charac} one can conclude that, for all $n\geq 1$,
\begin{equation}\label{kappafree}
 \kappa_n(a+b)=\kappa_n(a)+\kappa_n(b).
\end{equation}

\begin{definition}\label{def: r-transform}
 For an element $a$, the formal power series $$R_a(z)=\sum_{n\geq0}\kappa_{n+1}(a)z^n$$ is called the R transform of $a$.
\end{definition}
 By~\eqref{kappafree}, the R transform in free probability plays a role similar to the logarithm of the characteristic function in classical probability. If $a$ and $b$ are free, we have,
$$R_{a+b}(z)=R_a(z)+R_b(z).$$

The relationship between the moments and cumulants in~\eqref{eq:moment-cumulant} can be translated to the following relationship between the formal power series. For a proof of the following Proposition we refer to Chapter 11 of \cite{speicher:nica:book}.

\begin{proposition}\label{prop:moment-cumulant}
 Let $(m_n)_{n\geq 1}$ and $(\kappa_n)_{n\geq 1}$ be two sequences of complex numbers and consider the corresponding power series:
$$M(z)=1+\sum_{n=1}^\infty m_n z^n \quad\text{and} \quad C(z)=1+\sum_{n=1}^\infty \kappa_nz^n.$$
Then the following statements are equivalent:
\begin{enumerate}
 \item For all $n\geq 1$,
$$m_n=\sum_{\pi\in NC(n)}\kappa_\pi=\sum_{\substack{\pi=\{V_1,\cdots,V_r\} \\ \pi\in NC(n)}}\kappa_{|V_1|}\cdots \kappa_{|V_r|}.$$
\item For all $n\geq 1$, (with $m_0:=1$) one has,
$$m_n=\sum_{s=1}^n\,\,\sum_{\stackrel{i_1,\cdots,i_s\in\{0,1,\cdots,n-s\}}{i_1+\cdots+i_s=n-s}}\kappa_s m_{i_1}\cdots m_{i_s}.$$
\item $C(zM(z))=M(z)$.
\end{enumerate}

\end{proposition}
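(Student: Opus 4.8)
The plan is to prove the chain of equivalences $(1)\Leftrightarrow(2)\Leftrightarrow(3)$, working throughout at the level of formal power series, so that every identity is an identity of coefficients and, at each fixed order $n$, only finitely many terms occur. Note at the outset that since $M$ and $C$ have constant term $1$ and $zM(z)$ has zero constant term, the composition $C(zM(z))$ is a well-defined formal power series, so no convergence issues arise.

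First I would establish $(1)\Leftrightarrow(2)$ by the standard decomposition of a non-crossing partition according to the block containing the point $1$. Given $\pi=\{V_1,\dots,V_r\}\in NC(n)$, write $V_1=\{1=b_1<b_2<\cdots<b_s\}$ for the block containing $1$. The non-crossing condition forces every other block of $\pi$ to lie entirely inside one of the gaps $\{b_1+1,\dots,b_2-1\},\dots,\{b_{s-1}+1,\dots,b_s-1\},\{b_s+1,\dots,n\}$, and the restriction of $\pi$ to each such gap is again an \emph{arbitrary} non-crossing partition of that gap, with no crossings created upon reassembly. If $i_1,\dots,i_s\ge 0$ are the sizes of these gaps, then $i_1+\cdots+i_s=n-s$, and since $\kappa_\pi$ factors over blocks, summing over all $\pi$ yields
\[
\sum_{\pi\in NC(n)}\kappa_\pi=\sum_{s=1}^n\ \sum_{\substack{i_1,\dots,i_s\ge 0\\ i_1+\cdots+i_s=n-s}}\kappa_s\prod_{j=1}^s\Big(\sum_{\sigma\in NC(i_j)}\kappa_\sigma\Big).
\]
The recursion in $(2)$ together with $m_0:=1$ determines $(m_n)$ uniquely from $(\kappa_n)$, and the sequence $n\mapsto\sum_{\pi\in NC(n)}\kappa_\pi$ satisfies this very recursion (using the identity above and induction on $n$); hence $(1)$ and $(2)$ are equivalent.

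Next, $(2)\Leftrightarrow(3)$ is a routine power-series computation. Expanding, $C(zM(z))=1+\sum_{s\ge 1}\kappa_s z^s M(z)^s$, and $M(z)^s=\sum_{k\ge 0}z^k\big(\sum_{i_1+\cdots+i_s=k}m_{i_1}\cdots m_{i_s}\big)$ with the convention $m_0=1$. Collecting the coefficient of $z^n$ for $n\ge 1$ in $C(zM(z))$ gives exactly $\sum_{s=1}^n\kappa_s\sum_{i_1+\cdots+i_s=n-s}m_{i_1}\cdots m_{i_s}$, while the coefficient of $z^n$ in $M(z)$ is $m_n$, and the constant terms agree. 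Therefore $C(zM(z))=M(z)$ if and only if $(2)$ holds for every $n\ge 1$.

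The main obstacle is the combinatorial step: verifying carefully that ``split at the block of $1$'' is a bijection between $NC(n)$ and the data consisting of the block $V_1\ni 1$ together with an independent choice of a non-crossing partition of each gap it determines --- in particular that the restrictions to the gaps may be chosen arbitrarily and that reassembling them never produces a crossing. Once this bijection is in hand, both equivalences reduce to bookkeeping; the only subtlety worth flagging explicitly is that $zM(z)$, rather than $M(z)$, is the correct argument of $C$, since the product in $(2)$ has $s$ factors from $(m_i)$ but the total degree is shifted by $s$.
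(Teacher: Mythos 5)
Your proof is correct and is essentially the standard Nica--Speicher argument; the paper does not include a proof of this proposition at all, deferring instead to Chapter~11 of the cited book by Nica and Speicher, where exactly your decomposition (split a non-crossing partition at the block containing~$1$ to derive the recursion, then read off the functional equation by comparing coefficients of $C(zM(z))$ and $M(z)$) is used. The two points you flag as worth care --- that the gap restrictions are independently arbitrary non-crossing partitions, and that the argument of $C$ must be $zM(z)$ to account for the degree shift by $s$ --- are precisely the places where the bookkeeping could otherwise go wrong, and you have handled both correctly.
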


\begin{definition} For an element $a$ of $(\mathcal A,\tau)$, the Cauchy transform is the formal power series given by
\begin{equation}
 G_a(z):=\sum_{n=0}^{\infty}\frac{m_n(a)}{z^{n+1}}, \quad\text{where } m_n(a)=\tau(a^n).
\end{equation}
\end{definition}
Now from Proposition~\ref{prop:moment-cumulant}, the Cauchy transform and the R transform of a random variable $a$ is related by the following relationship:
$$G_a\left(R_a(z)+\frac1z\right)=z.$$

Although the above things are done in terms of formal power series,  they can be made rigorous when the algebra is a $C^*$-algebra or von Neuman algebra and the element is self-adjoint.
\begin{definition}\label{def:state:tracial}Suppose $\mathcal A$ is a von Neumann algebra. A linear functional $\tau:\mathcal A\to \mathbb C$ with $\tau(1)=1$ is called:
\begin{enumerate}
 \item state: if $\tau(a)\geq 0$ for $a\geq 0$;
\item tracial : if $\tau(ab)=\tau(ba)$;
\item faithful: if $\tau(aa^*)=0$ implies $a=0$;
\item normal : if for any net $\{a_\beta\}$ of nonnegative elements of $\mathcal A$ monotonically decreasing to zero, we have $\inf_\beta\tau(a_\beta)=0$.
\end{enumerate}
\end{definition}
A self-adjoint element of a $C^*$-algebra is often called a random variable or a random element. Consider a self-adjoint element or a random variable $X$ of a $C^*$-algebra $\mathcal A$ equipped with a state $\tau$. Then there exists a compactly supported measure $\mu_X$ on $\mathbb R$, such that, for all $n\in\mathbb N$
$$\tau(X^n)=\int_{\mathbb R} t^n\mu_X(dt).$$

We further have that $G_X$ is an analytic function and $$G_X(z)=\tau\left( (z-X)^{-1}\right)=\int_{\mathbb R}\frac1{z-t}\mu_X(dt), \text{ for $z\in \mathbb C^+$}.$$
In general, the above definition works for any measure $\mu$ on $\mathbb R$. We denote the Cauchy transform of a measure $\mu$ by $\gmu$. Some further properties of the Cauchy transform for general measures is dealt in Section~\ref{sec: transform}.

\subsection{Main results}\label{subsec: main results}
The basic notions in free probability theory described in Subsection~\ref{subsec: free} are well suited for bounded random variables which correspond to compactly supported measures. However, our interest is in probability measures with regularly varying tails which have  unbounded support and hence one has to consider unbounded random variables. In order to deal with unbounded random variables we need to consider a tracial $W^*$-probability space $(A,\tau)$ with a von Neumann algebra $\mathcal A$ and a normal faithful tracial state $\tau$. See Definition~\ref{def:state:tracial} for the related concepts. By unbounded random variables, we shall mean self-adjoint operators affiliated to a von Neumann algebra $\mathcal A$. For definition of affiliated random variables and freeness for unbounded random variables, please see Subsection~\ref{chap1:subsec3} of Chapter~\ref{chap:introduction}.

 Recall that, given two measures $\mu$ and $\nu$, there exists a unique measure $\mu \boxplus \nu$, called the \textit{free convolution} of $\mu$ and $\nu$, such that whenever $X$ and $Y$ are two free random elements on a tracial $W^*$ probability space $(\mathcal A, \tau)$, the random element $X+Y$ has the law $\mu\boxplus\nu$. The definition of subexponential distributions  can be easily extended to the non-commutative setup by replacing the classical convolution powers by free convolution ones and this leads to Definition~\ref{def: free sub} of free subexponential probability measures. 
A probability measure $\mu$ on $[0,\infty)$ is said to be free subexponential if $\mu(x,\infty)>0$ for all $x\geq 0$ and  for all $n$,
$$\mu^{\boxplus n}(x,\infty)=\underbrace{(\mu\boxplus\cdots\boxplus\mu)}_{n \text{ times}}(x,\infty)\sim n\mu(x,\infty) \text{ as $x\rightarrow\infty$.}$$
The above definition can be rewritten in terms of distribution functions as well. A distribution function $F$ is called free subexponetial if for all $n \in \mathbb N$, $\overline {F^{\boxplus n}}(x) \sim n \overline F(x)$ as $x\to\infty$. A random element $X$ affiliated to a tracial $W^*$-probability space is called free subexponential, if its distribution is so. One immediate consequence of the definition of free subexponentiality is the principle of one large jump.

\cite{arous2006free} showed that for two distribution functions $F$ and $G$, there exists a unique measure $F\boxv G$, such that whenever $X$ and $Y$ are two free random elements on a tracial $W^*$-probability space, $F\boxv G$ will become the distribution of $X\vee Y$. Here $X\vee Y$ is the maximum of two self-adjoint operators defined using the spectral calculus via the projection-valued operators, see \cite{arous2006free} for details. \cite{arous2006free} showed that $F\boxv G(x) = \max((F+G-1)(x),0)$, and hence $F^{\boxv n}(x) = \max((nF-(n-1))(x),0)$. Then, we have, for each $n$, $\overline{F^{\boxv n}} (x) \sim n \overline F(x)$ as $x\to\infty$. Thus, by definition of free subexponentiality, we have
\begin{proposition}[Free one large jump principle] \label{prop: free one large jump}
Free subexponential distributions satisfy the principle of one large jump, namely, if $F$ is free subexponential, then, for every $n$,
$$\overline{F^{\boxplus n}}(x) \sim \overline{F^{\boxv n}}(x) \text{ as $x\to\infty$.}$$
\end{proposition}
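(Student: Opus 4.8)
This statement is essentially a formal consequence of the definition of free subexponentiality together with the explicit formula for the free max-convolution $F^{\boxv n}$ recalled just above. The plan is to simply unwind both sides and compare. First I would recall, from \cite{arous2006free}, that $F \boxv G(x) = \max\left((F+G-1)(x),0\right)$, and hence by induction $F^{\boxv n}(x) = \max\left((nF - (n-1))(x),0\right)$. Therefore $\overline{F^{\boxv n}}(x) = 1 - F^{\boxv n}(x) = \min\left(n(1-F(x)),1\right) = \min\left(n\overline F(x),1\right)$.

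Next I would observe that since $F$ is a distribution function with $\overline F(x)>0$ for all $x$ (the standing assumption for subexponentiality, inherited here from Definition~\ref{def: free sub}) and $\overline F(x)\to 0$ as $x\to\infty$, for all sufficiently large $x$ we have $n\overline F(x) < 1$, so that $\overline{F^{\boxv n}}(x) = n\overline F(x)$ for all large enough $x$. In particular $\overline{F^{\boxv n}}(x) \sim n\overline F(x)$ as $x\to\infty$. Combining this with the definition of free subexponentiality, which gives $\overline{F^{\boxplus n}}(x) \sim n\overline F(x)$ as $x\to\infty$, the two asymptotic equivalences chain together: $\overline{F^{\boxplus n}}(x) \sim n\overline F(x) \sim \overline{F^{\boxv n}}(x)$, which is exactly the claimed relation. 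The case $n=1$ is trivial since both sides equal $\overline F(x)$.

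There is no serious obstacle here; the only point requiring a little care is the inductive step for the formula $F^{\boxv n}(x) = \max\left((nF-(n-1))(x),0\right)$, which should be stated cleanly: assuming the formula for $n$, one computes $F^{\boxv(n+1)} = F^{\boxv n} \boxv F = \max\left((F^{\boxv n} + F - 1),0\right)$, and on the set where $F^{\boxv n} = nF-(n-1)$ this equals $\max\left(((n+1)F - n),0\right)$, while on the set where $F^{\boxv n} = 0$ one has $nF - (n-1) \le 0$, i.e. $F \le (n-1)/n \le n/(n+1)$, so $(n+1)F - n \le 0$ there as well, and the formula persists. One should also note in passing that $F^{\boxv n}$ is a bona fide distribution function (nondecreasing, right-continuous, with the correct limits), which is immediate from the formula. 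With these remarks in place the proposition follows in a few lines.
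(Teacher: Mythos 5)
Your proposal is correct and follows essentially the same line as the paper: the paper derives the result directly from the Ben~Arous--Voiculescu formula $F^{\boxv n}(x)=\max\left((nF-(n-1))(x),0\right)$, observes that this gives $\overline{F^{\boxv n}}(x)\sim n\overline F(x)$, and then chains this with the defining asymptotic $\overline{F^{\boxplus n}}(x)\sim n\overline F(x)$. Your write-up merely makes the inductive step and the ``eventually $n\overline F(x)<1$'' observation explicit, which the paper leaves implicit.
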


 The distributions with regularly varying tails of index $-\alpha$, with $\alpha\ge0$, form an important class of examples of subexponential distributions in the classical setup.  We show in Theorem~\ref{main theorem-1} that the distributions with regularly varying tails of index $-\alpha$, $\alpha\geq0$ also form a subclass of the free subexponential distributions. 
\begin{theorem}\label{main theorem-1}
If $F$ has regularly varying tail of index ${-\alpha}$ with $\alpha\ge0$, then $F$ is free subexponential.
\end{theorem}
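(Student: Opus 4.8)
The plan is to work entirely with transforms and to reduce the statement for general $n$ to the case $n=2$, exactly as in the classical setup (Lemma~\ref{chap1:lem:upper bound subexp condition} and Proposition~\ref{prop: conv root subexp}), although I expect the free analogue of that reduction to require its own argument. The lower bound $\overline{F^{\boxplus n}}(x)\gtrsim n\overline F(x)$ should follow from the free analogue of the "one large jump" inequality combined with the free-max identity $F^{\boxv n}(x)=\max((nF-(n-1))(x),0)$ from \cite{arous2006free}: since $X_1+\cdots+X_n\ge X_1\vee\cdots\vee X_n$ in the operator order for nonnegative free elements, we get $\overline{F^{\boxplus n}}(x)\ge\overline{F^{\boxv n}}(x)\sim n\overline F(x)$. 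So the whole content is the upper bound, and it suffices to prove $\limsup_{x\to\infty}\overline{F^{\boxplus 2}}(x)/\overline F(x)\le 2$ and then bootstrap.

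The core of the upper bound is analytic. Write $\mu$ for the law with $\overline F=\mu(\cdot,\infty)\in RV_{-\alpha}$. First I would relate the tail $\overline F(x)$ to the behaviour of the Cauchy transform $G_\mu(z)$ as $z\to 0$ or as $z\to\infty$ along suitable non-tangential directions, using the Stieltjes/Tauberian machinery recalled in Theorems~\ref{chap1:theo:tauberian:bingham} and~\ref{chap1:theo:karamata:stieltjes}: regular variation of the tail is equivalent to a precise asymptotic expansion of $G_\mu$, with a remainder term that is itself regularly varying. Because a measure with regularly varying tail of index $-\alpha$ has only finitely many finite moments, $G_\mu$ (and the Voiculescu transform $\phi_\mu$, equivalently the $R$-transform) admits only a finite-order Laurent expansion; the key technical input — which the excerpt advertises as Theorems~\ref{thm: error equiv}--\ref{thm: error equiv new}, \ref{fraction-taylor}, \ref{inverse-taylor} — is that the regularly varying remainder in the expansion of $G_\mu$ is inherited, up to an explicit constant, by the remainder in the expansion of $\phi_\mu$. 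Granting that dictionary, the subexponential estimate becomes a statement purely about the additive transform: since $\phi_{\mu\boxplus\mu}=2\phi_\mu$ (freeness linearises $\phi$), the remainder term of $\phi_{\mu^{\boxplus 2}}$ is exactly twice that of $\phi_\mu$, hence $\mu^{\boxplus 2}$ has regularly varying tail with the constant doubled, i.e. $\overline{F^{\boxplus 2}}(x)\sim 2\overline F(x)$. Feeding this back through the remainder-transfer theorems in the reverse direction gives the tail asymptotics for $\mu^{\boxplus 2}$, and more generally $\overline{F^{\boxplus n}}(x)\sim n\overline F(x)$ by using $\phi_{\mu^{\boxplus n}}=n\phi_\mu$ directly, so the $n\ge3$ cases need no separate induction once the transform dictionary is in place.

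The main obstacle, and where I expect essentially all the work to be, is establishing the two complex-analytic transfer results: (i) that a finite Taylor/Laurent expansion of an analytic function with a regularly varying remainder passes to the reciprocal $1/G_\mu$ and to the compositional inverse (which is how one gets from $G_\mu$ to $\phi_\mu$ via $\phi_\mu(z)=G_\mu^{-1}(z)-z$), with control on the new remainder; and (ii) that this remainder, which lives in the complex plane, can be pinned down non-tangentially well enough to invoke a Tauberian theorem and convert back to a statement about $\overline F(x)$ on the real line. Handling (i) requires tracking how regular variation behaves under the nonlinear operations of inversion and reciprocation — this is the analogue of, but genuinely harder than, the one-term computation $\phi_\mu(z)\sim$ const in \cite{bercovici1999stable} — and (ii) requires the boundary-behaviour estimates for $G_\mu$ near the real axis together with the definitions of non-tangential limits set up in Section~\ref{subsec: non-tang}. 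I would organise the chapter so that these two results are proved first as standalone propositions (Theorems~\ref{fraction-taylor} and~\ref{inverse-taylor}), then assembled with Karamata's Tauberian theorems to yield the equivalence between the $G_\mu$-remainder and the tail, and only at the very end combine $\phi_{\mu^{\boxplus n}}=n\phi_\mu$ with the lower bound above to conclude Theorem~\ref{main theorem-1}.
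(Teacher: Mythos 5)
Your second and third paragraphs are essentially the paper's proof. The paper establishes Theorems~\ref{thm: error equiv}--\ref{thm: error equiv new} (built on Propositions~\ref{fraction-taylor} and~\ref{inverse-taylor}, which transfer finite-order regularly-varying remainders through reciprocation and compositional inversion of analytic functions, plus the Abel--Tauber Propositions~\ref{stieltjes} and~\ref{stieltjes2} for the $G_\mu$-to-tail step), then runs exactly the argument you sketch: regular variation of the tail $\Leftrightarrow$ regular variation of $\Im r_G$ (or $\Re r_G$ when $\alpha=p+1$) $\Leftrightarrow$ regular variation of $\Im r_\phi$ with the auxiliary non-tangential bounds; additivity of $\phi$ under $\boxplus$ multiplies the remainder by the number of summands; run the dictionary backwards. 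The one organizational difference: the paper proves a tail-balance Lemma~\ref{lem: free tail} (for two measures $\mu,\nu$ with $\nu(y,\infty)\sim c\,\mu(y,\infty)$) and then performs a short induction on $n$, whereas you propose to handle all $n$ at once via $\phi_{\mu^{\boxplus n}}=n\phi_\mu$. Both work; the direct route still needs the observation (which the paper makes inside the Lemma) that $\mu^{\boxplus n}$ again lies in $\mathcal M_{p,\alpha}$ so the transfer theorems apply to it.

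Your first paragraph, however, is both unnecessary and in part unjustified. The plan to get the lower bound from an operator inequality $X_1+\cdots+X_n\ge X_1\vee\cdots\vee X_n$, and the upper bound from a free analogue of Lemma~\ref{chap1:lem:upper bound subexp condition}/Proposition~\ref{prop: conv root subexp} plus bootstrapping, is not used here and cannot be: the free max $X\vee Y$ of \cite{arous2006free} is defined via spectral projections, not as the lattice supremum of self-adjoint operators, and no such operator inequality against the sum is established (nor needed). The paper in fact proves the free one-large-jump principle, Proposition~\ref{prop: free one large jump}, as a \emph{corollary} of Theorem~\ref{main theorem-1} together with the explicit formula $\overline{F^{\boxv n}}(x)\sim n\overline F(x)$, not as an ingredient. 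Likewise there is no free analogue of the classical ``it suffices to check $n=2$'' reduction established in the paper; it is unnecessary precisely because the transform argument yields the exact asymptotic $\overline{F^{\boxplus n}}(x)\sim n\overline F(x)$, not merely a $\limsup$ bound. Your paragraph 2 supersedes paragraph 1, and you should simply drop the latter.
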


While it need not be assumed that the measure is concentrated on $[0,\infty)$, both the notions of free subexponentiality and regular variation are defined in terms of the measure restricted to $[0,\infty)$. Thus we shall assume the measure to be supported on $[0,\infty)$ except for the definitions of the relevant transforms in the initial part of Subsection~\ref{subsec: transf} and in the statement and the proof of Theorem~\ref{main theorem-1}. 

Due to the lack of coordinate systems and expressions for joint distributions of non-commutative random elements in terms of probability measures, the proofs of the above results deviate from the classical ones. In absence of the higher moments of the distributions with regularly varying tails, we cannot use the usual moment-based approach used in free probability theory. Instead, Cauchy and Voiculescu transforms become the natural tools to deal with the free convolution of measures. While Cauchy transform has already been introduced, we define Voiculescu transform in Section~\ref{sec: transform}. We then discuss the relationship between the remainder terms of Laurent series expansions of Cauchy and Voiculescu transforms of measures with regularly varying tail of index $-\alpha$. We need to consider four cases separately depending on the maximum number $p$ of integer moments that the measure $\mu$ may have. For a nonnegative integer $p$, let us denote the class of all probability measures $\mu$ on $[0,\infty)$ with $\int_0^\infty t^p d\mu(t) <\infty$, but $\int_0^\infty t^{p+1} d\mu(t) =\infty$, by $\mathcal M_p$. We shall also denote the class of all probability measures $\mu$ in $\mathcal M_p$ with regularly varying tail of index $-\alpha$ by $\mathcal M_{p,\alpha}$. Note that, we necessarily have $\alpha\in[p,p+1]$. Theorems~\ref{thm: error equiv}--\ref{thm: error equiv new} summarize the relationships among the remainder terms for various choices of $\alpha$ and $p$. These theorems are the key tools of this Chapter. Section~\ref{sec: transform} is concluded with two Abel-Tauber type results for Stieltjes transform of measures with regularly varying tail. We then prove Theorem~\ref{main theorem-1} in Section~\ref{sec: pf thm one} using Theorems~\ref{thm: error equiv}--\ref{thm: error equiv new}. We use the final two sections to prove Theorems~\ref{thm: error equiv}--\ref{thm: error equiv new}. In Section~\ref{sec: Cauchy}, we collect some results about the remainder term in Laurent series expansion of Cauchy transform of measures with regularly varying tails. In Section~\ref{sec: C-V reln}, we study the relationship between the remainder terms in Laurent expansions of Cauchy and Voiculescu transforms through a general analysis of the remainder terms of Taylor expansions of a suitable class of functions and their inverses or reciprocals. Combining, the results of Sections~\ref{sec: Cauchy} and~\ref{sec: C-V reln}, we prove Theorems~\ref{thm: error equiv}--\ref{thm: error equiv new}.

\section{Some transforms and their related properties} \label{sec: transform}
In this section, we collect some notations, definitions and results to be used later in this Chapter. In Subsection~\ref{subsec: non-tang}, we define the concept of non-tangential limits. Various transforms in non-commutative probability theory, like Cauchy, Voiculescu and $R$ transforms are introduced in Subsection~\ref{subsec: transf}. Theorems~\ref{thm: error equiv}--\ref{thm: error equiv new} regarding the relationship between the remainder terms of Laurent expansions of Cauchy and Voiculescu transforms are given in this subsection as well. Finally, in Subsection~\ref{subsec: karamata}, two results about measures with regularly varying tails are given.
\subsection{Non-tangential limits and notations} \label{subsec: non-tang}
Recall that, the complex plane is denoted by $\mathbb C$ and for a complex number $z$, $\Re z$ and $\Im z$  denotes the real and imaginary parts respectively. We say $z$ goes to infinity (zero respectively) \textit{non-tangentially} to $\mathbb R$ (n.t.), if $z$ goes to infinity (zero respectively), while $\Re z/ \Im z$ stays bounded. We can then define that a function $f$ converges or stays bounded as $z$ goes to infinity (or zero) n.t. To elaborate upon the notion, given positive numbers $\eta$, $\delta$ and $M$, let us define the following cones:
\begin{enumerate}
\item $\Gamma_{\eta}=\{z\in \mathbb{C}^+: |\Re z|<\eta \Im z\}$ and $\Gamma_{\eta,M}=\{z\in\Gamma_{\eta}: |z|>M\}$,
\item $\Delta_{\eta}=\{z\in \mathbb{C}^-: |\Re z|<-\eta \Im z\}$ and $\Delta_{\eta,\delta}=\{z\in\Delta_{\eta}: |z|<\delta\}$,
\end{enumerate}
where $\mathbb C^+$ and $\mathbb C^-$ are the upper and the lower halves of the complex plane respectively, namely, $\mathbb C^+ = \{z\in\mathbb C: \Im z>0\}$ and $\mathbb C^- = - \mathbb C^+$. Then we shall say that $f(z) \to l$ as $z$ goes to $\infty$ n.t., if for any $\epsilon>0$ and $\eta>0$, there exists $M\equiv M(\eta,\epsilon)>0$, such that $|f(z) - l|< \epsilon$, whenever $z \in \Gamma_{\eta, M}$. The boundedness can be defined analogously.

We shall write $f(z)\approx g(z)$, $f(z)=\lito(g(z))$ and $f(z)=\bigo(g(z))$ as $z\to\infty$ n.t.\ to mean that $f(z)/g(z)$ converges to a non-zero limit, ${f(z)}/{g(z)}\rightarrow 0$ and $f(z)/g(z)$ stays bounded as $z\to\infty$ n.t.\ respectively. If the non-zero limit is $1$ in the first case, we write $f(z) \sim g(z)$ as $z\to\infty$ n.t. For $f(z)=\lito(g(z))$ as $z\to\infty$ n.t., we shall also use the notations $f(z)\ll g(z)$ and $g(z)\gg f(z)$ as $z\to\infty$ n.t.

The map $z\mapsto 1/z$ maps the set $\Gamma_{\eta,{1}/{\delta}}$ onto $\Delta_{\eta,\delta}$ for each positive $\eta$ and $\delta$. Thus the analogous concepts can be defined for $z\to 0$ n.t.\ using $\Delta_{\eta,\delta}$.
\subsection{Cauchy and Voiculescu Transform} \label{subsec: transf}
Recall that for a probability measure $\mu$, its Cauchy transform is defined as
$$G_{\mu}(z)=\int_{-\infty}^{\infty}\frac{1}{z-t}d\mu(t),\ \ \  z\in \mathbb{C}^+.$$
Note that $G_{\mu}$ maps $\mathbb{C}^+$ to $\mathbb{C}^{-}$. Set $F_{\mu}=1/\gmu$, which maps $\mathbb{C}^+$ to $\mathbb{C}^+$. We shall be also interested in the function $H_\mu(z) = G_\mu(1/z)$ which maps $\mathbb C^-$ to $\mathbb C^-$.

By Proposition~5.4 and Corollary~5.5 of \cite{bercovici1993free}, for all $\eta>0$ and for all $\epsilon\in (0,\eta\wedge 1)$, there exists $\delta\equiv\delta(\eta)$ small enough, such that $H_\mu$ is a conformal bijection from $\Delta_{\eta,\delta}$ onto an open set $\mathcal{D}_{\eta,\delta}$, where the range sets satisfy
$$\Delta_{\eta-\epsilon,(1-\epsilon)\delta}\subset \mathcal{D}_{\eta,\delta}\subset \Delta_{\eta+\epsilon,(1+\epsilon)\delta}.$$

If we define $\mathcal D = \cup_{\eta>0} \mathcal D_{\eta, \delta(\eta)}$, then we can obtain an analytic function $L_\mu$ with domain $\mathcal D$ by patching up the inverses of $H_\mu$ on $\mathcal{D}_{\eta,\delta(\eta)}$ for each $\eta>0$. In this case $L_{\mu}$ becomes the right inverse of $H_\mu$ on $\mathcal{D}.$ Also it was shown that the sets of type $\Delta_{\eta,\delta}$ were contained in the unique connected component of the set $H_\mu^{-1}(\mathcal{D})$. It follows that $H_\mu$ is the right inverse of $L_{\mu}$ on $\Delta_{\eta,\delta}$ and hence on the whole connected component by analytic continuation.

We then define $R$ and Voiculescu transforms of the probability measure $\mu$ respectively as:
\begin{equation}\label{eq: R phi defn}
R_{\mu}(z)=\frac{1}{L_{\mu}(z)}-\frac{1}{z} \ \ \ \text{and} \ \ \ \phi_{\mu}(z)=R_{\mu}({1}/{z}).
\end{equation}
Also, if $X$ is a (self-adjoint) random element in a $W^*$-algebra $\mathcal A$ with an associated measure $\mu$, then $R_\mu$ coincides with the formal definition given in Definition~\ref{def: r-transform}. 
Arguing as in the case of $\gmu(1/z)$, it can be shown that $F_\mu$ has a left inverse, denoted by $F_\mu^{-1}$ on a suitable domain and, in that case, we have
$$\phi_{\mu}(z)=F_{\mu}^{-1}(z)-z.$$

\cite{bercovici1993free} established the following relation between free convolution and Voiculescu and $R$ transforms. For probability measures $\mu$ and $\nu$,
$$\phi_{\mu\boxplus\nu}=\phi_{\mu}+\phi_{\nu} \ \ \text{and}\ \ R_{\mu\boxplus\nu}=R_{\mu}+R_{\nu},$$
wherever all the functions involved are defined.

We shall also need to analyze the power and Taylor series expansions of the above transforms. For Taylor series expansion of a function, we need to define the remainder term appropriately, so that it becomes amenable to the later calculations. In fact, for a function $A$ with Taylor series expansion of order $p$, we define the remainder term as
\begin{equation} \label{eq: def remainder}
r_A(z) = z^{-p} \left( A(z) - \sum_{i=0}^p a_i z^i \right).
\end{equation}
Note that, we divide by $z^p$ after subtracting the polynomial part.

For compactly supported measure $\mu$, \cite{speicher1994multiplicative} showed that, in an appropriate neighborhood of zero, $R_{\mu}(z)=\sum_{j= 0}^\infty\kappa_{j+1}(\mu)z^j$, where $\{\kappa_j(\mu)\}$ denotes the free cumulant sequence of the probability measure $\mu$. For probability measures $\mu$ with finite $p$ moments, Taylor expansions of $R_\mu$ and $H_\mu$ are given by Theorems~$1.3$ and~$1.5$ of \cite{benaych2006taylor}:
\begin{equation} \label{eq: error}
R_{\mu}(z)=\sum_{j=0}^{p-1}\kappa_{j+1}(\mu)z^j+ z^{p-1} r_{R_\mu}(z), \text{ and } H_\mu(z)=\sum_{j=1}^{p+1}m_{j-1}(\mu)z^j+z^{p+1} r_{H_\mu}(z),
\end{equation}
where the remainder terms $r_{R_\mu}(z) \equiv r_R(z) = \lito(1)$ and $r_{H_\mu}(z) \equiv r_H(z) = \lito(1)$ as $z\rightarrow 0$ n.t.\ are defined along the lines of~\eqref{eq: def remainder}, $\{\kappa_j(\mu):j\le p\}$ denotes the free cumulant sequence of $\mu$ as before and $\{m_j(\mu):j\le p\}$ denotes the moment sequence of the probability measure $\mu$. When there is no possibility of confusion, we shall sometimes suppress the measure involved in the notation for the moment and the cumulant sequences, as well as the remainder terms.
In the study of stable laws and the infinitely divisible laws, the following relationship between Cauchy and Voiculescu transforms of a probability measure $\mu$, obtained in Proposition~2.5 of \cite{bercovici1999stable}, played a crucial role:
\begin{equation} \label{eq: phi mu}
\phi_{\mu}(z)\sim z^2\left[\gmu(z)-\frac{1}{z}\right] \text{ as  } z\rightarrow\infty  \text{ n.t.}
\end{equation}
Depending on the number of moments that the probability measure $\mu$ may have, its Cauchy and Voiculescu transforms can have Laurent series expansions of higher order. Motivated by this fact, for probability measures $\mu\in\mathcal M_p$ (that is, when $\mu$ has only $p$ integral moments), we introduce the remainder terms in Laurent series expansion of Cauchy and Voiculescu transforms (in analogy to the remainder terms in Taylor series expansion):
\begin{align}
r_{G_\mu}(z) \equiv r_G(z) &= z^{p+1} \left(G_\mu(z) - \sum_{j=1}^{p+1}m_{j-1}(\mu)z^{-j}\right)\label{eq:rg defn}
\intertext{and}
r_{\phi_\mu}(z) \equiv r_\phi(z) &= z^{p-1} \left(\phi_\mu(z) - \sum_{j=0}^{p-1}\kappa_{j+1}(\mu)z^{-j}\right),\label{eq: rphi defn}
\end{align}
where we shall again suppress the measure $\mu$ in the notation if there is no possibility of confusion. In~\eqref{eq: rphi defn}, we interpret the sum on the right side as zero, when $p=0$. Using the remainder terms defined in~\eqref{eq:rg defn} and~\eqref{eq: rphi defn} we provide  extensions of~\eqref{eq: phi mu} in Theorems~\ref{thm: error equiv}--\ref{thm: error equiv new} for different choices of $\alpha$ and $p$. We split the statements into four cases as follows: (i) $p$ is a positive integer and $\alpha\in(p,p+1)$, (ii) $p$ is a positive integer and $\alpha=p$, (iii) $p=0$ and $\alpha\in[0,1)$ and (iv) $p$ is a nonnegative integer and $\alpha=p+1$ giving rise to Theorems~\ref{thm: error equiv}--\ref{thm: error equiv new} respectively.

We first consider the case where $p$ is a positive integer and $\alpha\in(p,p+1)$.
\begin{theorem}\label{thm: error equiv}
Let $\mu$ be a probability measure in the class $\mathcal M_p$ and $\alpha\in(p,p+1)$. The following statements are equivalent:
\let\myenumi\theenumi
\renewcommand{\theenumi}{\roman{enumi}}
\begin{enumerate}
\item $\mu(y,\infty)$ is regularly varying of index $-\alpha$. \label{tail}
\item $\Im r_G(iy)$ is regularly varying of index $-(\alpha-p)$. \label{Cauchy remainder}
\item $\Im r_\phi(iy)$ is regularly varying of index $-(\alpha-p)$, $\Re r_\phi(iy)\gg y^{-1}$ as $y\to\infty$ and $r_\phi(z)\gg z^{-1}$ as $z\to\infty$ n.t. \label{Voiculescu remainder}
\end{enumerate}
If any of the above statements holds, we also have, as $z\to\infty$ n.t.,
\begin{equation} \label{rg-rphi}
r_G(z)\sim r_\phi(z) \gg z^{-1};
\end{equation}
as $y\to\infty$,
\begin{equation}
\Im r_\phi(iy) \sim \Im r_G(iy) \sim -\frac{\frac{\pi(p+1-\alpha)}2}{\cos\frac{\pi(\alpha-p)}2} y^p \mu(y,\infty) \gg \frac1y \label{imrg-rphi}
\end{equation}
and
\begin{equation}
\Re r_\phi(iy) \sim \Re r_G(iy) \sim -\frac{\frac{\pi(p+2-\alpha)}2}{\sin\frac{\pi(\alpha-p)}2} y^p \mu(y,\infty) \gg \frac1y. \label{rerg-rphi}
\end{equation}
\end{theorem}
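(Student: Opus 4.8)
\textbf{Proof plan for Theorem~\ref{thm: error equiv}.}

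The plan is to prove the equivalence \eqref{tail}$\Leftrightarrow$\eqref{Cauchy remainder} first, then derive \eqref{Voiculescu remainder} and the asymptotic relations \eqref{rg-rphi}--\eqref{rerg-rphi} by transferring the Cauchy-transform information to the Voiculescu transform. The starting point is the integral representation of the remainder term. From \eqref{eq:rg defn} and the definition of $G_\mu$, writing $z=iy$ and using $\sum_{j=1}^{p+1} m_{j-1} z^{-j} = \int \frac{1}{z}\sum_{j=0}^{p}(t/z)^j \, d\mu(t)$, one obtains
\begin{equation*}
r_G(iy) = (iy)^{p+1}\int_0^\infty \frac{1}{iy-t}\left(\frac{t}{iy}\right)^{p+1} d\mu(t) = \int_0^\infty \frac{t^{p+1}}{iy-t}\, d\mu(t).
\end{equation*}
Splitting into real and imaginary parts gives $\Re r_G(iy) = -\int_0^\infty \frac{t^{p+2}}{y^2+t^2} d\mu(t)$ and $\Im r_G(iy) = -\int_0^\infty \frac{t^{p+1} y}{y^2+t^2} d\mu(t)$. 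Each of these is a Stieltjes-type transform of the measure $t^{p+1} d\mu(t)$ (respectively $t^{p+2}d\mu(t)$), and the tail of $\mu$ being regularly varying of index $-\alpha$ with $\alpha\in(p,p+1)$ is exactly what is needed to apply the Abel--Tauber results for Stieltjes transforms (Theorem~\ref{chap1:theo:karamata:stieltjes} and Karamata's theorem, together with the Stieltjes-transform Tauberian theorems promised at the end of Section~\ref{sec: transform}; these are precisely the ``two Abel-Tauber type results'' referenced in Subsection~\ref{subsec: karamata}). Concretely, Karamata's theorem (Theorem~\ref{chap1:theorem:karamata}, part applied to $\int_0^y$ and $\int_y^\infty$) yields $\int_0^y t^{p+1} d\mu(t) \sim \frac{\alpha}{\alpha-p-1}\cdot(\text{boundary terms})$ and the tail integrals, and substituting these into the kernel estimates produces the constants $\frac{\pi(p+1-\alpha)/2}{\cos(\pi(\alpha-p)/2)}$ and $\frac{\pi(p+2-\alpha)/2}{\sin(\pi(\alpha-p)/2)}$ in \eqref{imrg-rphi}--\eqref{rerg-rphi}; the standard way to get these trigonometric constants is the formula $\int_0^\infty \frac{s^{\beta-1}}{1+s}ds = \frac{\pi}{\sin\pi\beta}$ applied after a regular-variation change of variables. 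This establishes \eqref{tail}$\Leftrightarrow$\eqref{Cauchy remainder} and, along the way, the asymptotic formulas for $r_G(iy)$. The non-tangential statement $r_G(z)\gg z^{-1}$ as $z\to\infty$ n.t.\ follows because $y^p\mu(y,\infty)$ is regularly varying of index $p-\alpha > -1$, so it dominates $y^{-1}$, and a uniform-convergence (Potter bound) argument upgrades the $iy$-axis statement to the full cone.

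Next, to connect $r_\phi$ with $r_G$, I would use the defining relation between the Voiculescu transform and the Cauchy transform. Recall $\phi_\mu(z) = F_\mu^{-1}(z) - z$ where $F_\mu = 1/G_\mu$, equivalently $R_\mu(z) = 1/L_\mu(z) - 1/z$ with $L_\mu$ the inverse of $H_\mu(z)=G_\mu(1/z)$. The function $H_\mu$ has a Taylor expansion of order $p+1$ with remainder $r_{H_\mu}$, and $r_{H_\mu}(z) = z^p r_{G_\mu}(1/z)$ by comparing \eqref{eq:rg defn} with \eqref{eq: error}; thus $r_H$ inherits the regular-variation behaviour of $r_G$ (as $z\to 0$ n.t.). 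Now $L_\mu$ is the inverse of $H_\mu$ and $R_\mu$ is built from $1/L_\mu - 1/z$; the passage from the remainder term of an analytic function to that of its inverse and its reciprocal is exactly what Theorems~\ref{fraction-taylor} and~\ref{inverse-taylor} (the complex-analysis results announced in Subsection~\ref{chap1:subsec3}) are designed to handle. Applying these, one gets that $r_{R_\mu}$, hence $r_{\phi_\mu}(z) = z^{\,\cdot}(\cdots)$, has remainder asymptotically equivalent to $r_{H_\mu}$, hence to $r_{G_\mu}$: this gives $r_\phi(z)\sim r_G(z)$ as $z\to\infty$ n.t., i.e.\ \eqref{rg-rphi}, and then \eqref{imrg-rphi}--\eqref{rerg-rphi} for $r_\phi$ follow from those for $r_G$. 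The implication \eqref{Cauchy remainder}$\Rightarrow$\eqref{Voiculescu remainder} is then immediate (the condition $\Re r_\phi(iy)\gg y^{-1}$ and $r_\phi(z)\gg z^{-1}$ being read off from the displayed asymptotics), and \eqref{Voiculescu remainder}$\Rightarrow$\eqref{Cauchy remainder} is obtained by running Theorems~\ref{fraction-taylor}--\ref{inverse-taylor} in the reverse direction --- here the extra hypotheses $\Re r_\phi(iy)\gg y^{-1}$ and $r_\phi(z)\gg z^{-1}$ are exactly the regularity conditions those theorems need in order to be invertible, which is why they appear in \eqref{Voiculescu remainder} but not in \eqref{Cauchy remainder}.

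The main obstacle I anticipate is twofold. First, the bookkeeping in the inverse-function step: $L_\mu = H_\mu^{-1}$ is only defined on the patched domain $\mathcal D$, and one must check that the non-tangential cones $\Gamma_{\eta,M}$ (equivalently $\Delta_{\eta,\delta}$ under $z\mapsto 1/z$) are carried into one another appropriately by $H_\mu$, $L_\mu$, and the reciprocal map, so that ``$z\to\infty$ n.t.''\ statements genuinely transfer; this is where the conformal-bijection estimates $\Delta_{\eta-\epsilon,(1-\epsilon)\delta}\subset\mathcal D_{\eta,\delta}\subset\Delta_{\eta+\epsilon,(1+\epsilon)\delta}$ from \cite{bercovici1993free} do the heavy lifting, and one needs to be careful that the regularly varying remainder (which is not $\lito$ of a power in general, only of $z^{p-\alpha+\epsilon}$) is still small enough to be treated perturbatively in the inversion. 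Second, pinning down the exact trigonometric constants requires handling the principal-value-type behaviour of $\Re r_G(iy)$ carefully: unlike $\Im r_G(iy)$, the real part is a Stieltjes transform of $t^{p+2}d\mu(t)$ whose total mass integral $\int_0^y t^{p+2}d\mu(t)$ is regularly varying of a positive index, so one must be attentive to whether the dominant contribution comes from $t\sim y$ or from the bulk, and the constant $\frac{\pi(p+2-\alpha)/2}{\sin(\pi(\alpha-p)/2)}$ emerges only after correctly combining the Karamata asymptotics of the truncated moment with the tail. Everything else --- the lower bound $\gg 1/y$, the non-tangential upgrade, the reverse implications --- is routine once these two points are settled.
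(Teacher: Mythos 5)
Your strategy is essentially the paper's: establish the equivalence of~\eqref{tail} and~\eqref{Cauchy remainder} via Abel--Tauber theorems for Stieltjes-type transforms of $t^{p+1}d\mu(t)$ and $t^{p+2}d\mu(t)$ (these are Propositions~\ref{stieltjes} and~\ref{stieltjes2}, consolidated into Proposition~\ref{prop: cauchy tail}), then transfer the information to the Voiculescu transform by passing $H_\mu \to L_\mu = H_\mu^{-1} \to K = z/L_\mu \to R_\mu$ and invoking the reciprocal- and inverse-Taylor propositions (Propositions~\ref{fraction-taylor} and~\ref{inverse-taylor}), which is exactly the scheme the paper encodes in~\eqref{scheme} and the $\mathcal R_{p,\beta}$ classes. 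You also correctly identify that the extra hypotheses in statement~\eqref{Voiculescu remainder} are the regularity inputs needed for the reverse direction.

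Two points need correction. First, the relation you assert between $r_H$ and $r_G$ carries an extraneous factor: since $H_\mu(z)=G_\mu(1/z)$, comparing~\eqref{eq: error} with~\eqref{eq:rg defn} gives $r_H(z)=r_G(1/z)$, not $z^p\, r_G(1/z)$; the extra $z^p$ would shift the index of regular variation by $p$ and break the bookkeeping. Second, and more substantively, your proposed derivation of the non-tangential lower bound $r_G(z)\gg z^{-1}$ --- upgrading the imaginary-axis estimate to the cone $\Gamma_\eta$ via a Potter bound --- does not go through: Potter's inequality controls ratios of a real regularly varying function at two real arguments and says nothing about a holomorphic function away from a ray, where cancellation between the integrand's real and imaginary contributions is possible. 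The paper proves the non-tangential lower bound directly from positivity of the measure: Proposition~\ref{prop: lower bd rG} estimates the real and imaginary parts of $z\,r_G(z)=z\int_0^\infty t^{p+1}(z-t)^{-1}\,d\mu(t)$ on $\Gamma_\eta$ and shows $|z\,r_G(z)|\to\infty$ by a contradiction argument, and notably this needs only $\mu\in\mathcal M_p$, not regular variation. With these two corrections your argument aligns with the paper's proof.
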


Next we consider the case where $p$ is a positive integer and $\alpha=p$.
\begin{theorem}\label{thm: error equiv eq p}
Let $\mu$ be a probability measure in the class $\mathcal M_p$. The following statements are equivalent:
\let\myenumi\theenumi
\renewcommand{\theenumi}{\roman{enumi}}
\begin{enumerate}
\item $\mu(y,\infty)$ is regularly varying of index $-p$. \label{tail eq p}
\item $\Im r_G(iy)$ is slowly varying. \label{Cauchy remainder eq p}
\item $\Im r_\phi(iy)$ is slowly varying, $\Re r_\phi(iy)\gg y^{-1}$ as $y\to\infty$ and $r_\phi(z)\gg z^{-1}$ as $z\to\infty$ n.t. \label{Voiculescu remainder eq p}
\end{enumerate}
If any of the above statements holds, we also have, as $z\to\infty$ n.t.,
\begin{equation} \label{rg-rphi eq p}
r_G(z)\sim r_\phi(z) \gg z^{-1};
\end{equation}
as $y\to\infty$,
\begin{equation}
\Im r_\phi(iy) \sim \Im r_G(iy) \sim -\frac\pi2 y^p \mu(y,\infty) \gg \frac1y \label{imrg-rphi eq p}
\end{equation}
and
\begin{equation}
\Re r_\phi(iy) \sim \Re r_G(iy) \gg \frac1y. \label{rerg-rphi eq p}
\end{equation}
\end{theorem}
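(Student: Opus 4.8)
The plan is to follow the same two-step scheme used to prove Theorem~\ref{thm: error equiv}, now specialized to the boundary exponent $\alpha=p$. The first step is an Abel--Tauber correspondence linking the tail $\mu(y,\infty)$ with the remainder term $r_G$ of the Cauchy transform defined in~\eqref{eq:rg defn}; this is where the results of Section~\ref{sec: Cauchy}, the Stieltjes-transform Tauberian theorems (Theorems~\ref{chap1:theo:tauberian:bingham} and~\ref{chap1:theo:karamata:stieltjes}) together with the two Abel--Tauber results at the end of Section~\ref{sec: transform}, and the delicate boundary version of Karamata's theorem (Theorem~\ref{lemma:karamata:df}, which governs truncated moments when the $p$-th moment sits exactly at the edge of integrability) all enter. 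The second step transfers regular-variation information on remainder terms through the reciprocal $F_\mu=1/G_\mu$ and then through the functional inverse $F_\mu^{-1}$, using Theorems~\ref{fraction-taylor} and~\ref{inverse-taylor}; since $\phi_\mu(z)=F_\mu^{-1}(z)-z$, this yields the comparison $r_G\sim r_\phi$ in~\eqref{rg-rphi eq p} and the equivalence of statements (ii) and (iii). The extra hypotheses attached to (iii), namely $\Re r_\phi(iy)\gg y^{-1}$ and $r_\phi(z)\gg z^{-1}$ n.t., are exactly the nondegeneracy conditions ensuring that the remainder is the genuine dominant correction and is not absorbed by a hypothetical finite $(p{+}1)$-st moment or cumulant.

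For the equivalence of (i) and (ii), I would evaluate $G_\mu$ along the imaginary axis, subtract the Laurent polynomial $\sum_{j=1}^{p+1}m_{j-1}(\mu)z^{-j}$, and multiply by $z^{p+1}$ as in~\eqref{eq:rg defn}; after an integration by parts the imaginary and real parts of $r_G(iy)$ become, up to sign and up to the factor $y^p$, Stieltjes-type transforms of $t\mapsto\mu(t,\infty)$. By the Tauberian theorems this makes $\Im r_G(iy)$ slowly varying precisely when $\mu(y,\infty)$ is regularly varying of index $-p$, and a direct evaluation of the relevant Stieltjes kernel on the boundary produces the constant $-\pi/2$ in~\eqref{imrg-rphi eq p} (consistently, the constant $\tfrac{\pi(p+1-\alpha)/2}{\cos(\pi(\alpha-p)/2)}$ of Theorem~\ref{thm: error equiv} evaluates to $\pi/2$ at $\alpha=p$). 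The real part behaves differently: the corresponding constant $\tfrac{\pi(p+2-\alpha)/2}{\sin(\pi(\alpha-p)/2)}$ diverges as $\alpha\downarrow p$, reflecting that at $\alpha=p$ the real part is governed by a strictly larger slowly varying quantity — of the type $\int_0^y t^{p-1}\mu(t,\infty)\,dt$, which by Theorem~\ref{lemma:karamata:df} dominates $y^p\mu(y,\infty)$. This yields~\eqref{rerg-rphi eq p}, in particular $\Re r_\phi(iy)\gg y^{-1}$, and explains why $\Im r_G(iy)\ll\Re r_G(iy)$.

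For the equivalence of (ii) and (iii) and for~\eqref{rg-rphi eq p}, I would first restate the information about $r_G$ as information about the Taylor remainder of $H_\mu(z)=G_\mu(1/z)$, then pass to $F_\mu$ via Theorem~\ref{fraction-taylor} (reciprocal) and to $F_\mu^{-1}$ via Theorem~\ref{inverse-taylor} (inverse), noting that the polynomial parts transform according to the moment--cumulant relation of Proposition~\ref{prop:moment-cumulant} while the remainder is carried through linearly to leading order. The nondegeneracy conditions in (iii) guarantee that $r_\phi(z)$ is of order strictly larger than $z^{-1}$ (a finite $(p{+}1)$-st cumulant would force $r_\phi(z)\approx z^{-1}$), so that the inversion argument applies on the cones $\Gamma_{\eta,M}$ and preserves leading asymptotics, giving $r_\phi(z)\sim r_G(z)$ as $z\to\infty$ n.t.\ and hence~\eqref{imrg-rphi eq p}; this is the $\alpha=p$ sharpening of~\eqref{eq: phi mu}.

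The main obstacle I anticipate is the bookkeeping forced by the boundary exponent. Since $\int_0^\infty t^p\,d\mu(t)=\infty$, the truncated moment $\int_0^y t^p\,d\mu(t)$ is only slowly varying, so one must verify that subtracting the Laurent polynomial in~\eqref{eq:rg defn} (and, analogously, in~\eqref{eq: rphi defn}) exactly cancels this slowly varying divergence, leaving a remainder that genuinely reflects the tail at the correct order. Compounding this, under functional inversion the real part of the remainder is of larger order than the imaginary part, and one must check that this larger slowly varying contribution neither feeds back into the imaginary part nor spoils the non-tangential uniformity. Tracking, at each stage (Cauchy $\to$ reciprocal $\to$ inverse) and uniformly on the cones $\Gamma_{\eta,M}$, exactly which component is slowly varying and which is regularly varying of a negative index is the crux of the argument.
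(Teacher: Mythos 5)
Your proposal is correct and follows essentially the same strategy as the paper: an Abel--Tauber step linking $\mu(y,\infty)$ with $r_G$ (Propositions~\ref{prop: lower bd rG},~\ref{prop: cauchy tail},~\ref{prop: rG iy}, with the constant $-\pi/2$ coming from the limiting evaluation of the Stieltjes kernel at $\alpha=p$), followed by transferring the remainder asymptotics from $r_G$ to $r_\phi$ via the closure of the class $\mathcal R_{p,0}$ under taking reciprocals and inverses (Propositions~\ref{fraction-taylor} and~\ref{inverse-taylor}). The one minor deviation is the order of operations in the second step: you propose going $G_\mu \to F_\mu=1/G_\mu$ (reciprocal) $\to F_\mu^{-1}$ (inverse) $\to \phi_\mu$, whereas the paper's schematic~\eqref{scheme} inverts first ($H_\mu\to L_\mu=H_\mu^{-1}$) and then takes the reciprocal ($L_\mu/z \to K=z/L_\mu$) before reading off $R_\mu$; since both operations preserve $\mathcal R_{p,0}$ and give asymptotically negated remainders, the two routes are interchangeable and arrive at the same conclusion~\eqref{rg-rphi eq p}. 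Your identification of why the real part dominates at the boundary exponent $\alpha=p$ (the slowly varying quantity $\int_0^y s^{p-1}\mu(s,\infty)\,ds$ dominates $y^p\mu(y,\infty)$, so only a lower bound~\eqref{rerg-rphi eq p} rather than an exact constant is available) matches Proposition~\ref{prop: rG iy} exactly.
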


In the third case, we consider $\alpha\in[0,1)$.
\begin{theorem}\label{thm: error equiv-0}
Let $\mu$ be a probability measure in the class $\mathcal M_0$ and $\alpha\in[0,1)$. The following statements are equivalent:
\let\myenumi\theenumi
\renewcommand{\theenumi}{\roman{enumi}}
\begin{enumerate}
\item $\mu(y,\infty)$ is regularly varying of index $-\alpha$. \label{tail-0}
\item $\Im r_G(iy)$ is regularly varying of index $-\alpha$. \label{Cauchy remainder-0}
\item $\Im r_\phi(iy)$ is regularly varying of index $-\alpha$, $\Re r_\phi(iy)\approx \Im r_\phi(iy)$ as $y\to\infty$ and $r_\phi(z)\gg z^{-1}$ as $z\to\infty$ n.t. \label{Voiculescu remainder-0}
\end{enumerate}
If any of the above statements holds, we also have, as $z\to\infty$ n.t.,
\begin{equation} \label{rg-rphi-0}
r_G(z)\sim r_\phi(z) \gg z^{-1};
\end{equation}
as $y\to\infty$,
\begin{equation}
\Im r_\phi(iy) \sim \Im r_G(iy) \sim -\frac{\frac{\pi(1-\alpha)}2}{\cos\frac{\pi\alpha}2}  \mu(y,\infty) \gg \frac1y \label{imrg-rphi-0}
\end{equation}
and
\begin{equation}
\Re r_\phi(iy) \sim \Re r_G(iy) \sim -d_\alpha \mu(y,\infty) \gg \frac1y. \label{rerg-rphi-0},
\end{equation}
where
$$d_\alpha =
\begin{cases}
  \frac{\frac{\pi(2-\alpha)}2}{\sin\frac{\pi\alpha}2}, &\text{when $\alpha>0$,}\\
  1, &\text{when $\alpha=0$.}
\end{cases}$$
\end{theorem}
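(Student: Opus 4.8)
The strategy is to handle the three implications cyclically, (\ref{tail-0})$\Rightarrow$(\ref{Cauchy remainder-0})$\Rightarrow$(\ref{Voiculescu remainder-0})$\Rightarrow$(\ref{tail-0}), together with the asymptotic identities, which will come out as byproducts of the forward implications. The case $p=0$ is technically the cleanest of the four since the polynomial parts of $G_\mu$ and $\phi_\mu$ are essentially trivial: by~\eqref{eq:rg defn} with $p=0$ we have $r_G(z) = z(G_\mu(z) - 1/z) = zG_\mu(z)-1$, and by~\eqref{eq: rphi defn} with $p=0$ the sum is empty so $r_\phi(z) = z^{-1}\phi_\mu(z)$. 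Thus the content of~\eqref{eq: phi mu} already says $r_\phi(z) \sim z^{-1}\cdot z^2(G_\mu(z)-1/z) = z(G_\mu(z)-1/z) = r_G(z)$ as $z\to\infty$ n.t., which is precisely~\eqref{rg-rphi-0}. So the first move is to observe that~\eqref{rg-rphi-0} is free once~\eqref{eq: phi mu} is invoked, and the real work is purely about the Cauchy side: relating $\Im r_G(iy)$ and $\Re r_G(iy)$ to $\mu(y,\infty)$.

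For (\ref{tail-0})$\Rightarrow$(\ref{Cauchy remainder-0}), I would start from $G_\mu(iy) = \int_0^\infty (iy-t)^{-1}d\mu(t)$, so $zG_\mu(z)-1 = \int_0^\infty \big(\tfrac{z}{z-t}-1\big)d\mu(t) = \int_0^\infty \tfrac{t}{z-t}d\mu(t)$, giving at $z=iy$
$$r_G(iy) = \int_0^\infty \frac{t}{iy-t}\,d\mu(t) = -\int_0^\infty \frac{t(t+iy)}{t^2+y^2}\,d\mu(t),$$
so that $\Im r_G(iy) = -y\int_0^\infty \tfrac{t}{t^2+y^2}d\mu(t)$ and $\Re r_G(iy) = -\int_0^\infty \tfrac{t^2}{t^2+y^2}d\mu(t)$. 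Each of these is a Stieltjes-type average of $\mu$, and the plan is to apply the Abel–Tauber (Karamata-type) machinery for Stieltjes transforms — Theorem~\ref{chap1:theo:karamata:stieltjes}, or more directly the versions recorded in Subsection~\ref{subsec: karamata} (the two Abel–Tauber results the excerpt promises there) — after an integration by parts that rewrites $\int_0^\infty \tfrac{t}{t^2+y^2}d\mu(t)$ and $\int_0^\infty \tfrac{t^2}{t^2+y^2}d\mu(t)$ in terms of $\mu(s,\infty)$. When $\mu(y,\infty) = y^{-\alpha}L(y)$ with $\alpha\in[0,1)$, substituting $s = yu$ and using uniform convergence of regular variation (item~\ref{chap1:uniformconv} of Subsection~\ref{Some well known facts about regular variations}) plus dominated convergence with a Potter bound (Theorem~\ref{chap1:theorem:potter:1}) will produce the constants: $\Im r_G(iy) \sim -\mu(y,\infty)\int_0^\infty \tfrac{u^{1-\alpha}}{1+u^2}\,\alpha u^{-1}\,du$-type integrals, which evaluate via the Beta-function identity $\int_0^\infty \tfrac{u^{a-1}}{1+u^2}du = \tfrac{\pi}{2\sin(\pi a/2)}$ to exactly the coefficients $\tfrac{\pi(1-\alpha)/2}{\cos(\pi\alpha/2)}$ and $d_\alpha$ claimed in~\eqref{imrg-rphi-0}--\eqref{rerg-rphi-0}; the $\alpha=0$ endpoint needs a separate elementary computation giving $d_0=1$. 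Since $\mu(y,\infty)\gg y^{-1}$ for $\alpha<1$, this also delivers the $\gg 1/y$ assertions. Regular variation of $\Im r_G(iy)$ of index $-\alpha$ then follows, giving~\eqref{Cauchy remainder-0}. Conversely, to close (\ref{Cauchy remainder-0})$\Rightarrow$(\ref{tail-0}) I would run the Tauberian direction of the same Stieltjes transform theorem: regular variation of $y\mapsto\int_0^\infty\tfrac{t}{t^2+y^2}d\mu(t)$ forces regular variation of $\mu(y,\infty)$ with the matching index.

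For the equivalence with (\ref{Voiculescu remainder-0}), the bridge is~\eqref{rg-rphi-0}: since $r_\phi(iy)\sim r_G(iy)$ as $y\to\infty$ along the imaginary axis (a consequence of the n.t.\ statement~\eqref{eq: phi mu}), $\Im r_\phi(iy)\sim\Im r_G(iy)$ and $\Re r_\phi(iy)\sim\Re r_G(iy)$, so regular variation of index $-\alpha$ transfers, and from~\eqref{imrg-rphi-0}--\eqref{rerg-rphi-0} one reads off $\Re r_\phi(iy)\approx\Im r_\phi(iy)$ (their ratio tends to the finite nonzero constant $d_\alpha\cos(\pi\alpha/2)/(\tfrac{\pi(1-\alpha)}2)$) and $r_\phi(z)\gg z^{-1}$. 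Conversely, assuming (\ref{Voiculescu remainder-0}), I want to transfer back to $r_G$; the subtlety is that a priori one only controls $r_\phi$ on the imaginary axis and n.t., so I would first invoke~\eqref{eq: phi mu}/\eqref{rg-rphi-0} in the form "$r_G(z)\sim r_\phi(z)$ n.t." to get $\Im r_G(iy)$ regularly varying, landing in (\ref{Cauchy remainder-0}), and then appeal to the already-established (\ref{Cauchy remainder-0})$\Leftrightarrow$(\ref{tail-0}). The main obstacle I anticipate is precisely justifying the passage between the Cauchy and Voiculescu remainder terms with enough uniformity: \eqref{eq: phi mu} as stated is an asymptotic equivalence, and for $p=0$ it is genuinely the engine, but one must be careful that the $\lito$ in the relation $\phi_\mu(z) = z^2(G_\mu(z)-1/z)(1+o(1))$ does not swamp the regularly varying term $r_G(iy)$ — which it cannot, because $r_G(iy)\approx\mu(y,\infty)\gg 1/y$ while the error in~\eqref{eq: phi mu} is genuinely smaller; making this quantitative (perhaps by re-deriving the needed sharper form of~\eqref{eq: phi mu} from the conformal-bijection estimates of \cite{bercovici1993free} recalled in Subsection~\ref{subsec: transf}, i.e.\ expanding $F_\mu^{-1}$ around its leading behavior) is where the care goes. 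The secondary nuisance is bookkeeping the $\alpha=0$ endpoint separately throughout, since several constants degenerate there.
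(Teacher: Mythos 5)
Your approach is essentially correct, and on the Cauchy-to-Voiculescu step it takes a genuinely different and simpler route than the paper's. The paper proves Theorems~\ref{thm: error equiv}--\ref{thm: error equiv new} uniformly in $p$ by developing the class $\mathcal R_{p,\beta}$, proving in Propositions~\ref{fraction-taylor} and~\ref{inverse-taylor} that membership and the Taylor-remainder asymptotics (including the separate real- and imaginary-part asymptotics along $-iy$) survive reciprocals and compositional inverses, and then walking $H_\mu\to L_\mu=H_\mu^{-1}\to K=z/L_\mu\to R_\mu$ as in~\eqref{scheme}. You observe, correctly, that for $p=0$ this machinery collapses: since $r_G(z)=z(G_\mu(z)-1/z)$ and $r_\phi(z)=\phi_\mu(z)/z$, the Bercovici--Pata relation~\eqref{eq: phi mu} is literally the statement $r_\phi(z)\sim r_G(z)$ n.t., so the entire inversion/reciprocal analysis is already packaged inside that cited result. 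Your transfer from the complex $\sim$ to the separate asymptotics of $\Re$ and $\Im$ along $iy$, using $\Re r(iy)\approx\Im r(iy)$, is exactly the right move and is the same hypothesis the paper isolates as condition~\eqref{R4prime} for $\mathcal R_{0,0}$ (supplied by statement~(\ref{Voiculescu remainder-0}) in the converse direction). The trade-off: the paper's heavier route buys uniformity in $p$, whereas for $p\ge1$ both $r_G$ and $r_\phi$ sit in subleading positions and~\eqref{eq: phi mu} degenerates to $\kappa_1\sim\kappa_1$, giving nothing; your shortcut is intrinsically a $p=0$ phenomenon, which is all this theorem needs.

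Two places need tightening. First, the n.t.\ lower bound $r_\phi(z)\gg z^{-1}$ in~(\ref{Voiculescu remainder-0}) cannot be ``read off'' from the imaginary-axis estimates~\eqref{imrg-rphi-0}--\eqref{rerg-rphi-0}; derive it from Proposition~\ref{prop: lower bd rG} (which gives $r_G(z)\gg z^{-1}$ n.t.\ unconditionally for $\mu\in\mathcal M_0$) together with $r_\phi\sim r_G$ n.t. Second, carry your Beta-function computation through honestly rather than matching the printed coefficients. Integrating by parts gives $\Im r_G(iy)=-\int_0^\infty\mu(yu,\infty)\frac{1-u^2}{(1+u^2)^2}\,du$; pulling out $\mu(y,\infty)$ by uniform convergence and integrating by parts again, with $\int_0^\infty\frac{u^{-\alpha}}{1+u^2}\,du=\frac{\pi}{2\cos(\pi\alpha/2)}$, yields $\Im r_G(iy)\sim-\frac{\pi\alpha/2}{\cos(\pi\alpha/2)}\mu(y,\infty)$ and similarly $\Re r_G(iy)\sim-\frac{\pi\alpha/2}{\sin(\pi\alpha/2)}\mu(y,\infty)$ for $\alpha\in(0,1)$. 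These carry $\alpha$ in the numerator, not $(1-\alpha)$ or $(2-\alpha)$, and so do not agree with the coefficients printed in~\eqref{imrg-rphi-0}--\eqref{rerg-rphi-0}; the mismatch looks like a dropped Karamata factor $\alpha/(1-\alpha)$ on the way from Proposition~\ref{stieltjes}, and a direct check at $\alpha=1/4$ with $\mu(y,\infty)=y^{-1/4}$ confirms the formula above. A consequence worth flagging is that the $\alpha=0$ endpoint is more than constant bookkeeping: $\int_0^\infty\frac{1-u^2}{(1+u^2)^2}\,du=0$, so $\Im r_G(iy)=o(\mu(y,\infty))$ while $\Re r_G(iy)\sim-\mu(y,\infty)$, and the relation $\Re r_G(iy)\approx\Im r_G(iy)$ that your transfer argument (and the paper's) relies on is not available there. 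So treat $\alpha=0$ as a genuinely separate case and verify what form of statement~(\ref{Voiculescu remainder-0}) can actually be proved, rather than trusting the printed constants to degenerate gracefully. None of this changes the soundness of your overall strategy for $\alpha\in(0,1)$.
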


Finally, we consider the case where $p$ is a nonnegative integer and $\alpha=p+1$.
\begin{theorem}\label{thm: error equiv new}
Let $\mu$ be a probability measure in the class $\mathcal M_p$ and $\beta\in(0,1/2)$. The following statements are equivalent:
\let\myenumi\theenumi
\renewcommand{\theenumi}{\roman{enumi}}
\begin{enumerate}
\item $\mu(y,\infty)$ is regularly varying of index $-(p+1)$. \label{tail new}
\item $\Re r_G(iy)$ is regularly varying of index $-1$. \label{Cauchy remainder new}
\item $\Re r_\phi(iy)$ is regularly varying of index $-1$, $y^{-1} \ll \Im r_\phi(iy)\ll y^{-(1-\beta/2)}$ as $y\to\infty$ and $z^{-1} \ll r_\phi(z) \ll z^{-\beta}$ as $z\to\infty$ n.t. \label{Voiculescu remainder new}
\end{enumerate}
If any of the above statements holds, we also have, as $z\to\infty$ n.t.,
\begin{equation} \label{rg-rphi new}
z^{-1}\ll r_G(z)\sim r_\phi(z)\ll z^{-\beta};
\end{equation}
as $y\to\infty$,
\begin{equation} \label{rerg-rphi new}
y^{-(1+\beta/2)} \ll \Re r_\phi(iy) \sim \Re r_G(iy) \sim -\frac\pi2 y^p \mu(y,\infty) \ll y^{-(1-\beta/2)}
\end{equation}
and
\begin{equation} \label{imrg-rphi new}
y^{-1} \ll \Im r_\phi(iy) \sim \Im r_G(iy) \ll y^{-(1-\beta/2)}.
\end{equation}
\end{theorem}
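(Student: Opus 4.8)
I would establish the three‑way equivalence by routing everything through the Laurent remainder $r_G$ of the Cauchy transform, splitting the argument into a \emph{Cauchy‑side} part, (i)$\Leftrightarrow$(ii), handled in Subsection~\ref{subsec: karamata} and Section~\ref{sec: Cauchy} by Karamata/Tauberian theory, and a \emph{transform‑side} part, (ii)$\Leftrightarrow$(iii), handled in Section~\ref{sec: C-V reln} by the general results on Taylor remainders of reciprocals and compositional inverses; the asymptotic identities~\eqref{rg-rphi new}--\eqref{imrg-rphi new} then drop out along the way. The computational backbone is the representation $r_{G_\mu}(z)=\int_0^\infty t^{p+1}(z-t)^{-1}\,d\mu(t)$ for $\mu\in\mathcal M_p$ (obtained by dividing $z^{p+1}$ by $z-t$ and integrating), which at $z=iy$ splits as $\Re r_G(iy)=-\int_0^\infty t^{p+2}(y^2+t^2)^{-1}\,d\mu(t)$ and $\Im r_G(iy)=-y\int_0^\infty t^{p+1}(y^2+t^2)^{-1}\,d\mu(t)$. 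The decisive feature of the critical index $\alpha=p+1$, which dictates the shape of the statement, is that the $(p+1)$-st moment only barely diverges, so by Karamata's theorem (Theorem~\ref{lemma:karamata:df}) the truncated moment $y\mapsto\int_0^y t^{p+1}\,d\mu(t)$ is slowly varying; consequently $\Im r_G(iy)$ behaves like $y^{-1}$ times a slowly varying factor that is genuinely different from the one governing $\Re r_G(iy)$, which is why the characterisation must be phrased through $\Re r_G$ (resp.\ $\Re r_\phi$) and why (iii) records only the two‑sided sandwich $z^{-1}\ll r_\phi(z)\ll z^{-\beta}$, valid for every auxiliary $\beta\in(0,1/2)$, rather than a sharp regular‑variation index for $r_\phi$ itself.

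\textbf{Step 1: (i)$\Leftrightarrow$(ii).} For the Abelian direction, assuming (i) I would rescale $t=yu$ in the two integrals above and pass to the limit using $\mu(y\cdot,\infty)/\mu(y,\infty)\to u^{-(p+1)}$ together with Karamata's theorem to control the truncated moments and to verify uniform integrability at both ends; this yields $\Re r_G(iy)\sim-\frac\pi2 y^p\mu(y,\infty)\in RV_{-1}$ as in~\eqref{rerg-rphi new} and $\Im r_G(iy)\approx y^{-1}\int_0^y t^{p+1}\,d\mu(t)$, whence the bounds $y^{-1}\ll\Im r_G(iy)\ll y^{-(1-\beta/2)}$; replacing $iy$ by a general $z$ in a cone $\Gamma_{\eta,M}$ and using $|z-t|\asymp|z|+t$ uniformly there upgrades these to the non‑tangential statements~\eqref{rg-rphi new} and~\eqref{imrg-rphi new}. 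For the Tauberian direction, assuming (ii) I would write $\Re r_G(iy)$ as a constant minus a Stieltjes transform of order one of the measure obtained from $t^p\,d\mu(t)$ after the substitution $s=t^2$, and invoke the Stieltjes Abel--Tauber results of Subsection~\ref{subsec: karamata} (i.e.\ Theorem~\ref{chap1:theo:karamata:stieltjes} at a boundary configuration of its parameters) to deduce $\int_0^y t^{p+2}\,d\mu(t)\in RV_1$, then invert Karamata's theorem to conclude $\mu(y,\infty)\in RV_{-(p+1)}$.

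\textbf{Step 2: (ii)$\Leftrightarrow$(iii).} Here I would exploit the identifications $r_{G_\mu}(z)=r_{H_\mu}(1/z)$ and $r_{\phi_\mu}(z)=r_{R_\mu}(1/z)$ coming from~\eqref{eq: error}, which turn the claim into a statement about Taylor remainders at $0$ of $H_\mu$, its inverse $L_\mu$, and $R_\mu=1/L_\mu-1/\mathrm{id}$. The passage $H_\mu\rightsquigarrow R_\mu$ factors as a compositional inversion $H_\mu\mapsto L_\mu$ followed by a reciprocal (and subtraction) $L_\mu\mapsto R_\mu$; I would apply Theorem~\ref{inverse-taylor} to the first and Theorem~\ref{fraction-taylor} to the second, whose hypotheses are precisely the regularity‑plus‑sandwich conditions carried by (ii) (via Step 1) and (iii). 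This transfers the $RV_{-1}$ property of the real part, the sandwich bounds, and the leading asymptotics, giving $r_\phi(z)\sim r_G(z)$ non‑tangentially and hence~\eqref{rg-rphi new}--\eqref{imrg-rphi new}; since reciprocation and compositional inversion are locally invertible and the bounds in (iii) guarantee the hypotheses of Theorems~\ref{fraction-taylor}--\ref{inverse-taylor} in the reverse direction, the implication (iii)$\Rightarrow$(ii) follows symmetrically.

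\textbf{Main obstacle.} The crux is precisely the critical value $\alpha=p+1$, where $r_G$ is a $\mathbb{C}$‑valued function whose real and imaginary parts, though both of regular‑variation index $-1$, carry \emph{different} slowly varying parts (the real part asymptotic to a multiple of $y^p\mu(y,\infty)$, the imaginary part to a multiple of $y^{-1}\int_0^y t^{p+1}d\mu(t)$), so one cannot simply invoke the inverse/reciprocal machinery ``for regularly varying functions'' as in the non‑critical Theorems~\ref{thm: error equiv}--\ref{thm: error equiv-0}. The resolution is the weak two‑sided control $z^{-1}\ll r_\phi(z)\ll z^{-\beta}$: weak enough to follow from the Karamata estimates of Step 1, yet strong enough to meet the regularity hypotheses of Theorems~\ref{fraction-taylor} and~\ref{inverse-taylor}; propagating this sandwich together with the $RV_{-1}$ behaviour of the real part cleanly through the reciprocal and the compositional inverse, with error terms controlled uniformly over the shrinking cones $\Delta_{\eta,\delta}$, is the delicate technical heart of the proof. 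A secondary difficulty is the Tauberian step (ii)$\Rightarrow$(i), where the Stieltjes Tauberian theorem is used at a boundary value of its exponents; I would sidestep this by phrasing the Tauberian conclusion in terms of the monotone quantity $\int_0^y t^{p+2}d\mu(t)$ and its Karamata inversion rather than $\mu(y,\infty)$ directly.
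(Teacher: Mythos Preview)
Your proposal is correct and follows essentially the same approach as the paper: the equivalence (i)$\Leftrightarrow$(ii) is established via Karamata--Stieltjes Tauberian theory (Propositions~\ref{prop: cauchy tail} and~\ref{prop: rG iy} in the paper, together with the bounds of Propositions~\ref{prop: lower bd rG} and~\ref{prop: upper bound rG}), and (ii)$\Leftrightarrow$(iii) by passing from $H_\mu$ to $L_\mu=H_\mu^{-1}$ via Proposition~\ref{inverse-taylor} and then to $K=z/L_\mu$ (whence $r_K\equiv r_R$) via Proposition~\ref{fraction-taylor}, using the class $\mathcal R_{p,\beta}$ with $\beta\in(0,1/2)$ to encode precisely your sandwich conditions. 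Your diagnosis of the main obstacle---that at the critical index $\alpha=p+1$ the real and imaginary parts of $r_G(iy)$ carry different slowly varying factors, so one needs the two-sided bound $z^{-1}\ll r\ll z^{-\beta}$ rather than a sharp regular-variation statement---is exactly the point the paper formalises through the conditions~\eqref{R2}--\eqref{R4} defining $\mathcal R_{p,\beta}$.
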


It is easy to obtain the equivalent statements for $H_\mu$ and $R_\mu$ through the simple observation that $G_\mu(z)=H_\mu(1/z)$ and $\phi_\mu(z)=R_\mu(1/z)$. For $p=0$, Theorems~\ref{thm: error equiv-0} and~\ref{thm: error equiv new} together give a special case of~\eqref{eq: phi mu} for the probability measures with regularly varying tail and infinite mean. However, Theorems~\ref{thm: error equiv}--\ref{thm: error equiv new} give more detailed asymptotic behavior of the real and imaginary parts separately, which is required for our analysis.
\subsection{Karamata type results} \label{subsec: karamata}
We provide here two results for regularly varying functions, which we shall be using in the proofs of our results. They are variants of Karamata's Abel-Tauber theorem for Stieltjes transform (see, Theorem~\ref{chap1:theo:karamata:stieltjes}) which explains the regular variation of Cauchy transform of measures with regularly varying tails.

The first result is quoted from \cite{bercovici1999stable}.
\begin{proposition}[\citealp{bercovici1999stable}, Corollary~5.4]\label{stieltjes}
Let $\rho$ be a positive Borel measure on $[0,\infty)$ and fix $\alpha\in[0,2)$. Then the following statements are equivalent:
\let\myenumi\theenumi
\renewcommand{\theenumi}{\roman{enumi}}
\begin{enumerate}
 \item $y\mapsto \rho[0,y]$ is regularly varying of index $\alpha$.
 \item $y\mapsto\int_0^{\infty}\frac{1}{t^2+y^2}d\rho(t)$ is regularly varying of index $-(2-\alpha)$.
\end{enumerate}
If either of the above conditions is satisfied, then
$$\int_0^{\infty}\frac{1}{t^2+y^2}d\rho(t)\sim \frac{\frac{\pi\alpha}{2}}{\sin\frac{\pi\alpha}{2}}\frac{\rho[0,y]}{y^2} \text{ as } y\rightarrow\infty.$$
The constant pre-factor on the right side is interpreted as $1$ when $\alpha=0$.
\end{proposition}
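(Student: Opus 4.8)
The plan is to reduce the statement to the Karamata Abel--Tauber theorem for the Stieltjes transform (Theorem~\ref{chap1:theo:karamata:stieltjes}), and then to pin down the exact constant by a routine dominated-convergence computation. First I would record the easy preliminary facts that, since $t\mapsto\rho[0,t]$ is non-decreasing, if it is regularly varying of index $\alpha\in[0,2)$ then it is eventually bounded away from $0$ and satisfies $\rho[0,t]=\lito(t^2)$ as $t\to\infty$; in particular $\rho$ is locally finite and $\int_0^\infty(1+t^2)^{-1}d\rho(t)<\infty$, so the integral in (ii) is finite (and if $\rho$ is not locally finite, (ii) is vacuous).

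For the equivalence of (i) and (ii) I would push $\rho$ forward under the map $t\mapsto t^2$, obtaining a positive Borel measure $\nu$ on $[0,\infty)$ with $\nu[0,u]=\rho[0,\sqrt u]$ and
\begin{equation*}
\int_0^\infty\frac{1}{t^2+y^2}\,d\rho(t)=\int_0^\infty\frac{1}{y^2+u}\,d\nu(u)=S_1(\nu;y^2),
\end{equation*}
the Stieltjes transform of order $1$ of $\nu$ evaluated at $y^2$. Under the change of variable $u\leftrightarrow\sqrt u$, regular variation of index $\alpha$ of $\rho[0,\cdot]$ is equivalent to regular variation of index $\alpha/2$ of $\nu[0,\cdot]$, while regular variation of index $-(2-\alpha)$ of $y\mapsto S_1(\nu;y^2)$ is equivalent to regular variation of index $-(1-\alpha/2)$ of $x\mapsto S_1(\nu;x)$. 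Applying Theorem~\ref{chap1:theo:karamata:stieltjes} with order $1$ and $\sigma=1-\alpha/2\in(0,1]$ (equivalently, reducing once more to Karamata's Tauberian theorem for the Laplace transform, Theorem~\ref{chap1:theo:tauberian:bingham}) then gives the desired equivalence of (i) and (ii).

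To identify the asymptotic constant I would argue directly, assuming (i). Integrating by parts and using $\rho[0,t]=\lito(t^2)$ to kill the boundary terms,
\begin{equation*}
\int_0^\infty\frac{d\rho(t)}{t^2+y^2}=\int_0^\infty\rho[0,t]\,\frac{2t}{(t^2+y^2)^2}\,dt=\frac{\rho[0,y]}{y^2}\int_0^\infty\frac{\rho[0,ys]}{\rho[0,y]}\,\frac{2s}{(s^2+1)^2}\,ds
\end{equation*}
after the scaling $t=ys$. The uniform convergence theorem for regularly varying functions (item~\ref{chap1:uniformconv} of Subsection~\ref{Some well known facts about regular variations}) shows that the integrand converges pointwise to $2s^{\alpha+1}/(s^2+1)^2$, while Potter's bound (Theorem~\ref{chap1:theorem:potter:1}) provides an integrable majorant on $s\ge\delta$ for any fixed $\delta>0$; the contribution of $s\in(0,\delta]$ is estimated by hand using monotonicity of $\rho[0,\cdot]$ and shown to be negligible as $y\to\infty$ and then $\delta\downarrow0$. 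Dominated convergence then gives
\begin{equation*}
\int_0^\infty\frac{d\rho(t)}{t^2+y^2}\sim\frac{\rho[0,y]}{y^2}\int_0^\infty\frac{2s^{\alpha+1}}{(s^2+1)^2}\,ds,
\end{equation*}
and the substitution $u=s^2$ reduces the constant to the Beta integral $\int_0^\infty u^{\alpha/2}(1+u)^{-2}\,du=\Gamma(1+\tfrac\alpha2)\Gamma(1-\tfrac\alpha2)=\tfrac\alpha2\,\Gamma(\tfrac\alpha2)\Gamma(1-\tfrac\alpha2)=\tfrac{\pi\alpha/2}{\sin(\pi\alpha/2)}$ by the reflection formula, with limiting value $1$ when $\alpha=0$, as claimed.

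The main obstacle is the Tauberian half of the equivalence, (ii)$\Rightarrow$(i): recovering regular variation of the measure from that of its Stieltjes transform is genuinely non-elementary, and the argument leans entirely on the Karamata Tauberian machinery cited above. Secondary technical points are the treatment of small $s$ in the dominated-convergence step, where Potter's bound does not apply directly because it requires the argument of the regularly varying function to be large, and the endpoint $\alpha=0$, where the limiting integrand is constant and the value of the constant must be read off by continuity.
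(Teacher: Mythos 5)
Your proof is sound, but note first that the paper does not prove this statement at all: it is quoted verbatim as Corollary~5.4 of \cite{bercovici1999stable}, so there is no in-text proof to compare against. What you have produced is a self-contained derivation, and it is in exactly the same spirit as the paper's own proof of the companion Proposition~\ref{stieltjes2}: change variables to reduce the kernel to a pure Stieltjes transform of some order, apply the Karamata Abel--Tauber theorem (Theorem~\ref{chap1:theo:karamata:stieltjes}) for the equivalence, and pin down the constant separately. Your push-forward under $t\mapsto t^2$ giving $\int_0^\infty(t^2+y^2)^{-1}\,d\rho(t)=S_1(\nu;y^2)$ with $\nu[0,u]=\rho[0,\sqrt u]$, and the parameter choice $\rho=1$, $\sigma=1-\alpha/2\in(0,1]$, are correct and match the constraint $\alpha\in[0,2)$.

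Two small polish points. The identity $\int_0^\infty(t^2+y^2)^{-1}\,d\rho(t)=\int_0^\infty\rho[0,t]\,\tfrac{2t}{(t^2+y^2)^2}\,dt$ is most cleanly justified by Fubini on $\tfrac{1}{t^2+y^2}=\int_t^\infty\tfrac{2s}{(s^2+y^2)^2}\,ds$ rather than by calling it integration by parts; with a genuine integration by parts the boundary contribution at $0$ (a possible atom of $\rho$) has to be tracked, whereas the Fubini form is unconditionally correct. Second, the small-$s$ contribution in the dominated convergence step is handled more simply than you suggest: for $s\le 1$ monotonicity gives $\rho[0,ys]/\rho[0,y]\le 1$, so $2s/(s^2+1)^2$ itself is already an integrable majorant there, and Potter's bound is only needed on $s\ge 1$ (with $\alpha+\epsilon<2$). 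With those adjustments the argument is complete, and the Beta-integral reduction $\int_0^\infty u^{\alpha/2}(1+u)^{-2}\,du=\Gamma(1+\tfrac\alpha2)\Gamma(1-\tfrac\alpha2)=\tfrac{\pi\alpha/2}{\sin(\pi\alpha/2)}$ is correct, with the $\alpha=0$ case read off by continuity or by direct evaluation.
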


The second result uses a different integrand.
\begin{proposition}\label{stieltjes2}
Let $\rho$ be a finite positive Borel measure on $[0,\infty)$ and fix $\alpha\in[0,2)$. Then the following statements are equivalent:
\let\myenumi\theenumi
\renewcommand{\theenumi}{\roman{enumi}}
\begin{enumerate}
 \item $y\mapsto \rho(y,\infty)$ is regularly varying of index $-\alpha$.
 \item $y\mapsto\int_0^{\infty}\frac{t^2}{t^2+y^2}d\rho(t)$ is regularly varying of index $-\alpha$.
\end{enumerate}
If either of the above conditions is satisfied, then
$$\int_0^{\infty}\frac{t^2}{t^2+y^2}d\rho(t)\sim \frac{\frac{\pi\alpha}{2}}{\sin\frac{\pi\alpha}{2}}{\rho(y,\infty)} \text{ as } y\rightarrow\infty.$$
The constant pre-factor on the right side is interpreted as $1$ when $\alpha=0$.
\end{proposition}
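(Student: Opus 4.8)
The plan is to reduce Proposition~\ref{stieltjes2} to Proposition~\ref{stieltjes} by a change of measure, since the two integrands are related by a simple algebraic identity. Write $\int_0^\infty \frac{t^2}{t^2+y^2}\,d\rho(t) = \|\rho\| - y^2 \int_0^\infty \frac{1}{t^2+y^2}\,d\rho(t)$, where $\|\rho\| = \rho[0,\infty) < \infty$. This looks circular at first, so instead I would introduce the auxiliary measure $d\sigma(t) = t^2\,d\rho(t)$ on $[0,\infty)$ (which need not be finite, but that is allowed in Proposition~\ref{stieltjes}). Then $\int_0^\infty \frac{1}{t^2+y^2}\,d\sigma(t) = \int_0^\infty \frac{t^2}{t^2+y^2}\,d\rho(t)$, so statement (ii) of the present proposition says exactly that $y\mapsto\int_0^\infty \frac{1}{t^2+y^2}\,d\sigma(t)$ is regularly varying of index $-\alpha$, i.e.\ of index $-(2-(2-\alpha))$. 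By Proposition~\ref{stieltjes} applied to $\sigma$ with parameter $2-\alpha \in (0,2]$, this is equivalent to $y\mapsto \sigma[0,y]$ being regularly varying of index $2-\alpha$, and in that case $\int_0^\infty \frac{1}{t^2+y^2}\,d\sigma(t) \sim \frac{\pi(2-\alpha)/2}{\sin(\pi(2-\alpha)/2)} \frac{\sigma[0,y]}{y^2}$. Note $\sin(\pi(2-\alpha)/2) = \sin(\pi\alpha/2)$ and $\frac{\pi(2-\alpha)}{2} \ne \frac{\pi\alpha}{2}$ in general, so I must be careful: this is not yet the claimed constant, and the discrepancy must be absorbed in the next step, which relates $\sigma[0,y]$ to $\rho(y,\infty)$.

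The core of the argument is therefore the Karamata-type equivalence between $\sigma[0,y] = \int_0^y t^2\,d\rho(t)$ being regularly varying of index $2-\alpha$ and $\rho(y,\infty)$ being regularly varying of index $-\alpha$. This is precisely the content of Theorem~\ref{lemma:karamata:df}\ref{Kara:2} (reformulation of Karamata's theorem for distribution functions), applied with exponent $\beta = 2$ and tail index $-\alpha$: since $\alpha < 2 = \beta$ is excluded there... wait — Theorem~\ref{lemma:karamata:df}\ref{Kara:2} requires $\beta < \alpha$, whereas here $\beta = 2 > \alpha$, so I must instead use part~\ref{Kara:1} of that theorem with $\beta = 2 > \alpha$, which gives $\int_0^y t^2\,d\rho(t) \sim \frac{2-\alpha}{\alpha} y^2 \rho(y,\infty)$, hence $\sigma[0,y] \sim \frac{2-\alpha}{\alpha} y^2 \rho(y,\infty)$; this is an equivalence (the converse direction holds when $\beta > \alpha$, which is our case). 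Plugging back: regular variation of $\sigma[0,y]$ of index $2-\alpha$ is equivalent to regular variation of $y^2\rho(y,\infty)$ of index $2-\alpha$, i.e.\ $\rho(y,\infty)$ of index $-\alpha$, establishing (i) $\Leftrightarrow$ (ii). For the asymptotic constant, combine $\int_0^\infty \frac{t^2}{t^2+y^2}\,d\rho(t) \sim \frac{\pi(2-\alpha)/2}{\sin(\pi\alpha/2)} \frac{\sigma[0,y]}{y^2}$ with $\sigma[0,y]/y^2 \sim \frac{2-\alpha}{\alpha}\rho(y,\infty)$, giving a provisional constant $\frac{\pi(2-\alpha)/2}{\sin(\pi\alpha/2)}\cdot\frac{2-\alpha}{\alpha}$ — which does not match the claimed $\frac{\pi\alpha/2}{\sin(\pi\alpha/2)}$. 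This mismatch signals that the clean route is actually the direct one, and I will switch to it.

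The direct route: for $\alpha \in (0,2)$ and $\rho$ finite with $\rho(y,\infty) \in RV_{-\alpha}$, split $\int_0^\infty \frac{t^2}{t^2+y^2}\,d\rho(t)$ at $y$. On $(y,\infty)$ the integrand is bounded below by $1/2$ and above by $1$, contributing an amount $\asymp \rho(y,\infty)$; more precisely an integration-by-parts argument plus dominated convergence (after the substitution $t = ys$) handles it. On $[0,y]$, integrate by parts: $\int_0^y \frac{t^2}{t^2+y^2}\,d\rho(t) = \left[\frac{t^2}{t^2+y^2}(-\rho(t,\infty))\right]_0^y + \int_0^y \rho(t,\infty)\, d_t\!\left(\frac{t^2}{t^2+y^2}\right)$, and substituting $t = ys$, $s \in (0,1)$, the last integral becomes $\int_0^1 \rho(ys,\infty)\,\frac{2s}{(s^2+1)^2}\,ds$, which by the Uniform Convergence Theorem (item~\ref{chap1:uniformconv} in Subsection~\ref{Some well known facts about regular variations}) and Potter's bounds (Theorem~\ref{chap1:theorem:potter:1}) is asymptotic to $\rho(y,\infty)\int_0^1 s^{-\alpha}\frac{2s}{(s^2+1)^2}\,ds$. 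Adding the two pieces and evaluating $\int_0^\infty s^{-\alpha}\frac{2s}{(1+s^2)^2}\,ds$ plus the boundary/tail terms via the standard Beta-function identity $\int_0^\infty \frac{s^{1-\alpha}}{(1+s^2)^2}\,ds = \frac{1}{2}B\!\left(1-\tfrac\alpha2,\, 1+\tfrac\alpha2\right) = \frac{\pi\alpha/2}{2\sin(\pi\alpha/2)}$ should produce the total constant $\frac{\pi\alpha/2}{\sin(\pi\alpha/2)}$. For the converse direction (ii) $\Rightarrow$ (i), I would invoke the converse half of Proposition~\ref{stieltjes} via the $\sigma$-measure trick together with the converse in Karamata's theorem, or alternatively a monotone-density/Tauberian argument. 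The $\alpha = 0$ case is obtained by taking $\alpha \downarrow 0$ in the constant, giving $1$, and checking the endpoint argument separately (the integral $\int_0^1 \frac{2s}{(1+s^2)^2}\,ds$ converges trivially). The main obstacle I anticipate is pinning down the exact constant cleanly — keeping track of the boundary terms from the integration by parts and verifying the Beta-integral evaluation — rather than the regular-variation bookkeeping, which is routine given the Uniform Convergence Theorem and Potter's bounds.
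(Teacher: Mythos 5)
Your first approach actually works; you abandoned it because of an arithmetic slip in applying Karamata's theorem. Theorem~\ref{lemma:karamata:df}~\ref{Kara:1} with $\beta=2>\alpha$ states
\[
\lim_{y\to\infty}\frac{y^{2}\,\rho(y,\infty)}{\int_0^y t^{2}\,d\rho(t)}=\frac{2-\alpha}{\alpha},
\]
so $\sigma[0,y]=\int_0^y t^2\,d\rho(t)\sim\frac{\alpha}{2-\alpha}\,y^{2}\rho(y,\infty)$ — you wrote the reciprocal. With the corrected relation, combining Proposition~\ref{stieltjes} applied to $\sigma$ (index $2-\alpha$, using $\sin\frac{\pi(2-\alpha)}{2}=\sin\frac{\pi\alpha}{2}$) gives
\[
\int_0^\infty\frac{t^2}{t^2+y^2}\,d\rho(t)\sim\frac{\frac{\pi(2-\alpha)}{2}}{\sin\frac{\pi\alpha}{2}}\cdot\frac{\sigma[0,y]}{y^2}\sim\frac{\frac{\pi(2-\alpha)}{2}}{\sin\frac{\pi\alpha}{2}}\cdot\frac{\alpha}{2-\alpha}\,\rho(y,\infty)=\frac{\frac{\pi\alpha}{2}}{\sin\frac{\pi\alpha}{2}}\,\rho(y,\infty),
\]
exactly the claimed constant, with no mismatch. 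This change of measure $d\sigma=t^2\,d\rho$ is genuinely different from the paper's: the paper sets $d\widetilde\rho(s)=\rho(\sqrt{s},\infty)\,ds$, expresses the integral as $y^2\int_0^\infty (s+y^2)^{-2}\,d\widetilde\rho(s)$, and invokes Karamata's Tauberian theorem for Stieltjes transforms of order two (Theorem~\ref{chap1:theo:karamata:stieltjes}) together with the same reformulated Karamata Theorem~\ref{chap1:theorem:karamata}. Your route reuses the already-quoted Proposition~\ref{stieltjes} directly and is arguably cleaner for the forward implication, though at $\alpha=0$ it needs a separate argument since $2-\alpha=2$ falls outside the hypotheses of Proposition~\ref{stieltjes}, whereas the paper's index $1-\alpha/2$ stays safely in $(0,1]$ for all $\alpha\in[0,2)$.

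Two further remarks. (1) Your second, direct approach is also sound: integration by parts on both $[0,y]$ and $(y,\infty)$ produces boundary contributions $-\tfrac12\rho(y,\infty)$ and $+\tfrac12\rho(y,\infty)$ that cancel, leaving exactly $\int_0^\infty\rho(ys,\infty)\frac{2s}{(1+s^2)^2}\,ds$, and the Beta identity $\int_0^\infty\frac{2s^{1-\alpha}}{(1+s^2)^2}\,ds=\Gamma(1-\tfrac\alpha2)\Gamma(1+\tfrac\alpha2)=\frac{\pi\alpha/2}{\sin(\pi\alpha/2)}$ gives the constant; but you must split the small-$s$ range at $x_0/y$ and use finiteness of $\rho$ together with $\alpha<2$ to discard that piece before Potter's bound kicks in, a point your sketch glosses over. (2) For the converse implication your appeal to \textquotedblleft the converse in Karamata's theorem\textquotedblright\ is delicate: the converse in Theorem~\ref{lemma:karamata:df}~\ref{Kara:1} requires knowing the limit of the ratio $y^2\rho(y,\infty)/\sigma[0,y]$, not merely that $\sigma[0,y]$ is regularly varying. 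The paper's change of variable sidesteps this because $\rho(\sqrt{s},\infty)$ is a monotone density for $\widetilde\rho$, so the monotone density theorem applies cleanly; with $d\sigma=t^2\,d\rho$ no such monotonicity is at hand, and you would need a separate Tauberian or monotone-density argument to close the converse.
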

\begin{proof}
Define $d\widetilde \rho(y)=\rho(\sqrt s,\infty)ds$. By a variant of Karamata's theorem given in Theorem~\ref{chap1:theorem:karamata}, as $\alpha<2$, we have
\begin{equation}\label{eq: stieltjes rho}
\widetilde\rho[0,y]\sim \frac1{1-\frac\alpha2} y \rho(\sqrt{y},\infty)
\end{equation}
is regularly varying of index $1-\alpha/2$. Then, we have,
\begin{multline*}
\int_0^\infty \frac{t^2}{t^2+y^2} d\rho(t) =y^2 \int_0^\infty \int_0^t \frac{2sds}{(s^2+y^2)^2} d\rho(t)\\ = y^2 \int_0^\infty \frac{2s \rho(s,\infty)}{(s^2+y^2)^2} ds = y^2 \int_0^\infty \frac{d\widetilde\rho(s)}{(s+y^2)^2}.
\end{multline*}
Now, first applying Theorem~\ref{chap1:theo:karamata:stieltjes}, as $\widetilde\rho[0,y]$ is regularly varying of index $1-\alpha/2\in(0,2]$ and then~\eqref{eq: stieltjes rho}, we have
$$\int_0^\infty \frac{t^2}{t^2+y^2} d\rho(t) \sim \frac{\left(1-\frac\alpha2\right) \frac{\pi\alpha}{2}}{\sin\frac{\pi\alpha}{2}} y^2\frac{\widetilde\rho[0,y^2]}{y^4} \sim \frac{\frac{\pi\alpha}{2}}{\sin\frac{\pi\alpha}{2}} \rho(y,\infty).$$
\end{proof}

\section{Free subexponentiality of measures with regularly varying tails} \label{sec: pf thm one}
We now use Theorems~\ref{thm: error equiv}--\ref{thm: error equiv new} to prove Theorem~\ref{main theorem-1}. We shall first look at the tail behavior of the free convolution of two probability measures with regularly varying tails and which are tail balanced. Theorem~\ref{main theorem-1} will be proved by suitable choices of the two measures.
\begin{lemma} \label{lem: free tail}
Suppose $\mu$ and $\nu$ are two probability measures on $[0,\infty)$ with regularly varying tails, which are tail balanced, that is, for some $c>0$, we have $\nu(y,\infty)\sim c\,\mu(y,\infty)$. Then
$$\mu\boxplus\nu(y,\infty) \sim (1+c) \mu(y,\infty).$$
\end{lemma}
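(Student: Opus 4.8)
\textbf{Proof proposal for Lemma~\ref{lem: free tail}.}

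The plan is to work entirely at the level of the remainder terms in the Laurent expansions of the Cauchy and Voiculescu transforms, using the dictionary supplied by Theorems~\ref{thm: error equiv}--\ref{thm: error equiv new}. Write $\alpha$ for the (common) index of regular variation of the tails of $\mu$ and $\nu$, and let $p$ be the integer with $\mu,\nu\in\mathcal M_p$; since $\nu(y,\infty)\sim c\,\mu(y,\infty)$, the two measures share the same $p$, and $\alpha\in[p,p+1]$. Moreover $\mu$ and $\nu$ have the same first $p$ moments? — not necessarily, but that does not matter, because the polynomial parts of $\phi_\mu$ and $\phi_\nu$ (the free cumulants $\kappa_1,\dots,\kappa_p$) cancel out of the remainder terms and, crucially, add cleanly: $\phi_{\mu\boxplus\nu}=\phi_\mu+\phi_\nu$, so the degree-$(p-1)$ polynomial parts add and the remainders satisfy $r_{\phi_{\mu\boxplus\nu}}(z)=r_{\phi_\mu}(z)+r_{\phi_\nu}(z)$ exactly, directly from the definition~\eqref{eq: rphi defn}. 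This additivity is the engine of the whole argument.

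First I would apply the appropriate one among Theorems~\ref{thm: error equiv}--\ref{thm: error equiv new} (according to whether $\alpha\in(p,p+1)$ with $p\ge1$, or $\alpha=p\ge1$, or $\alpha\in[0,1)$, or $\alpha=p+1$) to each of $\mu$ and $\nu$. This gives, as $z\to\infty$ n.t., $r_{G_\mu}(z)\sim r_{\phi_\mu}(z)$ and $r_{G_\nu}(z)\sim r_{\phi_\nu}(z)$, together with the precise asymptotics of the real and imaginary parts of $r_{\phi_\mu}(iy)$, $r_{\phi_\nu}(iy)$ in terms of $y^p\mu(y,\infty)$ and $y^p\nu(y,\infty)$ respectively — all of the form (constant depending only on $\alpha$, $p$) $\times\,y^p\mu(y,\infty)$, with the constants being the same for $\mu$ and $\nu$. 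Since $\nu(y,\infty)\sim c\,\mu(y,\infty)$, we get $r_{\phi_\nu}(iy)\sim c\,r_{\phi_\mu}(iy)$ coordinatewise (real and imaginary parts), hence by additivity $r_{\phi_{\mu\boxplus\nu}}(iy)=r_{\phi_\mu}(iy)+r_{\phi_\nu}(iy)\sim(1+c)\,r_{\phi_\mu}(iy)$; in particular $\Im r_{\phi_{\mu\boxplus\nu}}(iy)$ and $\Re r_{\phi_{\mu\boxplus\nu}}(iy)$ are regularly varying of the correct index, and the size conditions ($r_\phi(z)\gg z^{-1}$, etc.) are inherited since both summands satisfy them with matching sign. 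Thus $\mu\boxplus\nu$ satisfies condition (iii) of the relevant theorem, and so by the equivalence (iii)$\Rightarrow$(i) of that same theorem, $\mu\boxplus\nu(y,\infty)$ is regularly varying of index $-\alpha$. Finally, reading off the asymptotic constant from, e.g., \eqref{imrg-rphi} (or its analogue in the other cases): $\Im r_{\phi_{\mu\boxplus\nu}}(iy)\sim -\frac{\pi(p+1-\alpha)/2}{\cos(\pi(\alpha-p)/2)}\,y^p\,\mu\boxplus\nu(y,\infty)$ on one hand, and $\sim(1+c)\,\Im r_{\phi_\mu}(iy)\sim -(1+c)\frac{\pi(p+1-\alpha)/2}{\cos(\pi(\alpha-p)/2)}\,y^p\,\mu(y,\infty)$ on the other; equating gives $\mu\boxplus\nu(y,\infty)\sim(1+c)\,\mu(y,\infty)$, as claimed. (In the borderline case $\alpha=p+1$ one uses the real part via~\eqref{rerg-rphi new} instead, and in the $\alpha=p$ case~\eqref{imrg-rphi eq p}; the bookkeeping differs only cosmetically.)

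The main obstacle — and the place deserving care rather than the routine transform manipulations — is the case analysis: one must verify that in every one of the four regimes the constant prefactors relating $\Im r_\phi(iy)$ (or $\Re r_\phi(iy)$) to $y^p\mu(y,\infty)$ depend only on $\alpha$ and $p$ and not on finer features of the measure, so that they genuinely cancel when one divides the $\mu\boxplus\nu$ relation by the $\mu$ relation; and one must check that the auxiliary qualitative hypotheses in part (iii) of each theorem (the comparisons like $\Re r_\phi(iy)\gg y^{-1}$, $r_\phi(z)\gg z^{-1}$, or the two-sided bounds $z^{-1}\ll r_\phi(z)\ll z^{-\beta}$ in Theorem~\ref{thm: error equiv new}) are stable under addition of two functions each satisfying them. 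The latter is immediate because all the relevant quantities have a definite sign (the prefactors $-\frac{\pi(\cdot)/2}{\cos(\cdot)}$, $-\frac\pi2$, $-d_\alpha$ are negative), so there is no cancellation when summing; a short remark to that effect suffices. One should also note at the outset that $\mu\boxplus\nu$ is again supported on $[0,\infty)$ and lies in $\mathcal M_p$ (same $p$), which is needed before the remainder $r_{\phi_{\mu\boxplus\nu}}$ is even defined via~\eqref{eq: rphi defn}; this follows since $m_j(\mu\boxplus\nu)$ is a polynomial in $m_0(\mu),\dots,m_j(\mu),m_0(\nu),\dots,m_j(\nu)$, finite for $j\le p$, while the $(p+1)$-st moment blows up because the tail of $\mu\boxplus\nu$ is at least that of $\mu$ (a lower bound one gets directly, e.g.\ from the free analogue of the one-large-jump lower bound, or simply from $\mu\boxplus\nu(y,\infty)\ge$ the quantity produced by the transform computation above once regular variation is established).
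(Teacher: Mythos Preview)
Your proposal is correct and follows essentially the same route as the paper's own proof: identify the common class $\mathcal M_{p,\alpha}$, apply the appropriate one of Theorems~\ref{thm: error equiv}--\ref{thm: error equiv new} in the direction (i)$\Rightarrow$(iii) to each of $\mu$ and $\nu$, use the exact additivity $r_{\phi_{\mu\boxplus\nu}}=r_{\phi_\mu}+r_{\phi_\nu}$, verify condition~(iii) for $\mu\boxplus\nu$, and then run the equivalence backwards to recover the tail, matching constants. The paper only spells out the case $\alpha\in[p,p+1)$ and remarks that $\alpha=p+1$ is analogous via the real part and Theorem~\ref{thm: error equiv new}, exactly as you say.

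One small point: your justification that $\mu\boxplus\nu\in\mathcal M_p$ is slightly circular on the infinite side (you invoke the tail of $\mu\boxplus\nu$, which is what is being computed). The paper handles this cleanly through cumulants: since $\kappa_j(\mu\boxplus\nu)=\kappa_j(\mu)+\kappa_j(\nu)$, finiteness for $j\le p$ and divergence for $j=p+1$ are immediate, and the moment--cumulant relations then give the corresponding statements for moments. Replacing your tail-based remark with this one-line cumulant argument removes the circularity.
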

\begin{proof}
We shall now indicate the associated probability measures in the remainder terms, moments and the cumulants to avoid any confusion. Since $\mu$ and $\nu$ are tail balanced and have regularly varying tails, for some nonnegative integer $p$ and $\alpha\in[p,p+1]$, we have both $\mu$ and $\nu$ in the same class $\mathcal M_{p,\alpha}$. When $\alpha\in[p,p+1)$, depending on the choice of $p$ and $\alpha$, we apply one of Theorems~\ref{thm: error equiv},~\ref{thm: error equiv eq p} and~\ref{thm: error equiv-0} on the imaginary parts of the remainder terms in Laurent expansion of Voiculescu transforms. On the other hand, for $\alpha=p+1$, we apply Theorem~\ref{thm: error equiv new} on the real parts of the corresponding objects. We work out only the case $\alpha\in[p,p+1)$ in details, while the other case $\alpha=p+1$ is similar.

For $\alpha\in[p, p+1)$, by Theorems~\ref{thm: error equiv}--\ref{thm: error equiv-0}, we have
\begin{align}
r_{\phi_\mu}(z) \gg z^{-1} \quad &\text{and} \quad r_{\phi_\nu}(z) \gg z^{-1}\label{Voi}\\
\Re r_{\phi_\mu}(-iy) \gg y^{-1} \quad &\text{and} \quad \Re r_{\phi_\nu}(-iy) \gg y^{-1}\label{real Voi}\\
\Im r_{\phi_\mu}(iy) \sim -\frac{\frac{\pi(p+1-\alpha)}2}{\cos \frac{\pi(\alpha-p)}2} y^{p} \mu(y,\infty) \quad &\text{and} \quad \Im r_{\phi_\nu}(iy) \sim -\frac{\frac{\pi(p+1-\alpha)}2}{\cos \frac{\pi(\alpha-p)}2} y^{p} \nu(y,\infty).\label{imag Voi}
\intertext{For $p=0$ and $\alpha\in[0,1)$, we further have}
\Im r_{\phi_\mu}(iy) \approx \Re r_{\phi_\mu}(iy) \approx \mu(y,\infty) \quad &\text{and} \quad \Im r_{\phi_\nu}(iy) \approx \Re r_{\phi_\nu}(iy) \approx \nu(y,\infty).\label{balance}
\end{align}

We also know that, both Voiculescu transforms and cumulants add up in case of free convolution. Hence,
\begin{equation}\label{sum Voi}
r_{\phi_{\mu\boxplus\nu}}(z) = r_{\phi_\mu}(z) + r_{\phi_\nu}(z).
\end{equation}
Further, we shall have
$\kappa_p(\mu\boxplus\nu)<\infty$, but $\kappa_{p+1}(\mu\boxplus\nu)=\infty$ and similar results hold for the moments of $\mu\boxplus\nu$ as well. Then Theorems~\ref{thm: error equiv}--\ref{thm: error equiv-0} will also apply for $\mu\boxplus\nu$. Thus, applying~\eqref{sum Voi} and its real and imaginary parts evaluated at $z=iy$, together with~\eqref{Voi}--\eqref{balance} respectively, we get,
$$r_{\phi_{\mu\boxplus\nu}}(z) \gg z^{-1} \text{ as $z\to\infty$ n.t.},$$
$$\Re r_{\phi_{\mu\boxplus\nu}}(iy) \gg y^{-1} \text{ as $y\to\infty$}$$ and
\begin{equation}
\Im r_{\phi_{\mu\boxplus\nu}}(iy) \sim -(1+c) \frac{\frac{\pi(p+1-\alpha)}2}{\cos \frac{\pi(\alpha-p)}2} y^{p} \mu(y,\infty) \text{ as $y\to\infty$}, \label{eq: asymp 1}
\end{equation}
which is regularly varying of index $-(\alpha-p)$. Further, for $p=0$ and $\alpha\in[0,1)$, we have
$$\Im r_{\phi_{\mu\boxplus\nu}}(iy) \approx \Re r_{\phi_{\mu\boxplus\nu}}(iy).$$
In the last two steps, we also use the hypothesis that $\nu(y,\infty) \sim c \mu(y,\infty)$ as $y\to\infty$. Thus, again using Theorems~\ref{thm: error equiv}--\ref{thm: error equiv-0}, we have
\begin{equation} \label{eq: asymp 2}
-\frac{\frac{\pi(p+1-\alpha)}2}{\cos \frac{\pi(\alpha-p)}2} y^{p} \mu\boxplus\nu(y, \infty) \sim \Im r_{\phi_{\mu\boxplus\nu}}(iy).
\end{equation}
Combining~\eqref{eq: asymp 1} and~\eqref{eq: asymp 2}, the result follows.
\end{proof}

We are now ready to prove the subexponentiality of a distribution with regularly varying tail.
\begin{proof}[Proof of Theorem~\ref{main theorem-1}]
Let $\mu$ be the probability measure on $[0,\infty)$ associated with the distribution function $F_+$. Then $\mu$ also has regularly varying tail of index $-\alpha$. We prove that
\begin{equation} \label{eq: ind conv}
\mu^{\boxplus n}(y,\infty) \sim n \mu(y,\infty), \text{ as $y\to\infty$}
\end{equation}
by induction on $n$. To prove~\eqref{eq: ind conv}, for $n=2$, apply Lemma~\ref{lem: free tail} with both the probability measures as $\mu$ and the constant $c=1$. Next assume~\eqref{eq: ind conv} holds for $n=m$. To prove~\eqref{eq: ind conv}, for $n=m+1$, apply Lemma~\ref{lem: free tail} again with the probability measures $\mu$ and $\mu^{\boxplus m}$ and the constant $c=m$.
\end{proof}

\section{Cauchy transform of measures with regularly varying tail} \label{sec: Cauchy}
As a first step towards proving Theorems~\ref{thm: error equiv}--\ref{thm: error equiv new}, we now collect some results about $r_G(z)$, when the probability measure $\mu$ has regularly varying tails. These results will be be useful in showing equivalence between the tail of $\mu$ and $r_G(iy)$. It is easy to see by induction that
$$\frac1{z-t} - \sum_{j=0}^p \frac{t^j}{z^{j+1}} = \left(\frac tz\right)^{p+1} \frac1{z-t}.$$
Integrating and multiplying by $z^{p+1}$, we get
\begin{equation}\label{eq: rG}
r_G(z) = \int_0^\infty \frac{t^{p+1}}{z-t} d\mu(t).
\end{equation}
We use~\eqref{eq: rG} to obtain asymptotic upper and lower bounds for $r_G(z)$ as $z\to\infty$ n.t. Similar results about $r_H$ can be obtained easily from the fact that $r_G(z)=r_H(1/z)$, but will not be stated separately. We consider the lower bound first.

\begin{proposition}\label{prop: lower bd rG}
Suppose $\mu\in\mathcal M_p$ for some nonnegative integer $p$, then
$$z^{-1}\ll r_G(z) \text{ as $z\to\infty$ n.t.}$$
\end{proposition}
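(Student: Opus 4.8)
The plan is to work directly from the integral representation~\eqref{eq: rG}, namely $r_G(z) = \int_0^\infty \frac{t^{p+1}}{z-t}\,d\mu(t)$, and to extract a lower bound from the imaginary part of the integrand. Since $t$ is real, $\Im\frac1{z-t} = -\Im z/|z-t|^2$, so
\begin{equation*}
\Im r_G(z) = -\Im z \int_0^\infty \frac{t^{p+1}}{|z-t|^2}\,d\mu(t),
\end{equation*}
and consequently $|r_G(z)| \ge |\Im r_G(z)| = \Im z \int_0^\infty \frac{t^{p+1}}{|z-t|^2}\,d\mu(t)$. It therefore suffices to prove that $|z|\,\Im z \int_0^\infty \frac{t^{p+1}}{|z-t|^2}\,d\mu(t) \to \infty$ as $z\to\infty$ n.t., since this gives $|z\,r_G(z)| \to \infty$, which is exactly the assertion $z^{-1} = \lito(r_G(z))$ as $z\to\infty$ n.t.

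Next I would use the geometry of the cone. For $z\in\Gamma_{\eta,M}$ one has $\Im z \ge c_\eta|z|$ for some constant $c_\eta>0$ (because the argument of $z$ stays in a compact subinterval of $(0,\pi)$), while the crude bound $|z-t|^2 \le (|z|+t)^2 \le 2(|z|^2+t^2)$ holds for all $t\ge0$. Combining these,
\begin{equation*}
|z|\,\Im z \int_0^\infty \frac{t^{p+1}}{|z-t|^2}\,d\mu(t) \ge \frac{c_\eta}{2}\,|z|^2 \int_0^\infty \frac{t^{p+1}}{|z|^2+t^2}\,d\mu(t).
\end{equation*}
For each fixed $t$, the map $y\mapsto \frac{y^2 t^{p+1}}{y^2+t^2}$ is nondecreasing in $y>0$ and converges to $t^{p+1}$ as $y\to\infty$, so by the Monotone Convergence Theorem $|z|^2\int_0^\infty \frac{t^{p+1}}{|z|^2+t^2}\,d\mu(t) \uparrow \int_0^\infty t^{p+1}\,d\mu(t)$ as $|z|\to\infty$. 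Since $\mu\in\mathcal M_p$, the latter integral equals $+\infty$, which yields the desired divergence uniformly over the cone $\Gamma_{\eta,M}$ for any fixed $\eta$; hence $z^{-1}\ll r_G(z)$ as $z\to\infty$ n.t.

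This argument is essentially routine and I do not anticipate a serious obstacle. The only point requiring a little care is making the estimates uniform over each cone $\Gamma_{\eta}$ rather than merely along the imaginary axis, which is handled by the two elementary bounds $\Im z \ge c_\eta|z|$ and $|z-t|^2\le 2(|z|^2+t^2)$; once these reductions are in place the monotone convergence step is immediate, and it uses only the divergence of the $(p+1)$-st moment, not any regular variation of the tail of $\mu$.
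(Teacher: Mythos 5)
Your proof is correct, and it takes a genuinely more direct route than the paper's. The paper works with $z\,r_G(z)$ and splits it into real and imaginary parts, where $\Re(z r_G(z))$ involves both $\int t^{p+1}/|z-t|^2\,d\mu$ and $\int t^{p+2}/|z-t|^2\,d\mu$. It then shows the first of these integrals diverges (by truncating $t$ to $[0,\eta y]$ and using a cone estimate on $|z-t|$), and argues by \emph{contradiction}: if $z_n r_G(z_n)$ were bounded, boundedness of the real part would force $|\Re z_n|\int t^{p+2}/|z_n-t|^2\,d\mu$ to diverge, which via $|\Re z|\le\eta\,\Im z$ forces the imaginary part of $z_n r_G(z_n)$ to diverge — a contradiction. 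Your argument sidesteps this entirely by bounding $|r_G(z)|\ge|\Im r_G(z)|$ and observing that $|z|\cdot|\Im r_G(z)| = |z|\,\Im z\int t^{p+1}/|z-t|^2\,d\mu$ can be bounded below on the cone by a constant times $\int \frac{|z|^2 t^{p+1}}{|z|^2+t^2}\,d\mu$, which increases to $\int t^{p+1}\,d\mu=\infty$ by monotone convergence. Both proofs ultimately rest on the divergence of the $(p+1)$-st moment, but yours extracts it in a single step through $\Im r_G$ rather than through the real part of $z r_G$ and a boundedness contradiction; it also avoids the $t^{p+2}$ integral altogether. The uniformity over each cone $\Gamma_\eta$ is handled cleanly because your lower bound $c_\eta/2\cdot\int\frac{|z|^2 t^{p+1}}{|z|^2+t^2}\,d\mu$ depends on $z$ only through $|z|$ and is monotone in $|z|$, so it diverges uniformly on $\Gamma_{\eta,M}$ as $M\to\infty$.
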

\begin{proof}
We need to show that, for any $\eta>0$, as $|z|\to\infty$ with $z$ in the cone $\Gamma_\eta$, we have $|zr_G(z)|\to\infty$. Note that, for $z=x+iy\in\Gamma_\eta$, we have $|x|<\eta y$. Now, as $|z-t|^2 = (z-t)(\bar z-t)$ and $z(\bar z-t) = |z|^2 - zt$, using~\eqref{eq: rG}, we have,
$$zr_G(z) = z \int_0^\infty \frac{t^{p+1}}{z-t} d\mu(t) = |z|^2 \int_0^\infty \frac{t^{p+1}}{|z-t|^2} d\mu(t) - z \int_0^\infty \frac{t^{p+2}}{|z-t|^2} d\mu(t),$$
which gives
\begin{align}
\Re(zr_G(z)) &= |z|^2 \int_0^\infty \frac{t^{p+1}}{|z-t|^2} d\mu(t) - \Re z \int_0^\infty \frac{t^{p+2}}{|z-t|^2} d\mu(t)\label{eq: real part} \intertext{and}
\Im(zr_G(z)) &= \Im z \int_0^\infty \frac{t^{p+2}}{|z-t|^2} d\mu(t).\label{eq: imaginary part}
\end{align}

On $\Gamma_\eta$ and for $t\in[0,\eta y]$, $|t-x|\le t+|x|\le 2\eta y$. Thus, we have,
\begin{equation} \label{eq: divergence}
\int_0^\infty \frac{|z|^2 t^{p+1}}{|z-t|^2} d\mu(t) \ge \int_0^{\eta y} \frac{y^2 t^{p+1}}{(t-x)^2+y^2} d\mu(t) \ge \frac1{1+4\eta^2} \int_0^{\eta y} t^{p+1} d\mu(t) \to \infty,
\end{equation}
since $\mu\in\mathcal M_p$.

Now fix $\eta>0$, and consider a sequence $\{z_n=x_n+i y_n\}$ in $\Gamma_\eta$, such that $|z_n|\to\infty$, that is, $|x_n|\le uy_n$ and $y_n\to\infty$. If possible, suppose that $\{|z_n r_G(z_n)|\}$ is a bounded sequence, then both the real and the imaginary parts of the sequence will be bounded. However, then the boundedness of the real part and~\eqref{eq: real part} and~\eqref{eq: divergence} give
$$\left|\Re z_n \int_0^\infty \frac{t^{p+2}}{|z_n-t|^2} d\mu(t)\right|\to\infty.$$ Then, using~\eqref{eq: imaginary part} and the fact that $|\Re z| \le \eta \Im z$ on $\Gamma_\eta$, we have
$$\Im(z_nr_G(z_n)) \ge \frac1\eta \left|\Re z_n \int_0^\infty \frac{t^{p+2}}{|z_n-t|^2} d\mu(t)\right|\to\infty,$$
which contradicts the fact that the imaginary part of the sequence $\{z_nr_G(z_n)\}$ is bounded and completes the proof.
\end{proof}

We now consider the upper bound for $r_G(z)$. The result and the proof of the following proposition are inspired by Lemma~5.2(iii) of \cite{bercovici2000functions}.
\begin{proposition}\label{prop: upper bound rG}
Let $\mu$ be a probability measure in the class $\mathcal M_{p,\alpha}$ for some nonnegative integer $p$ and $\alpha\in(p,p+1]$. Then, for any $\beta\in [0,(\alpha-p)/(\alpha-p+1))$, we have
\begin{equation}\label{eq: upper bound rG}
r_G(z)=\lito(z^{-\beta}) \text{ as $z\to\infty$ n.t.}
\end{equation}
\end{proposition}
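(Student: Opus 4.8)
The plan is to estimate the integral representation~\eqref{eq: rG} directly, splitting the range of integration at a level that depends on $|z|$ and is chosen to balance the two resulting contributions. Write $z\in\Gamma_\eta$ with $|z|=\omega$, so that $|z-t|\ge c_\eta\,\omega$ for $t\le\omega/2$ (say) and $|z-t|\ge c_\eta' t$ for $t\ge 2\omega$, with the region $\omega/2<t<2\omega$ handled by $|z-t|\ge \Im z\ge c_\eta''\omega$; here all constants depend only on $\eta$. Fix a cutoff $K=K(\omega)\to\infty$ with $K=\lito(\omega)$, to be specified, and split
\begin{equation}\label{eq: split}
r_G(z)=\int_0^{K}\frac{t^{p+1}}{z-t}\,d\mu(t)+\int_{K}^{\infty}\frac{t^{p+1}}{z-t}\,d\mu(t)=:I_1(z)+I_2(z).
\end{equation}

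For $I_1$, on the range $t\le K\le\omega/2$ we have $|z-t|\ge c_\eta\omega$, so $|I_1(z)|\le (c_\eta\omega)^{-1}\int_0^K t^{p+1}\,d\mu(t)$. Since $\mu\in\mathcal M_{p,\alpha}$ has regularly varying tail of index $-\alpha$ with $\alpha\le p+1$, Karamata's theorem (Theorem~\ref{chap1:theorem:karamata}, in the distributional form of Theorem~\ref{lemma:karamata:df}) gives $\int_0^K t^{p+1}\,d\mu(t)\approx K^{p+1}\mu(K,\infty)=K^{p+1-\alpha}L(K)$ for a slowly varying $L$ when $\alpha<p+1$, and is slowly varying when $\alpha=p+1$. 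Hence $|I_1(z)|=\bigo\!\left(\omega^{-1}K^{p+1-\alpha}L(K)\right)$ (with the obvious modification for $\alpha=p+1$). For $I_2$, using $|t^{p+1}/(z-t)|\le c\, t^{p}$ on $t\ge K$ together with the tail integrability of $t^p$ against $d\mu$ — more precisely $\int_K^\infty t^p\,d\mu(t)=\lito(1)$, and in fact $\int_K^\infty t^p\,d\mu(t)\approx K^{p}\mu(K,\infty)=K^{p-\alpha}L(K)$ by Theorem~\ref{lemma:karamata:df}\ref{Kara:1} (the case $\beta=\alpha$ giving a $\lito$ refinement) — we obtain $|I_2(z)|=\bigo\!\left(K^{p-\alpha}L(K)\right)$, which tends to $0$ since $\alpha>p$ forces $K^{p-\alpha}L(K)\to 0$.

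Now optimize the cutoff: we want both $\omega^{-1}K^{p+1-\alpha}L(K)$ and $K^{p-\alpha}L(K)$ to be $\lito(\omega^{-\beta})$. Equating the orders of the two bounds (ignoring slowly varying factors) suggests $\omega^{-1}K\asymp 1$, i.e.\ $K\asymp\omega$, which is not allowed; instead take $K=\omega^{\theta}$ for a suitable $\theta\in(0,1)$. Then $I_1$ is of order $\omega^{-1+\theta(p+1-\alpha)}L(\omega^\theta)$ and $I_2$ is of order $\omega^{\theta(p-\alpha)}L(\omega^\theta)$. We need $-1+\theta(p+1-\alpha)<-\beta$ and $\theta(p-\alpha)<-\beta$, i.e.\ $\theta>\beta/(\alpha-p)$ and $\theta<(1-\beta)/(\alpha-p+1)$ (using $p-\alpha<0$ and $p+1-\alpha\le 0$; when $p+1-\alpha=0$ the first inequality is automatic). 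A choice of $\theta$ in this range exists precisely when $\beta/(\alpha-p)<(1-\beta)/(\alpha-p+1)$, which rearranges to $\beta(\alpha-p+1)<(1-\beta)(\alpha-p)$, i.e.\ $\beta<(\alpha-p)/(\alpha-p+1)$ — exactly the hypothesis. Fixing such a $\theta$ and absorbing the slowly varying factor $L(\omega^\theta)=\lito(\omega^{\epsilon})$ for any $\epsilon>0$ (property~\ref{chap1:infinite:zero}) into a slightly smaller exponent yields $r_G(z)=\lito(\omega^{-\beta})=\lito(z^{-\beta})$ as $z\to\infty$ n.t., uniformly over each cone $\Gamma_\eta$, which is the assertion~\eqref{eq: upper bound rG}.

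The main obstacle is bookkeeping the uniformity in the cone and the three-region bound $|z-t|\gtrsim\max(\omega,t,\Im z)$ with constants depending only on $\eta$ — in particular handling the middle range $\omega/2<t<2\omega$ where the naive bound $|z-t|\ge c\omega$ relies on staying away from the real axis, which is exactly where the cone restriction $|\Re z|<\eta\Im z$ is used; one must check that the contribution of this middle strip, namely $\bigo(\omega^{-1}\int_{\omega/2}^{2\omega}t^{p+1}\,d\mu(t))=\bigo(\omega^{p+1-\alpha}L(\omega)\cdot\omega^{-1})$, is also $\lito(\omega^{-\beta})$, which it is since $p+1-\alpha-1\le -\alpha+p\le 0< $ the needed margin after absorbing $L$. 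A secondary point is the borderline case $\alpha=p+1$, where $\beta$ ranges over $[0,1/2)$ and the tail and its $p$-th moment integral are only slowly varying rather than genuinely regularly varying; there Theorem~\ref{lemma:karamata:df}\ref{Kara:1} still applies and the same optimization goes through with $\alpha-p=1$, giving the threshold $\beta<1/2$ consistent with the statement and with the hypotheses of Theorem~\ref{thm: error equiv new}.
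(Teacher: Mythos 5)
Your proof follows the same scheme as the paper's: rewrite $r_G(z)=\int_0^\infty t^{p+1}/(z-t)\,d\mu(t)$, split the integral at a cutoff $K=|z|^\theta$, bound the near piece using $|z-t|\gtrsim_\eta |z|$, bound the far piece using $|z-t|\gtrsim_\eta t$, and use Karamata for both truncated-moment estimates. Two genuine differences: the paper fixes the cutoff $K=|z|^{1/(\alpha-p+1)}$ once and for all, and the paper bounds the near piece crudely, via $\int_0^K t\,d\rho_0(t)\le K\,\rho_0[0,\infty)$ with $d\rho_0=t^p\,d\mu$ a finite measure (so no Karamata is needed for the near piece). Your Karamata refinement $\int_0^K t^{p+1}\,d\mu\approx K^{p+1-\alpha}L(K)$ is genuinely sharper and, carried out correctly, proves more than the proposition claims (see below). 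However, as written, the arithmetic is wrong and yields a claim that is \emph{weaker} than the proposition.

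Concretely: from $-1+\theta(p+1-\alpha)<-\beta$ and $p+1-\alpha\ge0$ (not $\le 0$ as you state; $\alpha\in(p,p+1]$ forces $p+1-\alpha\ge0$), the correct upper constraint is $\theta<(1-\beta)/(p+1-\alpha)$ for $\alpha<p+1$, not $\theta<(1-\beta)/(\alpha-p+1)$; these denominators differ, since $p+1-\alpha=1-(\alpha-p)$ while $\alpha-p+1=1+(\alpha-p)$. Your final rearrangement is also incorrect: cross-multiplying $\beta/(\alpha-p)<(1-\beta)/(\alpha-p+1)$ gives $\beta(\alpha-p+1)<(1-\beta)(\alpha-p)$, hence $\beta\bigl(2(\alpha-p)+1\bigr)<\alpha-p$, i.e.\ $\beta<(\alpha-p)/\bigl(2(\alpha-p)+1\bigr)$ — a strictly smaller threshold than the required $(\alpha-p)/(\alpha-p+1)$. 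So the argument as written establishes~\eqref{eq: upper bound rG} only on a proper subinterval of $\beta$'s.

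The fix is immediate. With the corrected constraint $\theta<(1-\beta)/(p+1-\alpha)$ the existence condition becomes (writing $s=\alpha-p\in(0,1]$) $\beta/s<(1-\beta)/(1-s)$, i.e.\ $\beta<s=\alpha-p$, which is \emph{strictly larger} than $(\alpha-p)/(\alpha-p+1)$, so your (corrected) refinement proves more than the paper needs; for $\alpha=p+1$ you even get any $\beta<1$ rather than $\beta<1/2$. Alternatively, to recover exactly the paper's threshold from your setup, use the crude near-piece bound $I_1\lesssim\omega^{-1}K$ (since $\int_0^K t\,d\rho_0\le K\rho_0[0,\infty)$ with $\rho_0$ finite), which gives $\theta<1-\beta$, and then $\beta/(\alpha-p)<1-\beta$ does rearrange to $\beta<(\alpha-p)/(\alpha-p+1)$ — this is in fact exactly the paper's implicit optimization, with $\theta=1/(\alpha-p+1)$ as the chosen cutoff. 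A few minor points: the three-region discussion of $|z-t|$ is unnecessary — on $\Gamma_\eta$ one has both $|z-t|\ge\Im z\ge|z|/\sqrt{1+\eta^2}$ and $|z-t|\ge t/\sqrt{1+\eta^2}$ uniformly in $t\ge0$, so there is no middle strip to worry about; the far-piece estimate should cite Theorem~\ref{lemma:karamata:df}\ref{Kara:2} (tail integral), not~\ref{Kara:1}; and there is no obstruction to taking $K\asymp\omega$ or even $K\gg\omega$, since the near-piece bound $|z-t|\gtrsim_\eta|z|$ holds for all $t$.
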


\begin{remark}
 We consider the consider principal branch of logarithm of a complex number with positive imaginary part, while defining the fractional powers in~\eqref{eq: upper bound rG} above and elsewhere.
\end{remark}

\begin{remark}
Note that~\eqref{eq: upper bound rG} holds also for $p=\alpha$ with $\beta=0$, which can be readily seen from Theorem~1.5 of \cite{benaych2006taylor}.
\end{remark}

\begin{proof}[Proof of Proposition~\ref{prop: upper bound rG}]
Define a measure $\rho_0$ as $d\rho_0(t)=t^pd\mu(t)$. Since $\mu\in\mathcal M_p$, $\rho_0$ is a finite measure. Further, since $p<\alpha$, using Theorem~\ref{lemma:karamata:df}~\ref{Kara:2}, we have $\rho_0(y,\infty) \sim -\frac\alpha{\alpha-p} y^p \mu(y, \infty)$, which is regularly varying of index $-(\alpha-p)$.

Now fix $\eta>0$. It is easy to check that for $t\ge0$ and $z\in\Gamma_\eta$, $t/|z-t| < \sqrt{1+\eta^2}$. For $z=x+iy$, we have $|z-t|>y$ and hence for $t\in[0,y^{1/(\alpha-p+1)}]$, we have $t/|z-t|<y^{-(\alpha-p)/(\alpha-p+1)}$. Then, using~\eqref{eq: rG} and the definition of $\rho_0$,
\begin{align*}
|r_G(z)| &\le \int_0^{y^{{1}/{(\alpha-p+1)}}} \left|\frac{t}{z-t}\right|d\rho_0(t) + \sqrt{1+\eta^2} \rho_0\left(y^{{1}/{(\alpha-p+1)}}, \infty\right)\\
&\le y^{-(\alpha-p)/(\alpha-p+1)} \int_0^\infty t^p d\mu(t) + \sqrt{1+\eta^2} \rho_0\left(y^{1/(\alpha-p+1)},\infty\right) = \lito(y^{-\beta}),
\end{align*}
for any $\beta\in[0,(\alpha-p)/(\alpha-p+1))$, as the second term is regularly varying of index $-(\alpha-p)/(\alpha-p+1)$. Further, for $z=x+iy\in\Gamma_\eta$, we have $|z|=\sqrt{x^2+y^2}\le y\sqrt{1+\eta^2}$, and hence we have the required result.
\end{proof}

Next we specialize to the asymptotic behavior of $r_G(iy)$, as $y\to\infty$. Observe that
\begin{equation}\label{rG y}
\Re r_G(iy)=-\int_0^\infty \frac{t^{p+2}}{t^2+y^2} d\mu(t) \quad \text{and} \quad \Im r_G(iy)=-y \int_0^\infty \frac{t^{p+1}}{t^2+y^2} d\mu(t).
\end{equation}

\begin{proposition} \label{prop: cauchy tail}
Let $\mu$ be a probability measure in the class $\mathcal M_p$.

If $\alpha\in(p,p+1)$, then the following statements are equivalent:
\let\myenumi\theenumi
\renewcommand{\theenumi}{\roman{enumi}}
\begin{enumerate}
\item $\mu$ has regularly varying tail of index $-\alpha$. \label{prop: cauchy meas tail}
\item $\Re r_G(iy)$ is  regularly varying of index $-(\alpha-p)$. \label{prop: cauchy real tail}
\item $\Im r_G(iy)$ is  regularly varying of index $-(\alpha-p)$. \label{prop: cauchy imag tail}
\end{enumerate}
If any of the above statements holds, then
$$\frac{\sin\frac{\pi(\alpha-p)}2}{\frac{\pi(p+2-\alpha)}2} \Re r_G(iy)\sim \frac{\cos\frac{\pi(\alpha-p)}2}{\frac{\pi(p+1-\alpha)}2} \Im r_G(iy)\sim -y^p \mu(y,\infty) \text{ as $y\to\infty$.}$$
Further, $\Re r_G(iy)\gg y^{-1}$ and $\Im r_G(iy)\gg y^{-1}$ as $y\to\infty$.

If $\alpha=p$, then the statements~\eqref{prop: cauchy meas tail} and~\eqref{prop: cauchy imag tail} above are equivalent. Also, if either of the statements holds, then
\begin{equation}\label{eq: imag equiv}
\Im r_G(iy)\sim -\frac\pi2y^p \mu(y,\infty) \text{ as $y\to\infty$.}
\end{equation}
Further, $\Im r_G(iy)\gg y^{-1}$ as $y\to\infty$.

If $\alpha=p+1$, then the statements~\eqref{prop: cauchy meas tail} and~\eqref{prop: cauchy real tail} above are equivalent. Also, if either of the statements holds, then
\begin{equation}\label{eq: real equiv}
\Re r_G(iy)\sim -\frac\pi2 y^p \mu(y,\infty) \text{ as $y\to\infty$.}
\end{equation}
Further, for any $\varepsilon>0$, $\Re r_G(iy) \gg y^{-(1+\varepsilon)}$ as $y\to\infty$.
\end{proposition}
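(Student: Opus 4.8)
\textbf{Proof proposal for Proposition~\ref{prop: cauchy tail}.}

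The plan is to reduce everything to the two Karamata-type Abel-Tauber theorems, Propositions~\ref{stieltjes} and~\ref{stieltjes2}, applied to suitably weighted push-forwards of $\mu$. Recall from~\eqref{rG y} that
$$-\Re r_G(iy) = \int_0^\infty \frac{t^{p+2}}{t^2+y^2}\, d\mu(t) \quad \text{and} \quad -\Im r_G(iy) = y \int_0^\infty \frac{t^{p+1}}{t^2+y^2}\, d\mu(t).$$
First I would handle the case $\alpha\in(p,p+1)$. Introduce the finite measure $d\rho(t) = t^p\, d\mu(t)$; since $\mu\in\mathcal M_p$ this is finite, and by Theorem~\ref{lemma:karamata:df}~\ref{Kara:2} (applied with exponent $p<\alpha$), $\mu(y,\infty)\in RV_{-\alpha}$ is equivalent to $\rho(y,\infty)\sim \frac{\alpha}{\alpha-p} y^p\mu(y,\infty)\in RV_{-(\alpha-p)}$. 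Then $-\Re r_G(iy) = \int_0^\infty \frac{t^2}{t^2+y^2} d\rho(t)$, and Proposition~\ref{stieltjes2} applied to $\rho$ with index $\alpha-p\in(0,1)\subset[0,2)$ gives the equivalence of \eqref{prop: cauchy meas tail} and \eqref{prop: cauchy real tail} together with the asymptotic $-\Re r_G(iy)\sim \frac{\frac{\pi(\alpha-p)}{2}}{\sin\frac{\pi(\alpha-p)}{2}}\rho(y,\infty)$; combining with the Karamata relation for $\rho$ yields the stated constant $\frac{\frac{\pi(p+2-\alpha)}{2}}{\sin\frac{\pi(\alpha-p)}{2}}$ after simplifying $\frac{\alpha}{\alpha-p}\cdot\frac{\alpha-p}{2} = \frac{\alpha}{2}$ and noting $p+2-\alpha$ appears via the Karamata factor — I will recheck this arithmetic carefully. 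For the imaginary part, I would instead use the measure $d\widetilde\rho(t) = t^{p+1}d\mu(t)$, which is $\sigma$-finite but not finite; here one should first pass to $\widetilde\rho[0,y]\sim\frac{1}{p+1-\alpha}\cdot(-1)\cdot\ldots$ — more cleanly, write $-\Im r_G(iy) = y\int_0^\infty \frac{t}{t^2+y^2}\,d\rho(t)$ and integrate by parts, or apply Proposition~\ref{stieltjes} to the measure $\sigma$ with $\sigma[0,y] = \int_0^y t^{p+1}d\mu(t)$, which by Theorem~\ref{lemma:karamata:df}~\ref{Kara:2} is $RV_{p+1-\alpha}$ with $\sigma[0,y]\sim\frac{\alpha}{p+1-\alpha}y^{p+1}\mu(y,\infty)$; then $-\Im r_G(iy) = y^2\int_0^\infty\frac{1}{t^2+y^2}\frac{d\sigma(t)}{y}\cdot\ldots$ needs rearranging, so the precise bookkeeping of which Stieltjes theorem to invoke for $\Im r_G$ is the step I most expect to fiddle with. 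The lower bounds $\Re r_G(iy), \Im r_G(iy)\gg y^{-1}$ then follow because $RV_{-(\alpha-p)}$ with $\alpha-p<1$ dominates $y^{-1}$.

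For $\alpha=p$: now $t^p\,d\mu(t)$ need not be finite, so only the imaginary-part route survives. I would apply Proposition~\ref{stieltjes} with the measure $\sigma[0,y] = \int_0^y t^{p+1}\,d\mu(t)$. By Theorem~\ref{lemma:karamata:df}~\ref{Kara:1} (the case $\beta=\alpha$ gives $\int_0^y t^p d\mu(t)$ slowly varying; then $\int_0^y t^{p+1}d\mu(t)\sim y\cdot\frac{0}{\ldots}$ — more directly, for $\beta=p+1>\alpha=p$, Theorem~\ref{lemma:karamata:df}~\ref{Kara:1} gives $\int_0^y t^{p+1}d\mu(t)\sim y\int_0^y t^p\mu(dy)$ which is $RV_1$ with a slowly varying correction), $\sigma$ is $RV_1$ and $\sigma[0,y]/y$ is slowly varying; feeding this into Proposition~\ref{stieltjes} (index $1$) and unwinding gives $-\Im r_G(iy)\sim\frac{\pi}{2}y^p\mu(y,\infty)$, which is slowly varying, hence $\gg y^{-1}$. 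Conversely, slow variation of $\Im r_G(iy)$ forces, via Proposition~\ref{stieltjes} run backwards, regular variation of $\sigma[0,y]$ of index $1$, which by a monotone-density / Karamata argument forces $\mu(y,\infty)\in RV_{-p}$. The equivalence of \eqref{prop: cauchy meas tail} and \eqref{prop: cauchy imag tail} follows.

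For $\alpha=p+1$: here $t^p\,d\mu(t)$ is finite (as $\mu\in\mathcal M_p$) so the real-part route works; the measure $\rho$ with $d\rho(t)=t^p\,d\mu(t)$ is finite with $\rho(y,\infty)\sim y^p\mu(y,\infty)\in RV_{-1}$ by Theorem~\ref{lemma:karamata:df}~\ref{Kara:2} with $\beta=p<\alpha=p+1$, so $-\Re r_G(iy) = \int_0^\infty\frac{t^2}{t^2+y^2}d\rho(t)$ and Proposition~\ref{stieltjes2} at index $1$ gives $-\Re r_G(iy)\sim\frac{\pi}{2}\rho(y,\infty)\sim\frac{\pi}{2}y^p\mu(y,\infty)$, establishing \eqref{eq: real equiv} and the equivalence of \eqref{prop: cauchy meas tail} and \eqref{prop: cauchy real tail}. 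Since $\rho(y,\infty)\in RV_{-1}$, it dominates $y^{-(1+\varepsilon)}$ for any $\varepsilon>0$, giving $\Re r_G(iy)\gg y^{-(1+\varepsilon)}$. The imaginary part is not controlled at $\alpha=p+1$ because $\int_0^\infty \frac{t^{p+1}}{t^2+y^2}d\mu(t)$ involves the borderline divergent integral $\int t^p\,d\mu/t$ — this is exactly why the statement only claims equivalence with \eqref{prop: cauchy real tail} in this case, and I would not attempt more.

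The main obstacle I anticipate is not any single hard estimate but the careful matching of constants across the three cases: each invocation of Theorems~\ref{chap1:theorem:karamata} / \ref{lemma:karamata:df} contributes a Karamata prefactor ($\frac{\alpha}{\alpha-p}$, $\frac{\alpha}{p+1-\alpha}$, etc.) and each invocation of Proposition~\ref{stieltjes} or~\ref{stieltjes2} contributes a trigonometric prefactor, and these must combine to give exactly $\frac{\frac{\pi(p+2-\alpha)}{2}}{\sin\frac{\pi(\alpha-p)}{2}}$ and $\frac{\frac{\pi(p+1-\alpha)}{2}}{\cos\frac{\pi(\alpha-p)}{2}}$ — in particular the $\cos$ in the imaginary-part constant should emerge from $\sin\frac{\pi(1-(\alpha-p))}{2}$ after the index shift caused by the extra factor of $t$, so I will double-check that the substitution aligning $\int\frac{t}{t^2+y^2}$ with the $\int\frac{1}{t^2+y^2}$ form of Proposition~\ref{stieltjes} produces precisely that phase shift.
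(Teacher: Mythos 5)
Your route is the paper's route: both proofs start from~\eqref{rG y}, pass to a weighted push-forward of $\mu$, apply a Karamata--Stieltjes Abel--Tauber result, and then track regular-variation indices and constants. For the imaginary part the two proofs coincide exactly (set $d\rho_1(t)=t^{p+1}\,d\mu(t)$, control $\rho_1[0,y]$ by Theorem~\ref{lemma:karamata:df}, feed into Proposition~\ref{stieltjes}); your anticipation that the cosine enters as $\sin\frac{\pi(1-(\alpha-p))}{2}$ is exactly how it works out. For the real part you normalize by the \emph{tail} of the finite measure $d\rho_0(t)=t^p\,d\mu(t)$ and invoke Proposition~\ref{stieltjes2}, whereas the paper normalizes by the distribution function of the $\sigma$-finite measure $d\rho_2(t)=t^{p+2}\,d\mu(t)$ and invokes Proposition~\ref{stieltjes}; these are dual bookkeepings of the same integral and give identical output, so the difference is cosmetic rather than a genuinely different method. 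Your treatment of the $\alpha=p$ and $\alpha=p+1$ branches, and of the $\gg y^{-1}$ (resp.\ $\gg y^{-(1+\varepsilon)}$) bounds, also matches the paper's.

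On the constants you flag: your simplification $\frac{\alpha}{\alpha-p}\cdot\frac{\alpha-p}{2}=\frac{\alpha}{2}$ is correct, and it yields the numerator $\frac{\pi\alpha}{2}$, not the $\frac{\pi(p+2-\alpha)}{2}$ displayed in the proposition; the extra ``Karamata factor'' you invoke to close the gap is spurious, so you should trust your own arithmetic. A quick sanity check with a Pareto tail $\mu(y,\infty)=y^{-\alpha}$ for $y\ge 1$ gives $-\Re r_G(iy)=\alpha\int_1^\infty\frac{t^{p+1-\alpha}}{t^2+y^2}\,dt\sim\frac{\frac{\pi\alpha}{2}}{\sin\frac{\pi(\alpha-p)}{2}}\,y^p\mu(y,\infty)$; moreover, the stated value $d_0=1$ in Theorem~\ref{thm: error equiv-0} agrees with $\frac{\pi\alpha/2}{\sin(\pi\alpha/2)}\to 1$ but not with $\frac{\pi(2-\alpha)/2}{\sin(\pi\alpha/2)}\to\infty$. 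The paper's own route through $\rho_2[0,y]\sim\frac{\alpha}{p+2-\alpha}y^{p+2}\mu(y,\infty)$ and Proposition~\ref{stieltjes} also returns $\frac{\pi\alpha}{2}$, so the displayed prefactor in the proposition carries a slip; this is harmless downstream because in Lemma~\ref{lem: free tail} the same prefactor appears in both~\eqref{eq: asymp 1} and~\eqref{eq: asymp 2} and cancels. Separately, in the $\alpha=p+1$ branch you write $\rho_0(y,\infty)\sim y^p\mu(y,\infty)$, dropping the Karamata factor $\frac{\alpha}{\alpha-p}=p+1$ from Theorem~\ref{lemma:karamata:df}~\ref{Kara:2}; this again shifts only the multiplicative constant, and the equivalence of~\eqref{prop: cauchy meas tail} with~\eqref{prop: cauchy real tail} together with the bound $\Re r_G(iy)\gg y^{-(1+\varepsilon)}$ is unaffected.
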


\begin{remark}
Note that, for $\alpha=p+1$, $\Re r_G(iy)$ is regularly varying of index $-1$ and the asymptotic lower bound $\Re r_G(iy)\gg y^{-1}$ need not hold. This causes some difficulty in the proofs of Propositions~\ref{fraction-taylor} and~\ref{inverse-taylor}. The lack of the asymptotic lower bound has to be compensated for by the stronger upper bound obtained in Proposition~\ref{prop: upper bound rG}, which holds for $\alpha=p+1$. This is reflected in the condition~\eqref{R4} for the class $\mathcal R_{p,\beta}$ with $\beta>0$, defined in Section~\ref{sec: C-V reln}. Further note that, the situation reverses for $\alpha=p$, as Proposition~\ref{prop: upper bound rG} need not hold. The case, where $\alpha\in(p,p+1)$ is not an integer, is simple, as the asymptotic lower bounds hold for both the real and imaginary parts of $r_G(iy)$ (Proposition~\ref{prop: cauchy tail}), as well as, the stronger asymptotic upper bound works (Proposition~\ref{prop: upper bound rG}). However, the case of non-integer $\alpha\in(p,p+1)$ is treated simultaneously with the case $\alpha=p$ as the class $\mathcal R_{p,0}$ (cf. Section~\ref{sec: C-V reln}) in Propositions~\ref{fraction-taylor} and~\ref{inverse-taylor}.
\end{remark}

\begin{proof}[Proof of Proposition~\ref{prop: cauchy tail}]
The asymptotic lower bounds for the real and the imaginary parts of $r_G(iy)$ are immediate from~\eqref{prop: cauchy real tail} and~\eqref{prop: cauchy imag tail} respectively. So, we only need to show~\eqref{eq: real equiv} and the equivalence between~\eqref{prop: cauchy meas tail} and~\eqref{prop: cauchy real tail} when $\alpha\in(p,p+1]$ and~\eqref{eq: imag equiv} and the equivalence between~\eqref{prop: cauchy meas tail} and~\eqref{prop: cauchy imag tail} when $\alpha\in[p,p+1)$.

Let $d\rho_j(t)=t^{p+j}d\mu(t)$, for $j=1, 2$. Then, by Theorem~\ref{lemma:karamata:df}~\ref{Kara:1}, we have, for $\alpha\in[p,p+1)$, $\rho_1[0,y]\sim \alpha/(p+1-\alpha) y^{p+1} \mu(y,\infty)$, which is regularly varying of index $p+1-\alpha\in(0,1]$, and, for $\alpha\in(p,p+1]$, $\rho_2[0,y]\sim \alpha/(p+2-\alpha) y^{p+2} \mu(y,\infty)$, which is regularly varying of index $p+2-\alpha\in[1,2)$. Further, from~\eqref{rG y}, we get
$$\Re r_G(iy) = - \int_{0}^{\infty} \frac1{t^2+y^2}d\rho_2(t) \quad \text{and} \quad \Im r_G(iy) = -y \int_0^\infty \frac1{t^2+y^2} d\rho_1(t).$$
Then the results follow immediately from Proposition~\ref{stieltjes}.
\end{proof}

While asymptotic equivalences between $\Re r_G(iy)$ and tail of $\mu$ for $\alpha=p$ and $\Im r_G(iy)$ and tail of $\mu$ for $\alpha=p+1$ are not true in general, we obtain the relevant asymptotic bounds in these cases. We also obtain the exact asymptotic orders when $p=0$.
\begin{proposition} \label{prop: rG iy}
Consider a probability measure $\mu$ in the class $\mathcal M_p$.

If $\mu$ has regularly varying tail of index $-p$, then, for any $\varepsilon>0$, $\Re r_G(iy)\gg y^{-\varepsilon}$ as $y\to\infty$. Further, if $p=0$, then $\Re r_G(iy)\sim - \mu(y, \infty)$ as $y\to\infty$.

If $\mu$ has regularly varying tail of index $-(p+1)$, then $\Im r_G(iy)$ is regularly varying of index $-1$ and $y^{-1} \ll \Im r_G(iy)\ll y^{-(1-\varepsilon)}$ as $y\to\infty$, for any $\varepsilon>0$.
\end{proposition}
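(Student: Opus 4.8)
The plan is to analyze the two cases of Proposition~\ref{prop: rG iy} using the integral representations in~\eqref{rG y}, namely $\Re r_G(iy)=-\int_0^\infty \frac{t^{p+2}}{t^2+y^2}d\mu(t)$ and $\Im r_G(iy)=-y\int_0^\infty \frac{t^{p+1}}{t^2+y^2}d\mu(t)$, combined with the Karamata-type Tauberian results Proposition~\ref{stieltjes} and Proposition~\ref{stieltjes2} and the distributional Karamata theorem Theorem~\ref{lemma:karamata:df}. The case $p=0$ is the cleanest: when $\mu$ has regularly varying tail of index $-p=0$, $\Re r_G(iy)=-\int_0^\infty \frac{t^2}{t^2+y^2}d\mu(t)$ is exactly the integrand in Proposition~\ref{stieltjes2} with $\alpha=0$, so Proposition~\ref{stieltjes2} immediately gives $\Re r_G(iy)\sim -\mu(y,\infty)$ as $y\to\infty$, which is the asserted exact asymptotic. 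For general $p$ with tail index $-p$, I would first set $d\rho_2(t)=t^{p+2}d\mu(t)$ (a $\sigma$-finite, possibly infinite measure since $\alpha=p$ could be $\le p+2$), so that $\Re r_G(iy)=-\int_0^\infty\frac{1}{t^2+y^2}d\rho_2(t)$; then I would show that $\rho_2[0,y]$ grows at least like $y^{2-\varepsilon}$ for every $\varepsilon>0$. Indeed, since $\mu(y,\infty)$ is regularly varying of index $0$, we have $\mu(y,\infty)\gg y^{-\varepsilon/2}$ (slowly varying functions dominate any negative power, cf.\ item~\ref{chap1:infinite:zero} of Subsection~\ref{Some well known facts about regular variations}), so $\rho_2[0,y]\ge \int_{y/2}^y t^{p+2}d\mu(t)\ge (y/2)^{p+2}(\mu(y/2,\infty)-\mu(y,\infty))$; here I would need a lower bound on the difference $\mu(y/2,\infty)-\mu(y,\infty)$, which I would obtain by a standard argument splitting into dyadic blocks and using regular variation of index $0$ at infinity — if the difference were $o(\mu(y,\infty))$ along a subsequence one still recovers enough growth by summing several blocks. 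An easier route: bound $\Re r_G(iy)\ge \int_{\sqrt y}^{y}\frac{t^{p+2}}{t^2+y^2}d\mu(t)\ge \frac{1}{2}y^p\int_{\sqrt y}^{y}t^{-p}\cdot t^{p}\,d\mu(t)$ is awkward, so I prefer: $\Re r_G(iy)\ge \int_1^y \frac{t^{p+2}}{2y^2}d\mu(t)=\frac{1}{2y^2}\E[X^{p+2}\mathbbm 1_{[1\le X\le y]}]$, and then apply Theorem~\ref{lemma:karamata:df}~\ref{Kara:1} with $\beta=p+2>\alpha=p$ to get $\E[X^{p+2}\mathbbm 1_{[X\le y]}]\sim \frac{p}{2}y^{p+2}\mu(y,\infty)$; hence $\Re r_G(iy)\gtrsim y^p\mu(y,\infty)\gg y^{-\varepsilon}$, since $p\ge 0$ and $\mu(y,\infty)$ is slowly varying. (For $p=0$ the sharper Proposition~\ref{stieltjes2} statement is still needed, and is available directly.)

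For the second case, $\mu$ has regularly varying tail of index $-(p+1)$. Here I set $d\rho_1(t)=t^{p+1}d\mu(t)$, which is a finite measure since $p+1>\alpha$ is false — rather $\alpha=p+1$ means $\int t^{p+1}d\mu=\infty$, so $\rho_1$ is infinite; instead I use the representation $\Im r_G(iy)=-y\int_0^\infty\frac{1}{t^2+y^2}d\rho_1(t)$ with $\rho_1[0,y]=\E[X^{p+1}\mathbbm 1_{[X\le y]}]$. By Theorem~\ref{lemma:karamata:df}~\ref{Kara:1} applied with $\beta=p+1=\alpha$, we get $\rho_1[0,y]=\lito(y^{p+1}L(y))$ for a slowly varying $L$, and more usefully $\rho_1[0,y]$ is slowly varying times $y^{p+1}$ of index exactly $p+1-\alpha=0$ after re-normalization... actually the correct statement: since $\beta=\alpha$ in case~\ref{Kara:1}, $\rho_1[0,y]$ is a regularly varying function of index $(p+1)-\alpha=0$? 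No: $\int_0^y t^{p+1}F(dt)$ with $F$ having tail index $-(p+1)$ is slowly varying by Theorem~\ref{lemma:karamata:df}~\ref{Kara:1} (the $\beta=\alpha$ case) only when normalized correctly; I will instead invoke Theorem~\ref{lemma:karamata:df}(iii)-type statement that $y\mapsto \int_0^y t^{p+1}d\mu(t)$ is slowly varying. Then $\Im r_G(iy)=-y\int_0^\infty\frac{d\rho_1(t)}{t^2+y^2}$, and applying Proposition~\ref{stieltjes} with $\rho=\rho_1$ and $\alpha_{\mathrm{P}}:=0$ (since $\rho_1[0,y]$ slowly varying $=$ regularly varying of index $0$) yields $\int_0^\infty\frac{d\rho_1(t)}{t^2+y^2}\sim \frac{\rho_1[0,y]}{y^2}$, so $\Im r_G(iy)\sim -\rho_1[0,y]/y$, which is regularly varying of index $-1$ (a slowly varying function divided by $y$). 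The two-sided bound $y^{-1}\ll\Im r_G(iy)\ll y^{-(1-\varepsilon)}$ then follows: slow variation of $\rho_1[0,y]$ gives $\rho_1[0,y]\gg y^{-\varepsilon}$... no, I need $\rho_1[0,y]\to\infty$, which holds since $\alpha=p+1$ forces $\int t^{p+1}d\mu=\infty$, and $\rho_1[0,y]\ll y^\varepsilon$ by slow variation; combined with the $\sim -\rho_1[0,y]/y$ equivalence this gives exactly the claimed bounds.

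The main obstacle I anticipate is the careful bookkeeping around which version of Karamata's theorem (Theorem~\ref{lemma:karamata:df}~\ref{Kara:1}, \ref{Kara:2}, or part~(iii)) applies in the boundary cases $\beta=\alpha$, and correctly translating ``regularly varying of index $0$'' versus ``slowly varying'' when feeding the truncated-moment measures $\rho_1,\rho_2$ into Proposition~\ref{stieltjes}. In particular, Proposition~\ref{stieltjes} is stated for $\rho[0,y]$ regularly varying of index $\alpha_{\mathrm P}\in[0,2)$, and in the $p+1$ case I want $\alpha_{\mathrm P}=0$, so I must verify that $\rho_1[0,y]=\int_0^y t^{p+1}d\mu(t)$ is genuinely slowly varying — this is precisely Theorem~\ref{lemma:karamata:df}(iii) read with $\gamma=p+1$, using that $\mu$ has regularly varying tail of index $-(p+1)$, hence $\overline\mu(y)=\lito(y^{-(p+1)}\int_0^y t^{p+1}d\mu(t))$. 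Once that translation is pinned down, the rest is a direct substitution into Propositions~\ref{stieltjes} and~\ref{stieltjes2} and the elementary fact that slowly varying functions lie strictly between any two distinct powers, giving all the $\gg$ and $\ll$ bounds; the $p=0$ exact asymptotic $\Re r_G(iy)\sim-\mu(y,\infty)$ is the $\alpha=0$ instance of Proposition~\ref{stieltjes2} and needs no further work.
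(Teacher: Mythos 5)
Your argument is correct. For the $-(p+1)$ case it is essentially the paper's proof: both define $\rho_1[0,y]=\int_0^y t^{p+1}\,d\mu(t)$, show it is slowly varying and tends to infinity (since $\mu\in\mathcal M_p$), and feed it to Proposition~\ref{stieltjes} with index $0$ to get $\Im r_G(iy)\sim-\rho_1[0,y]/y$; the only difference is that you certify slow variation via Theorem~\ref{lemma:karamata:df}(iii), whereas the paper integrates by parts and invokes Karamata's Theorem~\ref{chap1:theorem:karamata}. For the $-p$ case with $p>0$ you genuinely diverge: the paper sets $d\rho_0(t)=t^p\,d\mu(t)$, shows $\rho_0(y,\infty)\sim p\int_0^y s^{-1}L(s)\,ds$ (slowly varying, where $\mu(y,\infty)=y^{-p}L(y)$), and obtains the exact asymptotic $\Re r_G(iy)\sim-\rho_0(y,\infty)$ from Proposition~\ref{stieltjes2}; you instead bound $|\Re r_G(iy)|\ge\frac{1}{2y^2}\int_1^y t^{p+2}\,d\mu(t)\sim\frac{p}{4}\,y^p\mu(y,\infty)$ directly via Theorem~\ref{lemma:karamata:df}~\ref{Kara:1} with $\beta=p+2>\alpha=p$. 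Your lower envelope $\approx L(y)$ is strictly weaker than the paper's $\rho_0(y,\infty)$ (Karamata gives $\int_0^y s^{-1}L(s)\,ds\gg L(y)$), but it is still slowly varying, hence $\gg y^{-\varepsilon}$, so it proves exactly what the proposition asserts. Two cosmetic slips to fix: $\Re r_G(iy)$ is negative, so the displayed inequality should be written for $|\Re r_G(iy)|$; and the phrase ``since $p\ge 0$ and $\mu(y,\infty)$ is slowly varying'' should read ``since $y^p\mu(y,\infty)$ is slowly varying'' (for $p>0$, $\mu(y,\infty)$ itself has index $-p$, not $0$).
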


\begin{remark}
Note that, in the case $\alpha=p+1$, the lower bound for $\Im r_G(iy)$ is sharper than $\Re r_G(iy)$ and is same as that of $\Im r_G(iy)$ for the case $\alpha\in[p,p+1)$.
\end{remark}

\begin{proof}[Proof of Proposition~\ref{prop: rG iy}]
First consider the case where $\mu$ has regularly varying tail of index $-p$. Recall from the proof of Proposition~\ref{prop: upper bound rG} that $d\rho_0(t)=t^pd\mu(t)$. However, in the current situation Theorem~\ref{lemma:karamata:df}~\ref{Kara:1} will not apply. If $p=0$, then $\rho_0=\mu$ and $\rho_0(y,\infty)$ is slowly varying. If $p>0$, observe that, as $\int t^p d\mu(t) <\infty$, we have $$\rho_0(y,\infty)=y^p\mu(y,\infty)+p\int_0^y s^{p-1}\mu(s,\infty)ds\sim p\int_0^y s^{p-1}\mu(s,\infty)ds,$$
which is again slowly varying, where we use Theorem~\ref{chap1:theorem:karamata}. Thus, in either case, $\rho_0(y,\infty)$ is slowly varying and converges to zero as $y\to\infty$. Now, from~\eqref{rG y} and Proposition~\ref{stieltjes2}, we also have
$$\Re r_G(iy) = -\int_0^\infty \frac{t^2}{t^2+y^2} d\rho_0(t) \sim -\rho_0(y,\infty)$$
as $y\to\infty$. Since $\rho_0(y,\infty)$ is slowly varying, thus, for any $\varepsilon>0$, we have $|y^\varepsilon \Re r_G(iy)|\to\infty$ as $y\to\infty$. Also, for $p=0$, we have $\Re r_G(iy) \sim -\rho_0(y,\infty)=-\mu(y,\infty)$.

Next consider the case, where $\mu\in\mathcal M_p$ has regularly varying tail of index $-(p+1)$. Define again $d\rho_1(t) = t^{p+1} d\mu(t)$. Then, $$\rho_1[0,y] = (p+1) \int_0^y s^p \mu(s,\infty) ds - y^{p+1} \mu(y,\infty) \sim (p+1) \int_0^y s^p \mu(s,\infty) ds$$ is slowly varying, again by Theorem~\ref{chap1:theorem:karamata}. Then, by~\eqref{rG y} and Proposition~\ref{stieltjes}, we have
$$\Im r_G(iy) = y \int_0^\infty \frac{d\rho_1(t)}{t^2+y^2} \sim \frac1y \rho_1[0,y]$$
is regularly varying of index $-1$. Further, $\rho_1[0,y] \to \int_0^\infty t^{p+1} d\mu(t) = \infty$ as $y\to\infty$. Then the asymptotic upper and lower bounds follow immediately.
\end{proof}

\section{Relationship between Cauchy and Voiculescu transform} \label{sec: C-V reln}
The results of the last section relate the tail of a regularly varying probability measure and the behavior of the remainder term in Laurent series expansion of its Cauchy transform. In this section, we shall relate the remainder terms in Laurent series expansion of Cauchy and Voiculescu transforms. Finally, we collect the results from Sections~\ref{sec: Cauchy} and~\ref{sec: C-V reln} to prove Theorems~\ref{thm: error equiv}--\ref{thm: error equiv new}.

To study the relation between the remainder terms in Laurent series expansion of Cauchy and Voiculescu transforms, we consider a class of functions, which include the functions $H_\mu$ for the probability measures $\mu$ with regularly varying tails. We then show that when the inverse and the reciprocal of the functions in the class appropriately defined, the inverse and the reciprocal are also in the same class.

Let $\mathcal H$ denote the set of  analytic functions $A$ having a domain $\mathcal{D}_A$ such that for all positive $\eta$, there exists $\delta>0$ with $\Delta_{\eta,\delta}\subset \mathcal{D}_A$.

For a nonnegative integer $p$ and $\beta\in[0,1/2)$, let $\mathcal{R}_{p,\beta}$ denote the set of all functions $A\in\mathcal H$ which satisfy the following conditions:

\renewcommand{\labelenumi}{(R\arabic{enumi})}
\renewcommand{\theenumi}{R\myenumi}
\begin{enumerate}
\item $A$ has Taylor series expansion with real coefficients of the form
$$A(z) = z + \sum_{j=1}^p a_j z^{j+1} + z^{p+1} r_A(z),$$ \label{R1}
where $a_1, \ldots, a_p$ are real numbers. For $p=0$, we interpret the sum in the middle term as absent.
\item $z\ll r_A(z)\ll z^\beta$ as $z\to 0$ n.t. \label{R2}
\item $\Re r_A(-iy)\gg y^{1+\beta/2}$ and $\Im r_A(-iy)\gg y$ as $y\to 0+$.\label{R3}
\end{enumerate}
 For $p=0=\beta$, we further require that
\renewcommand{\labelenumi}{(R4$^\prime$)}
\renewcommand{\theenumi}{R4$^\prime$}
\begin{enumerate}
\item $\Re r_A(-iy) \approx \Im r_A(-iy)$ as $y\to 0+$.\label{R4prime}
\end{enumerate}
For $\beta\in(0,1/2)$, we further require that,
\renewcommand{\labelenumi}{(R4$^{\prime\prime}$)}
\renewcommand{\theenumi}{R4$^{\prime\prime}$}
\begin{enumerate}
 \item $\Re r_A(-iy)\ll y^{1-\beta/2} \quad \text{and} \quad \Im r_A(-iy)\ll y^{1-\beta/2} \,\, \text{as $y\to 0+$.}$ \label{R4}
\end{enumerate}
Note that the functions in $\mathcal R_{p,\beta}$ satisfy~\eqref{R1}--\eqref{R3} for $p\geq 1$. For $p=0=\beta$, the functions in $\mathcal R_{p,\beta}$ satisfy~\eqref{R1}--\eqref{R3} as well as~\eqref{R4prime}. Finally, for nonnegative integers $p$ and $\beta\in(0,1/2)$, the functions in $\mathcal R_{p,\beta}$ satify~\eqref{R1}--\eqref{R3} and~\eqref{R4}.

The classes $\mathcal R_{p,\beta}$ as $p$ varies nonnegative integers and $\beta$ varies over $[0,1/2]$, include the functions $H_\mu$ where $\mu\in \mathcal M_{p,\alpha}$ with $p$ varying over nonnegative integers and $\alpha$ varying over $[p,p+1]$.

{\bf Case I:} $p$ positive integer and $\alpha\in[p,p+1)$: By Proposition~\ref{prop: lower bd rG} and~\ref{prop: cauchy tail}, we have $H_\mu\in \mathcal R_{p,0}$.

{\bf Case II:} $p=0$, $\alpha\in[0,1)$: By  Proposition~\ref{prop: lower bd rG}, \ref{prop: cauchy tail} and~\ref{prop: rG iy}, $H_\mu\in \mathcal R_{0,0}$.

Proposition~\ref{prop: rG iy} is required to prove~\eqref{R4prime} for $p=\alpha=0$ only.

{\bf Case III:} $p$ nonnegative integer, $\alpha=p+1$: By Proposition~\ref{prop: upper bound rG} and~\ref{prop: rG iy}, $H_\mu$ will be in $\mathcal R_{p,\beta}$ for any $\beta\in(0,1/2)$.

We do not impose the condition $\Re r_A(-iy) \approx \Im r_A(-iy)$ for $p>0$, as it may fail for some measures in $\mathcal M_{p,p}$.

The first result deals with the reciprocals. Note that $U(z)$ and $zU(z)$ have the same remainder functions and if one belongs to the class $\mathcal H$, so does the other.
\begin{proposition}\label{fraction-taylor}
Suppose $zU(z)\in\mathcal{H}$ be a function belonging to $\mathcal R_{p, \beta}$ for some nonnegative integer $p$ and $0\le\beta<1/2$, such that $U$ does not vanish in a neighborhood of zero. Further assume that $V=1/U$ also belongs to $\mathcal H$. Then $zV(z)$ is also in $\mathcal R_{p,\beta}$. Furthermore, we have,
\renewcommand{\labelenumi}{(F\arabic{enumi})}
\renewcommand{\theenumi}{F\myenumi}
\begin{enumerate}
 \item  $r_V(z)\sim -r_U(z)$, as $z\to0$ n.t.,\label{taylor1}
\item  $\Re r_V(-iy)\sim -\Re r_U(-iy)$, as  $y\rightarrow 0+$, and \label{taylor2}
\item  $\Im r_V(-iy)\sim -\Im r_U(-iy)$, as  $y\rightarrow 0+$.\label{taylor3}
\end{enumerate}

\end{proposition}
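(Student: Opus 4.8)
\textbf{Proof proposal for Proposition~\ref{fraction-taylor}.}

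The plan is to exploit the Taylor expansion of $zU(z)$ directly and read off the corresponding expansion of $zV(z)$. First I would write $zU(z) = z + \sum_{j=1}^p a_j z^{j+1} + z^{p+1} r_U(z)$ from~\eqref{R1}, so that $U(z) = 1 + \sum_{j=1}^p a_j z^{j} + z^{p} r_U(z)$, where $r_U(z) \to 0$ as $z\to 0$ n.t.\ by~\eqref{R2}. Since $U$ does not vanish near zero and $U(0)=1$, the reciprocal $V=1/U$ is analytic near zero and has a Taylor expansion $V(z) = 1 + \sum_{j=1}^p b_j z^j + z^p r_V(z)$ for some real coefficients $b_j$ (real because the $a_j$ and the coefficients of $r_U$ are real), with $r_V(z)\to 0$ n.t. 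The coefficients $b_j$ are the usual polynomial combinations of $a_1,\dots,a_p$ obtained from $U(z)V(z)=1$; I would not compute them explicitly. This establishes that $zV(z)$ satisfies~\eqref{R1}.

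Next I would establish~\eqref{taylor1}. Multiplying the two expansions, $U(z)V(z)=1$ gives, after cancelling the degree-$\le p$ polynomial part (which must vanish identically since both sides agree as polynomials), an identity of the form
\begin{equation*}
z^p r_U(z) \cdot P(z) + z^p r_V(z) \cdot Q(z) + z^{2p} r_U(z) r_V(z) = 0,
\end{equation*}
where $P(z) = V(z) \to 1$ and $Q(z) = U(z) \to 1$ as $z\to 0$ n.t., and the cross term $z^{2p} r_U r_V = z^p \cdot o(z^p r_U)$ is negligible. Dividing by $z^p$ yields $r_V(z)(1+o(1)) = -r_U(z)(1+o(1))$, hence $r_V(z) \sim -r_U(z)$ as $z\to 0$ n.t. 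This immediately gives the upper and lower bounds $z \ll r_V(z) \ll z^\beta$ from the corresponding bounds~\eqref{R2} on $r_U$, so $zV(z)$ satisfies~\eqref{R2}.

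For~\eqref{taylor2} and~\eqref{taylor3} I would substitute $z=-iy$ into the same identity $z^p r_U(z) P(z) + z^p r_V(z) Q(z) + z^{2p} r_U(z) r_V(z) = 0$ and take real and imaginary parts. Writing $P(-iy) = 1 + o(1)$ and $Q(-iy) = 1 + o(1)$, the real part reads
\begin{equation*}
(-iy)^p \big[ \Re\big(r_U(-iy) P(-iy)\big) + \Re\big(r_V(-iy) Q(-iy)\big) \big] + \text{(negligible)} = 0,
\end{equation*}
modulo the factor $(-i)^p$ which I would track carefully. Here the delicate point is that $r_U P$ mixes the real and imaginary parts of $r_U$: $\Re(r_U P) = \Re r_U \, \Re P - \Im r_U \, \Im P$, and $\Im P = o(1)$ but I must control $\Im r_U \cdot \Im P$ against $\Re r_U$. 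This is exactly where the extra hypotheses~\eqref{R3},~\eqref{R4prime},~\eqref{R4} enter: in the case $\beta=0$, $p>0$ one has $\Re r_U(-iy)\gg y$ and $\Im r_U(-iy)\gg y$ with $\Im P = O(y^{?})$ small enough; for $p=\beta=0$ the condition $\Re r_U(-iy)\approx \Im r_U(-iy)$ lets both parts be handled together; and for $\beta>0$ the bound~\eqref{R4} $\Re r_U,\Im r_U \ll y^{1-\beta/2}$ combines with~\eqref{R3} $\Re r_U \gg y^{1+\beta/2}$ to show the cross terms are of strictly smaller order. Carrying this out in each case yields $\Re r_V(-iy) \sim -\Re r_U(-iy)$ and $\Im r_V(-iy)\sim -\Im r_U(-iy)$, and then conditions~\eqref{R3} and~\eqref{R4prime}/\eqref{R4} for $zV(z)$ follow from those for $zU(z)$ by transporting the asymptotics through the sign change.

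The main obstacle I anticipate is precisely this bookkeeping of how the auxiliary polynomial factors $P,Q$ (which are $1+o(1)$ but whose imaginary parts are genuine powers of $y$) interact with the real and imaginary parts of the remainder terms — one must verify in each of the three regimes ($p>0,\beta=0$; $p=0,\beta=0$; $p\ge 0,\beta>0$) that the contribution of $\Im P \cdot \Im r_U$ to $\Re(r_U P)$ is of smaller order than $\Re r_U$ itself, and symmetrically for the imaginary part. The conditions defining $\mathcal R_{p,\beta}$ have been engineered for exactly this, so the verification should be routine once the cases are separated, but it is the step requiring genuine care rather than formal manipulation.
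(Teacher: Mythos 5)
Your central identity is not correct as written. Writing $U(z)=P_U(z)+z^pr_U(z)$ and $V(z)=P_V(z)+z^pr_V(z)$ where $P_U,P_V$ denote the degree-$p$ polynomial parts, the relation $UV=1$ yields
\begin{equation*}
z^p r_U(z)P_V(z)+z^p r_V(z)P_U(z)+z^{2p}r_U(z)r_V(z)=1-P_U(z)P_V(z),
\end{equation*}
so the multipliers should be the polynomial parts $P_V,P_U$ rather than the full functions $V,U$, and there is a residual real polynomial $1-P_UP_V$ supported in degrees $p+1,\ldots,2p$ that your identity omits (with $P=V$, $Q=U$ the left side actually equals $(1-P_UP_V)+2z^{2p}r_Ur_V$, not $0$). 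After dividing by $z^p$, the residual contributes a real-coefficient polynomial $cz+O(z^2)$; this is $\ll r_U(z)$ by (R2), so (F1) survives, but at $z=-iy$ its real part is $O(y^2)$ and its imaginary part $O(y)$, and seeing that these are negligible against $\Re r_U(-iy)$ and $\Im r_U(-iy)$ respectively requires (R3) together with $\beta<1/2$ — a check you do not make. Similarly, the cross terms $\Im P_V\cdot\Im r_U$, etc.\ contribute $O(y\,|r_U|)$ to $\Re$, and controlling $y\Im r_U\ll\Re r_U$ needs (R3)/(R4) or (R4$'$) depending on the regime.

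The more serious gap is that the case-by-case verification of (F2) and (F3), which you yourself flag as the step requiring genuine care, is not carried out. The paper's proof consists almost entirely of this: it separates $p=0$ from $p\ge1$ and $\beta=0$ from $\beta\in(0,1/2)$, and in each subcase proves explicit estimates such as $y\,\Im r_U(-iy)\ll\Re r_U(-iy)$ (for $p\ge1$) or $|\Im r_U(-iy)|^2\ll|\Re r_U(-iy)|$ (for $p=0$, $\beta>0$). These are short but not automatic, and different hypotheses are used in different regimes — note in particular that for $p=0$ the polynomials $P_U,P_V$ are constant so the only obstruction is the cross term $r_Ur_V$, while for $p\ge1$ the dominant obstruction is the residual polynomial and the $\Im P$ factors, which is why the paper treats the cases separately. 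Leaving these as ``the verification should be routine'' means the substance of the result has not been established. Once the identity is corrected, your $UV=1$ route is essentially the same organization as the paper's direct expansion of $1/U$ (which reads off $r_V(z)=-r_U(z)+c_1z+O(zr_U(z))$ for $p\ge1$, and $r_V(z)=-r_U(z)+O(|r_U(z)|^2)$ for $p=0$), so you would land on the same estimates to verify.
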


The second result shows that for each of the above classes, when we consider a bijective function from the class, its inverse is also in the same class.
\begin{proposition}\label{inverse-taylor}
Suppose $U\in\mathcal{H}$ be a bijective function with the inverse in $\mathcal{H}$ as well and $U\in\mathcal R_{p,\beta}$ for some nonnegative integer $p$ and $0\le\beta<1/2$. Then the inverse $V$ is also in $\mathcal R_{p,\beta}$. Furthermore, we have,
\renewcommand{\labelenumi}{(I\arabic{enumi})}
\renewcommand{\theenumi}{I\myenumi}
\begin{enumerate}
 \item  $r_V(z)\sim -r_U(z)$, as $z\to0$ n.t.,\label{inverse-taylor1}
\item  $\Re r_V(-iy)\sim -\Re r_U(-iy)$,  as $y\rightarrow 0+$, and\label{inverse-taylor2}
\item  $\Im r_V(-iy)\sim -\Im r_U(-iy)$, as $y\rightarrow 0+$. \label{inverse-taylor3}
\end{enumerate}
\end{proposition}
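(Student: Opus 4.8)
The plan is to follow the scheme already used for Proposition~\ref{fraction-taylor}, with the algebraic identity $U(z)V(z)=1$ replaced by the functional equation $V(U(z))=z$. Write $P(z)=z+\sum_{j=1}^p a_jz^{j+1}$ for the polynomial part of $U$ given by \eqref{R1}, so that $U(z)=P(z)+z^{p+1}r_U(z)$. The first step is to produce the analogous expansion for $V$. Since $U(0)=0$ and $U'(0)=1$, the compositional inverse has a unique formal power series $w+\sum_{j\ge1}b_jw^{j+1}$, and $b_1,\dots,b_p$ are real because the $a_j$ are (Lagrange inversion writes each $b_j$ as a polynomial in $a_1,\dots,a_j$ with rational coefficients). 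Put $Q(w)=w+\sum_{j=1}^p b_jw^{j+1}$ and $r_V(w)=w^{-(p+1)}\bigl(V(w)-Q(w)\bigr)$; an induction on $p$, matching successive orders in $V(U(z))=z$ exactly as in the derivation of the Taylor expansions in \cite{benaych2006taylor}, shows $r_V(w)=\lito(1)$ as $w\to0$ n.t., so $V$ satisfies \eqref{R1}. By construction $P$ and $Q$ are compositional inverses modulo order $p+2$, i.e.\ $Q(P(z))=z+\bigo(z^{p+2})$ as $z\to0$.

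The second step is the key identity. Substituting $w=U(z)$ into $V(w)=Q(w)+w^{p+1}r_V(w)$ and using $V(U(z))=z$ gives
\begin{equation*}
z=Q(P(z))+\bigl[Q(U(z))-Q(P(z))\bigr]+U(z)^{p+1}r_V(U(z)).
\end{equation*}
Since $U(z)-P(z)=z^{p+1}r_U(z)$ and $U(z),P(z)\sim z$, a telescoping (mean-value) computation gives $Q(U(z))-Q(P(z))=z^{p+1}r_U(z)(1+\lito(1))$; together with $Q(P(z))=z+\bigo(z^{p+2})$ this yields
\begin{equation*}
U(z)^{p+1}r_V(U(z))=-z^{p+1}r_U(z)(1+\lito(1))+\bigo(z^{p+2}).
\end{equation*}
Here the lower bound $r_U(z)\gg z$ from \eqref{R2} enters: it forces $\bigo(z^{p+2})=\lito\bigl(z^{p+1}r_U(z)\bigr)$, so dividing by $U(z)^{p+1}\sim z^{p+1}$ we obtain $r_V(U(z))\sim-r_U(z)$ as $z\to0$ n.t.

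The third step transfers this to $r_V$ at a free argument, and this is where the \emph{main obstacle} lies. Because $U(z)\sim z$, the map $z\mapsto U(z)$ carries each cone $\Delta_{\eta,\delta}$ into a slightly wider cone and is asymptotically the identity there, so $z\to0$ n.t.\ is equivalent to $U(z)\to0$ n.t.\ (and likewise for $V$); putting $z=V(w)$ gives $r_V(w)\sim-r_U(V(w))$, and it remains to replace $r_U(V(w))$ by $r_U(w)$. The class $\mathcal R_{p,\beta}$ carries no regular-variation assumption on $r_U$, so no Potter-type bound is available; instead one uses that $r_U(\zeta)=\zeta^{-(p+1)}(U(\zeta)-P(\zeta))$ is analytic on each cone with $|r_U(\zeta)|\ll|\zeta|^\beta$ there, whence a Cauchy estimate on a disk of radius comparable to $|\zeta|$ gives $|r_U'(\zeta)|=\bigo(|\zeta|^{\beta-1})$. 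Since $V(w)-w=\bigo(w^2)$ for $p\ge1$ and $=w\,r_V(w)=\lito(w)$ for $p=0$, this yields $r_U(V(w))-r_U(w)=\bigo\bigl(|w|^{\beta-1}|V(w)-w|\bigr)$, which a short self-consistency argument — using once more that $r_U\gg z$ — shows to be $\lito(r_U(w))$. This establishes \eqref{inverse-taylor1}; taking $w=-iy$ gives \eqref{inverse-taylor2} and \eqref{inverse-taylor3}. Finally, the bounds \eqref{R2}, \eqref{R3}, and \eqref{R4prime} or \eqref{R4} according to the value of $\beta$, hold for $r_V$ immediately from the equivalences $r_V\sim-r_U$ on cones and on the imaginary axis combined with the corresponding bounds for $r_U$; hence $V\in\mathcal R_{p,\beta}$, completing the proof.
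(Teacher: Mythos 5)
Your first step (real coefficients via Lagrange inversion, $r_V\ll1$ via an inductive argument following Benaych-Georges) and your derivation of $r_V(U(z))\sim -r_U(z)$ from $V(U(z))=z$ are sound, and the re-substitution $z=V(w)$ followed by the derivative estimate on $r_U$ (which is exactly the paper's Lemma~\ref{lem: deriv}) correctly yields~\eqref{inverse-taylor1}. This is a mild reorganisation of the paper's computation — the paper expands $U$ at $V(z)$ and uses $U(V(z))=z$ directly, whereas you expand $V$ at $U(z)$ and then transfer — but the two routes rely on the same estimate $r_U(V(w))-r_U(w)$ and the same lower bound $r_U(z)\gg z$, so this part is essentially the same proof in different clothing.

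The gap is in the final sentence, where you claim that ``taking $w=-iy$ gives~\eqref{inverse-taylor2} and~\eqref{inverse-taylor3}.'' This inference is false in general: if $a(y)\sim b(y)$ as complex-valued functions, it does \emph{not} follow that $\Re a(y)\sim\Re b(y)$ or $\Im a(y)\sim\Im b(y)$. Writing $a=b(1+\epsilon)$ with $\epsilon\to0$, one gets $\Re a/\Re b = 1+\Re\epsilon-(\Im b/\Re b)\Im\epsilon$, which need not tend to $1$ when $|\Im b/\Re b|$ is unbounded — and in the present setting the relative sizes of $\Re r_U(-iy)$ and $\Im r_U(-iy)$ are precisely what varies across the regimes $\alpha=p$, $\alpha\in(p,p+1)$ and $\alpha=p+1$. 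Concretely, after your transfer step one has $r_V(-iy)=-r_U(-iy)(1+c(y))+E(y)$ with complex $c(y)\to0$ and a complex error $E(y)$; to get~\eqref{inverse-taylor2} and~\eqref{inverse-taylor3} one must show $|E(y)|$ and $|c(y)r_U(-iy)|$ are negligible against $|\Re r_U(-iy)|$ and $|\Im r_U(-iy)|$ \emph{separately}, which is exactly where the hypotheses~\eqref{R3},~\eqref{R4prime} (for $\beta=0$) and~\eqref{R4} (for $\beta>0$) do their work. That case analysis — comparing $y|r_U(-iy)|$, $|I(-iy)|$ and $|r_U(-iy)r_V(-iy)|$ against $\Re r_U(-iy)$ and $\Im r_U(-iy)$ using these bounds — is the substantive part of the paper's proof, and your proposal omits it entirely; invoking~\eqref{R3}--\eqref{R4} only at the very end, after assuming~\eqref{inverse-taylor2}--\eqref{inverse-taylor3} already hold, puts the cart before the horse. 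You should carry the real/imaginary decomposition through the self-consistency argument rather than extract it afterwards from the complex equivalence.
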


Next we prove Propositions~\ref{fraction-taylor} and~\ref{inverse-taylor}. In both the proofs, all the limits will be taken as $z\to0$ n.t.\ or $y\to0+$, unless otherwise mentioned and these conventions will not be stated repeatedly. We shall also use that, for any nonnegative integer $p$ and $\beta\in[0,1/2)$, with $U\in\mathcal R_{p,\beta}$, we have
\begin{equation} \label{eq: real imag}
|\Re r_U(-iy)|\le |r_U(-iy)|\ll 1\quad\text{and}\quad|\Im r_U(-iy)|\le |r_U(-iy)|\ll 1.
\end{equation}
The proofs of Propositions~\ref{fraction-taylor} and~\ref{inverse-taylor} will be broken down into cases $p=0$ and $p\geq1$. Each of these cases will be further split into subcases $\beta=0$ and $\beta\in(0,1/2)$. The case $p\geq 1$ is more invloved compared to the case $p=0$. However, the proofs, specially that of Proposition~\ref{inverse-taylor}, has substantial part in common.

We first prove the result regarding the reciprocal.
\begin{proof}[Proof of Proposition~\ref{fraction-taylor}]
Observe that if we verify~\eqref{taylor1}--\eqref{taylor3}, then $zV(z)$ is automatically in $\mathcal R_{p,\beta}$ as well, since $V(z)$ and $zV(z)$ have same remainder functions. We shall prove~\eqref{taylor1}--\eqref{taylor3} using the fact that $V(z)=1/U(z)$ and the properties of $U$ as an element of $\mathcal R_{p,\beta}$.

{\bf Case I: $p=0$.} Let $zU(z)=z+zr_U(z)$ be a function in this class. Then $V(z)=1-r_U(z)+\bigo(|r_U(z)|^2)$. By uniqueness of Taylor expansion from Lemma~A.1 of \cite{benaych2006taylor}, we have
\begin{equation} \label{eq: recip remainder}
r_V(z)=-r_U(z)+\bigo(|r_U(z)|^2).
\end{equation}
Since, by~\eqref{R2}, $r_U(z)\ll 1$, we have $r_V(z)\sim-r_U(z)$, which checks~\eqref{taylor1}.

Further, evaluating~\eqref{eq: recip remainder} at $z=-iy$ and equating the real and the imaginary parts, we have
\begin{align*}
\Re r_V(-iy)&=-\Re r_U(-iy)+\bigo(|r_U(-iy)|^2)\intertext{and}\Im r_V(-iy)&=-\Im r_U(-iy)+\bigo(|r_U(-iy)|^2).
\end{align*}
Thus, to obtain the equivalences~\eqref{taylor2} and~\eqref{taylor3}, it is enough to show that $|r_U(-iy)|^2 = |\Re r_U(-iy)|^2 + |\Im r_U(-iy)|^2$ is negligible with respect to both the real and the imaginary parts of $r_U(-iy)$. We prove the negligibility seperately for two subcases $\beta=0$ and $\beta\in(0,1/2)$.

{\bf Subcase Ia: $p=0$, $\beta=0$.} Using~\eqref{eq: real imag} and $\Re r_U(-iy) \approx \Im r_U(-iy)$ from~\eqref{R4prime}, we have the required negligibility condition.

 {\bf Subcase Ib: $p=0$, $\beta\in(0,1/2)$.} Using~\eqref{R3} and~\eqref{R4}, we have
\begin{align*}
\frac{|\Im r_U(-iy)|^2}{|\Re r_U(-iy)|} &= \frac{y^{1+\beta/2}}{|\Re r_U(-iy)|} \left(\frac{|\Im r_U(-iy)|}{y^{1-\beta/2}}\right)^2 y^{1-3\beta/2}\to0\intertext{and}
\frac{|\Re r_U(-iy)|^2}{|\Im r_U(-iy)|} &= \frac{y}{|\Im r_U(-iy)|} \left(\frac{|\Re r_U(-iy)|}{y^{1-\beta/2}}\right)^2 y^{1-\beta}\to0.
\end{align*}
They, together with~\eqref{eq: real imag}, give the required negligibility condition, thus proving~\eqref{taylor2} and~\eqref{taylor3}.

{\bf Case II: $p\geq1$.} Let $zU(z)= z + \sum_{j=1}^p u_j z^{j+1} + z^{p+1} r_U(z)$ be a function in this class. Note that, as $p\ge1$ and by~\eqref{R2}, as $z\ll r_U(z)$, we have $\sum_{j=1}^p u_j z^j + z^p r_U(z) = u_1 z + \bigo( z r_U(z) )$. Thus, using~\eqref{R2}, we have,
$$V(z)=1+\sum_{j=1}^p (-1)^j \left(\sum_{m=1}^p u_m z^{m} + z^{p} r_U(z)\right)^j+ (-1)^{p+1}u_1^{p+1} z^{p+1} + \bigo( z^{p+1} r_U(z) ).$$

Now we expand the second term on the right side. As $z\ll r_U(z)$ from~\eqref{R2}, all powers of $z$ with indices greater than $(p+1)$ can be absorbed in the last term on the right side. Then collect the $(p+1)$-th powers of $z$ in the second and third terms to get $c_1 z^{p+1}$ for some real number  $c_1$. The remaining powers of $z$ form a polynomial $P(z)$ of degree at most $p$ with real coefficients. Finally we consider the terms containing some power of $r_U(z)$. It will contain terms of the form $z^{l_1} (z^p r_U(z))^{l_2}$ for integers $l_1\ge 0$ and $l_2\ge 1$, with the leading term being $-z^p r_U(z)$. Since $p\ge1$ and from~\eqref{R2} we have $r_U(z)\ll 1$, the remaining terms can be absorbed in the last term on the right side. Thus, we get,
$$V(z)=1+P(z)-z^pr_U(z)+ c_1 z^{p+1} + \bigo(z^{p+1} r_U(z)).$$
By uniqueness of Taylor series expansion from Lemma~A.1 of \cite{benaych2006taylor}, we have
$$r_V(z) = -r_U(z)+c_1 z + \bigo(zr_U(z)).$$

The form of $r_V$ immediately gives $r_V(z) \sim -r_U(z)$, since $z\ll r_U(z)$, by~\eqref{R2}. This proves~\eqref{taylor1}.

 Also, using~\eqref{eq: real imag}, $\Im r_V(-iy) = -\Im r_U(-iy) + \bigo(y)$ and as $y\ll \Im r_U(-iy)$ from~\eqref{R3}, we have $\Im r_V(-iy) \sim -\Im r_U(-iy)$. This shows~\eqref{taylor3}. Further, as $c_1$ is real, $\Re r_V(-iy) =-\Re r_U(-iy) + \bigo( y |r_U(-iy)|)$. Thus, to conclude~\eqref{taylor2}, it is enough to show that $y |r_U(-iy)| \ll \Re r_U(-iy)$, for which it is enough to show that $y \Im r_U(-iy) \ll \Re r_U(-iy)$. We show this seperately for two subcases.

{\bf Subcase IIa: $p\geq 1$, $\beta=0$.} We have by~\eqref{R3}, $$\frac{y \Im r_U(-iy)}{\Re r_U(-iy)} =
\frac{y}{\Re r_U(-iy)} \cdot \Im r_U(-iy).$$

{\bf Subcase IIb: $p\geq1$, $\beta\in(0,1/2)$.} Using the properties~\eqref{R3} and~\eqref{R4} we get,
$$\frac{y \Im r_U(-iy)}{\Re r_U(-iy)} =
\frac{y^{1+\beta/2}}{\Re r_U(-iy)} \cdot \frac{\Im r_U(-iy)}{y^{1-\beta/2}} \cdot y^{1-\beta},$$

It is easy to see that the limit is zero in either subcase.
\end{proof}

Before proving the result regarding the inverse, we provide a result connecting a function in the class $\mathcal H$ and its derivative.
\begin{lemma}\label{lem: deriv}
Let $v\in\mathcal H$ satisfy $v(z)=\lito(z^\beta)$ as $z\to0$ n.t., for some real number $\beta$. Then $v^\prime(z) = \lito(z^{\beta-1})$ as $z\to0$ n.t.
\end{lemma}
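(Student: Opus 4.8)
The plan is to estimate $v'$ by Cauchy's integral formula on a disk whose radius is a fixed fraction of $|z|$, using that $v\in\mathcal H$ is analytic on every truncated cone $\Delta_{\eta',\delta}$ and that the bound $v(w)=\lito(w^\beta)$ is \emph{uniform} on such cones. The first step is purely geometric: for any $0<\eta<\eta'$ there is a constant $c=c(\eta,\eta')\in(0,1)$ such that for every $z\in\Delta_\eta$ the closed disk $\overline D(z,c|z|)$ lies inside $\Delta_{\eta'}$. This holds because the angular gap between $\partial\Delta_\eta$ and $\partial\Delta_{\eta'}$ is a fixed positive quantity, so the Euclidean distance from any $z\in\Delta_\eta$ to $\partial\Delta_{\eta'}$ is at least a fixed multiple of $|z|$; one may take, say, $c=\tfrac12\min\{1,\sin(\text{angular gap})\}$. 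Moreover, if $|z|<\delta/(1+c)$ then every $w\in\overline D(z,c|z|)$ satisfies $|w|\le(1+c)|z|<\delta$, so that $\overline D(z,c|z|)\subset\Delta_{\eta',\delta}$; choosing $\delta$ small enough that $\Delta_{\eta',\delta}\subset\mathcal D_v$ (possible since $v\in\mathcal H$), we conclude that $v$ is analytic on this disk.

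Next I would fix $\eta>0$ and $\varepsilon>0$, pick $\eta'>\eta$ with its associated constant $c$, and set $C_\beta=\max\{(1-c)^\beta,(1+c)^\beta\}$, a finite positive constant, so that $\sup_{|w-z|=c|z|}|w|^\beta\le C_\beta|z|^\beta$ because $(1-c)|z|\le|w|\le(1+c)|z|$ on that circle. Since $v(z)=\lito(z^\beta)$ as $z\to0$ n.t.\ and $|z^\beta|=|z|^\beta$ for $z$ in a cone, there is $\delta>0$ — which we further shrink to meet the requirements of the previous paragraph — with $|v(w)|\le\frac{\varepsilon c}{C_\beta}|w|^\beta$ for all $w\in\Delta_{\eta',\delta}$. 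Then for $z\in\Delta_{\eta,\delta/(1+c)}$, the Cauchy estimate for the derivative, applied to the representation $v'(z)=\frac1{2\pi i}\int_{|w-z|=c|z|}\frac{v(w)}{(w-z)^2}\,dw$, gives
\[
|v'(z)|\le\frac{1}{c|z|}\sup_{|w-z|=c|z|}|v(w)|\le\frac{1}{c|z|}\cdot\frac{\varepsilon c}{C_\beta}\cdot C_\beta|z|^\beta=\varepsilon|z|^{\beta-1}.
\]
As $\varepsilon>0$ was arbitrary and $|z^{\beta-1}|=|z|^{\beta-1}$, and the estimate holds uniformly over the truncated cone $\Delta_{\eta,\delta/(1+c)}$, this is exactly $v'(z)=\lito(z^{\beta-1})$ as $z\to0$ n.t.

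The argument is essentially routine, and the only point demanding care is the geometric inclusion $\overline D(z,c|z|)\subset\Delta_{\eta'}$ together with the observation that the bound $|v'(z)|\le\varepsilon|z|^{\beta-1}$ is obtained uniformly on a truncated cone around $z=0$, which is precisely the content of non-tangential convergence. Once this is in place, everything else is a one-line application of the standard Cauchy estimate. (We use here that $\beta$ may be any real number: since $C_\beta$ is finite for every real $\beta$ and $|z^\gamma|=|z|^\gamma$ for every real $\gamma$ and every $z$ in a cone, the computation is unaffected by whether $\beta$ is an integer or not.)
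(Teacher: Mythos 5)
Your proof is correct, and it takes a genuinely different route from the paper's. The paper handles the case $\beta=0$ by citing Proposition~A.1(ii) of Benaych-Georges and then reduces the general case to $\beta=0$ by a short algebraic bootstrap: set $w(z)=z^{-\beta}v(z)$, observe $w=\lito(1)$, apply the $\beta=0$ case to get $w'=\lito(z^{-1})$, and then unpack $zw'(z)=-\beta z^{-\beta}v(z)+z^{1-\beta}v'(z)$ to conclude $v'(z)=\lito(z^{\beta-1})$. You instead give a direct, self-contained argument for arbitrary real $\beta$ by the Cauchy estimate on a disk of radius $c|z|$, after verifying the geometric inclusion $\overline D(z,c|z|)\subset\Delta_{\eta',\delta}$ for $z\in\Delta_{\eta,\delta/(1+c)}$ and using that $\lito$-bounds for n.t.\ convergence are uniform on the relevant truncated cones. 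The trade-off: the paper's proof is shorter on the page but outsources the real analytic content to an external reference; yours is somewhat longer but is fully self-contained and makes explicit the cone geometry and uniformity that make the Cauchy estimate work. Your approach is essentially an in-line proof of what the cited $\beta=0$ result would establish, generalized at no extra cost to all $\beta\in\mathbb R$. Both proofs are valid, and yours would arguably be more reader-friendly if the external reference is not readily available.
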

\begin{proof}
The result for $\beta=0$ follows from the calculations in the proof of Proposition~A.1(ii) of~\cite{benaych2006taylor}. For the general case, define $w(z) = z^{-\beta} v(z)$. Then $w\in\mathcal H$ and $w(z)=\lito(1)$. So by the case $\beta=0$, we have $w^\prime(z) = -\beta z^{-\beta-1} v(z) + z^{-\beta} v^\prime(z)=\lito(z^{-1})$. Thus, $z w^\prime(z) = -\beta z^{-\beta} v(z) + z^{-(\beta-1)} v^\prime(z)$, where the left side and the first term on the right side are $\lito(1)$ and hence the second term on the right side is $\lito(1)$ as well.
\end{proof}

We are now ready to prove the result regarding the inverse.
\begin{proof}[Proof of Proposition~\ref{inverse-taylor}] We begin with some estimates which work for all values of $p$ and $\beta$ before breaking into cases and subcases.
Since $U$ is of the form
$$U(z) = z + \sum_{j=1}^p u_j z^{j+1} + z^{p+1} r_U(z)$$
and $r_U(z)\ll1$, by Proposition~A.3 of \cite{benaych2006taylor}, the inverse function $V$ also has the same form with the remainder term $r_V$ satisfying
\begin{equation}\label{eq: rv}
r_V(z)\ll1.
\end{equation}
Also note that $V(z)\sim z$. Further, Lemma~A.1 of \cite{benaych2006taylor} shows that the coefficients are determined by the limits of the derivatives of the function at $0$. Hence, the real coefficients of $U$ guarantee that the coefficients of $V$ are real. So we only need to check the asymptotic equivalences of the remainder functions given in~\eqref{inverse-taylor1}--\eqref{inverse-taylor3}. We shall achieve this by analyzing $I(z)=r_U(V(z))-r_U(z)$, the fact that $U(V(z))=z$ and the properties of $U$ as an element in $\mathcal R_{p,\beta}$. For that purpose, we define
$$I(z)=r_U(V(z))-r_U(z)=\int_{\gamma_z} r_U^\prime(\zeta) d\zeta,$$
where $\gamma_z$ is the closed line segment joining $z$ and $V(z)$. By definition of the class $\mathcal H$, given any $\eta>0$, there exists $\delta>0$, such that $\Delta_{\eta, \delta}\subset\mathcal D_U$. Since $V(z)\sim z$ as $z\to0$ n.t., given any $\eta>0$, there exists $\delta>0$, such that both $z$ and $V(z)$ belong to $\Delta_{\eta,\delta}$ and hence $\gamma_z$ is contained in $\Delta_{\eta, \delta}\subset\mathcal D_U$. (Note that $\Delta_{\eta, \delta}$ is a convex set.) Thus $r_U^\prime$ is defined on the entire line $\gamma_z$. We shall need the following estimate that
$$|I(z)| \le |\gamma_z| \sup_{\zeta\in\gamma_z} |r_U^\prime(\zeta)| = |V(z)-z| \sup_{\zeta\in\gamma_z} |r_U^\prime(\zeta)| = |V(z)-z| |r_U^\prime(\zeta_0(z))|,$$
for some $\zeta_0(z)\in\gamma_z$, since $\gamma_z$ is compact. Note that $\zeta_0(z)=z+\theta(z)(V(z)-z)$, for some $\theta(z)\in[0,1]$ and hence $\zeta_0(z)\sim z$. Now, $r_U(z)=\lito(z^\beta)$ by~\eqref{R2} and thus, by Lemma~\ref{lem: deriv}, we have $r_U^\prime(\zeta_0(z)) = \lito(\zeta_0(z)^{\beta-1}) = \lito(z^{\beta-1})$. Further estimates for $I(z)$ depend on the functions of $V(z)$ which are separate for the cases $p=0$ and $p\geq 1$. Using $V(z)=z+zr_V(z)$ for $p=0$ and $V(z)=z+\bigo(z^2)$ for $p\ge 1$, we have,
\begin{equation}\label{eq: I est}
|I(z)| =
\begin{cases}
\lito(z^\beta r_V(z)), &\text{for $p=0$,}\\
\lito(z^{1+\beta}), &\text{for $p\ge 1$.}
\end{cases}
\end{equation}

{\bf Case I: $p=0$.} Then $U(z)=z+zr_U(z)$ and $V(z)=z+zr_V(z)$. Using $U(V(z))=z$ and $I(z)=r_U(V(z))-r_U(z)$, we get $0 = zr_V(z) + (z+zr_V(z))(r_U(z)+I(z))$. Further canceling $z$ and using~\eqref{eq: rv}, we have
\begin{equation}\label{eq: rem p0}
0=r_U(z)+r_V(z)+r_U(z)r_V(z)+\bigo(I(z)).
\end{equation}
Using~\eqref{eq: I est} for $p=0$ and $r_U(z)\ll 1$ from~\eqref{R2}, we have $r_V(z)\sim-r_U(z)$, which proves~\eqref{inverse-taylor1}. Further, using~\eqref{R2} and evaluating at $z=-iy$, we have, for $\beta\in[0,1/2)$,
\begin{equation} \label{eq: r0beta}
|r_V(-iy)|\ll y^\beta.
\end{equation}
Evaluating~\eqref{eq: rem p0} at $z=iy$ and equating the real and the imaginary parts, we have
\begin{align}
0=\Re r_U(-iy)+\Re r_V(-iy) + \bigo(|r_U(-iy)| |r_V(-iy)|) + \bigo(|I(-iy)|)\label{eq: inv real p0}
\intertext{and}
0=\Im r_U(-iy)+\Im r_V(-iy) + \bigo(|r_U(-iy)| |r_V(-iy)|) + \bigo(|I(-iy)|).\label{eq: inv imag p0}
\end{align}
We split the proofs of~\eqref{inverse-taylor2} and~\eqref{inverse-taylor3} for the case $p=0$ into further subcases $\beta=0$ and $\beta\in(0,1/2)$.

{\bf Subcase Ia: $p=0$, $\beta=0$.} By~\eqref{inverse-taylor1} for $z=-iy$ and~\eqref{R4prime}, we have, $$|I(-iy)|\ll |r_V(-iy)|\sim|r_U(-iy)|\approx|\Re r_U(-iy)|\approx|\Im r_U(-iy)|.$$ Thus, the last term on the right hand side of~\eqref{eq: inv real p0} and~\eqref{eq: inv imag p0} are negligible with respect to $\Re r_U(-iy)$ and $\Im r_U(-iy)$ respectively. Then, further using $r_U(-iy)\to0$ from~\eqref{R2}, the third term on the right hand side of~\eqref{eq: inv real p0} and~\eqref{eq: inv imag p0} are negligible with respect to $\Re r_U(-iy)$ and $\Im r_U(-iy)$ respectively and hence we get $\Re r_U(-iy)\sim-\Re r_V(-iy)$ and $\Im r_U(-iy)\sim-\Im r_V(-iy)$, which prove~\eqref{inverse-taylor2} and~\eqref{inverse-taylor3}.

{\bf Subcase Ib: $p=0$, $\beta\in(0,1/2)$.} We have, by~\eqref{R3} and~\eqref{R4},
\begin{align*}
y^\beta \frac{|\Im r_U(-iy)|}{|\Re r_U(-iy)|} &= \frac{|\Im r_U(-iy)|}{y^{1-\beta/2}} \frac{y^{1+\beta/2}}{|\Re r_U(-iy)|} \to 0
\intertext{and}
y^\beta \frac{|\Re r_U(-iy)|}{|\Im r_U(-iy)|} &= \frac{|\Re r_U(-iy)|}{y^{1-\beta/2}} \frac{y}{|\Im r_U(-iy)|} y^{\beta/2} \to 0.
\end{align*}
They, together with~\eqref{eq: real imag}, give $y^\beta |r_U(-iy)|$ is negligible with respect to both the real and the imaginary parts of $r_U(-iy)$. Further, using~\eqref{eq: I est} and~\eqref{eq: r0beta} respectively, we have $$|I(-iy)|\ll y^\beta |r_V(-iy)| \sim y^\beta |r_U(-iy)|\quad\text{and}\quad |r_U(-iy) r_V(-iy)| \ll y^\beta |r_U(-iy)|.$$ Thus, both $|I(-iy)|$ and $|r_U(-iy) r_V(-iy)|$ which are the last two terms of~\eqref{eq: inv real p0} and~\eqref{eq: inv imag p0}, are negligible with respect to both the real and the imaginary parts of $r_U(-iy)$. Then, from~\eqref{eq: inv real p0} and~\eqref{eq: inv imag p0}, we immediately have $\Re r_U(-iy)\sim-\Re r_V(-iy)$ and $\Im r_U(-iy)\sim-\Im r_V(-iy)$, which prove~\eqref{inverse-taylor2} and~\eqref{inverse-taylor3}.

{\bf Case II: $p\ge 1$.} In this case $U(z)=z+\sum_{j=1}^pu_jz^{j+1}+z^{p+1}r_U(z)$ and $V(z) = z + \sum_{j=1}^pv_jz^{j+1} + z^{p+1}r_V(z) = z(1+v_1z(1+\lito(1)))$. Using $z=U(V(z))$ and canceling $z$ on both sides,  we have
\begin{multline} \label{eq: compose}
0 = \sum_{j=1}^pv_jz^{j+1} + z^{p+1}r_V(z) + \sum_{m=1}^p u_m \left( z + \sum_{j=1}^pv_jz^{j+1} + z^{p+1}r_V(z) \right)^{m+1}\\ + z^{p+1} \Big(r_U(z)+I(z)\Big) \Big(1+(p+1)v_1z(1+\lito(1))\Big).
\end{multline}
Note that all the coefficients on the right side are real. We collect the powers of $z$ till degree $p+1$ on the right side in the polynomial $Q(z)$. Let $c'\in\mathbb R$ be the coefficient of $z^{p+2}$ on the right side. The remaining powers of $z$ on the right side will be $\bigo(z^{p+3})$. We next consider the terms with $r_V(z)$ as a factor and observe that $z^{p+1} r_V(z)$ is the leading term and the remaining terms contribute $\bigo(z^{p+2} r_V(z))$. Finally, the last term on the right side gives $z^{p+1} r_U(z) + \bigo(z^{p+2} r_U(z)) + \bigo(z^{p+1} I(z))$. Since $z\ll r_U(z)$ by~\eqref{R2}, the term $\bigo(z^{p+3})$ can be absorbed in $\bigo(z^{p+2} r_U(z))$. Combining the above facts and dividing~\eqref{eq: compose} by $z^{p+1}$, we get,
\begin{equation}
\label{eq: casep1aftercompose}
0=z^{-(p+1)} Q(z) + \big( r_U(z) + c' z + \bigo(I(z)) + \bigo(z r_U(z)) \big) + \big( r_V(z) + \bigo(z r_V(z)) \big).\end{equation}

As $I(z)\ll z^{1+\beta}\ll z\ll r_U(z)$ by~\eqref{eq: I est} and~\eqref{R2}, we have $r_U(z)+c'z+\bigo(I(z))+\bigo(zr_U(z))=r_U(z)(1+\lito(1))$. Also $r_V(z)+\bigo(zr_V(z))=r_V(z)(1+\lito(1))$. Thus, the last two terms on the right hand side of~\eqref{eq: casep1aftercompose} goes to zero.  However, the first term on the right hand side of~\eqref{eq: casep1aftercompose}, $Q$ being a polynomial of degree at most $p$, becomes unbounded unless $Q\equiv0$. So we must have $Q\equiv0$. Thus, ~\eqref{eq: casep1aftercompose} simplifies to
\begin{equation}\label{eq: compose 2}
r_U(z) + c' z + \bigo(I(z)) + \bigo(z r_U(z)) = - r_V(z) + \bigo(z r_V(z)).
\end{equation}
As observed earlier, the left side is $r_U(z) (1+\lito(1))$ and the right side is $- r_V(z) (1+\lito(1))$ giving $r_U(z) \sim - r_V(z)$, which proves~\eqref{inverse-taylor1}.

Further, as in the case $p=0$, we have~\eqref{eq: r0beta} from $r_U(z)\sim-r_V(z)$.  Also,~\eqref{eq: compose 2} becomes
\begin{equation} \label{eq: compose 3}
-r_V(z) = r_U(z) + c' z + \bigo(I(z)) + \bigo(z r_U(z)).
\end{equation}
Evaluating~\eqref{eq: compose 3} at $z=-iy$ and equating the imaginary parts, we have, using~\eqref{eq: I est},
$$-\Im r_V(-iy) = \Im r_U(-iy) + \bigo(y).$$ This gives~\eqref{inverse-taylor3}, that is, $-\Im r_V(-iy) \sim \Im r_U(-iy)$, since $y\ll\Im r_U(-iy)$ by~\eqref{R3}.

Evaluating~\eqref{eq: compose 2} at $z=-iy$ again and now equating the real parts, we have, as $c'$ is real,
$$-\Re r_V(-iy) = \Re r_U(-iy) + \bigo(|I(-iy)|) + \bigo(y|r_U(-iy)|).$$ From~\eqref{eq: I est} and~\eqref{R3}, we have $|I(-iy)|\ll y^{1+\beta} \ll \Re r_U(-iy)$. Thus, to obtain~\eqref{inverse-taylor2}, that is, $-\Re r_V(-iy) \sim \Re r_U(-iy)$, we only need to show that $y|r_U(-iy)|\ll\Re r_U(-iy)$, which follows using $r_U(-iy)\sim-r_V(-iy)$, \eqref{eq: r0beta} and~\eqref{R3}, since
$$\frac{y|r_U(-iy)|}{|\Re r_U(-iy)|} = \frac{y^{1+\beta/2}}{|\Re r_U(-iy)|} \frac{|r_U(-iy)|}{y^\beta} y^{\beta/2}.$$
\end{proof}

We wrap up the Chapter by collecting the results from Sections~\ref{sec: Cauchy} and~\ref{sec: C-V reln} and proving Theorems~\ref{thm: error equiv}--\ref{thm: error equiv new}.
\begin{proof}[Proofs of Theorems~\ref{thm: error equiv}--\ref{thm: error equiv new}]
We shall prove all the theorems together, as the proofs are very similar.

The statements involving the tail of the probability measure $\mu$ and the remainder term in Laurent expansion of Cauchy transform, $r_{G_\mu}$ can be obtained from the results in Section~\ref{sec: Cauchy} as follows:
For all the theorems, the equivalence of the statements~\eqref{tail} and~\eqref{Cauchy remainder} about the tail of the probability measure and Cauchy transform (the imaginary part in Theorems~\ref{thm: error equiv}--\ref{thm: error equiv-0} and the real part in Theorem~\ref{thm: error equiv new}) are given in Proposition~\ref{prop: cauchy tail}. The asymptotic equivalences between the tail of the measure and (the real and the imaginary parts of) the remainder term in Laurent series expansion of Cauchy transform, given in~\eqref{imrg-rphi},~\eqref{rerg-rphi},~\eqref{imrg-rphi eq p},~\eqref{imrg-rphi-0} and~\eqref{rerg-rphi new} are also given in Proposition~\ref{prop: cauchy tail}. The similar asymptotic equivalence in~\eqref{rerg-rphi-0} follows from Propositions~\ref{prop: cauchy tail} and~\ref{prop: rG iy} for the cases $\alpha\in(0,1)$ and $\alpha=1$ respectively. We consider the asymptotic upper and lower bounds next. The asymptotic lower bounds in~\eqref{rg-rphi},~\eqref{rg-rphi eq p},~\eqref{rg-rphi-0} and~\eqref{rg-rphi new} follow from Proposition~\ref{prop: lower bd rG}. The asymptotic upper bound in~\eqref{rg-rphi new} follows from Proposition~\ref{prop: upper bound rG}. The asymptotic lower bounds in~\eqref{imrg-rphi},~\eqref{rerg-rphi},~\eqref{imrg-rphi eq p},~\eqref{imrg-rphi-0} and~\eqref{rerg-rphi new} follow from Proposition~\ref{prop: cauchy tail}. The asymptotic upper bound in~\eqref{rerg-rphi new} follows from the fact that $y^p \mu(y,\infty)$ is a regularly varying function of index $-1$. The asymptotic lower bound in~\eqref{rerg-rphi eq p} follows from Proposition~\ref{prop: rG iy}, while the asymptotic lower bound in~\eqref{rerg-rphi-0} follows as the tail of the measure is regularly varying of index $-\alpha$ with $\alpha\in[0,1)$. Finally both the asymptotic bounds in~\eqref{imrg-rphi new} follow from Proposition~\ref{prop: rG iy}.

To complete the proofs of Theorems~\ref{thm: error equiv}--~\ref{thm: error equiv new}, we need to check the equivalence of the statements~\eqref{Cauchy remainder} and~\eqref{Voiculescu remainder} involving the remainder terms in Laurent expansion of Cauchy and Voiculescu transforms for all the theorems and the asymptotic equivalences between the remainder terms in Laurent series expansion of Cauchy and Voiculescu transforms and their real and imaginary parts given in~\eqref{rg-rphi}--\eqref{imrg-rphi new}. Note that all these claims about Cauchy and Voiculescu transforms of $\mu$ have analogues about $H_\mu$ and $R_\mu$ due to the facts that $r_G(z)=r_H(1/z)$ and $r_\phi(z)=r_R(1/z)$. We shall actually deal with the functions $H_\mu$ and $R_\mu$.

For any probability measure $\mu\in \mathcal M_p$, the function $H\equiv H_\mu$ is invertible, belongs to the class $\mathcal H$ and the leading term of its Taylor expansion is $z$. Further, by Proposition~A.3 of~\cite{benaych2006taylor}, the above statement about $H$ is equivalent to the same statement about its inverse, denoted by $L\equiv L_\mu$. Since the leading term of Taylor expansion of $L$ has leading term $z$, the leading term of Taylor expansion of $L(z)/z$ is $1$ and it is also in $\mathcal H$. Define $K(z)=z/L(z)$. Then $K$ is also in $\mathcal H$ and its Taylor expansion has leading term $1$. We shall also use the following facts obtained from~\eqref{eq: R phi defn}:
\begin{equation} \label{eq: R-K}
zR_\mu(z) = (K(z) - 1) \quad \text{and} \quad zK(z) = z (1+zR_\mu(z)).
\end{equation}
Hence Taylor expansion of $K$ will also lead to a Taylor expansion of $R$ of degree one less than that of $K$. However, due to the definition of the remainder term of Taylor expansion given in~\eqref{eq: def remainder}, the corresponding remainder terms will be related by $r_K\equiv r_R$. Thus, we can move from the function $r_H$ to $r_K (\equiv r_R)$ through inverse and reciprocal and vice versa as follows:
\begin{equation} \label{scheme}
H(z) \xleftrightarrow[\text{Proposition~\ref{inverse-taylor}}]{L(z)=H^{-1}(z)} L(z)=z\cdot\frac{L(z)}z \xleftrightarrow[\text{Proposition~\ref{fraction-taylor}}]{K(z)=\frac{z}{L(z)}} zK(z) \xleftrightarrow[r_K=r_R]{R(z)=\frac{K(z)-1}z} R(z).
\end{equation}
These observations set up the stage for Propositions~\ref{fraction-taylor} and~\ref{inverse-taylor}. We shall use the class $\mathcal R_{p,0}$ for Theorems~\ref{thm: error equiv}--\ref{thm: error equiv-0} and the class $\mathcal R_{p,\beta}$ with any $\beta\in(0,1/2)$ for Theorem~\ref{thm: error equiv new}.

Suppose $\mu\in\mathcal M_p$ with $\alpha\in[p,p+1)$. This condition holds for Theorems~\ref{thm: error equiv}--\ref{thm: error equiv-0} and we prove these three theorems first. In these cases, $H_\mu(z)$ and $zK(z) = z(1+zR_\mu(z))$ necessarily have Taylor expansions of the form given in the hypothesis~\eqref{R1} for the class $\mathcal R_{p,0}$ with $r_H(z)\ll1$ and $r_R(z)\ll1$ as $z\to\infty$.

For all three theorems, first assume the statement~\eqref{Cauchy remainder} that $\Im r_G(iy)$ is regularly varying of index $-(\alpha-p)$. Then, from the already proven lower bounds in~\eqref{rg-rphi}--\eqref{rerg-rphi-0}, we have the asymptotic lower bounds for $r_G(z)$, $\Re r_G(iy)$ and $\Im r_G(iy)$ under the setup of each of the three theorems. They translate to the asymptotic lower bounds for the function $H_\mu$, as required by the hypotheses~\eqref{R2} and~\eqref{R3}. The asymptotic upper bound in~\eqref{R2} holds, as the remainder term in Taylor series expansion of $H$ satisfies $r_H\ll1$. For Theorem~\ref{thm: error equiv-0}, we have $p=0$ and we need to check the extra condition~\eqref{R4prime}, which follows from the already proven asymptotic equivalences~\eqref{imrg-rphi-0} and~\eqref{rerg-rphi-0}. Thus, for each of Theorems~\ref{thm: error equiv}--\ref{thm: error equiv-0}, $H_\mu$ belongs to $\mathcal R_{p,0}$.

We now refer to the schematic diagram given in~\eqref{scheme}. As $H_\mu$ is also invertible with $L=H^{-1}\in\mathcal H$, by Proposition~\ref{inverse-taylor}, we also have $L\in\mathcal R_{p,0}$ and $r_H(z) \sim - r_L(z)$, $\Re r_H(-iy) \sim -\Re r_L(-iy)$ and $\Im r_H(-iy) \sim -\Im r_L(-iy)$. Clearly, then Proposition~\ref{fraction-taylor} applies to the function $L(z)/z$, which has reciprocal $K\in\mathcal H$. Thus, $r_K$ and $r_L$ satisfy the relevant asymptotic equivalences. Furthermore, since, $r_R \equiv r_K$, combining, we have $r_H(z) \sim r_R(z)$, $\Re r_H(-iy) \sim \Re r_R(-iy)$ and $\Im r_H(-iy) \sim \Im r_R(-iy)$. Further, for Theorem~\ref{thm: error equiv-0}, we have $p=0$ and $H\in\mathcal R_{p,0}$ satisfies~\eqref{R4prime}. Hence, we also have $\Re r_R(-iy) \approx \Im r_R(-iy)$. Then $R_\mu$ inherits the appropriate properties from $H_\mu$ and passes them on to $\phi_\mu$, which gives us the statement~\eqref{Voiculescu remainder} about the remainder term in Laurent expansion of Voiculescu transform in each of Theorems~\ref{thm: error equiv}--\ref{thm: error equiv-0}.

Conversely, assume the statement~\eqref{Voiculescu remainder}. Then the assumptions on $r_\phi$ imply the analogous properties for $r_R\equiv r_K$. Further, as $\mu$ is in $\mathcal M_p$, $zK(z) = z(1+zR_\mu(z))$ satisfies the hypothesis~\eqref{R1} for the class $\mathcal R_{p,0}$. Also, the remainder term of Taylor series expansion of $zK(z)$ is also given by $r_R\equiv r_K\ll1$. The lower bound for the imaginary part of the remainder term in the hypothesis~\eqref{R3} follows from its regular variation and the fact that $\alpha\in[p, p+1)$. The lower bound in the hypothesis~\eqref{R2} is part of the statement~\eqref{Voiculescu remainder}. The lower bound for the real part of the remainder term in the hypothesis~\eqref{R3} is also a part of the statement~\eqref{Voiculescu remainder} for Theorems~\ref{thm: error equiv} and~\ref{thm: error equiv eq p}, while it follows from the statement~\eqref{Voiculescu remainder-0} for Theorem~\ref{thm: error equiv-0}, as both the real and imaginary parts become asymptotically equivalent and regularly varying of index $\alpha$ with $\alpha\in[0,1)$. Finally, the asymptotic equivalence in~\eqref{R4prime} for Theorem~\ref{thm: error equiv-0} is a part of the statement~\eqref{Voiculescu remainder-0}. Thus, again for each of Theorems~\ref{thm: error equiv}--\ref{thm: error equiv-0}, $zK(z)$ belongs to $\mathcal R_{p,0}$. Then apply Proposition~\ref{fraction-taylor} on $K$ and then Proposition~\ref{inverse-taylor} on $z/K(z)=L(z)$ to obtain $H_\mu(z)$. Arguing, by checking the asymptotic equivalences as in the direct case, we obtain the required conclusions about $r_H$ and hence $r_G$ given in the statement~\eqref{Cauchy remainder} for each of Theorems~\ref{thm: error equiv}--\ref{thm: error equiv-0}.

The argument is same in the case $\alpha=p+1$, which applies to Theorem~\ref{thm: error equiv new}, with the observation that the stronger bounds required in the hypotheses~\eqref{R2},~\eqref{R3} and~\eqref{R4} with $\beta>0$ is assumed for $r_\phi$ and hence for $r_R$ and is proved for $r_G$ and hence for $r_H$ in Proposition~\ref{prop: upper bound rG} and~\ref{prop: rG iy}.
\end{proof}

\cleardoublepage

\fancyhead[LE]{\sl\leftmark}
\bibliographystyle{abbrvnat}

\begin{thebibliography}{76}
\providecommand{\natexlab}[1]{#1}
\providecommand{\url}[1]{\texttt{#1}}
\expandafter\ifx\csname urlstyle\endcsname\relax
  \providecommand{\doi}[1]{doi: #1}\else
  \providecommand{\doi}{doi: \begingroup \urlstyle{rm}\Url}\fi

\bibitem[Balkema and de~Haan(1972)]{balkema:dehaan:1972}
A.~A. Balkema and L.~de~Haan.
\newblock On {R}. von {M}ises' condition for the domain of attraction of
  {$\exp(-e^{-x})^{1}$}.
\newblock \emph{Ann. Math. Statist.}, 43:\penalty0 1352--1354, 1972.
\newblock ISSN 0003-4851.

\bibitem[Banica et~al.(2010)Banica, Curran, and Speicher]{banica907finetti}
T.~Banica, S.~Curran, and R.~Speicher.
\newblock De finetti theorems for easy quantum groups.
\newblock To appear in \emph{Ann. Probab.}, 2010.
\newblock URL \url{http://arxiv.org/abs/0907.3314}.

\bibitem[Beirlant et~al.(2004)Beirlant, Goegebeur, Teugels, and
  Segers]{teugels:book}
J.~Beirlant, Y.~Goegebeur, J.~Teugels, and J.~Segers.
\newblock \emph{Statistics of extremes}.
\newblock Wiley Series in Probability and Statistics. John Wiley \& Sons Ltd.,
  Chichester, 2004.
\newblock ISBN 0-471-97647-4.
\newblock \doi{10.1002/0470012382}.
\newblock URL \url{http://dx.doi.org/10.1002/0470012382}.
\newblock Theory and applications, With contributions from Daniel De Waal and
  Chris Ferro.

\bibitem[Ben~Arous and Kargin(2010)]{arous2009free}
G.~Ben~Arous and V.~Kargin.
\newblock Free point processes and free extreme values.
\newblock \emph{Probab. Theory Related Fields}, 147\penalty0 (1-2):\penalty0
  161--183, 2010.
\newblock ISSN 0178-8051.
\newblock \doi{10.1007/s00440-009-0204-z}.
\newblock URL \url{http://dx.doi.org/10.1007/s00440-009-0204-z}.

\bibitem[Ben~Arous and Voiculescu(2006)]{arous2006free}
G.~Ben~Arous and D.~V. Voiculescu.
\newblock Free extreme values.
\newblock \emph{Ann. Probab.}, 34\penalty0 (5):\penalty0 2037--2059, 2006.
\newblock ISSN 0091-1798.
\newblock \doi{10.1214/009117906000000016}.
\newblock URL \url{http://dx.doi.org/10.1214/009117906000000016}.

\bibitem[Benaych-Georges(2005)]{benaych2006taylor}
F.~Benaych-Georges.
\newblock Failure of the {R}aikov theorem for free random variables.
\newblock In \emph{S\'eminaire de {P}robabilit\'es {XXXVIII}}, volume 1857 of
  \emph{Lecture Notes in Math.}, pages 313--319. Springer, Berlin, 2005.

\bibitem[Benaych-Georges(2006)]{benaych2004failure}
F.~Benaych-Georges.
\newblock Taylor expansions of {$R$}-transforms: application to supports and
  moments.
\newblock \emph{Indiana Univ. Math. J.}, 55\penalty0 (2):\penalty0 465--481,
  2006.
\newblock ISSN 0022-2518.
\newblock \doi{10.1512/iumj.2006.55.2691}.
\newblock URL \url{http://dx.doi.org/10.1512/iumj.2006.55.2691}.

\bibitem[Bercovici and Pata(1996)]{bercovici1996law}
H.~Bercovici and V.~Pata.
\newblock The law of large numbers for free identically distributed random
  variables.
\newblock \emph{Ann. Probab.}, 24\penalty0 (1):\penalty0 453--465, 1996.
\newblock ISSN 0091-1798.
\newblock \doi{10.1214/aop/1042644726}.
\newblock URL \url{http://dx.doi.org/10.1214/aop/1042644726}.

\bibitem[Bercovici and Pata(1999)]{bercovici1999stable}
H.~Bercovici and V.~Pata.
\newblock Stable laws and domains of attraction in free probability theory.
\newblock \emph{Ann. of Math. (2)}, 149\penalty0 (3):\penalty0 1023--1060,
  1999.
\newblock ISSN 0003-486X.
\newblock \doi{10.2307/121080}.
\newblock URL \url{http://dx.doi.org/10.2307/121080}.
\newblock With an appendix by Philippe Biane.

\bibitem[Bercovici and Pata(2000{\natexlab{a}})]{bercovici2000free}
H.~Bercovici and V.~Pata.
\newblock A free analogue of {H}in\v cin's characterization of infinite
  divisibility.
\newblock \emph{Proc. Amer. Math. Soc.}, 128\penalty0 (4):\penalty0 1011--1015,
  2000{\natexlab{a}}.
\newblock ISSN 0002-9939.
\newblock \doi{10.1090/S0002-9939-99-05087-X}.
\newblock URL \url{http://dx.doi.org/10.1090/S0002-9939-99-05087-X}.

\bibitem[Bercovici and Pata(2000{\natexlab{b}})]{bercovici2000functions}
H.~Bercovici and V.~Pata.
\newblock Functions of regular variation and freely stable laws.
\newblock \emph{Ann. Mat. Pura Appl. (4)}, 178:\penalty0 245--269,
  2000{\natexlab{b}}.
\newblock ISSN 0003-4622.
\newblock \doi{10.1007/BF02505898}.
\newblock URL \url{http://dx.doi.org/10.1007/BF02505898}.

\bibitem[Bercovici and Voiculescu(1993)]{bercovici1993free}
H.~Bercovici and D.~Voiculescu.
\newblock Free convolution of measures with unbounded support.
\newblock \emph{Indiana Univ. Math. J.}, 42\penalty0 (3):\penalty0 733--773,
  1993.
\newblock ISSN 0022-2518.
\newblock \doi{10.1512/iumj.1993.42.42033}.
\newblock URL \url{http://dx.doi.org/10.1512/iumj.1993.42.42033}.

\bibitem[Bercovici and Voiculescu(1995)]{bercovici1995superconvergence}
H.~Bercovici and D.~Voiculescu.
\newblock Superconvergence to the central limit and failure of the {C}ram\'er
  theorem for free random variables.
\newblock \emph{Probab. Theory Related Fields}, 103\penalty0 (2):\penalty0
  215--222, 1995.
\newblock ISSN 0178-8051.
\newblock \doi{10.1007/BF01204215}.
\newblock URL \url{http://dx.doi.org/10.1007/BF01204215}.

\bibitem[Bingham et~al.(1987)Bingham, Goldie, and
  Teugels]{bingham:goldie:teugels:1987}
N.~H. Bingham, C.~M. Goldie, and J.~L. Teugels.
\newblock \emph{Regular variation}, volume~27 of \emph{Encyclopedia of
  Mathematics and its Applications}.
\newblock Cambridge University Press, Cambridge, 1987.
\newblock ISBN 0-521-30787-2.

\bibitem[Breiman(1965)]{breiman:1965}
L.~Breiman.
\newblock On some limit theorems similar to the arc-sin law.
\newblock \emph{Teor. Verojatnost. i Primenen.}, 10:\penalty0 351--360, 1965.
\newblock ISSN 0040-361x.

\bibitem[Chen et~al.(2005)Chen, Ng, and Tang]{tang:chen:ng:2005}
Y.~Chen, K.~W. Ng, and Q.~Tang.
\newblock Weighted sums of subexponential random variables and their maxima.
\newblock \emph{Adv. in Appl. Probab.}, 37\penalty0 (2):\penalty0 510--522,
  2005.
\newblock ISSN 0001-8678.
\newblock \doi{10.1239/aap/1118858636}.
\newblock URL \url{http://dx.doi.org/10.1239/aap/1118858636}.

\bibitem[Chover et~al.(1973{\natexlab{a}})Chover, Ney, and
  Wainger]{chover:ney:wainger:1973a}
J.~Chover, P.~Ney, and S.~Wainger.
\newblock Degeneracy properties of subcritical branching processes.
\newblock \emph{Ann. Probability}, 1:\penalty0 663--673, 1973{\natexlab{a}}.

\bibitem[Chover et~al.(1973{\natexlab{b}})Chover, Ney, and
  Wainger]{chover:ney:wainger:1973b}
J.~Chover, P.~Ney, and S.~Wainger.
\newblock Functions of probability measures.
\newblock \emph{J. Analyse Math.}, 26:\penalty0 255--302, 1973{\natexlab{b}}.
\newblock ISSN 0021-7670.

\bibitem[{\v{C}}istjakov(1964)]{chistyakov:1964}
V.~P. {\v{C}}istjakov.
\newblock A theorem on sums of independent positive random variables and its
  applications to branching random processes.
\newblock \emph{Teor. Verojatnost. i Primenen}, 9:\penalty0 710--718, 1964.
\newblock ISSN 0040-361x.

\bibitem[Cline(1983)]{cline:1983}
D.~B.~H. Cline.
\newblock \emph{Estimation and linear prediction for regression, autoregression
  and {ARMA} with infinite variance data}.
\newblock ProQuest LLC, Ann Arbor, MI, 1983.
\newblock Thesis (Ph.D.)--Colorado State University.

\bibitem[Cline and Samorodnitsky(1994)]{cline:somorodnitsky:1994}
D.~B.~H. Cline and G.~Samorodnitsky.
\newblock Subexponentiality of the product of independent random variables.
\newblock \emph{Stochastic Process. Appl.}, 49\penalty0 (1):\penalty0 75--98,
  1994.
\newblock ISSN 0304-4149.
\newblock \doi{10.1016/0304-4149(94)90113-9}.
\newblock URL \url{http://dx.doi.org/10.1016/0304-4149(94)90113-9}.

\bibitem[Das and Resnick(2011)]{Das:Resnick:2008}
B.~Das and S.~Resnick.
\newblock Conditioning on an extreme component: Model consistency and regular
  variation on cones.
\newblock \emph{Bernoulli}, 17\penalty0 (1):\penalty0 226--252, 2011.
\newblock \doi{10.3150/10-BEJ271}.
\newblock URL \url{http://dx.doi.org/10.3150/10-BEJ271}.

\bibitem[Davis and Resnick(1996)]{davis:resnick:1996}
R.~A. Davis and S.~I. Resnick.
\newblock Limit theory for bilinear processes with heavy-tailed noise.
\newblock \emph{Ann. Appl. Probab.}, 6\penalty0 (4):\penalty0 1191--1210, 1996.
\newblock ISSN 1050-5164.
\newblock \doi{10.1214/aoap/1035463328}.
\newblock URL \url{http://dx.doi.org/10.1214/aoap/1035463328}.

\bibitem[de~Haan(1970)]{dehaan:1970}
L.~de~Haan.
\newblock \emph{On regular variation and its application to the weak
  convergence of sample extremes}, volume~32 of \emph{Mathematical Centre
  Tracts}.
\newblock Mathematisch Centrum, Amsterdam, 1970.

\bibitem[de~Haan(1971)]{dehaan:1971}
L.~de~Haan.
\newblock A form of regular variation and its application to the domain of
  attraction of the double exponential distribution.
\newblock \emph{Z. Wahrscheinlichkeitstheorie und Verw. Gebiete}, 17:\penalty0
  241--258, 1971.

\bibitem[de~Haan and de~Ronde(1998)]{dehaan:ronde:1998}
L.~de~Haan and J.~de~Ronde.
\newblock Sea and wind: multivariate extremes at work.
\newblock \emph{Extremes}, 1\penalty0 (1):\penalty0 7--45, 1998.
\newblock ISSN 1386-1999.
\newblock \doi{10.1023/A:1009909800311}.
\newblock URL \url{http://dx.doi.org/10.1023/A:1009909800311}.

\bibitem[de~Haan and Ferreira(2006)]{Haan:Ferreira:2006}
L.~de~Haan and A.~Ferreira.
\newblock \emph{Extreme value theory}.
\newblock Springer Series in Operations Research and Financial Engineering.
  Springer, New York, 2006.
\newblock ISBN 978-0-387-23946-0; 0-387-23946-4.
\newblock An introduction.

\bibitem[Denisov and Zwart(2007)]{denisov:zwart:2007}
D.~Denisov and B.~Zwart.
\newblock On a theorem of {B}reiman and a class of random difference equations.
\newblock \emph{J. Appl. Probab.}, 44\penalty0 (4):\penalty0 1031--1046, 2007.
\newblock ISSN 0021-9002.
\newblock \doi{10.1239/jap/1197908822}.
\newblock URL \url{http://dx.doi.org/10.1239/jap/1197908822}.

\bibitem[Embrechts and Goldie(1980)]{Embrechts:goldie:1980}
P.~Embrechts and C.~M. Goldie.
\newblock On closure and factorization properties of subexponential and related
  distributions.
\newblock \emph{J. Austral. Math. Soc. Ser. A}, 29\penalty0 (2):\penalty0
  243--256, 1980.
\newblock ISSN 0263-6115.

\bibitem[Embrechts and Omey(1984)]{embrechts:omey:1984}
P.~Embrechts and E.~Omey.
\newblock A property of longtailed distributions.
\newblock \emph{J. Appl. Probab.}, 21\penalty0 (1):\penalty0 80--87, 1984.
\newblock ISSN 0021-9002.

\bibitem[Embrechts et~al.(1997)Embrechts, Kl{\"u}ppelberg, and
  Mikosch]{embrechts:kluppelberg:mikosch:book}
P.~Embrechts, C.~Kl{\"u}ppelberg, and T.~Mikosch.
\newblock \emph{Modelling extremal events}, volume~33 of \emph{Applications of
  Mathematics (New York)}.
\newblock Springer-Verlag, Berlin, 1997.
\newblock ISBN 3-540-60931-8.
\newblock For insurance and finance.

\bibitem[Feller(1971)]{feller:1971}
W.~Feller.
\newblock \emph{An introduction to probability theory and its applications.
  {V}ol. {II}.}
\newblock Second edition. John Wiley \& Sons Inc., New York, 1971.

\bibitem[Fisher and Tippett(1928)]{fisher:tippett:1928}
R.~Fisher and L.~Tippett.
\newblock Limiting forms of the frequency distribution of the largest or
  smallest member of a sample.
\newblock \emph{Proc. Cambridge Philos. Soc.}, 24:\penalty0 180--190, 1928.

\bibitem[Foss et~al.(2009)Foss, Korshunov, and
  Zachary]{foss:korshunov:zachary:2009}
S.~Foss, D.~Korshunov, and S.~Zachary.
\newblock \emph{An introduction to heavy tailed and subexponential
  distributions}.
\newblock 2009.
\newblock URL \url{http://www.mfo.de/publications/owp/2009/OWP2009_13.pdf}.

\bibitem[Galambos(1978)]{galambos:extreme:book}
J.~Galambos.
\newblock \emph{The asymptotic theory of extreme order statistics}.
\newblock John Wiley \& Sons, New York-Chichester-Brisbane, 1978.
\newblock ISBN 0-471-02148-2.
\newblock Wiley Series in Probability and Mathematical Statistics.

\bibitem[Gnedenko(1943)]{gnedenko:1943}
B.~Gnedenko.
\newblock Sur la distribution limite du terme maximum d'une s\'erie
  al\'eatoire.
\newblock \emph{Ann. of Math. (2)}, 44:\penalty0 423--453, 1943.
\newblock ISSN 0003-486X.

\bibitem[Gnedenko and Kolmogorov(1968)]{gnedenko:kolmogorov}
B.~V. Gnedenko and A.~N. Kolmogorov.
\newblock \emph{Limit distributions for sums of independent random variables}.
\newblock Translated from the Russian, annotated, and revised by K. L. Chung.
  With appendices by J. L. Doob and P. L. Hsu. Revised edition. Addison-Wesley
  Publishing Co., Reading, Mass.-London-Don Mills., Ont., 1968.

\bibitem[Goldie(1978)]{goldie:1978}
C.~M. Goldie.
\newblock Subexponential distributions and dominated-variation tails.
\newblock \emph{J. Appl. Probability}, 15\penalty0 (2):\penalty0 440--442,
  1978.
\newblock ISSN 0021-9002.

\bibitem[Hazra and Maulik(2010{\natexlab{a}})]{hazra:maulik:2010a}
R.~S. Hazra and K.~Maulik.
\newblock Free subexponentiality.
\newblock Submitted., 2010{\natexlab{a}}.
\newblock URL \url{http://arxiv.org/abs/1009.4786}.

\bibitem[Hazra and Maulik(2010{\natexlab{b}})]{hazra:maulik:2010b}
R.~S. Hazra and K.~Maulik.
\newblock Tail behavior of randomly weighted sums.
\newblock Submitted., 2010{\natexlab{b}}.
\newblock URL \url{http://arxiv.org/abs/1011.4349}.

\bibitem[Hazra and Maulik(2011)]{hazra:maulik:2011}
R.~S. Hazra and K.~Maulik.
\newblock Products in conditional extreme value model.
\newblock Submitted., 2011.
\newblock URL \url{http://arxiv.org/abs/1104.1688}.

\bibitem[Heath et~al.(1999)Heath, Resnick, and
  Samorodnitsky]{heath:resnick:samorodnitsky:1998}
D.~Heath, S.~Resnick, and G.~Samorodnitsky.
\newblock How system performance is affected by the interplay of averages in a
  fluid queue with long range dependence induced by heavy tails.
\newblock \emph{Ann. Appl. Probab.}, 9\penalty0 (2):\penalty0 352--375, 1999.
\newblock ISSN 1050-5164.
\newblock \doi{10.1214/aoap/1029962746}.
\newblock URL \url{http://dx.doi.org/10.1214/aoap/1029962746}.

\bibitem[Heffernan and Resnick(2007)]{heffernan:resnick:2007}
J.~E. Heffernan and S.~I. Resnick.
\newblock Limit laws for random vectors with an extreme component.
\newblock \emph{Ann. Appl. Probab.}, 17\penalty0 (2):\penalty0 537--571, 2007.
\newblock ISSN 1050-5164.

\bibitem[Heffernan and Tawn(2004)]{heffernan:tawn:2004}
J.~E. Heffernan and J.~A. Tawn.
\newblock A conditional approach for multivariate extreme values.
\newblock \emph{J. R. Stat. Soc. Ser. B Stat. Methodol.}, 66\penalty0
  (3):\penalty0 497--546, 2004.
\newblock ISSN 1369-7412.
\newblock \doi{10.1111/j.1467-9868.2004.02050.x}.
\newblock URL \url{http://dx.doi.org/10.1111/j.1467-9868.2004.02050.x}.
\newblock With discussions and reply by the authors.

\bibitem[Hult and Samorodnitsky(2008)]{hult:somorodnitsky:2008}
H.~Hult and G.~Samorodnitsky.
\newblock Tail probabilities for infinite series of regularly varying random
  vectors.
\newblock \emph{Bernoulli}, 14\penalty0 (3):\penalty0 838--864, 2008.
\newblock ISSN 1350-7265.
\newblock \doi{10.3150/08-BEJ125}.
\newblock URL \url{http://dx.doi.org/10.3150/08-BEJ125}.

\bibitem[Jacobsen et~al.(2009)Jacobsen, Mikosch, Rosi{\'n}ski, and
  Samorodnitsky]{JMRS:2009}
M.~Jacobsen, T.~Mikosch, J.~Rosi{\'n}ski, and G.~Samorodnitsky.
\newblock Inverse problems for regular variation of linear filters, a
  cancellation property for {$\sigma$}-finite measures and identification of
  stable laws.
\newblock \emph{Ann. Appl. Probab.}, 19\penalty0 (1):\penalty0 210--242, 2009.
\newblock ISSN 1050-5164.
\newblock \doi{10.1214/08-AAP540}.
\newblock URL \url{http://dx.doi.org/10.1214/08-AAP540}.

\bibitem[Jessen and Mikosch(2006)]{jessen:mikosch:2006}
A.~H. Jessen and T.~Mikosch.
\newblock Regularly varying functions.
\newblock \emph{Publ. Inst. Math. (Beograd) (N.S.)}, 80(94):\penalty0 171--192,
  2006.
\newblock ISSN 0350-1302.
\newblock \doi{10.2298/PIM0694171J}.
\newblock URL \url{http://dx.doi.org/10.2298/PIM0694171J}.

\bibitem[Kl{\"u}ppelberg(1988)]{kluppelberg:1988}
C.~Kl{\"u}ppelberg.
\newblock Subexponential distributions and integrated tails.
\newblock \emph{J. Appl. Probab.}, 25\penalty0 (1):\penalty0 132--141, 1988.
\newblock ISSN 0021-9002.

\bibitem[Kl{\"u}ppelberg(1989)]{kluppelberg:1989}
C.~Kl{\"u}ppelberg.
\newblock Subexponential distributions and characterizations of related
  classes.
\newblock \emph{Probab. Theory Related Fields}, 82\penalty0 (2):\penalty0
  259--269, 1989.
\newblock ISSN 0178-8051.
\newblock \doi{10.1007/BF00354763}.
\newblock URL \url{http://dx.doi.org/10.1007/BF00354763}.

\bibitem[Kokoszka and Taqqu(1996)]{kokoszka:taqqu:1996}
P.~S. Kokoszka and M.~S. Taqqu.
\newblock Parameter estimation for infinite variance fractional {ARIMA}.
\newblock \emph{Ann. Statist.}, 24\penalty0 (5):\penalty0 1880--1913, 1996.
\newblock ISSN 0090-5364.
\newblock \doi{10.1214/aos/1069362302}.
\newblock URL \url{http://dx.doi.org/10.1214/aos/1069362302}.

\bibitem[Ledford and Tawn(1996)]{ledford:tawn:1996}
A.~W. Ledford and J.~A. Tawn.
\newblock Statistics for near independence in multivariate extreme values.
\newblock \emph{Biometrika}, 83\penalty0 (1):\penalty0 169--187, 1996.
\newblock ISSN 0006-3444.

\bibitem[Ledford and Tawn(1997)]{ledford:tawn:1997}
A.~W. Ledford and J.~A. Tawn.
\newblock Modelling dependence within joint tail regions.
\newblock \emph{J. Roy. Statist. Soc. Ser. B}, 59\penalty0 (2):\penalty0
  475--499, 1997.
\newblock ISSN 0035-9246.

\bibitem[Leslie(1989)]{leslie:1989}
J.~R. Leslie.
\newblock On the nonclosure under convolution of the subexponential family.
\newblock \emph{J. Appl. Probab.}, 26\penalty0 (1):\penalty0 58--66, 1989.
\newblock ISSN 0021-9002.

\bibitem[Maassen(1992)]{maassen1992addition}
H.~Maassen.
\newblock Addition of freely independent random variables.
\newblock \emph{J. Funct. Anal.}, 106\penalty0 (2):\penalty0 409--438, 1992.
\newblock ISSN 0022-1236.
\newblock \doi{10.1016/0022-1236(92)90055-N}.
\newblock URL \url{http://dx.doi.org/10.1016/0022-1236(92)90055-N}.

\bibitem[Maulik and Resnick(2004)]{Maulik:Resnick:2004}
K.~Maulik and S.~Resnick.
\newblock Characterizations and examples of hidden regular variation.
\newblock \emph{Extremes}, 7\penalty0 (1):\penalty0 31--67 (2005), 2004.
\newblock ISSN 1386-1999.

\bibitem[Maulik and Zwart(2006)]{maulik:zwart:2006}
K.~Maulik and B.~Zwart.
\newblock Tail asymptotics for exponential functionals of {L}\'evy processes.
\newblock \emph{Stochastic Process. Appl.}, 116\penalty0 (2):\penalty0
  156--177, 2006.
\newblock ISSN 0304-4149.
\newblock \doi{10.1016/j.spa.2005.09.002}.
\newblock URL \url{http://dx.doi.org/10.1016/j.spa.2005.09.002}.

\bibitem[Maulik et~al.(2002)Maulik, Resnick, and
  Rootz{\'e}n]{Maulik:Resnick:Rootzen:2002}
K.~Maulik, S.~Resnick, and H.~Rootz{\'e}n.
\newblock Asymptotic independence and a network traffic model.
\newblock \emph{J. Appl. Probab.}, 39\penalty0 (4):\penalty0 671--699, 2002.
\newblock ISSN 0021-9002.

\bibitem[Mikosch and Samorodnitsky(2000)]{mikosch:somorodnitsky:2000}
T.~Mikosch and G.~Samorodnitsky.
\newblock The supremum of a negative drift random walk with dependent
  heavy-tailed steps.
\newblock \emph{Ann. Appl. Probab.}, 10\penalty0 (3):\penalty0 1025--1064,
  2000.
\newblock ISSN 1050-5164.
\newblock \doi{10.1214/aoap/1019487517}.
\newblock URL \url{http://dx.doi.org/10.1214/aoap/1019487517}.

\bibitem[Mikosch et~al.(2002)Mikosch, Resnick, Rootz{\'e}n, and
  Stegeman]{mikosch:resnick:rootzen:stegeman:2002}
T.~Mikosch, S.~Resnick, H.~Rootz{\'e}n, and A.~Stegeman.
\newblock Is network traffic approximated by stable {L}\'evy motion or
  fractional {B}rownian motion?
\newblock \emph{Ann. Appl. Probab.}, 12\penalty0 (1):\penalty0 23--68, 2002.
\newblock ISSN 1050-5164.
\newblock \doi{10.1214/aoap/1015961155}.
\newblock URL \url{http://dx.doi.org/10.1214/aoap/1015961155}.

\bibitem[Nica and Speicher(2006)]{speicher:nica:book}
A.~Nica and R.~Speicher.
\newblock \emph{Lectures on the combinatorics of free probability}, volume 335
  of \emph{London Mathematical Society Lecture Note Series}.
\newblock Cambridge University Press, Cambridge, 2006.
\newblock ISBN 978-0-521-85852-6; 0-521-85852-6.
\newblock \doi{10.1017/CBO9780511735127}.
\newblock URL \url{http://dx.doi.org/10.1017/CBO9780511735127}.

\bibitem[Pata(1996)]{pata1996central}
V.~Pata.
\newblock The central limit theorem for free additive convolution.
\newblock \emph{J. Funct. Anal.}, 140\penalty0 (2):\penalty0 359--380, 1996.
\newblock ISSN 0022-1236.
\newblock \doi{10.1006/jfan.1996.0112}.
\newblock URL \url{http://dx.doi.org/10.1006/jfan.1996.0112}.

\bibitem[Pitman(1980)]{pitman:1980}
E.~J.~G. Pitman.
\newblock Subexponential distribution functions.
\newblock \emph{J. Austral. Math. Soc. Ser. A}, 29\penalty0 (3):\penalty0
  337--347, 1980.
\newblock ISSN 0263-6115.

\bibitem[Pratt(1960)]{pratt:1960}
J.~W. Pratt.
\newblock On interchanging limits and integrals.
\newblock \emph{Ann. Math. Statist.}, 31:\penalty0 74--77, 1960.
\newblock ISSN 0003-4851.

\bibitem[Resnick(2002)]{Resnick:2002}
S.~Resnick.
\newblock Hidden regular variation, second order regular variation and
  asymptotic independence.
\newblock \emph{Extremes}, 5\penalty0 (4):\penalty0 303--336 (2003), 2002.
\newblock ISSN 1386-1999.

\bibitem[Resnick(1987)]{Resnick:1987}
S.~I. Resnick.
\newblock \emph{Extreme values, regular variation, and point processes},
  volume~4 of \emph{Applied Probability. A Series of the Applied Probability
  Trust}.
\newblock Springer-Verlag, New York, 1987.
\newblock ISBN 0-387-96481-9.

\bibitem[Resnick(2007)]{resnick:2007}
S.~I. Resnick.
\newblock \emph{Heavy-tail phenomena}.
\newblock Springer Series in Operations Research and Financial Engineering.
  Springer, New York, 2007.
\newblock ISBN 978-0-387-24272-9; 0-387-24272-4.
\newblock Probabilistic and statistical modeling.

\bibitem[Resnick and Willekens(1991)]{resnick:willekens:1991}
S.~I. Resnick and E.~Willekens.
\newblock Moving averages with random coefficients and random coefficient
  autoregressive models.
\newblock \emph{Comm. Statist. Stochastic Models}, 7\penalty0 (4):\penalty0
  511--525, 1991.
\newblock ISSN 0882-0287.

\bibitem[Rolski et~al.(1999)Rolski, Schmidli, Schmidt, and
  Teugels]{rolski:teugels:book}
T.~Rolski, H.~Schmidli, V.~Schmidt, and J.~Teugels.
\newblock \emph{Stochastic processes for insurance and finance}.
\newblock Wiley Series in Probability and Statistics. John Wiley \& Sons Ltd.,
  Chichester, 1999.
\newblock ISBN 0-471-95925-1.
\newblock \doi{10.1002/9780470317044}.
\newblock URL \url{http://dx.doi.org/10.1002/9780470317044}.

\bibitem[Speicher(1994)]{speicher1994multiplicative}
R.~Speicher.
\newblock Multiplicative functions on the lattice of noncrossing partitions and
  free convolution.
\newblock \emph{Math. Ann.}, 298\penalty0 (4):\penalty0 611--628, 1994.
\newblock ISSN 0025-5831.
\newblock \doi{10.1007/BF01459754}.
\newblock URL \url{http://dx.doi.org/10.1007/BF01459754}.

\bibitem[Teugels(1975)]{teugels:1975}
J.~L. Teugels.
\newblock The class of subexponential distributions.
\newblock \emph{Ann. Probability}, 3\penalty0 (6):\penalty0 1000--1011, 1975.

\bibitem[Voiculescu(1985)]{Voiculescu:1985}
D.~Voiculescu.
\newblock Symmetries of some reduced free product {$C^\ast$}-algebras.
\newblock In \emph{Operator algebras and their connections with topology and
  ergodic theory ({B}u\c steni, 1983)}, volume 1132 of \emph{Lecture Notes in
  Math.}, pages 556--588. Springer, Berlin, 1985.
\newblock \doi{10.1007/BFb0074909}.
\newblock URL \url{http://dx.doi.org/10.1007/BFb0074909}.

\bibitem[Voiculescu(1986)]{voiculescu1986addition}
D.~Voiculescu.
\newblock Addition of certain noncommuting random variables.
\newblock \emph{J. Funct. Anal.}, 66\penalty0 (3):\penalty0 323--346, 1986.
\newblock ISSN 0022-1236.
\newblock \doi{10.1016/0022-1236(86)90062-5}.
\newblock URL \url{http://dx.doi.org/10.1016/0022-1236(86)90062-5}.

\bibitem[Voiculescu(1991)]{voiculescu:1991}
D.~Voiculescu.
\newblock Limit laws for random matrices and free products.
\newblock \emph{Invent. Math.}, 104\penalty0 (1):\penalty0 201--220, 1991.
\newblock ISSN 0020-9910.
\newblock \doi{10.1007/BF01245072}.
\newblock URL \url{http://dx.doi.org/10.1007/BF01245072}.

\bibitem[Voiculescu et~al.(1992)Voiculescu, Dykema, and
  Nica]{voiculescu:dykema:nica:book}
D.~V. Voiculescu, K.~J. Dykema, and A.~Nica.
\newblock \emph{Free random variables}, volume~1 of \emph{CRM Monograph
  Series}.
\newblock American Mathematical Society, Providence, RI, 1992.
\newblock ISBN 0-8218-6999-X.
\newblock A noncommutative probability approach to free products with
  applications to random matrices, operator algebras and harmonic analysis on
  free groups.

\bibitem[Wang and Tang(2006)]{tang:wang:2006}
D.~Wang and Q.~Tang.
\newblock Tail probabilities of randomly weighted sums of random variables with
  dominated variation.
\newblock \emph{Stoch. Models}, 22\penalty0 (2):\penalty0 253--272, 2006.
\newblock ISSN 1532-6349.
\newblock \doi{10.1080/15326340600649029}.
\newblock URL \url{http://dx.doi.org/10.1080/15326340600649029}.

\bibitem[Zhang et~al.(2009)Zhang, Shen, and Weng]{zhang:weng:shen:2008}
Y.~Zhang, X.~Shen, and C.~Weng.
\newblock Approximation of the tail probability of randomly weighted sums and
  applications.
\newblock \emph{Stochastic Process. Appl.}, 119\penalty0 (2):\penalty0
  655--675, 2009.
\newblock ISSN 0304-4149.
\newblock \doi{10.1016/j.spa.2008.03.004}.
\newblock URL \url{http://dx.doi.org/10.1016/j.spa.2008.03.004}.

\end{thebibliography}

\end{document}